
\documentclass[reqno]{amsart}

\usepackage{hyperref, bbm, amssymb, mathtools, url, enumerate}
\usepackage{amsmath}
\usepackage{amsthm}
\setcounter{tocdepth}{2}
\usepackage{tikz}
\usepackage{tikz-cd}
\usepackage[scr]{rsfso}
\usepackage{comment}
\usepackage{bm}
\usepackage{amssymb}
\usetikzlibrary{matrix,arrows,decorations.pathmorphing, cd}
\usepackage[capitalise]{cleveref}
\usepackage{aliascnt}

\newcommand{\qednow}{\pushQED{\qed}\qedhere\popQED}

\def\twocell[#1]{\arrow[#1, dash, phantom, "\Rightarrow"{scale=1.125, yshift=-.4pt, description, allow upside down, sloped, inner sep=0pt}]}

\makeatletter
\let\@uldcite=\cite
\def\st@rcite#1#2{\@uldcite[#2]{#1}}
\def\cite#1{\@ifstar{\st@rcite{#1}}{\@uldcite{#1}}}
\makeatother
\let\cites=\cite

\pgfdeclarelayer{bg}
\pgfsetlayers{bg,main}
\usetikzlibrary{calc}

\newcommand{\mynewtheorem}[2]{\newaliascnt{#1}{theorem}\newtheorem{#1}[#1]{#2}\aliascntresetthe{#1}}
\newtheorem{theorem}{Theorem}[subsection]
\mynewtheorem{conjecture}{Conjecture}
\mynewtheorem{corollary}{Corollary}
\mynewtheorem{lemma}{Lemma}
\mynewtheorem{proposition}{Proposition}

\theoremstyle{definition}
\newtheorem*{claim*}{Claim}
\mynewtheorem{construction}{Construction}
\mynewtheorem{convention}{Convention}
\mynewtheorem{definition}{Definition}

\mynewtheorem{discussion}{Discussion}
\mynewtheorem{example}{Example}
\mynewtheorem{hypothesis}{Hypothesis}
\mynewtheorem{notation}{Notation}
\mynewtheorem{observation}{Observation}
\mynewtheorem{question}{Question}
\mynewtheorem{remark}{Remark}
\newtheorem*{remark*}{Remark}
\mynewtheorem{warn}{Warning}
\newtheorem{introthm}{Theorem}

\setlength{\parindent}{0cm}
\setlength{\parskip}{0.8ex}

\DeclareMathOperator{\Cc}{\mathcal{C}}
\DeclareMathOperator{\Dd}{\mathcal{D}}
\DeclareMathOperator{\Ee}{\mathcal{E}}

\renewcommand{\phi}{\varphi}
\renewcommand{\epsilon}{\varepsilon}

\renewcommand{\S}{{{\mathscr S}}}
\DeclareMathOperator{\Sp}{Sp}
\DeclareMathOperator{\Spc}{Spc}
\DeclareMathOperator{\Cat}{Cat}

\newcommand{\PrL}{\textup{Pr}^{\textup{L}}}
\newcommand{\PrLT}{\textup{Pr}^{\textup{L}}_{T}}
\newcommand{\PrRT}{\textup{Pr}^{\textup{R}}_{T}}
\newcommand{\PrLTst}{\textup{Pr}^{\textup{L,st}}_{T}}
\newcommand{\PrRTst}{\textup{Pr}^{\textup{R,st}}_{T}}
\newcommand{\PrLTsemi}{\textup{Pr}^{\textup{L,$P$-$\oplus$}}_{T}}
\newcommand{\PrRTsemi}{\textup{Pr}^{\textup{R,$P$-$\oplus$}}_{T}}
\newcommand{\PrLTPst}{\textup{Pr}^{\textup{L,$P$-st}}_{T}}


\DeclareMathOperator{\Hom}{Hom}
\DeclareMathOperator{\Fun}{Fun}
\DeclareMathOperator{\PSh}{PSh}
\DeclareMathOperator{\Ar}{Ar}

\DeclareMathOperator{\CMon}{CMon}
\DeclareMathOperator{\RelCat}{RelCat}

\newcommand{\tcat}{{\mathcal D}}
\newcommand{\cat}[1]{\textbf{\textup{#1}}}
\newcommand{\HOM}{\cat{Hom}}
\newcommand{\PSH}{\cat{PSh}}
\newcommand{\FUN}{\cat{Fun}}
\newcommand{\maps}{\textup{maps}}
\newcommand{\catop}{^{\mathrm{op}}}
\newcommand{\op}{{\textup{op}}}

\renewcommand{\smallint}{{\textstyle\int}}

\DeclareMathOperator{\core}{\iota}
\DeclareMathOperator{\colim}{colim}
\DeclareMathOperator{\const}{const}
\DeclareMathOperator{\id}{id}
\DeclareMathOperator{\pr}{pr}
\DeclareMathOperator{\fgt}{fgt}
\DeclareMathOperator{\diag}{diag}
\DeclareMathOperator{\cocart}{cocart}
\DeclareMathOperator{\BC}{BC}

\DeclareMathOperator{\Orb}{Orb}
\DeclareMathOperator{\Glo}{Glo}

\DeclareMathOperator{\ind}{ind}
\DeclareMathOperator{\coind}{coind}
\DeclareMathOperator{\Nm}{Nm}
\DeclareMathOperator{\Nmadj}{\widetilde{\Nm}}
\DeclareMathOperator{\Nmadjdual}{\overline{\Nm}}

\newcommand{\ulhelper}[2]{\underline{\setbox0=\hbox{$#1#2$}\dp0=1pt \box0\relax}}
\newcommand{\ul}[1]{{\mathpalette\ulhelper{#1}}\hbox{\rule[-2pt]{0pt}{0pt}}}
\newcommand{\ulFun}{\ul{\Fun}}
\newcommand{\bbU}{\mathbf{U}}

\newcommand{\finSets}{\mathbb{F}}
\newcommand{\finTsets}{{\finSets_{T}}}
\newcommand{\finPsets}{\finSets_{T}^{P}}
\newcommand{\ulFinSets}[1]{\ul{\finSets}^{#1}_T}
\newcommand{\ulfinPsets}{\ul{\finSets}_T^{P}}

\newcommand{\ulfinptdPsets}{\ul{\finSets}_{T,*}^{P}}

\newcommand{\ulPCMon}{\ul{\CMon}^{P}}

\newcommand{\CatTPSemi}{\Cat_T^{P\text{-}\oplus}}
\newcommand{\CatTPProd}{\Cat_T^{P\text{-}\times}}

\newcommand{\blank}{{\textup{--}}}
\newcommand{\rOgl}{{\mathfrak O}^\textup{gl}}

\newcommand{\Ogl}{\textbf{\textup O}^\textup{gl}}

\newcommand{\nerve}{\textup{N}}
\newcommand{\OGglGamma}{{\cat{O}^\textup{$G$-gl}_\Gamma}}
\newcommand{\sGlo}{\cat{Glo}}
\newcommand{\tcatUn}[1]{\mathop{\hfuzz=10pt\hbox to 0pt{$\textstyle\bm\int$}\kern.3pt\raise.2pt\hbox to 0pt{$\textstyle\bm\int$}\lower.2pt\hbox to 0pt{$\textstyle\bm\int$}\kern.3pt\hbox to 0pt{$\textstyle\bm\int$}\kern-.1pt\raise.1pt\hbox{\color{white}$\textstyle\int$}}}
\newcommand{\GammaS}{{\Gamma\kern-1.5pt\mathscr S}}
\newcommand{\mySp}{{\mathscr S\kern-2ptp}}
\newcommand{\mathscrGr}{{\mathscr G\kern-1.25ptr}}


\DeclareMathOperator{\pt}{pt}
\DeclareMathOperator{\lex}{lex}
\DeclareMathOperator{\ex}{ex}
\DeclareMathOperator{\st}{st}
\DeclareMathOperator{\fib}{fib}
\DeclareMathOperator{\cofib}{cofib}
\newcommand{\ev}{\mathrm{ev}}
\let\smashp=\wedge

\newcommand{\projGgl}[1]{\textup{$#1$-gl~proj}}
\newcommand{\injGgl}[1]{\textup{$#1$-gl~inj}}
\newcommand{\FinOrbSets}[1]{{\mathfrak{F}^{\mathrm{gl},+}_{#1}}}
\newcommand{\combFinOrbSets}[1]{{\tcatUn{}\mathfrak{F}^{\mathrm{gl},+}_{#1}}}

\newcommand{\Plex}{{\textup{$P$-lex}}}
\newcommand{\Pex}{{\textup{$P$-ex}}}
\newcommand{\Pst}{{\textup{$P$-st}}}
\newcommand{\Pprod}{{\textup{$P$-$\times$}}}
\newcommand{\Pbiprod}{{\textup{$P$-$\oplus$}}}
\newcommand{\Poplus}{{\textup{$P$-$\oplus$}}}
\newcommand{\Pcoprod}{{\textup{$P$-$\sqcup$}}}

\newcommand{\iso}{\xrightarrow{\;\smash{\raisebox{-0.5ex}{\ensuremath{\scriptstyle\sim}}}\;}}

\newcommand{\pullbacksign}{\hspace{-0.325ex}\tikz[baseline=(pb.base)]{\draw[line width=rule_thickness, line cap=round] (0,0) ++ (-2.45ex,0.45ex) -- ++ (1ex,0ex) -- ++ (0ex,1ex);\node (pb) at (0,0) {\phantom{x}};}}
\newcommand{\pushoutsign}{\hspace{0.2ex}\tikz[baseline=(po.base)]{\draw (0,0) ++ (2.45ex,-1.45ex) -- ++(0ex,1ex) -- ++ (1ex,0ex);\node (po) at (0,0) {\phantom{x}};}}
\tikzcdset{
	pullback/.style = {dash, phantom, "\vphantom{x}\smash{\pullbacksign}", start anchor=center, end anchor=center},
	pushout/.style = {dash, phantom, "\vphantom{x}\smash{\pushoutsign}", start anchor=center, end anchor=center}
}

\newcommand\noloc{%
	\nobreak
	\mspace{6mu plus 1mu}
	{:}
	\nonscript\mkern-\thinmuskip
	\mathpunct{}
	\mspace{2mu}
}

\makeatletter
\newif\ifhe@d
\def\@tocline#1#2#3#4#5#6#7{\begingroup\relax%
	\he@dfalse\ifcase#1\relax\or\he@dtrue\fi%
	\ifnum#1<3\ifhe@d\else\hspace{2em}\hspace{-2pt}\fi#6\hfill#7\par\fi\endgroup}
\makeatother

\begin{document}
\title[Parametrized stability and the univ.~property of global spectra]{Parametrized stability\\ and the universal property of global spectra}
\author{Bastiaan Cnossen}
\address{B.C.: Mathematisches Institut, Rheinische Friedrich-Wilhelms-Universit\"at Bonn, Endenicher Allee 60, 53115 Bonn, Germany}
\curraddr{\scshape Fakultät für Mathematik, Universität Regensburg, 93040 Regensburg, Germany}
\author{Tobias Lenz}
\address{T.L.: Mathematical Institute, University of Utrecht, Budapestlaan 6, 3584 CD Utrecht, The Netherlands}
\curraddr{\scshape Mathematisches Institut, Rheinische Friedrich-Wilhelms-Universit\"at Bonn, Endenicher Allee 60, 53115 Bonn, Germany}
\author{Sil Linskens}
\address{S.L.: Mathematisches Institut, Rheinische Friedrich-Wilhelms-Universit\"at Bonn, Endenicher Allee 60, 53115 Bonn, Germany}
\curraddr{\scshape Fakultät für Mathematik, Universität Regensburg, 93040 Regensburg, Germany}
\keywords{Global homotopy theory, parametrized higher category theory, genuine semiadditivity}
\subjclass[2020]{55U35, 55P91}
\maketitle

\begin{abstract}
	We develop a framework of parametrized semiadditivity and stability with respect to so-called atomic orbital subcategories of an indexing $\infty$-category $T$, extending work of Nardin. Specializing this framework, we introduce \emph{global $\infty$-categories} and the notions of {equivariant semiadditivity} and {stability},
	yielding a higher categorical version of the notion of a Mackey 2-functor studied by Balmer-Dell'Ambrogio. As our main result, we identify the free presentable equivariantly stable global $\infty$-category with a natural global $\infty$-category of global spectra for finite groups, in the sense of Schwede and Hausmann.
\end{abstract}

\tableofcontents

\section{Introduction}

Equivariant homotopy theory combines classical homotopy theory with ideas from representation theory to study geometric objects with symmetries. Many constructions from homotopy theory carry over to the equivariant setting, leading for example to equivariant analogues of important cohomology theories like topological $K$-theory and (stable) bordism. The resulting tools and methods have been successfully applied to various other branches of mathematics, for example in the proof of the \textit{Atiyah-Segal Completion Theorem} \cite{AtiyahSegal1969equivariant}, Carlson's proofs of the \textit{Segal} \cite{carlsson1984SegalConjecture} and \textit{Sullivan Conjecture} \cite{carlsson1991SullivanConjecture}, or in the resolution of the Kervaire invariant one problem by Hill, Hopkins, and Ravenel \cite{HHR2016Kervaire}.

While one can study equivariant homotopy theory for a single group $G$ at a time, there are many equivariant phenomena which occur uniformly and compatibly in large families of groups, such as the families of all finite groups or all compact Lie groups. The study of such phenomena is known as \textit{global homotopy theory}  \cites{gepnerhenriques2007orbispaces, bohmann2014global, schwede2018global, hausmann-global, g-global, LNP}. This framework has led to improved understanding of a variety of equivariant phenomena, where previously a direct description for each individual group was either much more opaque or not available, for example for equivariant stable bordism and equivariant formal group laws \cite{hausmann2022global}. The study of global homotopy theory moreover admits connections to the geometry of stacks \cites{gepnerhenriques2007orbispaces, juran2020orbifolds, pardon2020orbifold, sati2020proper}.

Just like non-equivariant and $G$-equivariant homotopy theory, global homotopy theory comes in various different flavours: \emph{unstable global homotopy theory} studies \emph{global spaces} \cite{gepnerhenriques2007orbispaces} while \emph{stable global homotopy theory} is concerned with so-called \emph{global spectra} \cite{schwede2018global}; in-between, one can also consider a variety of algebraic structures on global spaces \cite{barrero2021}, with the most prominent example being \emph{ultra-commutative monoids} or the equivalent notion of \emph{special global $\Gamma$-spaces} \cite{g-global}. The goal of this article is to understand the relationship between these different variants.

\subsubsection*{Stability and equivariant semiadditivity}
Classically the passage from the homotopy theory of spaces to the homotopy theory of spectra is known as \textit{stabilization}. More generally, a homotopy theory $\Cc$ (e.g.\ given in the form of a model category or an $\infty$-category) is said to be \textit{stable} if it is pointed and the suspension-loop adjunction in $\Cc$ is an equivalence. Stability of a homotopy theory leads to a lot of algebraic structure: for example, its homotopy category $\mathrm{Ho}(\Cc)$ is additive, and it canonically admits the structure of a triangulated category. If $\Cc$ is not yet (known to be) stable, there is a universal way to stabilize it by forming a homotopy theory $\Sp(\Cc)$ of suitable \textit{spectrum objects} in $\Cc$.

Although one may apply this stabilization procedure to the homotopy theory of global spaces, the resulting theory is in many ways not sufficient, and in particular does not yield the homotopy theory of global spectra. This issue in fact already arises in the case of equivariant homotopy theory for a fixed group $G$: applying the general stabilization procedure to the homotopy theory of $G$-spaces for some finite group $G$ only results in the homotopy theory of \emph{naive $G$-spectra}, which for example does not support a good theory of duality. 	Instead, one defines the homotopy theory of \emph{genuine $G$-spectra} by stabilizing more generally with respect to the \textit{representation spheres} $S^V$ for each finite-dimensional $G$-representation $V$.
This genuine stabilization leads to a much richer algebraic structure on the associated homotopy category than naive stabilization: for example, the homotopy category of genuine $G$-spectra admits a canonical enrichment in \textit{Mackey functors}, refining the enrichment in abelian groups.

Non-equivariantly, the algebraic structure on hom sets in a stable homotopy theory comes from semiadditivity: finite coproducts agree with finite products. In a similar way, the Mackey enrichment of the homotopy theory of genuine $G$-spectra comes from a form of \textit{equivariant semiadditivity}.
To explain what this means, consider a subgroup $H$ of the finite group $G$; the restriction functor from genuine $G$-spectra to genuine $H$-spectra then admits both a left adjoint $\ind^G_H$ and a right adjoint $\coind^G_H$, called \textit{induction} and \textit{coinduction}, respectively. From the perspective of this article, the main feature of genuine equivariant spectra is that there is a natural equivalence $\ind^G_H \simeq \coind^G_H$ between these two functors, called the \textit{Wirthmüller isomorphism} \cite{wirthmuller1974equivariant}. If we think of $\ind^G_H$ as a `$G$-coproduct over $G/H$' and $\coind^G_H$ as a `$G$-product over $G/H$,' this may be seen as an equivariant analogue of the usual notion of semiadditivity. These Wirthmüller isomorphisms are then precisely what gives rise to the transfer maps in the aforementioned Mackey enrichment.

\subsubsection*{Parametrized higher category theory}
In light of the above, it is natural to ask whether one can modify the stabilization procedure for $G$-spaces in a way that additionally enforces equivariant semiadditivity, and, if so, whether this will result in the homotopy theory of genuine $G$-spectra. One subtlety with this question is that the Wirthmüller isomorphisms described above do not only depend on the homotopy theory of genuine $G$-spectra but also on the homotopy theories of genuine $H$-spectra for every subgroup $H$ of $G$, together with all the restriction functors relating them. Based on suggestions by Mike Hill in 2012, Clark Barwick, Emanuele Dotto, Saul Glasman, Denis Nardin, and Jay Shah \cite{exposeI} began developing the theory of \textit{$G$-$\infty$-categories} for a finite group $G$, in which these ideas could be made precise. More generally, given an $\infty$-category $T$, they introduced the notion of a \textit{$T$-$\infty$-category}, thought of as an family of $\infty$-categories parametrized by $T$, and showed that many concepts and foundational results from the theory of $\infty$-categories have analogues in this parametrized setting. Using this framework, Nardin \cite{nardin2016exposeIV} worked out a notion of \textit{parametrized semiadditivity} which neatly recovers the equivariant Wirthmüller isomorphisms described earlier. He further sketched a proof that the $G$-$\infty$-category of genuine $G$-spectra is obtained from the $G$-$\infty$-category of $G$-spaces by enforcing both stability and parametrized semiadditivity.

\subsection{Global \texorpdfstring{\for{toc}{$\infty$}\except{toc}{$\bm\infty$}}{∞}-categories}
The goal of this article is to develop an analogue of the above story, and in particular of Nardin's result, in the global setting. A distinguishing feature that was not present in the equivariant setting is the appearance of \textit{inflation functors}: restriction functors along \textit{surjective} group homomorphisms $\alpha \colon H \twoheadrightarrow G$. This extra structure leads to the notion of a \textit{global $\infty$-category}. Roughly speaking, such an object consists of
\begin{enumerate}[(i)]
	\item an $\infty$-category $\Cc(G)$ for every finite group $G$;
	\item a restriction functor $\alpha^*\colon \Cc(G) \to \Cc(H)$ for every homomorphism $\alpha\colon H \to G$;
	\item higher structure which in particular witnesses that
	conjugations act as the identity.
\end{enumerate}

Examples of global $\infty$-categories abound in representation theory, and more generally equivariant mathematics; here we only mention categories of representations, genuine equivariant spectra, and equivariant Kasparov categories, referring the reader to \cite{balmerAmbrogio_Mackey} for a detailed discussion of these examples. In this paper, on the other hand, we will be mainly interested in examples coming from \textit{$G$-global homotopy theory} in the sense of \cite{g-global}; namely, we consider:
\begin{itemize}
	\item the global $\infty$-category $\ul{\S}^\textup{gl}$ of \textit{global spaces}, given at a group $G$ by $G$-global spaces (see \Cref{subsec:GlobalCatOfGlobalSpaces} for a precise definition);
	\item the global $\infty$-category $\ul\GammaS^\textup{gl, spc}$ of \textit{special global $\Gamma$-spaces}, given at a group $G$ by special $G$-global $\Gamma$-spaces (see \Cref{subsec:GGlobalGammaSpaces} for a precise definition);
	\item the global $\infty$-category $\ul\mySp^\textup{gl}$ of \textit{global spectra}, given at a group $G$ by $G$-global spectra (see \Cref{subsec:GGlobalSpectra} for a precise definition).
\end{itemize}

As the main results of this paper we establish universal properties for these three global $\infty$-categories:

\subsubsection*{Presentability}
A global $\infty$-category $\Cc$ is said to be \textit{presentable} if $\Cc(G)$ is presentable for all $G$ and the restriction functors $\alpha^*\colon \Cc(G) \to \Cc(H)$ admit left and right adjoints $a_!$ and $\alpha_*$ for all $\alpha\colon H \to G$ satisfying a base change condition, which may be thought of as a categorified version of the Mackey double coset formula. We refer to \Cref{subsec:presentableTcategories} for a precise definition. The universal example is provided by $G$-global homotopy theory:

\begin{introthm}[Universal property of global spaces, \ref{thm:global-spaces}] \label{introthm:universal-prop-spaces}
	The global $\infty$-category $\ul\S^\textup{gl}$ is presentable. For every presentable global $\infty$-category $\mathcal D$, evaluation at the point $* \in \ul\S^{\textup{gl}}(1)$ induces an equivalence
	\begin{equation*}
		\ul{\Fun}^{\textup{L}}_{\Glo}(\ul\S^\textup{gl},\mathcal D)\to\mathcal D
	\end{equation*}
	of global $\infty$-categories. Put differently, $\ul\S^\textup{gl}$ is the free presentable global $\infty$-category on one generator $*$.
\end{introthm}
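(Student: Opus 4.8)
\textit{Strategy.} This is the global, parametrized incarnation of the classical fact that $\S\simeq\PSh(\pt)$ is the free presentable $\infty$-category on one generator, which itself rests on the universal property $\Fun^{\textup{L}}(\PSh(\Cc),\mathcal{D})\simeq\Fun(\Cc,\mathcal{D})$ of presheaf $\infty$-categories. Accordingly, I would prove it in three steps. \emph{Step 1:} identify $\ul\S^\textup{gl}$ with the global $\infty$-category $\ul{\PSh}_{\Glo}(\ul{*})$ of parametrized presheaves on the terminal global $\infty$-category $\ul{*}$ --- equivalently, with the free presentable global $\infty$-category on a single object living over the trivial group $1$, noting that $\ul{*}$ is also the free global $\infty$-category on one object over $1$. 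In particular $\ul\S^\textup{gl}$ is presentable. \emph{Step 2:} invoke the universal property of parametrized presheaf categories --- part of the theory of presentable global $\infty$-categories developed in the body of the paper (see \Cref{subsec:presentableTcategories}), and also available from standard parametrized higher category theory: $\ul{\PSh}_{\Glo}(\Cc)$ is presentable, and for every presentable global $\infty$-category $\mathcal{D}$ restriction along the parametrized Yoneda embedding induces an equivalence
\[
	\ul{\Fun}^{\textup{L}}_{\Glo}\bigl(\ul{\PSh}_{\Glo}(\Cc),\mathcal{D}\bigr)\iso\ul{\Fun}_{\Glo}(\Cc,\mathcal{D}),
\]
where the superscript $\textup{L}$ denotes the parametrized full subcategory of functors preserving all $\Glo$-colimits, i.e.\ fiberwise colimits together with the left adjoints to the restriction and inflation functors; for presentable targets these are exactly the functors admitting a parametrized right adjoint, which is the sense of $\textup{L}$ in the theorem. \emph{Step 3:} specialize to $\Cc=\ul{*}$ and unwind.

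\textit{Unwinding Step 3.} The key point is $\ul{\Fun}_{\Glo}(\ul{*},\mathcal{D})\simeq\mathcal{D}$. Evaluating at a finite group $G$: a $\Glo_{/G}$-functor from the terminal $\Glo_{/G}$-$\infty$-category into $\mathcal{D}|_{\Glo_{/G}}$ is the same datum as a point of $\lim_{(\Glo_{/G})^{\op}}\mathcal{D}|_{\Glo_{/G}}$ --- a coherent choice of object in each fiber compatible with all restrictions --- and since the slice $\Glo_{/G}$ has a terminal object, namely $\id_G$, this limit is simply $\mathcal{D}(G)$, naturally in $G$. Combining with Steps 1--2 yields $\ul{\Fun}^{\textup{L}}_{\Glo}(\ul\S^\textup{gl},\mathcal{D})\simeq\mathcal{D}$. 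Finally, this composite equivalence is exactly $\ev_*$: under the identification $\ul\S^\textup{gl}\simeq\ul{\PSh}_{\Glo}(\ul{*})$ the object $*\in\ul\S^\textup{gl}(1)$ is the image of the (unique, over $1$) object of $\ul{*}$ under the parametrized Yoneda embedding, so ``restrict along Yoneda'' is ``evaluate at $*$'', while the identification $\ul{\Fun}_{\Glo}(\ul{*},\mathcal{D})\simeq\mathcal{D}$ is evaluation at that same object; composing gives $\ev_*$.

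\textit{Main obstacle.} The real work is Step 1: matching the concrete model $\ul\S^\textup{gl}$ --- whose value at $G$ is the $\infty$-category of $G$-global spaces in the sense of \cite{g-global}, with the evident restriction functors --- with the abstract parametrized presheaf $\infty$-category $\ul{\PSh}_{\Glo}(\ul{*})$, whose value at $G$ is $\PSh(\Glo_{/G})$, \emph{compatibly with all restriction and inflation functors}. This requires a presheaf presentation of $G$-global homotopy theory over a suitable global orbit category together with an identification of the indexing diagrams, and is the business of \Cref{subsec:GlobalCatOfGlobalSpaces}. A related subtlety --- and precisely the reason the general framework is set up relative to \emph{atomic orbital} subcategories --- is that $\Glo$ itself is not orbital, because of the inflation functors, so one must check that the definition of a presentable global $\infty$-category (fiberwise presentability together with left and right adjoints to all $\alpha^*$ satisfying base change) agrees with ``fiberwise presentable and $\Glo$-cocomplete'', which is what lets the parametrized presheaf machinery apply; one also needs that $\ul{\Fun}^{\textup{L}}_{\Glo}(\ul\S^\textup{gl},\mathcal{D})$ is again a global $\infty$-category, which comes from the internal-hom structure on $\PrLT$.
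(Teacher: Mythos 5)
Your three-step strategy matches the paper's own: \Cref{thm:global-spaces-comparison} carries out the explicit identification $\ul\S^\textup{gl}\simeq\ul\Spc_{\Glo}\simeq\ul{\PSh}_{\Glo}(\ul{1})$ (via the $G$-global Elmendorf theorem, the combinatorial model $\rOgl_G$ of the global orbit category with the strictly $2$-natural comparison $\Psi$ to enriched presheaves, and the identification $\gamma\colon\rOgl_G\simeq\sGlo_{/G}$ together with \Cref{prop:comparison-slices}), after which presentability and the universal property follow formally from \Cref{cor:Universal_Property_T_Spaces}, exactly as in your Steps 2--3. One small correction to your aside: $\Glo$'s failure to be orbital is irrelevant to presentability --- the Martini--Wolf theory of presentable $T$-$\infty$-categories (and hence \Cref{prop:Universal_Property_Presheaves}) applies to arbitrary $T$, and orbitality enters only for the semiadditivity/stability formalism of the later sections.
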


We will in fact provide a stronger version of \Cref{introthm:universal-prop-spaces} based on a notion of \textit{global cocompleteness}, see \Cref{subsec:ParametrizedAdjunctions}. Our proof of this result can be regarded as a highly coherent version of Schwede's global Elmendorf theorem \cite{schwede_orbispaces_2020}.

\subsubsection*{Equivariant semiadditivity}
Following ideas of \cite{nardin2016exposeIV}, we introduce a notion of \textit{equivariant semiadditivity} in our context; namely, a global $\infty$-category $\Cc$ is equivariantly semiadditive if the following conditions are satisfied:
\begin{itemize}
	\item
	\textit{Fiberwise semiadditivity:}
	The $\infty$-category $\Cc(G)$ is semiadditive for every $G$ and the functor $\alpha^*\colon \Cc(G) \to \Cc(H)$ preserves finite biproducts for every $\alpha \colon H \to G$;
	\item
	\textit{Ambidexterity:}
	For every \emph{injective} homomorphism $i\colon H \to G$, the restriction functor $i^*\colon \Cc(G) \to \Cc(H)$ admits a both left adjoint $i_!$ and a right adjoint $i_*$ satisfying a base change condition as before, and a certain \textit{norm map} $\Nm_{i}\colon i_! \to i_*$ is a natural equivalence between these two adjoints.
\end{itemize}
A 2-categorical analogue of this definition was studied under the name \textit{Mackey 2-functor} by Balmer-Dell'Ambrogio \cite{balmerAmbrogio_Mackey}. The examples of representations, equivariant spectra, and Kasparov categories referred to above are all equivariantly semiadditive -- for example, in the case of equivariant spectra, ambidexterity precisely comes from the Wirthmüller isomorphism. Once again, $G$-global homotopy theory provides the universal example in this setting:

\begin{introthm}[Universal property of global $\Gamma$-spaces, \ref{thm:global-gamma}]
	\label{introthm:universal-prop-gamma}
	The global $\infty$-category $\ul\GammaS^\textup{gl, spc}$ is presentable and equivariantly semiadditive. For every presentable equivariantly semiadditive global $\infty$-category $\Dd$, evaluation at the free special global $\Gamma$-space $\mathbb P(*)$ induces an equivalence
	\[
	\ul\Fun^\textup{L}_{\Glo}(\ul\GammaS^\textup{gl, spc},\mathcal D)\iso \mathcal D
	\]
	of global $\infty$-categories. Put differently, $\ul{\GammaS}^\textup{gl, spc}$ is the free presentable equivariantly semiadditive global $\infty$-category on one generator $\mathbb P(*)$.
\end{introthm}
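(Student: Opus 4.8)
The plan is to deduce \Cref{introthm:universal-prop-gamma} from \Cref{introthm:universal-prop-spaces} together with the general theory of parametrized semiadditivity developed in this paper, specialized to the indexing $\infty$-category $T=\Glo$ and the atomic orbital subcategory $P\subseteq\Glo$ spanned by the injective homomorphisms, so that $P$-semiadditivity of a global $\infty$-category is precisely equivariant semiadditivity in the sense above. The first step is formal: the inclusion of presentable $P$-semiadditive global $\infty$-categories into all presentable global $\infty$-categories is a reflective (indeed smashing) localization, so it admits a left adjoint $L$, the \emph{$P$-semiadditive completion}, which is computed by the formation of $P$-commutative monoids. Applying $L$ to $\ul\S^\textup{gl}$ and using \Cref{introthm:universal-prop-spaces}, the free presentable equivariantly semiadditive global $\infty$-category on one generator is $\ulPCMon(\ul\S^\textup{gl})$, with generator the image of $*\in\ul\S^\textup{gl}(1)$ under the canonical functor $\ul\S^\textup{gl}\to\ulPCMon(\ul\S^\textup{gl})$; unwinding, this image is the free $P$-commutative monoid $\mathbb P(*)$ on the point. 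Hence the theorem reduces to producing an equivalence of global $\infty$-categories $\ulPCMon(\ul\S^\textup{gl})\iso\ul\GammaS^\textup{gl, spc}$ carrying $\mathbb P(*)$ to $\mathbb P(*)$; in particular, presentability and equivariant semiadditivity of $\ul\GammaS^\textup{gl, spc}$ then become consequences of the corresponding properties of $\ulPCMon$.

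The universal property of $\ulPCMon(\Cc)$ for a presentable global $\infty$-category $\Cc$ — that it computes the $P$-semiadditive completion of $\Cc$, is itself presentable and equivariantly semiadditive (fiberwise semiadditive, with the norm maps $\Nm_i\colon i_!\to i_*$ for injective $i$ being equivalences), and that the canonical functor $\Cc\to\ulPCMon(\Cc)$ enjoys the expected universal property — is part of the general parametrized machinery and I treat it as established earlier. The genuine work is therefore the identification of $\ulPCMon(\ul\S^\textup{gl})$ with the global $\infty$-category of special global $\Gamma$-spaces. I would carry this out level-wise: at a fixed finite group $G$, a special $G$-global $\Gamma$-space is a functor from finite pointed sets to $G$-global spaces satisfying a Segal-type specialness condition, whereas the value at $G$ of $\ulPCMon(\ul\S^\textup{gl})$ is a category of functors on a parametrized category of finite pointed $P$-sets into $\ul\S^\textup{gl}$ that send parametrized coproducts to products. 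The crux is to show that these two models coincide, i.e.\ that testing the parametrized Segal condition against the \emph{non-equivariant} category of finite pointed sets already detects all the equivariant norm equivalences. For this I expect to reduce to the analogous, previously available fact purely within $G$-global homotopy theory — that special $G$-global $\Gamma$-spaces carry the universal $G$-global transfer structure on one generator — as developed in \cite{g-global}. One then promotes the level-wise equivalences to an equivalence of global $\infty$-categories by checking naturality in restriction, inflation, and conjugation functors — this is where the global structure, rather than a mere family of $G$-local statements, is essential — and identifies the generators using that $\mathbb P(*)$ is the free object on the point on both sides.

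The main obstacle is precisely this level-wise identification, and within it the point that the ordinary category of finite pointed sets is already rich enough to encode the parametrized commutative-monoid structure together with all transfers along injective homomorphisms. In the fixed-group equivariant setting one would a priori expect to need finite pointed $G$-sets in order to see every equivariant transfer; that the plain, non-equivariant $\Gamma$ suffices in the $G$-global setting is a genuine feature of $G$-global homotopy theory, and transporting it — together with a careful matching of the specialness condition of \cite{g-global} with the parametrized Segal condition associated to $P$ — is the technical heart of the proof. Everything else is either formal (the reflective localization and its reflector, the universal property of $\ulPCMon(\Cc)$) or an appeal to results established earlier in this article or in \cite{g-global}.
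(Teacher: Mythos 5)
Your high-level reduction matches the paper's: by the general semiadditivity machinery (\Cref{cor:PresentableUniversalPropertyCMon}) and \Cref{introthm:universal-prop-spaces}, the theorem follows once one produces an equivalence $\ulPCMon(\ul\S^\textup{gl})\iso\ul\GammaS^\textup{gl, spc}$ of global $\infty$-categories compatible with the free functors, and the formal steps you describe (reflective localization, $\ulPCMon$ as $P$-semiadditive completion) are exactly what the paper uses. Two things in your plan would not go through as written. First, there is no ``previously available fact'' in \cite{g-global} that special $G$-global $\Gamma$-spaces carry the universal $G$-global transfer structure on one generator; the results drawn from \cite{g-global} are purely model-categorical (existence of the relevant model structures, Quillen equivalences, the $G$-global Elmendorf theorem for $\Gamma$-spaces as in \Cref{prop:elmendorf-Gamma}). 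The universal-property statement at the fixed-$G$ level \emph{is} the fiberwise content of what the paper establishes, so trying to reduce to it would be circular. Second, ``carry this out level-wise, then promote by checking naturality in restriction/inflation/conjugation'' understates the difficulty to the point of being a gap: once you are working in an $\infty$-categorical (or even $(2,1)$-categorical) setting, a compatible family of fiberwise equivalences requires all higher coherence data and is not meaningfully easier than the global statement itself.

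The paper sidesteps this by a different decomposition you do not attempt: it first proves an equivalence of \emph{unrestricted} categories $\ul\GammaS^\textup{gl}\simeq\ul\Fun_{\Glo}(\ul{\mathbb F}^{\Orb}_{\Glo,*},\ul\Spc_{\Glo})$ (\Cref{thm:Gamma-presheaves}), working at the level of strict $(2,1)$-categorical models with a strictly $2$-natural comparison map (\Cref{prop:Psi-Gamma-equivalence}, \Cref{prop:Psi-Gamma-natural}, \Cref{prop:Theta-grothendieck}), so that all coherence is carried automatically by the rigid presentation; and only then shows this equivalence restricts on both sides to $\ul\GammaS^\textup{gl, spc}\simeq\ul\CMon^{\Orb}(\ul\Spc_{\Glo})$ by matching the Segal specialness condition (\Cref{prop:special-rephrased}) with the parametrized Segal criterion for $P$-semiadditive functors into $\ul{\Spc}_T$ (\Cref{prop:parametrized-segal-special-map}, \Cref{rmk:otherlevels}). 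Your observation that the non-equivariant $\Gamma$ is already rich enough is correct in spirit, but the reason is that the total space $\int(\ul{\mathbb F}^{\Orb}_{\Glo,*}\times\ul G)$ is equivalent to the $G$-global orbit category $\OGglGamma$ from the Elmendorf theorem, whose objects are the $\Gamma_{H,S,\phi}$'s; this is not a statement that can be read off from comparing $\Gamma$ to $\ul{\mathbb F}^{\Orb}_{\Glo,*}$ fiberwise, and you would need to supply the comparison of indexing $(2,1)$-categories (the role played by \Cref{prop:delta-equivalence}) to make your crux precise.
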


\subsubsection*{Equivariant stability}
A global $\infty$-category $\Cc$ is called \textit{equivariantly stable} if it is equivariantly semiadditive and \textit{fiberwise stable}, meaning that the $\infty$-category $\Cc(G)$ is stable for every finite group $G$ and the restriction functors $\alpha^*\colon \Cc(G) \to \Cc(H)$ are exact for all $\alpha \colon H \to G$.

\begin{introthm}[Universal property of global spectra, \ref{thm:global-spectra}] \label{introthm:universal-prop-spectra}
	The global $\infty$-category $\ul\mySp^\textup{gl}$ is presentable and equivariantly stable. For every presentable equivariantly stable global $\infty$-category $\Dd$, evaluation at the global sphere spectrum $\mathbb S$ defines an equivalence
	\begin{equation*}
		\ul\Fun^\textup{L}_{\Glo}(\ul\mySp^\textup{gl},\mathcal D)\xrightarrow{\;\simeq\;}\mathcal D
	\end{equation*}
	of global $\infty$-categories. Put differently, $\ul\mySp^\textup{gl}$ is the free presentable equivariantly stable global $\infty$-category on one generator $\mathbb S$.
\end{introthm}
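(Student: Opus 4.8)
The plan is to deduce the theorem from the universal property of global $\Gamma$-spaces (\Cref{introthm:universal-prop-gamma}) together with the theory of parametrized stabilization developed above. Since equivariant stability is by definition equivariant semiadditivity together with fiberwise stability, the fully faithful inclusion of presentable equivariantly stable global $\infty$-categories into presentable equivariantly semiadditive ones admits a left adjoint, computed fiberwise by ordinary stabilization: it sends a presentable equivariantly semiadditive global $\infty$-category $\Cc$ to the global $\infty$-category $\ul{\Sp}(\Cc)$ with $\ul{\Sp}(\Cc)(G) = \Sp(\Cc(G))$ and restriction functors $\Sp(\alpha^*)$. These are again colimit-preserving, so the adjoints $\alpha_!,\alpha_*$ and their base change squares are inherited and the norm maps $\Nm_i$ remain equivalences, whence $\ul{\Sp}(\Cc)$ is presentable and equivariantly semiadditive, hence equivariantly stable. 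Being a left adjoint to a fully faithful inclusion, $\ul{\Sp}(-)$ is moreover a localization, so for every presentable equivariantly stable $\Dd$ restriction along the unit $\Cc \to \ul{\Sp}(\Cc)$ is an equivalence $\ul\Fun^\textup{L}_{\Glo}(\ul{\Sp}(\Cc),\Dd) \iso \ul\Fun^\textup{L}_{\Glo}(\Cc,\Dd)$ of global $\infty$-categories. Combined with \Cref{introthm:universal-prop-gamma}, this shows that the free presentable equivariantly stable global $\infty$-category on one generator is $\ul{\Sp}(\ul\GammaS^\textup{gl, spc})$, with generator the image $\Sigma^\infty\mathbb P(*)$ of the generator $\mathbb P(*)$ of \Cref{introthm:universal-prop-gamma}.

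It remains to identify $\ul{\Sp}(\ul\GammaS^\textup{gl, spc})$ with $\ul\mySp^\textup{gl}$, matching $\Sigma^\infty\mathbb P(*)$ with the global sphere spectrum $\mathbb S$. I would first construct a colimit-preserving functor $\Sigma^\infty\colon \ul\GammaS^\textup{gl, spc} \to \ul\mySp^\textup{gl}$ of global $\infty$-categories, namely the $g$-global suspension-spectrum functor, realized as the derived functor of a left Quillen functor between the relevant $g$-global model categories and carrying $\mathbb P(*)$ to $\mathbb S$ (a global form of the Barratt--Priddy--Quillen theorem, available in the $g$-global literature \cite{g-global}). Because $\ul\mySp^\textup{gl}$ is fiberwise stable, the universal property of fiberwise stabilization extends this essentially uniquely to a colimit-preserving functor $\ul{\Sp}(\ul\GammaS^\textup{gl, spc}) \to \ul\mySp^\textup{gl}$; to see it is an equivalence of global $\infty$-categories it suffices to check this fiberwise, i.e.\ that for each finite group $G$ the induced functor from the stabilization of special $G$-global $\Gamma$-spaces to $G$-global spectra is an equivalence of $\infty$-categories, which is a reformulation of the known comparison in $g$-global homotopy theory \cite{g-global}. (As a byproduct, presentability and equivariant stability of $\ul\mySp^\textup{gl}$ are imported through this equivalence; alternatively, fiberwise stability and exactness of the restriction functors are immediate from the construction, while presentability and the $G$-global Wirthmüller isomorphisms can be verified directly.)

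With the comparison established, the theorem follows formally. For a presentable equivariantly stable $\Dd$ one obtains equivalences of global $\infty$-categories
\[
\ul\Fun^\textup{L}_{\Glo}(\ul\mySp^\textup{gl},\Dd) \;\iso\; \ul\Fun^\textup{L}_{\Glo}(\ul{\Sp}(\ul\GammaS^\textup{gl, spc}),\Dd) \;\iso\; \ul\Fun^\textup{L}_{\Glo}(\ul\GammaS^\textup{gl, spc},\Dd) \;\iso\; \Dd ,
\]
where the first comes from the comparison, the second from the universal property of fiberwise stabilization (using that $\Dd$ is fiberwise stable), and the third from \Cref{introthm:universal-prop-gamma} (using that $\Dd$ is equivariantly semiadditive). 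Tracing through the identifications, the composite is evaluation at the image of $\mathbb P(*)$, which is the global sphere spectrum $\mathbb S$; this is the asserted equivalence.

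The main obstacle is the fiberwise comparison in the second step: upgrading the known $G$-by-$G$ relationship between special $G$-global $\Gamma$-spaces and $G$-global spectra to a single equivalence $\ul{\Sp}(\ul\GammaS^\textup{gl, spc}) \simeq \ul\mySp^\textup{gl}$ of global $\infty$-categories -- one compatible with all restriction and inflation functors simultaneously -- forces one to match up the point-set models underlying $\ul\GammaS^\textup{gl, spc}$ and $\ul\mySp^\textup{gl}$ and to check that the comparison functor is a stable equivalence naturally in the full $2$-variable functoriality in groups. Everything else in the argument is formal, given the parametrized stabilization machinery and \Cref{introthm:universal-prop-gamma}.
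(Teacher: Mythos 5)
Your proposal follows the same overall strategy as the paper: use the universal property of special global $\Gamma$-spaces (\Cref{introthm:universal-prop-gamma}) and the fact that fiberwise stabilization $\ul{\Sp}^{\fib}$ is left adjoint to the inclusion of presentable $P$-stable into presentable $P$-semiadditive $T$-$\infty$-categories (\Cref{lem:SpPresentableAdjunction}), so that the free presentable equivariantly stable global $\infty$-category on one generator is $\ul{\Sp}^{\fib}(\ul{\GammaS}^\text{gl, spc})$; then identify this with $\ul\mySp^\text{gl}$. You also correctly identify the genuine content: upgrading the $G$-by-$G$ delooping comparison to a single global functor, and the paper does this precisely via the strictly $2$-natural point-set functor $\mathcal E^\otimes$ on the $\mathcal I$-model (\Cref{thm:spectra-global-model-structure}ff.).

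One place where you understate the work is in asserting that the fiberwise check "is a reformulation of the known comparison in $g$-global homotopy theory." The known $G$-global delooping theorem (\Cref{thm:g-global-delooping}) identifies \emph{very special} $G$-global $\Gamma$-spaces with \emph{connective} $G$-global spectra; to convert this into the statement that $\cat{R}\Phi^\otimes\colon \mySp^\text{gl}_G\to\ul{\GammaS}^\text{gl, spc}_{\mathcal I,*}(G)$ exhibits $G$-global spectra as the stabilization of special $\Gamma$-spaces requires two further ideas: (i) the general fact that for a right complete $t$-structure the connective part has the whole stable $\infty$-category as its stabilization (\Cref{lemma:stabilization-of-connective}), applied to the $t$-structure constructed in \Cref{thm:g-global-t-structure}, and (ii) the observation that when the source is already known to be fiberwise stable, the fiberwise stabilization property can be verified pointwise with no further coherence (the unnumbered lemma following \Cref{lemma:stabilization-of-connective}). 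Your sketch also contains a small overreach: after constructing a global functor $\ul{\Sp}^{\fib}(\ul{\GammaS}^\text{gl, spc})\to\ul\mySp^\text{gl}$ via the universal property of fiberwise stabilization, checking it is an equivalence is indeed pointwise, so there is no residual naturality to verify at that stage; the place where the $2$-variable functoriality genuinely enters is in producing a colimit-preserving \emph{global} functor in the first place, which, as you note, must be set up at the point-set model level. Modulo these points, the argument is sound and coincides with the paper's.
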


Combining this with \Cref{introthm:universal-prop-spaces}, this makes precise that $\ul\mySp^\textup{gl}$ is obtained from $\ul\S^\textup{gl}$ by universally stabilizing and enforcing Wirthmüller isomorphisms, answering the question from the beginning. In particular, global $\infty$-categories provide a natural and convenient home for studying global homotopy theory. Conversely, once one is interested in global $\infty$-categories, global (and more generally $G$-global) homotopy theory appears naturally in the form of the universal examples. For example one can show using the above that for every equivariantly stable global $\infty$-category $\Cc$, the $\infty$-category $\Cc(G)$ is canonically enriched over $G$-global spectra, with strong compatibilities as the group $G$ varies.

\subsection{Parametrized higher category theory}

In setting up the formalism of equivariant semiadditivity and stability, we work in the more general context of $T$-$\infty$-categories for an arbitrary $\infty$-category $T$, in the sense of \cite{exposeI}. Global $\infty$-categories arise as the special case where $T$ is the $(2,1)$-category $\Glo$ of finite connected groupoids, see \Cref{ex:globalCategory}. We introduce the notion of an \textit{atomic orbital} subcategory $P \subseteq T$, generalizing a notion due to \cite{nardin2016exposeIV}; in this setting, we can then more generally define \textit{$P$-semiadditivity}
and \textit{$P$-stability}, which for the subcategory $\Orb \subseteq \Glo$ of \textit{faithful morphisms} specializes to the notions of equivariant semiadditivity/stability discussed before.

Given a $T$-$\infty$-category $\Cc$ with sufficiently many parametrized limits, we provide a universal way to turn it into a $P$-semiadditive $T$-$\infty$-category by passing to the $T$-$\infty$-category $\ulPCMon(\Cc)$ of \textit{$P$-commutative monoids}, a parametrized version of commutative monoid objects in higher algebra. In a similar way, we construct a universal $P$-stabilization $\ul{\Sp}^{P}(\Cc)$ of $\Cc$. Combining this with \Cref{introthm:universal-prop-spaces}, the key step in the proof of
\Cref{introthm:universal-prop-gamma} and \Cref{introthm:universal-prop-spectra} is then to produce equivalences of global $\infty$-categories
\[
\ul\GammaS^\textup{gl,spc} \simeq \ul{\CMon}^{\Orb}(\ul\S^\textup{gl}), \qquad \qquad \ul\mySp^\textup{gl} \simeq \ul{\Sp}^{\Orb}(\ul\S^\textup{gl}).
\]

\subsection{Acknowledgements} The authors would like to thank Branko Juran for pointing out an omission in a draft of this article, which led to the inclusion of Appendix~\ref{appendix:slices}.
B.C.\ would like to thank Louis Martini and Sebastian Wolf for many helpful discussions about parametrized higher category theory.
T.L.~would like to thank Markus Hausmann who first suggested to him to use $G$-global homotopy theory to obtain the parametrized incarnation of global homotopy theory. All three authors would like to thank the anonymous referee for a very careful reading and helpful feedback.

This article is partially based on work supported by the Swedish Research Council under grant no. 2016-06596 while T.L.~was in residence at Institut Mittag-Leffler in Djursholm, Sweden in early 2022.

While the first version of this article was written, B.C.\ was supported by the Max Planck Institute for Mathematics in Bonn and S.L.\ was supported by the DFG Schwerpunktprogramm 1786 `Homotopy Theory and Algebraic Geometry' (project ID SCHW 860/1-1). Moreover, both B.C.\ and S.L.\ were associate members of the Hausdorff Center for Mathematics at the University of Bonn during that time, as was T.L. when this article was revised. B.C.\ and S.L.\ are associate members of the SFB 1085 Higher Invariants.

\section{Parametrized higher category theory}
\label{sec:parametrizedHigherCategoryTheory} In this section, we will recall some of the basic notions of parametrized higher category theory. A first development of such theory was given by Clark Barwick,  Emanuele Dotto, Saul Glasman, Denis Nardin and Jay Shah, cf.\  \cites{exposeI, shah2021parametrized, nardin2016exposeIV}. From the perspective of categories internal to $\infty$-topoi, an alternative development was given by Louis Martini and Sebastian Wolf \cites{martini2021yoneda, martiniwolf2021limits, martiniwolf2022presentable}.

\subsection{\texorpdfstring{\except{toc}{$\bm T$-$\bm\infty$}\for{toc}{$T$-$\infty$}}{T-∞}-categories}
We introduce the notion of a $T$-$\infty$-category for a small $\infty$-category $T$ and discuss various constructions and examples.

\begin{definition}
	Let $T$ be a small $\infty$-category. A \textit{$T$-$\infty$-category} is a functor $\Cc\colon T\catop \to \Cat_{\infty}$. If $\Cc$ and $\Dd$ are $T$-$\infty$-categories, then a \textit{$T$-functor} $F\colon \Cc \to \Dd$ is a natural transformation from $\Cc$ to $\Dd$. The $\infty$-category $\Cat_T$ of $T$-$\infty$-categories is defined as the functor category $\Cat_T := \Fun(T\catop, \Cat_{\infty})$.

	If $\Cc$ is a $T$-$\infty$-category and $f\colon A \to B$ is a morphism in $T$, we will write $f^*$ for the functor $\Cc(f)\colon \Cc(B) \to \Cc(A)$ and refer to this as \textit{restriction along $f$}.
\end{definition}

\begin{example}
	\label{ex:GCategory}
	Let $G$ be a finite group and let $\Orb_G$ denote the \textit{orbit category of $G$}, defined as the full subcategory of the category of $G$-sets spanned by the orbits $G/H$ for subgroups $H \leqslant G$. When $T = \Orb_G$, $T$-$\infty$-categories are referred to as \textit{$G$-$\infty$-categories}, c.f.\ \cite{exposeI}.
\end{example}

We will be mainly interested in the following example.

\begin{example}
	\label{ex:globalCategory}
	Define $\sGlo$ as the strict $(2,1)$-category of finite groups, group homomorphisms, and conjugations. In particular, $\sGlo$ comes with a fully faithful functor $B\colon \sGlo\hookrightarrow \cat{Grpd}$ into the $(2,1)$-category of groupoids given by sending a finite group $G$ to the corresponding $1$-object groupoid $BG$, a homomorphism $f\colon G\to H$ to the functor $Bf\colon BG\to BH$ given on homomorphisms by $f$, and a conjugation $h\colon f\Rightarrow f'$ (i.e.~an $h\in H$ such that $f'(g)=hf(g)h^{-1}$ for all $g\in G$) to the natural transformation $Bf\Rightarrow Bf'$ whose value at the unique object of $BG$ is the edge $h$.

	We define the $\infty$-category $\Glo$ as the Duskin nerve of the $(2,1)$-category $\sGlo$. We will use the term \emph{global $\infty$-category} for a $\Glo$-$\infty$-category, \textit{global functor} for a $\Glo$-functor, etc.
\end{example}

\begin{remark}
	\label{rmk:TCategoriesAsCocartesianFibrations}
	The straightening-unstraightening equivalence (see \cite{HTT}*{Theorem~3.2.0.1}) provides an equivalence of $\infty$-categories $\Cat_T \simeq (\Cat_{\infty})_{/T\catop}^{\cocart}$, where $(\Cat_{\infty})_{/T\catop}^{\cocart}$ denotes the (non-full) subcategory of the slice $(\Cat_{\infty})_{/T\catop}$ spanned by the cocartesian fibrations over $T\catop$ and the functors over $T\catop$ that preserve cocartesian edges. The cocartesian fibration over $T\catop$ corresponding to a $T$-$\infty$-category $\Cc\colon T\catop \to \Cat_{\infty}$ is denoted by $\smallint \Cc \to T\catop$ and is referred to as the cocartesian unstraightening of $\Cc$. A $T$-functor $F\colon \Cc \to \Dd$ corresponds to a functor $\smallint F\colon \smallint \Cc \to \smallint \Dd$ over $T\catop$ which preserves cocartesian edges. In fact, in the articles \cite{exposeI}, \cite{shah2021parametrized} and \cite{nardin2016exposeIV}, a $T$-$\infty$-category is \textit{defined} as a cocartesian fibration over $T\catop$.
\end{remark}

\begin{definition}
	\label{def:UnderlyingInfinityCategory}
	Let $\Cc\colon T\catop \to \Cat_{\infty}$ be a $T$-$\infty$-category. We define the \textit{underlying $\infty$-category} $\Gamma(\Cc)$ of $\Cc$ as the limit of $\Cc$:
	\begin{align*}
		\Gamma(\Cc) := \lim_{B \in T\catop} \Cc(B).
	\end{align*}
	This defines a functor $\Gamma\colon \Cat_T \to \Cat_{\infty}$. Note that when $T$ has a final object, $\Gamma(\Cc)$ is obtained by evaluating $\Cc$ at the final object.
\end{definition}

\begin{remark}
	By \cite{HTT}*{Corollary 3.3.3.2}, the $\infty$-category $\Gamma(\Cc)$ is equivalent to the $\infty$-category of cocartesian sections of $\smallint \Cc \to T\catop$.
\end{remark}

We discuss some important examples of $T$-$\infty$-categories.
\begin{example}
	\label{ex:ConstantTCategory}
	Every $\infty$-category $\Ee$ gives rise to a $T$-$\infty$-category $\const_{\Ee}\colon T\catop \to \Cat_{\infty}$ given by $\const_{\Ee}(t) = \Ee$ for all $t \in T$. This provides a functor $\const\colon \Cat_{\infty} \to \Cat_T$. We will refer to $T$-$\infty$-categories in the essential image of this functor as \textit{constant $T$-$\infty$-categories}.
\end{example}

\begin{remark}
	\label{rmk:AdjunctionConstantUnderlying}
	Note that the functor $\const\colon \Cat_{\infty} \to \Cat_{T}$ is left adjoint to the underlying $\infty$-category functor $\Gamma\colon \Cat_T \to \Cat_{\infty}$: for every $T$-$\infty$-category $\Cc$ and every $\infty$-category $\Ee$ there is an equivalence
	\begin{align*}
		\Hom_{\Cat_T}(\const_{\Ee},\Cc) \simeq \Hom_{\Cat_{\infty}}(\Ee,\Gamma(\Cc)).
	\end{align*}
\end{remark}

\begin{example}
	\label{ex:TGroupoid}
	Every presheaf $B\colon T\catop \to \Spc$ on $T$ gives rise to a $T$-$\infty$-category $\ul{B}\colon T\catop \to \Cat_{\infty}$ by composing it with the inclusion $\Spc \subseteq \Cat_{\infty}$ of $\infty$-groupoids into all $\infty$-categories, and we obtain a fully faithful inclusion
	\begin{align*}
		\PSh(T) = \Fun(T\catop,\Spc) \hookrightarrow \Fun(T\catop,\Cat_{\infty}) = \Cat_T.
	\end{align*}
	The $T$-$\infty$-categories in the essential image of this functor will be referred to as \textit{$T$-$\infty$-groupoids}.
\end{example}

In particular, every object $B \in T$ gives rise to a $T$-$\infty$-category $\ul{B}$ via the Yoneda embedding $T \hookrightarrow \PSh(T)$.

\begin{remark}
	\label{rmk:CoreAdjoint}
	The inclusion $\PSh(T) \subseteq \Cat_T$ admits a right adjoint $\core\colon \Cat_T \to \PSh(T)$. It is given on $\Cc$ by $\core\circ \Cc$, where $\core\colon \Cat_{\infty} \to \Spc$ is the functor which assigns to an $\infty$-category its core, the largest $\infty$-groupoid contained in it.
\end{remark}

\begin{example}
	\label{ex:TObjectsInCategory}
	Let $\Ee$ be an $\infty$-category. A \textit{$T$-object in $\Ee$} is a functor $T\catop \to \Ee$. We obtain a $T$-$\infty$-category $\ul{\Ee}_T$ of \textit{$T$-objects in $\Ee$} by assigning to an object $B \in T$ the $\infty$-category $\Fun((T_{/B})\catop,\Ee)$ of $T_{/B}$-objects in $\Ee$. More precisely, the $T$-$\infty$-category $\ul{\Ee}_T$ is defined as the following composite
	\begin{align*}
		T\catop \xrightarrow{B \mapsto (T_{/B})\catop} (\Cat_{\infty})\catop \xrightarrow{\Fun(-,\Ee)} \Cat_{\infty},
	\end{align*}
	where the functoriality of the first functor is via post-composition in $T$, i.e.\ the straightening of the cocartesian fibration $\ev_1\colon T^{[1]} \to T$. It is clear that sending $\Ee$ to $\ul{\Ee}_T$ gives rise to a functor $\Cat_{\infty} \to \Cat_T$.

	As a special case, we obtain the following $T$-$\infty$-categories:
	\begin{enumerate}[(1)]
		\item \label{it:TSpaces}
		taking $\Ee = \Spc$ gives a $T$-$\infty$-category $\ul{\Spc}_T$ of \textit{$T$-spaces} or \textit{$T$-$\infty$-groupoids}.
		\item \label{it:PointedTSpaces}
		taking $\Ee = \Spc_*$ gives a $T$-$\infty$-category $\ul{\Spc}_{T,*} \coloneqq \ul{\Spc_*}_T$ of \textit{pointed $T$-spaces}.
		\item \label{it:NaiveTSpectra}
		taking $\Ee = \Sp$ gives a $T$-$\infty$-category $\ul{\Sp}_T$ of \textit{naive $T$-spectra}.\footnote{The term `naive spectra' is used in equivariant homotopy theory to contrast it with `genuine spectra'.}
		\item \label{it:TCategories}
		taking $\Ee = \mathrm{cat}_{\infty}$, the $\infty$-category of small $\infty$-categories, gives a $T$-$\infty$-category $\ul{\mathrm{cat}}_T \coloneqq \ul{\mathrm{cat}_{\infty}}_T$ of \textit{small $T$-$\infty$-categories}.
	\end{enumerate}
\end{example}

\begin{remark}
	\label{rmk:AdjunctionGrothendieckConstructionTObjects}
	For every $T$-$\infty$-category $\Cc$ and every $\infty$-category $\Ee$, there is an equivalence
	\begin{align*}
		\Hom_{\Cat_T}(\Cc,\ul{\Ee}_T) \simeq \Hom_{\Cat_{\infty}}(\smallint \Cc, \Ee)
	\end{align*}
	which is natural in $\Cc$ and $\Ee$. In other words, the construction of \cref{ex:TObjectsInCategory} provides a right adjoint to the cocartesian unstraightening $\smallint\colon \Cat_T \to \Cat_{\infty}$ which assigns to a $T$-$\infty$-category $\Cc\colon T\catop \to \Cat_{\infty}$ the total category $\smallint \Cc$ of its unstraightening $\smallint \Cc \to T\catop$. We will prove this in \cref{lem:AdjunctionGrothendieckConstructionTObjects} below.
\end{remark}

\begin{remark}
	\label{rmk:limitExtension}
	One may alternatively describe $T$-$\infty$-categories as $\Cat_{\infty}$-valued sheaves on the presheaf $\infty$-topos $\PSh(T) = \Fun(T\catop,\Spc)$, i.e., as limit-preserving functors $\PSh(T)\catop \to \Cat_{\infty}$. Indeed, the functor
	\begin{align*}
		\Fun(\PSh(T)\catop,\Cat_{\infty}) \to \Fun(T\catop,\Cat_{\infty})
	\end{align*}
	given by precomposition with the Yoneda embedding $T \hookrightarrow \PSh(T)$ becomes an equivalence when restricting the domain to the full subcategory of limit-preserving functors, see \cite{HTT}*{Theorem~5.1.5.6}.
\end{remark}

\begin{remark}
	\label{rmk:ParametrizedCategoriesAsInternalCategories}
	For an $\infty$-topos $\mathcal{B}$, the $\infty$-category $\Fun^{\mathrm{R}}(\mathcal{B}\catop, \Cat_{\infty})$ of sheaves of $\infty$-categories on $\mathcal{B}$ is equivalent to the full subcategory of $\Fun(\Delta\catop,\mathcal{B})$ spanned by the \textit{internal $\infty$-categories} (or \textit{complete Segal objects}). We refer to \cite{martini2021yoneda}*{Definition~3.2.4} for a precise definition of an internal $\infty$-category, and to \cite{martini2021yoneda}*{Section~3.5} for a proof of this equivalence. By \cref{rmk:limitExtension}, the study of $T$-$\infty$-categories is thus equivalent to the study of $\infty$-categories internal to the presheaf topos $\PSh(T)$. Although we will never use this perspective in this article, we will not hesitate to cite results from \cites{martini2021yoneda, martiniwolf2021limits, martiniwolf2022presentable} which are formulated in the language of internal $\infty$-categories.
\end{remark}

\begin{convention}
	Henceforth, we will abuse notation and denote the extension of a $T$-$\infty$-category $\Cc$ to a limit preserving functor $\PSh(T)\catop \to \Cat_{\infty}$ again by $\Cc$. At various points in this article, we will write expressions such as $A \times A$ or $A \times_B A$ for objects $A,B \in T$, meaning implicitly that this pullback is taken in the presheaf $\infty$-category $\PSh(T)$. In particular, when we write $\Cc(A \times B)$ or $\Cc(A \times_B A)$, we are referring to the values of the limit-extension $\Cc\colon \PSh(T)\catop \to \Cat_{\infty}$ at the relevant objects. This abuse of notation is justified by the fact that the Yoneda embedding $T \hookrightarrow \PSh(T)$ preserves all limits that exist in $T$, cf.\ \cite{HTT}*{Proposition 5.1.3.2}. In a similar way, all colimits of objects of $T$ are understood to be taken in the presheaf $\infty$-category $\PSh(T)$: expressions such as $\bigsqcup_{i=1}^n A_i$ will always mean formal disjoint union. 
\end{convention}

\begin{remark}\label{rmk:sheaf_associated_to_Spc_T}
	It will be useful to observe that the limit-extension of the $T$-$\infty$-category $\ul{\Spc}_T$ of $T$-spaces is equivalent to the slice functor
	\begin{align*}
		\PSh(T)_{/-}\colon \PSh(T)\catop &\rightarrow \Cat_\infty, \\
		A &\mapsto \PSh(T)_{/A}, \\
		[f\colon A\rightarrow B] &\mapsto f^*\colon \PSh(T)_{/B}\rightarrow \PSh(T)_{/A},
	\end{align*}
	which is defined as the functor which classifies the cartesian fibration
	\[
	t\colon \Ar(\PSh(T)) \to \PSh(T)\colon (A \to B) \mapsto B.
	\]
	Indeed, since this slice functor preserves limits by \cite{HTT}*{Theorem~6.1.3.9, Proposition~6.1.3.10}, it suffices to show that its restriction to $T\catop$ is equivalent to $\ul{\Spc}_T$. 	Consider the Yoneda embedding $T\hookrightarrow \PSh(T)$. By considering the functoriality in over-categories on both sides we obtain a natural transformation
	\[T_{/-}\rightarrow \PSh(T)_{/-}\]
    of functors in $T$. The universal property of presheaves implies that this extends to a natural equivalence
	\[\PSh(T_{/-})\iso \PSh(T)_{/-}.\]
	By the naturality of the Yoneda embedding (see \cite{HHLNa}*{Theorem 8.1} or \cite{Ramzi2022Yoneda}*{Theorem 2.4}) we get that upon passing to right adjoints the diagram $\PSh(T_{/-})$ agrees with $\ul{\Spc}_T$, completing the proof.
\end{remark}

\begin{example}
	\label{ex:baseChangeCategory}
	For an object $B \in T$ there is an adjunction
	\begin{equation*}
		\pi_B\colon \PSh(T)_{/B} \rightleftarrows
		\PSh(T)\noloc - \times B,
	\end{equation*}
	where $\pi_B$ is the forgetful functor. Since both functors preserve colimits we obtain by precomposition an adjunction
	\begin{align*}
		\pi^*_B\colon \Cat_T \rightleftarrows
		\Cat_{T_{/B}}\noloc (\pi_B)_* = (- \times B)^*.
	\end{align*}
\end{example}

\begin{lemma}
	\label{lem:baseChangeOfTObjects}
	Consider an object $B \in T$. Then there is for every $\infty$-category $\Ee$ an equivalence of $T_{/B}$-$\infty$-categories
	\[
	\pi_B^*\ul{\Ee}_T \simeq \ul{\Ee}_{T_{/B}},
	\]
	natural in $\Ee$.
\end{lemma}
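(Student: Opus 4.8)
The plan is to unwind both sides into functors assembled from slice categories, and then to appeal to the standard compatibility of iterated slices.

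By definition (\cref{ex:TObjectsInCategory}), $\ul{\Ee}_T = \Fun(-,\Ee)\circ\sigma_T$, where $\sigma_T\colon T\catop\to(\Cat_\infty)\catop$ sends $A\mapsto(T_{/A})\catop$ with postcomposition functoriality; equivalently, $\sigma_T$ is the straightening of the target fibration $\ev_1\colon T^{[1]}\to T$, viewed as a functor to $(\Cat_\infty)\catop$. The analogous description holds for $\ul{\Ee}_{T_{/B}}$ with $T$ replaced by $T_{/B}$. Unwinding \cref{ex:baseChangeCategory}, the functor $\pi_B^*$ is precomposition with $\pi_B\catop\colon(T_{/B})\catop\to T\catop$, so $\pi_B^*\ul{\Ee}_T = \Fun(-,\Ee)\circ(\sigma_T\circ\pi_B\catop)$, where $\sigma_T\circ\pi_B\catop$ sends $(f\colon A\to B)\mapsto(T_{/A})\catop$. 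Since $\Fun(-,\Ee)$ is functorial in $\Ee$ and $(-)\catop$ is an equivalence of $\Cat_\infty$, it is therefore enough to produce an equivalence of functors $T_{/B}\to\Cat_\infty$
\[
	s_T\circ\pi_B\;\simeq\;s_{T_{/B}},
\]
where $s_T\colon T\to\Cat_\infty$, $A\mapsto T_{/A}$, and $s_{T_{/B}}\colon T_{/B}\to\Cat_\infty$ are the straightenings of the respective target fibrations; the asserted naturality in $\Ee$ is then automatic, being obtained by whiskering this fixed equivalence with the functor $\Ee\mapsto\Fun(-,\Ee)$.

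To establish this equivalence I pass through straightening--unstraightening once more. Since straightening is compatible with base change, $s_T\circ\pi_B$ is the straightening of the pulled-back cocartesian fibration $\pi_B^*(T^{[1]}) = T^{[1]}\times_{\ev_1,T,\pi_B}T_{/B}\to T_{/B}$, while $s_{T_{/B}}$ is the straightening of $(T_{/B})^{[1]}\xrightarrow{\ev_1}T_{/B}$. Hence it suffices to construct an equivalence over $T_{/B}$
\[
	(T_{/B})^{[1]}\;\simeq\;T^{[1]}\times_{\ev_1,T,\pi_B}T_{/B}.
\]
There is an evident comparison functor over $T_{/B}$, sending a morphism $\alpha$ of $T_{/B}$ to the pair consisting of its underlying morphism $\pi_B(\alpha)$ in $T$ and its target in $T_{/B}$. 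Both sides are cocartesian fibrations over $T_{/B}$ (the left one as a target fibration, the right one as a base change of one), so by the usual fiberwise criterion it is enough to check that the comparison is an equivalence on fibers. The fiber of the left-hand side over an object $(\psi\colon A\to B)$ is the iterated slice $(T_{/B})_{/\psi}$, the fiber of the right-hand side is $T_{/A}$, and under these identifications the comparison becomes the forgetful functor $(T_{/B})_{/\psi}\to T_{/A}$, which is an equivalence by the standard identification of iterated slices (see \cite{HTT}).

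The point requiring the most care is keeping everything \emph{natural} rather than merely pointwise; this is precisely why I route the argument through straightenings of cocartesian fibrations, so that functoriality over $T_{/B}$ (and hence in $\Ee$) comes for free and only the fiberwise statement must be verified. The remaining inputs---that straightening commutes with base change, that a fiberwise equivalence of cocartesian fibrations over a fixed base is an equivalence, and the naturality of the iterated-slice equivalence---are all standard.
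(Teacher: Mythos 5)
Your proof takes essentially the same approach as the paper: both reduce the statement to an equivalence $s_T\circ\pi_B\simeq s_{T_{/B}}$ of slice functors and then establish this by comparing target fibrations, observing that $\ev_1\colon(T_{/B})^{[1]}\to T_{/B}$ is the pullback of $\ev_1\colon T^{[1]}\to T$ along $\pi_B$. The paper records this last observation as immediate, whereas you spell out the comparison functor and the fiberwise verification via the iterated-slice equivalence; that extra detail is correct and unobjectionable.
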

\begin{proof}
	It will suffice to prove that the composite
	\[
	T_{/B} \xrightarrow{\pi_B} T \xrightarrow{A \mapsto (T_{/A})\catop} \Cat_{\infty}
	\]
	is equivalent to the slice functor of $T_{/B}$. This is immediate from the observation that the target map $\ev_1 \colon (T_{/B})^{[1]} \to T_{/B}$ of $T_{/B}$ is the pullback along $\pi_B$ of the target map $\ev_1\colon T^{[1]} \to T$.
\end{proof}

\subsection{Parametrized functor categories}
In this subsection, we establish a variety of basic results on parametrized functor categories.
\begin{definition}
	\label{def:TCategoryOfTFunctors}
	Since $T$ is small and $\Cat_{\infty}$ is cartesian closed, the $\infty$-category $\Cat_T = \Fun(T\catop,\Cat_\infty)$ is again cartesian closed. Given two $T$-$\infty$-categories $\Cc$ and $\Dd$, we define the \textit{$T$-$\infty$-category of $T$-functors $\Cc \to \Dd$}, denoted $\ulFun_T(\Cc,\Dd)$, as the internal hom-object between $\Cc$ and $\Dd$ in the $\infty$-category $\Cat_T$. In particular, for any triple of $T$-$\infty$-categories $\Cc$, $\Dd$ and $\Ee$ there is a natural equivalence
	\begin{align*}
		\ulFun_T(\Cc \times \Dd, \Ee) \simeq \ulFun_T(\Cc,\ulFun_T(\Dd,\Ee)).
	\end{align*}
\end{definition}

\begin{definition}
	\label{def:CategoryOfTFunctors}
	Given two $T$-$\infty$-categories $\Cc$ and $\Dd$, we define the $\infty$-category $\Fun_T(\Cc,\Dd)$ of $T$-functors $\Cc \to \Dd$ as the underlying $\infty$-category of the $T$-$\infty$-category $\ulFun_T(\Cc,\Dd)$:
	\begin{align*}
		\Fun_T(\Cc,\Dd) := \Gamma(\ulFun_T(\Cc,\Dd)).
	\end{align*}
\end{definition}

\begin{remark}
	The objects of $\Fun_T(\Cc,\Dd)$ may be identified with $T$-functors $\Cc \to \Dd$. If $F$ and $F'$ are two such $T$-functors, we refer to a morphism $\alpha\colon F \to F'$ in $\Fun_T(\Cc,\Dd)$ as a \textit{natural transformation of $T$-functors}. A natural transformation of $T$-functors is given by a collection of natural transformations $\eta_A\colon F(A)\rightarrow F'(A)$ together with a coherent collection of 3-cells which fill the cylinders
	\[
	\begin{tikzcd}[column sep=3.0pc, row sep=4pc]
		\Cc(B)\arrow[d, "f^*"{left, name=s3}]\ar[r, bend left, "F(B)", ""{name=s1, below}]\ar[r, bend right, "{F'(B)}"{below}, ""{above, name=t1}] & \Dd(B)\arrow[d, "f^*"{right, name=t3}]\arrow[Rightarrow, from=s1, to=t1, "\eta_B"]\\
		\Cc(A)\ar[r, bend left, "F(A)", ""{name=s2, below}]\ar[r, bend right, "{F'(A)}"{below}, ""{above, name=t2}] & {\Dd(A),}\arrow[Rightarrow, from=s2, to=t2, "\eta_{A}"]
	\end{tikzcd}
	\]
	for every morphism $f\colon A\to B$ in $T$.
\end{remark}

\begin{remark}\label{rk:YonedaEmbedding}
	Let $\Cc$ be a $T$-$\infty$-category. In \cite{martini2021yoneda}*{Section 4.7}, Martini constructs a \emph{parametrized hom functor}
	\[\ul{\Hom}_{\Cc}(-,-)\colon \Cc\catop \times \Cc \rightarrow \ul{\Spc}_T\] 
	via a parametrized version of straightening for left fibrations, also see \cite{exposeI}*{Section 10}. While we will never need the precise definition, let us record for motivational purposes that, given two objects $X,Y\in \Cc(B)$, the presheaf $\ul{\Hom}_{\Cc}(X,Y)$ is informally given by
	\[\ul{\Hom}_{\Cc}(X,Y)\colon (T_{/B})^{\op}\to \Spc, \quad [f\colon A\to B] \mapsto \Hom_{\Cc(A)}(f^* X,f^* Y).\] 
	Adjoining over results in the \emph{parametrized Yoneda embedding}
	\[
		y\colon \Cc \to \ul{\Fun}_T(\Cc\catop,\ul{\Spc}_T),
	\]
	which is fully faithful\footnote{A $T$-functor $F\colon \Cc \to \Dd$ is called \emph{fully faithful} if $F(A) \colon \Cc(A) \to \Dd(A)$ is fully faithful for all $A \in T$, or equivalently (as limits of fully faithful functors are fully faithful) for all $A\in \PSh(T)$.} by \cite{martini2021yoneda}*{Theorem 4.7.8}. The $T$-functors of the form $\Cc\catop \to \ul{\Spc}_T$ are called \textit{$T$-presheaves on $\Cc$}.
\end{remark}

Natural transformations between ordinary categories induce natural transformations between their associated $T$-$\infty$-categories of $T$-objects.
\begin{construction}
	\label{cstr:NaturalTrafosGiveParametrizedNaturalTrafos}
	Given $\infty$-categories $\Ee$ and $\Ee'$, we will construct a functor
	\begin{align*}
		\Fun(\Ee,\Ee') \to \Fun_T(\ul{\Ee}_T,\ul{\Ee'}_T)
	\end{align*}
	which on groupoid cores reduces to the functoriality of the construction $\Ee \mapsto \ul{\Ee}_T$ of \Cref{ex:TObjectsInCategory}. By adjunction we may equivalently specify a $T$-functor of the form
	\begin{align*}
		\const_{\Fun(\Ee,\Ee')} \times \ul{\Ee}_T \to \ul{\Ee'}_T.
	\end{align*}
	At level $B \in T$, we define this as the composition functor
	\begin{align*}
		\Fun(\Ee,\Ee') \times \Fun((T_{/B})\catop,\Ee) \to \Fun((T_{/B})\catop,\Ee').
	\end{align*}
	By precomposing with the functors $T_{/A}\catop \to T_{/B}\catop$ this specifies a $T$-functor.
\end{construction}

The following result of \cite{martiniwolf2021limits} relates the $\infty$-category of $T$-functors from \Cref{def:CategoryOfTFunctors} to the identically named $\infty$-category of $T$-functors from \cite{exposeI}*{p.3}.

\begin{proposition}[\cite{martiniwolf2021limits}*{Proposition~3.2.1}]
	For any two $T$-$\infty$-categories $\Cc$ and $\Dd$ there is a natural equivalence
	\begin{align*}
		\Fun_T(\Cc,\Dd) \simeq \Fun^{\cocart}_{/T\catop}(\smallint \Cc, \smallint \Dd),
	\end{align*}
	where the right-hand side denotes the full subcategory of $\Fun_{/T\catop}(\smallint \Cc, \smallint \Dd)$ spanned by those functors $\smallint \Cc \to \smallint \Dd$ over $T\catop$ that preserve cocartesian edges.\qed
\end{proposition}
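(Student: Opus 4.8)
The plan is a Yoneda argument. Fix $T$-$\infty$-categories $\Cc$ and $\Dd$. For an arbitrary $\infty$-category $Z$ I will compute the mapping space $\Hom_{\Cat_\infty}(Z,-)$ out of each of the two $\infty$-categories in the statement, show that in both cases the result is naturally equivalent to $\Hom_{\Cat_T}(\const_Z\times\Cc,\Dd)$, and then invoke the Yoneda lemma in $\Cat_\infty$. Since the Yoneda embedding $\Cat_\infty\hookrightarrow\Fun(\Cat_\infty\catop,\Spc)$ is fully faithful (and stays fully faithful after postcomposition into any functor $\infty$-category), keeping track of the naturality of all identifications in $Z$, $\Cc$ and $\Dd$ will automatically upgrade the resulting objectwise equivalence to an equivalence of bifunctors $\Cat_T\catop\times\Cat_T\to\Cat_\infty$. (Any set-theoretic issue here is handled by enlarging the universe.)

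For the left-hand side, the adjunction $\const\dashv\Gamma$ of \Cref{rmk:AdjunctionConstantUnderlying} combined with the defining property of the internal hom $\ulFun_T(\Cc,-)$ from \Cref{def:TCategoryOfTFunctors} gives natural equivalences
\[
\Hom_{\Cat_\infty}\big(Z,\Fun_T(\Cc,\Dd)\big)\simeq\Hom_{\Cat_T}\big(\const_Z,\ulFun_T(\Cc,\Dd)\big)\simeq\Hom_{\Cat_T}\big(\const_Z\times\Cc,\Dd\big).
\]

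For the right-hand side, the key point is to identify the cocartesian unstraightening of $\const_Z\times\Cc$. Since $\smallint\const_Z\simeq T\catop\times Z$ over $T\catop$, and since $\smallint$ sends pointwise products of $T$-$\infty$-categories to fibre products over $T\catop$, one obtains $\smallint(\const_Z\times\Cc)\simeq Z\times\smallint\Cc$ with structure map the projection through $\smallint\Cc$; under this identification the cocartesian morphisms are exactly the pairs consisting of an equivalence in $Z$ and a cocartesian morphism of $\smallint\Cc$. Consequently a functor $Z\times\smallint\Cc\to\smallint\Dd$ over $T\catop$ preserves cocartesian morphisms if and only if its restriction to $\{z\}\times\smallint\Cc$ does for every $z\in Z$: one implication is immediate, and the converse follows by factoring a cocartesian morphism of $Z\times\smallint\Cc$ as a cocartesian morphism inside a single fibre $\{z\}\times\smallint\Cc$ followed by an equivalence. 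Unwinding the definition of the non-full subcategory $\Fun^{\cocart}_{/T\catop}(\smallint\Cc,\smallint\Dd)$ and then applying the straightening--unstraightening equivalence of \Cref{rmk:TCategoriesAsCocartesianFibrations} therefore gives
\[
\Hom_{\Cat_\infty}\big(Z,\Fun^{\cocart}_{/T\catop}(\smallint\Cc,\smallint\Dd)\big)\simeq\Hom_{(\Cat_\infty)^{\cocart}_{/T\catop}}\big(\smallint(\const_Z\times\Cc),\smallint\Dd\big)\simeq\Hom_{\Cat_T}\big(\const_Z\times\Cc,\Dd\big),
\]
again naturally in $Z$, $\Cc$ and $\Dd$. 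Comparing the two computations and applying Yoneda completes the proof.

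I expect the main obstacle to be the bookkeeping on the right-hand side: pinning down $\smallint(\const_Z\times\Cc)$ together with its class of cocartesian morphisms, and checking that preservation of cocartesian morphisms over $T\catop$ can be tested after restricting along each point of $Z$, which is exactly where one uses that $\const_Z$ contributes only an inert constant factor with no cocartesian morphisms of its own. The second thing requiring care is to carry the naturality in $Z$, $\Cc$ and $\Dd$ through every step, so that the final Yoneda argument genuinely promotes the pointwise equivalence to a natural one.
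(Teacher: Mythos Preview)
Your argument is correct. Note that the paper does not give its own proof of this proposition: it is quoted from \cite{martiniwolf2021limits}*{Proposition~3.2.1} and marked with a \qed, so there is no approach to compare against. Your Yoneda/cotensoring strategy is the standard one and is in fact the same style of argument the paper itself uses in the proof of \cref{lem:AdjunctionGrothendieckConstructionTObjects}; the one extra wrinkle here is verifying that a map $Z\times\smallint\Cc\to\smallint\Dd$ over $T\catop$ preserves cocartesian edges iff it does so on each slice $\{z\}\times\smallint\Cc$, which you handle correctly by factoring a cocartesian edge $(e,c)$ through an equivalence and an edge in a single slice.
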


To give a pointwise description of the parametrized functor category $\ulFun_T(\Cc,\Dd)$, we need the following enhanced version of the Yoneda lemma.

\begin{lemma}[Categorical Yoneda lemma]
	\label{lem:YonedaLemma}
	For every $B \in T$ and $\Cc \in \Cat_T$, evaluation at the identity $\id_B \in \Hom_T(B,B) = \ul{B}(B)$ induces a natural equivalence of $\infty$-categories
	\[
	\Fun_T(\ul{B},\Cc) \iso \Cc(B).
	\]
\end{lemma}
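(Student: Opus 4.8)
The plan is to deduce this from the ordinary (unparametrized) Yoneda lemma together with the description of $\Cat_T$ as a presheaf category. The key observation is that the representable $T$-$\infty$-category $\ul B$ is nothing but the image of the presheaf $\Hom_T(\blank, B) \in \PSh(T)$ under the inclusion $\PSh(T) \hookrightarrow \Cat_T$ from \Cref{ex:TGroupoid}, i.e.\ under the Yoneda embedding $T \hookrightarrow \PSh(T) \hookrightarrow \Cat_T$. Since $\Cat_T = \Fun(T\catop, \Cat_\infty)$ is a presheaf $\infty$-category (on the product $T\catop \times \Delta^{?}$, or more conceptually a functor category into $\Cat_\infty$), one expects a Yoneda-type statement to hold here.

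First I would reduce to computing $\Gamma$ of the internal hom. By \Cref{def:CategoryOfTFunctors}, $\Fun_T(\ul B, \Cc) = \Gamma\big(\ulFun_T(\ul B, \Cc)\big) = \lim_{A \in T\catop} \ulFun_T(\ul B,\Cc)(A)$. Alternatively, and more directly, I would use the adjunction $\Hom_{\Cat_T}(\ul B \times \blank, \blank) \simeq \Hom_{\Cat_T}(\blank, \ulFun_T(\ul B, \blank))$ only as motivation and instead work with the mapping-space/mapping-category description: for any $\infty$-category $\Ee$ one has $\Fun_T(\ul B, \Cc) = \Fun_{\Cat_T}(\ul B, \Cc)$ (the $\infty$-categorical mapping object, which exists since $\Cat_T$ is cartesian closed). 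The cleanest route is: since $\ul B$ is the $T$-$\infty$-groupoid associated to the representable presheaf $y_B = \Hom_T(\blank,B)$, and since $\Cat_T = \Fun(T\catop, \Cat_\infty)$, I would invoke the pointwise formula for internal homs in a functor $\infty$-category: $\ulFun_T(\Cc,\Dd)(A) \simeq \Fun_{T_{/A}}\big(\pi_A^*\Cc, \pi_A^*\Dd\big)$, or more elementarily the coend/end formula $\ulFun_T(\Cc,\Dd)(A) \simeq \lim_{(f\colon C \to A)} \Fun(\Cc(C), \Dd(C))$ over $(T_{/A})\catop$ ... but in fact the slickest argument avoids all of this.

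Here is the argument I would actually carry out. Write $Y\colon \PSh(T) \hookrightarrow \Cat_T$ for the fully faithful inclusion of \Cref{ex:TGroupoid}, which admits the right adjoint $\core$ by \Cref{rmk:CoreAdjoint}. For $B \in T$ we have $\ul B = Y(y_B)$ where $y_B$ is the representable presheaf. Now I claim
\[
\Fun_T(\ul B, \Cc) \simeq \Fun_{\Cat_T}(\ul B, \Cc);
\]
this is essentially \Cref{def:CategoryOfTFunctors} unwound, using that $\Gamma$ of an internal hom computes the mapping $\infty$-category. The point is then to show $\Fun_{\Cat_T}(Y(y_B), \Cc) \simeq \Cc(B)$ naturally in $B$ and $\Cc$, with the equivalence given by evaluation at $\id_B$. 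I would prove this by the following chain. For $\infty$-groupoid-valued test objects this is the ordinary enriched Yoneda lemma in the functor $\infty$-category $\Cat_T = \Fun(T\catop, \Cat_\infty)$: mapping out of the representable functor $y_B$ (landing in spaces $\subseteq \Cat_\infty$) into $\Cc$ recovers $\Cc(B)$. More precisely, the $\infty$-category $\Fun_{\Cat_T}(\ul B, \Cc)$ can be computed as the end $\int_{A \in T} \Fun_{\Cat_\infty}\big(\ul B(A), \Cc(A)\big) = \int_{A} \Fun_{\Cat_\infty}\big(\Hom_T(A,B), \Cc(A)\big)$. Since $\Hom_T(A,B)$ is a space (an $\infty$-groupoid), $\Fun_{\Cat_\infty}(\Hom_T(A,B), \Cc(A)) \simeq \Cc(A)^{\Hom_T(A,B)}$ is the cotensor, and the end formula $\int_A \Cc(A)^{\Hom_T(A,B)}$ is precisely the weighted limit computing $\Cc(B)$ by the $\Cat_\infty$-enriched (co)Yoneda lemma. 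Evaluation at $\id_B \in \Hom_T(B,B)$ furnishes the comparison map, and the $\Cat_\infty$-Yoneda lemma says it is an equivalence; naturality in $B$ and $\Cc$ is built into the construction.

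\textbf{Main obstacle.} The genuine content — and the step I'd expect to need the most care — is setting up the end/weighted-limit formula for $\Fun_{\Cat_T}(\Cc, \Dd)$ rigorously in the $\infty$-categorical setting, i.e.\ justifying that the internal hom in $\Fun(T\catop, \Cat_\infty)$ together with its global sections $\Gamma$ is computed by the $\Cat_\infty$-enriched end, and then that the $\Cat_\infty$-enriched Yoneda lemma applies to the representable (which lands in $\Spc \subseteq \Cat_\infty$, so one really is in the situation of cotensoring over spaces). One clean way to sidestep delicate enriched-end bookkeeping is to instead argue: $\ul B = \colim_{(T_{/B})} \ul A$ is not quite what we want, but rather use that $Y\colon \PSh(T) \to \Cat_T$ preserves colimits and every presheaf is a colimit of representables — however, for a \emph{single} representable one can just cite the $\infty$-categorical Yoneda lemma in the form $\Map_{\Fun(T\catop, \Cat_\infty)}(y_B, \Cc) \simeq \core\,\Cc(B)$ and upgrade it to the categorical (non-core) statement using that $\ulFun_T(\ul B, \Cc)(A) \simeq \Cc(B \times A)$ — no wait, that last identity is false in general. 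So the honest and safest path is the enriched-end computation sketched above, and the one technical lemma to pin down is: for $\Cc, \Dd \in \Cat_T$ and $B \in T$, $\ulFun_T(\ul B, \Cc)(A) \simeq \Fun_{\Cat_\infty}\big((T_{/A})\catop \times_{?} \cdots\big)$ — concretely, $\ulFun_T(\ul B, \Cc)(A) = \Fun_{T}(\ul B \times \ul A, \Cc)$ by \Cref{rmk:AdjunctionGrothendieckConstructionTObjects}-style reasoning combined with $\Gamma \circ \pi_A^* \simeq \Fun_T(\ul A, \blank)$, and then one applies the $B$-case pointwise. Taking $\Gamma = \lim_{A \in T\catop}$ then gives $\lim_A \Cc(B \times A)$; but $B \times A$ need not lie in $T$. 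The resolution is to observe $\ul B \times \ul A \simeq \ul{B \times A}$ as presheaves (product in $\PSh(T)$, per the \textbf{Convention}), so $\Cc(B \times A)$ makes sense via the limit-extension, and $\lim_{A \in T\catop} \Cc(B \times A) \simeq \Cc(B)$ because $y_B \times \blank \colon \PSh(T) \to \PSh(T)_{/y_B} \to \PSh(T)$ and the colimit $\colim_{A \in T\catop/?}$ reconstructs... This bookkeeping is exactly where the paper's earlier setup (\Cref{rmk:limitExtension}, \Cref{rmk:sheaf_associated_to_Spc_T}, \Cref{ex:baseChangeCategory}) pays off, and I would lean on it rather than reprove it.
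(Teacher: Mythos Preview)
Your approach via enriched ends and weighted limits is in principle workable, but it is considerably more involved than what the paper does, and you correctly flag that the bookkeeping is where the difficulty lies. You never quite close that gap: the end formula $\Fun_T(\ul B,\Cc)\simeq\int_{A}\Cc(A)^{\Hom_T(A,B)}$ for the internal hom in $\Fun(T\catop,\Cat_\infty)$ is the right heuristic, but making it precise requires either a theory of $\Cat_\infty$-enriched ends or the limit-extension machinery you allude to at the end, and neither is fully carried out. Also, the identity $\ulFun_T(\ul B,\Cc)(A)\simeq\Cc(B\times A)$ that you doubt is in fact correct (it is \Cref{cor:TCategoryOfTFunctors2} in the paper), but the paper deduces it \emph{from} the present lemma rather than the other way around, so you cannot lean on it here.

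The paper's argument is much shorter and avoids ends entirely. The key trick you missed is to prove the statement on groupoid cores first and then upgrade via cotensoring. On cores, the ordinary Yoneda lemma together with the adjunction $\PSh(T)\rightleftarrows\Cat_T$ of \Cref{rmk:CoreAdjoint} gives $\Hom_{\Cat_T}(\ul B,\Cc)\simeq\core\Cc(B)$ directly. To get the categorical statement, observe that $\Cat_T$ is cotensored over $\Cat_\infty$ with $(\Cc^{\Ee})(B)=\Fun(\Ee,\Cc(B))$; then for any $\Ee\in\Cat_\infty$ one has
\[
\Hom_{\Cat_\infty}(\Ee,\Fun_T(\ul B,\Cc))\simeq\Hom_{\Cat_T}(\ul B,\Cc^{\Ee})\simeq\core\Fun(\Ee,\Cc(B))=\Hom_{\Cat_\infty}(\Ee,\Cc(B)),
\]
and the ordinary Yoneda lemma finishes. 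This is a standard ``test against all $\Ee$'' maneuver that replaces the enriched-end machinery with a single cotensor computation.
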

\begin{proof}
	By the Yoneda lemma and \cref{rmk:CoreAdjoint} there is a natural equivalence
	\begin{align*}
		\Hom_{\Cat_T}(\ul{B},\Cc) \simeq \core(\Cc(B))
	\end{align*}
	between the $\infty$-groupoid of $T$-functors $\ul{B} \to \Cc$ and the groupoid core of the $\infty$-category $\Cc(B)$, so the statement holds on groupoid cores. To obtain the statement on the level of categories, we use that the $\infty$-category $\Cat_T$ is cotensored over $\Cat_{\infty}$: for every $T$-$\infty$-category $\Cc$ and every $\infty$-category $\Ee$, the cotensor $\Cc^{\Ee}$ is given at $B \in T$ by $\Cc^{\Ee}(B) \simeq \Fun(\Ee,\Cc(B))$. It follows that for any $\infty$-category $\Ee$ we have natural equivalences
	\begin{align*}
		\Hom_{\Cat_{\infty}}(\Ee,\Fun_T(\ul{B},\Cc)) &\simeq \Hom_{\Cat_T}(\ul{B},\Cc^{\Ee}) \simeq \core(\Cc^{\Ee}(B)) \\
		&\simeq \core(\Fun(\Ee,\Cc(B)) = \Hom_{\Cat_{\infty}}(\Ee,\Cc(B)),
	\end{align*}
	and thus the claim follows from the Yoneda lemma.
\end{proof}

By limit-extending the previous equivalence to presheaves, we immediately obtain:

\begin{corollary}
\label{cor:YonedaLemmaPresheaves}
	There is a unique natural equivalence $\Fun_T(\ul X,\Cc)\simeq\Cc(X)$ of functors $\PSh(T)\times\Cat_T\to\Cat$ that for representable presheaves recovers the equivalence from the previous lemma.\qed
\end{corollary}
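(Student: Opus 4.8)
The plan is to deduce the statement from \Cref{lem:YonedaLemma} by limit-extending along the Yoneda embedding $T\hookrightarrow\PSh(T)$, invoking the universal property of presheaf $\infty$-categories recalled in \Cref{rmk:limitExtension}: for any $\infty$-category $\Ee$ admitting small limits, a limit-preserving functor $\PSh(T)\catop\to\Ee$ is determined by its restriction to $T\catop$, and conversely every functor $T\catop\to\Ee$ extends uniquely in this way. Accordingly, I would first check that both $X\mapsto\Fun_T(\ul X,\Cc)$ and $X\mapsto\Cc(X)$ are limit-preserving functors $\PSh(T)\catop\to\Cat_\infty$, then observe that \Cref{lem:YonedaLemma} identifies them on representables, and finally promote everything to a statement natural in $\Cc$ (with uniqueness) by running the same argument with values in $\Fun(\Cat_T,\Cat_\infty)$.

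For the limit-preservation: the limit-extension $X\mapsto\Cc(X)$ is limit-preserving by its very construction (\Cref{rmk:limitExtension}). For the other functor I would factor $X\mapsto\Fun_T(\ul X,\Cc)$ as $\ul{(\blank)}\colon\PSh(T)\to\Cat_T$ followed by $\Fun_T(\blank,\Cc)\colon\Cat_T\catop\to\Cat_\infty$. The functor $\ul{(\blank)}$ is the pointwise inclusion $\Fun(T\catop,\Spc)\hookrightarrow\Fun(T\catop,\Cat_\infty)$; since $\Spc\hookrightarrow\Cat_\infty$ is left adjoint to $\core$ (\Cref{rmk:CoreAdjoint}) and colimits in both functor categories are computed pointwise, $\ul{(\blank)}$ preserves colimits. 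And $\Fun_T(\blank,\Cc)=\Gamma\circ\ulFun_T(\blank,\Cc)$ carries colimits to limits: the internal hom of the cartesian closed $\infty$-category $\Cat_T$ sends colimits in its first variable to limits (by adjunction, since $\Ee\times\blank$ preserves colimits), and $\Gamma$ preserves limits by \Cref{rmk:AdjunctionConstantUnderlying}. Hence the composite $X\mapsto\Fun_T(\ul X,\Cc)$ sends colimits in $\PSh(T)$ to limits, as required.

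To obtain naturality in $\Cc$ and the uniqueness clause simultaneously, I would view the two assignments as functors $\PSh(T)\catop\to\Fun(\Cat_T,\Cat_\infty)$, namely $X\mapsto\bigl(\Cc\mapsto\Fun_T(\ul X,\Cc)\bigr)$ and $X\mapsto\bigl(\Cc\mapsto\Cc(X)\bigr)$; since limits in $\Fun(\Cat_T,\Cat_\infty)$ are pointwise, both preserve limits by the previous paragraph. The equivalence of \Cref{lem:YonedaLemma}, being natural in $B$ and $\Cc$, is precisely an equivalence between the restrictions of these two functors along $T\hookrightarrow\PSh(T)$. By \cite{HTT}*{Theorem~5.1.5.6} (as in \Cref{rmk:limitExtension}), restriction along the Yoneda embedding is an equivalence from the $\infty$-category of limit-preserving functors $\PSh(T)\catop\to\Fun(\Cat_T,\Cat_\infty)$ onto $\Fun\bigl(T\catop,\Fun(\Cat_T,\Cat_\infty)\bigr)$, hence induces equivalences on mapping $\infty$-groupoids; therefore the equivalence of \Cref{lem:YonedaLemma} lifts along an essentially unique equivalence $\Fun_T(\ul X,\Cc)\simeq\Cc(X)$ of functors on $\PSh(T)\catop\times\Cat_T$, which gives both existence and the asserted uniqueness.

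I do not expect a genuine obstacle: the content is essentially the universal property of $\PSh(T)$, and the paper itself flags the result as immediate. The one place that warrants a little care is the last step — organizing the dependence on $\Cc$ as a functor into $\Fun(\Cat_T,\Cat_\infty)$ so that the universal property applies with parameters and yields uniqueness — while the remaining verifications (colimit-preservation of $\ul{(\blank)}$, the limits-from-colimits property of the internal hom, and limit-preservation of $\Gamma$) are all formal and follow from results already recorded above.
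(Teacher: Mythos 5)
Your proposal is correct and matches the paper's intent exactly: the paper records this result as an immediate consequence of limit-extending the Categorical Yoneda Lemma along $T\hookrightarrow\PSh(T)$, and you have simply spelled out the necessary verifications (that both functors are limit-preserving in $X$, and that fully faithfulness of restriction along the Yoneda embedding gives uniqueness). The only cosmetic divergence is that you correctly record the contravariant dependence on $X$ by writing $\PSh(T)\catop\times\Cat_T$ where the paper informally writes $\PSh(T)\times\Cat_T$; this is a matter of the paper's notational shorthand, not a mathematical discrepancy.
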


\begin{corollary}
\label{cor:TCategoryOfTFunctors2}
Let $X \in\PSh(T)$ and let $\Cc$ and $\Dd$ be $T$-$\infty$-categories. Then there are natural equivalences of $\infty$-categories
\[
    \ulFun_T(\Cc,\Dd)(X) \simeq \Fun_T(\ul{X},\ulFun_T(\Cc,\Dd)) \simeq \Fun_T(\ul{X} \times \Cc, \Dd) \simeq \Fun_T(\Cc,\ulFun_T(\ul{X},\Dd)).
\]
\end{corollary}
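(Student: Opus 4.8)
The plan is to deduce all three equivalences formally, with no genuine computation, from \Cref{cor:YonedaLemmaPresheaves}, the exponential adjunction of \Cref{def:TCategoryOfTFunctors}, the symmetry of the cartesian product on $\Cat_T$, and the identity $\Fun_T(-,-)=\Gamma\big(\ulFun_T(-,-)\big)$ of \Cref{def:CategoryOfTFunctors}.

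First, the equivalence $\ulFun_T(\Cc,\Dd)(X)\simeq\Fun_T(\ul X,\ulFun_T(\Cc,\Dd))$ is exactly the instance of \Cref{cor:YonedaLemmaPresheaves} obtained by substituting the $T$-$\infty$-category $\ulFun_T(\Cc,\Dd)$ for the ``$\Cc$'' appearing there; nothing further is required.

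Next, for the second equivalence I would unwind $\Fun_T(\ul X,\ulFun_T(\Cc,\Dd))=\Gamma\big(\ulFun_T(\ul X,\ulFun_T(\Cc,\Dd))\big)$, apply the exponential adjunction $\ulFun_T(\ul X,\ulFun_T(\Cc,\Dd))\simeq\ulFun_T(\ul X\times\Cc,\Dd)$ from \Cref{def:TCategoryOfTFunctors}, and then pass to underlying $\infty$-categories to land in $\Fun_T(\ul X\times\Cc,\Dd)$. For the third equivalence I would use the braiding $\ul X\times\Cc\simeq\Cc\times\ul X$ in $\Cat_T$ and apply the exponential adjunction again, now in the form $\ulFun_T(\Cc\times\ul X,\Dd)\simeq\ulFun_T(\Cc,\ulFun_T(\ul X,\Dd))$, before taking $\Gamma$.

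Finally, for naturality one observes that each manipulation is an equivalence of functors of the triple $(X,\Cc,\Dd)$: the internal hom is a bifunctor $\Cat_T\catop\times\Cat_T\to\Cat_T$, both the exponential adjunction equivalence and the braiding are natural, $\Gamma$ is a functor, and the $\ul{(-)}$ construction $\PSh(T)\to\Cat_T$ is functorial; hence the composite chain is natural in all three variables. I do not expect any real obstacle here — all the substance is already contained in \Cref{lem:YonedaLemma}/\Cref{cor:YonedaLemmaPresheaves} and in the cartesian closure of $\Cat_T$. The only point requiring mild care is to keep track that ``$\Fun_T$'' always abbreviates ``$\Gamma\circ\ulFun_T$'', so that one may freely move between the parametrized and the underlying functor categories when invoking the exponential adjunction.
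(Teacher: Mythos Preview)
Your proposal is correct and follows essentially the same approach as the paper: the paper's proof simply reads ``The first equivalence is \Cref{cor:YonedaLemmaPresheaves}, while the others are immediate,'' and your write-up is precisely an unpacking of what ``immediate'' means here, via the cartesian closedness of $\Cat_T$ and the identity $\Fun_T = \Gamma \circ \ulFun_T$.
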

\begin{proof}
The first equivalence is \Cref{cor:YonedaLemmaPresheaves}, while the others are immediate.
\end{proof}

Our next goal is to give an alternative description of the functor $T$-$\infty$-category $\ulFun_T(\Cc,\Dd)$.

\begin{construction}
\label{cons:FunctorCategories}
Let $B \in T$ and let $\Cc$ and $\Dd$ be $T$-$\infty$-categories. We define a $T_{/B}$-functor
\[
    \pi_B^*\colon \pi_B^*\ulFun_{T}(\Cc,\Dd) \to \ulFun_{T_{/B}}(\pi_B^*\Cc,\pi_B^*\Dd)
\]
as adjoint to the composite
\[
    \pi_B^*\ulFun_{T}(\Cc,\Dd) \times \pi_B^*\Cc \simeq \pi_B^*(\ulFun_{T}(\Cc,\Dd) \times \Cc) \xrightarrow{\pi_B^*(\ev)} \pi_B^*\Dd.
\]
We obtain $T$-functors
\[
    \ulFun_T(\ul{B},\Dd) \to (\pi_B)_*\pi_B^*\Dd \qquad \text{ and } \qquad (\pi_B)_!\pi_B^*\Cc \to \ul{B} \times \Cc,
\]
where $\pi_{B!}\colon\Cat_{T}\to\Cat_{T_{/B}}$ denotes the left adjoint of $\pi_B^*$, i.e.\ left Kan extension along $\pi_B$.

The first one is adjoint to the composite $T_{/B}$-functor
\[
    \pi_B^*\ulFun_T(\ul{B},\Dd) \xrightarrow{\pi_B^*} \ulFun_{T_{/B}}(\pi_B^*\ul{B},\pi_B^*\Dd) \xrightarrow{\ev_{\id_B}} \pi_B^*\Dd,
\]
while the second one is adjoint to the composite
\[
    \pi_B^*\Cc \xrightarrow{(\iota, \id)} \pi_B^*\ul{B} \times \pi_B^*\Cc \simeq \pi_B^*(\ul{B} \times \Cc).
\]
Here $\ev_{\id_B}$ denotes evaluation at the object $\id_B \in \Gamma(\pi_B^*\ul{B}) = \Hom_T(B,B)$, and $\iota$ picks out $\id_B \in \Gamma(\pi_B^*\ul{B}) = \Hom_T(B,B)$. Observe that the resulting map $(\pi_B)_!\pi_B^*\Cc \to \ul{B} \times \Cc$ is the total mate of the map $\ulFun_T(\ul{B},\Dd) \to (\pi_B)_*\pi_B^*\Dd$.
\end{construction}

\begin{corollary}
	\label{cor:TCategoryOfTFunctors}
	Let $\Cc$ and $\Dd$ be $T$-$\infty$-categories and let $B \in T$. The following hold:
	\begin{enumerate}[(1)]
		\item \label{it:TFun2} \label{it:TFun3} The $T$-functors
		\[
		    \ulFun_T(\ul{B},\Dd) \iso (\pi_B)_*\pi_B^*\Dd \qquad \text{ and } \qquad (\pi_B)_!\pi_B^*\Cc \iso \ul{B} \times \Cc
		\]
		from \Cref{cons:FunctorCategories} are equivalences of $T$-$\infty$-categories.
		\item \label{it:TFun5} The $T_{/B}$-functor
		\[
		    \pi_B^*\colon \pi_B^*\ulFun_T(\Cc,\Dd) \iso \ulFun_{T_{/B}}(\pi_B^*\Cc,\pi_B^*\Dd)
		\]
		from \Cref{cons:FunctorCategories} is an equivalence of $T_{/B}$-$\infty$-categories.
		\item \label{it:TFun4} In particular, passing to global sections gives an equivalence of $\infty$-categories
		\[
		    \ulFun_T(\Cc,\Dd)(B) \iso \Fun_{T_{/B}}(\pi_B^*\Cc,\pi_B^*\Dd).
		\]
	\end{enumerate}
\end{corollary}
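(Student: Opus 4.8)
The plan is to deduce all three statements from the categorical Yoneda lemma (\Cref{lem:YonedaLemma}, \Cref{cor:YonedaLemmaPresheaves}, \Cref{cor:TCategoryOfTFunctors2}) together with the formal properties of the base-change adjunctions $(\pi_B)_!\dashv\pi_B^*\dashv(\pi_B)_*$ from \Cref{ex:baseChangeCategory}. The organizing principle is that $\pi_B^*\colon\Cat_T\to\Cat_{T/B}$ is a base-change functor: it is restriction along $\pi_B^{\op}\colon(T_{/B})^{\op}\to T^{\op}$, and $\pi_B^{\op}$ is the cocartesian unstraightening of the groupoid-valued presheaf $\ul B$, so that $\Cat_{T/B}$ may be identified with the slice $(\Cat_T)_{/\ul B}$ in a way that turns $\pi_B^*$ into $\ul B\times(-)$, $(\pi_B)_!$ into the forgetful functor, and $(\pi_B)_*$ into its right adjoint. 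I will use this slice picture only as a guide; for the argument itself the identity $(\pi_B)_*=(-\times B)^*$ from \Cref{ex:baseChangeCategory} and a projection formula suffice.

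For part~(1), I would first establish $\ulFun_T(\ul B,\Dd)\simeq(\pi_B)_*\pi_B^*\Dd$ by comparing limit-extensions. By \Cref{ex:baseChangeCategory}, $(\pi_B)_*\pi_B^*\Dd$, viewed as a limit-preserving functor on $\PSh(T)^{\op}$, sends $X$ to $\Dd(X\times B)$; on the other hand, by \Cref{cor:TCategoryOfTFunctors2} and \Cref{cor:YonedaLemmaPresheaves}, the limit-extension of $\ulFun_T(\ul B,\Dd)$ sends $X$ to $\Fun_T(\ul X\times\ul B,\Dd)\simeq\Fun_T(\ul{X\times B},\Dd)\simeq\Dd(X\times B)$, where the first step uses that products of $T$-$\infty$-groupoids agree with products of presheaves. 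The equivalence $(\pi_B)_!\pi_B^*\Cc\simeq\ul B\times\Cc$ is then formal: it is the total mate of the first map, so for all $\Cc,\Dd$ the maps the two induce on mapping spaces $\Hom_{\Cat_T}(\ul B\times\Cc,\Dd)\to\Hom_{\Cat_T}((\pi_B)_!\pi_B^*\Cc,\Dd)$ and $\Hom_{\Cat_T}(\Cc,\ulFun_T(\ul B,\Dd))\to\Hom_{\Cat_T}(\Cc,(\pi_B)_*\pi_B^*\Dd)$ are identified under the adjunctions, whence one $T$-functor is an equivalence iff the other is. (Alternatively one computes $((\pi_B)_!\pi_B^*\Cc)(A)\simeq\colim_{\pi_B^{\op}\downarrow A}\Cc$ directly: this comma category fibres over the space $\Hom_T(A,B)$, each fibre has a terminal object whose inclusion is therefore cofinal, and $-\times\Cc(A)$ preserves colimits, so the colimit is $\Hom_T(A,B)\times\Cc(A)=(\ul B\times\Cc)(A)$.) The point that needs genuine care is that the concrete comparison $T$-functors built in \Cref{cons:FunctorCategories} out of $\pi_B^*$, $\ev_{\id_B}$, and the unit/counit maps agree with the abstract equivalences produced here.

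For part~(2), the $T_{/B}$-functor in question is the canonical map testing whether $\pi_B^*$ preserves internal homs. To see it is an equivalence, map an arbitrary $T_{/B}$-$\infty$-category $\mathcal F$ into both sides: via $(\pi_B)_!\dashv\pi_B^*$ and the defining adjunctions for $\ulFun$ one obtains $\Hom_{\Cat_{T/B}}(\mathcal F,\pi_B^*\ulFun_T(\Cc,\Dd))\simeq\Hom_{\Cat_T}((\pi_B)_!\mathcal F\times\Cc,\Dd)$ and $\Hom_{\Cat_{T/B}}(\mathcal F,\ulFun_{T/B}(\pi_B^*\Cc,\pi_B^*\Dd))\simeq\Hom_{\Cat_T}((\pi_B)_!(\mathcal F\times\pi_B^*\Cc),\Dd)$, so it remains to establish the projection formula $(\pi_B)_!(\mathcal F\times\pi_B^*\Cc)\simeq(\pi_B)_!\mathcal F\times\Cc$ naturally and compatibly with the counits. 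Under the slice picture this is immediate, since $\pi_B^*\Cc=\ul B\times\Cc$ and products in $(\Cat_T)_{/\ul B}$ are fibre products over $\ul B$; alternatively it follows pointwise from the left Kan extension formula and the fibrewise cofinality used above. Finally, to upgrade these mapping-space equivalences to equivalences of $\infty$-categories I would argue as in the proof of \Cref{lem:YonedaLemma}, using that $\Cat_T$ and $\Cat_{T/B}$ are cotensored over $\Cat_\infty$ and that $\pi_B^*$, being a restriction functor, commutes with the cotensor.

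Part~(3) is then obtained by applying global sections $\Gamma_{T/B}$ to the equivalence of part~(2), together with \Cref{def:CategoryOfTFunctors} and the identity $\Gamma_{T/B}(\pi_B^*\mathcal E)\simeq\mathcal E(B)$, which holds because $\pi_B^*\mathcal E$, as a limit-preserving functor on $\PSh(T)_{/B}^{\op}$, sends $(Y\to B)$ to $\mathcal E(Y)$ and hence takes the value $\mathcal E(B)$ at the terminal object $\ul B\xrightarrow{\id}\ul B$. (One can also argue via \Cref{cor:TCategoryOfTFunctors2}, part~(1), and the enhancement of $(\pi_B)_!\dashv\pi_B^*$ to functor $\infty$-categories.) The main obstacle throughout is not conceptual depth but coherence bookkeeping: checking that the explicit natural transformations assembled in \Cref{cons:FunctorCategories} from units, counits and the $\ev_{\id_B}$-maps really coincide with the abstract equivalences constructed above, i.e.\ tracking coherences through the several superimposed adjunctions and cotensor identifications. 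Everything else --- $(\pi_B)_*=(-\times B)^*$, the product-preservation of $\PSh(T)\hookrightarrow\Cat_T$, the comma-category cofinality, and the projection formula --- is routine.
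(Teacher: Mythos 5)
Your proposal is correct, and parts (2) and (3) take a genuinely different route from the paper's. You first argue part~(1) by comparing the limit-extensions of both sides to $X\mapsto\Dd(X\times B)$ --- essentially the same calculation the paper does (which proceeds pointwise at each $A\in T$ via \Cref{lem:YonedaLemma} and then verifies a small commutative square to identify the abstract equivalence with the concrete map of \Cref{cons:FunctorCategories}) --- and deduce the second equivalence by total mates, again as in the paper. The divergence is in the order of the remaining two parts: the paper proves part~(3) next, by the diagram
\[
\begin{tikzcd}[cramped]
    \Fun_T(\ul{B},\ulFun_T(\Cc,\Dd)) \dar{\sim} \rar{\sim} & \Fun_T(\Cc,\ulFun_T(\ul{B},\Dd)) \rar{\sim} & \Fun_T(\Cc,(\pi_B)_*\pi_B^*\Dd) \dar{\sim} \\
    \ulFun_T(\Cc,\Dd)(B) \ar{rr}{\pi_B^*(B)} &&  \Fun_{T_{/B}}(\pi_B^*\Cc,\pi_B^*\Dd),
\end{tikzcd}
\]
and then gets part~(2) from part~(3) by a two-out-of-three argument on global sections of the composite $\pi_A^*\ulFun_T(\Cc,\Dd) \to \pi_A^*\ulFun_{T_{/B}}(\pi_B^*\Cc,\pi_B^*\Dd) \to \ulFun_{T_{/A}}(\pi_A^*\Cc,\pi_A^*\Dd)$. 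You instead prove part~(2) directly by a mapping-space computation, at the cost of establishing the projection formula $(\pi_B)_!(\mathcal F\times\pi_B^*\Cc)\simeq(\pi_B)_!\mathcal F\times\Cc$ (which does hold, and your slice-picture justification via $\Cat_{T_{/B}}\simeq(\Cat_T)_{/\ul B}$ is sound since $(T_{/B})^{\op}\to T^{\op}$ is a left fibration), and then deduce part~(3) by taking global sections over $T_{/B}$. The trade is a clean formal identity (your projection formula) versus an extra diagram chase (the paper's proof of part~(3)); both require, as you correctly flag, a final coherence check that the abstractly constructed equivalences match the specific $T$-functors of \Cref{cons:FunctorCategories} --- the paper makes this explicit in each of its commutative squares, whereas you defer it, which is the one place your outline would need to be filled in.
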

\begin{proof}
For part (1), it suffices to show the first equivalence, since the second equivalence follows by passing to total mates. For this, we have to show that for every object $A \in T$ the induced map $\ulFun_T(\ul{B},\Dd)(A) \to (\pi_B)_*\pi_B^*\Dd(A) = \Dd(A \times B)$ is an equivalence. Given \Cref{lem:YonedaLemma}, it will suffice to show that the following diagram commutes:
\[
\begin{tikzcd}
    \Fun_T(\ul{A},\ulFun_T(\ul{B},\Dd)) \dar{\sim}[swap]{\ref{lem:YonedaLemma}} \rar{\sim} & \Fun_T(\ul{A \times B},\Dd) \dar{\sim}[swap]{\ref{lem:YonedaLemma}} \\
    \ulFun_T(\ul{B},\Dd)(A) \rar & \Dd(A \times B).
\end{tikzcd}
\]
This follows from unwinding the definitions, using the observation that the equivalence $\Fun_T(\ul{B},\Dd) \to \Dd(B)$ from \Cref{lem:YonedaLemma} is the map induced on global sections by the map $\ulFun_T(\ul{B},\Dd) \to (\pi_B)_*\pi_B^*\Dd$.

We will next prove part (3). It suffices to show that the following diagram commutes:
\[
\begin{tikzcd}
    \Fun_T(\ul{B},\ulFun_T(\Cc,\Dd)) \dar{\sim}[swap]{\ref{lem:YonedaLemma}} \rar{\sim} & \Fun_T(\Cc,\ulFun_T(\ul{B},\Dd)) \rar{\sim}[swap]{(1)} & \Fun_T(\Cc,(\pi_B)_*\pi_B^*\Dd) \dar{\sim} \\
    \ulFun_T(\Cc,\Dd)(B) \ar{rr}{\pi_B^*(B)} &&  \Fun_{T_{/B}}(\pi_B^*\Cc,\pi_B^*\Dd).
\end{tikzcd}
\]
This is again a matter of unwinding definitions, using that the equivalence of (1) is defined in terms of the map $\pi_B^*$ and evaluation at $\id_B$.

Finally, for part (2) it remains to show that the map $\pi_B^*$ induces an equivalence when evaluated at every object $A \in T_{/B}$. To see this, consider the following two $T_{/A}$-functors:
\[
    \pi_A^*\ulFun_T(\Cc,\Dd) \to \pi_A^*\ulFun_{T_{/B}}(\pi_B^*\Cc,\pi_B^*\Dd) \to \ulFun_{T_{/A}}(\pi_A^*\Cc,\pi_A^*\Dd).
\]
Here we abuse notation by writing $A$ both for an object in $T_{/B}$ and for its underlying object in $T$. By part (3), both the second map and the composite map induce equivalences on global sections, and therefore so does the first. This finishes the proof.
\end{proof}

For later use let us describe the functoriality of $\ul\Fun_{T_{/B}}(\pi_B^*\Cc,\pi_B^*\Dd)$ in $B$. While this can be done in a fully coherent fashion using the results and techniques of \cite{HHLNb}, for the purposes of the present paper the following more elementary lemma will be sufficient:

\begin{lemma}\label{lemma:restriction_functor_cat}
	Let $f\colon A\to B$ be a map in $T$, and let $\sigma\colon \pi_B\circ T_{/f}\simeq \pi_A$ be the usual equivalence. Then the diagram
	\begin{equation*}
		\begin{tikzcd}[cramped]
			\Hom_{T_{/B}}(\pi_B^*\Cc,\pi_B^*\Dd)\arrow[d,hook]\arrow[r, "T_{/f}^*"] & \Hom_{T_{/A}}(T_{/f}^*\pi_B^*\Cc,T_{/f}^*\pi_B^*\Dd)\arrow[r, "\sigma^*"] &
			\Hom_{T_{/A}}(\pi_A^*\Cc,\pi_A^*\Dd)\arrow[d,hook]\\
			\Fun_{T_{/B}}(\pi_B^*\Cc,\pi_B^*\Dd)\arrow[d,"\sim"',"\ref{cor:TCategoryOfTFunctors}"]&&\Fun_{T_{/A}}(\pi_A^*\Cc,\pi_A^*\Dd)\arrow[d, "\sim"', "\ref{cor:TCategoryOfTFunctors}"]\\
			\ul\Fun_T(\Cc,\Dd)(B)\arrow[rr, "f^*"'] && \ul\Fun_T(\Cc,\Dd)(A)
		\end{tikzcd}
	\end{equation*}
	of natural transformations of functors $\Cat_T^\op\times\Cat_T\to\Cat_\infty$ commutes up to homotopy.
	\begin{proof}
		Unravelling the definitions and using the naturality of the equivalences from the previous corollary, it suffices to construct a homotopy filling
		\begin{equation*}
			\begin{tikzcd}[cramped]
				\Hom_{T_{/B}}(\pi_B^*\Cc,\pi_B^*\Dd)\arrow[d,"\sim"']\arrow[r, "T_{/f}^*"] & \Hom_{T_{/A}}(T_{/f}^*\pi_B^*\Cc,T_{/f}^*\pi_B^*\Dd)
				\arrow[r, "\sigma^*"] &
				\Hom_{T_{/A}}(\pi_A^*\Cc,\pi_A^*\Dd)\arrow[d,"\sim"]\\
				\Hom_T(\pi_{B!}\pi_B^*\Cc,\Dd)\arrow[d, "j_B^*"'] && \Hom_T(\pi_{A!}\pi_A^*\Cc,\Dd)\arrow[d, "j_A^*"]\\
				\Hom_T(\ul B\times \Cc,\Dd)\arrow[rr, "(f\times\Cc)^*"'] && \Hom_T(\ul A\times \Cc,\Dd)
			\end{tikzcd}
		\end{equation*}
		where the unlabelled equivalences come from adjunction and $j_A\colon\pi_{A!}\pi_A^*\Cc\to\ul A\times\Cc$, $j_B\colon\pi_{B!}\pi_B^*\Cc\to\ul B\times\Cc$ are as in Corollary~\ref{cor:TCategoryOfTFunctors} again.

		Obviously, we can make the lower half commute by adding the restriction along the composite $\pi_{A!}\pi_A^*\Cc\simeq\ul A\times \Cc\to\ul B\times\Cc\simeq\pi_{B!}\pi_B^*$ as the middle arrow. Similarly, we can make the upper portion commute by taking the restriction along
		\begin{equation*}
			f_\lozenge\colon (\pi_A)_!\pi_A^*\Cc \xrightarrow[\smash{\raise2pt\hbox{$\scriptstyle\sim$}}]{\;\sigma^*\,} (\pi_A)_! T_{/f}^*\pi_B^*\Cc \longrightarrow (\pi_B)_! \pi_B^* \Cc
		\end{equation*}
		instead, where the second map is the mate of $\sigma^*$. It will therefore suffice to show that these two natural transformations $\pi_{A!}\pi_A^*\Rightarrow\pi_{B!}\pi_B^*$ are in fact homotopic. Plugging in the definitions of the equivalences $j_A$ and $j_B$, this amounts to saying that $j_Bf_\lozenge$ is adjunct to the map $\pi_A^*\Cc\to\pi_A^*\ul B\times\pi_A^*\Cc$ picking out $f\in(\pi_A^*\ul B)(\id_A)=\Hom(A,B)$.

		Further plugging in definitions, the adjunct of $jf_\lozenge$ is the top right composite in
		\begin{equation*}
			\begin{tikzcd}
				\pi_A^*\Cc\arrow[r, "\sigma^*"] & T_{/f}^*\pi_B^*\Cc\arrow[r,"\eta"]\arrow[dr, bend right=10pt, dashed] & T_{/f}^*\pi_B^*(\pi_B)_!\pi_B^*\Cc\arrow[r, "(\sigma^{-1})^*"]\arrow[d, "j_B"'] & \pi_A^*(\pi_B)_!\pi_B^*\Cc\arrow[d, "j_B"]\\
				&& T_{/f}^*\pi_B^*(\ul B\times\Cc)\arrow[r, "(\sigma^{-1})^*"'] & \pi_A^*(\ul B\times\Cc).
			\end{tikzcd}
		\end{equation*}
		The square on the right commutes by naturality and the dashed composite is by definition the adjunct of $j_B$, i.e.~it is induced by the map $\pi_B^*\Cc\to \pi_B^*(\ul B\times\Cc)$ classifying $\id_B\in\ul B(B)$. It follows that after postcomposing with the projection $\pi_A^*(\ul B\times\Cc)\to\pi_A^*\Cc$ the above is simply the identity, and it remains to show that the natural map $\pi_A^*\Cc\to\pi_A^*\ul B$ obtained by postcomposing with the other projection classifies $f\in\ul B(A)$. But indeed, as a functor of $\Cat_T$ the right hand side is constant, so any natural transformation into it is determined by its value on the terminal object. Together with application of the Yoneda lemma it therefore suffices that for $\Cc=*$ the map $(\pi_A^**)(\id_A)\to (\pi_A^*\ul B)(\id_A)$ hits $f$. However, by the above commutative diagram this can be identified with $(\pi_B^**)(f)\to(\pi_B^*\ul B)(f)$, which hits $f=f^*(\id_B)$ for formal reasons.
	\end{proof}
\end{lemma}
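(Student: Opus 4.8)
The plan is to trace through the construction of the equivalence of \Cref{cor:TCategoryOfTFunctors} and to reduce the asserted homotopy to a single formal compatibility between mates. Since the common source $(\Cc,\Dd)\mapsto\Hom_{T_{/B}}(\pi_B^*\Cc,\pi_B^*\Dd)$ of the two composites in the diagram is valued in $\infty$-groupoids, both composites factor through the maximal subgroupoid of the target $\ulFun_T(\Cc,\Dd)(A)$, which by \Cref{cor:TCategoryOfTFunctors2} is naturally $\Hom_{\Cat_T}(\ul A\times\Cc,\Dd)$; so it suffices to produce a homotopy of the two resulting natural transformations of $\infty$-groupoid-valued functors. Unravelling \Cref{cons:FunctorCategories} and \Cref{cor:TCategoryOfTFunctors}, the comparison between the maximal subgroupoid of $\ulFun_T(\Cc,\Dd)(B)$ and $\Hom_{T_{/B}}(\pi_B^*\Cc,\pi_B^*\Dd)$ is the composite
\[
\Hom_{\Cat_T}(\ul B\times\Cc,\Dd)\xrightarrow[\sim]{j_B^*}\Hom_{\Cat_T}((\pi_B)_!\pi_B^*\Cc,\Dd)\iso\Hom_{T_{/B}}(\pi_B^*\Cc,\pi_B^*\Dd),
\]
where $j_B\colon(\pi_B)_!\pi_B^*\Cc\to\ul B\times\Cc$ is the equivalence of \Cref{cor:TCategoryOfTFunctors} and the second map is the $(\pi_B)_!\dashv\pi_B^*$ adjunction; and, by naturality of the Yoneda equivalence (\Cref{cor:YonedaLemmaPresheaves}), $f^*$ on $\ulFun_T(\Cc,\Dd)$ corresponds under these identifications to restriction along $f\times\Cc\colon\ul A\times\Cc\to\ul B\times\Cc$. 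Thus the whole statement is reduced to producing a homotopy in the square with top edge $\sigma^*\circ T_{/f}^*\colon\Hom_{T_{/B}}(\pi_B^*\Cc,\pi_B^*\Dd)\to\Hom_{T_{/A}}(\pi_A^*\Cc,\pi_A^*\Dd)$, bottom edge $(f\times\Cc)^*$, and verticals the equivalences just described.

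I would then split this square along the intermediate $\infty$-groupoids $\Hom_{\Cat_T}((\pi_B)_!\pi_B^*\Cc,\Dd)$ and $\Hom_{\Cat_T}((\pi_A)_!\pi_A^*\Cc,\Dd)$ appearing in the two verticals. The lower half commutes by construction once one inserts restriction along the composite $(\pi_A)_!\pi_A^*\Cc\xrightarrow{j_A}\ul A\times\Cc\xrightarrow{f\times\Cc}\ul B\times\Cc$ as its middle arrow, while a routine manipulation with mates shows that the upper half commutes with the middle arrow given instead by restriction along
\[
f_\lozenge\colon(\pi_A)_!\pi_A^*\Cc\xrightarrow[\sim]{\sigma^*}(\pi_A)_!T_{/f}^*\pi_B^*\Cc\longrightarrow(\pi_B)_!\pi_B^*\Cc,
\]
whose second arrow is the mate of $\sigma^*$ under the identification $(\pi_A)_!=(\pi_B)_!(T_{/f})_!$ and the counit of $(T_{/f})_!\dashv T_{/f}^*$. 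The claim is thereby reduced to showing that the two natural transformations $(\pi_A)_!\pi_A^*\Cc\Rightarrow(\pi_B)_!\pi_B^*\Cc$ given by $f_\lozenge$ and by $j_B^{-1}\circ(f\times\Cc)\circ j_A$ are homotopic; equivalently, that $j_B\circ f_\lozenge\simeq(f\times\Cc)\circ j_A$ as maps into $\ul B\times\Cc$.

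This I would check after projecting $\ul B\times\Cc$ onto each of its two factors. On the $\Cc$-factor both composites become the counit $(\pi_A)_!\pi_A^*\Cc\to\Cc$ of the adjunction $(\pi_A)_!\dashv\pi_A^*$: for $(f\times\Cc)\circ j_A$ this follows from the description of $j_A$ in \Cref{cons:FunctorCategories} as adjoint to $(\iota,\id)$ together with the fact that $f\times\Cc$ is the identity on the $\Cc$-factor, and for $j_B\circ f_\lozenge$ it follows from the triangle identities, since $f_\lozenge$ is precisely the comparison of the adjunction $(\pi_A)_!\dashv\pi_A^*$ with the composite of $(T_{/f})_!\dashv T_{/f}^*$ and $(\pi_B)_!\dashv\pi_B^*$. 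On the $\ul B$-factor, \Cref{cons:FunctorCategories} identifies $\pr_{\ul B}\circ(f\times\Cc)\circ j_A$ with the map adjoint to the natural transformation $\pi_A^*\Cc\to\pi_A^*\ul B$ classifying $f\in\Hom_T(A,B)=(\pi_A^*\ul B)(\id_A)$; unwinding the mate inside $f_\lozenge$ and postcomposing the ensuing naturality square with the projection to $\pi_A^*\ul B$ exhibits $\pr_{\ul B}\circ j_B\circ f_\lozenge$ as adjoint to another natural transformation $\pi_A^*\Cc\to\pi_A^*\ul B$, and it remains to see that the two classify the same element. As a functor of $\Cc$ the target $\pi_A^*\ul B$ is constant, so the two agree once they do at $\Cc=*$, and there it suffices by the Yoneda lemma to check that the induced map $(\pi_A^**)(\id_A)\to(\pi_A^*\ul B)(\id_A)$ hits $f$; under the identifications at hand this map is $(\pi_B^**)(f)\to(\pi_B^*\ul B)(f)$, which hits $f=f^*(\id_B)$ for formal reasons. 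Naturality in $\Cc$ and $\Dd$ of the resulting homotopy is automatic, since every map entering it is.

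The hard part is the bookkeeping in the middle of the argument: correctly producing the mate $f_\lozenge$ and executing the two ensuing mates computations — that the upper half of the split square commutes via $f_\lozenge$, and that the $\ul B$-component of $j_B\circ f_\lozenge$ is the asserted classifying map. Everything else — the reduction to $\infty$-groupoid-valued functors, the factorization of the square, and the $\Cc$-component check — is routine manipulation of adjunctions and the Yoneda lemma.
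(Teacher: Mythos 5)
Your proposal is correct and follows essentially the same route as the paper's proof: reduce via the adjunction $(\pi_B)_!\dashv\pi_B^*$ to a square valued in $\Hom$-spaces of $\Cat_T$, split it by inserting $f_\lozenge$ and the $j_A$-then-$(f\times\Cc)$ composite as competing middle arrows, reduce to comparing $j_B\circ f_\lozenge$ with $(f\times\Cc)\circ j_A$, and verify this on the two projection factors (the $\Cc$-factor is formal from the triangle identities, the $\ul B$-factor is handled by constancy and Yoneda). Your identification of the $\Cc$-projection with the counit is a slightly cleaner phrasing of the paper's "simply the identity" after adjunction, but the decomposition and key steps coincide.
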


We will now prove the adjunction between $\Ee \mapsto \ul{\Ee}_T$ and $\Cc \mapsto \smallint \Cc$ promised in \Cref{rmk:AdjunctionGrothendieckConstructionTObjects}.

\begin{lemma}
	\label{lem:AdjunctionGrothendieckConstructionTObjects}
	The functor $\smallint\colon \Cat_T \to \Cat_{\infty}$, sending a $T$-$\infty$-category $\Cc\colon T\catop \to \Cat_{\infty}$ to the total space $\smallint \Cc$ of the cocartesian fibration $\int \Cc \to T\catop$ it classifies, admits a right adjoint given by the construction $\Ee \mapsto \ul{\Ee}_T$ of \cref{ex:TObjectsInCategory}.
\end{lemma}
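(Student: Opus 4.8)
The plan is to show directly that $\Ee \mapsto \ul{\Ee}_T$ represents the functor $\Cc \mapsto \Hom_{\Cat_\infty}(\smallint\Cc, \Ee)$; concretely, I would produce an equivalence
\[
    \Hom_{\Cat_T}(\Cc, \ul{\Ee}_T) \simeq \Hom_{\Cat_\infty}(\smallint\Cc, \Ee)
\]
of functors $\Cat_T\catop \times \Cat_\infty \to \Spc$, which is precisely the data of the asserted adjunction.

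To compute the left-hand side I would use that $\Cat_T = \Fun(T\catop, \Cat_\infty)$ is a functor $\infty$-category, so that $\Hom_{\Cat_T}(\Cc, \ul{\Ee}_T)$ is the end $\smallint_{B \in T\catop} \Hom_{\Cat_\infty}(\Cc(B), \ul{\Ee}_T(B))$. Inserting the defining formula $\ul{\Ee}_T(B) = \Fun((T_{/B})\catop, \Ee)$ of \Cref{ex:TObjectsInCategory} and using that $\Cat_\infty$ is cartesian closed rewrites the integrand as $\Hom_{\Cat_\infty}(\Cc(B) \times (T_{/B})\catop, \Ee)$; since $\Hom_{\Cat_\infty}(-,\Ee)$ carries colimits to limits, this end is then computed as $\Hom_{\Cat_\infty}$ out of the corresponding coend, giving
\[
    \Hom_{\Cat_T}(\Cc, \ul{\Ee}_T) \simeq \Hom_{\Cat_\infty}\!\Bigl(\smallint^{B \in T}\Cc(B) \times (T_{/B})\catop,\; \Ee\Bigr),
\]
naturally in $\Cc$ and $\Ee$. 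The lemma thus reduces to a natural equivalence $\smallint^{B \in T}\Cc(B)\times(T_{/B})\catop \simeq \smallint\Cc$.

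This remaining equivalence is the standard description of the total space of the cocartesian unstraightening as the \emph{oplax colimit} of $\Cc$, which I would either cite or establish as follows. Both $\Cc \mapsto \smallint\Cc$ and $\Cc \mapsto \smallint^{B}\Cc(B)\times(T_{/B})\catop$ are colimit-preserving functors $\Cat_T \to \Cat_\infty$, and there is an evident natural comparison map from the coend to $\smallint\Cc$, sending a representative $(x \in \Cc(B),\, f\colon A \to B)$ to the object $(A, f^*x)$ of $\smallint\Cc$. As the objects $\ul{B} \otimes \mathcal X$, for $B \in T$ and $\mathcal X \in \Cat_\infty$, generate $\Cat_T$ under colimits, it suffices to check the comparison map is an equivalence on those; and there both sides identify with $(T_{/B})\catop \times \mathcal X$ — on one side by the direct computation $\smallint\ul{B} \simeq (T_{/B})\catop$ of a category of elements, on the other by the co-Yoneda lemma $\smallint^{A}\Hom_T(A,B)\times(T_{/A})\catop \simeq (T_{/B})\catop$.

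The hard part will be exactly this identification of the coend with the unstraightening: it rests on the oplax colimit formula together with the fact that $\smallint$ preserves colimits, both standard in the theory of (co)cartesian fibrations (cf.\ \cite{HTT}) but where essentially all the content of the statement resides. By contrast, propagating the naturality in both variables through the chain of equivalences above, while somewhat tedious, is routine.
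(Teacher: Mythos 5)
Your proof is correct but takes a genuinely different route from the paper's. You reduce the adjunction to a direct (co)end computation, with the single nontrivial input being the oplax colimit formula $\smallint\Cc \simeq \smallint^{B}\Cc(B)\times(T_{/B})\catop$ for the cocartesian unstraightening (which is indeed standard, e.g.\ Gepner--Haugseng--Nikolaus). The paper instead first establishes the existence of the right adjoint $R$ abstractly, by factoring $\smallint$ as $\Cat_T \simeq (\Cat_\infty)^{\cocart}_{/T^{\op}} \hookrightarrow (\Cat_\infty)_{/T^{\op}} \to \Cat_\infty$ and citing model-categorical input (HA Example~B.2.10, Remark~B.0.28 plus a result on underlying adjunctions of Quillen adjunctions) for the middle inclusion; only afterwards does it identify $R(\Ee)$ with $\ul\Ee_T$ by a Yoneda argument using $\smallint\ul B \simeq (T_{/B})^{\op}$. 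Your approach stays entirely within $\infty$-category theory and is arguably more conceptual, but shifts the real content onto the oplax-colimit citation; in particular note that your appeal to "$\smallint$ preserves colimits" is itself a consequence of that formula (it is not a priori obvious without it, and is essentially what the paper's model-categorical citation is doing the work of establishing). One further difference: the paper's proof actually delivers, as a formal by-product via cotensoring, the \emph{$\infty$-categorical} enhancement $\Fun_T(\Cc,\ul\Ee_T) \simeq \Fun(\smallint\Cc,\Ee)$, which is used later (e.g.\ in Remark~\ref{rmk:unwinding_T_object_adj} and Remark~\ref{rk:grothendieck-vs-evaluation}); your mapping-space equivalence proves the lemma as stated, and can be similarly upgraded by the same cotensoring trick, but you would need to add that step.
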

\begin{proof}
	The functor $\smallint\colon \Cat_T \to \Cat_{\infty}$ can be expanded into the following composite functor:
	\begin{align*}
		\Cat_T \overset{\ref{rmk:TCategoriesAsCocartesianFibrations}}{\simeq} (\Cat_{\infty})_{/T\catop}^{\cocart} \hookrightarrow (\Cat_{\infty})_{/T\catop} \xrightarrow{\fgt} \Cat_{\infty}
	\end{align*}
	By \cite{Ramzi2022Grothendieck}*{Proposition~A.1}, the functor in the middle is cocontinuous, as is the forgetful functor for trivial reasons. It follows from the Special Adjoint Functor Theorem that $\smallint\colon \Cat_T \to \Cat_{\infty}$ admits a right adjoint $R\colon \Cat_{\infty} \to \Cat_T$.

	As a formal consequence we obtain for each $T$-$\infty$-category $\Cc$ and for each $\infty$-category $\Ee$ a natural equivalence
	\begin{align*}
		\Fun_T(\Cc,R(\Ee)) \simeq \Fun(\smallint \Cc,\Ee)
	\end{align*}
	between the $\infty$-category $T$-functors $\Cc \to R(\Ee)$ and the $\infty$-category of functors $\smallint \Cc \to \Ee$: for every other $\infty$-category $\Ee'$ there is a natural equivalence
	\begin{align*}
		\Hom_{\Cat_{T}}(\Ee',\Fun_T(\Cc,R(\Ee))) &\simeq \Hom_{\Cat_T}(\Cc \times \const_{\Ee'}, R(\Ee)) \\
		&\simeq \Hom_{\Cat_{\infty}}(\smallint(\Cc \times \const_{\Ee'}), \Ee)\\ &\simeq \Hom_{\Cat_{\infty}}(\smallint\Cc \times \Ee', \Ee) \\
		&\simeq \Hom_{\Cat_{\infty}}(\Ee', \Fun(\smallint\Cc, \Ee)),
	\end{align*}
	where we use that the cocartesian unstraightening of $\const_{\Ee'}$ is $T\catop \times \Ee'$ and that the inclusion $(\Cat_{\infty})_{/T\catop}^{\cocart} \hookrightarrow (\Cat_{\infty})_{/T\catop}$ preserves finite products. The claim now follows from the Yoneda lemma.

	The description of $R$ as the functor $\Ee \mapsto \ul{\Ee}_T$ from \cref{ex:TObjectsInCategory} now follows immediately by recalling that the cocartesian unstraightening of the functor $\ul{B} = \Hom_T(-, B)\colon T\catop \to \Spc$ is by definition given by the target functor $(T_{/B})\catop \to T\catop$. Namely for any $\Ee \in \Cat_{\infty}$ and $B \in T$ we have a natural equivalence
	\begin{align}\label{eq:identifying-R}
		R(\Ee)(B) \overset{\ref{lem:YonedaLemma}}{\simeq} \Fun_T(\ul{B},R(\Ee)) \simeq \Fun(\smallint \ul{B},\Ee) \simeq \Fun((T_{/B})\catop,\Ee) = \ul{\Ee}_T(B).
	\end{align}
	This finishes the proof.
\end{proof}

\begin{remark}\label{Rmk:FunTobjects}
	Combining the previous lemma and \Cref{cor:TCategoryOfTFunctors2} we obtain a natural equivalence
	\[\ulFun_T(\Cc,\Ee_T) \simeq \Fun(\smallint \Cc\times \ul{(-)},\Ee).\]
\end{remark}

\begin{remark}\label{rk:grothendieck-vs-evaluation}
	Let $B\in T$ arbitrary. Unravelling the chain of equivalences $(\ref{eq:identifying-R})$ we see that the diagram
	\begin{equation*}
		\begin{tikzcd}
			\Fun_T(\ul B,\ul{\mathcal E}_T)\arrow[r, "\text{adjunction}"]\arrow[d, "\text{Yoneda}"'] &[2em] \Fun(\smallint\ul B,\mathcal E)\arrow[d, "f^*"]\\
			\ul{\mathcal E}_T(B)\arrow[r, equal] & \Fun(T_{/B},\mathcal E)
		\end{tikzcd}
	\end{equation*}
	of equivalences commutes up to natural equivalence where $f$ is the chosen identification of $\smallint \ul B$ with $T_{/B}$ over $T$.

	Now assume $T$ has a final object $1$. Specializing the above to $B=1$ (and identifying $T_{/1}$ with $T$ as usual), we see that
	\begin{equation*}
		\begin{tikzcd}
			\Fun_T(\ul 1,\ul{\mathcal E}_T)\arrow[d, "\text{Yoneda}"', "\simeq"] \arrow[r, "\textup{adjunction}", "\simeq"'] &[2em] \Fun(\int\ul1,\mathcal E)\\
			\mathcal E_T(1)\arrow[r, equal] & \Fun(T^\op, \mathcal E)\arrow[u, "\pi^*"']
		\end{tikzcd}
	\end{equation*}
	commutes up to natural equivalence, where $\pi\colon\int\ul1\to T^\op$ is the cocartesian projection. Combining this with the naturality of the adjunction equivalence, we conclude that we have for every $T$-$\infty$-category $\mathcal C$ and $c\in\mathcal C(1)$ a natural equivalence filling
	\begin{equation*}
		\begin{tikzcd}
			\Fun_T(\mathcal C,\ul{\mathcal E}_T)\arrow[d, "\ev_c"'] \arrow[r, "\textup{adjunction}", "\simeq"'] &[2em] \Fun(\smallint\mathcal C,\mathcal E)\arrow[d, "\hat c^*"]\\
			\ul{\mathcal E}_T(1) \arrow[r, equal] & \Fun(T^\op,\mathcal E)
		\end{tikzcd}
	\end{equation*}
	where $\hat c\colon T^\op\to \smallint\mathcal C$ is the essentially unique map over $T^\op$ sending the fiber over $1\in T$ to $c$ (i.e.~the unstraightening of $c$ viewed as a $T$-functor $\ul1\to\mathcal C$).
\end{remark}

\begin{remark}\label{rmk:unwinding_T_object_adj}
	We can make the equivalence $\Fun_T(\Cc,\ul{\Ee}_T) \simeq \Fun(\smallint \Cc,\Ee)$ of \Cref{lem:AdjunctionGrothendieckConstructionTObjects} more explicit. Consider a functor $\tilde{F}\colon \int \Cc\rightarrow \Ee$. The associated $T$-functor $F\colon \Cc\rightarrow \ul{\Ee}_T$ is given at $B\in T$ by the functor
	\[F_B\colon \Cc(B)\rightarrow \Fun(T_{/B}^\op,\Ee),\] where $F_B(X)(h\colon C\rightarrow B) = \tilde{F}(h^*(X))$, the value of $\tilde{F}$ on the cocartesian pushforward of $X\in \Cc(B)$ along $h$ to $\Cc(C)$. The value of $F_B(X)$ on a triangle
	\[
	\begin{tikzcd}
		C \arrow[rd, "h"']\arrow[rr,"f"] &   &  D \arrow[ld,"g"] \\
		& B
	\end{tikzcd}
	\] is given by applying $\tilde{F}$ to the cocartesian edge over $f$ from $g^*(X)$ to $h^*(X)$. More generally, for another object $B' \in T$ and a functor $\tilde{F}\colon \smallint (\Cc \times \ul{B'}) \to \Ee$, the associated $T$-functor $F\colon \Cc \to \ulFun_T(\ul{B}',\Ee)$ is given at $B \in T$ by the functor $F_B\colon \Cc(B) \to \Fun(T_{/B \times B'}\catop,\Ee)$ given by
	\[
	F_B(X)(A \xrightarrow{(f_B,f_{B'})} B \times B') = \tilde{F}(A,f_B^*X,f_{B'}).
	\]
\end{remark}

\subsection{Parametrized adjunctions, limits and colimits}
\label{subsec:ParametrizedAdjunctions}
We will briefly recall the parametrized versions of adjunctions, limits and colimits, following Sections 3 and 4 of \cite{martiniwolf2021limits} (see \cref{rmk:ParametrizedCategoriesAsInternalCategories}). An alternative treatment in the language of cocartesian fibrations over $T\catop$ is given by \cite{shah2021parametrized}*{Sections 8 and 9}.

\begin{definition}[\cite{martiniwolf2021limits}*{Definition~3.1.1}]
	Let $\Cc$ and $\Dd$ be $T$-$\infty$-categories. An \textit{adjunction} between $\Cc$ and $\Dd$ is a tuple $(L, R, \eta, \epsilon)$, where
	$L\colon \Cc \to \Dd$ and $R\colon \Dd \to \Cc$ are $T$-functors and where $\eta\colon \id_{\Dd} \to RL$ and $\epsilon\colon LR \to \id_{\Cc}$ are natural transformations of $T$-functors fitting in commutative triangles
	\[\begin{tikzcd}
		L \drar[equal] \rar{L\eta} & LRL \dar{\epsilon L} \\
		& L
	\end{tikzcd}
	\qquad \text{ and } \qquad
	\begin{tikzcd}
		RLR \dar[swap]{R\epsilon} & R \dlar[equal] \lar[swap]{\eta R} \\
		R.
	\end{tikzcd}\]
\end{definition}
Note that the notion of an adjunction between two $T$-$\infty$-categories only depends on the (homotopy) 2-category associated to $\Cat_T$ and in particular many of the standard 2-categorical results about adjunctions hold in this setting.

\begin{example}
	Every adjunction $\Ee \rightleftarrows \Ee'$ of $\infty$-categories gives rise to an adjunction $\const_{\Ee} \rightleftarrows \const_{\Ee'}$ on associated constant $T$-$\infty$-categories.
\end{example}

\begin{example}
	\label{ex:AdjunctionGivesParametrizedAdjunctionOnTObjects}
	By \Cref{cstr:NaturalTrafosGiveParametrizedNaturalTrafos}, every adjunction $\Ee \rightleftarrows \Ee'$ of $\infty$-categories gives rise to an adjunction $\ul{\Ee}_T \rightleftarrows \ul{\Ee'}_T$ on associated $T$-$\infty$-categories of $T$-objects.
\end{example}

Important will be the following `pointwise' criterion for checking that a $T$-functor has a parametrized adjoint.

\begin{proposition}[{\cite{martiniwolf2021limits}*{Proposition~3.2.9 and Corollary~3.2.11}}]
	\label{prop:pointwisecriterionadjoints}
	A $T$-functor $F\colon \Cc \to \Dd$ admits a (parametrized) right adjoint if and only if the following two conditions hold:
	\begin{enumerate}[(1)]
		\item For every object $B \in T$, the induced functor $F(B)\colon \Cc(B) \to \Dd(B)$ admits a right adjoint $G(B)\colon \Dd(B) \to \Cc(B)$;
		\item For every morphism $f\colon A \to B$ in $T$, the Beck-Chevalley transformation
		\begin{align*}
			f^* \circ G(B) \implies G(A) \circ f^*
		\end{align*}
		given as the mate of the naturality square
		\[\begin{tikzcd}
			\Cc(B) \rar{F(B)} \dar[swap]{f^*} & \Dd(A) \dar{f^*} \\
			\Cc(A) \rar{F(A)} & \Dd(A)
		\end{tikzcd}\]
		is an equivalence.
	\end{enumerate}
	If this is the case, the right adjoint $G\colon \Dd \to \Cc$ of $F$ is given on an object $B \in T$ by the functor $G(B)\colon \Dd(B) \to \Cc(B)$. Moreover, if $Y\in\PSh(T)$ is any presheaf, then also the functor $F(Y)\colon\mathcal C(Y)\to\mathcal D(Y)$ admits a right adjoint $G(Y)$ in this case, and for any map $f\colon X\to Y$ in $\PSh(T)$ the Beck-Chevalley map $f^*\circ G(Y)\Rightarrow G(X)\circ f^*$ is an equivalence.

	The dual statement for parametrized left adjoints also holds. \qed
\end{proposition}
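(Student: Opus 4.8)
The first two sentences of the proposition — the pointwise criterion together with the fiberwise formula $G(B)\colon \Dd(B)\to\Cc(B)$ for the parametrized right adjoint — are precisely \cite{martiniwolf2021limits}*{Proposition~3.2.8 and Corollary~3.2.10}, so the plan is to take these as given and concentrate on the supplementary claims about presheaves and on the dual statement. (Were one to reprove the main equivalence from scratch, the genuine work would be to assemble the fiberwise adjoints $G(B)$ and the Beck--Chevalley equivalences into an honest $T$-functor $G$ equipped with coherent unit and counit; this coherence problem is exactly what is handled in \emph{loc.\ cit.}, so citing is by far the efficient route.)

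For the presheaf addendum, I would invoke \Cref{rmk:limitExtension}, which identifies $\Cat_T$ with the full subcategory of limit-preserving functors inside $\Fun(\PSh(T)\catop,\Cat_\infty)$, compatibly with the evaluations at objects of $T$. Under this identification the adjunction datum $(F,G,\eta,\epsilon)$ — which, as noted after the definition of parametrized adjunction, lives already in the homotopy $2$-category of $\Cat_T$ — becomes an adjunction in (the homotopy $2$-category of) $\Fun(\PSh(T)\catop,\Cat_\infty)$ between the limit-preserving functors $\Cc$ and $\Dd$. Since evaluation $\ev_Y\colon\Fun(\PSh(T)\catop,\Cat_\infty)\to\Cat_\infty$ at any $Y\in\PSh(T)$ is a $2$-functor, it sends this adjunction to an adjunction $F(Y)\colon\Cc(Y)\rightleftarrows\Dd(Y)\noloc G(Y)$ with unit $\eta(Y)$ and counit $\epsilon(Y)$; in particular $F(Y)$ admits the right adjoint $G(Y)$, and one moreover reads off $G(Y)\simeq\lim_{(B\to Y)}G(B)$ from the fact that $G$ preserves limits.

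For the Beck--Chevalley claim along a map $f\colon X\to Y$ of presheaves, I would use that both $F$ and $G$ are morphisms in $\Fun(\PSh(T)\catop,\Cat_\infty)$, hence in particular natural transformations: the naturality square of $F$ for the map $f$ therefore commutes, with the canonical \emph{invertible} $2$-cell $F(X)\circ f^*\simeq f^*\circ F(Y)$. Its mate with respect to the adjunctions $F(Y)\dashv G(Y)$ and $F(X)\dashv G(X)$ is, by the usual mate calculus, invertible precisely because the $2$-cell one starts from is invertible; and this mate is, by definition, the Beck--Chevalley transformation $f^*\circ G(Y)\Rightarrow G(X)\circ f^*$ appearing in the statement. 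The only point here requiring a little care — and the one I would expect to be the most technical, though still entirely formal — is checking that the mate built from the naturality square of $F$ via the (now global) adjunction data $F\dashv G$ agrees with the Beck--Chevalley transformation of condition~(2); this is the compatibility of mates with whiskering and composition of adjunctions.

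Finally, the dual statement — detection of a parametrized \emph{left} adjoint by fiberwise left adjoints together with the Beck--Chevalley condition on the corresponding mates, plus the same presheaf addendum — follows by applying everything above to $F\catop\colon\Cc\catop\to\Dd\catop$, using that the fiberwise-opposite functor $(-)\catop\colon\Cat_T\to\Cat_T$ is an equivalence compatible with the limit-extensions to $\PSh(T)\catop$, which interchanges left and right adjoints and left and right Beck--Chevalley transformations.
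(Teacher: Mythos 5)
The paper does not actually supply a proof here: the proposition is stated and immediately closed with a citation to Martini--Wolf, so there is no ``paper argument'' to compare against. Your plan --- take the cited pointwise criterion and fiberwise formula as given, and argue the presheaf addendum and the dual statement formally --- is a reasonable way to flesh out what the citation may leave implicit, and the overall structure (lift the adjunction datum along the limit-extension of \Cref{rmk:limitExtension}, then evaluate) is the right one.

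There is, however, a genuine logical error in your justification of the Beck--Chevalley equivalence for a map $f\colon X\to Y$ in $\PSh(T)$. You assert that the mate of the invertible naturality $2$-cell $F(X)\circ f^*\simeq f^*\circ F(Y)$ is ``invertible precisely because the $2$-cell one starts from is invertible.'' That inference is false in general: the mate of an invertible $2$-cell need not be invertible. A basic counterexample: take the adjunction $-\times S\dashv(-)^S$ on $\cat{Set}$ (with $|S|\ge2$) and the identity adjunction, with $p=-\times S$, $q=\id$; the identity $2$-cell $-\times S\Rightarrow-\times S$ is certainly invertible, yet its mate is the counit $(-)^S\times S\Rightarrow\id$, which is not. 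Invertibility of the mate is extra information, not a consequence of invertibility of the square one starts from.

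The correct argument is close to what you write but needs a different emphasis. Once the full adjunction datum $(F,G,\eta,\epsilon)$ has been lifted to the homotopy $2$-category of $\Fun(\PSh(T)\catop,\Cat_\infty)$ --- which your first step does establish --- the naturality square of $G$ at $f$ also commutes up to a specified \emph{invertible} $2$-cell $\beta_f\colon f^*\circ G(Y)\simeq G(X)\circ f^*$, because $G$ is a morphism in $\Fun(\PSh(T)\catop,\Cat_\infty)$. What remains is to show that the Beck--Chevalley transformation, defined as the mate of the $F$-naturality square with respect to the evaluated adjunctions $(F(Y)\dashv G(Y))$ and $(F(X)\dashv G(X))$, agrees with $\beta_f$. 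This is where the modification structure of $\eta$ and $\epsilon$ (i.e.~the naturality of the unit and counit under restriction along $f$) actually enters; it is a routine diagram chase using the triangle identities. In short: invertibility of the Beck--Chevalley map comes from the naturality of $G$ plus the compatibility of $\eta,\epsilon$ with $f^*$, not from the invertibility of the naturality square of $F$ alone. Your final remark flags ``agreement with the transformation of condition (2)'' as the delicate point, but that is not quite the issue; the delicate point is agreement of the mate with the naturality isomorphism of $G$.

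The reduction of the dual statement to the one proved, via the fiberwise opposite equivalence $(-)\catop\colon\Cat_T\to\Cat_T$, is correct.
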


We will now move to parametrized limits and colimits, of which we will only give a brief treatment sufficient for the purposes of the present article.

\begin{definition}
	Let $K$ and $\Cc$ be $T$-$\infty$-categories. We say that \textit{$\Cc$ admits $K$-indexed colimits} if the diagonal functor $\diag\colon \Cc \to \ulFun_T(K,\Cc)$ given by precomposing with $K \to \ul{1}$ admits a left adjoint $\colim_K\colon \ulFun_T(K,\Cc) \to \Cc$. Similarly we say that \textit{$\Cc$ admits $K$-indexed limits} if $\diag$ admits a right adjoint $\lim_K\colon \ulFun_T(K,\Cc) \to \Cc$.
\end{definition}

\begin{definition}
	\label{def:FunctorPreservesParametrizedColimits}
	Let $K$, $\Cc$ and $\Dd$ be $T$-$\infty$-categories and assume that $\Cc$ and $\Dd$ admit $K$-indexed colimits. We will say that a $T$-functor $F\colon \Cc \to \Dd$ \textit{preserves $K$-indexed colimits} if the Beck-Chevalley transformation $\colim_K \circ \;\ulFun_T(K,F) \implies F \circ \colim_K$ of the naturality square
	\[\begin{tikzcd}
		\Cc \rar{\diag} \dar[swap]{F} & \ulFun_T(K,\Cc) \dar{\ulFun_T(K,F)} \\
		\Dd \rar{\diag} & \ulFun_T(K,\Dd)
	\end{tikzcd}\]
	is an equivalence.
\end{definition}

In the non-parametrized context, one often asks an $\infty$-category to admit (co)limits for a certain class of indexing diagrams. In the parametrized setting, one should work with the following parametrized notion of `class of indexing diagrams.'

\begin{definition}
	\label{def:ClassOfTCategories}
	Let $T$ be an $\infty$-category. A \textit{class of $T$-$\infty$-categories} is a full parametrized subcategory $\bbU \subseteq \ul{\mathrm{cat}}_T$ of the $T$-$\infty$-category of small $T$-$\infty$-categories.
\end{definition}

\begin{definition}[\cite{martiniwolf2021limits}*{Definition~5.2.1 and Remark~5.2.4}]
	\label{def:UColimits}
	Let $\bbU$ be a class of $T$-$\infty$-categories and let $\Cc$ and $\Dd$ be $T$-$\infty$-categories.
	\begin{enumerate}[(1)]
		\item We will say that $\Cc$ \textit{admits $\bbU$-colimits} if the $T_{/B}$-$\infty$-category $\pi_B^*\Cc$ of \cref{ex:baseChangeCategory} admits $K$-indexed $T_{/B}$-colimits for every $B \in T$ and $K \in \bbU(B) \subseteq \Cat(T_{/B})$.
		\item If $\Cc$ and $\Dd$ admit $\bbU$-colimits, a $T$-functor $F\colon \Cc \to \Dd$ is said to \textit{preserve $\bbU$-colimits} if $\pi_B^*F$ preserves $K$-indexed $T_{/B}$-colimits for every $B \in T$ and $K \in \bbU(B)$.
	\end{enumerate}
	Dually, $\Cc$ is said to \textit{admit $\bbU$-limits} if for every $B \in T$ and $K \in \bbU(B)$, the $T_{/B}$-$\infty$-category $\pi_B^*\Cc$ admits $K$-indexed $T_{/B}$-limits. A $T$-functor $F\colon \Cc \to \Dd$ is said to \textit{preserve $\bbU$-limits} if $\pi_B^*F$ preserved $K$-indexed $T_{/B}$-limits for every $B \in T$ and $K \in \bbU(B)$.

	If $\bbU = \ul{\mathrm{cat}}_T$ consists of \textit{all} $T$-$\infty$-categories, we will say that $\Cc$ is \textit{$T$-cocomplete} or \textit{$T$-complete} respectively.
\end{definition}

From the pointwise criterion, \cref{prop:pointwisecriterionadjoints}, of parametrized adjunctions, we immediately obtain characterizations of $T$-(co)limits indexed by constant $T$-$\infty$-categories and $T$-$\infty$-groupoids, respectively. We start with the case of constant $T$-$\infty$-categories.

\begin{lemma}[cf.\ \cite{martiniwolf2021limits}*{Example~4.1.14 and Remark~4.1.15}]
	\label{lem:ColimitsIndexedByConstantTCategories}
	Let $\Cc$ be a $T$-$\infty$-category, let $K$ be an $\infty$-category, and let $\const_K$ be the associated constant $T$-$\infty$-category. Then the following conditions are equivalent:
	\begin{enumerate}[(1)]
		\item The $T$-$\infty$-category $\Cc$ admits $\const_{K}$-indexed colimits;
		\item For every object $B \in T$ the $\infty$-category $\Cc(B)$ admits $K$-indexed colimits, and for every morphism $\beta\colon B' \to B$ in $T$ the restriction functor $\beta^*\colon \Cc(B) \to \Cc(B')$ preserves $K$-indexed colimits.
		\item For every presheaf $Y\in\PSh(T)$, the $\infty$-category $\Cc(Y)$ admits $K$-indexed colimits, and for every morphism $\beta\colon Y'\to Y$ in $\PSh(T)$ the restriction $\beta^*\colon \Cc(Y)\to \Cc(Y')$ preserves $K$-indexed colimits.
	\end{enumerate}
	The dual statement for limits also holds.
\end{lemma}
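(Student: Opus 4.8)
The plan is to apply the pointwise adjunction criterion of \Cref{prop:pointwisecriterionadjoints} to the diagonal $T$-functor $\diag\colon\Cc\to\ulFun_T(\const_K,\Cc)$, whose one-sided adjoints govern $\const_K$-indexed (co)limits by definition. The first task is to identify the parametrized functor $T$-$\infty$-category occurring here. Since $\Cat_T=\Fun(T\catop,\Cat_\infty)$ is cotensored over $\Cat_\infty$ (via the cartesian closed structure of $\Cat_\infty$), with the tensor of a $T$-$\infty$-category $\Dd$ by $K$ computed levelwise as $\Dd\times\const_K$, the internal hom $\ulFun_T(\const_K,\Cc)$ is the cotensor $\Cc^K$, so that $\ulFun_T(\const_K,\Cc)(B)\simeq\Fun(K,\Cc(B))$ for every $B\in T$; applying the limit-extension and using that $\Fun(K,\blank)$ preserves limits, the formula $\ulFun_T(\const_K,\Cc)(Y)\simeq\Fun(K,\Cc(Y))$ persists for every presheaf $Y\in\PSh(T)$. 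Under this identification $\diag$ is given at each level by the ordinary constant-diagram functor $\Cc(Y)\to\Fun(K,\Cc(Y))$, compatibly with restriction along morphisms in $\PSh(T)$; checking this is a matter of unwinding the cotensoring together with \Cref{cor:TCategoryOfTFunctors}.

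Granting this, $\Cc$ admits $\const_K$-indexed colimits exactly when $\diag$ has a parametrized left adjoint, and by the left-adjoint version of \Cref{prop:pointwisecriterionadjoints} this holds if and only if (a) for every $B\in T$ the constant-diagram functor $\Cc(B)\to\Fun(K,\Cc(B))$ admits a left adjoint --- equivalently $\Cc(B)$ has $K$-indexed colimits --- and (b) for every morphism $\beta\colon B'\to B$ in $T$ the Beck--Chevalley transformation extracted from the naturality square of $\diag$ is an equivalence. Unwinding, this Beck--Chevalley transformation is precisely the canonical comparison $\colim_K(\beta^*\circ D)\to\beta^*(\colim_K D)$ for diagrams $D\colon K\to\Cc(B)$, whose invertibility for all $D$ says exactly that $\beta^*\colon\Cc(B)\to\Cc(B')$ preserves $K$-indexed colimits. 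This yields $(1)\Leftrightarrow(2)$.

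For $(1)\Rightarrow(3)$ one invokes the final clause of \Cref{prop:pointwisecriterionadjoints}: once the parametrized left adjoint exists, $\diag(Y)$ admits a left adjoint for every presheaf $Y\in\PSh(T)$ and the associated Beck--Chevalley map for every morphism $Y'\to Y$ in $\PSh(T)$ is an equivalence, and the same unwinding as above translates these into the two assertions of (3). Finally $(3)\Rightarrow(2)$ is immediate, since the Yoneda embedding $T\hookrightarrow\PSh(T)$ is fully faithful and the limit-extension satisfies $\Cc(\ul B)=\Cc(B)$ on representables, so (2) is simply (3) restricted to representable presheaves. This closes the cycle $(1)\Leftrightarrow(2)\Leftrightarrow(3)$; the dual statement for limits is obtained by running the same argument with the right-adjoint (original) form of \Cref{prop:pointwisecriterionadjoints}.

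The only step that is not purely formal is the identification in the first paragraph: one must verify not just that $\ulFun_T(\const_K,\Cc)$ is levelwise $\Fun(K,\Cc(\blank))$, but that the diagonal $T$-functor and the restriction functors match the evident constant-diagram and postcomposition functors under this equivalence --- it is exactly these structure maps that are tested by \Cref{prop:pointwisecriterionadjoints}. Once this bookkeeping is in place, the remainder is a direct application of the pointwise criterion and requires no further input.
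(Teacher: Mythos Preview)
Your proof is correct and follows essentially the same approach as the paper: identify $\ulFun_T(\const_K,\Cc)(B)\simeq\Fun(K,\Cc(B))$ with $\diag$ corresponding to the ordinary constant-diagram functor, and then apply \Cref{prop:pointwisecriterionadjoints}. The only cosmetic difference is that the paper obtains this identification via the chain of equivalences through \Cref{cor:TCategoryOfTFunctors2}, \Cref{rmk:AdjunctionConstantUnderlying}, and \Cref{lem:YonedaLemma}, while you argue directly from the cotensor structure; both routes arrive at the same place.
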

\begin{proof}
	We apply the natural identification
	\begin{align*}
		\ulFun_T(\const_K,\Cc)(B) &\overset{\ref{cor:TCategoryOfTFunctors2}}{\simeq} \Fun_T(\const_K,\ulFun_T(\ul{B},\Cc)) \\
		&\overset{\ref{rmk:AdjunctionConstantUnderlying}}{\simeq} \Fun(K,\Fun_T(\ul{B},\Cc)) \\
		&\overset{\ref{lem:YonedaLemma}}{\simeq} \Fun(K,\Cc(B)).
	\end{align*}
	Because each equivalence above is natural in $K$, we find that under this identification the $T$-functor $\diag\colon\Cc \rightarrow \ul{\Fun}_{T}(K,\Cc)$ corresponds at $B\in T$ to the standard diagonal functor. Furthermore the Beck-Chevalley transformation associated to the naturality square
	\[\begin{tikzcd}
		{\Cc(B)} & {\Fun(K,\Cc(B))} \\
		{\Cc(B')} & {\Fun(K,\Cc(B'))}
		\arrow["{\beta^*}"', from=1-1, to=2-1]
		\arrow["\diag", from=1-1, to=1-2]
		\arrow["\diag", from=2-1, to=2-2]
		\arrow["{\Fun(K,\beta^*)}", from=1-2, to=2-2]
	\end{tikzcd}\]
	is the standard comparison $\colim \circ \Fun(K,\beta^*) \Rightarrow F \circ \colim_K$. Therefore the equivalence of the first two statements is an instance of \cref{prop:pointwisecriterionadjoints}.

	The equivalence between the first and the third statement is proven in exactly the same way.
\end{proof}

The following result is proved similarly and will be left to the reader.

\begin{lemma}
	Let $K$ be an $\infty$-category and let $\Cc$ and $\Dd$ be two $T$-$\infty$-categories that admit $\const_K$-indexed $T$-colimits. Then a $T$-functor $F\colon \Cc \to \Dd$ preserves $\const_K$-indexed $T$-colimits if and only if for each $B \in T$ the functor $F(B)\colon \Cc(B) \to \Dd(B)$ preserves $K$-indexed colimits. Moreover, in this case $F(Y)\colon\Cc(Y)\to\Dd(Y)$ preserves $K$-indexed colimits for all $Y\in\PSh(T)$.

	The dual statement for limits also holds.\qed
\end{lemma}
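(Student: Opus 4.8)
The plan is to transcribe the proof of \cref{lem:ColimitsIndexedByConstantTCategories} almost verbatim, reducing the parametrized statement to the familiar non-parametrized one by evaluating the relevant Beck--Chevalley transformation at the objects $B\in T$ and then at arbitrary presheaves. I would begin from the natural identification
\[
\ulFun_T(\const_K,\Cc)(B)\;\simeq\;\Fun(K,\Cc(B))
\]
established in the proof of \cref{lem:ColimitsIndexedByConstantTCategories} (and natural in $\Cc\in\Cat_T$), under which the diagonal $\diag\colon\Cc\to\ulFun_T(\const_K,\Cc)$ restricts at each $B$ to the ordinary diagonal $\Cc(B)\to\Fun(K,\Cc(B))$; since $\Cc$ admits $\const_K$-indexed $T$-colimits, \cref{prop:pointwisecriterionadjoints} moreover shows that the parametrized left adjoint $\colim_K$ is computed pointwise, so it restricts at $B$ to the ordinary $K$-colimit functor $\Fun(K,\Cc(B))\to\Cc(B)$. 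The same applies to $\Dd$, and by naturality in the $T$-$\infty$-category the $T$-functor $\ulFun_T(\const_K,F)$ restricts at $B$ to $\Fun(K,F(B))$.

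Since $\ev_B\colon\Cat_T\to\Cat_\infty$ is a functor, it preserves adjunctions and the formation of mates; applying it to the defining square of \cref{def:FunctorPreservesParametrizedColimits} and to the associated Beck--Chevalley transformation, the identifications above turn these into the naturality square
\[
\begin{tikzcd}
\Cc(B)\rar{\diag}\dar[swap]{F(B)} & \Fun(K,\Cc(B))\dar{\Fun(K,F(B))}\\
\Dd(B)\rar{\diag} & \Fun(K,\Dd(B))
\end{tikzcd}
\]
together with its ordinary comparison map $\colim_K\circ\Fun(K,F(B))\Rightarrow F(B)\circ\colim_K$, which is an equivalence if and only if $F(B)$ preserves $K$-indexed colimits. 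As a natural transformation of $T$-functors is invertible if and only if it is so at every $B\in T$, this proves that $F$ preserves $\const_K$-indexed $T$-colimits precisely when $F(B)$ preserves $K$-indexed colimits for all $B$, which is the main assertion.

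For the addendum I would limit-extend everything along the Yoneda embedding. The functors $\Cc$ and $\ulFun_T(\const_K,\Cc)$ extend to limit-preserving functors on $\PSh(T)\catop$, and so does $Y\mapsto\Fun(K,\Cc(Y))$ because $\Fun(K,-)$ preserves limits; hence by \cref{rmk:limitExtension} the identification $\ulFun_T(\const_K,\Cc)(Y)\simeq\Fun(K,\Cc(Y))$ and the description of $\diag(Y)$ as the ordinary diagonal extend to all $Y\in\PSh(T)$, and likewise for $\Dd$. The ``moreover'' clause of \cref{prop:pointwisecriterionadjoints}, in its form for left adjoints applied to $\diag$, then identifies $\colim_K(Y)$ with the ordinary $K$-colimit functor on $\Fun(K,\Cc(Y))$, while the ``limit-extend and evaluate at $Y$'' functor carries the parametrized Beck--Chevalley transformation to the ordinary comparison map for $F(Y)$. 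Thus if $F$ preserves $\const_K$-indexed $T$-colimits, the parametrized Beck--Chevalley transformation is an equivalence in $\Cat_T$, hence so is its limit-extension at each $Y$, and therefore $F(Y)$ preserves $K$-indexed colimits. The dual statement for limits follows in the same way from the dual of \cref{prop:pointwisecriterionadjoints}.

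The only point requiring any care is the compatibility used throughout, namely that passing to mates commutes with $\ev_B$ and with the ``limit-extend and evaluate at $Y$'' functor. This is formal, since each of these is a functor of $\infty$-categories and therefore sends adjunction data to adjunction data and mates to mates, but it is exactly what licenses the pointwise reduction; everything else is a direct copy of the argument already given for \cref{lem:ColimitsIndexedByConstantTCategories}.
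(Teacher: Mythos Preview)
Your proposal is correct and follows exactly the approach the paper intends: the paper's own ``proof'' consists solely of the remark that this is proved similarly to \cref{lem:ColimitsIndexedByConstantTCategories} and is left to the reader, and you have carried out precisely that adaptation. The one point worth making explicit is that ``$\ev_B$ preserves mates'' really uses that evaluation is a 2-functor on the homotopy 2-category of $\Cat_T$, but this is standard and your argument goes through.
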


\begin{definition}
	\label{def:fiberwiseCocompleteness}
	If the equivalent conditions of \cref{lem:ColimitsIndexedByConstantTCategories} are satisfied, we say that $\Cc$ \textit{admits fiberwise $K$-indexed colimits.} If $S$ is a collection of small $\infty$-categories such that $\Cc$ admits fiberwise $K$-indexed colimits for every $K \in S$, we say that $\Cc$ \textit{admits fiberwise $S$-indexed colimits}. We say that $\Cc$ is \textit{fiberwise cocomplete} if $\Cc$ admits fiberwise $K$-indexed colimits for every small $\infty$-category $K$.

	Dually one defines when $\Cc$ \textit{admits fiberwise $K$-indexed limits} or is \textit{fiberwise complete}.
\end{definition}

We next describe parametrized colimits indexed by $T$-$\infty$-groupoids.

\begin{definition}
	\label{def:ClassOfTGroupoids}
	A \textit{class of $T$-$\infty$-groupoids}\footnote{This is called a `subuniverse' in \cite{martini2021yoneda}*{Definition 3.9.13}} is a full parametrized subcategory $\bbU \subseteq \ul{\Spc}_T$ of the $T$-$\infty$-category of $T$-$\infty$-groupoids. A morphism $f\colon X \to Y$ in $\PSh(T)$ is said to be in $\bbU$ if it is an object in the full subcategory $\bbU(Y) \subseteq \PSh(T)_{/Y}$.
\end{definition}

\begin{remark}
	In the above definition, we have again viewed $\bbU$ as a sheaf on $\PSh(T)$ via limit extension. For later use, let us make explicit what this means in terms of the original functor $T^\op\to\Cat_\infty$: a map $f\colon X\to Y$ in $\PSh(T)$ belongs to $\bbU$ if and only if for every map $\beta\colon B\to Y$ from a representable presheaf $B\in T$ the pulled back map $\beta^*f\colon\beta^*X\to B$ is an object of $\bbU(B)$.
\end{remark}

\begin{lemma}[cf.\ \cite{martiniwolf2021limits}*{Example~4.1.13}, \cite{shah2021parametrized}*{Proposition 5.12}]
	\label{lem:UColimitsVsAdjointable}
	Let $\bbU$ be a class of $T$-$\infty$-groupoids. Then a $T$-$\infty$-category $\Cc$ admits $\bbU$-colimits if and only if for every morphism $p\colon A \to B$ in $\bbU$, with $B \in T$, the restriction functor $p^*\colon \Cc(B) \to \Cc(A)$ admits a left adjoint $p_!\colon \Cc(A) \to \Cc(B)$, and for every pullback square
	\begin{equation}
		\label{eq:PullbackSquareTCocompleteness}
		\begin{tikzcd}
			A' \dar[swap]{p'} \rar{\alpha} \drar[pullback] & A \dar{p} \\
			B' \rar{\beta} & B
		\end{tikzcd}
	\end{equation}
	in $\PSh(T)$ with $\beta\colon B' \to B$ in $T$ and $p\colon A \to B$ in $\bbU$, the Beck-Chevalley transformation $p'_! \circ \alpha^* \Rightarrow \beta^* \circ p_!$ associated to the commutative diagram
	\[\begin{tikzcd}
		\Cc(B) \rar{\beta^*} \dar[swap]{p^*} & \Cc(B') \dar{{p'}^*} \\
		\Cc(A) \rar{\alpha^*} & \Cc(A')
	\end{tikzcd}\]
	is a natural equivalence.

	Dually, $\Cc$ admits $\bbU$-limits if and only if $p^*\colon \Cc(B) \to \Cc(A)$ admits a right adjoint $p_*\colon \Cc(A) \to \Cc(B)$ for every morphism $p\colon A \to B$ in $\bbU$ and for every pullback square \eqref{eq:PullbackSquareTCocompleteness}, the Beck-Chevalley transformation $\beta^* \circ p_* \Rightarrow p'_* \circ \alpha^*$ is a natural equivalence.
\end{lemma}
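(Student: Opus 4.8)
The plan is to reduce, for each fixed object $B\in T$ and each morphism $p\colon A\to B$ belonging to $\bbU(B)$, the condition that the $T_{/B}$-$\infty$-category $\pi_B^*\Cc$ admit $\ul A$-indexed $T_{/B}$-colimits to a pointwise adjointability statement via \Cref{prop:pointwisecriterionadjoints}, and then to recover the two clauses of the lemma by quantifying over all such pairs $(B,p)$. Here $\ul A$ denotes the $T_{/B}$-$\infty$-groupoid corresponding to $[p]\in\PSh(T)_{/B}\simeq\PSh(T_{/B})$; by definition $\pi_B^*\Cc$ admits $\ul A$-indexed colimits exactly when the diagonal $T_{/B}$-functor $\diag\colon\pi_B^*\Cc\to\ulFun_{T_{/B}}(\ul A,\pi_B^*\Cc)$, given by precomposition with $\ul A\to\ul 1$, admits a left adjoint.

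The first step is to describe the target of $\diag$ and $\diag$ itself pointwise. Using the limit-extended forms of \Cref{cor:TCategoryOfTFunctors2} and \Cref{cor:YonedaLemmaPresheaves}, together with the fact that the product of the representable $T_{/B}$-groupoid on $[s\colon S\to B]$ with $\ul A$ is the $T_{/B}$-groupoid associated to the fibre product $S\times_B A$, one obtains a natural equivalence $\ulFun_{T_{/B}}(\ul A,\pi_B^*\Cc)([s\colon S\to B])\simeq\Cc(S\times_B A)$, under which $\diag$ evaluated at $[s]$ becomes the restriction functor $\pr_S^*\colon\Cc(S)\to\Cc(S\times_B A)$ along the projection $\pr_S\colon S\times_B A\to S$. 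This identification is a pure bookkeeping matter, but it is the step requiring the most care, so I expect it to be the main (if modest) obstacle.

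Next I would invoke \Cref{prop:pointwisecriterionadjoints} with $T_{/B}$ in place of $T$, applied to $\diag$: it admits a left adjoint if and only if (a) for every object $[s\colon S\to B]$ of $T_{/B}$ the functor $\pr_S^*$ admits a left adjoint, and (b) for every morphism $f\colon S'\to S$ of $T_{/B}$ the Beck-Chevalley transformation for the pullback square with bottom edge $f$ and right edge $\pr_S$ is an equivalence. Since membership in $\bbU$ is preserved under pullback, $\pr_S=s^*p$ again lies in $\bbU$ and has codomain $S\in T$; hence quantifying (a) over all triples $(B,p,s)$ yields precisely the first clause of the lemma, and quantifying (b) over all $(B,p,f)$ yields precisely the Beck-Chevalley clause — taking $s=\id_B$ and $f=\beta$ recovers the displayed pullback square of the lemma and its transformation $p'_!\circ\alpha^*\Rightarrow\beta^*\circ p_!$, while conversely every instance of (b) is of this shape with $S$ playing the role of $B$. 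Commuting the outer universal quantifiers past the conjunction ``(a) and (b)'' then gives the asserted equivalence of conditions.

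Finally, the dual statement characterizing $\bbU$-limits is proved identically, using the right-adjoint half of \Cref{prop:pointwisecriterionadjoints} to decide when $\diag$ admits a right adjoint.
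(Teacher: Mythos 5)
Your argument is correct and is essentially the paper's proof: both reduce the existence of the left adjoint to $\diag\colon\pi_B^*\Cc\to\ulFun_{T_{/B}}(\ul A,\pi_B^*\Cc)$ to \cref{prop:pointwisecriterionadjoints}, using the identification $\ulFun_{T_{/B}}(\ul A,\pi_B^*\Cc)([s\colon S\to B])\simeq\Cc(S\times_B A)$ to make that criterion concrete, and then unwinding the quantifiers over $(B,p)$. The paper organizes the same content slightly differently (replacing $T$ by $T_{/B}$ at the outset and phrasing the pointwise criterion as a single two-pullback condition), but the underlying reasoning is the same.
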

\begin{proof}
	Let $(p\colon A \to B) \in \bbU(B) \subseteq \PSh(T)_{/B}$ be a morphism in $\bbU$. It suffices to show that the $T_{/B}$-$\infty$-category $\pi_B^*\Cc$ admits $\ul{A}$-indexed colimits if and only if for every pullback diagram
	\[
	\begin{tikzcd}
		A'' \rar{\alpha'} \dar[swap]{p''} \drar[pullback] & A' \dar[swap]{p'} \rar{\alpha} \drar[pullback] & A \dar{p} \\
		B'' \rar{\beta'} & B' \rar{\beta} & B
	\end{tikzcd}
	\]
	the functors ${p'}^*$ and ${p''}^*$ admit left adjoints $p'_!$ and $p''_!$, and the Beck-Chevalley transformation $p''_! \circ {\alpha'}^* \Rightarrow {\beta'}^* \circ p'_!$ is a natural equivalence. By replacing $T$ by $T_{/B}$, we may assume $B = 1$ is a terminal object of $T$. Using the natural identifications
	\begin{align*}
		\ulFun_T(\ul{A},\Cc)(B') \overset{\ref{cor:TCategoryOfTFunctors2}}{\simeq} \Fun_T(\ul{A} \times \ul{B'},\Cc) \simeq \Fun_T(\ul{A \times B'},\Cc) \overset{\ref{lem:YonedaLemma}}{\simeq} \Cc(A \times B'),
	\end{align*}
	this is an instance of \cref{prop:pointwisecriterionadjoints} applied to the $T$-$\infty$-category $\ulFun_T(\ul{A},\Cc)$.
\end{proof}

\begin{remark}
	\label{rmk:limitExtensionRestrictedClass}
	If $\Cc$ is $\bbU$-cocomplete, then \cite{martiniwolf2021limits}*{Remark~5.2.4} shows that the left adjoint $p_!\colon \Cc(A) \to \Cc(B)$ exists more generally for any presheaf $B\in\PSh(T)$ and any $p\in\bbU(B)$; similarly, the Beck-Chevalley condition holds for any pullback square \eqref{eq:PullbackSquareTCocompleteness} in which $p\in\bbU(B)$, $B\in\PSh(T)$. Put differently, we can drop all representability conditions in the above lemma.
\end{remark}

The following lemma is proved in a similar way and is left to the reader.

\begin{lemma}\label{lemma:BC-vs-cocontinuity}
	Let $\bbU$ be a class of $T$-$\infty$-groupoids and let $\Cc$ and $\Dd$ be two $T$-$\infty$-categories which admit $\bbU$-colimits. Then a $T$-functor $F$ preserves $\bbU$-colimits if and only if for every $B\in T$ and morphism $f\colon A \to B$ in $\bbU$, the Beck-Chevalley transformation $f_! \circ F(A) \Rightarrow F(B) \circ f_!$ is an equivalence. Moreover, in this case the Beck-Chevalley map is an equivalence more generally for any presheaf $B\in\PSh(T)$ and any $f\in\bbU(B)$.

	The dual statement for preserving $\bbU$-limits also holds.\qednow
\end{lemma}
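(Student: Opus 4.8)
The plan is to mirror the proof of \Cref{lem:UColimitsVsAdjointable}: reduce to a single indexing $T$-$\infty$-groupoid, translate the resulting statement into a condition on Beck--Chevalley maps of $F$ via \Cref{cor:TCategoryOfTFunctors2} and the pointwise criterion \Cref{prop:pointwisecriterionadjoints}, and then exploit that $\bbU$ is closed under base change.

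First I would unwind \Cref{def:UColimits}: $F$ preserves $\bbU$-colimits if and only if for every $B\in T$ and every $(p\colon A\to B)\in\bbU(B)$ the $T_{/B}$-functor $\pi_B^*F$ preserves $\ul A$-indexed $T_{/B}$-colimits, which makes sense since $\Cc$ and $\Dd$ admit $\bbU$-colimits. Fixing such $B$ and $p$ and working in $\Cat_{T_{/B}}$, for every $Y\in T_{/B}$ the equivalence $\ulFun_{T_{/B}}(\ul A,-)(Y)\simeq(-)(A\times_B Y)$ of \Cref{cor:TCategoryOfTFunctors2} carries the diagonal $\diag$ to restriction along the projection $\pr_Y\colon A\times_B Y\to Y$ and, by \Cref{prop:pointwisecriterionadjoints} (exactly as in the proof of \Cref{lem:UColimitsVsAdjointable}), carries $\colim_{\ul A}$ to $(\pr_Y)_!$ at level $Y$. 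Hence the Beck--Chevalley transformation of the square in \Cref{def:FunctorPreservesParametrizedColimits} becomes, at level $Y$, the Beck--Chevalley map $(\pr_Y)_!\circ F(A\times_B Y)\Rightarrow F(Y)\circ(\pr_Y)_!$ of $F$ along $\pr_Y$; since a natural transformation of $T_{/B}$-functors is invertible iff it is so at every $Y\in T_{/B}$, I conclude that $\pi_B^*F$ preserves $\ul A$-indexed colimits precisely when the Beck--Chevalley map of $F$ along $\pr_Y$ is an equivalence for all $Y\in T_{/B}$.

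The lemma then follows formally. Taking $Y=B$ with its identity map recovers the Beck--Chevalley map of $F$ along $p$ itself, which gives the ``only if'' direction. Conversely, since $\bbU\subseteq\ul{\Spc}_T$ is a parametrized subcategory it is closed under base change, so each $\pr_Y\colon A\times_B Y\to Y$ is again a morphism of $\bbU$ with representable target $Y\in T$; thus if the Beck--Chevalley map of $F$ along \emph{every} morphism of $\bbU$ with target in $T$ is an equivalence, it is one along each $\pr_Y$, proving the ``if'' direction. For the ``Moreover'' clause, \Cref{rmk:limitExtensionRestrictedClass} makes the functors $f_!$ and the needed Beck--Chevalley equivalences available for any $(f\colon A\to B)\in\bbU(B)$ with $B\in\PSh(T)$; writing such $B$ as a colimit of representables $B_i\to B$, compatibility of Beck--Chevalley transformations with base change identifies the restriction of $\mathrm{bc}_f$ along $B_i\to B$ with $\mathrm{bc}_{f\times_B B_i}$ up to whiskering by a restriction functor, and the latter is an equivalence by the first part; as $\Dd(B)\simeq\lim_i\Dd(B_i)$, this forces $\mathrm{bc}_f$ to be an equivalence. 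The dual statement for $\bbU$-limits is proved verbatim with right adjoints in place of left adjoints.

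The only genuinely non-formal point is the identification in the middle paragraph: one must verify that the Beck--Chevalley transformation produced by \Cref{def:FunctorPreservesParametrizedColimits} goes over, under $\ulFun_{T_{/B}}(\ul A,-)(Y)\simeq(-)(A\times_B Y)$, into the Beck--Chevalley transformation of $F$ along $\pr_Y$ rather than along some other square. This is the same kind of diagram bookkeeping already performed in the proof of \Cref{lem:UColimitsVsAdjointable} (and of the flavour of \Cref{lemma:restriction_functor_cat}), so I would either cite that computation or rerun the short chase; granting it, everything else is a formality about $\bbU$ being closed under pullbacks.
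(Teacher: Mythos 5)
Your proposal is correct and follows essentially the same strategy that the paper intends: the paper explicitly states that this lemma is "proved in a similar way" to \Cref{lem:UColimitsVsAdjointable} and leaves it to the reader, and your argument (reducing to a single $\ul A$-indexed diagram, translating via \Cref{cor:TCategoryOfTFunctors2} and \Cref{prop:pointwisecriterionadjoints}, and using closure of $\bbU$ under base change) is precisely that reduction. The handling of the "Moreover" clause by writing a general presheaf as a colimit of representables, restricting the Beck--Chevalley map along each $B_i\to B$, and invoking compatibility of mates with pasting is also sound and matches the spirit of \Cref{rmk:limitExtensionRestrictedClass}.
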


Using this we now give an easy criterion ensuring that cocontinuity is preserved under changing the indexing category:

\begin{lemma}\label{lemma:restriction-preserves-cocompl}
	Let $f\colon\PSh(S)\to\PSh(T)$ be a cocontinuous functor preserving pullbacks, let $\bbU$ be a class of $S$-$\infty$-groupoids, and let $\cat V$ be a class of $T$-$\infty$-groupoids such that $f(u)\in \cat V(f(B))$ for any $B\in S$ and $(u\colon A\to B)\in\bbU(B)$.

	Then $f^*\colon\Cat_T\to\Cat_S$ sends $\cat V$-cocomplete $T$-$\infty$-categories to $\cat U$-cocomplete $S$-$\infty$-categories and $\cat V$-cocontinuous $T$-functors to $\cat U$-cocontinuous $S$-functors.
	\begin{proof}
		Let $\Cc$ be a $\cat V$-cocomplete $T$-$\infty$-category. If $B\in S$, $(u\colon A\to B)\in\bbU(S)$, then $u^*\colon (f^*\Cc)(B)\to(f^*\Cc)(A)$ agrees with $(f(u))^*\colon\Cc(f(B))\to\Cc(f(A))$, so it admits a left adjoint by Remark~\ref{rmk:limitExtensionRestrictedClass} and $\cat V$-cocompleteness of $\Cc$. Similarly, given any pullback in $\PSh(S)$ as on the left
		\begin{equation*}
			\begin{tikzcd}
				A'\arrow[r,"\alpha"]\arrow[d,"p'"']\arrow[dr,"\lrcorner"{very near start},phantom] & A\arrow[d, "p"]\\
				B'\arrow[r, "\beta"'] & B
			\end{tikzcd}
			\qquad\qquad
			\begin{tikzcd}
				f(A')\arrow[r,"f(\alpha)"]\arrow[d,"f(p')"']\arrow[dr,"\lrcorner"{very near start},phantom] & f(A)\arrow[d, "f(p)"]\\
				f(B')\arrow[r, "f(\beta)"'] & f(B)
			\end{tikzcd}
		\end{equation*}
		with $B,B'$ representable and $p\in\bbU(B)$, also the diagram on the right is a pullback by assumption, and the Beck-Chevalley map $p'_!\alpha^*\Rightarrow\beta^*p_!$ for $f^*\Cc$ agrees with the Beck-Chevalley map $f(p')_!f(\alpha)^*\Rightarrow f(\beta)^*f(p)_!$ for $\Cc$. In particular, it is an equivalence again, so Lemma~\ref{lem:UColimitsVsAdjointable} shows that $f^*\Cc$ is $\bbU$-cocomplete.

		The statement about cocontinuity follows similarly from the previous lemma.
		\end{proof}
\end{lemma}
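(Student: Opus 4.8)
The plan is to reduce everything to the adjointability characterization of $\bbU$-cocompleteness in \Cref{lem:UColimitsVsAdjointable} — together with \Cref{rmk:limitExtensionRestrictedClass}, which drops the representability hypotheses on the bases — using that $f^*$ is computed ``levelwise'' by precomposition with $f$. Concretely, since $f$ is cocontinuous, $f^{\op}$ preserves limits, so for a $T$-$\infty$-category $\Cc$, regarded as a limit-preserving functor $\PSh(T)^{\op}\to\Cat_\infty$ as in \Cref{rmk:limitExtension}, the composite $\Cc\circ f^{\op}$ is again limit-preserving; by uniqueness of limit-extensions this is precisely $f^*\Cc$. Thus $(f^*\Cc)(X)\simeq\Cc(f(X))$ naturally in $X\in\PSh(S)$, and the restriction along a map $g\colon Y\to X$ in $\PSh(S)$ is identified with $f(g)^*$ on $\Cc$.

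Now let $\Cc$ be $\cat V$-cocomplete and take $(u\colon A\to B)\in\bbU(B)$ with $B\in S$. Then $u^*\colon(f^*\Cc)(B)\to(f^*\Cc)(A)$ is identified with $f(u)^*\colon\Cc(f(B))\to\Cc(f(A))$; since $f(u)\in\cat V(f(B))$ by hypothesis, $\cat V$-cocompleteness of $\Cc$ provides a left adjoint. For the base-change condition I would start from a pullback square in $\PSh(S)$ as in \eqref{eq:PullbackSquareTCocompleteness} with $B,B'$ representable and the leg $p$ in $\bbU(B)$; applying $f$ yields a pullback square in $\PSh(T)$ (because $f$ preserves pullbacks) whose corresponding leg lies in $\cat V$. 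Under the levelwise identification, the Beck-Chevalley transformation attached to the original square for $f^*\Cc$ is carried to the one attached to its $f$-image for $\Cc$, which is an equivalence by $\cat V$-cocompleteness. Hence \Cref{lem:UColimitsVsAdjointable} applies and $f^*\Cc$ is $\bbU$-cocomplete.

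For the statement about functors I would invoke the previous lemma: a $T$-functor is $\cat V$-cocontinuous if and only if for every $(p\colon A\to B)\in\cat V(B)$, $B\in\PSh(T)$, the Beck-Chevalley map $p_!\circ F(A)\Rightarrow F(B)\circ p_!$ is an equivalence. If $F\colon\Cc\to\Dd$ is $\cat V$-cocontinuous, then for any $(u\colon A\to B)\in\bbU(B)$ the Beck-Chevalley map for $f^*F$ is, via the levelwise identification, the one for $F$ at $f(u)\in\cat V(f(B))$; hence it is an equivalence, and $f^*F$ is $\bbU$-cocontinuous.

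The one point needing genuine care — more bookkeeping than conceptual difficulty — is checking that the mate/Beck-Chevalley construction is compatible with the levelwise description of $f^*$, i.e.\ that the parametrized left adjoints and the base-change squares witnessing $\bbU$-cocompleteness of $f^*\Cc$ are exactly the $f$-images of those witnessing $\cat V$-cocompleteness of $\Cc$. Once that identification is secured, the two hypotheses on $f$ (cocontinuity, so that $f^*\Cc$ is a genuine limit-extension, and pullback-preservation, so that the Beck-Chevalley squares transport) together with the comparison hypothesis $f(\bbU)\subseteq\cat V$ do all the remaining work.
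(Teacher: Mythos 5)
Your proposal is correct and follows the same route as the paper: identify $(f^*\Cc)(X)\simeq\Cc(f(X))$ levelwise, then verify the two conditions of Lemma~\ref{lem:UColimitsVsAdjointable} (existence of left adjoints and the Beck-Chevalley equivalences) by transporting them along $f$, using that $f$ preserves pullbacks and sends $\bbU$ into $\cat V$. Your explicit remark that $\Cc\circ f^{\op}$ is again limit-preserving (hence agrees with the limit-extension $f^*\Cc$) is a slightly more careful version of an identification the paper uses implicitly, but the argument is the same.
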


It turns out that the parametrized colimits indexed by the constant $T$-$\infty$-categories and the $T$-$\infty$-groupoids already determine all parametrized colimits.

\begin{proposition}[\cite{martiniwolf2021limits}*{Proposition~4.7.1}]\label{prop:charact-T-cc}
	A $T$-$\infty$-category is $T$-cocomplete if and only if it admits fiberwise colimits and $\ul{\Spc}_T$-colimits. A $T$-functor between $T$-cocomplete $T$-$\infty$-categories preserves $T$-colimits if and only if it preserves fiberwise colimits and $\ul{\Spc}_T$-colimits.\qed
\end{proposition}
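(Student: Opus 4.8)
The forward implications are immediate: $\const_K$ is a small $T$-$\infty$-category for every small $\infty$-category $K$, and $\ul{\Spc}_T\subseteq\ul{\mathrm{cat}}_T$ is a full parametrized subcategory in such a way that a morphism of presheaves lies in $\ul{\Spc}_T$ viewed as a class of $T$-$\infty$-categories precisely when it lies in $\ul{\Spc}_T$ viewed as a class of $T$-$\infty$-groupoids. Hence $T$-cocompleteness (resp.\ $T$-cocontinuity of a $T$-functor) specializes directly to fiberwise and $\ul{\Spc}_T$-cocompleteness (resp.\ the corresponding preservation statements). For the converse, I would first reduce to the base $T$: by \Cref{lemma:restriction-preserves-cocompl} the functor $\pi_B^*\colon\Cat_T\to\Cat_{T_{/B}}$ sends fiberwise colimits to fiberwise colimits and, using \Cref{lem:baseChangeOfTObjects} to identify $\pi_B^*\ul{\Spc}_T\simeq\ul{\Spc}_{T_{/B}}$, sends $\ul{\Spc}_T$-colimits to $\ul{\Spc}_{T_{/B}}$-colimits. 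Unwinding \Cref{def:UColimits}, it therefore suffices to prove: \emph{any $T$-$\infty$-category admitting fiberwise colimits and $\ul{\Spc}_T$-colimits admits $K$-indexed $T$-colimits for every small $T$-$\infty$-category $K$}, together with the analogous statement for $T$-functors.

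The engine is a presentation of $K$ by simple pieces: every small $T$-$\infty$-category $K$ is the colimit in $\Cat_T$ of a diagram $D\colon J\to\Cat_T$, indexed by a small $\infty$-category $J$ (the ``category of parametrized simplices'' of $K$, i.e.\ the total category of the functor $B\mapsto\Delta_{/K(B)}$), whose values are of the form $D_j=\ul{B_j}\times\const_{[n_j]}$. This is the parametrized co-Yoneda lemma for the generating set $\{\,\ul{B}\times\const_{[n]}\,\}$, whose members corepresent the functors $\Cc\mapsto\Fun([n],\Cc(B))$ on $\Cat_T$ by \Cref{lem:YonedaLemma} together with the cotensoring of $\Cat_T$ over $\Cat_\infty$. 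Granting such a presentation, I would verify two things. First, $\Cc$ admits $(\ul{B}\times\const_{[n]})$-indexed $T$-colimits: under the exponential equivalence $\ulFun_T(\ul{B}\times\const_{[n]},\Cc)\simeq\ulFun_T(\ul{B},\ulFun_T(\const_{[n]},\Cc))$ the diagonal factors as $\Cc\to\ulFun_T(\const_{[n]},\Cc)\to\ulFun_T(\ul{B},\ulFun_T(\const_{[n]},\Cc))$; the first diagonal admits a left adjoint since $\Cc$ admits $\const_{[n]}$-colimits, and $\ulFun_T(\const_{[n]},\Cc)$ still admits $\ul{\Spc}_T$-colimits (these are computed levelwise in the $[n]$-direction, since $\ulFun_T(\const_{[n]},\Cc)(Y)\simeq\Fun([n],\Cc(Y))$ naturally in $Y$, as established in the proof of \Cref{lem:ColimitsIndexedByConstantTCategories}), so the second diagonal admits a left adjoint by \Cref{lem:UColimitsVsAdjointable}; composing yields $\colim_{\ul{B}\times\const_{[n]}}$. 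Second, a \emph{Fubini principle}: if $K\simeq\colim_{j\in J}D_j$ in $\Cat_T$ and $\Cc$ admits $D_j$-indexed colimits for every $j$ as well as $\const_J$-indexed colimits, then $\Cc$ admits $K$-indexed colimits, computed by $\colim_K(F)\simeq\colim_{j\in J}\colim_{D_j}(\iota_j^*F)$ where $\iota_j\colon D_j\to K$ are the structure maps. Together with fiberwise cocompleteness (to supply the $\const_J$-indexed colimit) these give all $K$-indexed $T$-colimits.

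The main obstacle is the Fubini principle, which is really the statement that a colimit indexed by a colimit of $T$-$\infty$-categories is an iterated colimit. Concretely one must show that the parametrized category-of-elements construction (the cocartesian unstraightening of a $T$-presheaf on $K$) carries colimits in the diagram variable to colimits in $\Cat_T$, and then that the resulting double colimit can be reorganized as $\colim_J\colim_{D_j}$. I would prove this by first treating the cases in which the outer colimit is constant-indexed or $T$-groupoid-indexed separately -- there it reduces to associativity of adjoints via \Cref{lem:ColimitsIndexedByConstantTCategories} and to base change via \Cref{lem:UColimitsVsAdjointable} -- and then assembling the general case. Finally, the preservation statement for a $T$-functor $F\colon\Cc\to\Dd$ between $T$-cocomplete $T$-$\infty$-categories follows along the same presentation: via $K\simeq\colim_j(\ul{B_j}\times\const_{[n_j]})$ and $\colim_{D_j}\simeq\colim_{\ul{B_j}}\circ\colim_{\const_{[n_j]}}$, the Beck-Chevalley comparison $\colim_K\circ\,\ulFun_T(K,F)\Rightarrow F\circ\colim_K$ decomposes into the Beck-Chevalley comparisons for $\const_{[n_j]}$-, $\ul{B_j}$- and $\const_J$-indexed colimits, each of which is an equivalence under the hypothesis that $F$ preserves fiberwise and $\ul{\Spc}_T$-colimits; hence so is the composite.
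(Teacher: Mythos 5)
The paper provides no proof of this result: it cites Martini--Wolf Proposition~4.7.1 and supplies no argument of its own, so there is no in-paper proof to compare against.

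Your skeleton --- reduce to the base $T$ via $\pi_B^*$, present $K$ as a colimit of elementary pieces $\ul{B}\times\const_{[n]}$, check that $\Cc$ has colimits over those pieces, and assemble via a Fubini principle --- is sound in outline, and the verification that $\Cc$ admits $(\ul{B}\times\const_{[n]})$-indexed colimits is correct (it could be shortened by invoking \Cref{prop:CoLimitsInFunctorCategories} directly to see that $\ulFun_T(\const_{[n]},\Cc)$ inherits $\ul{\Spc}_T$-colimits from $\Cc$). But the Fubini principle, which you yourself flag as the main obstacle, is left at the level of a strategy rather than a proof, and it is not a routine verification. Even non-parametrically the claim ``if $K\simeq\colim_j D_j$ and $\Cc$ has $D_j$- and $J$-colimits then $\Cc$ has $K$-colimits'' packages two non-formal theorems: (a) decomposition of colimits along the cocartesian fibration $\smallint D\to J$, iterating fiberwise colimits and an outer $J$-colimit, and (b) cofinality of the localization $\smallint D\to\colim_J D_j\simeq K$ at cocartesian edges, which in your specific presentation is cofinality of the parametrized last-vertex functor. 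The parametrized analogues of (a) and (b) sit at the same level of difficulty as the statement you are proving, are not available in the present paper's toolbox, and ``treat the constant-indexed and groupoid-indexed cases separately and assemble'' does not yet make them precise. Without a full argument or a precise citation this step is a genuine hole. A secondary gap: the density presentation $K\simeq\colim_J(\ul{B_j}\times\const_{[n_j]})$ with $J$ the parametrized simplex category is asserted as the ``parametrized co-Yoneda lemma'' but not verified; it does hold (density of $\Delta$ in $\Cat_\infty$, density of representables in $\PSh(T)$, transported through $\Cat_T\simeq\PSh(T)\otimes\Cat_\infty$), but since the entire decomposition rests on it, it should be made explicit.
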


\begin{corollary}\label{cor:cocontinuity-restr}
	Let $f\colon\PSh(S)\to\PSh(T)$ as in Lemma~\ref{lemma:restriction-preserves-cocompl}. Then the restriction $f^*\colon\Cat_T\to\Cat_S$ sends $T$-cocomplete $T$-$\infty$-categories to $S$-cocomplete $S$-$\infty$-categories and $T$-cocontinuous $T$-functors to $S$-cocontinuous $S$-functors.
	\begin{proof}
		Clearly, $f^*$ preserves fiberwise cocompleteness and cocontinuity. The claim therefore follows from the previous proposition together with Lemma~\ref{lemma:restriction-preserves-cocompl}.
	\end{proof}
\end{corollary}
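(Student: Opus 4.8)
The plan is to reduce to the two generating classes of parametrized colimits identified in \cite{martiniwolf2021limits}*{Proposition~4.7.1} --- fiberwise colimits and $\ul{\Spc}_T$-indexed colimits --- and treat each separately, the second being exactly the content of \Cref{lemma:restriction-preserves-cocompl}.

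First I would check that $f^*$ preserves fiberwise cocompleteness. Recall that, on limit-extensions, $(f^*\Cc)(Y)\simeq\Cc(f(Y))$ naturally in $Y\in\PSh(S)$ and compatibly with restriction maps (this is already implicit in the proof of \Cref{lemma:restriction-preserves-cocompl}); in particular $f^*\Cc$ is again a limit-preserving functor $\PSh(S)\catop\to\Cat_\infty$ since $f\catop$ is continuous. Now fix a small $\infty$-category $K$ and a $T$-$\infty$-category $\Cc$ admitting fiberwise $K$-indexed colimits. By part~(3) of \Cref{lem:ColimitsIndexedByConstantTCategories}, $\Cc(W)$ then admits $K$-indexed colimits for \emph{every} presheaf $W\in\PSh(T)$, and these are preserved by all restriction functors $\beta^*$; evaluating at the presheaves $f(B)$ for $B\in S$ and at the maps $f(\beta)$ for $\beta\colon B'\to B$ in $S$, part~(2) of the same lemma shows that $f^*\Cc$ admits fiberwise $K$-indexed colimits. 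Running the analogous argument for a $T$-functor $F$ --- using the ``moreover'' clause of the lemma immediately following \Cref{lem:ColimitsIndexedByConstantTCategories}, which says $F(W)$ preserves $K$-indexed colimits for every $W\in\PSh(T)$ as soon as $F$ preserves fiberwise $K$-indexed colimits --- shows that $f^*F$ preserves fiberwise colimits whenever $F$ does. Hence $f^*$ preserves fiberwise cocompleteness and fiberwise cocontinuity.

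Next I would apply \Cref{lemma:restriction-preserves-cocompl} with $\bbU=\ul{\Spc}_S$ and $\cat V=\ul{\Spc}_T$: the functor $f$ is cocontinuous and preserves pullbacks by hypothesis, and since $\ul{\Spc}_T$ is the maximal class of $T$-$\infty$-groupoids --- its value at any presheaf $W$ is the whole slice $\PSh(T)_{/W}$ --- the condition $f(u)\in\cat V(f(B))$ is automatic. This gives that $f^*$ sends $\ul{\Spc}_T$-cocomplete $T$-$\infty$-categories to $\ul{\Spc}_S$-cocomplete $S$-$\infty$-categories, and $\ul{\Spc}_T$-cocontinuous $T$-functors to $\ul{\Spc}_S$-cocontinuous $S$-functors. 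Combining this with the previous paragraph and \cite{martiniwolf2021limits}*{Proposition~4.7.1} (applied once over $T$ and once over $S$) yields the claim. I do not anticipate a genuine obstacle: the only slightly delicate point is that $f(B)$ for a representable $B\in S$ is in general a non-representable presheaf of $T$, which is why the presheaf-level versions of \Cref{lem:ColimitsIndexedByConstantTCategories} (part~(3) rather than just part~(2)) and of the subsequent lemma are the ones actually invoked.
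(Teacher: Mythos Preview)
Your proof is correct and follows exactly the paper's approach: reduce via \cite{martiniwolf2021limits}*{Proposition~4.7.1} to fiberwise colimits and $\ul{\Spc}_T$-colimits, handle the latter by \Cref{lemma:restriction-preserves-cocompl} with $\bbU=\ul{\Spc}_S$, $\cat V=\ul{\Spc}_T$, and observe that the fiberwise case is immediate. The only difference is that you spell out the fiberwise step (correctly invoking the presheaf-level part~(3) of \Cref{lem:ColimitsIndexedByConstantTCategories} since $f(B)$ need not be representable), whereas the paper dismisses it with ``clearly''.
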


An important example of a $T$-(co)complete $T$-$\infty$-category is the $T$-$\infty$-category of $T$-spaces.

\begin{example}
	\label{ex:TSpacesPAdjointable}
	The $T$-$\infty$-category $\ul{\Spc}_T$ is both $T$-cocomplete and $T$-complete. Recall from \cref{rmk:sheaf_associated_to_Spc_T} that $\ul{\Spc}_T(B) \simeq \PSh(T)_{/B}$ for every $B \in T$, with functoriality given via pullback in $\PSh(T)$. The functor $f^*\colon \PSh(T)_{/B} \to \PSh(T)_{/A}$ admits a left adjoint given by postcomposition with $f$, and since $\PSh(T)$ is locally cartesian closed it also admits a right adjoint. It follows that $\ul{\Spc}_T$ admits all fiberwise limits and colimits. The left Beck-Chevalley condition is a consequence of the pasting law of pullback squares. The right Beck-Chevalley condition follows from this by passing to total mates.
\end{example}

\begin{example}
	\label{ex:PointedTSpacesPAdjointable}
	It follows directly from \Cref{ex:TSpacesPAdjointable} that also the $T$-$\infty$-categories $\ul{\Spc}_{T,*} $ and $\ul{\Sp}_T$ of pointed $T$-spaces and naive $T$-spectra are both $T$-cocomplete and $T$-complete, since they may be obtained from $\ul{\Spc}_T$ by pointwise tensoring with $\Spc_*$ and $\Sp$ inside $\PrL$, respectively. For later use, we will make the left adjoint functors $p_!$ of $\ul{\Spc}_{T,*} $ explicit. First note that giving a basepoint to an object $(X,f:X \to A) \in \ul{\Spc}_T(A) \simeq \PSh(T)_{/A}$ amounts to providing a section $s\colon A \to X$ of the map $f$, so that we can identify objects of $\ul{\Spc}_{T,*}(A)$ with triples $(X,f,s)$. Given a morphism $p\colon A \to B$ in $\PSh(T)$, we get $p_!(X,f,s) \simeq (X',f',s')$ defined via the following pushout diagram:
	\[\begin{tikzcd}
		A \rar{s} \dar[swap]{p} \drar[pushout] & X \dar[dashed] \rar{f} & A \dar{p} \\
		B \rar[dashed]{s'} & X' \rar[dashed]{f'} & B.
	\end{tikzcd}\]
\end{example}

We end this subsection with a discussion of categories of $T$-cocontinuous functors.

\begin{definition}
	Let $\Cc,\Dd$ be $T$-cocomplete $T$-$\infty$-categories, and let $A\in T$. We write $\ul\Fun_T^\textup{L}(\Cc,\Dd)(A)\subset\ul\Fun_T(\Cc,\Dd)(A)$ for the full subcategory spanned by the $T_{/A}$-cocontinuous functors $\pi_A^*\Cc\to\pi_A^*\Dd$.
\end{definition}

\begin{lemma}\label{lemma:FunL-T-subcat}
	This defines a $T$-subcategory $\ul\Fun_T^\textup{L}(\Cc,\Dd)\subset\ul\Fun_T(\Cc,\Dd)$.
	\begin{proof}
		By Lemma~\ref{lemma:restriction_functor_cat} it suffices to show that for any $f\colon A\to B$ in $T$ and $T_{/B}$-cocontinuous $F\colon\Cc'\to\Dd'$, the restriction $T_{/f}^*F$ is $T_{/A}$-cocontinuous. This follows at once from Corollary~\ref{cor:cocontinuity-restr} as $(T_{/f})_!\colon\PSh(T_{/A})\to\PSh(T_{/B})$ agrees up to equivalence with the pullback preserving functor $\PSh(T)_{/f}\colon\PSh(T)_{/A}\to\PSh(T)_{/B}$.
	\end{proof}
\end{lemma}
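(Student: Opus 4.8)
The plan is to verify that the pointwise full subcategories $\ul\Fun_T^\textup{L}(\Cc,\Dd)(A)\subseteq\ul\Fun_T(\Cc,\Dd)(A)$ assemble into a sub-$T$-$\infty$-category, i.e.\ that they are preserved by the restriction functors of $\ul\Fun_T(\Cc,\Dd)$. As these inclusions are full at every level $A\in T$, it suffices to show that for each morphism $f\colon A\to B$ in $T$ the restriction functor $f^*\colon\ul\Fun_T(\Cc,\Dd)(B)\to\ul\Fun_T(\Cc,\Dd)(A)$ carries $T_{/B}$-cocontinuous functors $\pi_B^*\Cc\to\pi_B^*\Dd$ to $T_{/A}$-cocontinuous functors $\pi_A^*\Cc\to\pi_A^*\Dd$.

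First I would run this through \Cref{lemma:restriction_functor_cat}, which identifies $f^*$, along the equivalences $\ul\Fun_T(\Cc,\Dd)(B)\simeq\Fun_{T_{/B}}(\pi_B^*\Cc,\pi_B^*\Dd)$ of \Cref{cor:TCategoryOfTFunctors}, with the composite $\sigma^*\circ T_{/f}^*$, where $T_{/f}^*$ denotes restriction along the postcomposition functor $T_{/f}\colon T_{/A}\to T_{/B}$ and $\sigma\colon\pi_B\circ T_{/f}\simeq\pi_A$ is the evident equivalence. Since $\sigma^*$ is an equivalence of $\infty$-categories it neither creates nor destroys cocontinuity, so the claim reduces to the assertion that $T_{/f}^*\colon\Cat_{T_{/B}}\to\Cat_{T_{/A}}$ sends $T_{/B}$-cocomplete $T_{/B}$-$\infty$-categories to $T_{/A}$-cocomplete ones and $T_{/B}$-cocontinuous functors to $T_{/A}$-cocontinuous functors.

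This is precisely an instance of \Cref{cor:cocontinuity-restr}. Indeed, under the limit-extension identification of \Cref{rmk:limitExtension}, restriction along $T_{/f}$ agrees with precomposition along the cocontinuous extension $(T_{/f})_!\colon\PSh(T_{/A})\to\PSh(T_{/B})$; this uses only that $(T_{/f})_!$ extends $T_{/f}$ along the Yoneda embeddings, so that restricting a limit-preserving sheaf back to representables along either route yields the same functor $T_{/A}\catop\to\Cat_\infty$. Under the equivalences $\PSh(T_{/A})\simeq\PSh(T)_{/A}$ and $\PSh(T_{/B})\simeq\PSh(T)_{/B}$ of \Cref{rmk:sheaf_associated_to_Spc_T}, the functor $(T_{/f})_!$ becomes postcomposition $\PSh(T)_{/f}\colon\PSh(T)_{/A}\to\PSh(T)_{/B}$, which is cocontinuous, being left adjoint to pullback along $f$, and preserves pullbacks, as these are created by the forgetful functor to $\PSh(T)$ and $\PSh(T)_{/f}$ leaves underlying objects and maps untouched. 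Hence \Cref{cor:cocontinuity-restr} applies with the roles of $S$ and $T$ there played by $T_{/A}$ and $T_{/B}$, yielding both halves of what was needed.

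I do not expect a genuine obstacle here: once \Cref{lemma:restriction_functor_cat} is in hand, so that $f^*$ is correctly matched with $\sigma^*\circ T_{/f}^*$ compatibly with the full subcategory of cocontinuous functors, the conclusion is immediate from \Cref{cor:cocontinuity-restr}. The only point demanding attention is the bookkeeping in the first two paragraphs — keeping track of which equivalences are being invoked and checking that passing to the cocontinuous full subcategories is compatible with all of them — but this is exactly what \Cref{lemma:restriction_functor_cat} and \Cref{cor:TCategoryOfTFunctors} are set up to handle.
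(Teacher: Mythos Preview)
Your proposal is correct and follows essentially the same route as the paper's proof: reduce via \Cref{lemma:restriction_functor_cat} to showing that $T_{/f}^*$ preserves cocontinuity, and then invoke \Cref{cor:cocontinuity-restr} using that $(T_{/f})_!$ identifies with the pullback-preserving postcomposition functor $\PSh(T)_{/f}$. You have simply spelled out a few more of the bookkeeping details than the paper does.
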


We will now give an alternative description in terms of the adjunct functors $F\colon\Cc\to\pi_{A*}\pi_A^*\Dd\simeq\ul\Fun_T(\ul A,\Dd)$, which will in particular allow us to describe the value of $\ul\Fun^\text{L}_T(\Cc,\Dd)$ at non-representable presheaves. For this we will need:

\begin{proposition}[\cite{martiniwolf2021limits}*{Proposition 4.3.1}]
	\label{prop:CoLimitsInFunctorCategories}
	Let $K$ and $\Dd$ be $T$-$\infty$-categories such that $\Dd$ admits all $K$-indexed parametrized limits. Then $\ulFun_T(\Cc,\Dd)$ admits all $K$-indexed limits for any $T$-$\infty$-category $\Cc$. Furthermore, the precomposition functor $i^*\colon \ulFun_T(\Cc',\Dd) \to \ulFun_T(\Cc,\Dd)$ preserves $K$-indexed limits for every $T$-functor $i\colon \Cc \to \Cc'$. The dual statement for colimits is true as well.\qed
\end{proposition}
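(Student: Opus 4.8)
The strategy is to deduce everything formally from the cartesian closure of $\Cat_T$ (see \Cref{def:TCategoryOfTFunctors}), using that pointwise parametrized limits in a functor $T$-$\infty$-category are computed by applying the internal hom to the limit functor of the target. First I would recall that, by definition, $\Dd$ admitting $K$-indexed limits means that the diagonal $\diag_\Dd\colon\Dd\to\ulFun_T(K,\Dd)$ -- precomposition along the unique $T$-functor $\pi\colon K\to\ul1$ -- admits a right adjoint $\lim_K\colon\ulFun_T(K,\Dd)\to\Dd$. Since an adjunction inside $\Cat_T$ only depends on the associated homotopy $2$-category, and any functor of $\infty$-categories preserves adjunctions, applying the internal hom functor $\ulFun_T(\Cc,-)\colon\Cat_T\to\Cat_T$ produces an adjunction $\ulFun_T(\Cc,\diag_\Dd)\dashv\ulFun_T(\Cc,\lim_K)$ between $\ulFun_T(\Cc,\Dd)$ and $\ulFun_T(\Cc,\ulFun_T(K,\Dd))$. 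Next I would use the exponential law together with the symmetry of $\times$ to identify
\[
\ulFun_T(\Cc,\ulFun_T(K,\Dd))\simeq\ulFun_T(\Cc\times K,\Dd)\simeq\ulFun_T(K\times\Cc,\Dd)\simeq\ulFun_T(K,\ulFun_T(\Cc,\Dd)),
\]
and check that under this identification the left adjoint $\ulFun_T(\Cc,\diag_\Dd)$ -- restriction along $\id_\Cc\times\pi$ -- corresponds to restriction along $\pi\times\id_\Cc$, i.e.\ to $\diag_{\ulFun_T(\Cc,\Dd)}$. This last point is a routine diagram chase with the exponential law, and establishes the first claim, with $\lim_K$ on $\ulFun_T(\Cc,\Dd)$ computed by $\ulFun_T(\Cc,\lim_K)$.

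For the statement about precomposition, write $i^*=\ulFun_T(i,\Dd)$, so that $\ulFun_T(K,i^*)$ becomes, under the exponential law, restriction along $\id_K\times i$. Naturality of the diagonal then yields a square
\[\begin{tikzcd}
\ulFun_T(\Cc',\Dd)\arrow[r,"i^*"]\arrow[d,"\diag"'] & \ulFun_T(\Cc,\Dd)\arrow[d,"\diag"]\\
\ulFun_T(K,\ulFun_T(\Cc',\Dd))\arrow[r,"{\ulFun_T(K,i^*)}"'] & \ulFun_T(K,\ulFun_T(\Cc,\Dd))
\end{tikzcd}\]
commuting up to the canonical homotopy $\pr\circ(\id_K\times i)\simeq i\circ\pr$ between two maps $K\times\Cc\to\Cc'$. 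By the first part its two downward right adjoints are $\ulFun_T(\Cc',\lim_K)$ and $\ulFun_T(\Cc,\lim_K)$, and bifunctoriality of $\ulFun_T(-,-)$ identifies the resulting Beck--Chevalley transformation $\lim_K\circ\ulFun_T(K,i^*)\Rightarrow i^*\circ\lim_K$ with the canonical comparison induced by $\ulFun_T(i,\lim_K)$, which is an equivalence. By the limit version of \Cref{def:FunctorPreservesParametrizedColimits} this is exactly the assertion that $i^*$ preserves $K$-indexed limits.

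The statement for colimits is proved verbatim with ``right adjoint'' replaced by ``left adjoint'' throughout: $\ulFun_T(\Cc,-)$ sends the adjunction $\colim_K\dashv\diag_\Dd$ to $\ulFun_T(\Cc,\colim_K)\dashv\ulFun_T(\Cc,\diag_\Dd)\simeq\diag_{\ulFun_T(\Cc,\Dd)}$, and the same bifunctoriality argument handles precomposition. The only genuine work -- and hence the main, though fairly mild, obstacle -- is the bookkeeping in these diagram chases: matching the internal-hom adjoints with the pointwise (co)limit functors, and checking that these matchings are natural in the first variable in precisely the way needed to identify the Beck--Chevalley maps. There is no geometric content; everything is forced by the cartesian closure of $\Cat_T$.
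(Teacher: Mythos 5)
Your approach is a clean, self-contained alternative to simply citing Martini--Wolf: you derive everything from the cartesian closure of $\Cat_T$, identifying $\lim_K$ on $\ulFun_T(\Cc,\Dd)$ with $\ulFun_T(\Cc,\lim_K)$ via the exponential law, and then observing that the Beck--Chevalley square for $i^*$ commutes on the nose by bifunctoriality of $\ulFun_T(-,-)$. The exponential-law bookkeeping and the identification of the diagonal with restriction along $\pr_\Cc\colon K\times\Cc\to\Cc$ are all correct, as is the dualization for colimits.

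One phrase, however, is simply false as stated: ``any functor of $\infty$-categories preserves adjunctions.'' A functor $F\colon\Ee\to\Ee'$ of $\infty$-categories has no reason to carry an adjoint pair $L\dashv R$ to an adjoint pair $F(L)\dashv F(R)$; this fails already for the forgetful functor $\Cat_\infty\to\Spc$. What you actually need --- and what is true --- is that $\ulFun_T(\Cc,-)$ is a $2$-functor on the homotopy $2$-category of $\Cat_T$: it acts on natural transformations of $T$-functors and respects their vertical and horizontal compositions, since $\Cat_T$ is cartesian closed (equivalently, cotensored over $\Cat_\infty$). Because an adjunction in $\Cat_T$ is a purely $2$-categorical notion (as the paper explicitly records after its definition of adjunction), and $2$-functors preserve adjunctions, the internal hom does preserve the adjunction $\diag_\Dd\dashv\lim_K$. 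With that one-sentence repair, the argument goes through and is, if anything, more elementary than the internal-category-theory route the paper cites.
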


Combining this with Lemma~\ref{lem:ColimitsIndexedByConstantTCategories} we get:

\begin{corollary}
	Let $\Cc,\Dd$ be $T$-$\infty$-categories.
	\begin{enumerate}
		\item If $\Dd$ is $\bbU$-(co)complete for some $\bbU\subset\ul\Spc_T$, then so is $\ul\Fun_T(\Cc,\Dd)$. Moreover, if $F\colon\Cc\to\Cc'$ is any functor, then $F^*\colon\ul\Fun_T(\Cc',\Dd)\to\ul\Fun_T(\Cc,\Dd)$ is $\bbU$-(co)continuous.
		\item If $\Dd$ is fiberwise (co)complete, then so is $\ul\Fun_T(\Cc,\Dd)$. For any $\Cc\to\Cc'$, the restriction $\ul\Fun_T(\Cc',\Dd)\to\ul\Fun_T(\Cc,\Dd)$ is fiberwise (co)continuous.\qed
	\end{enumerate}
\end{corollary}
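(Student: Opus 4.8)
The plan is to deduce both parts directly from the pointwise description of $\ulFun_T(\Cc,\Dd)$ in \Cref{cor:TCategoryOfTFunctors} together with \Cref{prop:CoLimitsInFunctorCategories}, which handles (co)limits and precomposition in parametrized functor categories. I will spell out the colimit statements throughout; the limit statements are entirely dual.

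For part~(1), I would first unwind \Cref{def:UColimits}: saying that $\ulFun_T(\Cc,\Dd)$ is $\bbU$-cocomplete means that for every $B\in T$ and every $K\in\bbU(B)$, the slice $\pi_B^*\ulFun_T(\Cc,\Dd)$ admits $K$-indexed $T_{/B}$-colimits. By \Cref{cor:TCategoryOfTFunctors}\,(2) this slice is equivalent to $\ulFun_{T_{/B}}(\pi_B^*\Cc,\pi_B^*\Dd)$, and $\bbU$-cocompleteness of $\Dd$ says exactly that $\pi_B^*\Dd$ admits $K$-indexed $T_{/B}$-colimits; so \Cref{prop:CoLimitsInFunctorCategories}, applied with $T$ replaced by $T_{/B}$ and with the $T_{/B}$-$\infty$-category $K$, produces the desired $K$-indexed $T_{/B}$-colimits in $\ulFun_{T_{/B}}(\pi_B^*\Cc,\pi_B^*\Dd)$, hence in $\pi_B^*\ulFun_T(\Cc,\Dd)$. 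For the statement about $F\colon\Cc\to\Cc'$, I would observe that under these equivalences $\pi_B^*(F^*)$ is identified with the precomposition functor $(\pi_B^*F)^*$ on $\ulFun_{T_{/B}}(-,\pi_B^*\Dd)$, and then invoke the second assertion of \Cref{prop:CoLimitsInFunctorCategories}, namely that precomposition preserves $K$-indexed colimits.

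For part~(2), the point is that by \Cref{def:fiberwiseCocompleteness} ``fiberwise $K$-indexed colimits'' is by definition the same as ``$\const_K$-indexed colimits'', and $\const_K$ is an honest $T$-$\infty$-category; hence \Cref{prop:CoLimitsInFunctorCategories} applies directly, with no passage to slices, showing both that $\ulFun_T(\Cc,\Dd)$ inherits $\const_K$-indexed colimits from $\Dd$ for every small $K$ and that precomposition along any $\Cc\to\Cc'$ preserves them; that is, $\ulFun_T(\Cc,\Dd)$ is fiberwise cocomplete and $\ulFun_T(\Cc',\Dd)\to\ulFun_T(\Cc,\Dd)$ is fiberwise cocontinuous. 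The only slightly delicate point, and the one I would take most care with, is the naturality invoked in part~(1): identifying $\pi_B^*(F^*)$ with $(\pi_B^*F)^*$ requires that the equivalence of \Cref{cor:TCategoryOfTFunctors}\,(2) be natural in the functor-category variables, which one can check by tracing through \Cref{cons:FunctorCategories}, since that equivalence is built from the adjoint of the manifestly natural restricted evaluation map $\pi_B^*(\ev)$.
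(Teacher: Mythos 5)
Your proposal is correct and is essentially the same argument the paper intends: the paper gives no written proof, but the preceding sentence ("Combining this with Corollary~\ref{cor:TCategoryOfTFunctors} we get:") is exactly your combination of \Cref{prop:CoLimitsInFunctorCategories} with the slice identification $\pi_B^*\ulFun_T(\Cc,\Dd)\simeq\ulFun_{T_{/B}}(\pi_B^*\Cc,\pi_B^*\Dd)$ from \Cref{cor:TCategoryOfTFunctors}\,(2). Your caution about identifying $\pi_B^*(F^*)$ with $(\pi_B^*F)^*$ is a reasonable point to flag, and indeed follows by tracing through \Cref{cons:FunctorCategories} as you indicate.
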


\begin{proposition}\label{prop:slice-adj-cc}
	Let $\Cc$ be a $T$-cocomplete $T$-$\infty$-category and let $\Dd$ be a $T_{/A}$-cocomplete $T$-$\infty$-category. Then $\pi_{A*}\Dd$ is $T$-cocomplete, and a functor $F\colon\pi_A^*\Cc\to\Dd$ is $T_{/A}$-cocontinuous if and only if its adjunct $\tilde F\colon\Cc\to\pi_{A*}\Dd$ is $T$-cocontinuous.
	\begin{proof}
		Assume first that $F$ is $T_{/A}$-cocontinuous. Its adjunct $\tilde F$ is then given by
		\begin{equation*}
			\Cc\xrightarrow{\;\eta\;}\pi_{A*}\pi_{A}^*\Cc\xrightarrow{\pi_{A*}F}\pi_{A*}\Dd.
		\end{equation*}
		Applying Corollary~\ref{cor:cocontinuity-restr} to $A\times\blank\colon\PSh(T)\to\PSh(T)_{/A}\simeq\PSh(T_{/A})$, we see that $\pi_{A*}\Dd$ is $T$-cocomplete and $\pi_{A*}F$ is $T$-cocontinuous, so it suffices to show that $\eta$ is $T$-cocontinuous. 
		
		Unravelling definitions, $\eta$ is simply the functor $\Cc\to\Cc(A\times\blank)$ given by restriction along the projections $A\times B\to B$. To show $T$-cocontinuity, it will suffice by Proposition~\ref{prop:charact-T-cc} that $\eta$ is fiberwise cocontinuous and preserves $\ul{\Spc}_T$-colimits. The first statement is clear, while the second one is equivalent by Lemma~\ref{lemma:BC-vs-cocontinuity} to demanding that for every map $p\colon B'\to B$ in $\PSh(T)$ with target in $T$ the Beck-Chevalley map $p_!\eta\Rightarrow\eta p_!$ be an equivalence. However, this is precisely the Beck-Chevalley map $(A\times p)_!\circ{\pr^*}\Rightarrow{\pr^*}\circ p_!$ associated to the pullback
		\begin{equation*}
			\begin{tikzcd}
				A\times B'\arrow[r, "\pr"]\arrow[dr,phantom,"\lrcorner"{very near start}]\arrow[d, "A\times p"'] & B'\arrow[d, "p"]\\
				A\times B\arrow[r, "\pr"'] & B
			\end{tikzcd}
		\end{equation*}
		and hence indeed an equivalence by the characterization of $T$-cocompleteness given in Lemma~\ref{lem:UColimitsVsAdjointable}.

		Conversely, assume $\tilde F$ is $T$-cocontinuous. Then $F$ factors as
		\begin{equation*}
			\pi_A^*\Cc\xrightarrow{\pi_A^*\tilde F}\pi_A^*\pi_{A*}\Dd\xrightarrow{\,\epsilon\,}\Dd,
		\end{equation*}
		where the first functor is $T_{/A}$-cocontinuous by Corollary~\ref{cor:cocontinuity-restr} (or in fact, simply by definition). Similarly to the above, the counit is given by restriction along the unit maps $B\to A\times\pi_A(B)$, and the claim follows by observing that we also have pullbacks
		\begin{equation*}
			\begin{tikzcd}
				B'\arrow[d, "p"']\arrow[dr,phantom,"\lrcorner"{very near start}] \arrow[r, "\eta"] & A\times \pi_A(B')\arrow[d, "A\times\pi_A(p)"]\\
				B\arrow[r, "\eta"'] & A\times \pi_A(B)
			\end{tikzcd}
		\end{equation*}
		in $\PSh(T_{/A})$ for any $p\colon B'\to B$ in $T_{/A}$.
	\end{proof}
\end{proposition}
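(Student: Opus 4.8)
The plan is to dispatch the cocompleteness of $\pi_{A*}\Dd$ using \Cref{cor:cocontinuity-restr}, and then to prove each direction of the equivalence by factoring $\tilde F$ through the unit and $F$ through the counit of the adjunction $\pi_A^*\dashv\pi_{A*}$, reducing every cocontinuity assertion to a Beck--Chevalley condition for an honest pullback square which holds by the assumed $T$- or $T_{/A}$-cocompleteness.

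Concretely, recall from \Cref{ex:baseChangeCategory} that $\pi_{A*}=(A\times\blank)^*$, where $A\times\blank\colon\PSh(T)\to\PSh(T)_{/A}\simeq\PSh(T_{/A})$ is cocontinuous (as $\PSh(T)$ is cartesian closed) and preserves pullbacks (as products preserve limits and pullbacks in a slice are computed downstairs). Hence \Cref{cor:cocontinuity-restr} immediately gives that $\pi_{A*}\Dd$ is $T$-cocomplete and that $\pi_{A*}$ sends $T_{/A}$-cocontinuous functors to $T$-cocontinuous functors. Applying the same corollary to the forgetful functor $\PSh(T_{/A})\simeq\PSh(T)_{/A}\to\PSh(T)$, which is likewise cocontinuous and preserves pullbacks, shows that $\pi_A^*$ preserves $T$-cocompleteness and $T$-cocontinuity; in particular $\pi_A^*\pi_{A*}\Dd$ is again $T_{/A}$-cocomplete, so all the cocontinuity assertions below are meaningful.

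For the implication ``$F$ is $T_{/A}$-cocontinuous $\Rightarrow$ $\tilde F$ is $T$-cocontinuous'' I would write $\tilde F$ as the composite $\Cc\xrightarrow{\eta}\pi_{A*}\pi_A^*\Cc\xrightarrow{\pi_{A*}F}\pi_{A*}\Dd$. The second map is $T$-cocontinuous by the previous paragraph, so it suffices to treat the unit $\eta$. Identifying $\pi_{A*}\pi_A^*\Cc$ with the $T$-$\infty$-category $Y\mapsto\Cc(A\times Y)$, the unit $\eta$ is, at each presheaf $Y$, restriction along the projection $A\times Y\to Y$. I would then invoke the characterization of $T$-cocontinuity as fiberwise cocontinuity together with preservation of $\ul{\Spc}_T$-colimits (with the pointwise descriptions of these in \Cref{lem:ColimitsIndexedByConstantTCategories,lem:UColimitsVsAdjointable}, using \Cref{rmk:limitExtensionRestrictedClass} to drop representability): the fiberwise statement holds because restriction functors in a $T$-cocomplete $T$-$\infty$-category preserve fiberwise colimits, and the $\ul{\Spc}_T$-statement holds because for every $p\colon X\to Y$ in $\PSh(T)$ the relevant Beck--Chevalley map is precisely the one attached to the evident pullback square with corners $A\times X$, $X$, $A\times Y$, $Y$, hence an equivalence by $T$-cocompleteness of $\Cc$.

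For the converse I would factor $F$ through the counit as $\pi_A^*\Cc\xrightarrow{\pi_A^*\tilde F}\pi_A^*\pi_{A*}\Dd\xrightarrow{\epsilon}\Dd$; the first map is $T_{/A}$-cocontinuous since $\pi_A^*$ preserves cocontinuity, so it remains to show the counit $\epsilon$ is $T_{/A}$-cocontinuous. At an object $g\colon X\to A$ of $\PSh(T_{/A})$ the counit $\epsilon$ is restriction along the graph map $(g,\id_X)\colon X\to A\times X$, a morphism over $A$; exactly as in the previous paragraph, its $T_{/A}$-cocontinuity reduces to the fact that restriction functors in $\Dd$ preserve fiberwise colimits together with the Beck--Chevalley condition for the pullback square over $A$ with corners $X$, $A\times X$, $Y$, $A\times Y$ attached to a morphism $p\colon(X\to A)\to(Y\to A)$, which holds by $T_{/A}$-cocompleteness of $\Dd$. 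I expect the only real work to lie in the bookkeeping of these last two paragraphs: correctly identifying the unit and counit with restriction along the projection, respectively the graph map, in the limit-extended picture, and checking that the resulting squares really are pullbacks in $\PSh(T)$ (resp.\ $\PSh(T_{/A})$); once that is done, the rest is a direct application of \Cref{cor:cocontinuity-restr} and the pointwise criteria for parametrized colimits.
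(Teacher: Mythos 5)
Your proposal is correct and follows essentially the same route as the paper's proof: express $\tilde F$ as $\pi_{A*}F\circ\eta$ (resp.\ $F$ as $\epsilon\circ\pi_A^*\tilde F$), dispatch the outer factors using \Cref{cor:cocontinuity-restr} applied to $A\times\blank$ (resp.\ the forgetful functor), and reduce cocontinuity of the (co)unit to a Beck--Chevalley condition for the pullback square with corners $A\times B'$, $B'$, $A\times B$, $B$ (resp.\ $B'$, $A\times\pi_A(B')$, $B$, $A\times\pi_A(B)$). The only difference is that you spell out why $A\times\blank$ and the forgetful functor satisfy the hypotheses of \Cref{cor:cocontinuity-restr}, which the paper takes for granted.
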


\begin{remark}
	Analogously one sees that for a class $\bbU$ of $T$-$\infty$-groupoids a functor $F\colon\pi_A^*\Cc\to\Dd$ is $\pi_A^*\bbU$-cocontinuous if and only if its adjunct is $\bbU$-cocontinuous.
\end{remark}

\begin{proposition}\label{prop:adjunct-cocont}
	Let $\Cc,\Dd$ be $T$-cocomplete $T$-$\infty$-categories, let $X\in\PSh(T)$, and let $(F\colon\Cc\to\ul\Fun_T(\ul X,\Dd))\in\ul\Fun_T(\Cc,\Dd)(X)$. Then $F$ belongs to $\ul\Fun^\textup{L}_T(\Cc,\Dd)(X)$ if and only if $F$ is $T$-cocontinuous.
	\begin{proof}
		If $X$ is representable, this is immediate from Proposition~\ref{prop:slice-adj-cc}. It therefore suffices to show that for general $X$ a functor $F\colon\Cc\to\ul\Fun_T(\ul X,\Dd)$ is $T$-cocontinuous if and only if for every map $A\to X$ from a representable the composite $\Cc\to\ul\Fun_T(\ul A,\Dd)$ is $T$-cocontinuous.

		The `only if' part is immediate from Proposition~\ref{prop:CoLimitsInFunctorCategories}. For the converse we observe that the functors $\ul\Fun_T(\ul X,\Dd)\to\ul\Fun_T(\ul A,\Dd)$ exhibit the left hand side as a limit, and so are jointly conservative. It follows immediately that $F$ is fiberwise cocontinuous. For the Beck-Chevalley condition we now let $p\colon C\to D$ be any map in $\PSh(T)$ with target in $T$ and consider
		\begin{equation*}
			\begin{tikzcd}
				\Cc(D)\arrow[d, "p^*"'] \arrow[r, "F"] & \ul\Fun_T(\ul X,\Dd)(D)\arrow[d, "p^*"{description}]\arrow[r] & \ul\Fun_T(\ul A,\Dd)(D)\arrow[d, "p^*"]\\
				\Cc(C)\arrow[r, "F"'] & \ul\Fun_T(\ul X,\Dd)(C)\arrow[r] & \ul\Fun_T(\ul A,\Dd)(C)\rlap.
			\end{tikzcd}
		\end{equation*}
		By cocontinuity of the restriction functors, the mate of the right hand square is an equivalence, and so is the mate of the total rectangle by assumption. By the compatibility of mates with pasting we see that the mate of the left hand square is an equivalence after postcomposing with $\ul\Fun_T(\ul X,\Dd)(C)\to\ul\Fun_T(\ul A,\Dd)(C)$; the claim follows from joint conservativity again.
	\end{proof}
\end{proposition}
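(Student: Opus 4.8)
The plan is to bootstrap from the case where $X$ is representable. In that case Corollary~\ref{cor:TCategoryOfTFunctors} identifies $\ul\Fun_T(\ul X,\Dd)$ with $\pi_{X*}\pi_X^*\Dd$, and under the adjunction $\pi_X^*\dashv\pi_{X*}$ the functor $F$ corresponds to its $T_{/X}$-adjunct $\pi_X^*\Cc\to\pi_X^*\Dd$; since membership in $\ul\Fun^\textup{L}_T(\Cc,\Dd)(X)$ is by definition $T_{/X}$-cocontinuity of this adjunct, the claim for representable $X$ is precisely Proposition~\ref{prop:slice-adj-cc}. So the only work is to reduce the general case to this one.

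For a general presheaf $X$ I would use that $\ul X$ is a colimit of representables indexed by the slice $T_{/X}$, so that $\ul\Fun_T(-,\Dd)$ turns it into a limit and the restriction $T$-functors $r_A\colon\ul\Fun_T(\ul X,\Dd)\to\ul\Fun_T(\ul A,\Dd)$ along maps $A\to X$ from representables exhibit $\ul\Fun_T(\ul X,\Dd)$ as a limit and are in particular jointly conservative. By the description of the $T$-subcategory $\ul\Fun^\textup{L}_T(\Cc,\Dd)$ from Lemma~\ref{lemma:FunL-T-subcat} this means that $F\in\ul\Fun^\textup{L}_T(\Cc,\Dd)(X)$ if and only if every composite $r_A\circ F$ lies in $\ul\Fun^\textup{L}_T(\Cc,\Dd)(A)$. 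The `only if' direction is then immediate: by Proposition~\ref{prop:CoLimitsInFunctorCategories} each precomposition $r_A$ preserves $T$-colimits, so $T$-cocontinuity of $F$ passes to $r_A\circ F$, and the representable case applies.

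For the converse, assume each $r_A\circ F$ is $T$-cocontinuous. Fiberwise cocontinuity of $F$ is immediate from joint conservativity of the $r_A$ (applied objectwise over $T$) together with the fact that $F$ commutes with them. The remaining content is the Beck--Chevalley condition for $F$ along maps $p\colon C\to D$ in $T$, which by Lemma~\ref{lem:UColimitsVsAdjointable} and Remark~\ref{rmk:limitExtensionRestrictedClass} then suffices for all $\ul{\Spc}_T$-colimits. Here one pastes the naturality square of $F$ against $p^*$ on top of the naturality square of $r_A$ against $p^*$: the mate of the $r_A$-square is an equivalence because $r_A$ is cocontinuous, the mate of the pasted rectangle is an equivalence by the hypothesis on $r_A\circ F$, and compatibility of mates with horizontal pasting forces the mate of the $F$-square to be an equivalence after applying $r_A$, whence it is an equivalence by joint conservativity.

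I expect this last Beck--Chevalley step to be the only real obstacle. One has to keep careful track of which left adjoints $p_!$ exist --- on $\Cc$, and on the functor $T$-categories $\ul\Fun_T(\ul X,\Dd)$ and $\ul\Fun_T(\ul A,\Dd)$, which are fiberwise cocomplete by the corollary following Proposition~\ref{prop:CoLimitsInFunctorCategories} --- verify that the relevant restriction functors genuinely preserve the parametrized colimits in play so that their Beck--Chevalley maps are invertible, and invoke the $2$-categorical pasting-of-mates lemma in the correct variance. It is also worth recording explicitly that the family $\{r_A\}$ not only is jointly conservative but jointly detects invertibility of the relevant natural transformations (in particular of Beck--Chevalley maps), which is what the pasting argument actually uses; this follows from objectwise joint conservativity together with the fact that $\ul\Fun_T(\ul X,\Dd)(C)$ is the limit of the $\ul\Fun_T(\ul A,\Dd)(C)$.
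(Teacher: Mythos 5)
Your proposal is correct and takes essentially the same route as the paper: reduce to the representable case via Proposition~\ref{prop:slice-adj-cc}, then use that the restrictions $r_A\colon\ul\Fun_T(\ul X,\Dd)\to\ul\Fun_T(\ul A,\Dd)$ exhibit the source as a limit (hence are jointly conservative and $T$-cocontinuous), and finish the Beck--Chevalley step by pasting mates and invoking joint conservativity. The paper is slightly terser about the bookkeeping you flag at the end, but the substance and the lemmas invoked are identical.
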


\subsection{Presentable \for{toc}{$T$-$\infty$}\except{toc}{\texorpdfstring{$\bm T$-$\bm\infty$}{T-∞}}-categories}
\label{subsec:presentableTcategories}
For the statement of various universal properties we need to restrict to \textit{presentable} $T$-$\infty$-categories. The notion of parametrized presentability was introduced by Nardin \cite{nardin2017thesis} and was subsequently further developed by Hilman \cite{hilman2022parametrised} in the case where the $\infty$-category $T$ is \textit{orbital} (in the sense of \Cref{def:OrbitalSubcategory} below). A more general theory of parametrized presentability which works for arbitrary $T$ was developed by Martini and Wolf \cite{martiniwolf2022presentable} in terms of internal higher category theory. In this subsection, we will recall the main results on parametrized presentability.

\begin{definition}
	\label{def:presentableTCategory}
	A $T$-$\infty$-category $\Cc$ is called \textit{presentable} if the following two conditions hold:
	\begin{enumerate}[(1)]
		\item $\Cc$ is fiberwise presentable, meaning that the functor $\Cc\colon T\catop \to \Cat_{\infty}$ factors (necessarily uniquely) through $\PrL$;
		\item $\Cc$ is $T$-cocomplete.
	\end{enumerate}
	Observe that fiberwise presentability guarantees that $\Cc$ has fiberwise colimits, so that condition (2) holds if and only if $\Cc$ admits $\ul{\Spc}_T$-indexed colimits.
\end{definition}

By \cite{martiniwolf2022presentable}*{Theorem~A}, this definition agrees with the definition of \cite{martiniwolf2022presentable}*{Section~2.4} applied to the $\infty$-topos $\PSh(T)$. When $T$ is orbital, this definition agrees with that of \cite{hilman2022parametrised}*{Section~4}.

\begin{remark}
	\label{rmk:PresentableHasLimits}
	Any presentable $T$-$\infty$-category $\Cc$ is automatically $T$-complete: fiberwise completeness and the existence of right adjoints $f_*\colon \Cc(A) \to \Cc(B)$ follow from fiberwise presentability, and for every pullback square of the form \eqref{eq:PullbackSquareTCocompleteness}, the Beck-Chevalley map $\beta^* \circ p_* \Rightarrow p'_* \circ \alpha^*$ is the total mate of the Beck-Chevalley map $\alpha_! \circ {p'}^* \Rightarrow p^* \circ \beta_!$ and thus an equivalence.
\end{remark}

\begin{definition}
	We define $\PrLT$ to be the (non-full) subcategory of $\Cat_T$ spanned by the presentable $T$-$\infty$-categories and left adjoint $T$-functors between them. Similarly we define $\PrRT$ to be the (non-full) subcategory of $\Cat_T$ spanned by the presentable $T$-$\infty$-categories and right adjoint $T$-functors between them. There is a canonical equivalence $\PrLT \simeq (\PrRT)\catop$, see \cite{martiniwolf2022presentable}*{Proposition~2.4.4.7}.
\end{definition}

\begin{example}
	\label{ex:T_Spaces_Presentable}
	The $T$-$\infty$-category $\ul{\Spc}_T$ of $T$-spaces is presentable: fiberwise presentability follows from presentability of $\PSh(T)$ while $T$-cocompleteness was argued for in \Cref{ex:TSpacesPAdjointable}.
\end{example}

\begin{example}
	Let $K$ be a small $T$-$\infty$-category and let $\Cc$ be a presentable $T$-$\infty$-category. Then the functor $T$-$\infty$-category $\ulFun_T(K,\Cc)$ is again presentable \cite{martiniwolf2022presentable}*{Corollary~2.4.2.7}, \cite{hilman2022parametrised}*{Lemma~4.6.1}.
\end{example}

\begin{example}\label{ex:accessible-Bousfield-presentable}
	Accessible Bousfield localizations of presentable $T$-$\infty$-category are again presentable.

	In more detail, let $\Cc$ be a presentable $T$-$\infty$-category and let $S$ be a parametrized family of morphisms in $\Cc$, i.e.\ a specification of a set $S(B)$ of morphisms of $\Cc(B)$ for every $B \in T$ such that $f^*(u) \in S(A)$ for every $u\in S(B)$ and every morphism $f\colon A\rightarrow B$ in $T$. An object $X \in \Cc(B)$ is said to be \textit{$S$-local} if for every morphism $f\colon A \to B$ in $T$ the object $f^*X \in \Cc(A)$ is $S(A)$-local, meaning that for every morphism $u\colon Y \to Z$ in $S(A)$ the induced map of spaces $\Hom_{\Cc(A)}(Z,f^*X) \to \Hom_{\Cc(A)}(Y,f^*X)$ is an equivalence. We let $\mathrm{Loc}_S(\Cc) \subseteq \Cc$ denote the full subcategory spanned by the $S$-local objects.

	By \cite{martiniwolf2022presentable}*{Proposition~2.4.1.7, Corollary~2.4.2.9} the $T$-$\infty$-category $\mathrm{Loc}_S(\Cc)$ is again presentable and the inclusion $\mathrm{Loc}_S(\Cc) \subset \Cc$ admits a left adjoint.
\end{example}

\begin{remark}
	It follows from the previous three examples that the subcategory of $S$-local objects of a $T$-$\infty$-category of $T$-presheaves $\ul\PSh_T(K) := \ulFun_T(K\catop,\ul{\Spc}_T)$ is presentable whenever $S$ is a parametrized family of morphisms in $\ul\PSh_T(K)$. Conversely, any presentable $T$-$\infty$-category is of this form, see \cite{martiniwolf2022presentable}*{Theorem~A}, \cite{hilman2022parametrised}*{Theorem~4.1.2}
\end{remark}

\begin{proposition}[Adjoint functor theorem, {\cite{martiniwolf2022presentable}*{Proposition~2.4.3.1}}]
	If $\Cc$ and $\Dd$ are large $T$-$\infty$-categories such that $\Cc$ is presentable and $\Dd$ is locally small, a $T$-functor $\Cc \to \Dd$ preserves $T$-colimits if and only if it admits a right adjoint. \qednow
\end{proposition}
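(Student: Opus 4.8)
The plan is to reduce to the classical adjoint functor theorem for presentable $\infty$-categories, using the pointwise recognition criterion \Cref{prop:pointwisecriterionadjoints}.

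The implication ``admits a right adjoint $\Rightarrow$ $T$-cocontinuous'' is formal, exactly as in the unparametrized case: if $(F,G)$ is a $T$-adjunction and $K$ is a $T$-$\infty$-category for which the relevant parametrized colimits exist, then applying the $2$-functor $\ulFun_T(K,\blank)$ gives $\ulFun_T(K,F)\dashv\ulFun_T(K,G)$, and since $\colim^\Cc_K\dashv\diag_\Cc$ and $\colim^\Dd_K\dashv\diag_\Dd$, both $F\circ\colim^\Cc_K$ and $\colim^\Dd_K\circ\ulFun_T(K,F)$ are left adjoint to $\diag_\Cc\circ G=\ulFun_T(K,G)\circ\diag_\Dd$; by uniqueness of adjoints the Beck--Chevalley comparison of \Cref{def:FunctorPreservesParametrizedColimits} is an equivalence. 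Running the same argument after base change along each $\pi_B$ shows $F$ preserves all $T$-colimits.

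For the converse, assume $F$ is $T$-cocontinuous; by \Cref{prop:pointwisecriterionadjoints} it is enough to produce fiberwise right adjoints satisfying Beck--Chevalley. For the first: $F$ preserves fiberwise colimits (\Cref{lem:ColimitsIndexedByConstantTCategories}), so each $F(B)\colon\Cc(B)\to\Dd(B)$ preserves all colimits; since $\Cc$ is presentable the $\infty$-category $\Cc(B)$ is presentable, and $\Dd(B)$ is locally small, so the usual adjoint functor theorem \cite{HTT}*{Corollary~5.5.2.9} yields a right adjoint $G(B)$. The remaining --- and genuinely non-formal --- point is to check that for \emph{every} morphism $f\colon A\to B$ in $T$ the Beck--Chevalley transformation $f^*\circ G(B)\Rightarrow G(A)\circ f^*$ is an equivalence, i.e.\ that the $G(B)$ assemble into a $T$-functor. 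When $f$ lies in $\ul{\Spc}_T$ this follows by a short mate computation from the equivalence $f_!\circ F(A)\iso F(B)\circ f_!$ expressing preservation of $\ul{\Spc}_T$-colimits (\Cref{lem:UColimitsVsAdjointable}); but an arbitrary restriction functor $f^*$ has no left adjoint on $\Cc$, so this does not handle the general case.

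To deal with an arbitrary $f$ I would pass to the universal example. By \cite{martiniwolf2022presentable}*{Theorem~B}, $\Cc$ is an accessible reflective localization $L\colon\ul\PSh_T(K)\rightleftarrows\Cc$ of a parametrized presheaf $\infty$-category; a right adjoint of the $T$-cocontinuous composite $F\circ L$ automatically takes values in the local objects, hence in $\Cc$, and restricts there to a right adjoint of $F$, and the Beck--Chevalley condition for the latter is inherited from that for the former. One may therefore assume $\Cc=\ul\PSh_T(K)$, in which case the parametrized Yoneda embedding (\Cref{ex:YonedaEmbedding}) together with the universal property of $T$-presheaf categories identifies $F$ with the parametrized left Kan extension of $F\circ y\colon K\to\Dd$, whose right adjoint is the parametrized mapping-space functor $d\mapsto\ul{\maps}_\Dd(y(\blank),d)$ --- well-defined precisely because $\Dd$ is locally small --- and whose $T$-functoriality and Beck--Chevalley equivalences are formal consequences of naturality of the Yoneda and internal mapping-space constructions. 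This argument is carried out in \cite{martiniwolf2022presentable}*{Proposition~6.3.1}, which we simply invoke; the only real work beyond the classical presentable adjoint functor theorem is this verification of Beck--Chevalley along all of $T$.
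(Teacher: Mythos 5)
The paper offers no proof of this proposition---it is quoted with a citation to Martini--Wolf and marked \emph{q.e.d.}\ to indicate the proof is deferred to the reference---so there is no in-paper argument to compare against. Your outline is reasonable in broad strokes: the easy direction, and the use of \Cref{prop:pointwisecriterionadjoints} together with the fiberwise adjoint functor theorem, are the right first moves, and identifying Beck--Chevalley as the crux is correct.

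However, the central step contains a misconception. You claim that for a general morphism $f\colon A\to B$ in $T$ the restriction $f^*\colon\Cc(B)\to\Cc(A)$ ``has no left adjoint on $\Cc$,'' so that the mate argument only applies ``when $f$ lies in $\ul{\Spc}_T$.'' But every morphism $f\colon A\to B$ in $T$ \emph{is} an object of $\ul{\Spc}_T(B)=\PSh(T)_{/B}$, and $\Cc$ is presentable, hence $T$-cocomplete, so $f_!\dashv f^*$ exists on $\Cc$ for \emph{all} such $f$, with base change (\Cref{lem:UColimitsVsAdjointable}, \Cref{rmk:limitExtensionRestrictedClass}). The actual subtlety sits on the side of $\Dd$: being only locally small, it need not admit $f_!$, so \Cref{def:FunctorPreservesParametrizedColimits} (which presupposes both categories have the colimits in question) does not literally apply, and the hypothesis ``preserves $T$-colimits'' must be read in the weaker sense of sending parametrized colimit cones to parametrized colimit cones. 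With that reading the mate argument you abandoned does close: for $X\in\Cc(A)$ and $Y\in\Dd(B)$ one has
\begin{align*}
\Hom_{\Cc(A)}(X,f^*G(B)Y) &\simeq \Hom_{\Cc(B)}(f_!X,G(B)Y) \simeq \Hom_{\Dd(B)}(F(B)f_!X,Y) \\
&\simeq \Hom_{\Dd(A)}(F(A)X,f^*Y) \simeq \Hom_{\Cc(A)}(X,G(A)f^*Y),
\end{align*}
where the third equivalence is precisely the statement that $F$ carries the $\ul{\Spc}_T$-colimit $f_!X$ to a colimit; tracking naturality identifies the resulting composite with the Beck--Chevalley map. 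The detour through presheaf categories and parametrized left Kan extension is therefore unnecessary for the argument, and in any case your version of it stops at the point where the work starts and simply invokes the proposition being proved.
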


Given a small $T$-$\infty$-category $K$, the $T$-$\infty$-category $\ul\PSh_T(K)$ is freely generated under parametrized colimits by $K$:

\begin{theorem}[\cite{martiniwolf2021limits}*{Theorem~7.1.1}]
	\label{prop:Universal_Property_Presheaves}
	Let $K$ be a small $T$-$\infty$-category and let $\Dd$ be a $T$-cocomplete $T$-$\infty$-category. Then restriction along the Yoneda embedding $y\colon K \hookrightarrow \ul\PSh_T(K)$ (Remark~\ref{rk:YonedaEmbedding}) induces an equivalence of $T$-$\infty$-categories
	\[
	\ulFun_T^{\textup{L}}(\ul\PSh_T(K),\Dd) \iso \ulFun_T(K,\Dd).\qednow
	\]
\end{theorem}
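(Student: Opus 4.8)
The plan is to exhibit restriction along $y$ as an equivalence whose inverse is parametrized left Kan extension along $y$, after first reducing to the corresponding statement for underlying $\infty$-categories. First observe that $\ul\PSh_T(K) = \ulFun_T(K\catop,\ul{\Spc}_T)$ is presentable, hence $T$-cocomplete, since $\ul{\Spc}_T$ is presentable (\Cref{ex:T_Spaces_Presentable}) and parametrized functor $\infty$-categories into presentable targets are presentable; in particular $\ulFun_T^{\textup{L}}(\ul\PSh_T(K),\Dd)$ is defined. Now a morphism in $\Cat_T = \Fun(T\catop,\Cat_\infty)$ is an equivalence as soon as it is so at each $B\in T$, so it suffices to understand the map in question at each level $B$. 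By \Cref{cor:TCategoryOfTFunctors} (first the identification $\pi_B^*\ulFun_T(\Cc,\Dd)\simeq\ulFun_{T_{/B}}(\pi_B^*\Cc,\pi_B^*\Dd)$, then passage to global sections) together with \Cref{lem:baseChangeOfTObjects} applied to $\Ee=\Spc$, one has $\pi_B^*\ul\PSh_T(K)\simeq\ul\PSh_{T_{/B}}(\pi_B^*K)$, compatibly with the Yoneda embeddings by naturality of the parametrized Yoneda embedding; under these identifications the value at $B$ of the map becomes exactly the underlying-$\infty$-category form of the theorem for the indexing $\infty$-category $T_{/B}$. Hence it suffices to prove, for an arbitrary small $\infty$-category $T$, that $y^*$ induces an equivalence $\Fun_T^{\textup{L}}(\ul\PSh_T(K),\Dd)\iso\Fun_T(K,\Dd)$ of $\infty$-categories, where $\Fun_T^{\textup{L}}$ denotes the full subcategory of $\Fun_T$ on $T$-cocontinuous functors.

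For this underlying statement I would argue by parametrized left Kan extension along $y$. Given a $T$-functor $f\colon K\to\Dd$, form its left Kan extension $y_!f\colon\ul\PSh_T(K)\to\Dd$ along $y$; this exists because $\Dd$ is $T$-cocomplete, and $y_!f$ is $T$-cocontinuous since it is computed pointwise by a $T$-colimit. Functoriality in $f$ gives $y_!\colon\Fun_T(K,\Dd)\to\Fun_T^{\textup{L}}(\ul\PSh_T(K),\Dd)$. Because the parametrized Yoneda embedding is fully faithful (\Cref{ex:YonedaEmbedding}), restricting a left Kan extension along $y$ back along $y$ recovers the original functor, so $y^*\circ y_!\simeq\id$. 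Conversely, for $T$-cocontinuous $F\colon\ul\PSh_T(K)\to\Dd$, the parametrized co-Yoneda lemma presents every object of $\ul\PSh_T(K)$ canonically as a $T$-colimit of representables; since both $F$ and $y_!y^*F$ preserve this colimit and agree on representables, the canonical comparison $y_!y^*F\to F$ is an equivalence, whence $y_!\circ y^*\simeq\id$. Therefore $y^*$ is an equivalence, which by the reduction above proves the theorem.

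The main obstacle is not in this formal bookkeeping but in the two substantive inputs it relies on from parametrized higher category theory: the existence and pointwise computation of parametrized left Kan extensions along $y$, and the parametrized co-Yoneda lemma, i.e.\ density of the parametrized representables inside $\ul\PSh_T(K)$. These are precisely the nontrivial results of \cite{martiniwolf2021limits} (and the closely related material of \cite{shah2021parametrized}); granting them, the reduction to underlying $\infty$-categories and the Kan-extension argument are routine.
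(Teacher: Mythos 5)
The paper itself offers no proof of this statement: it is a black-boxed citation of \cite{martiniwolf2021limits}*{Theorem~6.1.1}, so there is no argument in the paper to compare against. Your sketch is the standard free-cocompletion argument (reduce to global sections over each $B$, then run a left Kan extension along Yoneda), which is indeed the shape of the proof in \cite{martiniwolf2021limits}; the reduction step you describe is correct and is exactly how $\ulFun_T^{\textup{L}}$ is defined fiberwise (cf.\ \Cref{lemma:FunL-T-subcat}), and the identifications $\pi_B^*\ul\PSh_T(K)\simeq\ul\PSh_{T_{/B}}(\pi_B^*K)$ follow from \Cref{cor:TCategoryOfTFunctors} and \Cref{lem:baseChangeOfTObjects} as you say.

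There is, however, one real gap: the sentence ``$y_!f$ is $T$-cocontinuous since it is computed pointwise by a $T$-colimit'' is not a valid justification. Having a pointwise colimit formula $(y_!f)(X)\simeq\colim_{(k,\,y(k)\to X)} f(k)$ does not by itself imply that $y_!f$ preserves colimits in $X$; many functors are given by such formulas without being cocontinuous. The correct argument is that $y_!f$ admits a right adjoint, namely the restricted Yoneda functor $d\mapsto \maps_{\pi_A^*\Dd}(f(-),d)$, and left adjoints preserve colimits — this is precisely what \Cref{rk:ra-Yoneda-extension} records, citing \cite{martiniwolf2021limits}*{Remark~7.1.4}. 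Once you replace the faulty sentence with the right-adjoint argument (or explicitly cite that remark), the rest of your argument (unit and counit equivalences via full faithfulness of $y$ and density of representables) is correct, granting the parametrized Yoneda and co-Yoneda results from \cite{martiniwolf2021limits} that you already acknowledge relying on.
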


\begin{remark}\label{rk:ra-Yoneda-extension}
	Let $A\in T$ and let $f\colon \pi_A^*K\to \pi_A^*\Dd$ define an element of $\ulFun_T(K,\Dd)(A)$, which by the theorem then extends to a left adjoint $T$-functor $F\colon \pi_A^*\ul \PSh_T(K)\to\pi_A^*\Dd$.
	As in the classical non-parametrized situation, the \emph{right adjoint} $G$ of $F$ is actually easy to describe \cite{martiniwolf2021limits}*{Remark~7.1.4}: it is given by the composition
	\begin{equation*}
		\pi_A^*\Dd \xrightarrow{y}\ul\Fun_{T_{/A}}(\pi_A^*\Dd^\op, \ul\Spc_{T_{/A}})\xrightarrow{f^*}\ul\Fun_{T_{/A}}(\pi_A^*K,\ul\Spc_{T_{/A}})\simeq\pi_A^*\ul\PSh_T(K),
	\end{equation*}
	where the unlabelled equivalence on the right is the one from Corollary~\ref{cor:TCategoryOfTFunctors}.
\end{remark}

Applying the theorem to the case where $K$ is the terminal $T$-$\infty$-category $\ul{1}$, we see that the $T$-$\infty$-category $\ul{\Spc}_T$ is the free $T$-cocomplete $T$-$\infty$-category on a single generator:

\begin{corollary}
	\label{cor:Universal_Property_T_Spaces}
	Let $\Dd$ be a $T$-cocomplete $T$-$\infty$-category. Then evaluation at the terminal object $1 \in \PSh(T) = \Gamma(\ul{\Spc}_T)$ induces an equivalence of $T$-$\infty$-categories
	\[
	\ulFun_T^{\textup{L}}(\ul{\Spc}_T,\Dd) \iso \Dd.\qednow
	\]
\end{corollary}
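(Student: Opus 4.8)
The plan is to deduce this corollary directly from \Cref{prop:Universal_Property_Presheaves} by taking the small $T$-$\infty$-category $K$ there to be the terminal $T$-$\infty$-category $\ul{1}$. Observe first that the $T$-$\infty$-category $\ulFun_T^{\textup{L}}(\ul{\Spc}_T,\Dd)$ in the present statement is, by the definition given just before \Cref{lemma:FunL-T-subcat} (its value at $A$ consists of the $T_{/A}$-cocontinuous functors $\pi_A^*\ul{\Spc}_T\to\pi_A^*\Dd$), literally the $T$-$\infty$-category appearing on the left-hand side of \Cref{prop:Universal_Property_Presheaves}, so that no comparison between a priori different notions of ``colimit-preserving $T$-functor'' is required.

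Next I would make two formal identifications. Since $\ul{1}$ is a $T$-$\infty$-groupoid it is canonically self-opposite, $\ul{1}\catop\simeq\ul{1}$, and since $\ul{1}$ is the terminal object of $\Cat_T$ we have $\Ee'\times\ul{1}\simeq\Ee'$ for every $\Ee'$, whence $\Hom_{\Cat_T}(\Ee',\ulFun_T(\ul{1},\Dd'))\simeq\Hom_{\Cat_T}(\Ee'\times\ul{1},\Dd')\simeq\Hom_{\Cat_T}(\Ee',\Dd')$ and therefore $\ulFun_T(\ul{1},\Dd')\simeq\Dd'$ naturally in $\Dd'$ by the Yoneda lemma. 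Applying this with $\Dd'=\ul{\Spc}_T$ yields $\ul\PSh_T(\ul{1})=\ulFun_T(\ul{1}\catop,\ul{\Spc}_T)\simeq\ul{\Spc}_T$, and with $\Dd'=\Dd$ it yields $\ulFun_T(\ul{1},\Dd)\simeq\Dd$. I would then check that under the first identification the parametrized Yoneda embedding $y\colon\ul{1}\to\ul\PSh_T(\ul{1})$ of \Cref{ex:YonedaEmbedding} corresponds to the $T$-functor $\ul{1}\to\ul{\Spc}_T$ classifying the terminal object $1\in\PSh(T)=\Gamma(\ul{\Spc}_T)$: the $T$-mapping-space presheaf $\maps_{\ul{1}}(-,\ast)$ is constant with contractible values, and tracing it through the chain of equivalences of the previous sentence --- which on underlying $\infty$-categories is implemented by \Cref{lem:YonedaLemma} --- produces exactly the terminal object of $\Gamma(\ul{\Spc}_T)$.

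Assembling these, \Cref{prop:Universal_Property_Presheaves} gives an equivalence of $T$-$\infty$-categories
\[
\ulFun_T^{\textup{L}}(\ul{\Spc}_T,\Dd)\;\simeq\;\ulFun_T^{\textup{L}}(\ul\PSh_T(\ul{1}),\Dd)\;\iso\;\ulFun_T(\ul{1},\Dd)\;\simeq\;\Dd
\]
given by restriction along $y$ followed by the identification above, and by the previous paragraph this composite is precisely evaluation at $1$. The main obstacle is exactly this last bookkeeping step: showing that the equivalence furnished by \Cref{prop:Universal_Property_Presheaves} is \emph{implemented by} evaluation at the terminal object, rather than merely that \emph{some} equivalence exists. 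Once one has identified the Yoneda embedding of the terminal $T$-$\infty$-category with the point $1$, everything else is formal.
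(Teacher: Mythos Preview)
Your proposal is correct and takes exactly the same approach as the paper: the paper's entire proof is the sentence preceding the corollary, ``Applying the theorem to the case where $K$ is the terminal $T$-$\infty$-category $\ul{1}$,'' together with a \qednow. You have simply spelled out the identifications $\ul\PSh_T(\ul{1})\simeq\ul{\Spc}_T$ and $\ulFun_T(\ul{1},\Dd)\simeq\Dd$ and the matching of the Yoneda embedding with the terminal object, which the paper leaves implicit.
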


\section{The universal property of global spaces}
In this section we will give a parametrized interpretation of unstable global homotopy theory in the sense of \cite{schwede2018global}*{Chapter 1} with respect to \emph{finite} groups. For this, the key idea will be to more generally consider unstable \emph{$G$-global homotopy theory} in the sense of \cite{g-global}*{Chapter~1} for finite groups $G$, which we recall in Subsection~\ref{subsec:reminder-global-spaces} below. In \ref{subsec:GlobalCatOfGlobalSpaces} we will then explain how these models for varying $G$ assemble into a global $\infty$-category $\ul\S^\text{gl}$ (in the sense of Example~\ref{ex:globalCategory}), and in Subsection~\ref{subsec:UniversalPropertyGlobalSpaces} we will finally provide a universal description of $\ul\S^\text{gl}$ as the free cocomplete global $\infty$-category generated by the terminal object.

\subsection{A reminder on global and \texorpdfstring{\textit{G}}{G}-global homotopy theory}\label{subsec:reminder-global-spaces}
Let $G$ be a finite group; \cite{g-global}*{Chapter 1} studies various models of \emph{unstable $G$-global homotopy theory}. We will recall two of these models that will be particularly convenient for us:

\begin{definition}
	We write $\mathcal M$ for the monoid (under composition) of injective self-maps of the countably infinite set $\omega\mathrel{:=}\{0,1,\dots\}$.
\end{definition}

The functor $\cat{SSet}\to\cat{Set},X\mapsto X_0$ sending a simplicial set to its set of vertices admits a right adjoint $E$, given explicitly by $(EX)_n=X^{1+n}$ with functoriality induced by the identification $X^{1+n}\cong \Hom(\{0,\dots,n\},X)$; equivalently, this is the nerve of the groupoid with objects $X$ and a unique map between any two objects. As a right adjoint, $E$ in particular preserves products, so $E\mathcal M$ inherits a natural monoid structure from $\mathcal M$.

We occasionally call the resulting simplicial monoid $E\mathcal M$ the `universal finite group.' While $E\mathcal M$ is of course neither finite nor a group, this terminology is motivated by the fact that we can embed any finite group into $E\mathcal M$ in a particularly nice way:

\begin{definition}
	Let $H$ be a finite group. A countable $H$-set $\mathcal U$ is called a \emph{complete $H$-set universe} if every other countable $H$-set embeds equivariantly into $\mathcal U$.
\end{definition}

\begin{definition}
	A finite subgroup $H\subset\mathcal M$ is called \emph{universal} if the tautological $H$-action on $\omega$ makes the latter into a complete $H$-set universe.
\end{definition}

\begin{lemma}[See \cite{g-global}*{Lemma~1.2.8}]\label{lemma:universal-embeddings}
	Let $H$ be a finite group. Then there exists an injective homomorphism $i\colon H\to\mathcal M$ with universal image. If $j\colon H\to\mathcal M$ is another such map, then there exists an invertible $\phi\in\mathcal M$ such that $i(h)=\phi j(h)\phi^{-1}$ for all $h\in H$.\qed
\end{lemma}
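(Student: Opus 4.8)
The plan is to establish the two assertions separately: existence of a universal embedding $H\hookrightarrow\mathcal M$, and uniqueness of such an embedding up to conjugation in $\mathcal M$. Both are elementary facts about $H$-sets; the only ingredient about $\mathcal M$ itself that I would use is that its group of invertible elements is exactly the group of bijections of $\omega$ (an invertible injective self-map of $\omega$ is a bijection, and every bijection is injective).

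For \emph{existence}, I would first fix once and for all a ``standard'' complete $H$-set universe. Since $H$ is finite it has only finitely many conjugacy classes of subgroups, so $\mathcal U_H\mathrel{:=}\bigsqcup_{[K]}\bigsqcup_{n\in\omega}H/K$ — the disjoint union over conjugacy classes $[K]$ of subgroups $K\leqslant H$, each transitive $H$-set $H/K$ appearing $\omega$-many times — is a countably infinite $H$-set. It is a complete universe: any countable $H$-set decomposes as a disjoint union of at most $\omega$-many orbits, each isomorphic to some $H/K$, and since $\mathcal U_H$ contains $\omega$-many copies of every isomorphism type of orbit, one builds an equivariant injection orbit by orbit. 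Now I would choose any bijection of sets $b\colon\omega\iso\mathcal U_H$ and transport the $H$-action along $b$; as each bijection of $\omega$ lies in $\mathcal M$, this yields a group homomorphism $i\colon H\to\mathcal M$ landing in the invertible elements, injective because the $H$-action on $\mathcal U_H$ is faithful (it contains a free orbit), and with universal image because $(\omega,i)\cong\mathcal U_H$ is a complete universe by construction.

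For \emph{uniqueness}, suppose $i,j\colon H\to\mathcal M$ both have universal image, and write $\omega_i,\omega_j$ for $\omega$ equipped with the respective $H$-actions, so both are complete $H$-set universes. The key claim is that every complete $H$-set universe is isomorphic to $\mathcal U_H$. Granting this, there is an isomorphism of $H$-sets $\psi\colon\omega_j\iso\omega_i$; as a map of underlying sets $\psi$ is an invertible element of $\mathcal M$, and its equivariance $\psi\circ j(h)=i(h)\circ\psi$ for all $h\in H$ says exactly that $i(h)=\psi\,j(h)\,\psi^{-1}$, so $\phi\mathrel{:=}\psi$ works. To prove the claim, recall that a countable $H$-set is determined up to isomorphism by the number — an element of $\{0,1,2,\dots,\aleph_0\}$ — of its orbits of each isomorphism type $H/K$. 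A complete universe $\mathcal U$ is countable, hence has at most $\aleph_0$ orbits in total; conversely, for every $K\leqslant H$ the countable $H$-set $\bigsqcup_{n\in\omega}H/K$ embeds equivariantly into $\mathcal U$, and since an injective $H$-equivariant map out of a transitive $H$-set $H/K$ has image a single orbit on which it is an isomorphism — necessarily of type $H/K$ — this forces $\mathcal U$ to contain at least $\aleph_0$ orbits isomorphic to $H/K$. So $\mathcal U$ has exactly $\aleph_0$ orbits of each isomorphism type and $\mathcal U\cong\mathcal U_H$.

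I do not expect a real obstacle: the argument is routine combinatorics of $H$-sets, and indeed the statement is exactly \cite{g-global}*{Lemma~1.2.8}. The only point requiring a little care is the bookkeeping in the uniqueness step — namely that an injective $H$-equivariant map $H/K\to H/K'$ forces $K$ and $K'$ to be conjugate (this is what makes ``completeness'' genuinely pin down the isomorphism type of a countable universe), together with tracking the direction of conjugation so as to land on $i(h)=\phi\,j(h)\,\phi^{-1}$ rather than its inverse.
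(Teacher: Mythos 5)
The paper offers no internal proof of this lemma: the statement is cited verbatim from \cite{g-global}*{Lemma~1.2.8} and closed with \qed, so there is nothing in the paper to compare against. Your self-contained argument is correct and is the standard one: classify countable $H$-sets by the multiplicity ($\in\{0,1,\dots,\aleph_0\}$) of each orbit type $[H/K]$; observe that completeness forces every multiplicity to be exactly $\aleph_0$ (at least $\aleph_0$ because $\bigsqcup_\omega H/K$ must embed and injective equivariant maps send a transitive $H$-set onto an isomorphic orbit, at most $\aleph_0$ by countability), hence all complete universes are isomorphic; and transport existence and uniqueness along these isomorphisms, noting that the group of invertible elements of $\mathcal M$ is $\mathrm{Sym}(\omega)$. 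The two points you flag as needing care — that an injective equivariant map $H/K\to H/K'$ forces $K'$ to be conjugate to $K$, and the direction of conjugation $i(h)=\psi\,j(h)\,\psi^{-1}$ coming from $\psi\circ j(h)=i(h)\circ\psi$ — are both handled correctly.
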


\begin{remark}
	Somewhat loosely speaking, the reason to pass from the discrete monoid $\mathcal M$ to the simplicial monoid $E\mathcal M$ is to eliminate the indeterminacy of the invertible element $\phi$ in the above lemma, see~\cite{g-global}*{Subsections 1.2.2--1.2.3} for more details.
\end{remark}

\begin{definition}
	Let $G$ be any group. We write $\cat{$\bm{E\mathcal M}$-$\bm G$-SSet}$ for the $1$-category (or simplicially enriched category) of simplicial sets with a strict action of the simplicial monoid $E\mathcal M\times G$, together with the strictly $(E\mathcal M\times G)$-equivariant maps.
\end{definition}

The category $\cat{$\bm{E\mathcal M}$-$\bm G$-SSet}$ will be our first model for $G$-global homotopy theory. In order to define the weak equivalences of this model structure we recall the following notation:

\begin{notation}
	Let $G_1,G_2$ be groups, let $H\subset G_1$, and let $\phi\colon H\to G_2$ be a homomorphism. The \emph{graph subgroup} $\Gamma_{H,\phi}\subset G_1\times G_2$ is the subgroup $\{(h,\phi(h)): h\in H\}$. If $X$ is a $(G_1\times G_2)$-simplicial set, then we abbreviate $X^\phi\mathrel{:=}X^{\Gamma_{H,\phi}}$, and similarly for $(G_1\times G_2)$-equivariant maps.
\end{notation}

\begin{proposition}
	The category $\cat{$\bm{E\mathcal M}$-$\bm G$-SSet}$ carries a (unique) combinatorial model structure in which a map is a weak equivalence or fibration if and only if $f^\phi$ is a weak homotopy equivalence or Kan fibration, respectively, for every universal subgroup $H\subset\mathcal M$ and homomorphism $\phi\colon H\to G$. We call this the \emph{$G$-global model structure} and its weak equivalences the \emph{$G$-global weak equivalences}.

	Moreover, there is also a unique model structure on $\cat{$\bm{E\mathcal M}$-$\bm G$-SSet}$ whose weak equivalences are the $G$-global weak equivalences and whose cofibrations are the \emph{injective cofibrations}, i.e.~the levelwise injections. We call this the \emph{injective $G$-global model structure}.
	\begin{proof}
		These are special cases of~\cite{g-global}*{Propositions~1.1.2 and~1.1.15}, respectively; also see Corollary~1.2.34 of \emph{op.~cit.} for the former model structure.
	\end{proof}
\end{proposition}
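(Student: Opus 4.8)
The plan is to produce both model structures by recognition theorems (a complete treatment is \cite{g-global}*{Propositions~1.1.2 and~1.1.15}); what follows is a sketch of the method. The first thing to record is that $\cat{$\bm{E\mathcal M}$-$\bm G$-SSet}$ is locally presentable: a strict action of the simplicial monoid $E\mathcal M\times G$ on a simplicial set is exactly an algebra for the monad $(E\mathcal M\times G)\times(\blank)$ on $\cat{SSet}$, and this monad is a left adjoint (since $\cat{SSet}$ is cartesian closed) hence accessible, so its category of algebras is presentable. Given this, uniqueness in both parts of the statement is formal: any two model structures with the same weak equivalences and the same fibrations (respectively the same cofibrations) agree, because the remaining class is then detected by a lifting property against the common class of trivial fibrations (respectively trivial cofibrations).

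For the $G$-global model structure I would transfer along the fixed-point functors. For each universal subgroup $H\subseteq\mathcal M$ and homomorphism $\phi\colon H\to G$, restricting the $(E\mathcal M\times G)$-action along the diagonal $H\hookrightarrow\mathcal M\hookrightarrow E\mathcal M$ and along $\phi$ produces a $\Gamma_{H,\phi}$-action, and $(\blank)^\phi=(\blank)^{\Gamma_{H,\phi}}\colon\cat{$\bm{E\mathcal M}$-$\bm G$-SSet}\to\cat{SSet}$ has a left adjoint $L_\phi$ with $L_\phi(K)\cong\big((E\mathcal M\times G)/\Gamma_{H,\phi}\big)\times K$. I would set $I=\{L_\phi(\partial\Delta^n\hookrightarrow\Delta^n)\}$ and $J=\{L_\phi(\Lambda^n_k\hookrightarrow\Delta^n)\}$, ranging over all such $(H,\phi)$ up to conjugacy (a small set, since universal subgroups up to conjugacy correspond to isomorphism classes of finite groups) and all $0\leq k\leq n$, and verify the hypotheses of Kan's recognition theorem for cofibrantly generated model structures. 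Smallness of domains is automatic from presentability; the class $\mathcal W$ of $G$-global weak equivalences inherits two-out-of-three and closure under retracts from $\cat{SSet}$ since it is detected by the family $\{(\blank)^\phi\}$; the identity $I\textup{-inj}=\mathcal W\cap J\textup{-inj}$ holds because both sides consist of the maps $f$ with $f^\phi$ a trivial Kan fibration for every universal $\phi$; and the only remaining point is that every relative $J$-cell complex is a $G$-global weak equivalence.

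This last verification is the crux, and the step I expect to be the main obstacle. The key computation is that for any universal $\psi$ and any $C\in\cat{SSet}$ one has $(L_\psi C)^\phi\cong Q_{\phi,\psi}\times C$, where $Q_{\phi,\psi}=\big((E\mathcal M\times G)/\Gamma_{K,\psi}\big)^{\Gamma_{H,\phi}}$ is a fixed simplicial set (using that $\Gamma_{H,\phi}$ acts trivially on $C$); hence $(L_\psi(\Lambda^n_k\hookrightarrow\Delta^n))^\phi\cong Q_{\phi,\psi}\times(\Lambda^n_k\hookrightarrow\Delta^n)$ is anodyne, being the pushout-product of the cofibration $\emptyset\to Q_{\phi,\psi}$ with an anodyne map. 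Since cartesian product in $\cat{SSet}$ distributes over colimits and the corepresentable functor $(\blank)^\phi=\Hom\big((E\mathcal M\times G)/\Gamma_{H,\phi},\blank\big)$ commutes with the $\kappa$-filtered colimits appearing in the small object argument (the orbit objects being $\kappa$-small for a suitable regular $\kappa$), applying $(\blank)^\phi$ to a relative $J$-cell complex turns it into a transfinite composite of pushouts of coproducts of anodyne maps, hence into an anodyne map; in particular it lies in $\mathcal W$, and Kan's theorem then produces the $G$-global model structure. Note that the restriction to \emph{universal} subgroups, while inessential for this bare existence argument, is what singles out the homotopically correct structure and is indispensable for its good properties, such as the rigidity of \Cref{lemma:universal-embeddings} and compatibility with box products (cf.\ \cite{g-global}).

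For the injective $G$-global model structure one cannot transfer, since the cofibrations are prescribed as all levelwise monomorphisms; instead I would invoke Jeff Smith's recognition theorem for combinatorial model categories. I would take $I'$ to be a set of monomorphisms of $\cat{$\bm{E\mathcal M}$-$\bm G$-SSet}$ generating all monomorphisms under weakly saturated closure (available by presentability) and keep $\mathcal W$ as the $G$-global weak equivalences. The hypotheses to check are: $\mathcal W$ is accessible and accessibly embedded in the arrow category -- which holds because $\mathcal W$ is the preimage of the weak equivalences of $\cat{SSet}$ (an accessible, accessibly embedded subcategory of its arrow category) under the product of the accessible functors $(\blank)^\phi$; $I'\textup{-inj}\subseteq\mathcal W$ -- because a map with the right lifting property against all monomorphisms is carried by each $(\blank)^\phi$ to a map with the right lifting property against all monomorphisms of $\cat{SSet}$, i.e.\ to a trivial Kan fibration; and two-out-of-three and retract closure for $\mathcal W$. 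The solution-set condition for $I'\textup{-cof}\cap\mathcal W$ is then automatic, so Smith's theorem yields the model structure, with uniqueness formal as before. (Alternatively, one could realise it as a left Bousfield localization of the injective levelwise model structure on $\cat{$\bm{E\mathcal M}$-$\bm G$-SSet}$, which is combinatorial and left proper since every object is cofibrant; the work would then shift to identifying the $S$-local equivalences with the $G$-global ones.)
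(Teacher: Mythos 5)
The paper's proof is a pure citation to \cite{g-global}*{Propositions~1.1.2 and~1.1.15}, which prove these statements in the generality of an arbitrary simplicial monoid, a group $G$, and a family of ``universal'' subgroups. Your sketch correctly reconstructs the argument underlying those general results: a transfer along the fixed-point functors $(\blank)^\phi$ via Kan's recognition theorem for the projective structure, and Smith's theorem (or, as you note, a Bousfield localization of the injective levelwise structure) for the injective one. Local presentability via the free--forgetful monadic adjunction, the identification $L_\phi(K)\cong\big((E\mathcal M\times G)/\Gamma_{H,\phi}\big)\times K$, the computation $(L_\psi C)^\phi\cong Q_{\phi,\psi}\times C$, the adjunction argument that $I'\textup{-inj}\subset\mathcal W$, and the reduction of the solution-set condition to accessibility of $\mathcal W$ are all correct.

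One step deserves to be tightened. When you argue that $(\blank)^\phi$ carries relative $J$-cell complexes to anodyne maps, you appeal to ``cartesian product distributes over colimits'' and to $(\blank)^\phi$ commuting with $\kappa$-filtered colimits; the latter handles the transfinite composition, but you never state the reason why $(\blank)^\phi$ preserves the \emph{pushouts} that form each stage of the cell complex. Right adjoints do not preserve pushouts in general, and this is where the specific shape of the situation is used: $\Gamma_{H,\phi}$ is a finite \emph{group} inside $E\mathcal M\times G$, and fixed points for a group action preserve coproducts as well as pushouts along monomorphisms (because for $G$-sets, $(B\cup_A C)^H=B^H\cup_{A^H}C^H$ whenever $A\hookrightarrow B$ is injective). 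Since the attaching maps $L_\psi(\Lambda^n_k\hookrightarrow\Delta^n)$ are monomorphisms, $(\blank)^\phi$ carries each stage of the cell complex to a pushout of an anodyne map, which is the claim you need. Without this observation the sketch does not close; with it, your argument matches the one the cited reference carries out.
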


For $G=1$ the above recovers a version of Schwede's global homotopy theory where one only considers equivariant information for finite groups (`$\mathcal F\!in$-global homotopy theory'), see Remark~\ref{rk:g-global-vs-global-unstable} below. On the other hand, for general finite $G$ one can exhibit ordinary $G$-equivariant homotopy theory explicitly as a Bousfield localization of $G$-global homotopy theory, see~\cite{g-global}*{Subsection~1.2.6}. In this sense, $G$-global homotopy theory can be thought of as a `synthesis' of the usual equivariant and global approaches.

\begin{lemma}[See~\cite{g-global}*{Corollaries~1.2.76--1.2.79}]\label{lemma:alpha-star-EM}
	Let $\alpha\colon G\to G'$ be any group homomorphism. Then the restriction functor $\alpha^*\colon \cat{$\bm{E\mathcal M}$-$\bm{G'}$-SSet}\to \cat{$\bm{E\mathcal M}$-$\bm{G}$-SSet}$ is homotopical and it takes part in Quillen adjunctions
	\begin{align*}
		\alpha_!\colon\cat{$\bm{E\mathcal M}$-$\bm{G}$-SSet}_\textup{$G$-gl}&\rightleftarrows\cat{$\bm{E\mathcal M}$-$\bm{G'}$-SSet}_\textup{$G'$-gl}\noloc\alpha^*\\
		\alpha^*\colon\cat{$\bm{E\mathcal M}$-$\bm{G'}$-SSet}_\textup{inj.~$G'$-gl}&\rightleftarrows\cat{$\bm{E\mathcal M}$-$\bm{G}$-SSet}_\textup{inj.~$G$-gl}\noloc \alpha_*.
	\end{align*}
	Moreover, if $\alpha$ is \emph{injective}, then we also have Quillen adjunctions
	\begin{align*}
		\alpha_!\colon\cat{$\bm{E\mathcal M}$-$\bm{G}$-SSet}_\textup{inj.~$G$-gl}&\rightleftarrows\cat{$\bm{E\mathcal M}$-$\bm{G'}$-SSet}_\textup{inj.~$G'$-gl}\noloc \alpha^*\\
		\alpha^*\colon\cat{$\bm{E\mathcal M}$-$\bm{G'}$-SSet}_\textup{$G'$-gl}&\rightleftarrows\cat{$\bm{E\mathcal M}$-$\bm{G}$-SSet}_\textup{$G$-gl}\noloc \alpha_*.\qednow
	\end{align*}
\end{lemma}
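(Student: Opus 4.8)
The plan is to deduce all four Quillen adjunctions -- together with the homotopicality statement -- from an analysis of how the restriction functors $\alpha^*$ interact with the fixed-point functors that detect the $G$-global weak equivalences and fibrations, supplemented in the injective case by a Mackey-type double coset computation. As formal input, I would first observe that $\alpha^*$ is nothing but restriction of strict actions along the monoid homomorphism $\id_{E\mathcal M}\times\alpha$; being a restriction functor between presheaf-type categories it preserves all limits and colimits, hence admits a left adjoint $\alpha_!$ (induction, computed as $X\mapsto G'\times_G X$ with $E\mathcal M$ acting through the $X$-coordinate) and a right adjoint $\alpha_*$ (coinduction). Only the existence of $\alpha_*$ and the explicit shape of $\alpha_!$ will be used.

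The next step is to show that $\alpha^*$ is homotopical and sends fibrations of the $G'$-global model structure to fibrations of the $G$-global model structure. The point is the natural identification $(\alpha^*X)^{\Gamma_{H,\phi}}=X^{\Gamma_{H,\alpha\circ\phi}}$ for every universal subgroup $H\subseteq\mathcal M$ and every homomorphism $\phi\colon H\to G$; here $\alpha\circ\phi\colon H\to G'$ is again defined on the \emph{universal} subgroup $H$. Since the $G$-global weak equivalences (resp.\ fibrations) are exactly those $f$ for which each $f^{\Gamma_{H,\phi}}$ is a weak homotopy equivalence (resp.\ Kan fibration), this is immediate. Consequently $\alpha^*$ is right Quillen for the $G$-global model structures, which gives the first adjunction; and since $\alpha^*$ leaves underlying simplicial sets unchanged it also preserves the injective cofibrations, so being homotopical it is left Quillen for the injective $G$-global model structures, which gives the second adjunction.

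The remaining two adjunctions, for injective $\alpha$, are the crux. After identifying $G$ with $\alpha(G)\subseteq G'$ one has $\alpha_!X\cong\coprod_{G'/G}X$ as a plain simplicial set, so $\alpha_!$ preserves levelwise injections; hence the third adjunction follows once one shows that $\alpha_!$ is homotopical. For this I would compute $(\alpha_!X)^{\Gamma_{H,\psi}}$ for $H\subseteq\mathcal M$ universal and $\psi\colon H\to G'$ by the usual double coset (Mackey) formula for the fixed points of an induced object, exhibiting it as a coproduct of fixed-point sets $X^{\Gamma_{H,\phi}}$ with $\phi\colon H\to G$, using that any subgroup of a universal subgroup of $\mathcal M$ is itself universal (a complete $H$-set universe restricts to a complete $H'$-set universe for $H'\subseteq H$, since every countable $H'$-set embeds into the restriction of the countable $H$-set it induces). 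For the fourth adjunction it remains to check that $\alpha^*$ is left Quillen for the (non-injective) $G$-global model structures; as it is already homotopical, one only needs that $\alpha^*$ preserves cofibrations, and since $\alpha^*$ is cocontinuous this reduces to the generating cofibrations, i.e.\ to showing that $\alpha^*$ of a cell for the $G'$-global structure is cofibrant for the $G$-global structure -- the same orbit/double coset bookkeeping as before, and the place where injectivity of $\alpha$ is genuinely used, since it is what guarantees that the isotropy groups occurring in $\alpha^*$ of a cell are again graph subgroups $\Gamma_{H',\phi'}$ with $H'$ universal and $\phi'$ landing in $G$.

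The main obstacle is therefore the combinatorial analysis of induction and restriction along injective homomorphisms in the presence of the ``universal finite group'' $E\mathcal M$, i.e.\ keeping track of universal graph subgroups under these operations; all of this is carried out in \cite{g-global}*{Chapter~1}, from which the lemma can in fact be quoted directly as \cite{g-global}*{Corollaries~1.2.76--1.2.79}.
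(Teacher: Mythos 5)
The paper gives no in-house argument for this lemma: the $\qed$ appears in the statement itself and the ``proof'' is simply the reference to \cite{g-global}*{Corollaries~1.2.76--1.2.79}. Your proposal is a correct reconstruction of the argument those corollaries encapsulate, and it hits all the key points: the identification $(\alpha^*X)^{\Gamma_{H,\phi}}=X^{\Gamma_{H,\alpha\phi}}$ giving the first two (formal) adjunctions; the double coset analysis of $\alpha_!$ together with the closure of universal subgroups of $\mathcal M$ under passage to subgroups (your embedding argument via the unit $S\hookrightarrow\mathrm{Res}\,\mathrm{Ind}\,S$ is correct); and the role of injectivity in ensuring that the isotropy groups appearing when decomposing $\alpha^*$ of a generating cell are again graph subgroups $\Gamma_{H',\phi'}$ with $H'$ universal and $\phi'$ well-defined as a map into $G$. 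One small imprecision worth flagging: in the double coset decomposition of $(\alpha_!X)^{\Gamma_{H,\psi}}$ the summands are fixed points $X^{\Gamma_{H',\phi'}}$ for \emph{subgroups} $H'\subseteq H$, not $H$ itself --- you clearly know this, since it is exactly why you invoke the closure property, but your displayed notation $X^{\Gamma_{H,\phi}}$ with ``$\phi\colon H\to G$'' obscures it.
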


Next, we come to another model in terms of suitable `diagram spaces' that will become useful later to relate the unstable and stable theory to each other:

\begin{definition}
	We write $I$ for the category of finite sets and injections. Moreover, we write $\mathcal I$ for the simplicially enriched category obtained by applying $E\colon\cat{Set}\to\cat{SSet}$ to all hom-sets.

	We write $\cat{$\bm{\mathcal I}$-SSet}$ for the category $\FUN(\mathcal I,\cat{SSet})$ of simplicially enriched functors $\mathcal I\to\cat{SSet}$. Moreover, if $G$ is any group, then we write $\cat{$\bm G$-$\bm{\mathcal I}$-SSet}$ for the category of $G$-objects in $\cat{$\bm{\mathcal I}$-SSet}$.
\end{definition}

\begin{construction}
	Let $X$ be any $\mathcal I$-simplicial set. Then we define
	\begin{equation*}
		X(\omega)\mathrel{:=}\mathop{\text{colim}}\limits_{\substack{A\subset\omega \\ \text{finite}}} X(A).
	\end{equation*}
	This admits an $E\mathcal M$-action via the original functoriality of $X$ in $\mathcal I$, see~\cite{g-global}*{Construction~1.4.14} for details, giving rise to a functor $\ev_\omega\colon\cat{$\bm{\mathcal I}$-SSet}\to\cat{$\bm{E\mathcal M}$-SSet}$. If $G$ is any group, then we obtain a functor $\ev_\omega\colon\cat{$\bm G$-$\bm{\mathcal I}$-SSet}\to\cat{$\bm{E\mathcal M}$-$\bm G$-SSet}$ by pulling through the $G$-actions.
\end{construction}

\begin{theorem}[See~\cite{g-global}*{Proposition~1.4.3 and Theorem~1.4.30}]\label{thm:global-model-I}
	There is a unique model structure on $\cat{$\bm G$-$\bm{\mathcal I}$-SSet}$ with
	\begin{itemize}
		\item weak equivalences those maps $f$ for which $\ev_\omega f\mathrel{=:}f(\omega)$ is a $G$-global weak equivalence, \emph{and}
		\item acyclic fibrations those maps $f$ for which $f(A)^\phi$ is an acyclic Kan fibration for every finite set $A$, $H\subset\Sigma_A$, and $\phi\colon H\to G$.
	\end{itemize}
	We call this the \emph{$G$-global model structure} and its weak equivalences the \emph{$G$-global weak equivalences} again.

	Moreover, the functor $\ev_\omega$ is the left half of a Quillen equivalence $\cat{$\bm G$-$\bm{\mathcal I}$-SSet}\rightleftarrows\cat{$\bm{E\mathcal M}$-$\bm G$-SSet}_\textup{inj.~$G$-gl}$.\qed
\end{theorem}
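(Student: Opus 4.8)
The plan is to build this model structure in two stages — a level model structure by transfer, then a left Bousfield localization — and to deduce the Quillen equivalence from a cofinality statement about the $\mathcal I$-diagram $A\mapsto\mathcal I(A,\omega)$. For the first stage, note that for each finite set $A$ the evaluation functor $\ev_A$ from $\cat{$\bm G$-$\bm{\mathcal I}$-SSet}$ to $(\Sigma_A\times G)$-simplicial sets has a left adjoint $\mathcal I(A,\blank)\otimes(\blank)$. Equip $(\Sigma_A\times G)$-simplicial sets with the combinatorial, left proper, simplicial model structure for the family of graph subgroups $\Gamma_{H,\phi}$ ($H\subseteq\Sigma_A$, $\phi\colon H\to G$), whose weak equivalences and fibrations are the maps $g$ with $g^{\phi}$ a weak homotopy equivalence resp.\ Kan fibration. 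Letting $A$ range over a skeleton of $I$, right-transfer the product model structure along $\prod_A\ev_A$; the acyclicity hypothesis of the transfer theorem is checked by evaluating relative cell complexes of the free generating acyclic cofibrations $\mathcal I(A,\blank)\otimes j$ levelwise, as for Sagave--Schlichtkrull $\mathcal I$-spaces or Schwede's orthogonal spaces. The resulting \emph{level model structure} is combinatorial, simplicial and left proper, its cofibrations are generated by the maps $\mathcal I(A,\blank)\otimes i$ with $i$ a generating cofibration at level $A$, and — matching the statement — its acyclic fibrations are exactly the maps $f$ with $f(A)^{\phi}$ an acyclic Kan fibration for all $A$, $H$, $\phi$.

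Next, left Bousfield localize the level model structure at the set $S$ of cofibrations $\mathcal I(B,\blank)\otimes i\to\mathcal I(A,\blank)\otimes i$ induced by the inclusions $A\hookrightarrow B$ of finite sets ($i$ ranging over generating cofibrations), which forces all structure maps $X(A)\to X(B)$ to become equivalences; by the standard existence theorem for left Bousfield localizations of combinatorial left proper model categories, this yields a combinatorial left proper model structure with the same cofibrations, hence the same acyclic fibrations, as before. It remains to identify its weak equivalences with the maps $f$ such that $f(\omega)$ is a $G$-global weak equivalence. On the one hand, $\ev_\omega$ carries every map in $S$ — indeed every level acyclic cofibration — to a $G$-global weak equivalence, using that $\ev_\omega$ applied to $\mathcal I(j,\blank)$ is the $E$-construction on an inclusion of complete universes and that $G$-global weak equivalences in $\cat{$\bm{E\mathcal M}$-$\bm G$-SSet}$ are closed under the relevant filtered colimits; hence every $S$-local equivalence is an $\ev_\omega$-equivalence. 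Conversely, for a level-fibrant $S$-local $X$ one checks that $\ev_\omega X$ lies in the injective $G$-global model structure as a fibrant object — this is the place where the cofinality of universal subgroups $H\subseteq\mathcal M$ among the finite symmetry groups, and the contractibility of $\mathcal I(A,\omega)^{\Gamma_{H,\phi}}=E\bigl(I(A,\omega)^{\Gamma_{H,\phi}}\bigr)$, enter — so that between $S$-local objects an $\ev_\omega$-equivalence is already a level equivalence, and a $2$-out-of-$3$ argument with $S$-local replacements upgrades this to: every $\ev_\omega$-equivalence is an $S$-local equivalence. Uniqueness is then formal: prescribing the weak equivalences and the acyclic fibrations determines the cofibrations (left lifting against acyclic fibrations), hence the trivial cofibrations, hence the fibrations.

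For the Quillen equivalence, the adjunction $\ev_\omega\dashv R$ exists because $\ev_\omega$ preserves colimits. It is a Quillen pair into the injective $G$-global model structure on $\cat{$\bm{E\mathcal M}$-$\bm G$-SSet}$: the functor $\ev_\omega$ sends a generating cofibration $\mathcal I(A,\blank)\otimes i$ to the levelwise injection $\mathcal I(A,\omega)\times i$, and it preserves \emph{all} weak equivalences by construction of the source. Since $\ev_\omega$ also \emph{reflects} weak equivalences by construction, the standard criterion reduces the claim to showing that the derived counit $\ev_\omega R(Y)\to Y$ is a $G$-global weak equivalence for every injectively $G$-global fibrant $Y$. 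Here $\ev_\omega R(Y)$ is a filtered colimit, over finite sets $A$, of mapping complexes out of $\mathcal I(A,\omega)$ into $Y$; testing on $\phi$-fixed points for universal $H\subseteq\mathcal M$ and $\phi\colon H\to G$ reduces the statement to the contractibility of the spaces $\mathcal I(A,\omega)^{\Gamma_{H,\phi}}$ together with the cofinality of universal subgroups — exactly the input used in the previous step.

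The formal scaffolding (transfer, Bousfield localization, the Quillen-pair axioms, uniqueness) is routine; the genuine work, in the last two steps, is the homotopical claim that evaluation at $\omega$ faithfully records the $G$-global homotopy type. This reduces to the combinatorics of universal subgroups of $\mathcal M$ and to the contractibility of the fixed-point spaces of the relevant $E$-constructions — the same phenomenon that makes $E\mathcal M$ a good model for ``the universal finite group'' — and one must moreover verify that these contractibility statements are uniform enough to survive the filtered colimit over $\mathcal I$ and the passage to mapping complexes into a fibrant object.
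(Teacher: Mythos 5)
Your overall strategy — build a level model structure by transfer, left Bousfield localize, identify the local equivalences with the $\ev_\omega$-equivalences via contractibility of the spaces $\mathcal I(A,\omega)^{\Gamma_{H,\phi}}$ and cofinality of universal subgroups, and then verify the Quillen equivalence via the derived counit — is the standard Sagave--Schlichtkrull-style approach to diagram-space model structures, and it is essentially the route the cited source takes. The routine parts (transfer, Bousfield localization existence, uniqueness from the class of acyclic fibrations) are correctly dispatched. However, two steps as written contain genuine problems.

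The more serious issue is the claim that ``for a level-fibrant $S$-local $X$ one checks that $\ev_\omega X$ lies in the injective $G$-global model structure as a fibrant object.'' Injective fibrancy means the right lifting property against \emph{all} levelwise injections that are $G$-global weak equivalences; this is an extremely restrictive condition, satisfied essentially only after explicit injective fibrant replacement, and there is no reason to expect a filtered colimit $\ev_\omega X=\colim_A X(A)$ to satisfy it even when $X$ is level-fibrant and $S$-local. What you actually need here is not injective fibrancy of $\ev_\omega X$ but a \emph{detection} statement: that $\ev_\omega$ reflects $G$-global weak equivalences between level-fibrant $S$-local objects. This should be proved directly by computing $(\ev_\omega X)^\phi$ for a universal $H\subset\mathcal M$ and $\phi\colon H\to G$ as a suitable filtered colimit of $\Gamma$-fixed points $X(A)^{\Gamma_{K,\psi}}$, using that $S$-locality makes the comparison maps in this colimit weak equivalences for $A$ large, and that $\mathcal I(A,\omega)^{\Gamma_{H,\phi}}$ is contractible when nonempty; no appeal to injective fibrancy is needed or available.

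Second, the inference ``$\ev_\omega$ carries every map in $S$ to a $G$-global weak equivalence $\ldots$ hence every $S$-local equivalence is an $\ev_\omega$-equivalence'' is a non sequitur as stated: $S$-local equivalences are not generated from $S$ by colimits. The correct argument is to first establish that $\ev_\omega$ is left Quillen from the \emph{level} model structure to the injective $G$-global model structure (cofibrations go to injections; level acyclic cofibrations go to $G$-global weak equivalences since levelwise equivalences on $\Gamma$-fixed points pass to the filtered colimit), then use the universal property of left Bousfield localization (which applies because $\ev_\omega$ sends $S$ to weak equivalences) to conclude that $\ev_\omega$ remains left Quillen from the localization, and finally invoke Ken Brown's lemma together with cofibrant replacement — the last step is where one must also check that cofibrant replacement itself is an $\ev_\omega$-equivalence, which is another place the ``flatness'' of $\mathcal I$-diagrams enters. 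You gesture at all the right ingredients, but as written the logical chain has a gap precisely where the real work is hiding. You are honest in the closing paragraph that the homotopical core is only sketched, so I would not call this a wrong proof, but the injective fibrancy claim in particular should be replaced by the direct fixed-point detection argument before it can be called correct.
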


\begin{remark}
	One can also define a $G$-global model structure on the category $\cat{$\bm G$-$\bm I$-SSet}$ (whose weak equivalences are somewhat intricate). The forgetful functor $\cat{$\bm G$-$\bm{\mathcal I}$-SSet}\to\cat{$\bm G$-$\bm{I}$-SSet}$ is then the right half of a Quillen equivalence, see~\cite{g-global}*{Theorem~1.4.31}.
\end{remark}

\begin{remark}\label{rk:g-global-vs-global-unstable}
	Schwede \cite{schwede2018global}*{Theorem~1.2.21} originally studied unstable global homotopy theory in terms of so-called \emph{orthogonal spaces}, which are topologically enriched functors from the topological category $L$ of finite dimensional inner product spaces and linear isometric embeddings into $\cat{Top}$. While Schwede's \emph{global equivalences} on $\cat{$\bm L$-Top}$ see equivariant information for all compact Lie groups, there is a natural notion of `$\mathcal F\!in$-global weak equivalences' \cite{g-global}*{Definition~1.5.13}, and with respect to these the evident forgetful functor $\cat{$\bm L$-Top}\to\cat{$\bm I$-SSet}$ becomes an equivalence of homotopy theories, see~\cite{g-global}*{Corollary~1.5.29}. In this sense, the above two models generalize global homotopy theory with respect to \emph{finite} groups.
\end{remark}

Finally, we again have suitable restriction functoriality analogous to Lemma~\ref{lemma:alpha-star-EM}. We will only recall one aspect that we will need later:

\begin{lemma}[See~\cite{g-global}*{Lemma 1.4.40}]\label{lemma:alpha-star-I}
	Let $\alpha\colon G\to G'$ be any group homomorphism. Then the adjunction
	\begin{equation*}
		\alpha_!\colon\cat{$\bm G$-$\bm{\mathcal I}$-SSet}\rightleftarrows\cat{$\bm{G'}$-$\bm{\mathcal I}$-SSet} :\!\alpha^*
	\end{equation*}
	is a Quillen adjunction with homotopical right adjoint.\qed
\end{lemma}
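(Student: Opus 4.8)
The plan is to treat the two assertions separately---that $\alpha^*$ sends $G'$-global weak equivalences to $G$-global weak equivalences, and that $(\alpha_!,\alpha^*)$ is a Quillen pair---reducing both to the corresponding facts for $E\mathcal M$-simplicial sets recorded in Lemma~\ref{lemma:alpha-star-EM} by means of the evaluation functor $\ev_\omega$. The one ingredient needed to make this work is the compatibility $\ev_\omega\circ\alpha^*\simeq\alpha^*\circ\ev_\omega$: on $\mathcal I$-simplicial sets restriction along $\alpha$ is computed levelwise, so it commutes with the filtered colimit $X\mapsto\colim_{A\subset\omega}X(A)$ over finite subsets and with the $E\mathcal M$-action, which is induced purely by the $\mathcal I$-functoriality. (The same reasoning gives $\ev_\omega\circ\alpha_!\simeq\alpha_!\circ\ev_\omega$, which will be convenient below.)

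Granting this, homotopicality of $\alpha^*$ is immediate: if $f$ is a $G'$-global weak equivalence in $\cat{$\bm{G'}$-$\bm{\mathcal I}$-SSet}$, then $\ev_\omega f$ is a $G'$-global weak equivalence of $E\mathcal M$-$G'$-simplicial sets by the definition of the model structure in Theorem~\ref{thm:global-model-I}, whence $\ev_\omega(\alpha^*f)\simeq\alpha^*(\ev_\omega f)$ is a $G$-global weak equivalence by Lemma~\ref{lemma:alpha-star-EM}; that is, $\alpha^*f$ is again a weak equivalence.

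For the Quillen adjunction I would verify that $\alpha^*$ preserves fibrations and acyclic fibrations. The acyclic fibrations are the easy half: by Theorem~\ref{thm:global-model-I} a map $f$ is an acyclic fibration precisely when $f(A)^{\Gamma_{H,\phi}}$ is an acyclic Kan fibration for every finite set $A$, every $H\subset\Sigma_A$, and every $\phi\colon H\to G'$; since $(\alpha^*f)(A)$ is $f(A)$ with its $G$-action pulled back along $\alpha$, one has $(\alpha^*f)(A)^{\Gamma_{H,\phi}}=f(A)^{\Gamma_{H,\alpha\phi}}$ for any $\phi\colon H\to G$, so $\alpha^*f$ is an acyclic fibration. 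Equivalently, writing $F^\phi_A$ for the left adjoint of the functor $X\mapsto X(A)^{\Gamma_{H,\phi}}$, this says that $\alpha_!$ carries the generating cofibration $F^\phi_A(\partial\Delta^n\hookrightarrow\Delta^n)$ of the $G$-global structure to $F^{\alpha\phi}_A(\partial\Delta^n\hookrightarrow\Delta^n)$, a generating cofibration of the $G'$-global structure, using the identification $\alpha_!\circ F^\phi_A\simeq F^{\alpha\phi}_A$ (both sides being left adjoint to $X'\mapsto X'(A)^{\Gamma_{H,\alpha\phi}}$).

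The remaining point---that $\alpha^*$ preserves fibrations, equivalently that $\alpha_!$ preserves acyclic cofibrations---is where the real work lies, and I expect it to be the main obstacle. One route is to run the same argument on generating acyclic cofibrations: these are again obtained by applying the functors $F^\phi_A$ (and the analogous left adjoints attached to the morphisms of $\mathcal I$) to a fixed family of maps of simplicial sets, so that $\alpha_!\circ F^\phi_A\simeq F^{\alpha\phi}_A$ shows $\alpha_!$ takes $G$-global generating acyclic cofibrations to $G'$-global ones. An alternative, more structural route is to present $\cat{$\bm G$-$\bm{\mathcal I}$-SSet}_{G\text{-gl}}$ as a left Bousfield localization of the level model structure---with weak equivalences and fibrations detected on all fixed points $(-)(A)^{\Gamma_{H,\phi}}$, for which $\alpha^*$ is right Quillen by the computation above---and then to observe that the class of local objects is cut out by a condition on the underlying $\mathcal I$-diagram alone and is hence visibly stable under $\alpha^*$. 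Either way, the substance is the combinatorial description of these $G$-global model structures from \cite{g-global}, to which the passage along $\ev_\omega$ reduces everything; by contrast the compatibilities of $\ev_\omega$ with $\alpha_!$ and $\alpha^*$ and the fixed-point bookkeeping are routine.
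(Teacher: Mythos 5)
The paper does not actually prove this lemma; it cites \cite{g-global}*{Lemma 1.4.40} directly and marks the statement with \qed, so there is no internal argument to compare yours against. Evaluating your proposal on its own terms: the overall strategy is sound and essentially the standard one. The identification $\ev_\omega\circ\alpha^*\cong\alpha^*\circ\ev_\omega$ is an honest natural isomorphism (restriction along $\alpha$ is computed levelwise, hence commutes with the filtered colimit and the $E\mathcal M$-action), and reducing homotopicality to Lemma~\ref{lemma:alpha-star-EM} through it is exactly right. The acyclic fibration half of the Quillen pair also goes through cleanly from the combinatorial description in Theorem~\ref{thm:global-model-I}, via the observation that $(\alpha^*f)(A)^{\Gamma_{H,\phi}}=f(A)^{\Gamma_{H,\alpha\phi}}$.

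The one thing that needs correcting is in your route 2: the claim that the local objects of the Bousfield localization are "cut out by a condition on the underlying $\mathcal I$-diagram alone and hence visibly stable under $\alpha^*$" is not accurate. The locality condition is not $G$-independent; like the acyclic fibration condition, it is phrased in terms of fixed points $X(A)^{\Gamma_{H,\phi}}$ for graph subgroups $\Gamma_{H,\phi}$ with $\phi\colon H\to G$. What actually makes the argument work is the very same fixed-point bookkeeping you used for acyclic fibrations: $(\alpha^*X)(A)^{\Gamma_{H,\phi}}=X(A)^{\Gamma_{H,\alpha\phi}}$, so the $G$-locality of $\alpha^*X$ at a given $\phi\colon H\to G$ is precisely the $G'$-locality of $X$ at $\alpha\phi$. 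So $\alpha^*$ does preserve local objects, but for the same explicit reason, not a "group-blind" one. Relatedly, your route 1 is under-specified: the generating acyclic cofibrations of the \emph{localized} (as opposed to level) model structure are not simply free maps $F^\phi_A(-)$, and you would need to pin down what they are before asserting $\alpha_!$ carries $G$-generators to $G'$-generators. The cleanest way to close the argument is to establish the level Quillen pair via $\alpha_!\circ F^\phi_A\cong F^{\alpha\phi}_A$ (which you did), prove homotopicality of $\alpha^*$ (which you did), and then invoke the standard criterion that a level Quillen pair descends to Bousfield localizations once the right adjoint sends fibrant objects to local objects — which is route 2 once the locality condition is handled as above.
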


\subsection{The global \except{toc}{\texorpdfstring{$\bm\infty$}{∞}}\for{toc}{$\infty$}-category of global spaces} \label{subsec:GlobalCatOfGlobalSpaces}
We will now bundle the $\infty$-categories associated to the above model categories into a \emph{global $\infty$-category}, i.e.~an $\infty$-category parametrized over the $\infty$-category $\Glo$ from Example~\ref{ex:globalCategory}:

\begin{construction}
	We define the strict $2$-functor $\cat{$\bm{E\mathcal M}$-$\bm\bullet$-SSet}$ as the composition
	\begin{equation}\label{eq:G-to-G-gl-spaces}
		\sGlo^\op\xhookrightarrow{B}\cat{Grpd}^\op\xrightarrow{\Fun(\blank,\cat{$\bm{E\mathcal M}$-SSet})} \cat{Cat};
	\end{equation}
	put differently, this sends a finite group $G$ to the $1$-category $\cat{$\bm{E\mathcal M}$-$\bm G$-SSet}$, a homomorphism $\alpha\colon G\to G'$ to the restriction map $\alpha^*\colon \cat{$\bm{E\mathcal M}$-$\bm{G'}$-SSet}\to \cat{$\bm{E\mathcal M}$-$\bm G$-SSet}$, and a $2$-cell $g'\colon \alpha\Rightarrow\beta$ in $\Glo$ to the transformation $\alpha^*\Rightarrow\beta^*$ given by acting with $g'$.
\end{construction}

We now want to obtain a \emph{global $\infty$-category of global spaces} by pointwise localizing at the $G$-global weak equivalences. To this end we recall:

\begin{definition}
	A \textit{relative category} is a $1$-category $\Cc$ together with a wide subcategory $W \subseteq \Cc$, whose morphisms we call \textit{weak equivalences}. We let $\cat{RelCat}$ denote the $(2, 1)$-category of relative categories, weak equivalence preserving functors, and natural isomorphisms, and we write $\RelCat$ for its Duskin nerve.
\end{definition}

By Lemma~\ref{lemma:alpha-star-EM}, the restriction functor $\alpha^*\colon \cat{$\bm{E\mathcal M}$-$\bm{G'}$-SSet} \to \cat{$\bm{E\mathcal M}$-$\bm{G}$-SSet}$ sends $G'$-global weak equivalences to $G$-global weak equivalences for any homomorphism $\alpha\colon G \to G'$. In particular, $(\ref{eq:G-to-G-gl-spaces})$ lifts to a $2$-functor into $\cat{RelCat}$ this way.

\begin{construction}
	To every relative category $(\Cc,W)$, one can associate an $\infty$-category $\Cc[W^{-1}]$ together with a functor $\Cc \to \Cc[W^{-1}]$ that exhibits it as a Dwyer-Kan localization of $\Cc$ at $W$ in the sense of \cite{HA}*{Definition~1.3.4.1}. We will now recall the argument of \cite{gepnermeier2020equivTMF}*{Section~C.1} that the $\infty$-category $\Cc[W^{-1}]$ is in fact functorial in the pair $(\Cc,W)$.

	Let $\core\colon \Cat_{\infty} \to \Spc$ denote the right adjoint to the inclusion $\Spc \subseteq \Cat_{\infty}$ of $\infty$-groupoids into $\infty$-categories. Sending an $\infty$-category $\Cc$ to the adjunction counit $\core\Cc \hookrightarrow \Cc$ refines to a functor
	\begin{align*}
		R\colon \Cat_{\infty} \to \Fun(\Delta^1,\Cat_{\infty}).
	\end{align*}
	We let $L_{\infty}\colon \Fun(\Delta^1,\Cat_{\infty}) \to \Cat_{\infty}$ denote a left adjoint to this functor. By associating to a relative category $(\Cc,W)$ the inclusion $W \hookrightarrow \Cc$ and regarding both $W$ and $\Cc$ as $\infty$-categories via their nerve, we obtain a functor $\RelCat \to \Fun(\Delta^1,\Cat_\infty)$. In particular we obtain a localization functor
	\begin{align*}
		L\colon \RelCat \to \Fun(\Delta^1,\Cat_{\infty}) \xrightarrow{L_{\infty}} \Cat_{\infty}.
	\end{align*}
	It follows directly from the definition of $L_{\infty}$ that $L$ is on objects given by sending a relative category $(\Cc,W)$ to the Dwyer-Kan localization $L(\Cc,W) \simeq \Cc[W^{-1}]$.
\end{construction}

Postcomposing with this, we get a global $\infty$-category $L{\mathscr C}$ from any global relative category ${\mathscr C}$, and this comes with a global functor ${\mathscr C}\to L{\mathscr C}$ that is pointwise a Dwyer-Kan localization. By uniqueness of adjoints, this actually pins down $L{\mathscr C}$ up to essentially unique equivalence; in particular, we can (and will at times) freely choose a specific construction of the above localization for a given ${\mathscr C}$.

\begin{definition}
	\label{def:GloCatOfGlobalSpaces}
	We define the global $\infty$-category $\ul{\S}^\text{gl}$ of \textit{global spaces} as the composite
	\begin{align*}
		\Glo^\op =\nerve_\Delta(\sGlo)^\op\xrightarrow{\nerve_\Delta(\cat{$\bm{E\mathcal M}$-$\bm\bullet$-SSet})}\nerve_\Delta(\cat{RelCat})= \RelCat \xrightarrow{L} \Cat_{\infty}.
	\end{align*}
	In particular, for a finite group $G$ the $\infty$-category $\ul{\S}^{\text{gl}}(G)\mathrel{=:}\S_G^\text{gl}$ is the $\infty$-category of $G$-global spaces and for a group homomorphism $\alpha\colon G \to G'$, the functor $\ul{\S}^{\text{gl}}(\alpha)$ is induced by the restriction functor $\alpha^*\colon \cat{$\bm{E\mathcal M}$-$\bm{G'}$-SSet}\to \cat{$\bm{E\mathcal M}$-$\bm G$-SSet}$.

	Analogously, we get a global $\infty$-category $\ul\S_{\mathcal I}^\text{gl}$ sending $G$ to the Dwyer-Kan localization of $\cat{$\bm G$-$\bm{\mathcal I}$-SSet}$, with functoriality via restrictions.
\end{definition}

By design, the maps $\ev_\omega$ are homotopical and strictly compatible with restrictions, and so they assemble into a strictly $2$-natural transformation between functors $\sGlo^\op\to\cat{RelCat}$. Upon localization, we therefore get a global functor $\ul\S^\text{gl}_{\mathcal I}\to\ul\S^\text{gl}$ that we again call $\ev_\omega$. Theorem~\ref{thm:global-model-I} then implies:

\begin{corollary}\label{cor:EM-vs-I}
	The global functor $\ev_\omega\colon\ul\S^\textup{gl}_{\mathcal I}\to\ul\S^\textup{gl}$ is an equivalence of global $\infty$-categories.\qed
\end{corollary}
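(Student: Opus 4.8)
The plan is to deduce this directly from \Cref{thm:global-model-I} together with two standard facts: first, a morphism in $\Cat_\Glo=\Fun(\Glo^\op,\Cat_\infty)$ is an equivalence if and only if it becomes an equivalence after evaluation at every object $G\in\Glo$, since equivalences in functor $\infty$-categories are detected objectwise; and second, the left adjoint of a Quillen equivalence induces an equivalence between the underlying $\infty$-categories, i.e.\ between the Dwyer--Kan localizations at the respective classes of weak equivalences.

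First I would apply the first fact to reduce the claim to showing that for each finite group $G$ the functor $\ev_\omega\colon\ul\S^\textup{gl}_{\mathcal I}(G)\to\ul\S^\textup{gl}(G)$ is an equivalence of $\infty$-categories. Unwinding \Cref{def:GloCatOfGlobalSpaces} and the construction of the functorial localization $L$, this functor is obtained by applying $L$ to the relative functor $\ev_\omega\colon\cat{$\bm G$-$\bm{\mathcal I}$-SSet}\to\cat{$\bm{E\mathcal M}$-$\bm G$-SSet}$, where both sides carry the $G$-global weak equivalences; hence it agrees with the functor of $\infty$-categories induced on Dwyer--Kan localizations.

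Next I would observe that $\ul\S^\textup{gl}(G)$, being the localization of $\cat{$\bm{E\mathcal M}$-$\bm G$-SSet}$ at the $G$-global weak equivalences, coincides with the underlying $\infty$-category of the \emph{injective} $G$-global model structure, since that model structure has precisely the $G$-global weak equivalences as its weak equivalences and the Dwyer--Kan localization depends only on the underlying relative category. By \Cref{thm:global-model-I}, $\ev_\omega$ is the left half of a Quillen equivalence from the $G$-global model structure on $\cat{$\bm G$-$\bm{\mathcal I}$-SSet}$ to the injective $G$-global model structure on $\cat{$\bm{E\mathcal M}$-$\bm G$-SSet}$, so by the second fact it induces an equivalence of underlying $\infty$-categories, which is exactly what was needed.

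This argument is essentially formal given \Cref{thm:global-model-I}; the only point deserving a moment of care is matching the functorial localization $L$ of the preceding construction with the classical Dwyer--Kan localization and checking that its value on the relative functor $\ev_\omega$ is the functor induced on localizations by the underlying left Quillen functor. But this is precisely the content of the paragraph defining $L$, where $L$ is identified on objects with the Dwyer--Kan localization, so no genuine obstacle arises.
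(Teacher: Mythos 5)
Your proposal is correct and follows essentially the same route as the paper's implicit argument: equivalences of global $\infty$-categories are detected objectwise, and for each fixed $G$ the left Quillen equivalence of \Cref{thm:global-model-I} (whose left adjoint $\ev_\omega$ is moreover homotopical) induces an equivalence on Dwyer--Kan localizations, noting that the $G$-global and injective $G$-global model structures share the same weak equivalences and hence the same localization. The paper records this as immediate from \Cref{thm:global-model-I}; you have just spelled out the reduction in full.
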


\subsection{Proof of Theorem~\ref{introthm:universal-prop-spaces}} \label{subsec:UniversalPropertyGlobalSpaces}
As a basis for the universal properties of special global $\Gamma$-spaces and global spectra, we will now relate the global $\infty$-category $\ul\S^\textup{gl}$ (defined above in terms of a purely model categorical construction) to the global $\infty$-category $\ul\Spc_{\Glo}$ (constructed using parametrized higher category theory alone). Namely we will prove:

\begin{theorem}\label{thm:global-spaces-comparison}
	The global $\infty$-category $\ul\S^\textup{gl}$ is presentable. Moreover, the essentially unique globally cocontinuous functor  $\ul\Spc_{\Glo}\to\ul\S^\textup{gl}$ that sends the terminal object of $\Spc_{\Glo}(1)$ to the terminal object of $\S^\textup{gl}=\S^\textup{gl}_1$ is an equivalence.
\end{theorem}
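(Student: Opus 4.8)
Both assertions will be deduced from a highly coherent, parametrized form of the $G$-global Elmendorf theorem: the plan is to construct directly an equivalence of global $\infty$-categories $\Phi\colon\ul\Spc_\Glo\iso\ul\S^\textup{gl}$, and then to identify it with the comparison functor by a formal argument. Indeed, once $\ul\S^\textup{gl}$ is known to be presentable, and in particular $\Glo$-cocomplete, \Cref{cor:Universal_Property_T_Spaces} provides an essentially unique globally cocontinuous functor $\ul\Spc_\Glo\to\ul\S^\textup{gl}$ sending the terminal object of $\ul\Spc_\Glo(1)=\PSh(\Glo)$ to the terminal object of $\S^\textup{gl}_1$; so it suffices to produce \emph{any} globally cocontinuous equivalence $\Phi$ with this property on terminal objects and then invoke uniqueness. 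As the terminal object of $\PSh(\Glo)$ is the constant presheaf on the point, the latter will be immediate from the construction of $\Phi$.

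\textbf{Presentability.} First I would verify that $\ul\S^\textup{gl}$ is presentable. Fiberwise presentability is clear: each $\S^\textup{gl}_G$ is the underlying $\infty$-category of a combinatorial model category (the $G$-global model structure on $E\mathcal M$-$G$-simplicial sets), hence presentable, and each restriction functor $\alpha^*$ preserves colimits since by \Cref{lemma:alpha-star-EM} it is left Quillen for the injective $G$-global model structures, which have the same weak equivalences and therefore present the same $\infty$-categories. Thus $\ul\S^\textup{gl}$ factors through $\PrL$ and is in particular fiberwise cocomplete. For $\Glo$-cocompleteness I would use that a global $\infty$-category is $\Glo$-cocomplete as soon as it is fiberwise cocomplete and admits $\ul\Spc_\Glo$-colimits (\cite{martiniwolf2021limits}*{Proposition~4.7.1}); by \Cref{lem:UColimitsVsAdjointable} and \Cref{rmk:limitExtensionRestrictedClass} this reduces to producing left adjoints to restriction along maps of presheaves with representable target, and to checking Beck--Chevalley along their pullbacks by morphisms of $\Glo$. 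The left adjoints along $\alpha\colon H\to G$ in $\Glo$ are the left derived functors of the left Quillen functors $\alpha_!$ of \Cref{lemma:alpha-star-EM}, from which the general case is assembled using fiberwise colimits; the Beck--Chevalley identities constitute a homotopical double-coset formula for $G$-global spaces, which I would verify on the point-set level, or transport from the $\mathcal I$-model via \Cref{cor:EM-vs-I} and \Cref{lemma:alpha-star-I}, whose right adjoints are strictly homotopical. (Alternatively, both presentability and $\Glo$-cocompleteness fall out of the equivalence $\Phi$ below, since presheaf $\infty$-categories on small categories are presentable.)

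\textbf{The Elmendorf equivalence.} This is the crux of the proof. For each finite group $G$, consider the full subcategory $\mathcal O_G\subseteq\S^\textup{gl}_G$ of \emph{global orbits} --- the $G$-global analogues of the orbits $G/H$, represented in the $\mathcal I$-model of \Cref{thm:global-model-I} by the corepresentable $\mathcal I$-simplicial sets on a finite set $A$ equipped with a homomorphism from a subgroup of $\Sigma_A$ into $G$. The plan is to show, using the $G$-global model structure, that $\S^\textup{gl}_G$ is equivalent to the presheaf $\infty$-category $\PSh(\mathcal O_G)$, and to identify $\mathcal O_G$ with the slice $\Glo_{/G}$, so that $\S^\textup{gl}_G\simeq\PSh(\Glo_{/G})=\ul\Spc_\Glo(G)$. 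One must then upgrade these pointwise equivalences to an equivalence of global $\infty$-categories, i.e.\ make them natural in $G$ with respect to the restriction functors, together with all higher coherences. This requires understanding the restriction of global orbits along $\alpha\colon H\to G$ (a double-coset decomposition) and is carried out by the strictly $2$-functorial model-categorical bookkeeping of \cite{g-global} --- universal subgroups, the passage to the colimit over finite subsets of $\omega$ (which replaces the symmetric groups $\Sigma_A$ by the monoid $\mathcal M$), and the comparison of \Cref{cor:EM-vs-I}. The coherent assembly and the identification $\mathcal O_G\simeq\Glo_{/G}$ are the main obstacle; once they are in place the remaining steps are formal: $\Phi$ is then a globally cocontinuous equivalence, it carries the constant presheaf on the point (the terminal object of $\PSh(\Glo)=\ul\Spc_\Glo(1)$) to the point of $\S^\textup{gl}_1$, and so by the uniqueness clause of \Cref{cor:Universal_Property_T_Spaces} it coincides with the comparison functor, finishing the proof.
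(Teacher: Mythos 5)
Your proposal takes essentially the same approach as the paper: construct a coherent parametrized Elmendorf equivalence $\ul\Spc_\Glo\iso\ul\S^\textup{gl}$ directly (the authors explicitly note they do not know how to verify full faithfulness of the universal comparison functor head-on, even though its behaviour on generators is not hard to pin down) and then invoke the uniqueness clause of \Cref{cor:Universal_Property_T_Spaces}. The coherent assembly you correctly flag as the main obstacle is exactly what the paper's apparatus of combinatorial orbit $(2,1)$-categories $\rOgl_\bullet$, the strictly $2$-natural maps $\Psi$, and the identification $\gamma\colon\rOgl_G\simeq\sGlo_{/G}$ carries out---working in the $E\mathcal M$-model rather than the $\mathcal I$-model you reference---and the paper derives presentability \emph{from} the equivalence rather than checking it independently up front, as you note is also possible.
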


Together with \Cref{cor:Universal_Property_T_Spaces} this will then immediately imply Theorem~\ref{introthm:universal-prop-spaces} from the introduction:

\begin{theorem}\label{thm:global-spaces}
	The presentable global $\infty$-category $\ul\S^\textup{gl}$ is freely generated under global colimits by $*\in\ul\S^\textup{gl}$, i.e.~for any globally cocomplete global $\infty$-category $\mathcal D$ evaluating at $*$ induces an equivalence
	\begin{equation*}
		\ul{\Fun}^{\textup{L}}_{\Glo}(\ul\S^\textup{gl},\mathcal D)\to\mathcal D
	\end{equation*}
	of global $\infty$-categories.\qed
\end{theorem}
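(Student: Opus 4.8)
The strategy is to deduce \Cref{thm:global-spaces} formally from the comparison result \Cref{thm:global-spaces-comparison} together with the universal property of $\ul\Spc_T$ from \Cref{cor:Universal_Property_T_Spaces}, specialized to $T=\Glo$; the genuine content lies in \Cref{thm:global-spaces-comparison}, whose proof is carried out separately, so that what remains here is a transport of structure. First I would observe that $\ul\S^\textup{gl}$ is globally cocomplete: by \Cref{thm:global-spaces-comparison} it is even presentable, and presentability includes $\Glo$-cocompleteness by \Cref{def:presentableTCategory}. In particular, the parametrized functor $\infty$-category $\ul\Fun^{\textup{L}}_{\Glo}(\ul\S^\textup{gl},\mathcal D)$ is defined for every globally cocomplete $\mathcal D$.

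Next I would let $\Phi\colon\ul\Spc_{\Glo}\iso\ul\S^\textup{gl}$ denote the globally cocontinuous equivalence of \Cref{thm:global-spaces-comparison}, which by construction carries the terminal object $1\in\Gamma(\ul\Spc_{\Glo})=\PSh(\Glo)$ to $*\in\Gamma(\ul\S^\textup{gl})=\S^\textup{gl}_1$. Precomposition with $\Phi$ induces an equivalence
\[
\Phi^*\colon\ul\Fun^{\textup{L}}_{\Glo}(\ul\S^\textup{gl},\mathcal D)\iso\ul\Fun^{\textup{L}}_{\Glo}(\ul\Spc_{\Glo},\mathcal D)
\]
of global $\infty$-categories: at each $A\in\Glo$, restriction along the equivalence $\pi_A^*\Phi$ preserves $\Glo_{/A}$-cocontinuity (being a composite with a cocontinuous equivalence), with inverse given by restriction along $(\pi_A^*\Phi)^{-1}$. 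Composing $\Phi^*$ with the equivalence $\ul\Fun^{\textup{L}}_{\Glo}(\ul\Spc_{\Glo},\mathcal D)\iso\mathcal D$ of \Cref{cor:Universal_Property_T_Spaces} then yields an equivalence $\ul\Fun^{\textup{L}}_{\Glo}(\ul\S^\textup{gl},\mathcal D)\iso\mathcal D$, proving the statement up to identifying this composite.

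The remaining point is that the composite equivalence is indeed evaluation at $*$. This follows from the naturality of the evaluation functors $\ul\Fun^{\textup{L}}_{\Glo}(-,\mathcal D)\to\mathcal D$ in the first variable: since $\Phi$ sends $1$ to $*$, one has $\ev_1\circ\Phi^*\simeq\ev_*$, which is exactly what is needed to conclude. I expect this compatibility of evaluation functors to be the only place requiring even a brief argument; all the substantive input is already established, so there is no serious obstacle beyond \Cref{thm:global-spaces-comparison} itself.
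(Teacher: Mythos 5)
Your proposal is correct and takes essentially the same route as the paper, which leaves the deduction implicit (the paper states that \Cref{thm:global-spaces-comparison} together with \Cref{cor:Universal_Property_T_Spaces} "will then immediately imply" the theorem and attaches a $\qed$ to the statement). You have simply spelled out the transport of structure along the equivalence $\Phi$ and the compatibility $\ev_1\circ\Phi^*\simeq\ev_*$, both of which are straightforward and correct.
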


Corollary~\ref{cor:EM-vs-I} then shows:

\begin{corollary}\label{cor:SglI-univ-prop}
	The global $\infty$-category $\ul\S_{\mathcal I}^\textup{gl}$ is presentable, and it is freely generated under global colimits by $*\in\ul\S^\textup{gl}_{\mathcal I}$, i.e.~for any globally cocomplete global $\infty$-category $\mathcal D$ evaluating at $*$ induces an equivalence
	\begin{equation*}
		\ul{\Fun}^{\textup{L}}_{\Glo}(\ul\S_{\mathcal I}^\textup{gl},\mathcal D)\to\mathcal D
	\end{equation*}
	of global $\infty$-categories.\qed
\end{corollary}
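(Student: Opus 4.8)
The plan is to deduce this from Theorem~\ref{thm:global-spaces} by transporting along the equivalence $\ev_\omega$ of Corollary~\ref{cor:EM-vs-I}. First I would note that presentability of a global $\infty$-category is invariant under equivalence — both fiberwise presentability and global cocompleteness are preserved — so combining Corollary~\ref{cor:EM-vs-I} with the presentability assertion of Theorem~\ref{thm:global-spaces-comparison} immediately gives that $\ul\S^\textup{gl}_{\mathcal I}$ is presentable.

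For the universal property, the one point that needs checking is that the equivalence $\ev_\omega\colon\ul\S^\textup{gl}_{\mathcal I}\xrightarrow{\;\sim\;}\ul\S^\textup{gl}$ identifies the two distinguished generators, i.e.\ that $\ev_\omega(*)\simeq *$. Since $*$ is in both cases the terminal object of the underlying $\infty$-category and equivalences preserve terminal objects, this is automatic; alternatively one checks directly that the constant $\mathcal I$-simplicial set on a point is carried by $\ev_\omega$ to $\colim_{A\subset\omega}*\simeq *$. Granting this, precomposition with $\ev_\omega$ induces an equivalence $\ulFun_{\Glo}(\ul\S^\textup{gl},\mathcal D)\xrightarrow{\sim}\ulFun_{\Glo}(\ul\S^\textup{gl}_{\mathcal I},\mathcal D)$ for every global $\infty$-category $\mathcal D$ (as $\ulFun_{\Glo}(-,\mathcal D)$ sends equivalences to equivalences), and this restricts to an equivalence on the full subcategories of globally cocontinuous functors since precomposing by an equivalence preserves global cocontinuity. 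Composing with the equivalence of Theorem~\ref{thm:global-spaces} and using $\ev_\omega(*)\simeq *$ then exhibits evaluation at $*$ as the desired equivalence $\ulFun^\textup{L}_{\Glo}(\ul\S^\textup{gl}_{\mathcal I},\mathcal D)\xrightarrow{\sim}\mathcal D$ for $\mathcal D$ globally cocomplete.

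I do not expect a genuine obstacle here: all of the substantive work is already packaged into Corollary~\ref{cor:EM-vs-I} and Theorem~\ref{thm:global-spaces}, and what remains is the routine bookkeeping that an equivalence of global $\infty$-categories preserves presentability and terminal objects and induces a compatible equivalence on categories of globally cocontinuous functors. The only step that deserves to be spelled out in a sentence is the identification $\ev_\omega(*)\simeq *$.
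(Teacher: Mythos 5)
Your proposal is correct and is exactly the argument the paper has in mind: the paper's own ``proof'' is simply the remark that Corollary~\ref{cor:EM-vs-I} combined with Theorem~\ref{thm:global-spaces} yields the statement, and your write-up just spells out the routine verifications (invariance of presentability under equivalence, $\ev_\omega(*)\simeq *$, compatibility with passage to $\ul{\Fun}^\textup{L}$) that the paper leaves implicit.
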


The way Theorem~\ref{thm:global-spaces-comparison} is phrased naturally suggests a proof strategy: show that the (fiberwise presentable) global $\infty$-category $\ul\S^\text{gl}$ is globally cocomplete, use the universal property to construct the map, and then check that it is an equivalence. In fact, one can use the functoriality properties of Lemma~\ref{lemma:alpha-star-EM} together with \cite{g-global}*{Proposition~1.1.22} to verify global cocompleteness, and it is not hard to show using some adjunction yoga that the resulting functor sends corepresented objects to the standard `generators' of $G$-global homotopy theory (see Proposition~\ref{prop:elmendorf} below) while a concrete computation reveals that the mapping spaces on both sides are \emph{abstractly} equivalent. However, proving that actually the universal functor induces equivalences between these mapping spaces is a totally different story, and in fact the authors do not know a direct argument for this.

Instead, our proof of the theorem will proceed backwards: we will construct an equivalence between $\ul\S^\textup{gl}$ and $\ul\Spc_{\Glo}$ by hand, and deduce the remaining statements from this. Since this comparison is somewhat lengthy, let us outline the general strategy first: by definition, $\ul\Spc_{\Glo}$ is levelwise given by $\infty$-categories of presheaves, and the first step will be to likewise express the levels of $\ul\S^\textup{gl}$ in terms of \emph{model categories} of presheaves. To complete the proof, we will then give a comparison between the indexing categories on both sides, as well as a comparison between presheaves in the model categorical and $\infty$-categorical setting.

\subsubsection{The $G$-global Elmendorf Theorem} Recall that the classical \emph{Elmendorf Theorem} \cite{elmendorf} expresses the homotopy theory of $G$-CW-complexes in terms of \emph{fixed point systems}, yielding a presheaf model of unstable $G$-equivariant homotopy theory. We will now recall a $G$-global version of this, which is most easily formulated using the model of $E\mathcal M$-$G$-simplicial sets:

\begin{construction}
	Let $G$ be finite. We write $\Ogl_G$ for the full simplicial subcategory of $\cat{$\bm{E\mathcal M}$-$\bm G$-SSet}$ spanned by the objects $E\mathcal M\times_\phi G\mathrel{:=}(E\mathcal M\times G)/H$ for all universal subgroups $H\subset\mathcal M$ and homomorphisms $\phi\colon H\to G$, where $H$ acts on $E\mathcal M$ from the right in the evident way and on $G$ from the right via $\phi$.

	We now define a functor \[\Phi\colon\cat{$\bm{E\mathcal M}$-$\bm G$-SSet}\to\FUN((\Ogl_G)^\op,\cat{SSet}),\] where $\FUN$ denotes the $1$-category of simplicially enriched functors, via the formula $\Phi(X)(E\mathcal M\times_\phi G)=\maps(E\mathcal M\times_\phi G, X)$ with the evident (enriched) functoriality in each variable, i.e.~$\Phi$ is the composition
	\begin{equation*}
		\cat{$\bm{E\mathcal M}$-$\bm G$-SSet}\xrightarrow{\textup{Yoneda}} \FUN(\cat{$\bm{E\mathcal M}$-$\bm G$-SSet}^\op,\cat{SSet})\xrightarrow{\textup{restriction}}\FUN((\Ogl_G)^\op,\cat{SSet}).
	\end{equation*}
\end{construction}

\begin{proposition}\label{prop:elmendorf}
	For any finite group $G$ the above functor $\Phi$ is homotopical and the right half of a Quillen equivalence for the projective model structure on the target. In particular, it descends to an equivalence between the $\infty$-categorical localization at the $G$-global weak equivalences and the $\infty$-categorical localization at the levelwise weak homotopy equivalences.
	\begin{proof}
		This is a special case of \cite{g-global}*{Corollary~1.1.13}.
	\end{proof}
\end{proposition}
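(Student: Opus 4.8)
The plan is to recognize the statement as an Elmendorf-type theorem and run the standard argument. First I would observe that, since $\Ogl_G$ is an (essentially small) simplicial category and $\cat{$\bm{E\mathcal M}$-$\bm G$-SSet}$ is a simplicially enriched cocomplete category, the functor $\Phi$ — being the restriction along the inclusion $\Ogl_G\hookrightarrow\cat{$\bm{E\mathcal M}$-$\bm G$-SSet}$ of the enriched Yoneda embedding — admits a left adjoint $\Phi^!$, computed as the enriched left Kan extension along $\Ogl_G\hookrightarrow\cat{$\bm{E\mathcal M}$-$\bm G$-SSet}$; in particular $\Phi^!(\maps(\blank,O))\cong O$ for every $O\in\Ogl_G$ by the co-Yoneda lemma, $\Phi^!$ commutes with tensoring by simplicial sets, and $(\Phi^!\dashv\Phi)$ is a simplicial adjunction.

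The key computation is the natural identification $\Phi(X)(E\mathcal M\times_\phi G)\cong X^\phi$: indeed $E\mathcal M\times_\phi G=(E\mathcal M\times G)/\Gamma_{H,\phi}$ is freely induced from the trivial $\Gamma_{H,\phi}$-simplicial set along the subgroup inclusion $\Gamma_{H,\phi}\subset E\mathcal M\times G$ (with $H$ sitting inside $E\mathcal M$ via constants), so $\maps(E\mathcal M\times_\phi G,X)\cong\maps_{\Gamma_{H,\phi}}(*,X)=X^{\Gamma_{H,\phi}}=X^\phi$. With this in hand the formal properties follow directly from the definition of the $G$-global model structure: a map $f$ is a $G$-global weak equivalence (resp.\ fibration, resp.\ acyclic fibration) exactly when $\Phi(f)$ is a levelwise weak homotopy equivalence (resp.\ levelwise Kan fibration, resp.\ levelwise acyclic Kan fibration), i.e.\ a projective weak equivalence (resp.\ fibration, resp.\ acyclic fibration). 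Hence $\Phi$ is homotopical and in fact reflects all weak equivalences, and $(\Phi^!\dashv\Phi)$ is a Quillen adjunction.

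It then remains to see it is a Quillen \emph{equivalence}. Since $\Phi$ reflects all weak equivalences, by the standard criterion it suffices to check that the unit $\eta_F\colon F\to\Phi\Phi^! F$ is a levelwise weak equivalence for every projective-cofibrant $F$ (no fibrant replacement is needed, $\Phi$ being homotopical). On corepresentables $\maps(\blank,O)$ this is the identity map, by $\Phi^!(\maps(\blank,O))\cong O$ and $\Phi(O)\cong\maps(\blank,O)$. For a general cofibrant $F$ — a retract of a transfinite cell complex built from the maps $\maps(\blank,O)\otimes\partial\Delta^n\to\maps(\blank,O)\otimes\Delta^n$ — the point is that $\Phi^!$ preserves all of these colimits (being a left adjoint) and sends the generating cofibrations to the levelwise-injective maps $O\otimes\partial\Delta^n\to O\otimes\Delta^n$, while $\Phi$, being levelwise the fixed-point functor for the finite group $\Gamma_{H,\phi}$, commutes with pushouts along levelwise injections and with transfinite composites of such (a degreewise set-level check); hence $\eta_F$ is an isomorphism. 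Finally, the ``in particular'' is automatic, since Quillen equivalences induce equivalences of the associated $\infty$-categorical localizations.

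The genuinely hard input is not any of the formal steps above but the construction and basic properties of $G$-global homotopy theory itself — above all the fact, established in \cite{g-global}, that $G$-global weak equivalences are detected by the fixed points $X^\phi$ at \emph{universal} subgroups $H\subset\mathcal M$; granting this, the proposition is precisely \cite{g-global}*{Corollary~1.1.13}. If one wants a fully self-contained argument, the one place deserving care is the verification in the last step that finite-group fixed points commute with the relevant cellular colimits.
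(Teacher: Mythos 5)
Your argument is correct and matches the intended one: the paper's proof is a one-line citation to \cite{g-global}*{Corollary~1.1.13}, which itself is an instance of the Dwyer--Kan ``theory of orbits,'' and your write-up is essentially a transcription of that argument (identify $\Phi(X)(E\mathcal M\times_\phi G)\cong X^\phi$, note that $\Phi$ creates the $G$-global fibrations and weak equivalences, then run the cell induction using that finite-group fixed points commute with pushouts along levelwise injections and with filtered colimits). The only slight looseness is the phrase ``freely induced along the subgroup inclusion $\Gamma_{H,\phi}\subset E\mathcal M\times G$'' --- since $E\mathcal M$ is a monoid rather than a group one should instead say that $H\subset\mathcal M$ acts freely from the right on $E\mathcal M\times G$ --- but the computation $\maps(E\mathcal M\times_\phi G,X)\cong X^\phi$ it is gesturing at is exactly right and is the one recorded in Remark~\ref{rk:OGgl-morphism-spaces}.
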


\begin{remark}\label{rk:OGgl-morphism-spaces}
	We can describe the simplicial category $\Ogl_G$ combinatorially as follows, see also~\cite{g-global}*{Remark~1.2.40}: $n$-simplices of $\maps(E\mathcal M\times_\phi G,E\mathcal M\times_\psi G)$ correspond bijectively to $n$-simplices $[u_0,\dots,u_n;g]\in (E\mathcal M\times_\psi G)^\phi$ via evaluation at $[1;1]\in E\mathcal M\times_\phi G$; note that the right hand side is the nerve of a groupoid (as $H$ acts freely on $E\mathcal M$), so $\Ogl_G$ can be equivalently viewed a $(2,1)$-category. Under this correspondence, composition is given by $[u_0,\dots,u_n;g][u'_0,\dots,u'_n;g']=[u_0'u_0,\dots,u_n'u_n;g'g]$ (note the flipped order of multiplication).

	More generally, if $X$ is any $E\mathcal M$-$G$-simplicial set, then evaluation at $[1;1]$ induces a natural isomorphism $\epsilon\colon \Phi(X)(E\mathcal M\times_\phi G)=\maps(E\mathcal M\times_\phi G,X)\to X^\phi$. A direct computation shows that under this isomorphism restriction along an $(n+1)$-cell $[u_0,\dots,u_n;g]:E\mathcal M\times_\phi G\to E\mathcal M\times_\psi G$ in $\Ogl_G$ corresponds to action by the same element, i.e.~the following diagram commutes:
	\begin{equation}\label{diag:Phi-restriction-functoriality}
		\begin{tikzcd}
			\Phi(X)(E\mathcal M\times_\psi G)\arrow[d, "\epsilon"']\arrow[r, "{\Phi[u_0,\dots,u_n;g]}"] &[3em] \Phi(X)(E\mathcal M\times_\phi G)\arrow[d,"\epsilon"]\\
			X^\psi\arrow[r, "{(u_0,\dots,u_n;g).\blank}"'] & X^\phi\rlap.
		\end{tikzcd}
	\end{equation}
\end{remark}

\subsubsection{Comparisons of enriched presheaves} While one can extend the assignment $G\mapsto\Ogl_G$ to a strict $2$-functor in $\Glo$, and so assemble the localizations of the categories $\FUN((\Ogl_G)^\op,\cat{SSet})$ into a global $\infty$-category, the maps $\Phi$ will not be strictly natural with respect to this structure, but only pseudonatural. In order to avoid talking about all the coherences required to make this precise, we will now give a more `combinatorial' version of the simplicial categories $\Ogl_G$ and the functors $\Phi$ that will also become relevant in Section~\ref{sec:univ-prop-global-Gamma}.

\begin{construction}
	Let $G$ be a finite group. We define a strict $(2,1)$-category $\rOgl_G$ as follows: an object of $\rOgl_G$ is a pair $(H,\phi)$ of a universal subgroup $H\subset\mathcal M$ and a homomorphism $\phi\colon H\to G$. For any two such objects $(H,\phi), (K,\psi)$ the hom-category $\HOM((H,\phi),(K,\psi))$ has objects the triples $(u,g,\sigma)$ with $u\in\mathcal M$, $g\in G$ and $\sigma\colon H\to K$ a homomorphism such that $hu=u\sigma(h)$ for all $h\in H$ and moreover $\phi=c_g\psi\sigma$, where $c_g$ denotes conjugation by $g$. If $(u',g',\sigma')$ is another object of the hom-category, then a morphism $(u,g,\sigma)\to(u',g',\sigma')$ is a $k\in K$ such that $\sigma'=c_k\sigma$ and $g'\psi(k)=g$. Composition in $\HOM((H,\phi),(K,\psi))$ is induced by multiplication in $K$; we omit the easy verification that this is a well-defined groupoid.

	If $(L,\zeta)$ is another object and $(u_1,g_1,\sigma_1)\colon (H,\phi)\to (K,\psi)$, $(u_2,g_2,\sigma_2)\colon (K,\psi)\to (L,\zeta)$ are composable maps, then we define their composition as $(u_1u_2,g_1g_2,\sigma_2\sigma_1)$ (note the flipped order of composition in the first two components!); this is indeed a map $(H,\phi)\to(L,\zeta)$ as $hu_1u_2=u_1\sigma_1(h)u_2=u_1u_2\sigma_2\sigma_1(h)$ for all $h\in H$ and moreover $\phi=c_{g_1}\psi \sigma_1=c_{g_1g_2}\zeta\sigma_2\sigma_1$.

	Finally, if $(u_1',g_1',\sigma_1')\colon (H,\phi)\to (K,\psi)$ and $(u_2',g_2',\sigma_2')\colon (K,\psi)\to (L,\zeta)$ are further morphisms and $k_1\colon (u_1,g_1,\sigma_1)\to (u_1',g_1',\sigma_1')$, $k_2\colon (u_2,g_2,\sigma_2)\to (u_2',g_2',\sigma_2')$ are $2$-cells, then the composite of $k_1$ and $k_2$ is $k_2\sigma_2(k_1)$; note that this is indeed well-defined as $\sigma_2'\sigma_1'=c_{k_2}\sigma_2c_{k_1}\sigma_1=c_{k_1\sigma_2(k_2)}\sigma_2\sigma_1$ while $g_1g_2=g_1'\psi(k_1)g_2'\zeta(k_2)=g_1'g_2'\zeta\sigma_2'(k_1)\zeta(k_2)=g_1'g_2'\zeta(\sigma_2'(k_1)k_2)=g_1'g_2'\zeta(k_2\sigma_2(k_1))$ where the second equality uses that $(u_2',g_2',\sigma_2')$ is a morphism and the final equality uses that $k_2$ is a $2$-cell.

	We omit the straight-forward verification that this is suitably associative and unital with units the maps of the form $(1,1,\id)$, making $\rOgl_G$ into a strict $(2,1)$-category.
\end{construction}

\begin{construction}
	We define $\mu\colon\rOgl_G\to\Ogl_G$ as follows: an object $(H,\phi)$ is sent to $E\mathcal M\times_\phi G$, a morphism $(u,g,\sigma)\colon (H,\phi)\to (K,\psi)$ is sent to the map $E\mathcal M\times_\phi G\to E\mathcal M\times_\psi G$ represented by $[u;g]$ while a $2$-cell $k\colon(u,g,\sigma)\to(u',g',\sigma')$ is sent to $[u'k,u;g]$.
\end{construction}

\begin{lemma}\label{lemma:Ogl-vs-rOgl}
	The above $\mu$ is well-defined (i.e.~these are indeed morphisms and $2$-cells in $\Ogl_G$) and an equivalence of $(2,1)$-categories.
	\begin{proof}
		First observe that $[u;g]$ is indeed $\phi$-fixed as $[hu;\phi(h)g]=[u\sigma(h);g\psi\sigma(h)]=[u;g]$ by definition of the morphisms of $\rOgl_G$; moreover, any $1$-cell in the target is of this form by \cite{g-global}*{Lemma~1.2.38}. On the other hand, Lemma~1.2.74 of \emph{op.~cit.} shows that $[u'k,u;g]$ is indeed a $2$-cell $[u;g]\Rightarrow[u';g']$ and that this assignment is bijective. Thus, it only remains to show that $\mu$ is a strict $2$-functor.

		To prove that $\mu\colon\HOM((H,\phi),(K,\psi))\to (E\mathcal M\times_\psi G)^\phi$ is a functor, it suffices to prove compatibility with composition (as both sides are groupoids), for which we note that for all $k\colon (u,g,\sigma)\to (u',g',\sigma')$ and $k'\colon (u',g',\sigma')\to (u'',g'',\sigma'')$
		\begin{align*}
			\mu(k')\mu(k)&=[u''k',u';g'][u'k,u;g]=[u''k'k,u'k;\smash{\underbrace{g'\psi(k)}_{{}=g}}][u'k,u;g]=[u''k'k,u;g]\\&=\mu(k'k).
		\end{align*}
		Next, we have to show that $\mu$ is compatible with horizontal composition of $2$-cells, hence in particular with composition of $1$-cells. For this we note that if $k\colon (u_1,g_1,\sigma_1)\Rightarrow (u_1',g_1',\sigma_1')$ is a $2$-cell between morphisms $(H,\phi)\to(K,\psi)$ and $\ell\colon (u_2,g_2,\sigma_2)\Rightarrow (u_2',g_2',\sigma_2')$ is a $2$-cell between morphisms $(K,\psi)\to(L,\zeta)$, then the horizontal composition $\mu(\ell)\odot\mu(k)$ is given by
		\begin{align*}
			[u_2'\ell,u_2,g_2]\odot[u_1'k,u_1;g_1]&=[u_1'ku_2'\ell,u_1u_2;g_1g_2]
			=[u_1'u_2'\sigma_2'(k)\ell,u_1u_2;g_1g_2]\\
			&= [u_1'u_2'\ell\sigma_2(k), u_1u_2;g_1g_2]
		\end{align*}
		where the final equality uses that $\sigma_2'(k)\ell=\ell\sigma_2(k)$ as $\ell$ is a $2$-cell. On the other hand, by definition $\ell\odot k= \ell\sigma_2(k)\colon (u_1u_2,g_1g_2,\sigma_2\sigma_1)\to (u_1'u_2',g_1'g_2',\sigma_2'\sigma_1')$, so $\mu(\ell\odot k)=\mu(\ell)\odot\mu(k)$ as desired.

		Finally, $\mu(1,1,\id)=[1;1]$ by construction, i.e.~$\mu$ also preserves identity $1$-cells.
	\end{proof}
\end{lemma}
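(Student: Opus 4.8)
The plan is to verify three things in turn: that the assignments defining $\mu$ produce genuine cells of $\Ogl_G$, that $\mu$ strictly respects the $(2,1)$-categorical structure, and that it is essentially surjective and fully faithful. For the first point I would start from the combinatorial description of $\Ogl_G$ recalled in \Cref{rk:OGgl-morphism-spaces}: the mapping space $\maps(E\mathcal M\times_\phi G,E\mathcal M\times_\psi G)$ is the nerve of a groupoid whose $0$-simplices are, via evaluation at $[1;1]$, exactly the $\phi$-fixed classes $[u;g]$ of $E\mathcal M\times_\psi G$. Thus for $(u,g,\sigma)\in\HOM((H,\phi),(K,\psi))$ the claim that $\mu(u,g,\sigma)=[u;g]$ is a legitimate $1$-cell reduces to the one-line verification that $[hu;\phi(h)g]=[u\sigma(h);g\,\psi\sigma(h)]=[u;g]$ for all $h\in H$, where the first equality uses the defining relations $hu=u\sigma(h)$ and $\phi=c_g\psi\sigma$, and the second is the observation that $\sigma(h)\in K$ acts trivially in the quotient $(E\mathcal M\times G)/K$. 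For the $2$-cells I would invoke \cite{g-global}*{Lemma~1.2.74}, which says precisely that $[u'k,u;g]$ is a $1$-simplex of this mapping space running from $[u;g]$ to $[u';g']$ and that $k\mapsto[u'k,u;g]$ is a bijection.

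From here the ``equivalence'' half is nearly automatic. On objects, $\mu$ is surjective by the very definition of $\Ogl_G$ as the full simplicial subcategory on the $E\mathcal M\times_\phi G$. On each hom-groupoid, surjectivity on objects --- i.e.\ every $\phi$-fixed point of $E\mathcal M\times_\psi G$ is of the form $[u;g]$ for a valid triple $(u,g,\sigma)$ --- is \cite{g-global}*{Lemma~1.2.38}, while the bijection in \cite{g-global}*{Lemma~1.2.74} supplies fullness and faithfulness on morphisms; hence $\mu$ restricts to an equivalence of groupoids on each $\HOM((H,\phi),(K,\psi))$. It therefore only remains to check that $\mu$ is a strict $2$-functor.

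That last step is pure bookkeeping with the (deliberately order-reversing) multiplication conventions of $\rOgl_G$. Concretely I would check: $\mu$ preserves units, since $\mu(1,1,\id)=[1;1]$ by construction; $\mu$ is a functor on each hom-groupoid, i.e.\ it respects vertical composition of $2$-cells, which after rewriting one factor by a $K$-action so that the two $1$-simplices share a vertex follows from composition in the nerve, the identity $g'\psi(k)=g$ built into the morphisms of $\rOgl_G$ ensuring that the group coordinates match up; and $\mu$ respects horizontal composition of $2$-cells, from which compatibility with composition of $1$-cells follows by specializing to identity $2$-cells. The horizontal composition identity $\mu(\ell\odot k)=\mu(\ell)\odot\mu(k)$ is the one place where a genuine (if short) computation is needed: expanding both sides with the multiplication formula of \Cref{rk:OGgl-morphism-spaces} and using the defining relations of the relevant $1$-cells reduces the claim to the commutation $\sigma_2'(k)\,\ell=\ell\,\sigma_2(k)$, which is exactly the assertion that $\ell$ is a $2$-cell (so $\sigma_2'=c_\ell\sigma_2$). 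I expect this horizontal composition check --- and more broadly, keeping the flipped multiplication orders in the first two coordinates straight throughout --- to be the only real obstacle; everything else is either formal or outsourced to the cited results of \cite{g-global}.
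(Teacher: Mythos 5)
Your proposal is correct and follows essentially the same route as the paper: well-definedness of $\mu$ on $1$-cells by the $\phi$-fixedness computation, on $2$-cells (and bijectivity there) via \cite{g-global}*{Lemma~1.2.74}, essential surjectivity on $1$-cells via \cite{g-global}*{Lemma~1.2.38}, and then the bookkeeping verification that $\mu$ is a strict $2$-functor, with the vertical-composition check hinging on $g'\psi(k)=g$ and the horizontal-composition check hinging on $\sigma_2'(k)\ell=\ell\sigma_2(k)$. The only organizational difference is that you dispose of the equivalence claim before checking $2$-functoriality, whereas the paper does it the other way around; the substance is the same.
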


\begin{construction}
\label{cstr:PsiUnstable}
	Let $G$ be a finite group. We define $\Psi\colon\cat{$\bm{E\mathcal M}$-$\bm G$-SSet}\to\PSH(\rOgl_G)\mathrel{:=}\FUN((\rOgl_G)^\op,\cat{SSet})$ as follows: for any $E\mathcal M$-$G$-simplicial set $X$, the enriched functor $\Psi(X)\colon (\rOgl_G)^\op\to\cat{SSet}$ is given on objects by $\Psi(X)(H,\phi)=X^\phi\subset X$; if $(K,\psi)$ is another object, then we send an $n$-simplex
	\begin{equation}\label{eq:composite-k}
		(u_0,g_0,\sigma_0)\stackrel{k_1\,}{\Longrightarrow} (u_1,g_1,\sigma_1)\stackrel{k_2\,}{\Longrightarrow}\cdots\stackrel{k_n\,}{\Longrightarrow} (u_n,g_n,\sigma_n)\in\maps((H,\phi),(K,\psi))_n
	\end{equation}
	to the action of $(u_nk_n\cdots k_1,u_{n-1}k_{n-1}\cdots k_1,\dots,u_1k_1,u_0;g_0)$ on $X$, cf.\ \Cref{rk:OGgl-morphism-spaces}. If $f\colon X\to Y$ is any map of $E\mathcal M$-$G$-simplicial sets, then we define $\Psi(f)$ via $\Psi(f)(H,\phi)=f^\phi$.
\end{construction}

\begin{proposition}\label{prop:Psi-equivalence}
	The assignment $\Psi\colon \cat{$\bm{E\mathcal M}$-$\bm G$-SSet}\to\PSH(\rOgl_G)$ is well-defined (i.e.~$\Psi(X)$ is a simplicially enriched functor and $\Psi(f)$ is an enriched natural transformation) and constitutes a functor. Furthermore, it descends to an equivalence on $\infty$-categorical localizations.
	\begin{proof}
		We will simultaneously prove that $\Psi$ is well-defined and that it is isomorphic to the composite
		\begin{equation*}
			\cat{$\bm{E\mathcal M}$-$\bm G$-SSet}\xrightarrow\Phi\PSH(\Ogl_G)\xrightarrow{\mu^*}\PSH(\rOgl_G);
		\end{equation*}
		the claim then follows from Proposition~\ref{prop:elmendorf} together with Lemma~\ref{lemma:Ogl-vs-rOgl}.

		To prove this, we first fix an $E\mathcal M$-$G$-simplicial set $X$, and we will show that $\Psi(X)$ is a well-defined simplicial functor isomorphic to $\Phi(X)\circ\mu$. To this end, we recall that we have for every $(H,\phi)\in\rOgl_G$ an isomorphism
		\begin{equation*}
			\Phi(X)(\mu(H,\phi))=\maps(E\mathcal M\times_\phi G, X)\xrightarrow{\;\epsilon\;} X^\phi=\Psi(X)(H,\phi)
		\end{equation*}
		given by evaluation at $[1;1]$. It follows formally that there is a unique way to extend the assignment $(H,\phi)\mapsto X^\phi$ to a simplicially enriched functor $(\rOgl_G)^\op\to\cat{SSet}$ in such a way that the $\epsilon$'s assemble into an enriched natural isomorphism from $\Phi(X)\circ\mu$, namely in terms of the composites
		\begin{align*}
			\maps_{\rOgl}\big((H,\phi),(K,\psi)\big)&\xrightarrow\mu \maps_{\Ogl}(E\mathcal M\times_\phi G,E\mathcal M\times_\psi G)\\
			&\xrightarrow{\Phi}\maps_{\cat{SSet}}\big(\maps_{\cat{$\bm{E\mathcal M}$-$\bm G$-SSet}}(E\mathcal M\times_\psi G,X),\\
			&\phantom{{}\xrightarrow{\Phi}\maps_{\cat{SSet}}\big({}} \maps_{\cat{$\bm{E\mathcal M}$-$\bm G$-SSet}}(E\mathcal M\times_\phi G,X)\big)\\
			&\xrightarrow{\epsilon_*(\epsilon^{-1})^*} \maps_{\cat{SSet}}(X^\psi,X^\phi)
		\end{align*}
		and we only have to show that this recovers the above definition of $\Psi$. By commutativity of $(\ref{diag:Phi-restriction-functoriality})$ this then amounts to saying that
		\begin{equation*}
			\maps\big((H,\phi),(K,\psi)\big)\xrightarrow\mu \maps(E\mathcal M\times_\phi G,E\mathcal M\times_\psi G)\xrightarrow{\epsilon} (E\mathcal M\times_\psi G)^\phi\subset E\mathcal M\times_\psi G
		\end{equation*}
		sends $(\ref{eq:composite-k})$ to $(u_nk_n\cdots k_1,\dots, u_1k_1,u_0;g_0)$. As $E\mathcal M\times_\psi G$ is the nerve of a groupoid, it will be enough to show this after restricting to each edge $0\to m$ ($0\le m\le n$), i.e.~that $\mu(k_m\cdots k_0)=(u_mk_m\cdots k_1,u_0;g_0)$. However, this is precisely the definition.

		Thus, we have altogether shown that $\Psi(X)$ is indeed a well-defined simplicial functor and that the maps $\epsilon$ assemble into an isomorphism $\Psi(X)\cong\Phi(X)\circ\mu$. We can now show that $\Psi$ is a well-defined functor: indeed, if $f\colon X\to Y$ is $(E\mathcal M\times G)$-equivariant, then $\Psi(f)$ is enriched natural as the enriched functor structure on both sides is given by acting with simplices of $E\mathcal M\times G$. It is then clear that $\Psi$ preserves composition and identities as this can be checked after evaluating at each $(H,\phi)$.

		Finally, we have to establish that the isomorphisms $\epsilon$ are natural in $X$. However, we can again check this after evaluating at each $(H,\phi)$, where this is obvious.
	\end{proof}
\end{proposition}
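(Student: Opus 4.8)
The plan is to identify $\Psi$ with the composite
\[
\cat{$\bm{E\mathcal M}$-$\bm G$-SSet}\xrightarrow{\Phi}\PSH(\Ogl_G)\xrightarrow{\mu^*}\PSH(\rOgl_G),
\]
where $\Phi$ is the $G$-global Elmendorf functor of \Cref{prop:elmendorf} and $\mu^*$ is restriction along the equivalence $\mu\colon\rOgl_G\to\Ogl_G$ of \Cref{lemma:Ogl-vs-rOgl}; this is the natural move, since $\rOgl_G$ was introduced precisely as a strictly $2$-functorial replacement of $\Ogl_G$. Granting such an identification, the last assertion of the proposition is formal: $\mu$ being an equivalence of $(2,1)$-categories admits a pseudo-inverse $\nu$, and precomposing with the natural isomorphisms $\nu\mu\simeq\id$ and $\mu\nu\simeq\id$ shows that $\mu^*$ and $\nu^*$ are mutually inverse equivalences of the enriched functor $1$-categories $\PSH(\Ogl_G)\simeq\PSH(\rOgl_G)$; as $\mu^*$ visibly preserves (and reflects) levelwise weak homotopy equivalences, it descends to an equivalence on $\infty$-categorical localizations. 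Composing with the equivalence furnished by \Cref{prop:elmendorf} then yields that $\Psi$ descends to an equivalence between the localization of $\cat{$\bm{E\mathcal M}$-$\bm G$-SSet}$ at the $G$-global weak equivalences and the localization of $\PSH(\rOgl_G)$ at the levelwise weak homotopy equivalences.

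It therefore remains to produce the natural isomorphism $\Psi\cong\mu^*\circ\Phi$. I would first fix an $E\mathcal M$-$G$-simplicial set $X$: by \Cref{rk:OGgl-morphism-spaces}, evaluation at $[1;1]$ gives for every object $(H,\phi)$ of $\rOgl_G$ an isomorphism of simplicial sets $\epsilon\colon\Phi(X)(\mu(H,\phi))=\maps(E\mathcal M\times_\phi G,X)\iso X^\phi$. Since $\Phi(X)\circ\mu$ is an enriched functor $(\rOgl_G)^\op\to\cat{SSet}$ and the $\epsilon$'s are isomorphisms, there is a unique enriched functor structure on the assignment $(H,\phi)\mapsto X^\phi$ for which the $\epsilon$'s constitute an enriched natural isomorphism $\Phi(X)\circ\mu\cong({-})$. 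The heart of the argument is to check that this transported structure agrees with the explicit formula defining $\Psi(X)$ on morphisms. Using the commutativity of \eqref{diag:Phi-restriction-functoriality}, this reduces to verifying that the composite
\[
\maps_{\rOgl}\big((H,\phi),(K,\psi)\big)\xrightarrow{\mu}\maps_{\Ogl}(E\mathcal M\times_\phi G,E\mathcal M\times_\psi G)\xrightarrow{\epsilon}(E\mathcal M\times_\psi G)^\phi
\]
sends the $n$-simplex \eqref{eq:composite-k} to the tuple $(u_nk_n\cdots k_1,\dots,u_1k_1,u_0;g_0)$; since $E\mathcal M\times_\psi G$ is the nerve of a groupoid, a simplex is detected on its restrictions to the edges $0\to m$, so this comes down to unwinding the definition of $\mu$ on $2$-cells. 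I expect this bookkeeping --- keeping track of the flipped multiplication conventions in $\rOgl_G$ and in the combinatorial model of $\Ogl_G$ from \Cref{rk:OGgl-morphism-spaces} --- to be the one genuinely computational step, though a routine one.

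Once the pointwise identification is in hand, the remaining clauses all follow by checking after evaluation at a single object $(H,\phi)$. Well-definedness of $\Psi(X)$ as a simplicial functor is immediate, as it is carried by an isomorphism from the simplicial functor $\Phi(X)\circ\mu$. For a map $f\colon X\to Y$ of $E\mathcal M$-$G$-simplicial sets, $\Psi(f)$, given at $(H,\phi)$ by $f^\phi$, is enriched natural because the enriched functor structure on both $\Psi(X)$ and $\Psi(Y)$ is implemented by acting with simplices of $E\mathcal M\times G$ and $f$ is $(E\mathcal M\times G)$-equivariant; functoriality of $\Psi$ (compatibility with composition and identities) is then clear objectwise. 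Finally, naturality of the $\epsilon$'s in $X$ is again objectwise obvious, so the pointwise isomorphisms assemble into the desired natural isomorphism $\Psi\cong\mu^*\circ\Phi$, completing the proof.
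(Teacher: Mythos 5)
Your proposal is correct and follows essentially the same route as the paper's own proof: identify $\Psi$ with $\mu^*\circ\Phi$ via the evaluation-at-$[1;1]$ isomorphisms $\epsilon$, reduce the verification to the formula on morphism spaces using the commutativity of $(\ref{diag:Phi-restriction-functoriality})$ and the groupoid-nerve reduction to edges $0\to m$, and then conclude via Proposition~\ref{prop:elmendorf} and Lemma~\ref{lemma:Ogl-vs-rOgl}. The only difference is that you spell out a bit more fully why $\mu^*$ descends to an equivalence on localizations while being slightly more terse about the final bookkeeping check that $\mu(k_m\cdots k_0)=(u_mk_m\cdots k_1,u_0;g_0)$, which the paper also dispatches with ``this is precisely the definition.''
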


\begin{construction}
	We extend the assignment $G\mapsto\rOgl_G$ to a strict $(2,1)$-functor $\rOgl_\bullet\colon \sGlo\to\cat{Cat}_\Delta$ into the $2$-category of simplicial categories as follows: if $\alpha\colon G\to G'$ is a homomorphism, then $\alpha_!\colon\rOgl_G\to\rOgl_{G'}$ is given on objects by $\alpha_!(H,\phi)=(H,\alpha\phi)$, on $1$-cells by $\alpha_!(u,g,\sigma)=(u,\alpha(g),\sigma)$, and on $2$-cells by the identity; we omit the easy verification that this is well-defined and strictly functorial in $\alpha$. Moreover, if $g\in G'$ defines a natural transformation $\alpha_1\Rightarrow\alpha_2$ (i.e.~$\alpha_2=c_g\alpha_1$),  then we define the natural transformation $g_!\colon\alpha_{1!}\Rightarrow\alpha_{2!}$ on $(H,\phi)$ as $(1,g^{-1},\id_H)\colon (H,\alpha_1\phi)\to (H,\alpha_2\phi)$. We again omit the easy verification that this is well-defined and yields a strict $2$-functor.

	This $2$-functor structure then induces a $2$-functor structure on the assignment $G\mapsto (\rOgl_G)^\op$; note that in this the $2$-cells get inverted, i.e.~$g\colon\alpha_{1}\Rightarrow\alpha_{2}$ is now sent to the natural transformation $g_!^\op$ given pointwise by $(1,g,\id)$.
\end{construction}

\begin{proposition}\label{prop:Psi-2-natural}
	The maps $\Psi$ are strictly $2$-natural in $\sGlo$.
	\begin{proof}
		Let us first check $1$-naturality, i.e.~that for every $\alpha\colon G\to G'$ the diagram
		\begin{equation*}
			\begin{tikzcd}
				\cat{$\bm{E\mathcal M}$-$\bm{G'}$-SSet}\arrow[r,"\alpha^*"]\arrow[d, "\Psi"'] & \cat{$\bm{E\mathcal M}$-$\bm{G'}$-SSet}\arrow[d, "\Psi"]\\
				\PSH(\rOgl_{G'})\arrow[r, "{(\alpha_!)^*}"'] & \PSH(\rOgl_{G})
			\end{tikzcd}
		\end{equation*}
		of ordinary categories commutes.

		\textit{The above diagram commutes on the level of objects:} Let $X$ be an $E\mathcal M$-$G$-simplicial set; we have to show that $\Psi(\alpha^*X)=\Psi(X)\circ\alpha_!$. On the level of objects, this just amounts to the relation $(\alpha^*X)^\phi=X^{\alpha\circ\phi}$ for all $(H,\phi\colon H\to G)\in\rOgl_G$. To prove commutativity on the level of morphism spaces, we let $(K,\psi)$ be any other object and we consider an $n$-simplex
		\begin{equation*}
			(u_\bullet,g_\bullet,\sigma_\bullet)\mathrel{:=}\big((u_0,g_0,\sigma_0)\xRightarrow{k_1} (u_1,g_1,\sigma_1)\xRightarrow{k_2}\cdots\xRightarrow{k_n}(u_n,g_n,\sigma_n)\big)
		\end{equation*}
		of $\maps((H,\phi),(K,\psi))$. Then $\Psi(\alpha^*X)(u_\bullet,g_\bullet,\sigma_\bullet)$ is by definition given by acting with $(u_nk_n\cdots k_1,\dots, u_1k_1,u_0;g_0)\in E\mathcal M_n\times G$ on $\alpha^*X$, or equivalently by acting with $(u_nk_n\cdots k_1,\dots, u_1k_1,u_0;\alpha(g_0))\in E\mathcal M_n\times G'$ on $X$. As $\alpha_!\colon \rOgl_G\to\rOgl_{G'}$ sends $(u_\bullet,g_\bullet,\sigma_\bullet)$ to
		\begin{equation*}
			(u_0,\alpha(g_0),\sigma_0)\xRightarrow{k_1} (u_1,\alpha(g_1),\sigma_1)\xRightarrow{k_2}\cdots\xRightarrow{k_n}(u_n,\alpha(g_n),\sigma_n)
		\end{equation*}
		by definition, we see that $\Psi(X)(\alpha_!(u_\bullet,g_\bullet,\sigma_\bullet))$ is given by acting with the same element. Since in addition both $\Psi(X)(\alpha_!(u_\bullet,g_\bullet,\sigma_\bullet))$ and $\Psi(\alpha^*X)(u_\bullet,g_\bullet,\sigma_\bullet)$ are (higher) maps between the same two objects, this completes the proof that they agree, so that $\Psi(\alpha^*X)=\Psi(X)\circ\alpha_!$ as desired.

		\textit{The above diagram commutes on the level of morphisms:} As we already know that the diagram commutes on the level of objects, it is enough to check the claim after evaluating at each $(H,\phi)$. However, in this case both paths through the diagram send a morphism $f\colon X\to Y$ to the restriction $X^{\alpha\phi}\to Y^{\alpha\phi}$ of $f$.

		Finally, we can now very easily prove \textit{$2$-naturality} by the same argument: namely, it only remains to show that for every $2$-cell $g\colon\alpha_1\Rightarrow\alpha_2$ in $\Glo$, every $E\mathcal M$-$G$-simplicial set $X$, and every $(H,\phi)\in\rOgl_G$ the maps $\Psi(X)(g_!^\op\colon (H,\alpha_2\phi)\to (H,\alpha_1\phi))$ and $\Psi(g.\blank\colon \alpha_1^*X\to\alpha_2^*X)(H,\phi)$ agree. However, plugging in the definitions, both are simply given by acting with $g$ on $X$.
	\end{proof}
\end{proposition}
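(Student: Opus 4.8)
The plan is to verify strict $2$-naturality of $\Psi$ directly, separating the underlying $1$-categorical statement from the $2$-categorical coherence and, in each case, reducing to a computation at a single object of $\rOgl_G$, where everything is controlled by the defining description of $\Psi$ (fixed-point subsets of $X$, with functoriality given by actions of simplices of $E\mathcal M\times G$ on $X$).

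First I would establish $1$-naturality: for every homomorphism $\alpha\colon G\to G'$ one must show that the square of ordinary categories built from $\Psi$ (at $G$ and at $G'$), the restriction $\alpha^*$ on $E\mathcal M$-simplicial sets, and the precomposition $(\alpha_!)^*$ on presheaves commutes strictly. On objects this is the identity $(\alpha^*X)^\phi = X^{\alpha\circ\phi}$ for every $(H,\phi)\in\rOgl_G$, immediate from the definition of restriction. On morphism spaces one inspects the formula of \Cref{cstr:PsiUnstable}: $\Psi(\alpha^*X)$ sends an $n$-simplex $\xi$ of $\maps_{\rOgl_G}((H,\phi),(K,\psi))$ to the action on $\alpha^*X$ of an explicit element of $E\mathcal M_n\times G$ built from the data of $\xi$, and, since $G$ acts on $\alpha^*X$ through $\alpha$, this is the same as the action on $X$ of its image under $\id\times\alpha$; on the other side $\alpha_!$ sends $\xi$ to the simplex obtained by replacing only its $G$-valued entries by their $\alpha$-images, so $\Psi(X)\circ\alpha_!$ acts on $X$ by exactly that same element of $E\mathcal M_n\times G'$. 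As the source and target objects of the two resulting higher cells already agree by the object-level computation, the two simplicial maps coincide. Finally, for a morphism $f$ of \cat{$\bm{E\mathcal M}$-$\bm{G'}$-SSet}, having matched the presheaves $\Psi(\alpha^*X)$ and $\Psi(X)\circ\alpha_!$, it suffices to compare components, and both $\Psi(\alpha^*f)$ and $(\alpha_!)^*\Psi(f)$ are $f^{\alpha\phi}$ at $(H,\phi)$.

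Second I would upgrade this to $2$-naturality. A $2$-cell $g\colon\alpha_1\Rightarrow\alpha_2$ of $\sGlo$ amounts to an element $g\in G'$ with $\alpha_2=c_g\alpha_1$; on the source side it induces the transformation $\alpha_1^*\Rightarrow\alpha_2^*$ given by acting with $g$, and on the target side the transformation $(\alpha_{1!})^*\Rightarrow(\alpha_{2!})^*$ coming from the $2$-cell $g_!^\op$ of the $2$-functor $G\mapsto(\rOgl_G)^\op$ (a quick check of orientations shows the passages to opposites cancel, so the two transformations are parallel). Since $1$-naturality has already identified the relevant squares strictly, it suffices to check that the two resulting pasting composites have the same components. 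Evaluating at an $E\mathcal M$-$G'$-simplicial set $X$ and then at $(H,\phi)\in\rOgl_G$: the source side contributes $\Psi(g.\blank\colon\alpha_1^*X\to\alpha_2^*X)(H,\phi)$, which is the $g$-action on $X$ restricted to a map $X^{\alpha_1\phi}\to X^{\alpha_2\phi}$; the target side contributes $\Psi(X)$ evaluated on the component $(1,g,\id_H)$ of $g_!^\op$, which by the morphism formula for $\Psi$ is again the $g$-action on $X$ restricted to these same fixed-point sets. The two agree, which completes the argument.

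I do not expect a genuine obstacle here: the proposition is bookkeeping. The only care needed is in keeping straight the passages to opposite categories and the several flipped multiplication orders built into $\rOgl_G$ and into the formula for $\Psi$, together with the observation that once the source and target objects of two higher cells in \cat{SSet} have been matched, equality of the underlying simplicial maps already forces equality of the cells. A more conceptual alternative, should this bookkeeping feel unsafe, is to use the natural isomorphism $\Psi\cong\mu^*\circ\Phi$ of \Cref{prop:Psi-equivalence} and instead verify the more transparent strict $2$-naturality of the equivalences $\mu$ of \Cref{lemma:Ogl-vs-rOgl} together with the pseudonaturality of $\Phi$; but the hands-on computation above is shorter and entirely self-contained.
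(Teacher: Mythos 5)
Your proposal follows the paper's own proof essentially verbatim: first $1$-naturality on objects via $(\alpha^*X)^\phi=X^{\alpha\circ\phi}$, then on morphism spaces by matching the element of $E\mathcal M_n\times G'$ that acts (after observing source and target already agree), then on morphisms by comparing components $f^{\alpha\phi}$, and finally $2$-naturality by checking that both $\Psi(X)(g_!^\op)$ and $\Psi(g.\blank)(H,\phi)$ are the $g$-action on $X$. The only addition is your closing remark about the conceptual alternative via $\Psi\cong\mu^*\circ\Phi$, which the paper does not pursue; as you note, that route would require controlling pseudonaturality of $\Phi$, so the direct computation is indeed cleaner.
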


\begin{construction}\label{cons:gamma_rOglvsGlo}
	Let $G$ be a finite group. We define a strict $2$-functor $\gamma\colon\rOgl_G\to\sGlo_{/G}$ into the $2$-categorical slice as follows: an object $(H,\phi)$ is sent to $\phi\colon H\to G$ and a morphism $(u,g,\sigma)\colon (H,\phi)\to (K,\psi)$ is sent to the morphism
	\begin{equation}\label{diag:1-cell}
		\begin{tikzcd}[column sep=small]
			H\arrow[dr,bend right=10pt,"\phi"'{name=A}]\arrow[rr,"\sigma"]&&K;\arrow[dl, bend left=10pt,"\psi"]\twocell[from=A,"\scriptstyle g^{-1}"{yshift=-8pt},yshift=2pt]\\
			&G
		\end{tikzcd}
	\end{equation}
	note that $g^{-1}$ indeed defines such a $2$-cell in $\sGlo$ since $\phi=c_g\sigma\psi$ by assumption, whence $\sigma\psi=c_{g^{-1}}\phi$.
	Finally, a $2$-cell $k\colon (u,g,\sigma)\Rightarrow(u,g,\sigma)$ is sent to the $2$-cell $k\colon \sigma\Rightarrow\sigma'$.
\end{construction}

\begin{lemma}\label{lemma:Glo-vs-rOgl}
	For any finite $G$, $\gamma$ defines an equivalence $\rOgl_G\simeq\sGlo_{/G}$ of strict $(2,1)$-categories.
	\begin{proof}
		One easily checks by plugging in the definitions that $\gamma$ is indeed a strict $2$-functor. Essential surjectivity of $\gamma$ follows from the fact that any finite group is isomorphic to a universal subgroup (Lemma~\ref{lemma:universal-embeddings}). Moreover, given a general $1$-cell as depicted in $(\ref{diag:1-cell})$, there exists by \cite{g-global}*{Corollary~1.2.39} a $u\in\mathcal M$ with $hu=u\sigma(h)$ for all $h\in H$; $(u,g,\sigma)$ then clearly defines a $1$-cell $(H,\phi)\to(K,\psi)$ in $\rOgl_G$, and this is a preimage of $(\ref{diag:1-cell})$. This shows that $\gamma$ is essentially surjective on hom groupoids. Finally, $\gamma$ is bijective on $2$-cells by direct inspection.
	\end{proof}
\end{lemma}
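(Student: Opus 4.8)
The plan is to verify the three conditions that make a strict $2$-functor between $(2,1)$-categories an equivalence: essential surjectivity on objects, essential surjectivity of the induced functors on hom-groupoids, and bijectivity on $2$-cells. Before any of that, one should of course check that $\gamma$ is a well-defined strict $2$-functor; this is a routine unravelling, where the only points that require attention are that $g^{-1}$ in $(\ref{diag:1-cell})$ is a legitimate $2$-cell of $\sGlo$ exactly because $\phi = c_g\psi\sigma$ (so $\psi\sigma = c_{g^{-1}}\phi$), and that strict functoriality on composites holds on the nose once one tracks the order-reversal built into composition in $\rOgl_G$: $\gamma$ sends $(u_1u_2, g_1g_2, \sigma_2\sigma_1)$ to $\sigma_2\sigma_1$ with attached conjugation $(g_1g_2)^{-1} = g_2^{-1}g_1^{-1}$, which agrees with the composite in $\sGlo_{/G}$ of the $1$-cells attached to $(u_1,g_1,\sigma_1)$ and $(u_2,g_2,\sigma_2)$.

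For essential surjectivity on objects I would use \Cref{lemma:universal-embeddings}: an object of $\sGlo_{/G}$ is an arbitrary homomorphism $\phi\colon H\to G$ from a finite group $H$, and choosing an injective homomorphism $i\colon H\hookrightarrow\mathcal M$ with universal image $H'$ gives an object $(H', \phi\circ i^{-1})\in\rOgl_G$ whose image under $\gamma$ is isomorphic to $\phi$ (the isomorphism being $i$ itself, equipped with the trivial conjugation).

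Next I would fix $(H,\phi),(K,\psi)\in\rOgl_G$ and analyze the functor $\gamma\colon\HOM_{\rOgl_G}((H,\phi),(K,\psi))\to\HOM_{\sGlo_{/G}}(\phi,\psi)$. Bijectivity on $2$-cells is immediate from the definitions: a $2$-cell $(u,g,\sigma)\Rightarrow(u',g',\sigma')$ in $\rOgl_G$ is an element $k\in K$ with $\sigma' = c_k\sigma$ and $g'\psi(k) = g$, and unwinding the definition of $2$-cells in the slice $\sGlo_{/G}$ shows that this is precisely the data of a conjugation $k\colon\sigma\Rightarrow\sigma'$ compatible with the attached conjugations over $G$, i.e.\ precisely a $2$-cell $\gamma(u,g,\sigma)\Rightarrow\gamma(u',g',\sigma')$; the correspondence visibly preserves composition. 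For essential surjectivity on hom-groupoids, a $1$-cell $\phi\to\psi$ in $\sGlo_{/G}$ consists of a homomorphism $\sigma\colon H\to K$ together with $g\in G$ such that $\phi = c_g\psi\sigma$, and to realize it as $\gamma(u,g,\sigma)$ one needs a $u\in\mathcal M$ with $hu = u\sigma(h)$ for all $h\in H$; such a $u$ is furnished by \cite{g-global}*{Corollary~1.2.39}, since the tautological $H$- and $K$-actions on $\omega$ are both complete universes, and then $(u,g,\sigma)$ is the desired preimage.

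The one step with genuine content is this last point, the existence of the intertwining injection $u$, and it is exactly what the cited $G$-global result provides; after invoking it, the remainder of the argument is the bookkeeping of matching the explicit composition formulas of $\rOgl_G$ against those of the $2$-categorical slice $\sGlo_{/G}$, with no further obstacle.
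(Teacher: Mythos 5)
Your proof is correct and follows essentially the same route as the paper's: essential surjectivity on objects via Lemma~\ref{lemma:universal-embeddings}, essential surjectivity on hom-groupoids via the existence of an intertwining injection $u$ from \cite{g-global}*{Corollary~1.2.39}, and bijectivity on $2$-cells by inspection. You spell out the bookkeeping (order-reversal in composition, the explicit isomorphism $i$ on objects) in more detail than the paper, but the key ingredients and their roles are identical.
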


\begin{lemma}\label{lemma:Glo-rOgl-natural}
	The maps $\gamma$ define a strictly $2$-natural transformation $\rOgl_\bullet\Rightarrow\sGlo_{/\bullet}$.
	\begin{proof}
		Let us first show that $\gamma$ is $1$-natural, i.e.~for every homomorphism $\alpha\colon G\to G'$ the diagram
		\begin{equation*}
			\begin{tikzcd}
				\rOgl_G\arrow[r, "\gamma"]\arrow[d, "\alpha_!"'] & \sGlo_{/G}\arrow[d, "\sGlo_{/\alpha}"]\\
				\rOgl_{G'}\arrow[r, "\gamma"'] & \sGlo_{/G'}
			\end{tikzcd}
		\end{equation*}
		of strict $2$-functors commutes strictly. This just amounts to plugging in the definitions: both paths through the diagram send an object $(H,\phi)$ to $\alpha\phi\colon H\to G'$, a $1$-cell as in $(\ref{diag:1-cell})$ to
		\begin{equation*}
			\begin{tikzcd}[column sep=small]
				H\arrow[dr,bend right=10pt,"\alpha\phi"'{name=A}]\arrow[rr,"\sigma"]&&K,\arrow[dl, bend left=10pt,"\alpha\psi"]\twocell[from=A,"\scriptstyle \alpha(g)^{-1}"{yshift=-8pt},yshift=2pt]\\
				&G'
			\end{tikzcd}
		\end{equation*}
		and a $2$-cell $\sigma\Rightarrow\sigma'$ represented by $k$ to a $2$-cell represented by the same $k$.

		For $2$-functoriality it then only remains to show that for any $2$-cell $g\colon\alpha\Rightarrow\alpha'$ of maps $G\to G'$ in $\sGlo$ the two pastings
		\begin{equation*}
			\begin{tikzcd}
				\rOgl_G\arrow[r, bend left=15pt,"\alpha_!"{name=A},shift left=5pt]\arrow[r, bend right=15pt,"\alpha_!'"'{name=B}, shift right=5pt] &\twocell[from=A,to=B, "\scriptstyle g_!"{xshift=8pt},xshift=-4pt] \rOgl_{G'} \arrow[r,"\gamma"] & \sGlo_{/G'}
			\end{tikzcd}
			\qquad\text{and}\qquad
			\begin{tikzcd}
				\rOgl_G\arrow[r,"\gamma"]&
				\sGlo_{/G}\arrow[r, bend left=15pt,"\alpha_!"{name=A},shift left=4pt]\arrow[r, bend right=15pt,"\alpha_!'"'{name=B}, shift right=4pt] &\twocell[from=A,to=B, "\scriptstyle g_!"{xshift=8pt},xshift=-4pt] \sGlo_{/G'}
			\end{tikzcd}
		\end{equation*}
		agree pointwise. However, by direct inspection both are given on an object $(H,\phi)$ of $\rOgl_G$ simply as the $1$-cell
		\begin{equation*}
			\begin{tikzcd}[column sep=small]
				H\arrow[dr,bend right=10pt,"\alpha\phi"'{name=A}]\arrow[rr,"="]&&H\arrow[dl, bend left=10pt,"\alpha'\phi"]\twocell[from=A,"\scriptstyle g"{yshift=-8pt},yshift=2pt]\\
				&G
			\end{tikzcd}
		\end{equation*}
		which completes the proof of the lemma.
	\end{proof}
\end{lemma}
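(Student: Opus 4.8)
The plan is to verify the two conditions defining a strict $2$-natural transformation between the strict $(2,1)$-functors $\rOgl_\bullet$ and $\sGlo_{/\bullet}$ on the domain $\sGlo$: strict $1$-naturality with respect to homomorphisms, and compatibility with the conjugation $2$-cells. Each component $\gamma=\gamma_G\colon\rOgl_G\to\sGlo_{/G}$ is already a strict $2$-functor (see the proof of \Cref{lemma:Glo-vs-rOgl}) and all the relevant $2$-functors are strict, so there are no coherence isomorphisms to track and the whole argument is an unwinding of the definitions from \Cref{cons:gamma_rOglvsGlo} and the $2$-functor structures on $\rOgl_\bullet$ and $\sGlo_{/\bullet}$ recalled just before it — entirely parallel to the proof of \Cref{prop:Psi-2-natural}.

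For $1$-naturality I would fix a homomorphism $\alpha\colon G\to G'$ and check that the square
\[
\begin{tikzcd}
\rOgl_G\arrow[r, "\gamma"]\arrow[d, "\alpha_!"'] & \sGlo_{/G}\arrow[d, "\sGlo_{/\alpha}"]\\
\rOgl_{G'}\arrow[r, "\gamma"'] & \sGlo_{/G'}
\end{tikzcd}
\]
commutes on the nose, by tracing an object, a $1$-cell and a $2$-cell of $\rOgl_G$ around it. On objects both composites send $(H,\phi)$ to $(\alpha\phi\colon H\to G')$. On a $1$-cell $(u,g,\sigma)\colon(H,\phi)\to(K,\psi)$, the upper route produces via $\gamma$ the triangle over $G$ with top edge $\sigma$, legs $\phi,\psi$ and $2$-cell $g^{-1}$, and then postcomposition with $\alpha$ turns this into the triangle over $G'$ with the same top edge, legs $\alpha\phi,\alpha\psi$ and $2$-cell $\alpha(g)^{-1}$; the lower route produces via $\alpha_!$ the $1$-cell $(u,\alpha(g),\sigma)$ and then via $\gamma$ exactly that triangle, using $\alpha(g^{-1})=\alpha(g)^{-1}$. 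On $2$-cells both routes act as the identity on the representing group element, so the square commutes strictly.

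For compatibility with $2$-cells I would take $g\colon\alpha\Rightarrow\alpha'$ with $\alpha,\alpha'\colon G\to G'$, so $\alpha'=c_g\alpha$, and compare the two pastings of $\gamma$ with the whiskerings of $g$. Evaluated at an object $(H,\phi)\in\rOgl_G$, the pasting through $\rOgl_{G'}$ is $\gamma_{G'}$ applied to the component $(1,g^{-1},\id_H)$ of $g_!$, which by \Cref{cons:gamma_rOglvsGlo} is the $1$-cell over $G'$ with top edge $\id_H$, legs $\alpha\phi,\alpha'\phi$ and $2$-cell $(g^{-1})^{-1}=g$; the pasting through $\sGlo_{/G}$ is the component at $\gamma_G(H,\phi)=\phi$ of the canonical $2$-natural transformation $\sGlo_{/\alpha}\Rightarrow\sGlo_{/\alpha'}$, which is the $1$-cell over $G'$ with top edge $\id_H$, legs $\alpha\phi,\alpha'\phi$ and $2$-cell $g$ as well. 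Hence the two pastings agree pointwise, and since this is the only $2$-dimensional datum to match, strict $2$-naturality follows.

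I do not expect a genuine obstacle. The only point requiring care is bookkeeping: one must consistently carry the inverses (the $2$-cell attached to a morphism $(u,g,\sigma)$ of $\rOgl_G$ is $g^{-1}$, and $g_!$ is likewise built from $g^{-1}$) and match the flipped multiplication and composition conventions in $\rOgl_\bullet$ against ordinary postcomposition in the slice $2$-categories $\sGlo_{/\bullet}$. Beyond that, everything is a routine, if slightly tedious, verification in the spirit of \Cref{lemma:Glo-vs-rOgl} and \Cref{prop:Psi-2-natural}.
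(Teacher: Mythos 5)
Your proposal is correct and follows the same approach as the paper: verify strict $1$-naturality by tracing objects, $1$-cells, and $2$-cells through the commutative square, then check compatibility with the conjugation $2$-cells by evaluating the two whiskered pastings at an arbitrary object $(H,\phi)$. The identification of the two pastings — each producing the triangle over $G'$ with top edge $\id_H$, legs $\alpha\phi,\alpha'\phi$, and $2$-cell $g$ (via $(g^{-1})^{-1}=g$ on the $\rOgl$-side) — is exactly the paper's computation.
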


\subsubsection{Putting the pieces together}
Now we are finally ready to deduce our comparison result:

\begin{proof}[Proof of Theorem~\ref{thm:global-spaces}]
	As mentioned in the beginning of this subsection, we will first construct an equivalence $\ul\S^\textup{gl}\simeq\ul\Spc_{\Glo}$ by hand:

	Proposition~\ref{prop:Psi-2-natural} says that the maps $\Psi$ define a $2$-natural transformation between the global categories $\cat{$\bm{E\mathcal M}$-$\bm\bullet$-SSet}$ and $\PSH(\rOgl_\bullet)\colon G\mapsto \PSH(\rOgl_G)$. If we equip $\cat{$\bm{E\mathcal M}$-$\bm G$-SSet}$ with the $G$-global weak equivalences for varying $G$ and each $\PSH(\rOgl_G)$ with the levelwise weak homotopy equivalences, this lifts to a map of global relative categories, which in turn decends to an equivalence between the global $\infty$-categories obtained by pointwise localization according to Proposition~\ref{prop:Psi-equivalence}. Note that the localization of $\cat{$\bm{E\mathcal M}$-$\bm\bullet$-SSet}$ is the global $\infty$-category $\ul\S^\text{gl}$ by definition; it will now be useful to pick a very specific localization of $\PSH(\rOgl_\bullet)$ for the purposes of this proof:

	Namely, we pick a \emph{simplicially enriched} fibrant replacement functor for the Kan-Quillen model structure (for example via the enriched small object argument \cite{cat-htpy}*{Theorem~13.5.2} or simply by using the geometric realization-singular set adjunction), which provides us with an enriched functor $r\colon\cat{SSet}\to\cat{Kan}$ together with enriched natural transformations $\id\Rightarrow ir$ and $\id\Rightarrow ri$ that are levelwise weak homotopy equivalences, where $i\colon\cat{Kan}\hookrightarrow\cat{SSet}$ is the inclusion. As an upshot, if $A$ is any simplicially enriched category, then $r\circ\blank\colon \PSH(A)\to\PSH^\cat{Kan}(A)\mathrel{:=}\FUN(A^\op,\cat{Kan})$ is a homotopy equivalence with respect to the levelwise weak homotopy equivalences, so it induces an equivalence of $\infty$-categorical localizations. Specializing this to our situation, the maps $r$ assemble into a map of global relative categories from $\PSH(\rOgl_\bullet)$ to $\PSH^\cat{Kan}(\rOgl_\bullet)$. Finally, for any simplicial category $A$ the enriched-natural comparison map
	\begin{equation*}
		\nerve\big(\PSH^\cat{Kan}(A)\big)=\nerve\FUN(A^\op,\cat{Kan})\to\Fun(\nerve_\Delta(A^\op),\nerve_\Delta(\cat{Kan}))=\PSh(\nerve_\Delta A)
	\end{equation*}
	is a localization at the levelwise weak homotopy equivalences as a consequence of \cite{HTT}*{Proposition~4.2.4.4}, see also~\cite{g-global}*{Proposition~A.1.18}, where this argument is spelled out in detail. Thus, we altogether get a map of global $\infty$-categories
	\begin{equation*}
		\nerve\big(\PSH(\rOgl_\bullet)\big)\xrightarrow{r\circ\blank}\nerve\big(\PSH^\cat{Kan}(\rOgl_\bullet)\big)\xrightarrow{\text{canonical}} \PSh(\nerve_\Delta(\rOgl_\bullet))
	\end{equation*}
	that is pointwise a localization, whence induces an equivalence $\ul\S^\text{gl}\simeq\PSh(\nerve_\Delta\rOgl_\bullet)$ of global $\infty$-categories.

	Restricting along the strictly $2$-natural equivalence $\gamma\colon\rOgl_\bullet\Rightarrow\sGlo_{/\bullet}$ of $2$-functors $\sGlo\to\cat{Cat}_\Delta$ (see Lemmas~\ref{lemma:Glo-vs-rOgl} and~\ref{lemma:Glo-rOgl-natural}) yields an equivalence of global $\infty$-categories $\PSh(\nerve_\Delta(\sGlo_{/\bullet}))\simeq\PSh(\nerve_\Delta\rOgl_\bullet)$. By Proposition~\ref{prop:comparison-slices} the left hand side is then further equivalent to $\PSh(\Glo_{/\bullet})=\ul{\Spc}_{\Glo}$. This completes the construction of an equivalence $\ul\Spc_{\Glo}\simeq\ul\S^\text{gl}$ of global $\infty$-categories.

	As $\ul\Spc_{\Glo}$ is presentable (Example~\ref{ex:T_Spaces_Presentable}), so is $\ul\S^\text{gl}$. Moreover, the universal property of $\ul\Spc_{\Glo}$ shows that the equivalence $F\colon \ul\Spc_{\Glo}\to\ul\S^\text{gl}$ constructed above is characterized essentially uniquely by the image of the terminal object $*\in\ul\Spc_\text{Glo}(1)$, so it only remains to verify that $F$ sends this to the terminal object of $\S^\text{gl}$. However, this follows simply from the fact that $F(1)\colon \ul\Spc_{\Glo}(1)\to \S^\text{gl}$ is an equivalence of ordinary $\infty$-categories.
\end{proof}

\section{Parametrized semiadditivity}
The goal of this section is to introduce the parametrized analogue of the familiar notion of semiadditivity of an $\infty$-category, following the ideas introduced by Nardin \cite{nardin2016exposeIV}.
In the parametrized setting, the notion of semiadditivity comes in various flavors, parametrized by suitable subcategories $P \subseteq T$: roughly speaking, a $T$-$\infty$-category $\Cc$ is $P$-semiadditive if it is pointwise semiadditive, admits left adjoints $p_!$ and right adjoints $p_*$ for the morphisms $p\colon A \to B$ in $P$ satisfying base change, and a canonical \textit{norm map} $\Nm_p\colon p_! \to p_*$ between these two adjoints is an equivalence.

\subsection{Pointed \for{toc}{$T$-$\infty$}\except{toc}{\texorpdfstring{$\bm T$-$\bm\infty$}{T-∞}}-categories}
\label{subsec:PointedTCategories}
As a first step towards defining parametrized semiadditivity, we introduce the notion of pointedness for $T$-$\infty$-categories. Recall that a \textit{zero object} of an $\infty$-category is an object which is both initial and terminal. An $\infty$-category is called \textit{pointed} if it admits a zero object. This has the following parametrized analogue.

\begin{definition}
	Let $\Cc$ be a $T$-$\infty$-category and let $c\colon \ul{1} \to \Cc$ be a $T$-$\infty$-functor. We say that $c$ is a \textit{$T$-zero object} of $\Cc$ if  $c(B) \in \Cc(B)$ is a zero object for every $B \in T$. We say that $\Cc$ is \textit{pointed} if it admits a $T$-zero object; equivalently, $\Cc(B)$ is a pointed $\infty$-category for every $B \in T$ and $f^*\colon \Cc(B) \to \Cc(A)$ preserves the zero object for every $f\colon A \to B$ in $T$.

	Similarly, we say that $c\colon \ul{1} \to \Cc$ is a  \textit{$T$-initial object} (resp.\ a \textit{$T$-final object}) if $c(B) \in \Cc(B)$ is an initial object (resp.\ a final object) for all $B \in T$.

	Denote by $\Cat_T^{*} \subseteq \Cat_T$ the (non-full) subcategory spanned by the $T$-$\infty$-categories admitting a $T$-final object and the $T$-functors that preserve the $T$-final object. We let $\Cat^{\pt}_T \subseteq \Cat_T^{*}$ denote the full subcategory spanned by the pointed $T$-$\infty$-categories.
\end{definition}

\begin{remark}\label{rk:T-zero}
	By Lemma~\ref{lem:ColimitsIndexedByConstantTCategories}, a $T$-initial object is equivalently a $T$-colimit over the constant $T$-$\infty$-category $\const_\emptyset$. The aforementioned lemma moreover shows that in this case $\Cc(X)$ has an initial object for every presheaf $X\in\PSh(T)$, and that restriction along arbitrary maps of presheaves preserves initial objects. The statement for $T$-final objects is then formally dual, and we in particular conclude that for a pointed $T$-$\infty$-category $\Cc$ and any presheaf $X$, the $T$-$\infty$-category $\Cc(X)$ is pointed.
\end{remark}

\begin{definition}
	For $T$-$\infty$-categories $\Cc$ and $\Dd$ which admit a $T$-final object, we let
	\begin{align*}
		\ulFun_T^*(\Cc,\Dd) \subseteq \ulFun_T(\Cc,\Dd)
	\end{align*}
	be the full parametrized subcategory spanned at $B \in T$ by the \textit{pointed $T_{/B}$-functors}, i.e.\ those $F\colon \pi_B^*\Cc \to \pi_B^*\Dd$ which preserve the $T_{/B}$-final object.
\end{definition}

In the non-parametrized setting, an $\infty$-category is pointed if and only if it admits an initial object $\emptyset$ and a terminal object $1$, and the canonical map $\emptyset \to 1$ is an equivalence. In other words: the limit and colimit of the empty diagram in $\Cc$ exist and are canonically equivalent. For our discussion of parametrized semiadditivity, we will need an enhancement of this statement to the parametrized setting which identifies more generally the (parametrized) limit and colimit corresponding to a \textit{disjoint summand inclusion}.

\begin{definition}
	\label{def:DisjointSummandInclusion}
	A map $f\colon A \to B$ in an $\infty$-category $\Ee$ is called a \textit{disjoint summand inclusion} if there exists another morphism $g\colon C \to B$ in $\Ee$ such that the maps $f$ and $g$ exhibit $B$ as a coproduct of $A$ and $C$ in $\Ee$.
\end{definition}

\begin{lemma}
	\label{lem:NormMapDisjointSummandInclusions}
	Let $\Cc$ be a $T$-$\infty$-category and let $f\colon A \to B$ be a disjoint summand inclusion in $\PSh(T)$.
	\begin{enumerate}[(1)]
		\item If $\Cc$ admits a $T$-initial object, then the restriction functor $f^*\colon \Cc(B) \to \Cc(A)$ admits a fully faithful left adjoint $f_!\colon \Cc(A) \to \Cc(B)$;
		\item If $\Cc$ admits a $T$-final object, then the restriction functor $f^*\colon \Cc(B) \to \Cc(A)$ admits a fully faithful right adjoint $f_*\colon \Cc(A) \to \Cc(B)$;
		\item If $\Cc$ admits both a $T$-initial object and a $T$-final object, then there is a unique map \[\Nm_f\colon f_! \implies f_*\] whose restriction $f^*\Nm_f\colon f^*f_! \Rightarrow f^*f_*$ is the equivalence inverse to the composite \[f^*f_* \xRightarrow[\sim]{c^*_{f}} \id \xRightarrow[\sim]{u^!_f} f^*f_!;\]
		\item If $\Cc$ is pointed, this map $\Nm_f\colon f_! \Rightarrow f_*$ is an equivalence.
	\end{enumerate}
\end{lemma}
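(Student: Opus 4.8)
The plan is to reduce everything to the pointwise statement about disjoint summand inclusions in ordinary pointed $\infty$-categories and then check that these pointwise equivalences are compatible with restriction. First I would recall the non-parametrized fact (e.g.\ from \cite{HA}*{Section~2.4} or by a direct argument) that if $\Ee$ is a pointed $\infty$-category and $f\colon A\to B$ is a disjoint summand inclusion, witnessed by $g\colon C\to B$, then for a functor $X\colon A\to\Ee$ both the left and right Kan extensions of $X$ along $f$ exist and are computed by extending by the zero object on the complementary summand $C$; hence $f_!X$ and $f_*X$ agree, both being the object of $\Fun(B,\Ee)$ that restricts to $X$ on $A$ and to $0$ on $C$. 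Applying this with $\Ee=\Cc(B)$ and using that $\PSh(T)_{/B}$ (or rather the relevant comma/diagram category produced by $f$ being a summand inclusion) is what indexes the Kan extension, one obtains that $f_!\colon\Cc(A)\to\Cc(B)$ and $f_*\colon\Cc(A)\to\Cc(B)$ from parts (1) and (2) agree on objects, with the comparison being exactly the zero-object extension on the complement.

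Next I would make the norm map $\Nm_f$ concrete in this situation. Since $f^*f_!\simeq\id\simeq f^*f_*$ via $u_f^!$ and $c_f^*$ (parts (1) and (2) already give these as the unit/counit of the fully faithful adjoints), and since $g^*f_!X\simeq 0\simeq g^*f_*X$ by the description above, both $f_!X$ and $f_*X$ are uniquely determined (up to canonical equivalence) by their restrictions along the jointly-conservative pair $(f^*,g^*)$. Concretely, writing $j\colon A\sqcup C\to B$ for the resulting equivalence, one has $\Cc(B)\simeq\Cc(A)\times\Cc(C)$ compatibly with $f^*,g^*$, and under this identification $f_!X\simeq(X,0)\simeq f_*X$. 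The map $\Nm_f$ of part (3) is characterized by its restriction along $f^*$ being the canonical equivalence $u_f^!\circ(c_f^*)^{-1}$; I would check that the equivalence $(X,0)\xrightarrow{\ \id\ }(X,0)$ in $\Cc(A)\times\Cc(C)$ restricts to precisely this along $f^*$ (and trivially to the identity of $0$ along $g^*$), and therefore that it \emph{is} $\Nm_f$ by the uniqueness clause of part (3). Being an identity map, it is an equivalence.

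The one genuine point to be careful about — and the step I expect to be the main obstacle — is the passage from "$f$ is a disjoint summand inclusion in $\PSh(T)$" to a usable description of $f_!,f_*$ as functors $\Cc(A)\to\Cc(B)$, since the adjoints in parts (1)–(2) were produced abstractly (via the pointwise criterion \Cref{prop:pointwisecriterionadjoints} applied across $\PSh(T)$, or directly) rather than as honest Kan extensions along a functor of $\infty$-categories. The clean way around this is to use that a disjoint summand inclusion $f\colon A\to B$ exhibits $\Cc(B)$ as a product: from $B\simeq A\sqcup C$ and the fact that $\Cc$, extended to $\PSh(T)$, sends finite coproducts to products (colimits in $\PSh(T)$ go to limits in $\Cat_\infty$ under $\Cc$, by \Cref{rmk:limitExtension} / the convention on limit-extension), we get $\Cc(B)\simeq\Cc(A)\times\Cc(C)$ with $f^*=\pr_A$ and $g^*=\pr_C$. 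Then $f_!=(\id,\, 0\circ!)$ where $0\colon \ul1\to\Cc$ picks the zero object (this exists as $\Cc$ is pointed, hence $\Cc(C)$ is pointed and the constant-$0$ functor is both left and right adjoint to $\pr_C\colon\Cc(A)\times\Cc(C)\to\Cc(A)$ composed appropriately), and identically $f_*=(\id,\,0\circ!)$; the two agree on the nose. With this identification in hand, parts (1)–(3) are immediate and part (4) follows as above. The remaining work is purely bookkeeping: verifying that the unit $u_f^!$ and counit $c_f^*$ coming from this explicit adjunction match those used to characterize $\Nm_f$, which is a diagram chase in the product category $\Cc(A)\times\Cc(C)$ and presents no conceptual difficulty.
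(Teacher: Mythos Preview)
Your ``clean way'' is exactly the paper's proof: use that $\Cc$ (limit-extended) sends the coproduct $B\simeq A\sqcup C$ to a product, so $\Cc(B)\simeq\Cc(A)\times\Cc(C)$ with $f^*$ the first projection; then the left adjoint is $X\mapsto(X,\emptyset)$, the right adjoint is $X\mapsto(X,1)$, and $\Nm_f$ is $(\id_X,\emptyset\to 1)$, which is an equivalence precisely when $\Cc(C)$ is pointed. The Kan-extension discussion at the start of your plan is an unnecessary detour and you should drop it.

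One point to tighten: you assume pointedness throughout and write $f_!=(\id,0)=f_*$ ``on the nose,'' but parts (1)--(3) of the lemma are stated under the weaker separate hypotheses of a $T$-initial resp.\ $T$-final object. In that generality $f_!X=(X,\emptyset)$ and $f_*X=(X,1)$ need not agree, and the content of (3) is precisely that the map $(\id_X,\emptyset\to 1)$ is the unique transformation whose $f^*$-component is the specified equivalence (uniqueness coming from the joint conservativity of $(f^*,g^*)$ together with the fact that $\Hom(\emptyset,1)$ is contractible). Your argument for (4) is fine, but your write-up of (3) should keep $\emptyset$ and $1$ distinct.
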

\begin{proof}
	Let $g\colon C \to B$ denote the disjoint complement of $f$. As $\Cc\colon \PSh(T)\catop \to \Cat_{\infty}$ sends colimits in $\PSh(T)$ to limits of $\infty$-categories, the maps $f$ and $g$ induce an equivalence
	\begin{align*}
		(f^*,g^*)\colon \Cc(B) \iso \Cc(A) \times \Cc(C),
	\end{align*}
	and under this equivalence the restriction functor $f^*\colon \Cc(B) \to \Cc(A)$ corresponds to the first projection map $\Cc(A) \times \Cc(C) \to \Cc(A)$. If $\Cc$ admits a $T$-initial object, then this projection has a fully faithful left adjoint given by $X \mapsto (X,\emptyset)$, where $\emptyset \in \Cc(C)$ denotes the initial object (see Remark~\ref{rk:T-zero}). It follows that $f^*$ admits a fully faithful left adjoint $f_!$. Similarly if $\Cc$ admits a $T$-final object, the projection has a fully faithful right adjoint given by $X \mapsto (X,1)$, where $1 \in \Cc(C)$ is a final object, and thus $f^*$ admits a right adjoint $f_*$. If $\Cc$ satisfies both, then inserting the unique map $\emptyset \to 1$ in the second variable gives rise to a natural transformation $\Nm_f\colon f_! \Rightarrow f_*$, which is uniquely determined by requiring that its restriction along $f$ is the canonical identification $f^*f_! \simeq f^*f_*$ in (3). It is clear that $\Nm_f$ is an equivalence whenever $\Cc(C)$ is pointed.
\end{proof}

Before moving on, we record a useful characterization of the disjoint summand inclusions in a presheaf category:

\begin{lemma}
\label{lem:DisjointSummandInclusoinsLocalClass}
    Let $f\colon X \to Y$ be a map in $\PSh(T)$. Then the following are equivalent:
    \begin{enumerate}[(1)]
        \item The map $f$ is a disjoint summand inclusion;
        \item For every map $t\colon A \to Y$ from a representable object $A \in T$, the base change $t^*f\colon A \times_Y X \to A$ of $f$ is a disjoint summand inclusion (i.e.~either $t^*f$ is an equivalence or $A \times_Y X = \emptyset$).
    \end{enumerate}
	\begin{proof}
        It is clear that (1) implies (2) as disjoint summand inclusions in $\PSh(T)$ are closed under pullback. We thus assume that (2) is satisfied and prove that it implies (1).

        By the co-Yoneda Lemma, there are equivalences $X \simeq \colim_{A \in T_{/X}} A$ and $Y \simeq \colim_{B \in T_{/Y}} B$. Under these equivalences, the map $f$ corresponds to restriction of indexing diagrams along the functor $T_{/f}\colon T_{/X}\to T_{/Y}$. It will therefore suffice to show that this functor is a disjoint summand inclusion of $\infty$-categories, or equivalently that it is fully faithful and any object of $T_{/X}$ admitting a map to or from the image of $T_{/f}$ already belongs to the essential image.

		For this, we will first show that $f$ is a monomorphism. This will immediately imply that $\PSh(T)_{/f}$ is fully faithful, hence so is $T_{/f}$. To this end, we observe that the natural map $\tau\colon\coprod_{A\in T_{/X}}A\to\colim_{A\in T_{/X}}A\simeq X$ is an effective epimorphism \cite{abfj-left-exact}*{Example~2.3.6}, so pullbacks along it detect monomorphisms by \cite{HTT}*{Proposition~6.2.3.17}. However, by universality of colimits, $\tau^*f$ is simply given as the coproduct of all the pullbacks of $f$ along all the maps $A\to X$, and each of these is in particular a monomorphism by assumption.

		For the closure of the image, consider objects $t\colon A\to X$ in $T_{/X}$ and $u\colon B\to Y$ in $T_{/Y}$. If there is a map $\alpha\colon u\to T_{/f}(t)$ in $T_{/Y}$, then $ft\alpha\sim u$, so $t\alpha$ is a preimage of $u$. Conversely, a map $\beta\colon T_{/f}(t)\to u$ amounts to a commutative square
		\begin{equation}\label{diag:sq-def-beta}
			\begin{tikzcd}
				A\arrow[d, "t"']\arrow[r, "\beta"] & B\arrow[d, "u"]\\
				X\arrow[r, "f"'] & Y
			\end{tikzcd}
		\end{equation}
		in $\PSh(T)$. By assumption, the pullback  $B\times_YX$ is either empty or the projection to $B$ is an equivalence. However, the first case is impossible as $B\times_YX$ receives a map from $A$ induced by $(\ref{diag:sq-def-beta})$, so we see there exists a (pullback) square of the form
		\begin{equation*}
			\begin{tikzcd}
				B\arrow[r, equal]\arrow[d,"v"']\arrow[dr,phantom, "\lrcorner"{very near start}] & B\arrow[d, "u"]\\
				X\arrow[r, "f"'] & Y.
			\end{tikzcd}
		\end{equation*}
		The map $v$ is the desired preimage of $u$, finishing the proof of the lemma.
	\end{proof}
\end{lemma}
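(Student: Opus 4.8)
The forward direction is a general fact about $\infty$-topoi: disjoint summand inclusions are stable under pullback, since coproducts in $\PSh(T)$ are disjoint and universal. So I will assume (2) and deduce (1). The plan is to pass to indexing categories: by the co-Yoneda Lemma one has $X\simeq\colim_{A\in T_{/X}}A$ and $Y\simeq\colim_{B\in T_{/Y}}B$, naturally, and under these identifications $f$ is the map of colimits induced by restricting the indexing diagram along the postcomposition functor $T_{/f}\colon T_{/X}\to T_{/Y}$. It therefore suffices to exhibit $T_{/f}$ as a disjoint summand inclusion of $\infty$-categories --- that is, to show it is fully faithful and that its essential image contains every object of $T_{/Y}$ that admits a morphism to or from the image --- since such a functor is, up to equivalence, the inclusion of one summand of a coproduct decomposition $T_{/Y}\simeq\mathcal E\sqcup\mathcal E'$, and then $\colim_{\mathcal E\sqcup\mathcal E'}$ splits as a coproduct with the comparison map a summand inclusion.

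For full faithfulness the crucial point is that $f$ is a monomorphism in $\PSh(T)$. To see this I would use that the canonical map $\tau\colon\coprod_{A\in T_{/X}}A\to X$ is an effective epimorphism, so that pullback along $\tau$ detects monomorphisms; by universality of colimits $\tau^*f$ is the coproduct of the base changes of $f$ along the maps $A\to X$ from representables, each of which is a disjoint summand inclusion --- hence a monomorphism --- by hypothesis (2). Thus $f$ is a monomorphism, the postcomposition functor $\PSh(T)_{/f}\colon\PSh(T)_{/X}\to\PSh(T)_{/Y}$ is fully faithful, and $T_{/f}$, being its restriction to the full subcategories of objects with representable domain, is fully faithful as well.

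For the closure of the essential image, take $t\colon A\to X$ in $T_{/X}$ and $u\colon B\to Y$ in $T_{/Y}$. A morphism $u\to T_{/f}(t)$ in $T_{/Y}$ amounts to a map $\alpha\colon B\to A$ with $ft\alpha\simeq u$, and then $t\alpha$ is a preimage of $u$ under $T_{/f}$. A morphism $T_{/f}(t)\to u$ amounts to a map $\beta\colon A\to B$ with $u\beta\simeq ft$, which induces a map $A\to B\times_Y X$; in particular $B\times_Y X$ is nonempty, so hypothesis (2) applied to $u$ forces the projection $B\times_Y X\to B$ to be an equivalence, and the composite $B\simeq B\times_Y X\to X$ is then a preimage of $u$. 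This establishes the closure property and completes the proof. The main obstacle is the passage from the pointwise hypothesis (2) to the statement that $f$ is a monomorphism, which requires combining the effective-epimorphism criterion for monicity with universality of colimits in $\PSh(T)$; the remaining steps --- full faithfulness of $T_{/f}$, the sieve/cosieve diagram chase, and the splitting of a colimit over a coproduct decomposition --- are formal.
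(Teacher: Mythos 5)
Your proof is correct and follows the paper's argument essentially verbatim: co-Yoneda reduction to showing $T_{/f}$ is a disjoint summand inclusion of $\infty$-categories, monicity of $f$ via the effective epimorphism $\tau\colon\coprod_{A\in T_{/X}}A\to X$ and universality of colimits, and the same two-case diagram chase for closure of the essential image. The only addition is the explicit remark about the colimit splitting over a coproduct decomposition of the indexing category, which the paper leaves implicit.
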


Given a $T$-$\infty$-category $\Cc$ admitting a $T$-final object, one may form the $T$-$\infty$-category $\Cc_*$ of pointed objects of $\Cc$. We will need several basic properties of this construction.

\begin{construction}
	Let $\Cc$ be a $T$-$\infty$-category which admits a $T$-final object. We define the $T$-$\infty$-category $\Cc_*$ of \textit{pointed objects of $\Cc$} as the composite
	\begin{align*}
		T\catop \xrightarrow{\Cc} \Cat_{\infty}^{*} \xrightarrow{(-)_*} \Cat_\infty^{\pt},
	\end{align*}
	where the second functor sends an $\infty$-category $\Ee$ with terminal object $*$ to the slice $\Ee_* := \Ee_{*/}$. This construction is functorial in $\Cc$ and assembles into a functor $(-)_*\colon \Cat^{*}_T \to \Cat^{\pt}_T$.
\end{construction}

\begin{corollary}
	\label{cor:AdjunctionPointedness}
	The functor $(-)_*\colon \Cat^{*}_T \to \Cat^{\pt}_T$ is right adjoint to the fully faithful inclusion $\Cat^{\pt}_T \hookrightarrow \Cat^{*}_T$.
\end{corollary}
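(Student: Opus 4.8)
The plan is to deduce this adjunction from the fiberwise statement by an argument entirely analogous to the proof of \Cref{lem:ColimitsIndexedByConstantTCategories}, using the pointwise criterion for parametrized adjunctions (\Cref{prop:pointwisecriterionadjoints}) together with the categorical Yoneda lemma. First I would record the classical input: for an $\infty$-category $\Ee$ with terminal object, the forgetful functor $\Ee_{*/}\to\Ee$ is right adjoint to the functor $\Ee\to\Ee_{*/}$ that is left adjoint to forgetting the basepoint — concretely, on the pointed side the inclusion of the subcategory of $\infty$-categories admitting a terminal object into itself is the identity, so what one really needs is that $\Cat_\infty^{\pt}\hookrightarrow\Cat_\infty^{*}$ admits a right adjoint given by $\Ee\mapsto\Ee_{*/}$, with counit the forgetful functor. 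This is standard (it is how the stabilization/pointing functor is usually constructed), and I would either cite it or sketch the verification that $\Ee_{*/}$ is pointed and that for $\Dd$ pointed, $\Fun^{\mathrm{term}}(\Dd,\Ee_{*/})\simeq\Fun^{\mathrm{term}}(\Dd,\Ee)$, where $\Fun^{\mathrm{term}}$ denotes functors preserving the terminal object.

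Next I would upgrade this to the $T$-parametrized level. Since the inclusion $\Cat^{\pt}_T\hookrightarrow\Cat^{*}_T$ is by definition the pointwise inclusion $\Cat_\infty^{\pt}\hookrightarrow\Cat_\infty^{*}$ applied levelwise, and $(\blank)_*$ is the pointwise application of $(\blank)_{*/}$, it suffices to check the two conditions of \Cref{prop:pointwisecriterionadjoints} for the $T$-functor exhibiting $\Cc$ inside a suitable ambient category. More precisely, I would argue that for every pointed $T$-$\infty$-category $\Dd$ and every $\Cc\in\Cat^{*}_T$, there is a natural equivalence
\[
\Fun_{\Cat^{*}_T}(\Dd,\Cc)\simeq\Fun_{\Cat^{\pt}_T}(\Dd,\Cc_*),
\]
and then enhance it to the parametrized hom using the cotensoring of $\Cat_T$ over $\Cat_\infty$ exactly as in the proof of \Cref{lem:YonedaLemma}: one replaces $\Dd$ by $\Dd^{\Ee}$ for an auxiliary $\infty$-category $\Ee$, notes that $\Dd^{\Ee}$ is again pointed (as $\Fun(\Ee,\blank)$ preserves pointedness pointwise and commutes with restriction), and applies the unparametrized adjunction levelwise. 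The key point that makes the levelwise adjunctions assemble is the Beck–Chevalley condition of \Cref{prop:pointwisecriterionadjoints}(2): for $f\colon A\to B$ in $T$, the square relating $f^*$ on $\Cc$ and on $\Cc_*$ must mate to an equivalence, which holds because $f^*\colon\Cc(B)\to\Cc(A)$ preserves the terminal object (this is part of the definition of $\Cat^{*}_T$), hence the induced functor $\Cc(B)_{*/}\to\Cc(A)_{*/}$ is simply $f^*$ on pointed objects and the comparison is the identity.

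I expect the main obstacle to be purely bookkeeping: making precise that the unparametrized right adjoint $(\blank)_{*/}\colon\Cat_\infty^{*}\to\Cat_\infty^{\pt}$ is genuinely functorial with counit a natural transformation of functors on $\Cat_\infty^{*}$ (so that postcomposition gives a well-defined functor $\Cat^{*}_T\to\Cat^{\pt}_T$ with a counit), and then checking that the triangle identities for the parametrized adjunction follow from those for the levelwise one. Both are formal once the unparametrized adjunction is set up as an honest adjunction of $\infty$-categories rather than a pointwise collection of equivalences; the cleanest route is to observe that the inclusion $\Cat_\infty^{\pt}\hookrightarrow\Cat_\infty^{*}$ has a right adjoint, postcompose to get an adjunction $\Cat^{\pt}_T\rightleftarrows\Cat^{*}_T$ between the functor categories $\Fun(T\catop,\blank)$, and identify the resulting right adjoint with $(\blank)_*$ — postcomposition with adjoint functors yields adjoint functors on functor categories, so no triangle-identity check is needed beyond invoking this. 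Fullness of the inclusion is inherited levelwise from fullness of $\Cat_\infty^{\pt}\hookrightarrow\Cat_\infty^{*}$.
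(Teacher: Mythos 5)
Your ``cleanest route'' at the end is exactly the paper's proof: $\Cat^{*}_T$ and $\Cat^{\pt}_T$ are (subcategories of) functor categories $\Fun(T\catop,\blank)$, and postcomposition with the unparametrized adjunction $\mathrm{incl}\colon\Cat^{\pt}_\infty\rightleftarrows\Cat^{*}_\infty\noloc(\blank)_{*/}$ yields the parametrized one immediately; the paper records this in a single line.

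The earlier portions of the proposal, however, rest on a level confusion that would derail the argument if you actually tried to execute it. \Cref{prop:pointwisecriterionadjoints} and the Beck--Chevalley condition you invoke are criteria for a \emph{$T$-functor} $F\colon\Cc\to\Dd$ between $T$-$\infty$-categories to admit a parametrized right adjoint. The adjunction asserted in the present corollary is not of that type: it relates $\Cat^{\pt}_T$ and $\Cat^{*}_T$, which are ordinary large $\infty$-categories (subcategories of $\Cat_T = \Fun(T\catop,\Cat_\infty)$), not $T$-$\infty$-categories, so there is no $T$-functor to test pointwise and no Beck--Chevalley square to check. Likewise, the cotensoring argument (replacing $\Dd$ by $\Dd^{\Ee}$ and appealing to the Yoneda lemma) belongs to the proof of the subsequent, genuinely parametrized statement \Cref{cor:AdjunctionPointednessInternal}, which upgrades the adjunction equivalence on mapping spaces to an equivalence of $T$-$\infty$-categories of functors; it is not needed here. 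For the corollary at hand the postcomposition observation you reach at the end is the whole proof.
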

\begin{proof}
	This is immediate from the fact that the functor $(-)_*\colon \Cat^{*}_\infty \to \Cat^{\pt}_\infty$ is right adjoint to the fully faithful inclusion $\Cat^{\pt}_\infty \subseteq \Cat^{*}_\infty$.
\end{proof}

\begin{corollary}
	\label{cor:AdjunctionPointednessInternal}
	For $\Cc \in \Cat^{\pt}_T$ and $\Dd \in \Cat^{*}_T$, composition with the adjunction counit $\Dd_* \to \Dd$ induces an equivalence of $T$-$\infty$-categories $\ulFun_T^{*}(\Cc,\Dd_*) \iso \ulFun_T^{*}(\Cc,\Dd)$.
\end{corollary}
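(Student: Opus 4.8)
The plan is to deduce this from the corresponding non-parametrized fact level by level, using the pointwise criterion for checking that a $T$-functor is an equivalence. Recall that \Cref{cor:AdjunctionPointedness} is itself proved by reducing to the non-parametrized adjunction $(-)_*\colon\Cat^*_\infty\rightleftarrows\Cat^{\pt}_\infty$; the present statement is the "internal hom" upgrade of that adjunction, so the natural strategy is: (i) fix $B\in T$, and unwind $\ulFun_T^*(\Cc,\Dd_*)(B)$ and $\ulFun_T^*(\Cc,\Dd)(B)$ using \Cref{cor:TCategoryOfTFunctors}\eqref{it:TFun4}; (ii) reduce to showing that for every $B$ the functor $\Fun_{T_{/B}}^*(\pi_B^*\Cc,\pi_B^*\Dd_*)\to\Fun_{T_{/B}}^*(\pi_B^*\Cc,\pi_B^*\Dd)$ induced by the counit is an equivalence; (iii) observe $\pi_B^*(\Dd_*)\simeq(\pi_B^*\Dd)_*$ (the formation of pointed objects commutes with base change, since it is defined pointwise and $\pi_B^*$ is restriction along $T_{/B}\to T$ composed with taking slices), and also $\pi_B^*\Cc$ is pointed since $\Cc$ is; so after replacing $T$ by $T_{/B}$ it suffices to prove the statement on underlying $\infty$-categories of global sections, i.e.\ that $\Fun_T^*(\Cc,\Dd_*)\to\Fun_T^*(\Cc,\Dd)$ is an equivalence for pointed $\Cc$ and $\Dd\in\Cat^*_T$.

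For that last reduction step I would argue as follows. By \Cref{rmk:AdjunctionConstantUnderlying}-style reasoning combined with the cotensoring of $\Cat_T$ over $\Cat_\infty$ (used already in the proof of \Cref{lem:YonedaLemma}), to check that a functor of $\infty$-categories is an equivalence it suffices to check it induces an equivalence on mapping spaces out of every $\Ee\in\Cat_\infty$, i.e.\ on $\Hom_{\Cat_\infty}(\Ee,-)$; and $\Hom_{\Cat_\infty}(\Ee,\Fun^*_T(\Cc,\Dd))\simeq \Hom_{\Cat^*_T}(\Cc\times\const_\Ee,\Dd)$ where I use that $\Cc\times\const_\Ee$ again has a $T$-final object and that a $T$-functor out of it is pointed iff its "restriction to $\Cc$" is (this needs a small check, but it is formal from the componentwise description). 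The point is then that $\Cc\times\const_\Ee$ is still pointed (as $\Cc$ is), so the desired equivalence becomes exactly
\[
\Hom_{\Cat^*_T}(\Cc\times\const_\Ee,\Dd_*)\;\iso\;\Hom_{\Cat^*_T}(\Cc\times\const_\Ee,\Dd),
\]
which is an instance of the adjunction of \Cref{cor:AdjunctionPointedness} (composition with the counit $\Dd_*\to\Dd$), since $\Cc\times\const_\Ee\in\Cat^{\pt}_T$. Applying Yoneda over $\Cat_\infty$ finishes the proof.

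The main obstacle I anticipate is purely bookkeeping: making sure that $\ulFun_T^*$, defined via pointed $T_{/B}$-functors, genuinely assembles into a $T$-subcategory and that the counit $\Dd_*\to\Dd$ induces the map one expects under the identification \Cref{cor:TCategoryOfTFunctors}\eqref{it:TFun4}, compatibly in $B$ (here \Cref{lemma:restriction_functor_cat} is the relevant compatibility tool). The genuinely new input beyond \Cref{cor:AdjunctionPointedness} is the observation that $\pi_B^*$ commutes with $(-)_*$ and preserves pointedness and $T$-final objects — all of which follow from \Cref{lem:baseChangeOfTObjects} and the pointwise nature of these constructions — so none of this is deep, but it should be spelled out carefully enough that the reduction to \Cref{cor:AdjunctionPointedness} is airtight.
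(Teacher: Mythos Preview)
Your overall strategy matches the paper's exactly: reduce to underlying $\infty$-categories via \Cref{cor:TCategoryOfTFunctors}\eqref{it:TFun4}, then prove the underlying statement by testing against all $\Ee\in\Cat_\infty$ and invoking \Cref{cor:AdjunctionPointedness}. The bookkeeping you worry about (compatibility of $\pi_B^*$ with $(-)_*$, etc.) is fine and the paper does not even comment on it.

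There is, however, a genuine gap in your step (iii). You assert that $\Cc\times\const_\Ee$ has a $T$-final object and is pointed, but this is false for general $\Ee$: at level $B$ you get $\Cc(B)\times\Ee$, which has a final object only if $\Ee$ does. So the identification $\Hom_{\Cat_\infty}(\Ee,\Fun^*_T(\Cc,\Dd))\simeq\Hom_{\Cat^*_T}(\Cc\times\const_\Ee,\Dd)$ does not make sense as written, and you cannot directly apply \Cref{cor:AdjunctionPointedness} to the source $\Cc\times\const_\Ee$. The paper avoids this by moving $\Ee$ to the \emph{other} side, using the cotensor: one has $\Hom_{\Cat_\infty}(\Ee,\Fun^*_T(\Cc,\Dd))\simeq\Hom_{\Cat^*_T}(\Cc,\Dd^{\Ee})$, and $\Dd^{\Ee}$ always admits a $T$-final object (the constant functor at the final object) together with a canonical equivalence $(\Dd^{\Ee})_*\simeq(\Dd_*)^{\Ee}$. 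One then applies \Cref{cor:AdjunctionPointedness} with target $\Dd^{\Ee}$ rather than with a modified source. Your argument is repaired by exactly this switch from tensoring the domain to cotensoring the codomain.
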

\begin{proof}
	We will prove that the induced functor $\Fun_T^{*}(\Cc,\Dd_*) \to \Fun_T^{*}(\Cc,\Dd)$ on underlying $\infty$-categories is an equivalence. For every $B \in T$ this thus gives an equivalence $\Fun_{T_{/B}}^{*}(\pi_B^*\Cc,\pi_B^*\Dd_*) \to \Fun_{T_{/B}}^{*}(\pi_B^*\Cc,\pi_B^*\Dd)$ which proves the claim.
	By the Yoneda lemma it suffices to prove that for any $\infty$-category $\Ee$ the above map induces an equivalence
	\begin{align*}
		\Hom_{\Cat_{\infty}}(\Ee,\Fun^*_T(\Cc,\Dd_*)) \to \Hom_{\Cat_{\infty}}(\Ee,\Fun^*_T(\Cc,\Dd)).
	\end{align*}
	Observe that the cotensor $\Dd^{\Ee}$ of $\Dd$ by $\Ee$ again has fiberwise final objects, and that there is a canonical equivalence $(\Dd^{\Ee})_* \simeq (\Dd_*)^{\Ee}$. The cotensoring adjunction gives rise to an equivalence
	\[
	    \Hom_{\Cat_{\infty}}(\Ee,\Fun^*_T(\Cc,\Dd)) \simeq \Hom_{\Cat^*_T}(\Cc,\Dd^{\Ee})
	\]
	and similarly for $\Dd_*$. It thus suffices to show that for every $\infty$-category $\Ee$ the map $(\Dd^{\Ee})_* \to \Dd^{\Ee}$ induces an equivalence
	\begin{align*}
		\Hom_{\Cat^*_T}(\Cc,(\Dd^{\Ee})_*) \to \Hom_{\Cat^*_T}(\Cc,\Dd^{\Ee}),
	\end{align*}
	which is true by the adjunction of \cref{cor:AdjunctionPointedness}.
\end{proof}

It follows that the condition of being pointed is closed under passing to parametrized functor categories.
\begin{corollary}
	\label{cor:FunctorCategoryPointed}
	Consider $T$-$\infty$-categories $\Cc$ and $\Dd$ admitting a $T$-final object. If either $\Cc$ or $\Dd$ is pointed, the $T$-$\infty$-category $\ulFun_T^*(\Cc,\Dd)$ is pointed as well.
\end{corollary}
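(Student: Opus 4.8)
\textit{Proof idea.} The plan is to split into the two cases and reduce the first to the second. Suppose first that $\Cc$ is pointed; then $\Cc \in \Cat_T^{\pt}$ and $\Dd \in \Cat_T^{*}$, so \Cref{cor:AdjunctionPointednessInternal} provides an equivalence $\ulFun_T^{*}(\Cc,\Dd) \iso \ulFun_T^{*}(\Cc,\Dd_{*})$. Since $\Dd_{*}$ is pointed by construction, this reduces the first case to the second; it therefore remains to treat the case that $\Dd$ is pointed.

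For that, I would first record the following reformulation: a $T$-$\infty$-category $\mathcal E$ is pointed if and only if the unique $T$-functor $p\colon \mathcal E \to \ul{1}$ admits a $T$-functor $s\colon \ul{1} \to \mathcal E$ that is simultaneously a left and a right adjoint to $p$ (such an $s$ is then automatically a section of $p$ and a $T$-zero object of $\mathcal E$). Indeed, by the dual of \Cref{prop:pointwisecriterionadjoints}, $p$ admits a parametrized left adjoint precisely when each $\mathcal E(B)$ has an initial object and every restriction functor preserves initial objects, and the left adjoint is then the $T$-functor selecting these initial objects; dually, $p$ admits a parametrized right adjoint exactly when $\mathcal E$ has fiberwise terminal objects preserved by restriction, the right adjoint selecting them. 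Hence a single $s$ serves as both adjoints exactly when the fiberwise initial and terminal objects exist, are preserved by restriction, and coincide -- i.e.\ when $\mathcal E$ is pointed.

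Now assume $\Dd$ is pointed and let $s\colon \ul{1} \to \Dd$ be its $T$-zero object, so that $s \dashv p$ and $p \dashv s$ for the projection $p\colon \Dd \to \ul{1}$. By cartesian closedness of $\Cat_T$, the internal hom $\ulFun_T(\Cc,-)\colon \Cat_T \to \Cat_T$ is a right adjoint to $-\times\Cc$ and in particular preserves the terminal object, so $\ulFun_T(\Cc,\ul{1}) \simeq \ul{1}$; being a functor (and since adjunctions in $\Cat_T$ depend only on its homotopy $2$-category), it carries the two adjunctions $s \dashv p$ and $p \dashv s$ to two adjunctions exhibiting $\ulFun_T(\Cc,s)\colon \ul{1} \to \ulFun_T(\Cc,\Dd)$ as both a left and a right adjoint to $\ulFun_T(\Cc,p)\colon \ulFun_T(\Cc,\Dd) \to \ul{1}$. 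By the reformulation above, $\ulFun_T(\Cc,\Dd)$ is pointed with $T$-zero object $\ulFun_T(\Cc,s)$, which unwinds to the $T$-functor $\const_0$ constant at the $T$-zero object $0 \in \Dd$. Since $0$ is in particular $T$-final, $\const_0$ preserves $T_{/B}$-final objects for every $B$ and hence lies in the full parametrized subcategory $\ulFun_T^{*}(\Cc,\Dd) \subseteq \ulFun_T(\Cc,\Dd)$; as a full parametrized subcategory of a pointed $T$-$\infty$-category containing its $T$-zero object is again pointed, we conclude that $\ulFun_T^{*}(\Cc,\Dd)$ is pointed.

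The argument is essentially formal $2$-categorical bookkeeping; the only input that is not purely formal is the pointwise criterion \Cref{prop:pointwisecriterionadjoints}, on which both directions of the reformulation of pointedness (and, implicitly, the identification of the $T$-zero object of $\ulFun_T(\Cc,\Dd)$ with the fiberwise zero object) rely.
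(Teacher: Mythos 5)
Your proof is correct, and the overall decomposition matches the paper's: reduce the case $\Cc$ pointed to the case $\Dd$ pointed via \Cref{cor:AdjunctionPointednessInternal}, then handle the latter. Where you diverge is in the $\Dd$-pointed case. The paper's proof simply cites \Cref{prop:CoLimitsInFunctorCategories}: since $T$-limits in $\ulFun_T(\Cc,\Dd)$ are created levelwise, a $T$-initial/final object of $\Dd$ yields one of $\ulFun_T(\Cc,\Dd)$ (the constant functor), and as these coincide in $\Dd$ they coincide in $\ulFun_T(\Cc,\Dd)$, whence the zero object lands in $\ulFun_T^*(\Cc,\Dd)$. You instead reformulate pointedness as the existence of a simultaneous left and right adjoint to $\mathcal E\to\ul 1$ (a correct translation of $T$-initial/final objects via \Cref{prop:pointwisecriterionadjoints}) and transport these adjunctions through the enriched functor $\ulFun_T(\Cc,-)$, arriving at the same identification of the zero object. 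Both routes are sound; yours is more self-contained in that it avoids quoting the general machinery of \Cref{prop:CoLimitsInFunctorCategories} and instead rests only on the pointwise adjunction criterion plus 2-functoriality of the internal hom, at the cost of a slightly longer bookkeeping argument. One minor remark: the implicit appeal to $\ulFun_T(\Cc,-)$ preserving adjunctions needs the observation that this functor is $\Cat_T$-enriched (being the internal hom of a cartesian closed $\infty$-category), hence descends to a $2$-functor on homotopy $2$-categories; you gesture at this, but it deserves being stated as the precise justification.
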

\begin{proof}
	The case where $\Dd$ is pointed is clear from \Cref{prop:CoLimitsInFunctorCategories} together with Remark~\ref{rk:T-zero}. When $\Cc$ is pointed, we have by \cref{cor:AdjunctionPointednessInternal} an equivalence $\ulFun_T^*(\Cc,\Dd_*) \iso \ulFun_T^*(\Cc,\Dd)$, which reduces to the previous case since $\Dd_*$ is pointed.
\end{proof}

\begin{lemma}
	\label{lem:ForgetfulFunctorPreservesTLimitsPointedCase}
	Let $\bbU$ be a class of $T$-$\infty$-groupoids and let $\Dd$ be a $\bbU$-complete $T$-$\infty$-category admitting a $T$-final object. Then $\Dd_*$ is also $\bbU$-complete and the forgetful functor $\Dd_* \to \Dd$ preserves $\bbU$-limits.
\end{lemma}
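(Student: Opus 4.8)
The statement is about preservation of $\bbU$-limits by the forgetful functor $\Dd_* \to \Dd$, where $\bbU$ is a class of $T$-$\infty$-groupoids. By \Cref{lem:UColimitsVsAdjointable} (in its limit form), unravelling what $\bbU$-completeness means, the plan is to reduce to two concrete checks: that for every morphism $p\colon A \to B$ in $\bbU$ the restriction functor $p^*\colon \Dd_*(B) \to \Dd_*(A)$ admits a right adjoint, and that the Beck--Chevalley maps for pullback squares in $\bbU$ are equivalences. First I would recall the pointwise description: $\Dd_*(B) = \Dd(B)_{*/}$ is the $\infty$-category of pointed objects in $\Dd(B)$, i.e.\ arrows $* \to X$ out of the (fiberwise) terminal object, and the restriction functors are induced by those of $\Dd$, using that $f^*$ preserves terminal objects.

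**Main steps.** The key observation is the standard fact about pointed objects: if $\Ee$ has a terminal object and admits limits of shape $K$ (for $K$ nonempty, or more carefully: finite limits plus $K$-limits), then $\Ee_{*/}$ admits $K$-limits and the forgetful functor $\Ee_{*/} \to \Ee$ creates them; the limit of a diagram of pointed objects is computed on underlying objects, with basepoint induced by the universal property since the constant diagram at $*$ maps to it. The subtlety here is that $\bbU$-limits are \emph{parametrized} limits indexed by $T$-$\infty$-groupoids, so "$K$-limits" gets replaced by "the right adjoints $p_*$ to $p^*$". So the first real step is: for $p\colon A\to B$ in $\bbU$, construct the right adjoint to $p^*\colon \Dd(A)_{*/} \to \Dd(B)_{*/}$; this should be $p_*$ applied on underlying objects, with the new basepoint obtained from the composite $* \to p_* p^* * \xrightarrow{\;p_*(\text{basepoint})\;} p_* X$, using that $p^*$ preserves terminal objects and that $p_*$ of a terminal object, relative to the relevant slice, is again terminal in $\Dd(B)$ (or rather that the unit $\mathrm{id} \to p_* p^*$ at $*$ identifies with $* \to *$). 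One then checks the adjunction by a direct mapping-space computation in slice categories, reducing $\Hom_{\Dd(B)_{*/}}$ to a fiber of $\Hom_{\Dd(B)}$ and using the adjunction $p^* \dashv p_*$ in $\Dd$. The second step is the Beck--Chevalley condition: for a pullback square \eqref{eq:PullbackSquareTCocompleteness} in $\PSh(T)$ with $p \in \bbU$, the map $\beta^* p_* \Rightarrow p'_* \alpha^*$ for $\Dd_*$ sits over the corresponding map for $\Dd$ via the (conservative, limit-preserving) forgetful functors; since the latter is an equivalence by $\bbU$-completeness of $\Dd$, and since everything in sight is computed on underlying objects, so is the former.

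**Packaging.** Rather than doing all of this by hand, I would try to make it formal: $\Dd_*$ is the pullback (in $\Cat_T$, or pointwise) of $\Dd^{\Delta^1} \to \Dd$ against the point $\ul 1 \to \Dd$ picking out the $T$-final object; equivalently $\Dd_*(B) = \Dd(B) \times_{\Fun(\{1\},\Dd(B))} \Fun(\Delta^1,\Dd(B))$. Parametrized (co/limit-preserving) functor categories and their right adjoints are controlled by \Cref{prop:CoLimitsInFunctorCategories} and \Cref{prop:pointwisecriterionadjoints}, and limits commute with the pullback defining $\Dd_*$; the terminal object $\ul 1$ trivially has all limits and its restriction functors are equivalences, so it is $\bbU$-complete and the projection out of it preserves $\bbU$-limits. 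Then $\Dd_* \to \Dd$ is (up to the pullback) the map induced by $\mathrm{ev}_0 \colon \Dd^{\Delta^1} \to \Dd$, which preserves $\bbU$-limits by \Cref{prop:CoLimitsInFunctorCategories}, and $\bbU$-completeness of $\Dd_*$ follows since $\bbU$-limits of the three corners exist and are compatible. The main obstacle I anticipate is bookkeeping the basepoint coherently under the parametrized right adjoints $p_*$ — i.e.\ checking that the forgetful functor really \emph{creates} $\bbU$-limits and not merely preserves them on cores — but this is exactly the content of the Beck--Chevalley compatibility together with the pointwise statement for ordinary pointed objects, so no genuinely new input is needed beyond \Cref{lem:UColimitsVsAdjointable} and \Cref{prop:CoLimitsInFunctorCategories}.
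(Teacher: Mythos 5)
Your proposal follows the same strategy as the paper's own proof: verify via a right-adjoint-object calculation that for $p\colon A \to B$ in $\bbU$ the right adjoint $p_*\colon \Dd(A)_* \to \Dd(B)_*$ exists and is computed on underlying objects, with basepoint determined by adjunction applied to the terminal object, and then observe that Beck--Chevalley for $\Dd_*$ follows from that for $\Dd$ since the forgetful functor is conservative and intertwines the $p_*$'s. The ``packaging'' aside via the pullback $\Dd_* \simeq \ul{1} \times_{\Dd} \Dd^{\Delta^1}$ is a reasonable alternative but not what the paper does, and it contains minor bookkeeping slips (the displayed pullback formula doesn't obviously pick out the under-category, and the forgetful functor is $\ev_1$ rather than $\ev_0$); also the hedge ``for $K$ nonempty, or more carefully: finite limits plus $K$-limits'' is unnecessary --- the forgetful functor from an under-category of a terminal object creates \emph{all} limits with no side conditions.
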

\begin{proof}
Let $B \in T$ and let $(f\colon A \to B) \in \bbU(B)$. Consider objects $X \in \Dd(B)$ and $Y \in \Dd(A)_*$, and assume we are given a map $\phi\colon f^*X \to Y$ in $\Dd(A)$ which exhibits $X$ as a right adjoint object to $Y$ under $f^*\colon \Dd(B) \to \Dd(A)$, in the sense that for all $Z \in \Cc(B)$ the composite
	\begin{align*}
		\Hom_{\Cc(B)}(Z,X) \xrightarrow{f^*} \Hom_{\Cc(A)}(f^*Z,f^*X) \xrightarrow{\phi \circ -} \Hom_{\Cc(A)}(f^*Z,Y)
	\end{align*}
	is an equivalence. Taking $Z = *$ gives $X$ a canonical basepoint which turns the map $f^*X \to Y$ into a map in $\Dd(A)_*$. One now observes that this map exhibits $X \in \Dd(A)_*$ as a right adjoint object to $Y \in \Dd(B)_*$ under $f^*\colon \Dd(B)_* \to \Dd(A)_*$. This proves the claim.
\end{proof}

\subsection{Orbital subcategories}
\label{sec:OrbitalSubcategory}
In order to obtain a parametrized analogue of semiadditivity, we first need a parametrized analogue of the notion of \textit{finite (co)products}. In the non-parametrized setting, an $\infty$-category $\Ee$ admits finite (co)products if and only if it admits (co)limits indexed by finite sets (regarded as discrete $\infty$-categories). To generalize this to the parametrized setting, we would thus need a parametrized analogue of the notion of finite set.

In general, there might be various natural choices for such a generalization. A large family of examples comes from certain subcategories $P$ of $T$ that we call \textit{orbital}, extending the terminology of \cite{exposeI}. To every orbital subcategory $P$, we assign a class of $T$-$\infty$-groupoids called the \textit{finite $P$-sets}, and a $T$-$\infty$-category $\Cc$ is said to admit \textit{finite $P$-coproducts} if it admits parametrized colimits indexed by finite $P$-sets.

\begin{definition}
	\label{def:finPsets}
	Let $\finTsets$ be the finite coproduct completion of $T$, defined as the full subcategory of $\PSh(T)$ spanned by the finite disjoint unions $\bigsqcup_{i=1}^n A_i$ of representable presheaves $A_i \in T$. We refer to $\finTsets$ as the \textit{$\infty$-category of finite $T$-sets}.

	For a wide subcategory $P \subseteq T$, we let $\finPsets \subseteq \finTsets$ denote the wide subcategory spanned by all the morphisms which are a disjoint union of morphisms of the form $(p_i)\colon \bigsqcup_{i=1}^n A_i \to B$ where each morphism $p_i\colon A_i \to B$ is in $P$. We refer to $\finPsets$ as the \textit{$\infty$-category of finite $P$-sets}.

    Note that $\finPsets$ is equivalent to the finite coproduct completion of the $\infty$-category $P$.
\end{definition}

\begin{definition}
	\label{def:OrbitalSubcategory}
	A wide subcategory $P \subseteq T$ is called \textit{orbital} if the base change of a morphism in $\finPsets$ along an arbitrary morphism in $\finTsets$ exists and is again in $\finPsets$. Equivalently, for every pullback diagram
	\[\begin{tikzcd}
		A' \dar[swap]{p'} \rar{\alpha} \drar[pullback] & A \dar{p} \\
		B' \rar{\beta} & B
	\end{tikzcd}\]
	in $\PSh(T)$, with $A,B,B' \in T$ and $p\colon A \to B$ in $P$, the morphism $p'\colon A' \to B'$ can be decomposed as a disjoint union $(p_i)_{i=1}^n\colon \bigsqcup_{i=1}^n A_i \to B'$ for morphisms $p_i\colon A_i \to B'$ in $P$.

	The $\infty$-category $T$ is called \textit{orbital} if it is orbital when regarded as a subcategory of itself.
\end{definition}

\begin{remark}
	An $\infty$-category $T$ is orbital in our sense if and only if it is orbital in the sense of \cite{exposeI}, \cite{shah2021parametrized}, \cite{nardin2016exposeIV}*{Definition~4.1}.
\end{remark}

\begin{example}
	Every $\infty$-category $T$ has a minimal orbital subcategory given by $\iota T$, the core of $T$.
\end{example}

The following is the main example of an orbital subcategory in this article.

\begin{example}
	\label{ex:Orb_Orbital}
	We define $\Orb\subset\Glo$ to be the subcategory spanned by all objects and the injective group homomorphisms. We claim that $\Orb$ is an orbital subcategory of $\Glo$. Observe that the $\infty$-category of finite $\Glo$-sets is equivalent to the $(2,1)$-category of finite groupoids, which admits all homotopy-pullbacks. The subcategory of finite $\Orb$-sets is the wide subcategory on the faithful maps of groupoids, and thus the orbitality of $\Orb$ is equivalent to the observation that pullbacks of faithful maps of groupoids are again faithful.
\end{example}

The following two examples are variations of \cref{ex:Orb_Orbital}.

\begin{example}
	\label{ex:OrbG_Orbital}
	The orbit category $\Orb_G$ of a finite group $G$ is orbital. More generally, for a Lie group $G$, let $\Orb^{f.i.}_{G}$ be the wide subcategory of the orbit $\infty$-category $\Orb_G$ spanned by the morphisms equivalent to projections $G/K \to G/H$ for subgroups $K \subseteq H \subseteq G$ where $K$ has finite index in $H$. Then $\Orb^{f.i.}_{G}$ is an orbital subcategory of $\Orb_G$. Indeed, the pullback of $G/K \to G/H$ along a morphism $G/H' \to G/H$ is computed via a double coset formula, namely the finite disjoint union $\bigsqcup_{[g] \in H'\backslash H / K} G/(H' \cap {}^g\!K)$.
\end{example}

\begin{example}
	Mixing \Cref{ex:Orb_Orbital} with \Cref{ex:OrbG_Orbital}, one can define an $\infty$-category $\Glo_{\mathrm{Lie}}$ whose objects are compact Lie groups $G$ and whose morphism space $\Hom_{\Glo}(G,H)$ is given by the homotopy orbit space $\Hom_{\mathrm{TopGrp}}(G,H)_{hH}$, where $H$ acts on the space of continuous homomorphisms $G \to H$ via conjugation. See \cite{gepnerhenriques2007orbispaces}*{Section~4.1} or \cite{rezk2014global}*{Section~2.2}. Let $\Orb^{f.i.}_{\mathrm{Lie}} \subseteq \Glo_{\mathrm{Lie}}$ be the wide subcategory whose morphisms are given by the injective homomorphisms $G \hookrightarrow H$ of finite index. Then $\Orb^{f.i.}_{\mathrm{Lie}}$ is an orbital subcategory. The relevant pullbacks are again computed by a double coset formula.
\end{example}

Orbital subcategories are closed under various constructions:
\begin{example}
	\label{ex:OrbitalClosed}
	\begin{enumerate}[(1)]
		\item \label{it:SliceOfOrbitalIsOrbital} (Slice) Let $P \subseteq T$ be an orbital subcategory and let $B \in T$ be an object. Then the wide subcategory of $T_{/B}$ spanned by those morphisms over $B$ contained in $P$ is again an orbital subcategory. We will often abuse notation and denote this subcategory again by $P$.
		\item \label{it:PreimageOfOrbitalIsOrbital} (Preimage) More generally, if $f\colon S \to T$ is a right fibration, then the preimage $Q := f^{-1}(P) \subseteq S$ of an orbital subcategory $P \subseteq T$ is again orbital. Indeed, note that $\mathbb{F}_Q = f^{-1}(\mathbb{F}_P)$, and that the extension $\mathbb{F}_Q \to \mathbb{F}_P$ of $f$ is still a right fibration. The claim is then an instance of \cite{HHLNa}*{Proposition 2.6}.
		\item (Intersection) The intersection $\bigcap_{i \in I} P_i$ of any non-empty collection of orbital subcategories $P_i \subseteq T$ is again orbital.
	\end{enumerate}
\end{example}

\begin{example}
	Let $G$ be a finite group. Combining part \eqref{it:PreimageOfOrbitalIsOrbital} from \cref{ex:OrbitalClosed} with \cref{ex:OrbG_Orbital}, we find that for a $G$-space $X\colon \Orb_G\catop \to \Spc$, the $\infty$-category $\smallint X$ of points of $X$ (that is, the total category of the right fibration classified by $X$) is orbital.
\end{example}

So far, all the given examples of orbital subcategories are equivariant in nature, being a variation of the orbit category of a group; these are the examples we are most interested in in this article. In the following example we mention some orbital subcategories that appear in completely different contexts.

\begin{example}
	Let $T$ be an $\infty$-category, and assume $P \subseteq T$ is a wide subcategory such that base changes of morphisms in $P$ exist in $T$ and are again in $P$. Then $P$ is orbital.

	In particular, many geometric examples give rise to orbital subcategories. For example:
	\begin{enumerate}[(1)]
		\item If $T = \mathrm{Diff}$ is the ordinary category of smooth manifolds, the wide subcategory on the local diffeomorphisms is orbital.
		\item If $T = \mathrm{Sm}_S$ is the ordinary category of smooth schemes over some base scheme $S$, the wide subcategory on the étale morphisms is orbital.
	\end{enumerate}
\end{example}

For the remainder of this subsection, we will fix an orbital subcategory $P \subseteq T$.

\begin{definition}
	\label{def:ulFinPSets}
	We define the \textit{$T$-$\infty$-category of finite $P$-sets} $\ulfinPsets$. Given $B \in T$, we let
	\[
	\ulfinPsets(B) \subseteq \PSh(T)_{/B}
	\]
	be the full subcategory spanned by those morphisms $p\colon A \to B$ in $\PSh(B)$ which can be decomposed as a coproduct $(p_i)\colon \bigsqcup_{i=1}^{n} A_i \to B$ such that each morphism $p_i\colon A_i \to B$ is in $P$.
	By orbitality of $P$, $\ulfinPsets$ forms a parametrized subcategory of $\ul{\Spc}_T$.
	When $P = T$, we simply write $\ulFinSets{}$ for $\ulFinSets{T}$.
\end{definition}

Since $\ulfinPsets$ forms a class of $T$-$\infty$-groupoids (see \cref{def:ClassOfTGroupoids}) it makes sense to speak of parametrized colimits indexed by $\ulfinPsets$.

\begin{definition}
	\label{def:finitePcoproducts}
	Let $P \subseteq T$ be an orbital subcategory of $T$. We say that a $T$-$\infty$-category $\Cc$ \textit{admits finite $P$-coproducts} if it admits $\ulfinPsets$-colimits, in the sense of \cref{def:UColimits}. Dually, we say $\Cc$ \textit{admits finite $P$-products} if it admits $\ulfinPsets$-limits.
\end{definition}

\begin{definition}
	Let $\Cc$ and $\Dd$ be two $T$-$\infty$-categories which admit $\ulfinPsets$-limits. We define $\ulFun^{\Pprod}(\Cc,\Dd)$ to be the full parametrized subcategory of $\ulFun(\Cc,\Dd)$ spanned in level $B$ by the $T_{/B}$-functors $F\colon \pi_B^* \Cc \to \pi_B^* \Dd$ which preserve $P$-products (i.e.\ preserves $\pi_B^{-1}(P)$-products, c.f.\ \Cref{ex:OrbitalClosed}). Dually we define $\ulFun^{\Pcoprod}(\Cc,\Dd)$.
\end{definition}

When $P = T$, a $T$-$\infty$-category $\Cc$ admits finite $T$-coproducts in the sense of \cref{def:finitePcoproducts} if and only if it has finite $T$-coproducts in the sense of Shah \cite{shah2021parametrized}*{Definition~5.10}.

The following result gives a more explicit characterization of the condition for a $T$-$\infty$-category to admit finite $P$-(co)products.

\begin{proposition}[cf.\ \cite{shah2021parametrized}*{Proposition~5.12}, \cite{nardin2016exposeIV}*{Proposition~2.11}]
	\label{prop:finitePcoproducts}
	Let $P \subseteq T$ be an orbital subcategory
	and let $\Cc$ be a $T$-$\infty$-category. Then $\Cc$ admits finite $P$-coproducts if and only if the following two conditions hold:
	\begin{enumerate}[(1)]
		\item $\Cc$ admits fiberwise finite coproducts, see \cref{def:fiberwiseCocompleteness};
		\item for every morphism $p\colon A \to B$ in $P$, the restriction functor $p^*\colon \Cc(B) \to \Cc(A)$ admits a left adjoint $p_!\colon \Cc(A) \to \Cc(B)$ and for every pullback square as in \cref{lem:UColimitsVsAdjointable}\eqref{eq:PullbackSquareTCocompleteness} with $A,B,B' \in T$ and $f\colon A \to B$ in $P$, the Beck-Chevalley transformation $p'_! \circ \alpha^* \Rightarrow \beta^* \circ p_!$ is an equivalence.
	\end{enumerate}
	Dually, $\Cc$ admits finite $P$-products if and only the dual conditions hold.
\end{proposition}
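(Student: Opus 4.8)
The plan is to deduce this from the characterization of $\bbU$-colimits in terms of adjointability (\Cref{lem:UColimitsVsAdjointable}, together with \Cref{rmk:limitExtensionRestrictedClass}) applied to the class $\bbU = \ulfinPsets$ of $T$-$\infty$-groupoids, combined with the compatibility of $\ulfinPsets$-colimits with the two "building blocks" of a finite $P$-set: fiberwise finite coproducts (coming from the disjoint unions) and the $p_!$ for $p \in P$ (coming from the individual orbits). Recall that every object of $\ulfinPsets(B)$ is by definition a finite coproduct $\bigsqcup_{i=1}^n A_i \to B$ with each $A_i \to B$ in $P$, so morally "$\ulfinPsets$-colimit $=$ finite coproduct of $p_!$-type colimits."

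First I would handle the "only if" direction, which is the easy one: if $\Cc$ admits finite $P$-coproducts, then applying the definition to the constant finite $T$-$\infty$-categories $\const_{[n]}$ (which are in particular finite $P$-sets, being $n$-fold coproducts of $\ul 1$, and $\ul1 \to \ul1$ is the identity, hence in $P$ since $P$ is wide) shows via \Cref{lem:ColimitsIndexedByConstantTCategories} that $\Cc$ admits fiberwise finite coproducts, giving (1); and applying \Cref{lem:UColimitsVsAdjointable} directly to the representable $p \colon A \to B$ in $P \subseteq \ulfinPsets(B)$ gives the existence of $p_!$ and the Beck--Chevalley condition of (2). For the "if" direction I would invoke \Cref{lem:UColimitsVsAdjointable} and \Cref{rmk:limitExtensionRestrictedClass}: it suffices to produce, for every $q \colon X \to B$ in $\ulfinPsets(B)$ (with $B \in T$), a left adjoint $q_!$ to $q^*\colon \Cc(B) \to \Cc(X)$, and to verify the Beck--Chevalley condition for every pullback square over a representable base. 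Write $q = (p_i)\colon \bigsqcup_{i=1}^n A_i \to B$ with $p_i \in P$. Then $\Cc(X) \simeq \prod_{i=1}^n \Cc(A_i)$ since $\Cc$ sends the coproduct in $\PSh(T)$ to a product of $\infty$-categories, and under this identification $q^*$ is the functor $(p_1^*, \dots, p_n^*)$. Each $p_i^*$ has a left adjoint $p_{i!}$ by hypothesis (2); I claim $q_!(Y_1,\dots,Y_n) := \coprod_{i=1}^n p_{i!}(Y_i)$, formed using the fiberwise finite coproducts of $\Cc(B)$ from hypothesis (1), is left adjoint to $q^*$. This is a routine computation with mapping spaces: $\Hom_{\Cc(B)}(\coprod_i p_{i!}Y_i, Z) \simeq \prod_i \Hom_{\Cc(B)}(p_{i!}Y_i, Z) \simeq \prod_i \Hom_{\Cc(A_i)}(Y_i, p_i^* Z) \simeq \Hom_{\Cc(X)}((Y_i)_i, q^*Z)$.

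For the Beck--Chevalley condition, consider a pullback square as in \eqref{eq:PullbackSquareTCocompleteness} with $q \in \ulfinPsets(B)$ and $\beta \colon B' \to B$ with $B' \in T$; by orbitality the pullback $X' = X \times_B B'$ decomposes compatibly, $X' \simeq \bigsqcup_i A_i'$ with $A_i' = A_i \times_B B'$, and $\alpha$ restricts to the maps $\alpha_i \colon A_i' \to A_i$. Using that $\Cc$ turns coproducts into products, the Beck--Chevalley transformation $q'_! \alpha^* \Rightarrow \beta^* q_!$ is identified componentwise with the transformations $p'_{i!}\alpha_i^* \Rightarrow \beta^* p_{i!}$ assembled via the (fiberwise-coproduct-preserving) equivalences, where $p_i' \colon A_i' \to B'$ need not lie in $P$ but is itself a finite $P$-set over $B'$; one reduces to the representable case by the hypothesis that $\beta^* \colon \Cc(B) \to \Cc(B')$ and the relevant restrictions preserve fiberwise finite coproducts, so it suffices to check that each $p_{i!}'\alpha_i^* \Rightarrow \beta^* p_{i!}$ is an equivalence, which (decomposing $A_i'$ into its $P$-components and pasting pullback squares) follows from condition (2). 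The main obstacle I anticipate is purely bookkeeping: making the identification of the Beck--Chevalley transformation with its components fully precise, i.e.\ checking that the equivalences $\Cc(\bigsqcup A_i) \simeq \prod \Cc(A_i)$ are compatible with the mate construction and with the fiberwise coproduct functors. This is entirely formal — it uses only that mates of pasted squares paste, that $\Cc$ is a functor, and that left adjoints compose — but it requires some care to organize. The dual statement for finite $P$-products follows by passing to opposite categories, or equivalently by dualizing each step ($q^* \dashv$ replaced by $\dashv q_*$, coproducts by products, and \Cref{lem:UColimitsVsAdjointable} used in its dual form).
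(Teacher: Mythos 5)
Your proposal is correct and takes essentially the same route as the paper: both decompose a finite $P$-set into a disjoint union of maps in $P$ followed by a fold map, use that $\Cc$ sends coproducts in $\PSh(T)$ to products of $\infty$-categories, and thereby split the $\ulfinPsets$-adjointability of Lemma~\ref{lem:UColimitsVsAdjointable} into the fiberwise condition~(1) and the $P$-condition~(2). The paper phrases this via the factorization $q = \nabla\circ(\bigsqcup p_i)$ and composition of left adjoints, while you build $q_!$ directly as $(Y_i)\mapsto\coprod_i p_{i!}(Y_i)$ and verify by hand; these are the same functor, and the extra decomposition of $A_i'$ you invoke in the Beck--Chevalley step is only needed to construct $p_{i!}'$ — condition~(2) already asserts that the resulting transformation $p_{i!}'\alpha_i^*\Rightarrow\beta^*p_{i!}$ is an equivalence, with $A'$ allowed to be non-representable.
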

\begin{proof}
	By definition, every morphism in $\finPsets$ with target $B \in T$ can be written as a composite
	\begin{align}
		\label{eq:decomposedMorphism}
		\bigsqcup_{i=1}^n B_i \xrightarrow{\bigsqcup_{i=1}^n p_i} \bigsqcup_{i=1}^n B \xrightarrow{\nabla} B
	\end{align}
	for morphisms $p_i\colon B_i \to B$ in $P$, where $\nabla\colon \bigsqcup_{i=1}^n B \to B$ denotes the fold map in $\PSh(T)$. As the functor $\Cc\colon \PSh(T)\catop \to \Cat_{\infty}$ sends colimits in $\PSh(T)$ to limits of $\infty$-categories, the condition of left $\finPsets$-adjointability splits up as left adjointability for the maps $\nabla\colon \bigsqcup_{i=1}^n B \xrightarrow{\nabla} B$ and left adjointability for the maps in $P$. Spelling out the definitions, one observes that the former is equivalent to condition (1) while the latter is equivalent to condition (2).
\end{proof}

A similar argument gives the following analogous result for preservation of finite $P$-coproducts:

\begin{proposition}
	\label{prop:PreservingFinitePcoproducts}
	Let $P \subseteq T$ be an orbital subcategory
	and let $\Cc$ and $\Dd$ be $T$-$\infty$-categories that admits finite $P$-coproducts. Then a $T$-functor $F\colon \Cc \to \Dd$ preserves finite $P$-coproducts if and only if it preserves fiberwise finite coproducts and for every morphism $p\colon A \to B$ in $P$, the Beck-Chevalley transformation $p_! \circ F_A \Rightarrow F_B \circ p_!$ is an equivalence.

	The dual statement for preservation of finite $P$-products also holds.\qednow
\end{proposition}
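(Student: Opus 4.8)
The plan is to transcribe the proof of Proposition~\ref{prop:finitePcoproducts}, systematically replacing the phrase ``admits left adjoints satisfying base change'' by ``the Beck-Chevalley transformations for $F$ are equivalences''. The starting point is the pointwise criterion for preservation of $\bbU$-colimits recorded in the unlabelled lemma preceding Lemma~\ref{lemma:restriction-preserves-cocompl}: for $T$-$\infty$-categories admitting $\bbU$-colimits, a $T$-functor $F$ preserves $\bbU$-colimits if and only if for every $B \in T$ and every $f\colon A \to B$ in $\bbU$ the Beck-Chevalley transformation $f_! \circ F(A) \Rightarrow F(B) \circ f_!$ is an equivalence. Applying this with $\bbU = \ulfinPsets$ reduces the proposition to the assertion that, given that $\Cc$ and $\Dd$ admit finite $P$-coproducts, the condition ``$\BC(f)$ is an equivalence for all $B \in T$ and $f \in \ulfinPsets(B)$'' is equivalent to the conjunction of ``$F$ preserves fiberwise finite coproducts'' and ``$\BC(p)$ is an equivalence for all $p \in P$''.

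First I would invoke the factorisation \eqref{eq:decomposedMorphism}: for $B \in T$, every object of $\ulfinPsets(B)$ factors as
\[
\bigsqcup_{i=1}^n B_i \xrightarrow{\;\bigsqcup_{i=1}^n p_i\;} \bigsqcup_{i=1}^n B \xrightarrow{\;\nabla\;} B,
\]
with each $p_i\colon B_i \to B$ in $P$ and $\nabla$ the fold map in $\PSh(T)$. Under the identification $f_! \simeq \nabla_! \circ (\bigsqcup_i p_i)_!$, the compatibility of mates with horizontal pasting identifies $\BC(f)$ with the pasting of $\BC(\nabla)$ and a whiskering of $\BC(\bigsqcup_i p_i)$; hence $\BC(f)$ is an equivalence whenever $\BC(\nabla)$ and $\BC(\bigsqcup_i p_i)$ are. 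Conversely, the fold maps, the maps $\bigsqcup_i p_i$, and the $p_i$ themselves all belong to $\ulfinPsets$, so the respective ``only if'' directions are immediate, and it remains to treat the two kinds of building blocks separately.

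Since the limit-extensions $\Cc, \Dd\colon \PSh(T)\catop \to \Cat_\infty$ carry finite disjoint unions to finite products of $\infty$-categories, restriction along $\bigsqcup_{i=1}^n p_i$ is identified with $\prod_i p_i^*$, its left adjoint with $\prod_i (p_i)_!$, and $\BC(\bigsqcup_i p_i)$ with the product $\prod_i \BC(p_i)$; so this block is an equivalence for all such morphisms precisely when $\BC(p)$ is an equivalence for every $p \in P$. Likewise, restriction along $\nabla\colon \bigsqcup_{i=1}^n B \to B$ is identified with the diagonal $\Cc(B) \to \Cc(B)^{\times n}$, whose left adjoint is the $n$-fold fiberwise coproduct, and unwinding definitions shows $\BC(\nabla)$ to be exactly the comparison map expressing that $F(B)$ preserves $n$-fold coproducts (the case $n = 0$ recording preservation of initial objects); letting $n$ vary, these are all equivalences precisely when $F$ preserves fiberwise finite coproducts. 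Combining the two analyses proves the proposition, and the dual statement for finite $P$-products follows by passing to opposite $T$-$\infty$-categories. I expect the only point requiring care to be the bookkeeping of the pasting of Beck-Chevalley transformations along the factorisation above, which is however entirely formal; the rest is a routine adaptation of the proof of Proposition~\ref{prop:finitePcoproducts}.
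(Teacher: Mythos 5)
Your proposal is correct and follows exactly the argument the paper has in mind: the text introduces this result with ``a similar argument gives the following analogous result'' and omits a written proof, i.e.~it is intended to be proved by adapting the proof of \cref{prop:finitePcoproducts} in precisely the way you describe (factor a morphism of $\ulfinPsets(B)$ through a disjoint union of maps in $P$ followed by a fold map, use the pointwise Beck--Chevalley criterion, decompose $\BC$ under pasting, and identify the two kinds of blocks). Your accounting of the mate calculus, and the reduction of $\BC(\nabla)$ to fiberwise coproduct-preservation and of $\BC(\bigsqcup_i p_i)$ to $\prod_i\BC(p_i)$, is right; the dual statement by passing to opposites is standard.
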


We end this subsection by showing that the $T$-$\infty$-category $\ulfinPsets$ can be characterized by a universal property: it is the free $T$-$\infty$-category admitting finite $P$-coproducts.

\begin{corollary}
	\label{cor:FinPSetsHasFinitePColimits}
	The $T$-$\infty$-category $\ulfinPsets$ admits finite $P$-coproducts and the inclusion $\ulfinPsets \hookrightarrow \ul{\Spc}_T$ preserves finite $P$-coproducts.
\end{corollary}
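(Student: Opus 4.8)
The plan is to verify the two conditions of \Cref{prop:finitePcoproducts} for the $T$-$\infty$-category $\ulfinPsets$, and then deduce the preservation statement from \Cref{prop:PreservingFinitePcoproducts}. Throughout, we use that the inclusion $\iota\colon \ulfinPsets \hookrightarrow \ul{\Spc}_T$ is fiberwise fully faithful, that $\ul{\Spc}_T$ is $T$-cocomplete (\Cref{ex:TSpacesPAdjointable}), and that by orbitality of $P$ the restriction functor $\beta^*\colon \PSh(T)_{/B} \to \PSh(T)_{/B'}$ along any $\beta\colon B' \to B$ in $T$ carries finite $P$-sets to finite $P$-sets, so that $\ulfinPsets$ is genuinely a parametrized subcategory of $\ul{\Spc}_T$.

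For condition (1), note that the empty coproduct $\emptyset \to B$ and the binary coproduct $(A \to B) \sqcup (A' \to B) \simeq (A \sqcup A' \to B)$ of finite $P$-sets over some $B \in T$ are again finite $P$-sets, obtained by concatenating the given decompositions; moreover these coproducts are computed in $\PSh(T)_{/B}$, and since coproducts in $\PSh(T)$ are universal, the restriction functors $\beta^*$ preserve them. By \Cref{lem:ColimitsIndexedByConstantTCategories}, applied to finite discrete indexing categories, $\ulfinPsets$ admits fiberwise finite coproducts.

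For condition (2), fix a morphism $p\colon A \to B$ in $P$. On $\ul{\Spc}_T$ the functor $p^*$ has left adjoint $p_!$ given by postcomposition with $p$. If $q\colon X \to A$ is a finite $P$-set with decomposition $X = \bigsqcup_j X_j$, each $X_j \to A$ in $P$, then $p \circ q$ has decomposition $\bigsqcup_j (X_j \to A \xrightarrow{p} B)$ with every component in $P$ since $P$ is closed under composition; hence $p_!$ restricts to a functor $\ulfinPsets(A) \to \ulfinPsets(B)$. As $\iota$ is fully faithful and both $p^*$ and this restricted $p_!$ are compatible with the inclusions $\iota$, the unit and counit of the adjunction $p_! \dashv p^*$ on $\ul{\Spc}_T$ restrict to exhibit $p_! \dashv p^*$ on $\ulfinPsets$. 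Finally, given a pullback square as in \cref{lem:UColimitsVsAdjointable}\eqref{eq:PullbackSquareTCocompleteness} with $A, B, B' \in T$ and $p$ in $P$, the pullback $p'\colon A' \to B'$ is a finite $P$-set by orbitality, so $p'_!$ is again postcomposition and hence a restriction of its counterpart on $\ul{\Spc}_T$; the Beck--Chevalley transformation $p'_! \circ \alpha^* \Rightarrow \beta^* \circ p_!$ for $\ulfinPsets$ is therefore the restriction along the fully faithful $\iota$ of the corresponding transformation for $\ul{\Spc}_T$, which is an equivalence by \Cref{ex:TSpacesPAdjointable}. This establishes condition (2), so $\ulfinPsets$ admits finite $P$-coproducts.

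It then remains to invoke \Cref{prop:PreservingFinitePcoproducts} for the inclusion $\iota$: it preserves fiberwise finite coproducts by the computation in the proof of condition (1), and for each $p\colon A \to B$ in $P$ the Beck--Chevalley transformation $p_! \circ \iota_A \Rightarrow \iota_B \circ p_!$ is an equivalence because, by construction, $p_!$ on $\ulfinPsets$ is the restriction of $p_!$ on $\ul{\Spc}_T$, so this transformation is in fact an identity. Hence $\iota$ preserves finite $P$-coproducts. The only point demanding care -- though not a real obstacle -- is the bookkeeping with the ``disjoint union of $P$-morphisms'' presentations under postcomposition and restriction, which rests on closure of $P$ under composition, universality of coproducts in $\PSh(T)$, and the fact that a morphism from a representable presheaf into a finite coproduct of representables factors through one of the summands.
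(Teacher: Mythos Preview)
Your proof is correct and follows essentially the same approach as the paper: both verify that $\ulfinPsets$ is closed inside $\ul{\Spc}_T$ under fiberwise finite coproducts and under postcomposition $p_!$ for $p\in P$, invoking \Cref{prop:finitePcoproducts} and the $T$-cocompleteness of $\ul{\Spc}_T$ from \Cref{ex:TSpacesPAdjointable}. The paper compresses this into a single closure statement, while you unpack the two conditions explicitly and separately cite \Cref{prop:PreservingFinitePcoproducts} for the preservation claim; but the substance is the same.
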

\begin{proof}
	By \Cref{ex:TSpacesPAdjointable} it suffices to show that the subcategory $\ulfinPsets \hookrightarrow \ul{\Spc}_T$ is closed under finite $P$-coproducts. But this is clear from \Cref{prop:finitePcoproducts} since it is closed under fiberwise coproducts and under composition with morphisms in $P$ by construction.
\end{proof}

\begin{corollary}
	\label{cor:FinPSetsFreeOnFinitePColimits}
	Let $\Dd$ be a $T$-$\infty$-category admitting finite $P$-coproducts. Let $*\colon \ul{1} \to \ulfinPsets$ denote the $T$-final object, given at $B \in T$ by the identity $\id_{B} \in \ulfinPsets(B)$. Then composition with $*\colon \ul{1} \to \ulfinPsets$ induces an equivalence of $T$-$\infty$-categories
	\begin{align*}
		\ulFun_T^{{\Pcoprod}}(\ulfinPsets,\Dd) \to \ulFun_T(\ul{1},\Dd) \simeq \Dd.
	\end{align*}
\end{corollary}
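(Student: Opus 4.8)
The plan is to carry over the classical fact that the category of finite sets is the free category with finite coproducts on one object. First I would reduce to the case where $T$ has a terminal object: the comparison map $\ulFun_T^{\Pcoprod}(\ulfinPsets,\Dd)\to\ulFun_T(\ul{1},\Dd)\simeq\Dd$ is a $T$-functor, hence an equivalence as soon as it is one on every fibre, and by \Cref{cor:TCategoryOfTFunctors} the fibre over $B\in T$ is obtained by replacing $(T,P,\Dd)$ with $(T_{/B},\pi_B^{-1}(P),\pi_B^*\Dd)$. Here $\pi_B^{-1}(P)$ is orbital by \Cref{ex:OrbitalClosed}, $\pi_B^*\Dd$ still admits finite $\pi_B^{-1}(P)$-coproducts, and unwinding the definitions identifies $\pi_B^*\ulfinPsets$ with the $T_{/B}$-$\infty$-category of finite $\pi_B^{-1}(P)$-sets. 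Since $T_{/B}$ has the terminal object $\id_B$, I may assume $T$ has a terminal object $1$, so that $\Gamma(\Dd)=\Dd(1)$ (\Cref{def:UnderlyingInfinityCategory}); it then remains to show that restriction $\ev_*$ along the $T$-final object $*\colon\ul{1}\to\ulfinPsets$ is an equivalence $\Fun_T^{\Pcoprod}(\ulfinPsets,\Dd)\iso\Dd(1)$.

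Next I would record two facts about $*$. First, $*$ is fully faithful: its value $\id_B$ at each $B$ is the terminal object of $\ulfinPsets(B)\subseteq\PSh(T)_{/B}$, and the restriction functors preserve it. Second, $\ulfinPsets$ is generated under finite $P$-coproducts by $*$: every object $\xi=\bigl(\bigsqcup_{i=1}^nA_i\xrightarrow{(p_i)}B\bigr)$ of $\ulfinPsets(B)$ is the fibrewise coproduct $\bigoplus_{i=1}^n(p_i)_!(\id_{A_i})$, because finite $P$-coproducts in $\ulfinPsets$ are computed in $\ul{\Spc}_T$ (the inclusion being fully faithful and finite-$P$-coproduct-preserving, \Cref{cor:FinPSetsHasFinitePColimits}), where $p_!$ is postcomposition with $p$ (\Cref{ex:TSpacesPAdjointable}). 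A consequence of the second fact is a conservativity statement: if $\alpha\colon F\Rightarrow F'$ is a natural transformation of finite-$P$-coproduct-preserving $T$-functors $\ulfinPsets\to\Dd$ whose restriction along $*$ is an equivalence, then $\alpha$ is an equivalence, since the full $T$-subcategory of $\ulfinPsets$ on which $\alpha$ is invertible contains $*$ and is closed under finite $P$-coproducts.

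To conclude, form the parametrized left Kan extension $\operatorname{Lan}_*\colon\Fun_T(\ul{1},\Dd)\simeq\Dd(1)\to\Fun_T(\ulfinPsets,\Dd)$, left adjoint to $\ev_*$; it exists because the colimits in its pointwise formula are finite $P$-coproducts, which $\Dd$ admits, and that formula reads $\operatorname{Lan}_*(d)(\xi)\simeq\bigoplus_{i=1}^n(p_i)_!(a_i^*d)$ for $\xi$ as above, $a_i\colon A_i\to1$ the canonical map, and $(p_i)_!\colon\Dd(A_i)\to\Dd(B)$ the left adjoint from \Cref{prop:finitePcoproducts}. Reading off this formula, \Cref{prop:PreservingFinitePcoproducts} shows $\operatorname{Lan}_*(d)$ preserves finite $P$-coproducts, so $\operatorname{Lan}_*$ factors through $\Fun_T^{\Pcoprod}(\ulfinPsets,\Dd)$; full faithfulness of $*$ gives $\ev_*\circ\operatorname{Lan}_*\simeq\id$, and the counit $\operatorname{Lan}_*(\ev_*F)\Rightarrow F$ restricts along $*$ to an equivalence (triangle identity plus full faithfulness of $*$), hence is an equivalence by the conservativity statement, as both sides preserve finite $P$-coproducts. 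So $\ev_*$ and $\operatorname{Lan}_*$ are mutually inverse, giving the claimed equivalence $\ulFun_T^{\Pcoprod}(\ulfinPsets,\Dd)\simeq\Dd$. The main obstacle I anticipate is establishing the pointwise formula for $\operatorname{Lan}_*$ — i.e.\ identifying the relevant comma $T$-$\infty$-categories with the finite $P$-sets $\bigsqcup_iA_i\to B$ — and verifying the generation statement for $\ulfinPsets$ coherently on the morphisms and higher cells, not just the objects; the rest is formal $2$-categorical bookkeeping.
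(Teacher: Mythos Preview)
Your approach is correct in outline but takes a different route from the paper's. The paper's proof is essentially a one-liner: it observes (using \Cref{cor:FinPSetsHasFinitePColimits}) that $\ulfinPsets$ is the smallest full $T$-subcategory of $\ul{\Spc}_T$ containing the $T$-final object and closed under finite $P$-coproducts, which identifies it with the free $\ulfinPsets$-cocompletion $\ul\PSh_T^{\ulfinPsets}(\ul{1})$ in the sense of Martini--Wolf \cite{martiniwolf2021limits}*{Definition~6.1.6}, and then invokes \cite{martiniwolf2021limits}*{Theorem~6.1.10} for the universal property.

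Your direct construction via parametrized left Kan extension is essentially re-deriving a special case of that Martini--Wolf theorem by hand. The trade-off is self-containedness versus brevity: the paper outsources all the work to a black box, while you unpack the mechanism. The obstacle you flag at the end is genuine and is precisely where the difficulty lies --- producing $\operatorname{Lan}_*(d)$ as an honest $T$-functor (not just a formula on objects), establishing the pointwise formula via an identification of the relevant parametrized comma categories, and checking coherently that the result preserves finite $P$-coproducts. None of this is impossible, but it amounts to redeveloping a chunk of the Martini--Wolf machinery, and the paper's decision to cite rather than reprove is the economical choice here. Your conservativity argument and the reduction to $T$ having a terminal object are fine and would also appear implicitly in any unpacking of the cited theorem.
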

\begin{proof}
	It follows directly from \Cref{cor:FinPSetsHasFinitePColimits} that the subcategory $\ulfinPsets \subseteq \ul{\Spc}_T$ is the smallest subcategory which contains the $T$-final object and is closed under finite $P$-coproducts, meaning it is equivalent to $\ul\PSh_T^{\ulfinPsets}(\ul{1})$ in the notation of \cite{martiniwolf2021limits}*{Definition~7.1.6}. The claim is then an instance of \cite{martiniwolf2021limits}*{Theorem~7.1.13}.
\end{proof}

\subsection{Atomic orbital subcategories and norm maps}
\label{sec:AtomicSubcategory}
Let $P$ be an orbital subcategory of $T$. In this subsection, we will define what it means for $P$ to be an \textit{atomic orbital} subcategory of $T$, generalizing a definition of \cite{nardin2016exposeIV}. The atomicity condition on $P$ will allow us to define \textit{norm maps} $\Nm_p\colon p_! \to p_*$ in a pointed $T$-$\infty$-category $\Cc$, making it possible to compare finite $P$-coproducts in $\Cc$ to finite $P$-products in $\Cc$. We may therefore think of the atomic orbital subcategories as classifying the various potential `levels of semiadditivity' that a $T$-$\infty$-category might have.

\begin{definition}
	\label{def:AtomicSubcategory}
	Suppose $T$ is an $\infty$-category and let $P \subseteq T$ be an orbital subcategory. We say that $P$ is \textit{atomic orbital} if for every morphism $p\colon A \to B$ in $P$ the diagonal $\Delta\colon A \to A \times_B A$ in $\PSh(T)$ is a disjoint summand inclusion in the sense of \cref{def:DisjointSummandInclusion}. An $\infty$-category $T$ is called \textit{atomic orbital} if it is atomic orbital as a subcategory of itself.
\end{definition}

For a subcategory $P \subset T$, being an atomic orbital subcategory is a very restrictive condition: since every disjoint summand inclusion in $\PSh(T)$ is in particular a monomorphism, it implies that all the morphisms in $P$ have to be $0$-truncated.

The following lemma provides an alternative characterization of atomic subcategories in terms of the triviality of certain retracts. The case $P = T$ of this lemma immediately implies that our definition of atomic orbital $\infty$-categories is equivalent to that of \cite{nardin2016exposeIV}*{Definition~4.1}.

\begin{lemma}
	\label{lem:AtomicVsDisjointDiagonals}
	Let $P \subseteq T$ be an orbital subcategory. Then $P$ is atomic orbital if and only if any morphism $p\colon A \to B$ in $P$ which admits a section in $T$ is an equivalence.
\end{lemma}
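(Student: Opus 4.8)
The plan is to prove both implications directly from the definitions, using Lemma~\ref{lem:DisjointSummandInclusoinsLocalClass} to reduce the disjoint-summand condition to a pointwise (representable) check.

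\textbf{The forward direction.} Suppose $P$ is atomic orbital, and let $p\colon A \to B$ be a morphism in $P$ admitting a section $s\colon B \to A$ in $T$, so $ps \simeq \id_B$. I would consider the pullback $A \times_B A$ in $\PSh(T)$ together with the diagonal $\Delta\colon A \to A \times_B A$, which is a disjoint summand inclusion by hypothesis. The section $s$ and the identity $\id_A$ determine a second map $A \to A \times_B A$ (namely $(\id_A, sp)$, using $p \circ \id_A \simeq p \simeq p \circ sp$), and the key observation is that $\Delta$ composed with the first projection $\mathrm{pr}_1\colon A \times_B A \to A$ is the identity, while the other map composed with $\mathrm{pr}_1$ is also the identity; so both give sections of $\mathrm{pr}_1$. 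Now I would argue that $\Delta$ being a disjoint summand inclusion forces $\mathrm{pr}_1$ to restrict to an equivalence on the summand $\Delta(A)$, and since $\mathrm{pr}_2 \circ \Delta \simeq \id_A$ as well, pulling back along the section $s\colon B \to A$ (or rather comparing the two summands) one concludes that $s$ is an equivalence, hence so is $p$. More carefully: after base change along $s$, the diagonal $\Delta$ becomes a disjoint summand inclusion $B \to A$ (pulling $A \times_B A \to A$ back along $s$ gives $A \to B$ via the second projection, essentially $p$ with a canonical section), and a disjoint summand inclusion that admits a retraction splitting off the complement must have empty complement, i.e.\ be an equivalence. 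This gives that $p$ is an equivalence.

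\textbf{The reverse direction.} Conversely, suppose every morphism in $P$ admitting a section in $T$ is an equivalence, and let $p\colon A \to B$ be in $P$. I would like to show $\Delta\colon A \to A \times_B A$ is a disjoint summand inclusion in $\PSh(T)$. By Lemma~\ref{lem:DisjointSummandInclusoinsLocalClass}, it suffices to check this after base change along every map $t\colon C \to A \times_B A$ from a representable $C \in T$; such a map is the same as a pair of maps $a_1, a_2\colon C \to A$ with $p a_1 \simeq p a_2$. The base change of $\Delta$ along $t$ is the pullback $C \times_{A \times_B A} A$, and I need to show this is either empty or that the projection to $C$ is an equivalence. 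The pullback $C \times_{A \times_B A} A$ is the "equalizer-type" object $\{c \in C : a_1(c) \simeq a_2(c)\}$, more precisely it fits in a pullback square expressing when the two maps $a_1, a_2$ agree. Here is where orbitality enters: I would factor things so that the projection $C \times_{A \times_B A} A \to C$ is (a coproduct summand of) a morphism in $\finPsets$, and since it also admits a section whenever it is nonempty in a fiber — this is the crucial point to nail down — atomicity of the relevant piece forces it to be an equivalence on that component. Concretely, $C \times_{A \times_B A} A \to C$ is a disjoint summand inclusion provided each representable piece mapping in exhibits either emptiness or an isomorphism; and a representable mapping into it gives a point of $C$ where $a_1 = a_2$, which produces a section of the projection restricted to that component, forcing it (via the hypothesis applied to the appropriate orbital morphism) to be an equivalence there.

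\textbf{Main obstacle.} The genuinely delicate step is the reverse direction: translating "admits a section in $T$" (a statement about representables) into the disjoint-summand condition on $\Delta$ (a statement in $\PSh(T)$), and correctly identifying which morphism the projection $C \times_{A \times_B A} A \to C$ is a summand of so that orbitality of $P$ applies and the hypothesis can be invoked. I expect one must use that $\Delta$ itself is $0$-truncated (indeed a monomorphism, since $A \to A \times_B A$ always is) together with orbitality to see that $C \times_{A\times_B A} A \to C$ decomposes as a coproduct of morphisms in $P$; then each such morphism either has empty source or, because a representable witness produces a section, is an equivalence by hypothesis. Assembling these over the pointwise check of Lemma~\ref{lem:DisjointSummandInclusoinsLocalClass} then yields that $\Delta$ is a disjoint summand inclusion, completing the proof. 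The forward direction, by contrast, should be a short diagram chase once one sets up the two competing maps $A \rightrightarrows A \times_B A$ correctly.
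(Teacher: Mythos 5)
Your forward direction is correct and is essentially the paper's argument reshuffled: the paper factors $(\id_A,sp)\colon A \to A \times_B A$ through one of the two summands (since $A$ is representable) and derives a contradiction from the pullback of $\Delta$ and $(\id_A,sp)$ being $B$; you instead pull the disjoint summand inclusion $\Delta$ back along $s$ to exhibit $s\colon B \to A$ itself as a disjoint summand inclusion, then use representability of $A$ to conclude. Both are sound.

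The reverse direction has a real gap. First, the parenthetical claim that $\Delta\colon A \to A \times_B A$ ``always'' is a monomorphism is false: the diagonal $\Delta_p$ is a monomorphism if and only if $p$ is $0$-truncated, and $0$-truncatedness of $P$-maps is exactly a \emph{consequence} of atomicity, which is what you are trying to prove; assuming it is circular. Second, and more substantially, you cannot invoke orbitality to decompose $C \times_{A\times_B A} A \to C$ into a coproduct of $P$-morphisms: orbitality governs base changes of maps in $\mathbb F^P_T$ along maps in $\mathbb F_T$, and $\Delta$ is not known to lie in $\mathbb F^P_T$ at this stage (again, that would follow from atomicity). So the decomposition you are banking on is not available.

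The paper's argument sidesteps both issues, and does not use Lemma~\ref{lem:DisjointSummandInclusoinsLocalClass} at all for this direction. It decomposes the object $A \times_B A$ directly, which \emph{is} a pullback of $p \in P$ along $p$, hence by orbitality $A\times_B A \simeq \bigsqcup_{i=1}^n A_i$ with each $p_i\colon A_i \to A$ in $P$. Since $A$ is representable, the section $\Delta$ of $\mathrm{pr}_1$ factors through a single summand $A_{i_0}$, so $p_{i_0}$ admits a section in $T$, so $p_{i_0}$ is an equivalence by hypothesis, and therefore $\Delta \simeq (A \iso A_{i_0} \hookrightarrow \bigsqcup A_i)$ is a disjoint summand inclusion. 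If you want to salvage your local-check route, you should decompose $C\times_B A$ (a genuine pullback of $p$) rather than $C\times_{A\times_B A}A$, express the latter as the pullback of the two sections $(\id,a_1)$ and $(\id,a_2)$ of $C\times_B A \to C$, and then argue that those sections each factor through a $P$-summand which the hypothesis forces to be an equivalence; but this is strictly more work than the direct decomposition of $A\times_B A$, which makes the appeal to Lemma~\ref{lem:DisjointSummandInclusoinsLocalClass} superfluous.
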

\begin{proof}
	Assume first that $P$ is atomic orbital. Let $p\colon A \to B$ be a morphism in $P$ and assume that $p$ admits a section $s\colon B \to A$ in $T$. We will show that $p$ is an equivalence with inverse $s$. Since we are given an equivalence $ps \simeq \id_B$, it remains to show that $sp \simeq \id_A$. Equivalently, we may show that the map $(\id_A,sp)\colon A \to A \times_B A$ factors through the diagonal $\Delta_p\colon A \to A \times_B A$. By assumption this diagonal is equivalent to an inclusion $A \hookrightarrow A \sqcup C$ for some presheaf $C \in \PSh(T)$, and since $A$ is a representable presheaf it follows that the map $(\id_A,sp)\colon A \to A \times_B A \simeq A \sqcup C$ must either factor through $\Delta_p\colon A \hookrightarrow A \sqcup C$ or through $C \hookrightarrow A \sqcup C$. Assume for contradiction that we are in the latter situation. Then the pullback of $\Delta_p\colon A \to A\times_B A$ and $(\id_A,sp)\colon A \to A \times_B A$ is the empty presheaf. But this pullback is also equivalent to $B$, by the following pullback diagram:
	\[\begin{tikzcd}
		B \rar{s} \dar[swap]{s} \drar[pullback] & A \arrow[d, "{(\id_A,sp)}"{description}] \rar{p} \drar[pullback] & B \dar{s} \\
		A \rar{\Delta_p} & A \times_B A \rar{\pr_2} \dar[swap]{\pr_1} \drar[pullback] & A \dar{p} \\
		& A \rar{p} & B.
	\end{tikzcd}\]
	Since $B$ is not the empty presheaf, this leads to a contradiction, showing that $(\id_A,sp)\colon A \to A \times_B A$ factors through $\Delta_p$ as desired.

	Conversely, assume that any map in $P$ which admits a section in $T$ is an equivalence. Let $p\colon A \to B$ be a morphism in $P$. Since $P$ is orbital, the projection map $\pr_1\colon A \times_B A \to A$ in $\PSh(T)$ can be decomposed as a disjoint union $(p_i)^n_{i=1}\colon \bigsqcup_{i=1}^n A_i \to A$ of morphisms $p_i\colon A_i \to B$ in $P$. Since $A$ is representable, the diagonal $\Delta_p\colon A \to A \times_B A \simeq \bigsqcup_{i=1}^n A_i$ has to factor through one of the inclusions $A_i \hookrightarrow A \times_B A$, so that the morphism $p_i\colon A_i \to A$ admits a section $A \to A_i$ in $T$. By the assumption on $P$, this means that $p_i$ is an equivalence. It follows that the diagonal $\Delta_p$ of $p$ is the inclusion of a disjoint summand $A \simeq A_i \hookrightarrow \bigsqcup_{i=1}^n A_i$, as desired.
\end{proof}

\begin{example}
	Recall the subcategory $\Orb\subset\Glo$ spanned by the injective homomorphisms. Clearly, any injective homomorphism that admits a section is also surjective, hence an isomorphism. Together with Example~\ref{ex:Orb_Orbital}, we conclude that $\Orb$ is an atomic orbital subcategory of $\Glo$. By direct computation one sees that the diagonal in $\PSh(\Glo)$ of a non-injective group homomorphism is never a disjoint summand inclusion, and thus it follows that $\Orb$ is in fact the \textit{maximal} atomic orbital subcategory of $\Glo$.
\end{example}

\begin{remark}
    There is a classification of the atomic orbital subcategories of $\Glo$ in terms of \textit{global transfer systems} in the sense of Barrero \cite{Barrero2023TransferSystems}. Recall from \textit{op.~cit.}\ that a global transfer system (for the family of finite groups) is a partial order $\leqslant_T$ on the collection of finite groups which refines the subgroup relation and which is closed under preimages, meaning that for a group homomorphism $\alpha\colon G' \to G$, if $H \leqslant_T G$ then $\alpha^{-1}(H) \leqslant G'$. We may assign to $\leqslant_T$ a wide subcategory $\Orb_T \subseteq \Orb$ which contains those injections $i\colon H \hookrightarrow G$ for which $i(H) \leqslant_T G$. It is not difficult to show that $\Orb_T$ is an atomic orbital subcategory of $\Glo$, and that conversely every atomic orbital subcategory of $\Glo$ is of the form $\Orb_T$ for some global transfer system $\leq_T$.
\end{remark}

A convenient feature of atomic orbital subcategories is that they are \textit{left cancellable}, in the sense that for morphisms $f\colon A \to B$ and $g\colon B \to C$ in $T$, if both $g$ and $gf$ are in $P$ then also $f$ is in $P$.

\begin{lemma}
	Every atomic orbital subcategory $P \subseteq T$ is left cancellable.
\end{lemma}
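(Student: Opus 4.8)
The plan is to convert the two hypotheses into combinatorial data by means of orbitality, and then feed the resulting section into the atomicity criterion of \cref{lem:AtomicVsDisjointDiagonals}. Concretely, given $f\colon A\to B$ and $g\colon B\to C$ with $g\in P$ and $gf\in P$, I would form the pullback $X := A\times_C B$ in $\PSh(T)$, with its two projections $\pi_A\colon X\to A$ and $\pi_B\colon X\to B$. Reading $\pi_A$ as the base change of $g\in\finPsets$ along the morphism $gf$ of $\finTsets$, orbitality of $P$ shows that $X$ is a finite $T$-set, $X\iso\bigsqcup_{i=1}^n A_i$, with $\pi_A$ assembled from morphisms $p_i\colon A_i\to A$ in $P$. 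Reading $\pi_B$ instead as the base change of $gf\in\finPsets$ along $g\in\finTsets$, orbitality shows that $\pi_B$ also lies in $\finPsets$; since representable presheaves are connected in $\PSh(T)$ (a map out of a representable into a coproduct factors through a summand, by the Yoneda lemma and the pointwise computation of colimits), a finite $T$-set decomposes essentially uniquely into representable summands, so the restriction $q_i\colon A_i\to B$ of $\pi_B$ to each summand $A_i$ also lies in $P$.

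Next I would use the identity $gf = g\circ f$: it says that $(\id_A,f)$ defines a morphism $\sigma\colon A\to X$ with $\pi_A\circ\sigma = \id_A$ and $\pi_B\circ\sigma = f$. Because $A$ is representable, hence connected, $\sigma$ factors through a unique summand, $\sigma = \iota_j\circ s$ with $s\colon A\to A_j$. Then $p_j\circ s = \pi_A\circ\sigma = \id_A$ exhibits $s$ as a section of $p_j\in P$, so \cref{lem:AtomicVsDisjointDiagonals} forces $p_j$ to be an equivalence, and consequently $s = p_j^{-1}$ is an equivalence as well. Finally $f = \pi_B\circ\sigma = q_j\circ s$ writes $f$ as a composite of $q_j\in P$ with an equivalence, and since $P$ is a wide subcategory of $T$ it contains all equivalences and is closed under composition, whence $f\in P$.

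The one point that needs genuine thought is that one must apply orbitality \emph{twice} — once to $g$, to produce the summands $A_i$ and to set up the section $\sigma$, and once to $gf$, to see that the \emph{same} summands carry morphisms $q_i\in P$ down to $B$. Applying it only to $g$ does yield, via atomicity, that $p_j$ is an equivalence, but leaves no handle on $\pi_B|_{A_j}$; it is the second application, combined with the indecomposability of representable presheaves, that supplies the factorization $f = q_j\circ s$ through $P$. Everything else — the existence and properties of the pullback, the section, and the unique factorization through a summand — is routine diagram-chasing in $\PSh(T)$.
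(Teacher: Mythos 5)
Your argument is correct and takes essentially the same route as the paper: both factor $f$ through the pullback $A\times_C B$, apply orbitality to $gf$ to control the projection to $B$, and use atomicity of $g$ to control the other leg. The only difference is packaging: the paper recognizes $(1,f)$ directly as a base change of the disjoint summand inclusion $\Delta_g$ and concludes $f\in\finPsets$ by closure under composition, whereas you explicitly decompose the pullback into representable summands, locate the one hit by the section $(1,f)$, and invoke \cref{lem:AtomicVsDisjointDiagonals} to force that summand's projection to $A$ to be an equivalence.
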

\begin{proof}
	Let $f\colon A \to B$ and $g\colon B \to C$ be morphisms in $T$, and assume that both $g$ and $gf$ are in $P$. We will show that then also $f$ is in $P$. This is a classical argument \cite{ega1}*{Remarque~5.5.12}: we may factor $f$ as a composite
	\[
	A \xrightarrow{(1,f)} A \times_C B \xrightarrow{\pr_B} B
	\]
	in $\finTsets$, and it will suffice to show that both of these morphisms are morphisms in $\finPsets$. The projection $\pr_B\colon A \times_C B \to B$ is the base change of $gf\colon A \to C$ along $B \to C$, so it is in $\finPsets$ by orbitality of $P$ and the assumption on $gf$. In turn, the morphism $(1,f)\colon A \to A \times_C B$ is a base change of the diagonal map $\Delta_g\colon B \to B \times_C B$, which is by assumption a disjoint summand inclusion and thus in particular in $\finPsets$. This finishes the proof.
\end{proof}

\begin{corollary}
	Let $P \subseteq T$ be an atomic orbital subcategory. Then for every $B \in T$, the inclusion $P_{/B} \hookrightarrow T_{/B}$ is fully faithful. In particular, there is an equivalence $\ulfinPsets(B) \simeq (\finPsets)_{/B}$.\qed
\end{corollary}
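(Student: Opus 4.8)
The plan is to derive both assertions from the left cancellability of $P$ just established, essentially by bookkeeping with slice mapping spaces.

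First I would show that $P_{/B}\hookrightarrow T_{/B}$ is fully faithful. Fix objects $(p\colon A\to B)$ and $(q\colon C\to B)$ of $P_{/B}$. The slice mapping space $\Hom_{T_{/B}}\big((A,p),(C,q)\big)$ is the fibre over $p$ of the postcomposition map $q_*\colon\Hom_T(A,C)\to\Hom_T(A,B)$, and similarly $\Hom_{P_{/B}}\big((A,p),(C,q)\big)$ is the fibre over $p$ of $q_*\colon\Hom_P(A,C)\to\Hom_P(A,B)$; the comparison map is induced by the inclusions $\Hom_P(-,-)\subseteq\Hom_T(-,-)$. Since $P\subseteq T$ is a \emph{wide} subcategory, $\Hom_P(A,C)$ is the union of those connected components of $\Hom_T(A,C)$ all of whose morphisms lie in $P$. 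A component of $\Hom_T(A,C)$, with representative $h\colon A\to C$, contributes to the fibre over $p$ only if $qh\simeq p$; as $q\in P$ and $qh\simeq p\in P$, left cancellability forces $h\in P$, so every such component already lies in $\Hom_P(A,C)$. Hence the two fibres agree and the comparison map is an equivalence.

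Next I would prove the equivalence $\ulfinPsets(B)\simeq(\finPsets)_{/B}$. The chain of inclusions $\finPsets\subseteq\finTsets\subseteq\PSh(T)$ induces, after passing to slices over $B$, a functor $(\finPsets)_{/B}\to\PSh(T)_{/B}$ whose essential image is, by \cref{def:finPsets} and \cref{def:ulFinPSets}, exactly the full subcategory $\ulfinPsets(B)$; so it restricts to an essentially surjective functor $(\finPsets)_{/B}\to\ulfinPsets(B)$, and it remains to check full faithfulness. Writing two objects of $(\finPsets)_{/B}$ as $\bigsqcup_i(f_i\colon A_i\to B)$ and $\bigsqcup_j(g_j\colon C_j\to B)$ with all $f_i,g_j\in P$, the Yoneda lemma (together with the fact that colimits of presheaves are computed pointwise, so that a morphism out of the representable $A_i$ into $\bigsqcup_j C_j$ factors through a summand) identifies $\Hom_{\PSh(T)_{/B}}$ between these objects with a product over $i$ of fibres over $f_i$ of $\coprod_j\Hom_T(A_i,C_j)\to\Hom_T(A_i,B)$, and likewise $\Hom_{(\finPsets)_{/B}}$ with the same expression using $\Hom_P$ in place of $\Hom_T$ (here using that $\finPsets$ is the finite coproduct completion of $P$). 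Exactly as in the previous paragraph, left cancellability shows each such fibre sees only morphisms in $P$, so the two mapping spaces coincide.

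I do not expect a real obstacle: all the mathematical content sits in the left cancellability lemma, and the remaining work is formal manipulation of slices and the Yoneda lemma. The only point deserving a moment of care is making precise the statement that a map out of a representable presheaf into a finite coproduct factors through one of the summands, together with the bookkeeping that $(\finPsets)_{/B}$ has its morphism spaces exactly as described — both of which are standard.
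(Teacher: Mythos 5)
Your proof is correct and takes essentially the same route the paper intends: the paper marks this corollary with $\square$ and gives no proof, treating it as an immediate consequence of the left cancellability lemma, which is precisely the observation your argument spells out (reducing hom-spaces in the slice to fibres of postcomposition, then invoking left cancellability to see that the $P$-fibres and $T$-fibres coincide, and bookkeeping through the finite coproduct completion for the second assertion).
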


While atomicity a priori only requires the diagonals of maps in $P$ be disjoint summand inclusions, the next proposition shows that this in fact holds for a more general class of maps in $\PSh(T)$. Recall from \Cref{rmk:limitExtensionRestrictedClass} that, given a presheaf $B$ on $T$, we write $\ulfinPsets(B) \subseteq \PSh(T)_{/B}$ for the full subcategory containing those morphisms $p\colon A \to B$ of presheaves whose base change to any representable $B' \in T$ lives in $\mathbb F^P_T$.

\begin{proposition}
	\label{prop:FinitePSetsHasDisjointDiagonalInclusions}
	Let $P\subseteq T$ atomic orbital, let $Y\in\PSh(T)$ and let $(p\colon X\to Y)\in\ul{\mathbb F}^P_T(Y)$. Then the diagonal $\Delta_p\colon X\to X\times_YX$ in $\PSh(T)$ is a disjoint summand inclusion.
\end{proposition}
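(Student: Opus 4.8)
The plan is to reduce the assertion to the defining atomicity condition on $P$ via the pointwise criterion of \Cref{lem:DisjointSummandInclusoinsLocalClass}. By that lemma it suffices to fix a map $t\colon A\to X\times_YX$ out of a representable object $A\in T$ and show that the base change $A\times_{X\times_YX}X\to A$ of $\Delta_p$ along $t$ is a disjoint summand inclusion. Write $\pi\colon X\times_YX\to Y$ for the structure map and set $f\mathrel{:=}\pi t\colon A\to Y$ and $W\mathrel{:=}A\times_YX$, with projection $q\colon W\to A$. Since $(p\colon X\to Y)\in\ulfinPsets(Y)$ and $A$ is representable, the map $q=f^*p$ lies in $\finPsets$; concretely $W\simeq\bigsqcup_{i=1}^nA_i$ with each $q_i\colon A_i\to A$ a morphism of $P$.

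First I would identify this base change with one of the relative diagonal of $q$. Using the compatibility of relative diagonals with base change, the square
\[
\begin{tikzcd}
	W\rar\dar[swap]{\Delta_{W/A}} & X\dar{\Delta_p}\\
	W\times_AW\rar & X\times_YX
\end{tikzcd}
\]
--- whose horizontal maps are the projections, the bottom one being $W\times_AW\simeq A\times_Y(X\times_YX)\to X\times_YX$ --- is a pullback. The map $t$ factors through the bottom arrow via $\tilde t\mathrel{:=}(\id_A,t)\colon A\to A\times_Y(X\times_YX)\simeq W\times_AW$, because $\pi t=f$. Pullback pasting then gives a natural equivalence
\[
A\times_{X\times_YX}X\;\simeq\;A\times_{W\times_AW}W,
\]
with the right-hand pullback formed along $\Delta_{W/A}$ and $\tilde t$; in particular $A\times_{X\times_YX}X\to A$ is a base change of $\Delta_{W/A}$.

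It remains to show that $\Delta_{W/A}\colon W\to W\times_AW$ is a disjoint summand inclusion, for then so is its base change, disjoint summand inclusions in $\PSh(T)$ being stable under pullback (as recorded in the proof of \Cref{lem:DisjointSummandInclusoinsLocalClass}). Under $W\simeq\bigsqcup_iA_i$ we have $W\times_AW\simeq\bigsqcup_{i,j}A_i\times_AA_j$, and $\Delta_{W/A}$ restricts on $A_i$ to the diagonal $\Delta_{q_i}\colon A_i\to A_i\times_AA_i$, landing in the $(i,i)$-summand. Each $q_i$ lies in $P$, so atomicity of $P$ says $\Delta_{q_i}$ is a disjoint summand inclusion, say $A_i\times_AA_i\simeq A_i\sqcup C_i$; hence $\Delta_{W/A}$ is the composite of the disjoint summand inclusions $\bigsqcup_iA_i\hookrightarrow\bigsqcup_i(A_i\sqcup C_i)$ and $\bigsqcup_i(A_i\times_AA_i)\hookrightarrow\bigsqcup_{i,j}A_i\times_AA_j$, and is therefore a disjoint summand inclusion. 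The only genuinely delicate point is the pullback-pasting identification of the second step --- recognizing the relevant base change as a base change of the relative diagonal of the finite $P$-set $W\to A$; granting that, the remainder is coproduct bookkeeping together with the already-established closure properties of disjoint summand inclusions.
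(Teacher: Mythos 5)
Your proof is correct, and while it runs through the same initial reduction as the paper's, it takes a genuinely different route in the second half. Both arguments begin by invoking \Cref{lem:DisjointSummandInclusoinsLocalClass} to reduce to showing that the base change of $\Delta_p$ along a map $\alpha\colon A\to X\times_YX$ from a representable $A\in T$ is a disjoint summand inclusion, and both introduce the auxiliary pullback $W := A\times_YX$, whose projection $q\colon W\to A$ lies in $\finPsets$. The paper factors $\alpha = (\alpha_1,\alpha_2)$ as $(\alpha_1\times\id)\circ(\id,\alpha_2)$, recognizes the base change of $\Delta_p$ along $\alpha_1\times\id$ as the section $(\id,\alpha_1)$ of $q$, and then appeals to the argument of \Cref{lem:AtomicVsDisjointDiagonals} to conclude that a section (out of a representable) of a map in $\finPsets$ is a disjoint summand inclusion. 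You instead use the base-change compatibility of relative diagonals together with pullback pasting to identify the base change of $\Delta_p$ along $\alpha$ as a base change of the relative diagonal $\Delta_{W/A}\colon W\to W\times_AW$, and then verify directly that $\Delta_{W/A}$ is a disjoint summand inclusion by decomposing $W\simeq\bigsqcup_i A_i$ with each $q_i\in P$ and applying atomicity to each $\Delta_{q_i}$ separately. Both roads lead through the same atomicity input and are of comparable length; your version has the mild virtue of making explicit the structural intermediate fact that the relative diagonal of any map in $\finPsets$ out of a representable is a disjoint summand inclusion, rather than passing through the section criterion.
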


\begin{proof}
	By \Cref{lem:DisjointSummandInclusoinsLocalClass}, it will suffice to show that the base change of $\Delta_p\colon  X\to X \times_Y X$ along any map $\alpha = (\alpha_1,\alpha_2)\colon A \to X \times_Y X$ from a representable $A \in T$ is a disjoint summand inclusion. Observe that the map $\alpha$ factors as the following composite:
	\[
		A \xrightarrow{(\id,\alpha_2)} A \times_Y X \xrightarrow{\alpha_1 \times \id} X \times_Y X.
	\]
	As base changes of disjoint summand inclusions are again disjoint summand inclusions, it will thus suffice to show that the base change of $\Delta_p$ along the map $\alpha_1 \times \id$ is a disjoint summand inclusion. To this end, consider the following commutative diagram:
	\[
	\begin{tikzcd}
		A \dar[swap]{(\id,\alpha_1)} \rar{\alpha_1} \drar[pullback, xshift = -10pt] &[1em] X \dar{\Delta_p} \\
		A \times_Y X \dar[swap]{\pr_1} \rar{\alpha_1 \times \id} \drar[pullback, xshift = -10pt] & X \times_Y X \dar[swap]{\pr_1} \rar{\pr_2} \drar[pullback] & X \dar{p} \\
		A \rar{\alpha_1} & X \rar{p} & Y.
	\end{tikzcd}
	\]
	It follows readily from the pasting law of pullback squares that each square is a pullback square. Observe that the projection map $\pr_1\colon A \times_Y X \to A$ is the base change of $p$ along $p \alpha_1$, hence it lies in $\finPsets$ by assumption. We decompose $A\times_YX\simeq\coprod_{i=1}^n A_n$ into a disjoint union of representables, and note that $(\id,\alpha_1)$ has to factor through a (unique) coproduct summand $A_i$. The resulting map $A\to A_i$ is then a section to ${\pr_1}|_{A_i}\in P$, so Lemma~\ref{lem:AtomicVsDisjointDiagonals} shows that ${\pr_1}|_{A_i}\colon A_i\to A$ is an equivalence. We conclude by 2-out-of-3 that $(\id,\alpha_1)\colon A\to A\times_YX$ defines an equivalence onto $A_i$, so it is a disjoint summand inclusion as claimed.
\end{proof}

For the remainder of this subsection, we will fix an atomic orbital subcategory $P \subseteq T$. We are now ready to define the norm map $\Nm_p\colon p_! \to p_*$ for $p$ as in Proposition~\ref{prop:FinitePSetsHasDisjointDiagonalInclusions}.

\begin{construction}[Norm map, cf. \cite{HA}*{Construction~6.1.6.8}, \cite{nikolausscholze2018cyclic}*{Construction~I.1.7}, \cite{hopkins2013ambidexterity}*{Construction~4.1.8}]
	\label{cstr:NormMap}
	Let $\Cc$ be a pointed $T$-$\infty$-category, let $B \in \PSh(T)$ and let $(p\colon A \to B) \in \ulfinPsets(B)$. Consider the following pullback diagram
	\begin{equation}
		\label{eq:PullbackDiagramNormMap}
		\begin{tikzcd}
			A \times_B A \dar[swap]{\pr_1} \rar{\pr_2} \drar[pullback] & A \dar{p} \\
			A \rar{p} & B
		\end{tikzcd}
	\end{equation}
	in $\PSh(T)$, and let $\Delta\colon A \to A \times_B A$ denote the diagonal of $p$. By \Cref{prop:FinitePSetsHasDisjointDiagonalInclusions}, $\Delta$ is a disjoint summand inclusion, so that \cref{lem:NormMapDisjointSummandInclusions} provides adjunctions $\Delta_! \dashv \Delta^* \dashv \Delta_*$ and an equivalence $\Nm_{\Delta}\colon \Delta_! \simeq \Delta_*$.
	\begin{enumerate}[(1)]
		\item \label{it:DefAlpha} Define a natural transformation $\alpha\colon \pr_1^* \Rightarrow \pr_2^*$ as the following composite:
		\begin{align*}
			\pr_2^* \xRightarrow{u^*_{\Delta}} \Delta_*\Delta^*{\pr_2}^* \simeq \Delta_* \overset{\Nm_{\Delta}^{-1}}{\simeq} \Delta_! \simeq \Delta_! \Delta^*\pr_1^* \xRightarrow{c^!_{\Delta}} \pr_1^*.
		\end{align*}
		\item \label{it:AdjointNormMap} Assume that $\Cc$ admits finite $P$-coproducts, so that the pullback square \eqref{eq:PullbackDiagramNormMap} gives a left base change equivalence $p^* p_! \simeq {\pr_1}_! \pr_2^*$. We define the \textit{adjoint norm transformation} $\Nmadj_p\colon p^*p_! \Rightarrow \id$ \textit{of $p$ in $\Cc$} as the composite
		\begin{align*}
			\Nmadj_p\colon p^* p_! \overset{l.b.c.}{\simeq} {\pr_1}_! \pr_2^* \xRightarrow{{\pr_1}_!\alpha}
			{\pr_1}_!\pr_1^* \xRightarrow{c^!_{\pr_1}} \id.
		\end{align*}
		\item \label{it:DualAdjointNormMap} Assume that $\Cc$ admits finite $P$-products, so that the pullback square \eqref{eq:PullbackDiagramNormMap} gives a right base change equivalence $p^* p_* \simeq {\pr_2}_* \pr_1^*$. We define the \textit{dual adjoint norm transformation} $\Nmadjdual_p\colon \id \Rightarrow p^*p_*$ \textit{of $p$ in $\Cc$} as the composite
		\begin{align*}
			\Nmadjdual_p\colon \id \xRightarrow{u^*_{\pr_2}}
			{\pr_2}_*\pr_2^* \xRightarrow{{\pr_2}_*\alpha}
			{\pr_2}_*\pr_1^* \overset{r.b.c.}{\simeq} p^*p_*.
		\end{align*}
		\item \label{it:NormMap} Assume that $\Cc$ admits both finite $P$-products and finite $P$-coproducts. We define the \textit{norm transformation of $p$ in $\Cc$}
		\begin{align*}
			\Nm_p\colon p_! \implies p_*
		\end{align*}
		as the map adjoint to the adjoint norm transformation $\Nmadj_p\colon p^*p_! \Rightarrow \id$.
	\end{enumerate}
	We will sometimes write $\Nmadj^{\Cc}_p$, $\Nmadjdual^{\Cc}_p$ or $\Nm^{\Cc}_p$ to emphasize the dependence on $\Cc$.
\end{construction}

\begin{remark}\label{rem:concise_norm_maps}
	Unwinding the definitions, the map $\Nmadj_p\colon p^*p_! \Rightarrow \id$ may be given more directly as the composite
	\[
	p^* p_! \overset{l.b.c.}{\simeq} {\pr_1}_! \pr_2^* \xRightarrow{u^*_{\Delta}} {\pr_1}_! \Delta_* \Delta^* \pr_2^* \overset{\Nm^{-1}_{\Delta}}{\simeq} {\pr_1}_! \Delta_! \Delta^* \pr_2^* \simeq \id_{\Cc(A)}.
	\]
	Similarly, the map $\Nmadjdual_p\colon \id \Rightarrow p^*p_*$ unwinds to the following composite:
	\[
	\id_{\Cc(A)} \simeq {\pr_2}_*\Delta_*\Delta^*\pr_1^* \overset{\Nm^{-1}_{\Delta}}{\simeq} {\pr_2}_*\Delta_!\Delta^*\pr_1^* \xRightarrow{c^!_{\Delta}} {\pr_2}_* \pr_1^* \overset{r.b.c.}{\simeq} p^* p_*.
	\]
	The description of the adjoint norm map $\Nmadj$ given above is precisely the definition of the map $\nu^{(0)}_p\colon p^*p_! \Rightarrow \id$ of \cite{hopkins2013ambidexterity}*{Construction 4.1.8}, applied to the Beck-Chevalley fibration $\smallint \Cc \to \PSh(T)$ classified by the functor $\Cc\colon \PSh(T)\catop \to \Cat_{\infty}$. In particular, the norm map $\Nm_p\colon p_! \to p_*$ defined above agrees with the norm map $\Nm_p$ of \cite{hopkins2013ambidexterity}*{Construction 4.1.12}.
\end{remark}

\begin{remark}
	Let $(f\colon A \to B) \in \ulfinPsets(B)$ be a morphism in $\PSh(T)$ which happens to be a disjoint summand inclusion. Then the norm map $\Nm_f\colon f_! \Rightarrow f_*$ of \Cref{cstr:NormMap} agrees with the map $\Nm_f\colon f_! \Rightarrow f_*$ constructed in \cref{lem:NormMapDisjointSummandInclusions}.
\end{remark}

The map $\alpha\colon \pr_2^* \Rightarrow \pr_1^*$ defined in \Cref{cstr:NormMap}\eqref{it:DefAlpha} may be thought of as some kind of `diagonal matrix': as the next lemma shows, it restricts to the identity when restricted along the diagonal $\Delta\colon A \hookrightarrow A \times_B A$, and restricts to the zero map on the complement of the diagonal.

\begin{lemma}
	\label{lem:AlphaIsDiagonalMatrix}
	Let $\Cc$ be a pointed $T$-$\infty$-category and let $(p\colon A \to B) \in \ulfinPsets(B)$. Let $j\colon C \hookrightarrow A \times_B A$ denote the disjoint complement of the diagonal inclusion $\Delta\colon A \hookrightarrow A \times_B A$. Then the following hold:
	\begin{enumerate}[(1)]
		\item The composite $\id_{\Cc(A)} \simeq \Delta^*\pr_2^* \xRightarrow{\Delta^*\alpha} \Delta^*\pr_1^* \simeq \id_{\Cc(A)}$ is homotopic to the identity transformation.
		\item The map $j^*\alpha\colon j^*\pr_2^* \Rightarrow j^*\pr_1^*$ is the zero transformation, in the sense that it factors through the zero functor $0\colon \Cc(A) \to \Cc(C)$.
	\end{enumerate}
\end{lemma}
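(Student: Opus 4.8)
The plan is to reduce the whole statement to the product decomposition of $\Cc(A\times_B A)$ coming from the disjoint summand inclusion $\Delta$, under which every functor entering the definition of $\alpha$ becomes completely transparent, and then to perform a short diagram chase component by component.

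First I would invoke \Cref{prop:FinitePSetsHasDisjointDiagonalInclusions} to write $\Delta\colon A\to A\times_B A$ as a disjoint summand inclusion with disjoint complement $j\colon C\hookrightarrow A\times_B A$, so that $A\times_B A\simeq A\sqcup C$ in $\PSh(T)$. Since $\Cc$ carries colimits in $\PSh(T)$ to limits of $\infty$-categories, this yields an equivalence $\Cc(A\times_B A)\simeq\Cc(A)\times\Cc(C)$ under which $\Delta^*$ is the first projection and $j^*$ the second. Exactly as in the proof of \Cref{lem:NormMapDisjointSummandInclusions}, the adjoints of $\Delta^*$ are then $\Delta_!(X)\simeq(X,\emptyset)$ and $\Delta_*(X)\simeq(X,1)$ with $\emptyset,1\in\Cc(C)$ the initial resp.\ terminal object; since $\Cc$ is pointed both are the zero object, and $\Nm_\Delta$ is the resulting canonical identification. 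Finally, using $\pr_1\Delta\simeq\id_A\simeq\pr_2\Delta$ one obtains $\pr_i^*X\simeq(X,(\pr_i j)^*X)$, with $\Delta^*$ applied to these identifications recovering the canonical equivalences $\Delta^*\pr_i^*\simeq\id_{\Cc(A)}$.

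For part (2), I would note that the composite defining $\alpha$ in \Cref{cstr:NormMap}\eqref{it:DefAlpha} factors through $\Delta_*\Delta^*\pr_2^*$; applying $j^*$ therefore factors $j^*\alpha$ through $j^*\Delta_*\Delta^*\pr_2^*$, which by the description above is the zero functor $\Cc(A)\to\Cc(C)$ (as $j^*\Delta_*$ sends everything to the zero object of $\Cc(C)$). This immediately gives the claim; one could equally well use that $\alpha$ factors through $\Delta_!\Delta^*\pr_1^*$ and that $j^*\Delta_!\simeq 0$.

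For part (1), restriction along $\Delta$ extracts precisely the first component of $\alpha$ in the decomposition $\Cc(A\times_B A)\simeq\Cc(A)\times\Cc(C)$, so I would check, factor by factor, that each of the five constituent maps of $\alpha$ is the identity on the first component: the unit $u^*_\Delta$ and counit $c^!_\Delta$ restrict on the first factor to identities by the triangle identities (using that $c^*_\Delta$ and $u^!_\Delta$ are equivalences, $\Delta_*$ and $\Delta_!$ being fully faithful), the map $\Nm_\Delta^{-1}$ is the identity on the first factor, and the canonical equivalences $\Delta^*\pr_i^*\simeq\id$ contribute only identities; the telescoping composite is therefore the identity of $\id_{\Cc(A)}$. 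If one prefers a more formal argument, the same conclusion follows from the identities $\Delta^*u^*_\Delta=(c^*_\Delta\Delta^*)^{-1}$, $\Delta^*c^!_\Delta=(u^!_\Delta\Delta^*)^{-1}$ and $\Delta^*\Nm_\Delta^{-1}=u^!_\Delta\circ c^*_\Delta$, the last being the defining property of the norm map in \Cref{lem:NormMapDisjointSummandInclusions}. I expect the only genuinely fiddly point to be the bookkeeping of the canonical identifications $\Delta^*\pr_i^*\simeq\id$ and the whiskerings in the triangle-identity arguments; the product-decomposition picture reduces this to routine coherence juggling, so there should be no real obstacle.
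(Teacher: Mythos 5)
Your proposal is correct and takes essentially the same approach as the paper. For part (1), the paper's proof is exactly the ``more formal argument'' you offer as an alternative -- a single commutative diagram encoding the two triangle identities for $\Delta_!\dashv\Delta^*\dashv\Delta_*$ together with the defining property of $\Nm_\Delta$ from Lemma~\ref{lem:NormMapDisjointSummandInclusions} -- while for part (2) the paper likewise observes that $j^*\alpha$ factors through $j^*\Delta_*$ and then shows this vanishes, just phrased via base change along the pullback $C\times_{A\times_B A}A\simeq\emptyset$ rather than by reading off the second coordinate in the decomposition $\Cc(A\times_B A)\simeq\Cc(A)\times\Cc(C)$.
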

\begin{proof}
	The proof of (1) follows from the following commutative diagram:
	\[\begin{tikzcd}
		{\Delta^*\pr_2^*} & {\Delta^*\pr_2^*} & \id & {\Delta^*\pr_1^*} & {\Delta^*\pr_1^*} \\
		{\Delta^*\Delta_*\Delta^*\pr_2^*} & {\Delta^*\Delta_*} && {\Delta^*\Delta_!} & {\Delta^*\Delta_!\Delta^*\pr_2^*.}
		\arrow["{\Nm^{-1}_{\Delta}}"', from=2-2, to=2-4]
		\arrow[Rightarrow, no head, from=1-1, to=1-2]
		\arrow[Rightarrow, no head, from=1-4, to=1-5]
		\arrow["{u_{\Delta}^*}"', from=1-1, to=2-1]
		\arrow["{c_{\Delta}^*}"{description}, from=2-1, to=1-2]
		\arrow["{u_{\Delta}^!}"{description}, from=1-4, to=2-5]
		\arrow["{c_{\Delta}^!}"', from=2-5, to=1-5]
		\arrow["\simeq"', from=2-1, to=2-2]
		\arrow["\simeq"', from=2-4, to=2-5]
		\arrow["\simeq", from=1-3, to=1-4]
		\arrow["\simeq", from=1-2, to=1-3]
		\arrow["{c_{\Delta}^*}"{description}, from=2-2, to=1-3]
		\arrow["{u_{\Delta}^!}"{description}, from=1-3, to=2-4]
	\end{tikzcd}\]
	The triangles on the two sides commute by the triangle identity, the rhombi commute by naturality and the triangle in the middle commutes by the defining property of the norm map $\Nm_{\Delta}$ of \cref{lem:NormMapDisjointSummandInclusions}.

	For (2), note that by definition of $\alpha$ the map $j^*\alpha$ factors through the functor $j^*\Delta_*$. Since coproducts are disjoint in $\PSh(T)$, the fiber product $C \times_{A \times_B A} A$ is the empty presheaf. It then follows from base change that the functor $j^*\Delta_*$ factors through the $\infty$-category $\Cc(\emptyset) \simeq *$, which forces it to be the zero functor.
\end{proof}

\begin{remark}
	\label{rmk:NormMapBalmerDellAmbrogio}
	In the setting of Mackey 2-functors, Balmer and Dell'Ambrogio \cite{balmerAmbrogio_Mackey}*{Theorem~3.3.4} have produced a similar transformation $\Theta_i\colon i_! \Rightarrow i_*$ for $i$ a faithful map of groupoids, i.e.~ a morphism in $\mathbb{F}^{\Orb}_{\Glo}$. It follows from \Cref{lem:AlphaIsDiagonalMatrix} and \cite{balmerAmbrogio_Mackey}*{Proposition~3.2.1} that the transformation $\Nm_i\colon i_! \Rightarrow i_*$ of \Cref{cstr:NormMap} specializes to the transformation $\Theta_i$ of Balmer and Dell'Ambrogio in the case $T = \Glo$ and $P = \Orb$. In particular, if $\Cc$ is a pointed global $\infty$-category admitting finite $\Orb$-(co)products, it follows from \cite{balmerAmbrogio_Mackey}*{Theorem~3.4.2} that the norm maps $\Nm_i$ are equivalences for every faithful map of groupoids $i\colon H \rightarrow G$ if and only if there exist abstract equivalences $i_! \simeq i_*$ for every such $i$.
\end{remark}

\subsection{Properties of norm maps}
We will next establish a variety of results about the calculus of norm maps.

To start with, we address the obvious asymmetry in the construction of the norm map: we could just as well have considered the map $p_! \Rightarrow p_*$ adjoint to the \textit{dual} adjoint norm map $\Nmadjdual_p\colon \id \Rightarrow p^*p_*$. The following lemma shows that these two maps agree.

\begin{lemma}
	\label{lem:NormMapEqualsDualNormMap}
	Assume that $\Cc$ is a pointed $T$-$\infty$-category which admits both finite $P$-products and finite $P$-coproducts. For every $(p\colon A \to B) \in \ulfinPsets(B)$, the maps $\Nmadj_p\colon p^*p_! \Rightarrow \id$ and $\Nmadjdual_p\colon \id \Rightarrow p^*p_*$ adjoin to the same map $\Nm_p\colon p_! \Rightarrow p_*$.
\end{lemma}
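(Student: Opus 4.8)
The plan is to pass to the concise descriptions of $\Nmadj_p$ and $\Nmadjdual_p$ recorded in Remark~\ref{rem:concise_norm_maps}, unwind the two adjoint maps $p_!\Rightarrow p_*$ into composites of $2$-cells built from the pullback square~\eqref{eq:PullbackDiagramNormMap}, and then run a diagram chase that bootstraps from the case where $p$ is itself a disjoint summand inclusion. Concretely, write $\Nm'_p\colon p_!\Rightarrow p_*$ for the transformation adjoint to $\Nmadjdual_p$ under $p_!\dashv p^*$; the claim is that $\Nm'_p=\Nm_p$, where $\Nm_p$ is adjoint to $\Nmadj_p$ under $p^*\dashv p_*$ by definition.

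The first step is the base case $(f\colon A\to B)\in\ulfinPsets(B)$ a disjoint summand inclusion. Then $f_!$ and $f_*$ are fully faithful, the diagonal $\Delta_f$ is an equivalence, and the transformation $\alpha$ of Construction~\ref{cstr:NormMap}\eqref{it:DefAlpha} is the identity; unwinding the concise formulas shows that $\Nmadj_f$ and $\Nmadjdual_f$ are the canonical identifications $f^*f_!\simeq\id$ and $\id\simeq f^*f_*$, and a short check against the defining property of $\Nm_f$ in Lemma~\ref{lem:NormMapDisjointSummandInclusions}\,(3) shows that both adjoin to $\Nm_f$, so the lemma holds here. For general $p$, the second step is to observe that $\Nm_p$ and $\Nm'_p$ are obtained by the \emph{same} Beck--Chevalley manipulation over~\eqref{eq:PullbackDiagramNormMap}, applied to $\Nm_\Delta$ in both cases, the only difference being that $\Nm_p$ uses the left base change equivalence $({\pr_1})_!\pr_2^*\simeq p^*p_!$ where $\Nm'_p$ uses the right base change equivalence $({\pr_2})_*\pr_1^*\simeq p^*p_*$. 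Since the latter two equivalences are total mates of one another (this is how the right base change equivalence was introduced in Construction~\ref{cstr:NormMap}, cf.\ Remark~\ref{rmk:PresentableHasLimits} and Example~\ref{ex:TSpacesPAdjointable}), and since the base case gives $\Nm_\Delta=\Nm'_\Delta$, one can transport the composite computing $\Nm_p$ into the one computing $\Nm'_p$ using the two triangle identities for $p_!\dashv p^*\dashv p_*$ and the compatibility of mates with pasting. Along the way, Lemma~\ref{lem:AlphaIsDiagonalMatrix} can be used to discard the contributions supported on the complement $j\colon C\hookrightarrow A\times_B A$ of the diagonal, which keeps the chase manageable.

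The main obstacle is precisely this bookkeeping: both norm maps are assembled from a web of $2$-cells, and matching them up requires carefully tracking how the mate correspondences for the two distinct adjunctions $p_!\dashv p^*$ and $p^*\dashv p_*$ interact with the total-mate relation between the base change equivalences—there is no conservativity available to shortcut it, since $p^*$ need not be conservative. An alternative that avoids most of this is to invoke the corresponding statement for Beck--Chevalley fibrations: by Remark~\ref{rem:concise_norm_maps}, $\Nmadj_p$ is the transformation $\nu^{(0)}_p$ of \cite{hopkins2013ambidexterity}*{Construction~4.1.8} and $\Nm_p$ is the norm of \cite{hopkins2013ambidexterity}*{Construction~4.1.12} for the fibration $\smallint\Cc\to\PSh(T)$ classified by $\Cc$, and the desired symmetry is part of the theory developed there.
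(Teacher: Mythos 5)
Your strategy matches the paper's: both show that $\Nmadjdual_p$ is the total mate of $\Nmadj_p$, using that the left and right base change equivalences for \eqref{eq:PullbackDiagramNormMap} are total mates of one another, the triangle identities, and the reversal rule for the total mate of a composite. The base case for disjoint summand inclusions and the appeal to \cref{lem:AlphaIsDiagonalMatrix} turn out to be unnecessary, though --- the transformation $\alpha$ appears verbatim in both concise formulas of \cref{rem:concise_norm_maps} and never has to be opened up (so there is no issue of which adjoint of $\Nm_\Delta$ is being taken), which is what keeps the paper's proof to a few lines of $2$-category theory rather than a diagram chase.
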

\begin{proof}
	We have to show that dual adjoint norm map $\Nmadjdual_p$ is the total mate of the adjoint norm map $\Nmadj_p$. A mundane exercise in 2-category theory shows that the total mate of the Beck-Chevalley equivalence $p^* p_! \simeq {\pr_1}_! \pr_2^*$ is the Beck-Chevalley equivalence ${\pr_2}_* \pr_1^* \simeq p^* p_*$. Furthermore, it follows directly from the triangle identity that the total mate of the composite
	\[
	{\pr_1}_! \pr_2^* \xRightarrow{{\pr_1}_!\alpha} {\pr_1}_! \pr_1^* \xRightarrow{c^!_{\pr_1}} \id
	\]
	is given by the composite
	\[
	\id \xRightarrow{u^*_{\pr_2}} {\pr_2}_* \pr_2^* \xRightarrow{{\pr_2}_*\alpha} {\pr_2}_* \pr_1^*.
	\]
	Since the total mate of a composite of transformations is given by composing in opposite order the individual total mates of these transformations, this finishes the proof.
\end{proof}

The norm map $\Nm_p$ can be written in terms of the double Beck-Chevalley map $p_!{\pr_2}_* \Rightarrow p_*{\pr_1}_!$ associated to the pullback square \eqref{eq:PullbackDiagramNormMap}:

\begin{lemma}
	\label{lem:NormMapInTermsOfExchangeMap}
	Assume that $\Cc$ is a pointed $T$-$\infty$-category which admits both finite $P$-products and finite $P$-coproducts, and let $(p\colon A \to B) \in \ulfinPsets(B)$. Then the norm map $\Nm_p$ is homotopic to the composite
	\[
	p_! \simeq p_!{\pr_2}_*\Delta_* \xrightarrow{} p_*{\pr_1}_!\Delta_* \xrightarrow{\Nm^{-1}_{\Delta}} p_*{\pr_1}_!\Delta_! \simeq p_*.
	\]
\end{lemma}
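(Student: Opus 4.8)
The plan is to identify the composite in the statement — call it $\Phi\colon p_!\Rightarrow p_*$ — with $\Nm_p$ by comparing mates. Recall from \Cref{cstr:NormMap} that $\Nm_p$ is by definition the map adjoint under $p^*\dashv p_*$ to the adjoint norm map $\Nmadj_p\colon p^*p_!\Rightarrow\id$; equivalently, a map $p_!\Rightarrow p_*$ equals $\Nm_p$ precisely when its mate $p^*p_!\Rightarrow\id$, obtained by applying $p^*$ and postcomposing with the counit $c^*_p\colon p^*p_*\Rightarrow\id$, equals $\Nmadj_p$. By \Cref{rem:concise_norm_maps} the latter is the composite
\[
p^*p_!\overset{l.b.c.}{\simeq}{\pr_1}_!\pr_2^*\xRightarrow{u^*_{\Delta}\pr_2^*}{\pr_1}_!\Delta_*\Delta^*\pr_2^*\overset{\Nm^{-1}_{\Delta}}{\simeq}{\pr_1}_!\Delta_!\Delta^*\pr_2^*\simeq\id_{\Cc(A)},
\]
so it suffices to compute the mate of $\Phi$ and match it with this. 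All functors in $\Phi$ are defined: $\pr_1,\pr_2$ are base changes of $p$ along $p$, hence lie in $\ulfinPsets$ by orbitality, so ${\pr_1}_!$ and ${\pr_2}_*$ exist by the hypotheses on $\Cc$; and $\Delta$ is a disjoint summand inclusion by \Cref{prop:FinitePSetsHasDisjointDiagonalInclusions}, so $\Delta_!$, $\Delta_*$ and the equivalence $\Nm_\Delta$ are supplied by \Cref{lem:NormMapDisjointSummandInclusions}, using that $\Cc$ is pointed.

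Next I would unwind the three arrows of $\Phi$. The outer equivalences $p_!\simeq p_!{\pr_2}_*\Delta_*$ and $p_*{\pr_1}_!\Delta_!\simeq p_*$ come from $\pr_2\circ\Delta\simeq\id_A\simeq\pr_1\circ\Delta$ via ${\pr_2}_*\Delta_*\simeq(\pr_2\Delta)_*\simeq\id$ and ${\pr_1}_!\Delta_!\simeq(\pr_1\Delta)_!\simeq\id$. The middle arrow is the double Beck–Chevalley map $\beta\colon p_!{\pr_2}_*\Rightarrow p_*{\pr_1}_!$ of the square~\eqref{eq:PullbackDiagramNormMap} whiskered with $\Delta_*$; recall that $\beta$ factors as
\[
p_!{\pr_2}_*\xRightarrow{u^*_p}p_*p^*p_!{\pr_2}_*\overset{l.b.c.}{\simeq}p_*{\pr_1}_!\pr_2^*{\pr_2}_*\xRightarrow{c^*_{\pr_2}}p_*{\pr_1}_!,
\]
where $u^*_p\colon\id\Rightarrow p_*p^*$ is the unit, $c^*_{\pr_2}\colon\pr_2^*{\pr_2}_*\Rightarrow\id$ the counit, and $l.b.c.$ is the same left base change equivalence appearing in $\Nmadj_p$.

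Feeding this into the mate of $\Phi$ and using that mates are compatible with pasting, together with the naturality of the counit $c^*_p$, the innermost occurrence of $u^*_p$ cancels the outermost $c^*_p$ by a triangle identity. What survives assembles into: the $l.b.c.$-factor (matching $\Nmadj_p$ on the nose), the tail built from ${\pr_1}_!\Nm^{-1}_\Delta$ and ${\pr_1}_!\Delta_!\simeq\id$ (which reproduces the tail of $\Nmadj_p$ after absorbing $\Delta^*\pr_2^*\simeq\id$), and a single residual comparison of $\pr_2^*$-shaped $2$-cells. This residual comparison is where the real content sits, and is the step I expect to be the main obstacle: one must check that
\[
\pr_2^*\xRightarrow{u^*_{\Delta}\pr_2^*}\Delta_*\Delta^*\pr_2^*\overset{\sim}{\Rightarrow}\Delta_*
\qquad\text{equals}\qquad
\pr_2^*\xRightarrow{\pr_2^*\theta}\pr_2^*{\pr_2}_*\Delta_*\xRightarrow{c^*_{\pr_2}\Delta_*}\Delta_*,
\]
where $\theta\colon\id\simeq{\pr_2}_*\Delta_*$ is the equivalence above and the unlabelled arrow is the canonical identification $\Delta^*\pr_2^*\simeq\id$. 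This is a purely $2$-categorical statement — compatibility of the adjunction units and counits of $\Delta^*\dashv\Delta_*$ and $\pr_2^*\dashv{\pr_2}_*$ with the composition relation $\pr_2\circ\Delta\simeq\id_A$ — but the bookkeeping of the several base change equivalences and structure $2$-cells is genuinely delicate.

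An alternative that sidesteps the explicit chase is to note, as in \Cref{rem:concise_norm_maps}, that $\Nm_p$ is the norm map of \cite{hopkins2013ambidexterity}*{Construction~4.1.12} for the Beck–Chevalley fibration $\smallint\Cc\to\PSh(T)$ classified by $\Cc$, for which the factorization through the double Beck–Chevalley map and the diagonal norm $\Nm_\Delta$ is precisely what is established in \textit{loc.\ cit.}; translating that statement into the present language then yields the lemma directly. I would likely present the short argument via this citation in the main text and relegate the hands-on mate calculus to a remark (or omit it), since the abstract input from \cite{hopkins2013ambidexterity} is already invoked in \Cref{rem:concise_norm_maps}.
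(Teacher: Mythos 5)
Your strategy matches the paper's exactly: reduce via adjunction to showing that the mate of the displayed composite equals $\Nmadj_p$ in the form given in \Cref{rem:concise_norm_maps}, unfold the double Beck--Chevalley map $p_!{\pr_2}_*\Rightarrow p_*{\pr_1}_!$ through $u^*_p$ and $c^*_{\pr_2}$, and cancel $u^*_p$ against $c^*_p$ by a triangle identity. That is precisely the content of the paper's diagram chase. Where your write-up differs is that you stop at the ``residual comparison'' of $\pr_2^*$-shaped $2$-cells and flag it as the genuinely delicate step without carrying it out; the paper completes it by an explicit pasting diagram whose two non-formal squares are justified exactly as you anticipate, namely by the equivalence $\pr_1\circ\Delta\simeq\id\simeq\pr_2\circ\Delta$ together with the fact that the (co)unit of a composite adjunction is the composite of the individual (co)units. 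So this is a gap in the write-up rather than in the idea: you have named the correct remaining identity and the correct justification, you just haven't verified it. Your proposed fall-back of citing \cite{hopkins2013ambidexterity}*{Construction 4.1.12} via \Cref{rem:concise_norm_maps} would also work, though the paper opts for the self-contained diagram.
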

\begin{proof}
	By adjunction, it suffices to show that the adjoint norm map $\Nmadj_p\colon p^*p_! \to \id$ is given by the composite
	\[
	p^*p_! \simeq p^*p_!{\pr_2}_*\Delta_* \xrightarrow{} p^*p_*{\pr_1}_!\Delta_* \xrightarrow{\Nm^{-1}_{\Delta}} p^*p_*{\pr_1}_!\Delta_! \simeq p^*p_* \xrightarrow{c^*_p} \id.
	\]
	This follows from the following commutative diagram:
	\[
	\hskip-1.8483pt\hfuzz=1.8483pt\begin{tikzcd}[cramped]
		{p^*p_!} & {p^*p_!{\pr_2}_*\Delta_*} & {p^*p_*{\pr_1}_!\Delta_*} & {p^*p_*{\pr_1}_!\Delta_!} & {p^*p_*} \\
		{{\pr_1}_!\pr_2^*} & {{\pr_1}_!\pr_2^*{\pr_2}_*\Delta_*} & {{\pr_1}_!\Delta_*} & {{\pr_1}_!\Delta_!} & \id \\
		{{\pr_1}_!\pr_2^*} & {{\pr_1}_!\Delta_*\Delta^*\pr_2^*} && {{\pr_1}_!\Delta_!\Delta^*\pr_1^*} & {{\pr_1}_!\pr_1^*}
		\arrow["\simeq", from=1-1, to=1-2]
		\arrow["{l.b.c.}"', from=1-1, to=2-1]
		\arrow["{l.b.c.}"', from=1-2, to=2-2]
		\arrow["\simeq", from=2-1, to=2-2]
		\arrow[Rightarrow, no head, from=2-1, to=3-1]
		\arrow["{c^*_{\pr_2}}", from=2-2, to=2-3]
		\arrow["\simeq"', from=3-2, to=2-3]
		\arrow["{\Nm^{-1}_{\Delta}}", from=2-3, to=2-4]
		\arrow["{\Nm^{-1}_{\Delta}}", from=1-3, to=1-4]
		\arrow[from=1-2, to=1-3]
		\arrow["{c^*_p}"', from=1-3, to=2-3]
		\arrow["{c^*_p}"', from=1-4, to=2-4]
		\arrow["{c^*_p}"', from=1-5, to=2-5]
		\arrow["\simeq", from=1-4, to=1-5]
		\arrow["\simeq", from=2-4, to=2-5]
		\arrow["{u^*_{\Delta}}", from=3-1, to=3-2]
		\arrow["\simeq"', from=3-4, to=2-4]
		\arrow["{c^!_{\pr_1}}"', from=3-5, to=2-5]
		\arrow["{c^!_{\Delta}}", from=3-4, to=3-5]
		\arrow["{{\pr_1}_!\alpha}"', bend right = 10, from=3-1, to=3-5]
		\arrow["{(1)}"{description}, draw=none, from=2-2, to=1-3]
		\arrow["{(3)}"{description}, draw=none, from=2-4, to=3-5]
		\arrow["{(2)}"{description, pos=0.3}, shift left=2, draw=none, from=3-1, to=2-3]
	\end{tikzcd}
	\]
	The unlabeled squares commute by naturality. Commutativity of (1) is by the triangle identity, while commutativity of (2) and (3) follows from the equivalence $\pr_1 \circ \Delta \simeq \id \simeq \pr_2 \circ \Delta$ and the fact that the (co)unit of a composite of adjunctions is the composite of the individual (co)units.
\end{proof}

As was shown by Hopkins and Lurie \cite{hopkins2013ambidexterity}, the norm maps behave well under composition and base change of morphisms in $\ulfinPsets$.

\begin{proposition}[\cite{hopkins2013ambidexterity}*{Proposition~4.2.1}]
	\label{prop:adjNormsVsBaseChange}
	Assume that $\Cc$ is a pointed $T$-$\infty$-category which admits finite $P$-coproducts. Consider a pullback square
	\[
	\begin{tikzcd}
		A' \dar[swap]{p'} \rar{g_{A}} \drar[pullback] & A \dar{p} \\
		B' \rar{g_B} & B
	\end{tikzcd}
	\]
	in $\PSh(T)$ such that $p \in \ulfinPsets(B)$ and (hence) $p' \in \ulfinPsets(B')$.
	Then there is a commutative diagram
	\[
	\begin{tikzcd}[anchor=south,baseline=.65em]
		{p'}^*p'_!g_A^* \dar[swap]{\Nmadj_{p'}} \rar{l.b.c.}[swap]{\sim} & {p'}^*g_B^*p_! \rar{\sim} & g_A^*p^*p_! \dar{\Nmadj_p} \\
		g^*_A \ar[equal]{rr} && g_A^*.
	\end{tikzcd}\qednow
	\]
\end{proposition}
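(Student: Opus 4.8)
The plan is to deduce the statement from the general base-change compatibility of adjoint norm maps for Beck--Chevalley fibrations established by Hopkins and Lurie, using the identification of $\Nmadj_p$ with their construction $\nu^{(0)}_p$ recorded in \Cref{rem:concise_norm_maps}. Concretely, one applies \cite{hopkins2013ambidexterity}*{Proposition~4.2.1} to the Beck--Chevalley fibration $\smallint\Cc \to \PSh(T)$ classified by $\Cc$, so that the only real work is to verify that its hypotheses hold in our situation. Since $P$ is orbital, the class $\ulfinPsets$ is stable under base change, whence $p' \in \ulfinPsets(B')$; the existence of the left adjoints $p_!$, $p'_!$, $({\pr_1})_!$, $({\pr_2})_!$ and the validity of the relevant Beck--Chevalley identities is exactly the hypothesis that $\Cc$ admits finite $P$-coproducts. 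Moreover, \Cref{prop:FinitePSetsHasDisjointDiagonalInclusions} shows that the diagonals $\Delta_p$ and $\Delta_{p'}$ are disjoint summand inclusions, so that, $\Cc$ being pointed, the equivalences $\Nm_{\Delta_p}$ and $\Nm_{\Delta_{p'}}$ of \Cref{lem:NormMapDisjointSummandInclusions}(4) exist and $\Nmadj_p$, $\Nmadj_{p'}$ agree with the Hopkins--Lurie construction. With this in place the proposition is a direct instance of \cite{hopkins2013ambidexterity}*{Proposition~4.2.1}.

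For a more self-contained argument one can instead unwind the concise description of $\Nmadj_p$ from \Cref{rem:concise_norm_maps} --- built out of the left base-change equivalence $p^*p_! \simeq {\pr_1}_!\pr_2^*$, the unit of $\Delta_p$, the inverse norm $\Nm_{\Delta_p}^{-1}$, and the counit of $\Delta_p$ --- and track how each ingredient transforms under restriction along $g_A$. The key geometric observation is that the self-pullback square of $p'$ is the base change of the self-pullback square of $p$: writing $A' = A \times_B B'$, there is a canonical equivalence $A' \times_{B'} A' \simeq (A \times_B A) \times_B B'$ under which the projections $\pr_1'$, $\pr_2'$ and the diagonal $\Delta_{p'}$ correspond to the base changes along $B' \to B$ of $\pr_1$, $\pr_2$ and $\Delta_p$. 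Hence each of the four building blocks of $\Nmadj_{p'}$ is the restriction along $g_A$ of the corresponding block for $p$, and pasting these identifications produces the asserted commuting square. For the units, counits, and base-change equivalences this is the standard compatibility of mates with horizontal composition; for the norm of a disjoint summand inclusion it follows from the characterization in \Cref{lem:NormMapDisjointSummandInclusions}(3), since the restriction of $\Nm_{\Delta_p}$ along $g_A$ satisfies the same defining property as $\Nm_{\Delta_{p'}}$ once one observes that disjoint complements are stable under pullback in $\PSh(T)$ (cf.\ \Cref{lem:DisjointSummandInclusoinsLocalClass}).

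The main difficulty is organizational rather than conceptual: carrying out the second argument requires assembling several naturality and mate-compatibility squares into a single commuting diagram, and one must check that the two uses of ``base change along $B' \to B$'' --- the one producing the outer square and the one producing the new diagonal --- fit together coherently, which amounts to the pasting law applied to a three-dimensional cube of pullback squares. For this reason I expect the final write-up to simply cite \cite{hopkins2013ambidexterity}*{Proposition~4.2.1} through \Cref{rem:concise_norm_maps}, compressing the hypothesis check of the first paragraph into a sentence or two.
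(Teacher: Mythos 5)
Your first paragraph is exactly the paper's argument: the proposition is stated with the Hopkins--Lurie reference in its header and proved by `\qednow`, i.e.\ by citing \cite{hopkins2013ambidexterity}*{Proposition~4.2.1}, with \Cref{rem:concise_norm_maps} supplying the identification of $\Nmadj_p$ with the construction $\nu^{(0)}_p$ applied to the Beck--Chevalley fibration $\smallint\Cc \to \PSh(T)$. Your more hands-on second paragraph is a reasonable sketch of what unwinding the citation would involve, but the paper does not carry it out, and your own conclusion that the write-up would compress to the citation is correct.
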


\begin{corollary}[\cite{hopkins2013ambidexterity}*{Remark~4.2.3}]\label{cor:NormsVsBaseChange}
	In the situation of \Cref{prop:adjNormsVsBaseChange}, assume that $\Cc$ furthermore admits finite $P$-products. Then the composite
	\[
	p'_!g_A^* \overset{l.b.c.}{\simeq} g_B^*p_! \xrightarrow{g_B^*\Nm_p} g_B^*p_* \overset{r.b.c.}{\simeq} p'_*g_A^*
	\]
	is homotopic to the map $\Nm_{p'}g_A^*$.\qed
\end{corollary}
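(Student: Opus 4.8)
The plan is to deduce the statement from \Cref{prop:adjNormsVsBaseChange} by passing to adjoints, exactly in the spirit of the proof of \Cref{lem:NormMapEqualsDualNormMap}. Write $\Theta\colon p'_!g_A^*\Rightarrow p'_*g_A^*$ for the composite appearing in the statement. Recall from \Cref{cstr:NormMap}\eqref{it:NormMap} that $\Nm_{p'}$ is by definition the map adjoint to $\Nmadj_{p'}\colon{p'}^*p'_!\Rightarrow\id$ under the adjunction ${p'}^*\dashv p'_*$, so that $\Nm_{p'}g_A^*$ is adjoint to $\Nmadj_{p'}g_A^*$. Since this adjunction induces a bijection on homotopy classes of $2$-cells, it suffices to show that $\Theta$ is adjoint to $\Nmadj_{p'}g_A^*$ as well; concretely, one must identify the composite
\[
{p'}^*p'_!g_A^*\;\xRightarrow{\;{p'}^*\Theta\;}\;{p'}^*p'_*g_A^*\;\xRightarrow{\;c^*_{p'}g_A^*\;}\;g_A^*
\]
with $\Nmadj_{p'}g_A^*$.

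To do so I would unwind $\Theta$ using the description $\Nm_p=(p_*\Nmadj_p)\circ(u^*_pp_!)$ of the norm map (again \Cref{cstr:NormMap}\eqref{it:NormMap}), so that ${p'}^*\Theta$ becomes an explicit pasting involving the left base-change equivalence $p'_!g_A^*\simeq g_B^*p_!$, the $2$-cells $u^*_p$ and $\Nmadj_p$, and the right base-change equivalence $g_B^*p_*\simeq p'_*g_A^*$. The only non-formal input is the standard compatibility of the right base-change equivalence with the counits $c^*_p$, $c^*_{p'}$: postcomposing ${p'}^*(g_B^*p_*\xrightarrow{\sim}p'_*g_A^*)$ with $c^*_{p'}g_A^*$ agrees with postcomposing the canonical identification ${p'}^*g_B^*p_*\simeq g_A^*p^*p_*$ with $g_A^*c^*_p$. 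Feeding this in, together with the naturality of the remaining cells and the triangle identities for ${p'}^*\dashv p'_*$ (to cancel a $u^*_p$ against the adjacent $c^*_p$), the composite above collapses to
\[
{p'}^*p'_!g_A^*\;\xRightarrow{\;\sim\;}\;g_A^*p^*p_!\;\xRightarrow{\;g_A^*\Nmadj_p\;}\;g_A^*,
\]
whose first map is precisely the canonical equivalence appearing in \Cref{prop:adjNormsVsBaseChange}.

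Finally, \Cref{prop:adjNormsVsBaseChange} asserts exactly that this last composite equals $\Nmadj_{p'}g_A^*$; hence $\Theta$ and $\Nm_{p'}g_A^*$ have the same adjoint and therefore agree. I expect the only real effort to lie in the bookkeeping of the middle step — keeping track of the units and counits of the four adjunctions $p_!\dashv p^*\dashv p_*$ and $p'_!\dashv{p'}^*\dashv p'_*$ and of how the two base-change equivalences interact with them — which is purely $2$-categorical and introduces no genuine obstacle, matching the level of the ``mundane exercise'' already invoked in \Cref{lem:NormMapEqualsDualNormMap} and \Cref{lem:NormMapInTermsOfExchangeMap}. (Alternatively, one could base-change the pullback square \eqref{eq:PullbackDiagramNormMap} for $p$ along $g_B$ and match the formula for $\Nm_p$ from \Cref{lem:NormMapInTermsOfExchangeMap} term by term against that for $\Nm_{p'}$, but this seems to require strictly more bookkeeping without new ideas.)
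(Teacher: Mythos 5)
Your proof is correct, and it is exactly the derivation that the paper leaves implicit (the paper attaches a $\qed$ directly to the statement, citing Hopkins--Lurie Remark~4.2.3, which derives the base-change compatibility of the norm map from their Proposition~4.2.1 by the same adjoint transposition): since $\Nm_{p'}$ is by definition the transpose of $\Nmadj_{p'}$ under $p'^*\dashv p'_*$, one passes both sides through this bijection and observes that the transpose of $\Theta$ collapses, after the r.b.c./counit compatibility and naturality, precisely to the composite $g_A^*\Nmadj_p\circ(\text{canonical equivalence})$ appearing in \Cref{prop:adjNormsVsBaseChange}.

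One very minor slip: in the cancellation step ``cancel a $u^*_p$ against the adjacent $c^*_p$'' it is the triangle identity for $p^*\dashv p_*$ that is used (both $u^*_p$ and $c^*_p$ belong to that adjunction), not the one for $p'^*\dashv p'_*$; the latter enters separately, in establishing the compatibility of the right base-change equivalence with the counits, which you list as the ``standard'' input. This does not affect the validity of the argument.
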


\begin{proposition}[\cite{hopkins2013ambidexterity}*{Proposition~4.2.2}]
	\label{prop:adjNormsVsComposition}
	Assume that $\Cc$ is a pointed $T$-$\infty$-category which admits finite $P$-coproducts. Let $(p\colon A \to B) \in \ulfinPsets(B)$ and $(q\colon B \to C) \in \ulfinPsets(C)$, so that also $(qp\colon A \to C) \in \ulfinPsets(C)$. Then the adjoint norm map $\Nmadj_{qp}$ is homotopic to the composite
	\[
	(qp)^*(qp)_! \simeq p^*q^*q_!p_! \xrightarrow{\Nmadj_q} p^*p_! \xrightarrow{\Nmadj_p} \id.\qednow
	\]
\end{proposition}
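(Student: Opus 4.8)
The plan is to deduce the statement from \cite{hopkins2013ambidexterity}*{Proposition~4.2.2}, in the same way that \Cref{prop:adjNormsVsBaseChange} and \Cref{cor:NormsVsBaseChange} are obtained from the same source. The key is \Cref{rem:concise_norm_maps}: for any presheaf $Y$ and any $(r\colon X\to Y)\in\ulfinPsets(Y)$, the adjoint norm transformation $\Nmadj_r\colon r^*r_!\Rightarrow\id$ agrees with the transformation $\nu^{(0)}_r$ of \cite{hopkins2013ambidexterity}*{Construction~4.1.8} formed for the Beck--Chevalley fibration $\smallint\Cc\to\PSh(T)$ classified by the limit-extension $\Cc\colon\PSh(T)\catop\to\Cat_\infty$. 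Granting this dictionary, the asserted factorization of $\Nmadj_{qp}$ is exactly the conclusion of \cite{hopkins2013ambidexterity}*{Proposition~4.2.2} applied to the composable pair $p$, $q$, provided its hypotheses hold.

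First I would record that $\ulfinPsets$ is closed under composition, so that $qp$ does lie in $\ulfinPsets(C)$ as claimed: writing $q$ as a coproduct $(q_i)\colon\bigsqcup_i B_i\to C$ of maps in $P$ identifies $B$ with $\bigsqcup_i B_i$, and decomposing $p$ over the $B_i$ and postcomposing with the $q_i$ (using that $P$ is a subcategory) exhibits $qp$ as a coproduct of maps in $P$. Next, since $\Cc$ admits finite $P$-coproducts and $p\in\ulfinPsets(B)$, $q,qp\in\ulfinPsets(C)$, the left adjoints $p_!$, $q_!$, $(qp)_!$ exist and satisfy base change by \Cref{rmk:limitExtensionRestrictedClass}; this is the only input \cite{hopkins2013ambidexterity} needs at level $0$ beyond the diagonals. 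Finally, the diagonals $\Delta_p$, $\Delta_q$, $\Delta_{qp}$ are disjoint summand inclusions in $\PSh(T)$ by \Cref{prop:FinitePSetsHasDisjointDiagonalInclusions}, and for such maps \Cref{lem:NormMapDisjointSummandInclusions} supplies the adjoints $\Delta_!\dashv\Delta^*\dashv\Delta_*$ together with the equivalence $\Nm_\Delta\colon\Delta_!\iso\Delta_*$ serving as the $(-1)$-ambidexterity datum of \cite{hopkins2013ambidexterity}; one verifies that this $\Nm_\Delta$ is the Hopkins--Lurie $(-1)$-dimensional norm, which is the compatibility implicit in \Cref{rem:concise_norm_maps}. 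With these three points in hand, \cite{hopkins2013ambidexterity}*{Proposition~4.2.2} applies and, translated through \Cref{rem:concise_norm_maps}, gives the claimed homotopy.

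I expect the only genuine work to be this translation between Hopkins--Lurie's axiomatic framework and ours: one must check that the cartesian fibration $\smallint\Cc\to\PSh(T)$ carries the relevant cocartesian-type edges (the $f_!$) over the maps in play, and that their $(-1)$-ambidexterity of diagonals matches ``disjoint summand inclusion'' via \Cref{lem:NormMapDisjointSummandInclusions}. Should a self-contained argument be wanted instead, one could expand both sides using the concise formula for $\Nmadj$ from \Cref{rem:concise_norm_maps} and run a diagram chase, the essential geometric input being the pullback identification $A\times_B A\simeq(A\times_C A)\times_{B\times_C B}B$ relating $\Delta_{qp}$ to $\Delta_p$ and $\Delta_q$, together with the behaviour of the disjoint-summand norms $\Nm_\Delta$ under base change and composition; but since this only reproves a special case of the cited theorem I would not carry it out.
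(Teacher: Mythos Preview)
Your proposal is correct and matches the paper's approach exactly: the paper gives no proof beyond the citation (the statement ends with \qednow), relying on \Cref{rem:concise_norm_maps} for the identification of $\Nmadj_p$ with Hopkins--Lurie's $\nu^{(0)}_p$, which is precisely the dictionary you spell out. Your additional verification that the hypotheses of \cite{hopkins2013ambidexterity}*{Proposition~4.2.2} are met (closure of $\ulfinPsets$ under composition, existence of the left adjoints with base change, and $(-1)$-ambidexterity of diagonals via \Cref{lem:NormMapDisjointSummandInclusions}) is a reasonable elaboration of what the paper leaves implicit.
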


\begin{corollary}[\cite{hopkins2013ambidexterity}*{Remark~4.2.4}]\label{cor:NormsVsComposition}
	In the situation of \Cref{prop:adjNormsVsComposition}, assume that $\Cc$ furthermore admits finite $P$-products. Then the composite transformation
	\[
	(qp)_! \simeq q_!p_! \xrightarrow{\Nm_q} q_*p_! \xrightarrow{\Nm_p} q_*p_* \simeq (qp)_*
	\]
	is homotopic to the norm map $\Nm_{qp}$.\qed
\end{corollary}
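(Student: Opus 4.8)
The plan is to deduce this from Proposition~\ref{prop:adjNormsVsComposition} by passing to adjoints, exactly in the spirit of the proof of Lemma~\ref{lem:NormMapEqualsDualNormMap}. Recall that $\Nm_p$, $\Nm_q$ and $\Nm_{qp}$ are, by definition, the mates of the adjoint norm transformations $\Nmadj_p$, $\Nmadj_q$ and $\Nmadj_{qp}$ under the adjunctions $p^*\dashv p_*$, $q^*\dashv q_*$ and $(qp)^*\dashv (qp)_*$, respectively. The canonical equivalences $(qp)^*\simeq p^*q^*$ and $(qp)_*\simeq q_*p_*$ (together with $(qp)_!\simeq q_!p_!$) exhibit $(qp)^*\dashv(qp)_*$ as the composite of $p^*\dashv p_*$ followed by $q^*\dashv q_*$, so the adjunction bijection computing $\Nm_{qp}$ from $\Nmadj_{qp}$ factors as $\Hom\big((qp)^*(qp)_!,\id\big)\xrightarrow{\sim}\Hom(q^*q_!p_!,p_*)\xrightarrow{\sim}\Hom\big((qp)_!,(qp)_*\big)$, using $p^*\dashv p_*$ and then $q^*\dashv q_*$. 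Since Proposition~\ref{prop:adjNormsVsComposition} identifies $\Nmadj_{qp}$ with the composite $\Nmadj_p\circ(p^*\,\Nmadj_q\,p_!)$, it suffices to trace this composite through the two-stage bijection and check that the result is $(q_*\,\Nm_p)\circ(\Nm_q\,p_!)$.

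The computation itself is then routine naturality bookkeeping. Under the first bijection a transformation $\phi$ is sent to $(p_*\phi)\circ\eta_{q^*q_!p_!}$, where $\eta$ denotes the unit of $p^*\dashv p_*$; applying this to $\Nmadj_p\circ(p^*\,\Nmadj_q\,p_!)$ and using naturality of $\eta$ at the $2$-cell $\Nmadj_q\,p_!\colon q^*q_!p_!\Rightarrow p_!$ rearranges the composite as $\big((p_*\Nmadj_p)\circ\eta_{p_!}\big)\circ(\Nmadj_q\,p_!)$. But $(p_*\Nmadj_p)\circ\eta_{p_!}$ is precisely $\Nm_p$, so the image is $\Nm_p\circ(\Nmadj_q\,p_!)\colon q^*q_!p_!\Rightarrow p_*$. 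Repeating the same manoeuvre for the second bijection -- now inserting the unit $\eta'$ of $q^*\dashv q_*$, using naturality at $\Nm_p$, and the identity $\eta'_{q_!p_!}=\eta'_{q_!}\,p_!$ from associativity of whiskering -- produces $(q_*\,\Nm_p)\circ\big((q_*\Nmadj_q)\circ\eta'_{q_!}\big)p_!=(q_*\,\Nm_p)\circ(\Nm_q\,p_!)$, which is exactly the composite in the statement.

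The only place requiring care is the whiskering and unit bookkeeping: one must keep straight which adjunction unit ($\eta$ for $p$, $\eta'$ for $q$, or the composite unit) is inserted at each stage, and verify that the various whiskered units genuinely match up through associativity of whiskering and naturality. In the $\infty$-categorical setting all of this is carried out in the homotopy $2$-category, and it is licensed by the compatibility of the formation of adjoints with composition of adjunctions and with pasting of $2$-cells -- the same black box already used implicitly in the proofs of Lemma~\ref{lem:NormMapEqualsDualNormMap} and Lemma~\ref{lem:NormMapInTermsOfExchangeMap}. Alternatively, one can bypass the explicit computation entirely and simply invoke \cite{hopkins2013ambidexterity}*{Remark~4.2.4}, applied to the Beck--Chevalley fibration $\smallint\Cc\to\PSh(T)$ as in Remark~\ref{rem:concise_norm_maps}.
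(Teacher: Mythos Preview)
Your argument is correct: the deduction from Proposition~\ref{prop:adjNormsVsComposition} by adjointing over the composite adjunction $(qp)^*\simeq p^*q^*\dashv q_*p_*\simeq(qp)_*$ in two stages is exactly the intended mechanism, and your naturality bookkeeping checks out. Note that the paper itself gives no proof at all---it simply cites \cite{hopkins2013ambidexterity}*{Remark~4.2.4} and places a \qednow, which is precisely the alternative you mention at the end of your write-up; you have in effect supplied the details that the paper (and the cited remark) leave implicit.
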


The norm maps are also suitably functorial in the $T$-$\infty$-category $\Cc$: as we will now show, any pointed $T$-functor $G\colon \Cc \to \Dd$ transforms norm maps in $\Cc$ into norm maps in $\Dd$.

\begin{lemma}\label{lem:alpha-compatible}
	Let $G\colon \Cc \to \Dd$ be a pointed $T$-functor of pointed $T$-categories and let $(p\colon A \to B) \in \ulfinPsets(B)$. Then the diagram
	\begin{equation*}
		\begin{tikzcd}
			\pr_2^*G\arrow[r, "\alpha G"]\arrow[d,"\simeq"'] & \pr_1^*G\arrow[d, "\simeq"]\\
			G\pr_2^*\arrow[r, "G\alpha"'] & G\pr_1^*
		\end{tikzcd}
	\end{equation*}
	of transformations between functors $\mathcal C(A)\to\mathcal D(A\times_BA)$ commutes.
\end{lemma}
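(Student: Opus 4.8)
The plan is to exploit that the composite defining $\alpha$ in \Cref{cstr:NormMap}\eqref{it:DefAlpha},
\[
	\pr_2^* \xRightarrow{u^*_{\Delta}} \Delta_*\Delta^*\pr_2^* \simeq \Delta_* \xRightarrow{\Nm_{\Delta}^{-1}} \Delta_! \simeq \Delta_!\Delta^*\pr_1^* \xRightarrow{c^!_{\Delta}} \pr_1^*,
\]
involves only restriction functors along maps of $\PSh(T)$ together with the adjoints $\Delta_!,\Delta_*$ of $\Delta^*$, their units and counits, and the norm equivalence $\Nm_\Delta$ — and that a pointed $T$-functor respects all of these. Concretely, by \Cref{prop:FinitePSetsHasDisjointDiagonalInclusions} the diagonal $\Delta\colon A\to A\times_B A$ is a disjoint summand inclusion; write $j\colon C\hookrightarrow A\times_B A$ for its disjoint complement, so that $A\times_B A\simeq A\sqcup C$ in $\PSh(T)$. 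Since $\Cc$ and $\Dd$ convert this coproduct into product decompositions $\Cc(A\times_B A)\simeq\Cc(A)\times\Cc(C)$ and $\Dd(A\times_B A)\simeq\Dd(A)\times\Dd(C)$, and since $G$ is a $T$-functor, the component $G_{A\times_B A}$ is identified with $G_A\times G_C$ under these equivalences.

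Under these identifications one reads off, exactly as in the proof of \cref{lem:NormMapDisjointSummandInclusions}, that $\Delta^*$ is the first projection, that $\Delta_!$ and $\Delta_*$ are both the functor $X\mapsto(X,0)$ with $0$ the zero object of the relevant category, that $\Nm_\Delta$ is the resulting canonical identification between them, that $u^*_\Delta$ and $c^!_\Delta$ are in the second coordinate the unique maps into and out of $0$, and that the identifications $\Delta^*\pr_i^*\simeq\id$ are induced by the identities $\pr_i\circ\Delta=\id_A$ in $\PSh(T)$. A $T$-functor commutes with restriction functors and with the coherences coming from composition in $\PSh(T)$, so $G$ is compatible with the $\pr_i^*$, with $\Delta^*$, and with the identifications $\Delta^*\pr_i^*\simeq\id$; and since $G$ is \emph{pointed}, the component $G_C$ preserves the zero object together with the unique maps to and from it, so $G$ is moreover compatible with $\Delta_!$, $\Delta_*$, the units and counits $u^*_\Delta,c^!_\Delta$, and the identification $\Nm_\Delta$. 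Composing these compatibilities along the displayed factorisation of $\alpha$ then yields precisely the commuting square in the statement.

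I expect the only delicate point to be the bookkeeping of these coherences rather than any genuine obstruction: every natural transformation between the functors in play is detected on the two factors $\Cc(A)$ and $\Cc(C)$, so one may verify the identity of pasting diagrams factorwise, invoking on the $\Cc(A)$-factor that $G$ commutes with restrictions and on the $\Cc(C)$-factor that $G_C$ preserves zero objects. If one prefers to avoid choosing coordinates, the same argument runs via the mate calculus: the Beck--Chevalley comparisons $\Delta_!G_A\Rightarrow G_{A\times_B A}\Delta_!$ and $G_{A\times_B A}\Delta_*\Rightarrow\Delta_*G_A$ (the mates of the naturality equivalence $\Delta^*G_{A\times_B A}\simeq G_A\Delta^*$) are equivalences — for $\Delta_!$ and $\Delta_*$ this uses only full faithfulness of these functors, which holds because $\Delta$ is a disjoint summand inclusion — and preservation of the zero object by $G_C$ is what makes these comparison equivalences intertwine the units $u^*_\Delta$, the counits $c^!_\Delta$, and the norm equivalence $\Nm_\Delta$ on the two sides, which is exactly what is needed to identify $G\alpha$ with $\alpha G$.
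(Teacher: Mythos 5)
Your proposal is correct and essentially coincides with the paper's argument: factor $\alpha$ through the unit $u^*_\Delta$, the norm $\Nm_\Delta^{-1}$, and the counit $c^!_\Delta$, then check compatibility of $G$ with each piece, with pointedness of $G$ entering only for the norm piece. One small imprecision in your mate-calculus reformulation is worth flagging: you assert that the Beck--Chevalley comparisons $\Delta_!G_A \Rightarrow G_{A\times_B A}\Delta_!$ and $G_{A\times_B A}\Delta_* \Rightarrow \Delta_*G_A$ are equivalences ``using only full faithfulness'' of $\Delta_!,\Delta_*$; in fact this already requires pointedness of $G$ --- in your own coordinate picture the mate in the second factor compares $0\in\Dd(C)$ against $G_C(0_{\Cc(C)})$, and that is an equivalence precisely because $G_C$ preserves the zero object. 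This does not invalidate your proof, however, because neither your coordinate argument nor the paper's three pasting squares actually need the Beck--Chevalley maps to be invertible: the outer (unit and counit) squares commute for formal reasons via the triangle identities, and pointedness is used only to make the middle square --- intertwining $\Nm_\Delta$ with the Beck--Chevalley comparisons --- commute.
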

\begin{proof}
	Spelling out the definition of $\alpha$, this is a direct consequence of the following three commutative diagrams:
	\[
	\begin{tikzcd}[cramped]
		G & {\Delta_*\Delta^*G} \\
		{G\Delta_*\Delta^*} & {\Delta_*G\Delta^*,}
		\arrow["{u_{\Delta}^*G}", from=1-1, to=1-2]
		\arrow["\simeq", from=1-2, to=2-2]
		\arrow["{Gu^*_{\Delta}}"', from=1-1, to=2-1]
		\arrow["{\text{BC}_*}"', from=2-1, to=2-2]
	\end{tikzcd}
	\qquad
	\begin{tikzcd}[cramped]
		\Delta_!G\arrow[d, "\text{BC}_!"']\arrow[r, "\Nm_{\Delta} G"] &[1em] \Delta_*G\\
		G\Delta_!\arrow[r, "G\Nm_{\Delta}"'] & G\Delta_*, \arrow[u, "\text{BC}_*"']
	\end{tikzcd}
	\qquad
	\begin{tikzcd}[cramped]
		{\Delta_! \Delta^*G} & G \\
		{\Delta_!G\Delta^*} & {G\Delta_! \Delta^*.}
		\arrow["{c^!_{\Delta}G}", from=1-1, to=1-2]
		\arrow["{Gc^!_{\Delta}}"', from=2-2, to=1-2]
		\arrow["{\text{BC}_!}"', from=2-1, to=2-2]
		\arrow["\simeq", from=2-1, to=1-1]
	\end{tikzcd}
	\]
	The left and right squares commute by definition of the Beck-Chevalley maps, using the triangle identities. The fact that the middle square commutes follows directly from pointedness of $G$ and the construction of $\Nm_{\Delta}$ in \cref{lem:NormMapDisjointSummandInclusions}.
\end{proof}

\begin{lemma}
	\label{lem:NormVsBeckChevalley}
	Let $G\colon \Cc\rightarrow \Dd$ be a pointed $T$-functor between two pointed $T$-$\infty$-categories which admit finite $P$-coproducts. Then for every $(p\colon A \to B) \in \ulfinPsets(B)$, the diagram
	\[\begin{tikzcd}
		{p^*p_!G_A} & {p^*G_Bp_!} & {G_Ap^*p_!} \\
		{G_A} && {G_A}
		\arrow["\simeq", from=1-2, to=1-3]
		\arrow["{\BC_!}", from=1-1, to=1-2]
		\arrow[Rightarrow, no head, from=2-1, to=2-3]
		\arrow["{G_A\Nmadj^{\Cc}_p}", from=1-3, to=2-3]
		\arrow["{\Nmadj^{\Dd}_pG_A}"', from=1-1, to=2-1]
	\end{tikzcd}\]
	commutes.
\end{lemma}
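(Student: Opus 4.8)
The plan is to unwind the definition of the adjoint norm map $\Nmadj_p$ given in the concise form of \Cref{rem:concise_norm_maps}, namely as the composite
\[
p^*p_! \overset{l.b.c.}{\simeq} {\pr_1}_! \pr_2^* \xRightarrow{u^*_{\Delta}} {\pr_1}_! \Delta_* \Delta^* \pr_2^* \overset{\Nm^{-1}_{\Delta}}{\simeq} {\pr_1}_! \Delta_! \Delta^* \pr_2^* \simeq \id,
\]
and to verify that $G$ carries each of these four pieces to the corresponding piece for $\Dd$. The key observation is that all four steps are built from Beck--Chevalley maps (or their inverses) together with the transformation $\alpha$, so the compatibility reduces to two already-available facts: first, for any pointed $T$-functor $G$ and any morphism $f$ in $\ulfinPsets$, the canonical comparison between $G \circ f_!$ and $f_! \circ G$ (the mate of $G f^* \simeq f^* G$) is compatible with pasting of pullback squares — this is the standard naturality of the Beck--Chevalley construction, already invoked implicitly in the earlier proofs; second, \Cref{lem:alpha-compatible} tells us exactly that $G$ intertwines $\alpha$ on $\Cc$ with $\alpha$ on $\Dd$.

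Concretely, I would build a commutative diagram whose rows are the two instances of the above four-step composite (one for $\Cc$, one for $\Dd$), with the vertical maps given by the canonical comparisons $p^*p_!G \Rightarrow G p^*p_!$, $\pr_{1!}\pr_2^* G \Rightarrow G \pr_{1!}\pr_2^*$, $\pr_{1!}\Delta_*\Delta^*\pr_2^* G \Rightarrow G\pr_{1!}\Delta_*\Delta^*\pr_2^*$, and so on (all of which exist since $G$ is a $T$-functor and hence commutes with all restriction functors, and preserves the relevant left adjoints up to the canonical Beck--Chevalley map). The first square (the left base change equivalence) commutes by the naturality/pasting compatibility of Beck--Chevalley maps applied to the square \eqref{eq:PullbackDiagramNormMap}. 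The second square commutes by naturality of the unit $u^*_\Delta$ together with the interaction of $G$ with $\pr_{1!}$. The third square — passing from $\Delta_*$ to $\Delta_!$ via $\Nm^{-1}_\Delta$ — is precisely the middle diagram appearing in the proof of \Cref{lem:alpha-compatible}, i.e.\ it commutes because $G$ is pointed and $\Nm_\Delta$ is constructed as in \Cref{lem:NormMapDisjointSummandInclusions}. The last square, collapsing $\pr_{1!}\Delta_!\Delta^*\pr_2^*$ to the identity, commutes by the triangle identities and functoriality of counits, exactly as in the right-hand diagram of \Cref{lem:alpha-compatible}'s proof. Pasting these four commuting squares horizontally yields the claimed square, once one checks that the composite of the canonical comparison maps along the top and bottom agree with the single comparison $p^*p_!G \Rightarrow Gp^*p_!$ and $\id \Rightarrow \id$ respectively — the latter being trivial, the former again a pasting statement for Beck--Chevalley data.

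The main obstacle is bookkeeping rather than conceptual: one must be careful that the "canonical comparison" maps in the intermediate columns are the honest mates and that they compose correctly when the four pieces are glued, so that the top-left-to-bottom-right path of the big diagram really is $G_A \Nmadj^{\Cc}_p$ precomposed with $p^*p_!G_A \simeq G_A p^*p_!$ — rather than some a priori different natural transformation. I expect this to be handled cleanly by invoking the coherence of mates with horizontal and vertical pasting (the "mundane exercise in 2-category theory" already used in the proof of \Cref{lem:NormMapEqualsDualNormMap}), so the write-up can be kept short by citing \Cref{lem:alpha-compatible} for the $\alpha$-compatibility and the standard Beck--Chevalley pasting lemmas for the rest, and by noting that all constructions take place in the homotopy $2$-category of $\Cat_T$.
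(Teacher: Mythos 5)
Your proposal is correct in outline, but it takes a slightly different decomposition from the paper's. The paper works with the three-step description of $\Nmadj_p$ from Construction~\ref{cstr:NormMap}\eqref{it:AdjointNormMap}, i.e.\ $\smash{p^*p_! \overset{l.b.c.}{\simeq} {\pr_1}_!\pr_2^* \xRightarrow{{\pr_1}_!\alpha} {\pr_1}_!\pr_1^* \xRightarrow{c^!_{\pr_1}} \id}$, keeping $\alpha$ intact, and then assembles a single $4\times 3$ pasting diagram: the top rectangle (the l.b.c.\ pasting) commutes because the two sides are the Beck--Chevalley transformations of two equivalent composite squares; the middle row of squares uses Lemma~\ref{lem:alpha-compatible} once, as a black box, for the left square and mere naturality for the right; and the bottom rectangle is a triangle-identity computation. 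You instead expand $\alpha$ via Remark~\ref{rem:concise_norm_maps} into unit, $\Nm_{\Delta}^{-1}$ and counit, giving four squares, and for each of the last three you cite one of the three constituent diagrams in the proof of Lemma~\ref{lem:alpha-compatible}; in effect you inline that lemma's proof rather than using its statement. Both are valid, but the paper's packaging is tighter, and your unwinding reintroduces a bookkeeping subtlety that the black-box use of Lemma~\ref{lem:alpha-compatible} hides: the vertical ``commutator'' maps in your third and fourth columns involve the $\Delta_*$-Beck--Chevalley map going the wrong way, so one has to observe that $\BC_*$ for $\Delta_*$ is invertible here (as $\Delta$ is a disjoint summand inclusion and $G$ is pointed). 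Relatedly, your justification of the second square is slightly too weak as written: commuting $G$ past $u^*_\Delta$ is not just ``naturality of the unit together with the interaction of $G$ with ${\pr_1}_!$,'' it is precisely the first of the three diagrams in the proof of Lemma~\ref{lem:alpha-compatible}, which you invoke for the other two pieces but omit here. These are fixable slips rather than a real gap, and the strategy is sound.
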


\begin{proof}
	Consider the diagram
	\[\begin{tikzcd}
		{p^*p_!G_A} & {p^*G_Bp_!} & {G_Ap^*p_!} \\
		{{\pr_1}_! \pr_2^*G_A} & {{\pr_1}_!G_{A \times_B A}\pr_2^*} & {G_A{\pr_1}_! \pr_2^*} \\
		{{\pr_1}_! \pr_1^*G_A} & {{\pr_1}_!G_{A \times_B A}\pr_1^*} & {G_A{\pr_1}_! \pr_1^*} \\
		{G_A} && {G_A.}
		\arrow["\simeq", from=1-2, to=1-3]
		\arrow["{\BC_!}", from=1-1, to=1-2]
		\arrow["{l.b.c.}", from=1-1, to=2-1]
		\arrow["{l.b.c.}"', from=1-3, to=2-3]
		\arrow["{\BC_!}", from=2-2, to=2-3]
		\arrow["\simeq", from=2-1, to=2-2]
		\arrow["{{\pr_1}_!\alpha G_A}", from=2-1, to=3-1]
		\arrow["{G_A{\pr_1}_! \alpha}"', from=2-3, to=3-3]
		\arrow["{{\pr_1}_!G_{A \times_B A}\alpha}"{description}, from=2-2, to=3-2]
		\arrow["{\BC_!}", from=3-2, to=3-3]
		\arrow["\simeq", from=3-1, to=3-2]
		\arrow["{c^!_{\pr_1}G_A}", from=3-1, to=4-1]
		\arrow["{G_Ac^!_{\pr_1}}"', from=3-3, to=4-3]
		\arrow[Rightarrow, no head, from=4-1, to=4-3]
		\arrow["{G_A\Nmadj^{\Cc}_p}", bend left = 50, from=1-3, to=4-3]
		\arrow["{\Nmadj^{\Dd}_pG_A}"', bend right = 50, from=1-1, to=4-1]
	\end{tikzcd}\]
	We are interested in the outer square. The right middle square commute by naturality. The left middle square commutes by \cref{lem:alpha-compatible}. The bottom rectangle commutes by definition of the Beck-Chevalley map, using the triangle identity. Finally, the upper rectangle commutes as the two composites are the Beck-Chevalley transformations associated to the following two equivalent composite squares:
	\[\hskip-22.111pt\hfuzz=23pt\begin{tikzcd}[cramped]
		{\Cc(B)} & {\Dd(B)} & {\Dd(A)} \\
		{\Cc(A)} & {\Dd(A)} & {\Dd(A \times_B A)}
		\arrow["{G_B}", from=1-1, to=1-2]
		\arrow["{G_A}", from=2-1, to=2-2]
		\arrow["{p^*}"', from=1-1, to=2-1]
		\arrow["{p^*}", from=1-2, to=2-2]
		\arrow["{\mathrm{pr}_1^*}", from=1-3, to=2-3]
		\arrow["{\pr_2^*}", from=2-2, to=2-3]
		\arrow["{p^*}", from=1-2, to=1-3]
	\end{tikzcd}
	\qquad \text{ and } \qquad
	\begin{tikzcd}[cramped]
		{\Cc(B)} & {\Cc(A)} & {\Dd(A)} \\
		{\Cc(A)} & {\Cc(A\times_B A)} & {\Dd(A \times_B A).}
		\arrow["{p^*}", from=1-1, to=1-2]
		\arrow["{\pr_2^*}", from=2-1, to=2-2]
		\arrow["{p^*}"', from=1-1, to=2-1]
		\arrow["{\mathrm{pr}_1^*}", from=1-2, to=2-2]
		\arrow["{\mathrm{pr}_1^*}", from=1-3, to=2-3]
		\arrow["{G_{A \times_B A}}", from=2-2, to=2-3]
		\arrow["{G_A}", from=1-2, to=1-3]
	\end{tikzcd}\]
	This finishes the proof.
\end{proof}

We end the subsection with the the following technical lemma, needed for the proof of \cref{prop:CharacterizationPSemiadditivity} below. We recommend the reader skip this lemma on first reading.

\begin{lemma}
	\label{lem:TechnicalLemmaRecognitionPSemiadditivity}
	Let $\Cc$ be a pointed $T$-$\infty$-category which admits finite $P$-products. Let $(p\colon A \to B) \in \ulfinPsets(B)$, and assume that $p$ admits a section $s\colon B \to A$ which is a disjoint summand inclusion. Then the composite
	\[
	s^* \iso p_*s_*s^* \xrightarrow{p_*\Nm_s^{-1}s^*} p_*s_!s^* \xrightarrow{p_*c^!_s} p_*
	\]
	is homotopic to the composite
	\[
	s^* \xrightarrow{s^*\Nmadjdual_p} s^*p^*p_* \simeq p_*.
	\]
\end{lemma}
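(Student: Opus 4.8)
The plan is to compare the two composites after passing to adjuncts. Both are natural transformations $s^*\Rightarrow p_*$ of functors $\Cc(A)\to\Cc(B)$; applying the adjunction $p^*\dashv p_*$ turns each into a transformation $p^*s^*=(sp)^*\Rightarrow\id_{\Cc(A)}$ (a transformation $\eta\colon s^*\Rightarrow p_*$ goes to $p^*s^*\xrightarrow{p^*\eta}p^*p_*\xrightarrow{c^*_p}\id_{\Cc(A)}$), and it suffices to check that these two adjoint transformations agree. Since $c^*_p$ is natural one may slide it past all the intervening functors, so both adjuncts take a concrete form.

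The key geometric input is the morphism $\gamma\mathrel{:=}(\id_A,s\circ p)\colon A\to A\times_BA$, which is well-defined because $p\circ\id_A=p=p\circ s\circ p$ (using $p\circ s\simeq\id_B$). It has three features: (i) $\pr_1\circ\gamma\simeq\id_A$ and $\pr_2\circ\gamma\simeq s\circ p$, so $\gamma^*\pr_1^*\simeq\id_{\Cc(A)}$ and $\gamma^*\pr_2^*\simeq(sp)^*$; (ii) $\gamma$ is the base change of $s$ along $\pr_2$, hence a disjoint summand inclusion, and fits into a pullback square with top map $\gamma$, right map $\pr_2$, bottom map $s$ and left map $p$; (iii) the pullback of $\gamma$ along the diagonal $\Delta\colon A\to A\times_BA$ is the section $s$ (as $\gamma\circ s=(s,s)=\Delta\circ s$), yielding a second pullback square whose four legs all lie among $s,\Delta,\gamma$ and are disjoint summand inclusions ($\Delta$ by \Cref{prop:FinitePSetsHasDisjointDiagonalInclusions}). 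Using the square in (ii), the right Beck--Chevalley equivalence $s^*{\pr_2}_*\simeq p_*\gamma^*$ (available since $\Cc$ admits finite $P$-products and $\pr_2\in\ulfinPsets(A)$) together with a routine rewriting of $\Nmadjdual_p$ (in the form of \Cref{rem:concise_norm_maps}, or directly from \Cref{cstr:NormMap}\eqref{it:DualAdjointNormMap}) identifies the adjunct of the second composite with $\gamma^*\alpha\colon(sp)^*=\gamma^*\pr_2^*\Rightarrow\gamma^*\pr_1^*=\id_{\Cc(A)}$, where $\alpha$ is the transformation $\pr_2^*\Rightarrow\pr_1^*$ of \Cref{cstr:NormMap}\eqref{it:DefAlpha}.

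It then remains to identify the adjunct of the first composite with the same map $\gamma^*\alpha$. For this one restricts the defining zig-zag of $\alpha$ along $\gamma$ and feeds in the pullback square (iii): base-change compatibility of units and counits turns $\gamma^*u^*_\Delta$ into $u^*_s$ and $\gamma^*c^!_\Delta$ into $c^!_s$; the (easily checked) adjointability of pullback squares all of whose legs are disjoint summand inclusions gives $\gamma^*\Delta_*\simeq s_*s^*$ and $\gamma^*\Delta_!\simeq s_!s^*$; and base change of norm maps (\Cref{cor:NormsVsBaseChange} applied to the square in (iii), legitimate here because $\Delta$ is a disjoint summand inclusion so all the relevant adjoints exist; alternatively, argue directly from part~(3) of \Cref{lem:NormMapDisjointSummandInclusions} together with \Cref{lem:AlphaIsDiagonalMatrix}) identifies $\gamma^*\Nm_\Delta$ with $\Nm_s\cdot s^*$. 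Splicing these identifications into the restricted zig-zag rewrites $\gamma^*\alpha$ as precisely the composite built out of $u^*_s$, $\Nm_s^{-1}$ and $c^!_s$ which, after unwinding the adjunction and using $p_*s_*\simeq\id_{\Cc(B)}$ and $s\circ p\circ s\simeq s$, is the adjunct of the first composite.

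The hard part is the coherence bookkeeping rather than any single step: one must check mutual compatibility of the many canonical equivalences in play — those coming from $p\circ s\simeq\id_B$ (hence $p_*s_*\simeq\id$ and $sps\simeq s$), from the two pullback squares, from the disjoint-summand decompositions of $A\times_BA$ induced by $\Delta$ and by $\gamma$, and from the norm equivalence $\Nm_\Delta$ — and in particular verify that the two disjoint summand inclusions $s\colon B\hookrightarrow A$ and $\Delta\colon A\hookrightarrow A\times_BA$, which become identified after pulling back along $\gamma$, have their norm maps matched compatibly with all the ambient identifications. Once this is carried out the two adjoint transformations coincide, which proves the lemma.
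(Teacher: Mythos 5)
Your proposal is correct and takes a genuinely different route from the paper. The paper's proof proceeds by expanding $\Nmadjdual_p$ via \Cref{rem:concise_norm_maps} (the $\Delta$-form), writing out both composites in full, and filling a large commutative diagram one square at a time. You instead pass across the adjunction $p^*\dashv p_*$ (so that both sides become transformations $(sp)^*\Rightarrow\id_{\Cc(A)}$) and exhibit $\gamma^*\alpha$ as a common value for the two adjuncts, where $\gamma=(\id_A,sp)$. This is a cleaner conceptual organization: it pinpoints $\alpha$ as the quantity both composites are secretly computing, rather than asking the reader to verify that a dozen squares commute. That said, the underlying geometry is the same — the paper's diagram also implicitly carries $(1,sp)^*=\gamma^*$, and both arguments rest on exactly the same pullback squares (your (ii) is the upper-right and your (iii) the upper-left square of the $2\times 2$ grid displayed in the paper's proof) and on the same supporting lemmas (\Cref{cor:NormsVsBaseChange} for the square identifying $s$ as the base change of $\Delta$ along $\gamma$, plus \Cref{lem:NormMapDisjointSummandInclusions} and \Cref{lem:AlphaIsDiagonalMatrix} for the behaviour of $\Nm_\Delta$ and $\alpha$ on the disjoint summand inclusion $\Delta$). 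You are right that the residual work is coherence bookkeeping — checking that the unit/counit/Beck--Chevalley identifications you invoke assemble compatibly — and this is of comparable weight to verifying the paper's big diagram; the adjunct reduction buys conceptual clarity more than it saves calculation. One small point worth flagging when you flesh this out: \Cref{cor:NormsVsBaseChange} is stated for $p\in\ulfinPsets(B)$, so you should note (as the paper implicitly does) that $\Delta$, being a disjoint summand inclusion, indeed belongs to $\ulfinPsets(A\times_BA)$, so the corollary applies to the square (iii) with $p=\Delta$, $p'=s$, $g_A=s$, $g_B=\gamma$.
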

\begin{proof}
	Recall from \cref{rem:concise_norm_maps} that the map $\Nmadjdual_p\colon \id \to p^*p_*$ is given by the following composite:
	\[
	\id \simeq {\pr_2}_*\Delta_*\Delta^*\pr_1^* \xrightarrow{\Nm_{\Delta}^{-1}} {\pr_2}_*\Delta_!\Delta^*\pr_1^* \xrightarrow{c^!_{\Delta}} {\pr_2}_*\pr_1^* \overset{r.b.c.}{\simeq} p^*p_*.
	\]
	We thus see that the composite $s^* \xrightarrow{s^*\Nmadjdual_p} s^*p^*p_* \simeq p_*$ is given by the composite along the left, bottom and right in the following large diagram:
	\[\hskip-9.21pt\hfuzz=10pt
	\begin{tikzcd}[cramped]
		{s^*} & {p_*s_*s^*} & {p_*s_!s^*} & {p_*} \\
		&& {p_*s_!s^*(1,sp)^*\pr_1^*} & {p_*(1,sp)^*\pr_1^*} & {p_*} \\
		{s^*} & {p_*s_*s^*\Delta^*\pr_1^*} & {p_*s_!s^*\Delta^*\pr_1^*} \\
		& {p_*(1,sp)^*\Delta_*\Delta^*\pr_1^*} & {p_*(1,sp)^*\Delta_!\Delta^*\pr_1^*} & {p_*(1,sp)^*\pr_1^*} & {p_*} \\
		{s^*} & {s^*(\pr_2)_*\Delta_*\Delta^*\pr_1^*} & {s^*(\pr_2)_*\Delta_!\Delta^*\pr_1^*} & {s^*(\pr_2)_*\pr_1^*} & {s^*p^*p_*}
		\arrow["{\Nm_{\Delta}^{-1}}", from=5-2, to=5-3]
		\arrow["{c^!_{\Delta}}", from=5-3, to=5-4]
		\arrow["\simeq", from=5-1, to=5-2]
		\arrow["{r.b.c.}", from=5-4, to=5-5]
		\arrow["{s^*\Nmadjdual_p}"{description}, bend right = 13pt, from=5-1, to=5-5]
		\arrow["\simeq", from=3-1, to=3-2]
		\arrow["{r.b.c.}", from=3-2, to=4-2]
		\arrow["{r.b.c.}", from=4-2, to=5-2]
		\arrow[""{name=0, anchor=center, inner sep=0}, Rightarrow, no head, from=3-1, to=5-1]
		\arrow["{\Nm_s^{-1}}", from=3-2, to=3-3]
		\arrow["{l.b.c.}", from=3-3, to=4-3]
		\arrow["{\Nm_{\Delta}^{-1}}", from=4-2, to=4-3]
		\arrow["{r.b.c.}", from=4-3, to=5-3]
		\arrow["{r.b.c.}", from=4-4, to=5-4]
		\arrow["\simeq", from=4-4, to=4-5]
		\arrow["\simeq", from=4-5, to=5-5]
		\arrow[Rightarrow, no head, from=2-5, to=4-5]
		\arrow["{c^!_{\Delta}}", from=4-3, to=4-4]
		\arrow["\simeq", from=1-1, to=1-2]
		\arrow[Rightarrow, no head, from=1-1, to=3-1]
		\arrow["{\Nm_s^{-1}}", from=1-2, to=1-3]
		\arrow["\simeq", from=1-2, to=3-2]
		\arrow["{c_s^!}", from=1-3, to=1-4]
		\arrow[Rightarrow, no head, from=1-4, to=2-5]
		\arrow["{(1)}"{description}, draw=none, from=4-2, to=3-3]
		\arrow["\simeq", from=2-3, to=3-3]
		\arrow["\simeq", from=1-3, to=2-3]
		\arrow["\simeq", from=2-4, to=2-5]
		\arrow["\simeq", from=1-4, to=2-4]
		\arrow["{c^!_s}", from=2-3, to=2-4]
		\arrow[Rightarrow, no head, from=2-4, to=4-4]
		\arrow["{(2)}"{description}, draw=none, from=4-3, to=2-4]
		\arrow["{(3)}"{description}, draw=none, from=4-4, to=5-5]
		\arrow["{(3)}"{description}, draw=none, from=0, to=3-2]
	\end{tikzcd}
	\]
	The composite along the top of this diagram is the other map appearing in the statement of the lemma, so it will suffice to prove that the diagram commutes. All unlabeled equivalences in this diagram come from identifications on the level of maps in $\PSh(T)$, e.g.\ we have $p_*s_* \simeq (ps)_* \simeq \id_* \simeq \id$, etcetera. The maps labeled \textit{l.b.c.} and \textit{r.b.c.} are the left/right base change equivalences associated with one of the following three pullback squares in $\finPsets$:
	\[
	\begin{tikzcd}
		B \drar[pullback] & A \drar[pullback] & B \\
		A & {A \times_B A} \drar[pullback] & A \\
		& A & {B.}
		\arrow["p", from=1-2, to=1-3]
		\arrow["p", from=3-2, to=3-3]
		\arrow["{(1,sp)}"{description}, from=1-2, to=2-2]
		\arrow["{\pr_1}", from=2-2, to=3-2]
		\arrow["{\pr_2}", from=2-2, to=2-3]
		\arrow["p", from=2-3, to=3-3]
		\arrow["s", from=1-3, to=2-3]
		\arrow["\Delta", from=2-1, to=2-2]
		\arrow["s", from=1-1, to=1-2]
		\arrow["s"', from=1-1, to=2-1]
		\arrow[Rightarrow, no head, from=2-1, to=3-2]
	\end{tikzcd}
	\]
	Except for the squares labelled (1), (2) and (3), all squares in the above diagram commute by naturality. The commutativity of (1) is an instance of \cref{cor:NormsVsBaseChange} applied to the previous pullback square exhibiting $s$ as a base change of $\Delta$ along $(1,sp)\colon A \to A \times_B A$.
	The commutativity of (2) follows directly from the definition of the left base change equivalence $s_!s^* \iso (1,sp)^*\Delta_!$, using the triangle identity. Finally, the two squares labeled (3) use that the composite of two right base change equivalences is the right base change equivalence for the composite, which in both cases is just equivalent to the identity.
\end{proof}

\subsection{\texorpdfstring{\for{toc}{$P$}\except{toc}{$\bm P$}}{P}-semiadditive \for{toc}{$T$-$\infty$}\except{toc}{\texorpdfstring{$\bm T$-$\bm\infty$}{T-∞}}-categories}
\label{subsec:PSemiadditiveTCategories}

In this section, we will introduce and discuss the notion of a $P$-semiadditive $T$-$\infty$-category for a fixed atomic orbital subcategory $P \subseteq T$.

\begin{definition}[cf. \cite{nardin2016exposeIV}*{Definition~5.3}]
	\label{def:PSemiadditivity}
	Let $\Cc$ be a pointed $T$-$\infty$-category which admits both finite $P$-products and finite $P$-coproducts. We say that $\Cc$ is \textit{$P$-semiadditive} if for every morphism $p\colon A \to B$ in $\finPsets$ the norm map $\Nm_p\colon p_! \Rightarrow p_*$ is an equivalence.

	We let $\CatTPProd \subseteq \Cat_T$ denote the (non-full) subcategory spanned by the $T$-$\infty$-categories which admit finite $P$-products and the $T$-functors which preserve finite $P$-products. We let $\CatTPSemi \subseteq \CatTPProd$ denote the full subcategory spanned by the $P$-semiadditive $T$-$\infty$-categories.
\end{definition}

\begin{example}\label{ex:eqsemi}
The previous definition applied to the pair $\Orb \subset \Glo$ gives a notion of $\Orb$-semiadditivity for global $\infty$-categories. We will refer to this as \emph{equivariant semiadditivity}.
\end{example}

It follows directly that also the norm maps for more general morphisms in $\ulfinPsets$ are equivalences:
\begin{corollary}
    \label{cor:NormEquivalencesForUlFinPSets}
    Let $\Cc$ be a $P$-semiadditive $T$-$\infty$-category, let $B \in \PSh(T)$ and let $(p\colon A \to B) \in \ulfinPsets(B)$. Then the norm map $\Nm_p\colon p_! \Rightarrow p_*$ is an equivalence.
\end{corollary}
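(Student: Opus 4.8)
The plan is to reduce the statement to the defining case of a morphism in $\finPsets$ by a base change argument, using only the compatibility of norm maps with base change (\Cref{cor:NormsVsBaseChange}) and the fact that $\ulfinPsets$ is closed under base change (orbitality of $P$). First I would recall that, since $\Cc\colon \PSh(T)\catop\to\Cat_\infty$ is limit preserving and $B\simeq\colim_{B'\in T_{/B}}B'$ by the co-Yoneda lemma, we have $\Cc(B)\simeq\lim_{B'\in(T_{/B})\catop}\Cc(B')$; in particular the restriction functors $g_B^*\colon\Cc(B)\to\Cc(B')$ along maps $g_B\colon B'\to B$ with $B'\in T$ representable are jointly conservative. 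Note also that $p_!$ and $p_*$ exist as functors $\Cc(A)\to\Cc(B)$ by \Cref{rmk:limitExtensionRestrictedClass} and its dual, since $\Cc$ admits finite $P$-coproducts and finite $P$-products, so the norm map $\Nm_p$ makes sense via \Cref{cstr:NormMap}. Hence to show $\Nm_p\colon p_!\Rightarrow p_*$ is an equivalence, it suffices to show that $g_B^*\Nm_p$ is an equivalence for every such $g_B$.

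Next I would fix $g_B\colon B'\to B$ with $B'$ representable and form the pullback square with $p'\colon A'\to B'$ the base change of $p$ and $g_A\colon A'\to A$ the projection. By orbitality we have $p'\in\ulfinPsets(B')$, and since $B'$ is representable this means, by \Cref{def:ulFinPSets}, that $p'$ decomposes as a finite disjoint union $(p'_i)\colon\bigsqcup_{i=1}^n A'_i\to B'$ with each $p'_i\colon A'_i\to B'$ a morphism in $P$ (so in particular each $A'_i$ is representable). Thus $p'$ is a morphism in $\finPsets$, and $P$-semiadditivity of $\Cc$ (\Cref{def:PSemiadditivity}) gives that $\Nm_{p'}\colon p'_!\Rightarrow p'_*$ is an equivalence. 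Applying \Cref{cor:NormsVsBaseChange} to the above pullback square, the transformation $g_B^*\Nm_p$ is identified, via the left and right base change equivalences, with $\Nm_{p'}g_A^*$; since $\Nm_{p'}$ is an equivalence, so is $\Nm_{p'}g_A^*$, hence so is $g_B^*\Nm_p$. As $g_B$ was arbitrary and the $g_B^*$ are jointly conservative, $\Nm_p$ is an equivalence.

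I do not expect a genuine obstacle here: the argument is a routine "check on representables and base change" reduction. The only point that needs a moment of care is that the base change of $p$ along a map from a representable, while a priori only known to lie in $\ulfinPsets(B')$, automatically lies in $\finPsets$ because the constituents of its coproduct decomposition are morphisms in $P\subseteq T$ and hence representable — this is exactly where orbitality of $P$ (closure of $\ulfinPsets$ under base change) is used, and it is immediate from the definitions. The rest is bookkeeping with the base change equivalences already packaged in \Cref{prop:adjNormsVsBaseChange} and \Cref{cor:NormsVsBaseChange}.
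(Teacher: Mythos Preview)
Your proof is correct and follows essentially the same approach as the paper: reduce to representable $B'$ via the co-Yoneda lemma and joint conservativity of the restriction functors, then use \Cref{cor:NormsVsBaseChange} to identify $g_B^*\Nm_p$ with $\Nm_{p'}g_A^*$ for the base-changed morphism $p'\in\finPsets$, which is an equivalence by the definition of $P$-semiadditivity. Your write-up is slightly more explicit about why $p'$ lies in $\finPsets$ and about the existence of $p_!$, $p_*$ via \Cref{rmk:limitExtensionRestrictedClass}, but otherwise matches the paper's argument step for step.
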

\begin{proof}
    We may write the presheaf $B$ as a colimit $\colim_i B_i$ of representables $B_i \in T$, which gives rise to an equivalence of $\infty$-categories $\Cc(B) \simeq \lim_i \Cc(B_i)$. It will thus suffice to show that for every representable $B' \in T$ and any morphism $g\colon B' \to B$ of presheaves, the transformation $g^*\Nm_p\colon g^*p_! \Rightarrow g^*p_*$ is an equivalence. By \Cref{cor:NormsVsBaseChange}, it will suffice to show that the transformation $\Nm_{p'}\colon p'_! \Rightarrow p'_*$ is an equivalence, where $p'\colon A \times_B B' \to B'$ is the base change of $p$ along $g$. Since this base change is a morphism in $\finPsets$, this holds by assumption on $\Cc$.
\end{proof}

We will next discuss various alternative characterizations of $P$-semiadditivity. We start by observing that this condition is self-dual.

\begin{lemma}
	\label{lem:CharacterizationPSemiadditiveTCategories}
	Let $\Cc$ be a pointed $T$-$\infty$-category. Then the following conditions are equivalent:
	\begin{enumerate}[(1)]
		\item The $T$-$\infty$-category $\Cc$ is $P$-semiadditive;
		\item The opposite $T$-$\infty$-category $\Cc\catop$ is $P$-semiadditive;
		\item The $T$-$\infty$-category $\Cc$ admits finite $P$-coproducts and for every morphism $p\colon A \to B$ in $\finPsets$ the adjoint norm map $\Nmadj_p\colon p^*p_! \Rightarrow \id$ is the counit of an adjunction $p^* \dashv p_!$;
		\item The $T$-$\infty$-category $\Cc$ admits finite $P$-products and for every morphism $p\colon A \to B$ in $\finPsets$ the dual adjoint norm map $\Nmadjdual_p\colon \id \Rightarrow p^*p_*$ is the unit of an adjunction $p_* \dashv p^*$.
	\end{enumerate}
\end{lemma}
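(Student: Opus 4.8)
The plan is to establish $(1)\Leftrightarrow(3)$ directly, to obtain $(1)\Leftrightarrow(4)$ as its formal dual, and to deduce $(1)\Leftrightarrow(2)$ from the compatibility of the whole norm-map construction with passing to opposite $T$-$\infty$-categories. Two elementary facts will be used repeatedly. First (\emph{uniqueness of adjoints}): for $(p\colon A\to B)\in\finPsets$, if $\Nmadj_p\colon p^*p_!\Rightarrow\id$ happens to be the counit of an adjunction $p^*\dashv p_!$, then $p_!$ is a second right adjoint of $p^*$, so the canonical comparison $\Nm_p\colon p_!\Rightarrow p_*$---which by construction (\Cref{cstr:NormMap}\eqref{it:NormMap}) is the adjoint of $\Nmadj_p$ under $p^*\dashv p_*$---is an equivalence; conversely, if $\Nm_p$ is an equivalence then $\Nmadj_p$ is the composite of the counit of $p^*\dashv p_*$ with the equivalence $p^*p_!\simeq p^*p_*$, hence is itself a counit of some adjunction $p^*\dashv p_!$. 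Second, the fold maps $\nabla\colon\bigsqcup_{i=1}^n B\to B$ lie in $\finPsets$, with $\nabla^*$ the $n$-fold diagonal and $\nabla_!$ the $n$-fold coproduct, so $\Nmadj_\nabla$ being a counit of $\nabla^*\dashv\nabla_!$ says exactly that $\Cc(B)$ has $n$-fold products agreeing with $n$-fold coproducts.

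For $(1)\Rightarrow(3)$ I would just invoke the first fact, noting that a $P$-semiadditive $\Cc$ admits finite $P$-coproducts by definition. For $(3)\Rightarrow(1)$ the first fact gives that every $\Nm_p$ is an equivalence, so the remaining point is that $\Cc$ admits finite $P$-products: fiberwise finite products come from the second fact (and they are preserved by the restriction functors because these preserve fiberwise finite \emph{co}products and the fibers are semiadditive), while for the Beck--Chevalley condition of \Cref{prop:finitePcoproducts} I would feed the counits $\Nmadj_{p'},\Nmadj_p$ into the commuting square of \Cref{prop:adjNormsVsBaseChange} (available since $\Cc$ admits finite $P$-coproducts): since by $(3)$ these are the counits of $p'^*\dashv p'_!$ and $p^*\dashv p_!$, that square identifies the left base-change equivalence $p'_!g_A^*\simeq g_B^*p_!$ with the inverse of the right base-change map $g_B^*p_*\Rightarrow p'_*g_A^*$, which is therefore an equivalence.

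The equivalence $(1)\Leftrightarrow(4)$ I would treat as the exact dual of $(1)\Leftrightarrow(3)$: interchange $(-)_!$ with $(-)_*$, replace $\Nmadj_p$ by $\Nmadjdual_p$---which by \Cref{lem:NormMapEqualsDualNormMap} is the adjoint of the \emph{same} map $\Nm_p$, now under $p_!\dashv p^*$---and use in place of \Cref{prop:adjNormsVsBaseChange} its dual, obtained by applying \Cref{prop:adjNormsVsBaseChange} to $\Cc\catop$.

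Finally $(1)\Leftrightarrow(2)$, which I expect to be the main obstacle. Taking opposites is a $2$-functor $\Cat_T\to\Cat_T$ that reverses $2$-cells; under it $p_!^{\Cc\catop}=(p_*^{\Cc})\catop$, $p_*^{\Cc\catop}=(p_!^{\Cc})\catop$, and (by \Cref{prop:finitePcoproducts} and its dual) $\Cc\catop$ admits finite $P$-coproducts iff $\Cc$ admits finite $P$-products, and vice versa. The crux is the identity $(\Nmadj^{\Cc\catop}_p)\catop=\Nmadjdual^{\Cc}_p$ of $2$-cells $\id\Rightarrow p^*p_*^{\Cc}$: granting it, a counit of $p^*\dashv p_!^{\Cc\catop}$ opposes to a unit of $p_*^{\Cc}\dashv p^*$, so ``$\Cc\catop$ satisfies $(3)$'' is literally ``$\Cc$ satisfies $(4)$'', and combining $(1)\Leftrightarrow(3)$ for $\Cc\catop$ with the already-established $(1)\Leftrightarrow(4)$ for $\Cc$ yields $(2)\Leftrightarrow(1)$. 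To prove the identity I would unwind \Cref{cstr:NormMap}: units, counits and the left/right base-change equivalences all exchange with their duals under $(-)\catop$, and the only non-formal input---the norm $\Nm_\Delta$ of the disjoint summand inclusion $\Delta$---satisfies $(\Nm_\Delta^{\Cc\catop})\catop=\Nm_\Delta^{\Cc}$, since in the explicit description of \Cref{lem:NormMapDisjointSummandInclusions} (via the splitting $\Cc(A\times_BA)\simeq\Cc(A)\times\Cc(C)$ and the map $\emptyset\to 1$) passing to opposites merely exchanges the initial and final objects of $\Cc(C)$. The one genuine subtlety is that $\Nmadj^{\Cc\catop}_p$ is built from $\alpha^{\Cc\catop}\colon\pr_2^*\Rightarrow\pr_1^*$ while $\Nmadjdual^{\Cc}_p$ is built from $\alpha^{\Cc}$ with the roles of $\pr_1$ and $\pr_2$ apparently swapped; this I would resolve using \Cref{lem:AlphaIsDiagonalMatrix}, which pins $\alpha$ down up to canonical homotopy by its restrictions along $\Delta$ and along its complement $j$ (the identity, resp.\ the zero map)---a characterization visibly stable under both the flip of the pullback square \eqref{eq:PullbackDiagramNormMap} and passage to opposites---so that all the ``$\alpha$'s'' in sight agree after the evident identifications. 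Pinning down this last comparison of the $\alpha$'s, and in particular keeping the two projections straight, is where I expect to spend the most effort.
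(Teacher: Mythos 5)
Your proof is correct, and it establishes the equivalences along a route that is genuinely different from the paper's, although both arguments rest on the same key observation $(\Nmadj^{\Cc\catop}_p)\catop=\Nmadjdual^{\Cc}_p$ and on \Cref{lem:NormMapEqualsDualNormMap}.

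The paper proceeds in the order (1)$\Leftrightarrow$(2), then (1)$\Leftrightarrow$(3), and finally (2)$\Leftrightarrow$(4): once you observe that the $\Nmadjdual_p$-construction on $\Cc$ is the $\Nmadj_p$-construction on $\Cc\catop$, the equivalence (1)$\Leftrightarrow$(2) is read directly off \Cref{lem:NormMapEqualsDualNormMap} (since $\Nm_p^{\Cc\catop}$ opposes to $\Nm_p^{\Cc}$), and (2)$\Leftrightarrow$(4) is then literally (1)$\Leftrightarrow$(3) applied to $\Cc\catop$. You instead prove (1)$\Leftrightarrow$(3) first, dualize it to (1)$\Leftrightarrow$(4), and recover (1)$\Leftrightarrow$(2) by combining the two. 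Both structures are valid; yours is a little longer in getting to (2), while the paper keeps (2) up front and lets (4) fall out at the end.

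Two further points of comparison. First, you are more careful than the paper about the direction (3)$\Rightarrow$(1): showing that $\Nmadj_p$ being a counit of $p^*\dashv p_!$ really does yield the Beck--Chevalley condition required for $\Cc$ to admit finite $P$-products is a genuine (if standard) step, and your use of \Cref{prop:adjNormsVsBaseChange} together with the triangle identities is exactly the right way to see it. The paper simply declares ``(1)$\Leftrightarrow$(3) is clear,'' which sweeps this under the rug. Second, you correctly flag the only subtle point in the duality observation: after passing to $\Cc\catop$, the roles of $\pr_1$ and $\pr_2$ in the formulae for $\alpha$, the base-change equivalences, and \Cref{rem:concise_norm_maps} come out with the projections interchanged relative to the direct description of $\Nmadjdual_p$. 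This discrepancy is indeed resolved by the flip automorphism $\sigma$ of $A\times_BA$ over $B$ (which fixes $\Delta$ and swaps $\pr_1$ with $\pr_2$, giving $\pr_{1*}\pr_2^*\simeq\pr_{2*}\pr_1^*$ via $\sigma^*$), together with the characterization of $\alpha$ by its restrictions along $\Delta$ and its complement from \Cref{lem:AlphaIsDiagonalMatrix}. Your instinct that this is where the real work lies is sound; the paper treats the observation as self-evident and leaves the flip implicit.
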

\begin{proof}
	Observe that the dual adjoint norm map $\Nmadjdual_p\colon \id \Rightarrow p^*p_*$ may be obtained by applying the construction of the adjoint norm map $\Nmadj\colon p^*p_! \Rightarrow \id$ to the $T$-$\infty$-category $\Cc\catop$. The equivalence between (1) and (2) is then immediate from \cref{lem:NormMapEqualsDualNormMap}. The fact that (1) implies (3) is clear, since the norm map $\Nm_p\colon p_! \Rightarrow p_*$ is adjoint to $\Nmadj\colon p^*p_! \Rightarrow \id$. For the implication (3) $\implies$ (1), it remains to show that the right adjoints given by (3) satisfy the Beck-Chevalley condition, which is a consequence of \Cref{prop:adjNormsVsBaseChange}. The equivalence between (2) and (4) is obtained dually by replacing $\Cc$ with $\Cc\catop$.
\end{proof}

Every choice of an atomic orbital subcategory $P \subseteq T$ gives a different notion of parametrized semiadditivity for a $T$-$\infty$-category $\Cc$. The weakest form of parame\-trized semiadditivity is fiberwise semiadditivity:

\begin{definition}
	A $T$-$\infty$-category $\Cc$ is called \textit{fiberwise semiadditive} if for every $B \in T$ the $\infty$-category $\Cc(B)$ is semiadditive and for every morphism $f\colon A \to B$ in $T$ the restriction functor $f^*\colon \Cc(B) \to \Cc(A)$ preserves finite biproducts.
\end{definition}

\begin{lemma}
	\label{lem:PSemiadditiveImpliesFiberwiseSemiadditive}
	Let $\Cc$ be a pointed $T$-$\infty$-category which admits fiberwise finite products and coproducts. Then the following three conditions are equivalent:
	\begin{enumerate}[(1)]
		\item The $T$-$\infty$-category $\Cc$ is fiberwise semiadditive;
		\item The norm map $\Nm_{\nabla}\colon \nabla_! \to \nabla_*$ associated to the fold map $\nabla\colon \bigsqcup_{i=1}^n B \to B$ is an equivalence for every $n \geq 0$ and every $B \in T$;
		\item The $T$-$\infty$-category $\Cc$ is $P$-semiadditive for $P = \core T$, the core of $T$.
	\end{enumerate}
\end{lemma}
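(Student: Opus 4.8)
The plan is to prove $(2)\Leftrightarrow(3)$ and $(1)\Leftrightarrow(2)$. First I would record two preliminaries. One: $\core T$ is an atomic orbital subcategory of $T$ — it is the minimal orbital subcategory, and it is atomic since every morphism of $\core T$ is invertible, so \Cref{lem:AtomicVsDisjointDiagonals} applies trivially; thus the norm maps of \Cref{cstr:NormMap} and the notion of $\core T$-semiadditivity make sense here. Two: since all morphisms of $\core T$ are equivalences, \Cref{prop:finitePcoproducts} and its dual show that a $T$-$\infty$-category admits finite $\core T$-coproducts (resp.\ finite $\core T$-products) if and only if it admits fiberwise finite coproducts (resp.\ products); in particular the standing hypothesis on $\Cc$ makes $(3)$ a meaningful statement and guarantees the existence of $p_!,p_*$ for every equivalence $p$ and of $\nabla_!,\nabla_*$ for every fold map $\nabla\colon\bigsqcup_{i=1}^n B\to B$.

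With these in hand, $(3)\Rightarrow(2)$ is immediate, as each fold map $\nabla$ is a morphism of $\finSets_T^{\core T}$. For $(2)\Rightarrow(3)$ I would first reduce to representable targets: since $\Cc$ sends coproducts in $\PSh(T)$ to products of $\infty$-categories, the norm map of a morphism $\bigsqcup_k f_k\colon\bigsqcup_k X_k\to\bigsqcup_k B_k$ of $\finSets_T^{\core T}$ decomposes as the product of the $\Nm_{f_k}$ (alternatively, base-change along the summand inclusions $B_k\hookrightarrow\bigsqcup_k B_k$ and apply \Cref{cor:NormsVsBaseChange}). Over a representable $B$, a morphism of $\finSets_T^{\core T}$ factors as $\bigsqcup_{i=1}^n A_i\xrightarrow{\bigsqcup_i p_i}\bigsqcup_{i=1}^n B\xrightarrow{\nabla}B$ with the $p_i$ equivalences, so $\bigsqcup_i p_i$ is an equivalence; the norm map of any equivalence is an equivalence, since an equivalence is a disjoint summand inclusion with empty complement and so the last part of \Cref{lem:NormMapDisjointSummandInclusions} applies (and agrees with \Cref{cstr:NormMap} by the remark following it). Combining this with \Cref{cor:NormsVsComposition} and hypothesis $(2)$ then gives $(3)$.

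For $(1)\Leftrightarrow(2)$ I would first note that, since $\Cc$ admits fiberwise finite products and coproducts, every restriction functor $f^*$ preserves finite products and finite coproducts by \Cref{lem:ColimitsIndexedByConstantTCategories}; as $f^*$ also preserves the zero object, it automatically preserves finite biproducts once its source and target are semiadditive. Hence $(1)$ is equivalent to the assertion that $\Cc(B)$ is a semiadditive $\infty$-category for every $B\in T$. The remaining, and essential, point is to identify the norm map $\Nm_\nabla$ of a fold map $\nabla=\nabla^n\colon\bigsqcup_{i=1}^n B\to B$ with the canonical comparison map $\coprod_{i=1}^n X_i\to\prod_{i=1}^n X_i$ in $\Cc(B)$. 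Under the equivalence $\Cc(\bigsqcup_i B)\simeq\Cc(B)^{\times n}$, the functor $\nabla^*$ is the diagonal, $\nabla_!$ and $\nabla_*$ are the $n$-fold coproduct and product, and $A\times_B A\simeq\bigsqcup_{(i,j)}B$ with $\Delta$ the inclusion of the diagonal summands. By \Cref{lem:AlphaIsDiagonalMatrix} the transformation $\alpha\colon\pr_2^*\Rightarrow\pr_1^*$ is the ``identity matrix'' — the identity on the $(i,i)$-summands and zero elsewhere — and unwinding \Cref{cstr:NormMap} via the concise formula of \Cref{rem:concise_norm_maps} one finds that the $(i,j)$-entry $\pr_i\circ\Nm_\nabla\circ\iota_j$ of $\Nm_\nabla$ is $\id_{X_i}$ for $i=j$ and the zero map for $i\neq j$, i.e.\ $\Nm_\nabla$ is the canonical comparison map. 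Since a pointed $\infty$-category with finite products and coproducts is semiadditive precisely when these comparison maps are equivalences for all $n\geq 0$, both implications $(1)\Leftrightarrow(2)$ follow.

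The main obstacle will be exactly this last identification of $\Nm_\nabla$ with the comparison map: it is a routine (though somewhat fiddly) diagram chase using \Cref{lem:AlphaIsDiagonalMatrix} and \Cref{rem:concise_norm_maps}, and everything else in the argument is bookkeeping with the composition and base-change compatibilities of norm maps established earlier in the section.
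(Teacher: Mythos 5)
Your proof is correct and takes essentially the same approach as the paper: the heart of the argument is identifying $\Nm_\nabla$ with the canonical comparison map $\bigoplus_i X_i \to \prod_j X_j$ via \Cref{lem:AlphaIsDiagonalMatrix}, which is exactly the paper's computation for $(1)\Leftrightarrow(2)$. You are somewhat more careful in the $(2)\Leftrightarrow(3)$ direction, where the paper simply asserts that every map in $\finSets_T^{\core T}$ is ``equivalent to a fold map'' while you spell out the reduction to representable targets (using that $\Cc$ takes coproducts to products, or \Cref{cor:NormsVsBaseChange}), factor through an equivalence, and invoke \Cref{cor:NormsVsComposition}; this is a fuller unwinding of the same idea rather than a different route.
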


\begin{proof}
	When $P = \core T$ is the core of $T$, any map in $\finPsets$ is equivalent to a fold map $\nabla\colon \bigsqcup_{i=1}^n B \to B$ for some $B \in T$, and thus the equivalence between (2) and (3) is clear. It remains to show that (1) and (2) are equivalent. The $\infty$-category $\Cc(\bigsqcup_{i=1}^n B)$ is equivalent to the $n$-fold product $\prod_{i=1}^n \Cc(B)$ of $\Cc(B)$. Given an object $X = (X_i) \in \prod_{i=1}^n \Cc(B)$, there are equivalences $\nabla_!(X) \simeq \bigoplus_{i=1}^n X_i$ and $\nabla_*(X) \simeq \prod_{i=1}^n X_i$. By \Cref{lem:AlphaIsDiagonalMatrix}, the map $\alpha(X)$ is a morphism in $\prod_{i=1}^n\prod_{j=1}^n \Cc(B)$ which we may visually display as
	\begin{align*}
		\begin{pmatrix}
			1 & 0 & \dots & 0 \\
			0 & 1 & \dots & 0 \\
			\vdots & \vdots& \ddots & \vdots \\
			0 & 0 & \dots & 1
		\end{pmatrix}
		\colon
		\begin{pmatrix}
			X_1 & X_2 & \dots & X_n \\
			X_1 & X_2 & \dots & X_n \\
			\vdots & \vdots & \ddots&\vdots \\
			X_1 & X_2 & \dots & X_n
		\end{pmatrix}
		\to
		\begin{pmatrix}
			X_1 & X_1 & \dots & X_1 \\
			X_2 & X_2 & \dots & X_2 \\
			\vdots & \vdots& \ddots& \vdots \\
			X_n & X_n & \dots & X_n
		\end{pmatrix},
	\end{align*}
	where $1$ denotes an identity map while $0$ denotes the zero map. In particular, the induced norm map $\Nm_p\colon \bigoplus_{i=1}^n X_i \to \prod_{j=1}^n X_j$ is induced by the family of maps $\{X_i \to X_j\}_{i,j}$ given by the identity when $i = j$ and the zero-map when $i \neq j$. This is precisely the norm map defining ordinary semiadditivity for $\infty$-categories, finishing the proof.
\end{proof}

As the next result shows, the condition of $P$-semiadditivity for general $P$ is a combination of fiberwise semiadditivity and norm equivalences $\Nm_p\colon p_! \simeq p_*$ for morphisms $p$ in $P$.

\begin{corollary}
	\label{cor:PSemiadditivity}
	Let $\Cc$ be a $T$-$\infty$-category. Then $\Cc$ is $P$-semiadditive if and only if it is fiberwise semiadditive and for every morphism $p\colon A \to B$ in $P$ the norm map $\Nm_p\colon p_! \Rightarrow p_*$ is an equivalence.
\end{corollary}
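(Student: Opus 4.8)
The plan is to reduce \Cref{cor:PSemiadditivity} to the two special cases already treated, namely the case $P=\core T$ (\Cref{lem:PSemiadditiveImpliesFiberwiseSemiadditive}) and the behaviour of norm maps under composition (\Cref{cor:NormsVsComposition}). First I would observe that the ``only if'' direction is essentially trivial: if $\Cc$ is $P$-semiadditive, then $\Nm_p$ is an equivalence for \emph{every} morphism $p$ in $\finPsets$, in particular for every morphism $p\colon A\to B$ in $P$ and for every fold map $\nabla\colon\bigsqcup_{i=1}^nB\to B$ (which lies in $\finPsets$ by \Cref{def:finPsets}); the latter gives fiberwise semiadditivity by \Cref{lem:PSemiadditiveImpliesFiberwiseSemiadditive}, using that fiberwise finite products and coproducts exist by \Cref{prop:finitePcoproducts} applied to the hypotheses that $\Cc$ admits finite $P$-(co)products. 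Note also that $\Cc$ is automatically pointed: the norm map $\Nm_{\varnothing}$ for the inclusion $\varnothing\hookrightarrow B$ (an empty fold map) identifies the fiberwise initial object with the fiberwise terminal object, so a $P$-semiadditive $T$-$\infty$-category is pointed, while for the converse direction I would note that fiberwise semiadditivity already forces pointedness.

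For the ``if'' direction, suppose $\Cc$ is fiberwise semiadditive and $\Nm_p$ is an equivalence for every $p$ in $P$. I must show $\Nm_f$ is an equivalence for an arbitrary $f\colon X\to B$ in $\finPsets$ with $B\in T$. By \Cref{def:finPsets}, after decomposing the source we may write $f$ as a composite
\[
	\bigsqcup_{i=1}^n A_i \xrightarrow{\ \bigsqcup_i p_i\ } \bigsqcup_{i=1}^n B \xrightarrow{\ \nabla\ } B,
\]
where each $p_i\colon A_i\to B$ lies in $P$ and $\nabla$ is the fold map. By \Cref{cor:NormsVsComposition}, the norm map $\Nm_f$ is homotopic to the pasting $\Nm_\nabla\circ\Nm_{\bigsqcup_i p_i}$ (in the appropriate order), so it suffices to check that both factors are equivalences. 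The fold map $\nabla$ gives an equivalence on norm maps by \Cref{lem:PSemiadditiveImpliesFiberwiseSemiadditive}, using fiberwise semiadditivity. For the map $q\mathrel{:=}\bigsqcup_i p_i\colon\bigsqcup_i A_i\to\bigsqcup_i B$, I would use that $\Cc$ sends the coproduct decomposition of source and target to a product of $\infty$-categories $\Cc(\bigsqcup_i A_i)\simeq\prod_i\Cc(A_i)$ and $\Cc(\bigsqcup_i B)\simeq\prod_i\Cc(B)$, under which $q^*$ becomes $\prod_i p_i^*$, hence $q_!\simeq\prod_i(p_i)_!$ and $q_*\simeq\prod_i(p_i)_*$, and $\Nm_q\simeq\prod_i\Nm_{p_i}$ — this compatibility of the norm construction with the product decomposition can be extracted from \Cref{cstr:NormMap} together with the fact that the relevant pullback square \eqref{eq:PullbackDiagramNormMap} for $q$ decomposes as a disjoint union of those for the $p_i$ (the mixed fiber products $A_i\times_B A_j$ for $i\ne j$ being empty, so they contribute nothing, while the diagonal summand for $q$ is exactly $\bigsqcup_i\Delta_{p_i}$). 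Since each $\Nm_{p_i}$ is an equivalence by hypothesis, so is $\Nm_q$, completing the argument.

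The main obstacle is the bookkeeping in the last step: verifying that the norm map $\Nm_q$ for the disjoint union $q=\bigsqcup_i p_i$ really is the product of the $\Nm_{p_i}$. This requires tracing through \Cref{cstr:NormMap} and checking that the base-change equivalence, the transformation $\alpha$ of \eqref{it:DefAlpha} (here \Cref{lem:AlphaIsDiagonalMatrix} is useful, since $\alpha$ for $q$ is literally ``diagonal'' across the blocks indexed by $i$), and the norm map $\Nm_{\Delta_q}$ for the diagonal all decompose blockwise. Alternatively one can sidestep this by invoking \Cref{cor:NormsVsBaseChange}: restricting along the summand inclusion $B\hookrightarrow\bigsqcup_i B$ and using that this pulls $q$ back to $p_i\sqcup(\text{empty maps})$, one reduces the statement for $q$ to the statement for each $p_i$ separately, together with the observation that norm maps for maps into the empty presheaf are trivially equivalences. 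Either way this is routine but slightly tedious; none of it is conceptually hard given the composition and base-change formulas of Hopkins--Lurie recalled in \Cref{cor:NormsVsComposition} and \Cref{cor:NormsVsBaseChange}.
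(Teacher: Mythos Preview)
Your proposal is correct and follows essentially the same approach as the paper: decompose an arbitrary morphism in $\finPsets$ as a disjoint union $\bigsqcup_i p_i$ of maps in $P$ followed by a fold map $\nabla$, then use \Cref{cor:NormsVsComposition} to reduce $\Nm_f$ to $\Nm_\nabla$ (handled by \Cref{lem:PSemiadditiveImpliesFiberwiseSemiadditive}) and $\Nm_{\bigsqcup_i p_i}$ (a product of the $\Nm_{p_i}$). The paper's proof simply asserts the product decomposition of $\Nm_{\bigsqcup_i p_i}$ without comment, whereas you take more care with this bookkeeping and sketch two ways to verify it; both routes you suggest work, and your caution here is well-placed but not strictly necessary.
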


\begin{proof}
	As in the proof of \Cref{prop:finitePcoproducts}, every morphism in $\finPsets$ with representable domain $B \in T$ can be written as a composite $\bigsqcup_{i=1}^n A_i \xrightarrow{\bigsqcup_{i=1}^n p_i} \bigsqcup_{i=1}^n A_i \xrightarrow{\nabla} B$ for morphisms $p_i\colon A_i \to B$ in $P$, where $\nabla$ denotes the fold map. The norm map of $\bigsqcup_{i=1}^n p_i\colon \bigsqcup_{i=1}^n A_i \to \bigsqcup_{i=1}^n B$ is equivalent to the product of the norm maps for each individual $p_i\colon A_i \to B$. By \Cref{cor:NormsVsComposition}, the norm map of a composite morphism can be written as a composite of norm maps, and it follows that $\Cc$ is $P$-semiadditive if and only if the norm maps of all the fold maps $\nabla\colon \bigsqcup_{i=1}^n B \to B$ and of all morphisms $p\colon A \to B$ in $P$ are equivalences. But by \Cref{lem:PSemiadditiveImpliesFiberwiseSemiadditive} the norm maps for the fold maps are equivalences if and only if $\Cc$ is fiberwise semiadditive.
\end{proof}

We finish this subsection with a recognition criterion for $P$-semiadditivity along the lines of \cite{HA}*{Proposition 2.4.3.19}.

\begin{proposition}
	\label{prop:CharacterizationPSemiadditivity}
	Let $\Cc$ be a pointed $T$-$\infty$-category admitting finite $P$-products. Assume that for every morphism $p\colon A \to B$ in $\finPsets$, there is a natural transformation $\mu_p\colon p_*p^* \Rightarrow \id_{\Cc(B)}$ of functors $\Cc(B) \to \Cc(B)$ satisfying the following two conditions:
	\begin{enumerate}[(a)]
		\item for every $X \in \Cc(B)$, the composite
		\begin{align*}
			p^*X \xrightarrow{\Nmadjdual_{p}p^*X} p^*p_*p^*X \xrightarrow{p^*\mu_pX} p^*X
		\end{align*}
		is homotopic to the identity;
		\item for every $Y \in \Cc(A)$, the following diagram commutes
		\[\begin{tikzcd}
			p_*p^*p_* Y \drar[swap, bend right = 10]{\mu_pp_*Y} \rar{\simeq}[swap]{r.b.c.} & p_*(\pr_2)_*\pr_1^* Y \rar{\simeq} & p_*(\pr_1)_*\pr_1^*Y \dlar[bend left = 10]{p_*\mu_{\pr_1}Y} \\
			& p_*Y.
		\end{tikzcd}\]
	\end{enumerate}
	Then the $T$-$\infty$-category $\Cc$ is $P$-semiadditive.
\end{proposition}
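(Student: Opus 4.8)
The plan is to mimic the classical argument of \cite{HA}*{Proposition~2.4.3.19} in the parametrized setting, using the technical lemmas developed above. First I would use the recognition criterion \Cref{cor:PSemiadditivity}: it suffices to show that $\Cc$ is fiberwise semiadditive and that the norm map $\Nm_p\colon p_! \Rightarrow p_*$ is an equivalence for every morphism $p\colon A \to B$ in $P$ (which in fact implies $\Cc$ admits finite $P$-coproducts, as the left adjoints $p_!$ will be produced along the way). The hypotheses (a) and (b) are precisely designed so that the maps $\mu_p$ behave like the counits of adjunctions $p_* \dashv p^*$ — compare the dual of \Cref{lem:CharacterizationPSemiadditiveTCategories}(4). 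Concretely, condition (a) says the composite $p^* \xRightarrow{\Nmadjdual_p p^*} p^*p_*p^* \xRightarrow{p^*\mu_p} p^*$ is the identity, which is one of the triangle identities; I would first verify the other triangle identity, namely that $p_* \xRightarrow{p_*\Nmadjdual_p} p_*p^*p_* \xRightarrow{\mu_p p_*} p_*$ is the identity. This is where condition (b) enters: unwinding $\Nmadjdual_p$ via \Cref{rem:concise_norm_maps} and using the compatibility of norm maps with base change (\Cref{prop:adjNormsVsBaseChange}, \Cref{cor:NormsVsBaseChange}), the left-hand composite can be rewritten in terms of $\mu_{\pr_1}$ along the pullback square \eqref{eq:PullbackDiagramNormMap}, and condition (b) identifies this with the identity. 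Once both triangle identities hold, $\Nmadjdual_p$ exhibits $p_*$ as left adjoint to $p^*$, so $p^*$ admits a left adjoint $p_!$ with $p_! \simeq p_*$; one then has to check that, under this identification, the adjoint norm map $\Nmadj_p$ corresponds to the genuine counit, so that $\Nm_p$ is an equivalence. This last point follows because the norm map is characterized by its restriction along $p^*$ (resp.\ its adjoint), and $\Nmadj_p$ and $\Nmadjdual_p$ are total mates by \Cref{lem:NormMapEqualsDualNormMap}.

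Second, I would establish fiberwise semiadditivity. By \Cref{lem:PSemiadditiveImpliesFiberwiseSemiadditive} this amounts to showing that the norm maps $\Nm_\nabla\colon \nabla_! \Rightarrow \nabla_*$ for the fold maps $\nabla\colon \bigsqcup_{i=1}^n B \to B$ are equivalences. The fold map $\nabla$ is a morphism in $\finPsets$ (being a disjoint union of identities, which lie in $P$ since $P$ is wide), so the hypotheses furnish a transformation $\mu_\nabla\colon \nabla_*\nabla^* \Rightarrow \id$. Running the same adjunction argument as above — now with $p = \nabla$ — produces an adjunction $\nabla_* \dashv \nabla^*$ and hence a left adjoint $\nabla_!$ with $\Nm_\nabla$ an equivalence. (Alternatively, once we know $\Cc$ admits finite $P$-products and all norm maps for maps in $\finPsets$ with representable target are handled by the argument of the previous paragraph applied verbatim to $\nabla$, fiberwise semiadditivity is automatic.) In fact, since every morphism in $\finPsets$ with representable target factors as $\bigsqcup p_i$ followed by a fold map, and the hypotheses apply to \emph{all} morphisms in $\finPsets$, I expect the cleanest route is to run the triangle-identity argument uniformly for every $p \in \finPsets$ at once, and then invoke \Cref{cor:NormsVsComposition} and \Cref{cor:NormEquivalencesForUlFinPSets}-style bookkeeping only at the end to pass from representable targets to arbitrary presheaves.

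The main obstacle will be the verification of the second triangle identity from condition (b) — that is, carefully matching up the composite $\mu_p p_* \circ p_*\Nmadjdual_p$ with the identity. This requires unwinding the definition of $\Nmadjdual_p$ through the diagonal $\Delta$ and the two projections $\pr_1, \pr_2$ of the pullback square \eqref{eq:PullbackDiagramNormMap}, and then recognizing the resulting diagram as an instance of condition (b) after using the base-change compatibilities of \Cref{cstr:NormMap} and the naturality statements in \Cref{prop:adjNormsVsBaseChange}. This is essentially a (somewhat involved) diagram chase in the $2$-category $\Cat_T$, structurally parallel to the proof of \Cref{lem:TechnicalLemmaRecognitionPSemiadditivity} — indeed I expect that lemma, together with \Cref{lem:NormMapInTermsOfExchangeMap}, will be exactly the inputs needed to tame the bookkeeping. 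Everything else (the first triangle identity, extracting the adjunction, deducing $\Nm_p$ is an equivalence, and assembling via \Cref{cor:PSemiadditivity}) is formal once that step is in place.
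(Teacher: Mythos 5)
Your core strategy—use $\mu_p$ to build an explicit inverse to the adjunction-defining map on mapping spaces, and lean on \Cref{lem:TechnicalLemmaRecognitionPSemiadditivity} for the hard diagram chase—is exactly the paper's. The main difference is the reduction you set up at the start. The paper does not go through \Cref{cor:PSemiadditivity} (which would require you to separately handle fiberwise semiadditivity and to establish the existence and Beck--Chevalley condition for the left adjoints $p_!$ before the norm map $\Nm_p$ even makes sense). Instead it invokes \Cref{lem:CharacterizationPSemiadditiveTCategories}(4): it suffices to show that the dual adjoint norm map $\Nmadjdual_p\colon \id \Rightarrow p^*p_*$ is the unit of an adjunction $p_* \dashv p^*$ for every $p \in \finPsets$. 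This handles all morphisms in $\finPsets$ uniformly (so the fold maps and the fiberwise semiadditivity come along automatically), and it packages the existence of finite $P$-coproducts with base change into the ``iff'' of that lemma. You eventually converge to the same idea in your second paragraph (``run the triangle-identity argument uniformly for every $p \in \finPsets$''), so this is mostly a matter of choosing the right entry point; still, your initial route leaves a genuine loose end around $P$-cocompleteness that the paper's entry point avoids.

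Two smaller points worth noting. First, your framing in terms of ``verifying the two triangle identities and concluding an adjunction'' is slightly imprecise. Conditions (a) and (b) (and the second composite you propose to check) are only asserted to hold pointwise, i.e.\ for each fixed $X$ or $Y$, without assuming the homotopies are functorial — the paper flags exactly this in a footnote. This is enough to prove that the map $\Hom_{\Cc(B)}(p_*Y,X) \to \Hom_{\Cc(A)}(Y, p^*X)$ determined by $\Nmadjdual_p$ is an equivalence of spaces (by exhibiting a two-sided pointwise homotopy inverse built from $\mu_p$), and hence that $\Nmadjdual_p$ is a unit; but it does \emph{not} establish that $\mu_p$ is the corresponding counit. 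Your write-up should phrase the argument as verifying the unit criterion on mapping spaces rather than as ``establishing the triangle identities of an adjunction.'' Second, you cite \Cref{lem:NormMapInTermsOfExchangeMap} as a likely input; the paper does not use it here. The actual inputs for taming the second composite are \cref{rem:concise_norm_maps} (to expand $\Nmadjdual_p$ through $\Delta$, $\pr_1$, $\pr_2$), condition (b), the equivalence $p\circ\pr_1 \simeq p\circ\pr_2$, and then \Cref{lem:TechnicalLemmaRecognitionPSemiadditivity} applied to $\pr_1\colon A\times_B A \to A$ with its section $\Delta$, followed by condition (a) applied to $\pr_1$.
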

\begin{proof}
	To show that $\Cc$ is $P$-semiadditive, we may by \Cref{lem:CharacterizationPSemiadditiveTCategories} equivalently show that for every map $p\colon A \to B$ in $\finPsets$ and every object $Y \in \Cc(A)$, the dual adjoint norm map $\Nmadjdual_pY \colon Y \Rightarrow p^*p_*Y$ exhibits $p_*Y$ as a left adjoint object to $Y$ under the functor $p^*\colon \Cc(B) \to \Cc(A)$, i.e.\ that for every $X \in \Cc(B)$ the composite
	\begin{align*}
		\Hom_{\Cc(B)}(p_*Y,X) \xrightarrow{p^*} \Hom_{\Cc(A)}(p^*p_*Y,p^*X) \xrightarrow{- \circ \Nmadjdual_p Y} \Hom_{\Cc(A)}(Y,p^*X)
	\end{align*}
	is an equivalence. We claim that an inverse is given by
	\begin{align*}
		\Hom_{\Cc(A)}(Y,p^*X) \xrightarrow{p_*} \Hom_{\Cc(B)}(p_*Y,p_*p^*X) \xrightarrow{\mu_p X \circ -} \Hom_{\Cc(B)}(p_*Y,X).
	\end{align*}
	By naturality of $\mu_p$ and $\Nm_p$, it suffices to prove that the following two composites are homotopic to the identity for every \textit{fixed} $X$:\footnote{While this suffices to show that $\Nm_p$ is a unit of an adjunction, it does not show that $\mu_p$ is the corresponding counit, as we do not provide homotopies that are functorial in $X$ and $Y$.}
	\begin{align*}
		p^*X \xrightarrow{\Nmadjdual_{p}p^*X} p^*p_*p^*X \xrightarrow{p^*\mu_pX} p^*X, \\
		p_*Y \xrightarrow{p_*\Nmadjdual_pY} p_*p^*p_*Y \xrightarrow{\mu_p p_*Y} p_*Y.
	\end{align*}
	The first composite is homotopic to the identity by condition (a), so we focus on the second composite. Plugging in the description of $\Nmadjdual_p$ given in \cref{rem:concise_norm_maps}, this composite expands to
	\[\begin{tikzcd}[cramped]
		{p_*Y} & {p_*{\pr_2}_*\Delta_*\Delta^*\pr_1^*Y} & {p_*{\pr_2}_*\Delta_!\Delta^*\pr_1^*Y} \\
		&& {p_*{\pr_2}_* \pr_1^*Y} & {p_*p^* p_*Y} & {p_*Y,}
		\arrow["\sim", from=1-1, to=1-2]
		\arrow["{\Nm^{-1}_{\Delta}}", from=1-2, to=1-3]
		\arrow["{c^!_{\Delta}}", from=1-3, to=2-3]
		\arrow["{r.b.c.}", from=2-3, to=2-4]
		\arrow["{\mu_p p_*Y}", from=2-4, to=2-5]
		\arrow["\sim"{description}, shift left=2, draw=white, from=1-3, to=1-2]
		\arrow["\sim"{description}, shift left=2, draw=white, from=2-4, to=2-3]
	\end{tikzcd}\]
	which, using condition (b) and the equivalence $p \circ \pr_1 \simeq p \circ \pr_2$, is homotopic to the composite
	\[
	p_*Y \simeq p_*{\pr_1}_*\Delta_*\Delta^*\pr_1^*Y \xrightarrow{\Nm_{\Delta}^{-1}} p_*{\pr_1}_*\Delta_!\Delta^*\pr_1^*Y \xrightarrow{c^!_{\Delta}} p_*{\pr_1}_*\pr_1^*Y \xrightarrow{p_*\mu_{\pr_1}Y} p_*Y.
	\]
	Applying \cref{lem:TechnicalLemmaRecognitionPSemiadditivity} to the map $\pr_1\colon A \times_B A \to A$ with section $\Delta\colon A \to A \times_B A$, we see that this map is homotopic to the following composite:
	\[
	\hskip-13.5785pt\hfuzz=13.579pt p_*Y \simeq p_*\Delta^*\pr_1^*Y \xrightarrow{p_*\Delta^*\Nmadjdual_{\pr_1}\pr_1^*Y} p_*\Delta^*\pr_1^*{\pr_1}_*\pr_1^*Y \xrightarrow{p_*\Delta^*\pr_1^*\mu_{\pr_1}Y} p_*\Delta^*\pr_1^*Y \simeq p_*Y.
	\]
	This map is homotopic to the identity by assumption (a) applied to the map $\pr_1\colon A \times_B A \to A$, finishing the proof.
\end{proof}

\subsection{\texorpdfstring{\for{toc}{$P$}\except{toc}{$\bm P$}}{P}-semiadditive \for{toc}{$T$}\except{toc}{\texorpdfstring{$\bm T$}{T}}-functors}
\label{subsec:PSemiadditiveTFunctors}
We continue to fix an atomic orbital subcategory $P \subseteq T$. In this subsection we will define what it means for a $T$-functor $F\colon \Cc \to \Dd$ to be \textit{$P$-semiadditive}: roughly speaking, it means that $F$ turns finite $P$-coproducts in $\Cc$ into finite $P$-products in $\Dd$. The main result of this subsection is \cref{prop:SemiadditiveFunctorsFormSemiadditiveCategory}, which states that the $T$-subcategory $\ulFun^{\Poplus}_T(\Cc,\Dd)$ of $\ulFun_T(\Cc,\Dd)$ spanned by the $P$-semiadditive $T$-functors is $P$-semiadditive.

We start by constructing a `relative' variant of the norm map.

\begin{construction}
	\label{cstr:NormMapFunctor}
	Let $F\colon \Cc \to \Dd$ be a $T$-functor such that $\Cc$ is pointed and admits finite $P$-coproducts and $\Dd$ admits finite $P$-products, let $B \in \PSh(T)$ and let $(p\colon A \to B) \in \ulfinPsets(B)$. We define the \textit{norm transformation of $p$ relative to $F$}
	\[
	\Nm^F_{p}\colon F_B \circ p_! \implies p_* \circ F_A \vspace{-10pt}
	\]
	as the transformation adjoint to the composite $p^*F_B p_! \simeq F_A p^* p_! \xRightarrow{F_A\Nmadj^{\Cc}_p} F_A$, where the first equivalence uses that the parametrized functor $F\colon \Cc \to \Dd$ commutes with the restriction functors.
\end{construction}

Note that when $\Dd$ is equal to $\Cc$ and $F$ is the identity on $\Cc$, the transformation $\Nm^F_p$ reduces to the norm transformation $\Nm^{\Cc}_p\colon p_! \Rightarrow p_*$ of \cref{cstr:NormMap}.

\begin{definition}
	\label{def:PSemiadditiveFunctor}
	Let $F\colon \Cc \to \Dd$ be a $T$-functor such that $\Cc$ is pointed and admits finite $P$-coproducts and $\Dd$ admits finite $P$-products. We will say that $F$ is \textit{$P$-semiadditive} if it satisfies the following condition:
	\begin{enumerate}[($\ast$)]
		\item For each morphism $p\colon A \to B$ in $\finPsets$, the transformation $\Nm^F_{p}\colon F_B \circ p_! \Rightarrow p_* \circ F_A$ defined in \cref{cstr:NormMapFunctor} is a natural equivalence.
	\end{enumerate}
	By \Cref{ex:OrbitalClosed}\eqref{it:SliceOfOrbitalIsOrbital} we also obtain a notion of $P$-semiadditive $T_{/B}$-functors for all $B \in T$. Note that $\Cc$ is $P$-semiadditive if and only if the identity $\id\colon \Cc \to \Cc$ is $P$-semiadditive. Also note that condition $(\ast)$ specializes for $A = \emptyset$ to the condition that the functor $F_B\colon \Cc(B) \to \Dd(B)$ sends the zero object of $\Cc(B)$ to the final object of $\Dd(B)$.
\end{definition}

Just like in \Cref{cor:NormEquivalencesForUlFinPSets}, one immediately deduces that the relative norm maps are equivalences for arbitrary morphisms in $\ulfinPsets$:

\begin{corollary}
    \label{cor:RelativeNormEquivalencesForUlFinPSets}
    Let $F\colon \Cc \to \Dd$ as in \Cref{cstr:NormMapFunctor} and assume that $F$ is $P$-semiadditive. Then for every $B \in \PSh(T)$ and every $(p\colon A \to B) \in \ulfinPsets(B)$, the transformation $\Nm^F_{p}\colon F_B \circ p_! \Rightarrow p_* \circ F_A$ is an equivalence.\qed
\end{corollary}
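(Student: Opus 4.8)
The plan is to imitate the proof of \Cref{cor:NormEquivalencesForUlFinPSets}, reducing the case of a general presheaf $B$ to the representable case, where the $P$-semiadditivity of $F$ applies by definition; the only new ingredient needed is a relative analogue of the base-change compatibility \Cref{cor:NormsVsBaseChange}. First I would write $B \simeq \colim_i B_i$ as a colimit of representables $B_i \in T$ via the co-Yoneda lemma, so that $\Dd(B) \simeq \lim_i \Dd(B_i)$ and the restriction functors $g^*\colon \Dd(B) \to \Dd(B')$, indexed by the maps $g\colon B' \to B$ with $B'$ representable, are jointly conservative. Since a natural transformation of functors landing in $\lim_i\Dd(B_i)$ is an equivalence precisely when it becomes one after postcomposition with each of these functors, it then suffices to show that $g_B^*\Nm^F_p$ is an equivalence for every such $g_B\colon B' \to B$. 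Fixing $g_B$, I would form the pullback
\[
\begin{tikzcd}
	A' \dar[swap]{p'} \rar{g_A} \drar[pullback] & A \dar{p} \\
	B' \rar{g_B} & B
\end{tikzcd}
\]
in $\PSh(T)$ and record that, since $p \in \ulfinPsets(B)$ and $B'$ is representable, the base change $p'\colon A' \to B'$ lies in $\finPsets$.

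The key step — which I expect to be the main obstacle — is the relative base-change compatibility: under the base-change equivalences $g_B^*p_! \simeq p'_!g_A^*$ and $g_B^*p_* \simeq p'_*g_A^*$ (available because $\Cc$ admits finite $P$-coproducts and $\Dd$ admits finite $P$-products; cf.\ \Cref{prop:finitePcoproducts} and \Cref{rmk:limitExtensionRestrictedClass}) together with the equivalences $g_B^*F_B \simeq F_{B'}g_B^*$ and $g_A^*F_A \simeq F_{A'}g_A^*$ coming from the $T$-functor structure of $F$, the transformation $g_B^*\Nm^F_p$ should be identified with the whiskered transformation $\Nm^F_{p'}g_A^*\colon F_{B'}p'_!g_A^* \Rightarrow p'_*F_{A'}g_A^*$. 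To establish this I would unwind $\Nm^F_p$ as in \Cref{cstr:NormMapFunctor}, namely as the transformation adjoint to $p^*F_Bp_! \simeq F_Ap^*p_! \xRightarrow{F_A\Nmadj^{\Cc}_p} F_A$, and carry out a mate/Beck--Chevalley diagram chase, feeding in \Cref{prop:adjNormsVsBaseChange} to control the absolute adjoint norm $\Nmadj^{\Cc}_p$ of $\Cc$ under base change, the formal fact that $F$, being a natural transformation of functors $T\catop\to\Cat_\infty$, is compatible with all Beck--Chevalley transformations (as used in the proof of \Cref{lem:NormVsBeckChevalley}), and the compatibility of the unit of the adjunction $p^*\dashv p_*$ in $\Dd$ with base change, which is the right Beck--Chevalley condition built into the hypothesis that $\Dd$ admits finite $P$-products. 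This is entirely formal but somewhat lengthy, and is where the real content lies.

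Granting this identification, the conclusion is immediate: since $F$ is $P$-semiadditive and $p'\in\finPsets$, the transformation $\Nm^F_{p'}$ is an equivalence by \Cref{def:PSemiadditiveFunctor}, hence so is its whiskering $\Nm^F_{p'}g_A^*$, and therefore $g_B^*\Nm^F_p$ is an equivalence. As $g_B$ ranges over all maps into $B$ from representable presheaves, joint conservativity shows that $\Nm^F_p$ is an equivalence, completing the proof.
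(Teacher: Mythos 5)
Your proposal is correct and follows exactly the approach the paper intends: the paper itself gives no explicit proof, introducing the corollary with ``just like in \Cref{cor:NormEquivalencesForUlFinPSets}, one immediately deduces\dots,'' which is precisely your strategy of writing $B$ as a colimit of representables, appealing to joint conservativity of the restrictions, and reducing to the representable case via base change. You are right to flag that the missing ingredient is a relative version of \Cref{cor:NormsVsBaseChange} identifying $g_B^*\Nm^F_p$ with $\Nm^F_{p'}g_A^*$ under the evident equivalences; this is indeed a routine mate/Beck--Chevalley chase feeding in \Cref{prop:adjNormsVsBaseChange} and the naturality of $F$, and the paper's ``immediately deduces'' implicitly delegates exactly that chase to the reader.
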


While not necessary for our work, we show for completeness that our norm map generalizes the analogous construction in \cite{nardin2016exposeIV}.

\begin{proposition}
	\label{prop:OurNormMapVsNardinsNormMap}
	Let $T$ be an atomic orbital $\infty$-category, let $B \in T$ and let $p\colon A \to B$ be a morphism in $\finTsets$. Let $F\colon \Cc \to \Dd$ be a $T$-functor with $\Cc$ and $\Dd$ satisfying the assumptions of \cref{cstr:NormMapFunctor}. Then the norm transformation $\Nm^F_p\colon F_B \circ p_! \Rightarrow p_*\circ F_A$ of \cref{cstr:NormMapFunctor} is homotopic to the transformation defined in \cite{nardin2016exposeIV}*{Construction~5.2}.
\end{proposition}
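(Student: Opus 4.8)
The plan is to compare the two transformations via their mates. By \Cref{cstr:NormMapFunctor}, $\Nm^F_p$ is adjoint to the composite $p^*F_Bp_!\simeq F_Ap^*p_!\xRightarrow{F_A\Nmadj^{\Cc}_p}F_A$, and unwinding \Cref{cstr:NormMap}\eqref{it:AdjointNormMap}, the absolute adjoint norm map $\Nmadj^{\Cc}_p$ is itself the composite of the left base change equivalence $p^*p_!\simeq(\pr_1)_!\pr_2^*$ attached to the pullback square \eqref{eq:PullbackDiagramNormMap}, the transformation $(\pr_1)_!\alpha$, and the counit $c^!_{\pr_1}\colon(\pr_1)_!\pr_1^*\Rightarrow\id$. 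On the other side, writing $N_p$ for the transformation of \cite{nardin2016exposeIV}*{Construction~5.2}, a careful translation of that construction into the two-categorical mate calculus used here shows that $N_p$ is adjoint to a composite of exactly the same shape, with $(\pr_1)_!\alpha$ replaced by $(\pr_1)_!\alpha'$ for some transformation $\alpha'\colon\pr_2^*\Rightarrow\pr_1^*$. Moreover, since both relative norm maps are obtained from the absolute ones by applying $F_A$ and using that $F$ commutes with restriction, it suffices to treat the case $\Dd=\Cc$, $F=\id_{\Cc}$, i.e.\ to prove $\alpha=\alpha'$.

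To identify $\alpha$ with $\alpha'$ I would invoke \Cref{lem:AlphaIsDiagonalMatrix}: writing $\Delta\colon A\hookrightarrow A\times_BA$ for the diagonal and $j\colon C\hookrightarrow A\times_BA$ for its disjoint complement (a summand inclusion by atomicity, cf.\ \Cref{prop:FinitePSetsHasDisjointDiagonalInclusions}), the equivalence $(\Delta^*,j^*)\colon\Cc(A\times_BA)\iso\Cc(A)\times\Cc(C)$ shows that any transformation $\pr_2^*\Rightarrow\pr_1^*$ is determined by its restrictions along $\Delta$ and along $j$. By \Cref{lem:AlphaIsDiagonalMatrix} these restrictions of $\alpha$ are, respectively, the identity of $\id_{\Cc(A)}$ and the zero transformation. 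The remaining point is to check that Nardin's transformation $\alpha'$ has the same two restrictions; this is again a direct consequence of atomicity of $T$, entering through the same mechanism (the diagonal summand decomposition of $A\times_BA$), and amounts to unwinding \cite{nardin2016exposeIV}*{Construction~5.2} along the two summands.

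The main obstacle I anticipate is one of translation and bookkeeping rather than of mathematical content: \cite{nardin2016exposeIV}*{Construction~5.2} is phrased in terms of cocartesian fibrations and indexed (co)products, whereas \Cref{cstr:NormMap} is phrased pointwise using the mate calculus and the existence of $p_!$, $p_*$ from \Cref{prop:pointwisecriterionadjoints} and \Cref{lem:UColimitsVsAdjointable}. Setting up a precise dictionary between the two descriptions of $p_!$ and of the base change equivalence — and checking that the counits $c^!_{\pr_1}$ match under it — is where essentially all the work lies; once this is done, the identification $\alpha=\alpha'$ via the uniqueness argument above, and hence the agreement of the two norm maps, is immediate. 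Alternatively, one could observe that \Cref{rem:concise_norm_maps} already identifies $\Nmadj^{\Cc}_p$ with the Hopkins--Lurie transformation $\nu^{(0)}_p$ of \cite{hopkins2013ambidexterity}*{Construction~4.1.8}, and reduce the proposition to a comparison between Nardin's construction and that of Hopkins--Lurie, which some readers may find more transparent.
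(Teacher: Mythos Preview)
Your plan is reasonable and would go through, but it is organized differently from the paper and hides the main step behind the phrase ``a careful translation.'' Nardin's Construction~5.2 is not phrased in terms of the diagonal $\Delta\colon A\hookrightarrow A\times_BA$ and a single transformation $\alpha'\colon\pr_2^*\Rightarrow\pr_1^*$; it is phrased orbitwise, via the decomposition $A\simeq\bigsqcup_{i=1}^n A_i$ into representables and a separate map $F_B(p_!X)\to{p_i}_*F_{A_i}(X_i)$ for each~$i$. Extracting a global $\alpha'$ from this data (so that your uniqueness argument via \Cref{lem:AlphaIsDiagonalMatrix} can be applied) already requires reassembling the per-orbit maps, which is exactly the translation work you defer. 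The reduction to $F=\id_{\Cc}$ has the same status: it is valid only once you have checked that Nardin's relative map also factors as $F_A$ applied to an absolute map, which is part of the same translation.

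The paper proceeds in the opposite direction: rather than pulling Nardin's map into the language of \Cref{cstr:NormMap}, it pushes its own norm map into Nardin's language. Concretely, it decomposes $p_*$ via the orbit splitting, writes $\Nm^F_p$ as a product of $n$ component maps, and unpacks the $i$-th component as $F_{A_i}$ applied to $\iota_i^*\Nmadj_p$. Using the pullback of $\Delta_p$ along $(1,\iota_i)\colon A_i\times_B A\to A\times_B A$, this last map is identified with the ``project away the complement $X_C$'' map, which is then matched term by term with Nardin's $(\chi_{[W\subseteq U]})_*$ via an explicit notational dictionary. This is the same underlying content as your diagonal-matrix argument (indeed the ``project away $X_C$'' step is precisely \Cref{lem:AlphaIsDiagonalMatrix} applied componentwise), but carried out per orbit so that the endpoint is literally Nardin's formula rather than an abstract $\alpha'$. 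The paper's route is thus more direct given how the target construction is actually stated; your route would require first repackaging Nardin's orbitwise description as a global $\alpha'$, after which the two proofs become essentially the same.
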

\begin{proof}
	We will first give an alternative description of the norm map in this special case, and then argue why it agrees with the construction of Nardin. By definition of $\finTsets$, we may assume $p\colon A \to B$ to be of the form $p= (p_i)\colon \bigsqcup_{i=1}^n A_i \to B$, where each $A_i \in T$ is representable. Let $\iota_i\colon A_i \hookrightarrow \bigsqcup_{i=1}^n A_i = A$ denote the canonical inclusion, so that $p_i = p \circ \iota_i\colon A_i \to B$. The functor $p_*\colon \Dd(A) \to \Dd(B)$ may be decomposed as
	\begin{align*}
		\Dd(A) = \Dd(\bigsqcup_{i=1}^n A_i) \xrightarrow[\simeq]{(\iota_i^*)_i} \prod_{i=1}^n \Dd(A_i) \xrightarrow{\prod_{i=1}^n {p_i}_*} \prod_{i=1}^n \Dd(B) \xrightarrow{\prod} \Dd(B),
	\end{align*}
	where the last map denotes the multiplication in $\Dd(B)$. For an object $X = (X_i) \in \Cc(A) \simeq \prod_{i=1}^n \Cc(A_i)$ the norm map $\Nm^F_p(X)\colon F_B(p_!(X)) \to  p_*F_A(X) \simeq \prod_{i=1}^n {p_i}_*(F_{A_i}(X_i))$ is the product of $n$ maps $F_B(p_!(X)) \to {p_i}_*(F_{A_i}(X_i))$, where the $i$-th one is obtained by adjunction from the composite
	\begin{align*}     p_i^*F_B(p_!(X)) \simeq F_{A_i}p_i^*p_!X \simeq F_{A_i}\iota_i^*p^*p_!X \xRightarrow{F_{A_i}\iota_i^*\Nmadj_p} F_{A_i}\iota_i^*X = F_{A_i}X_i.
	\end{align*}
	We will now expand the definition of the map $\iota_i^*\Nmadj_p\colon p_i^*p_!X \to X_i$. First notice that the map $\Nmadj_p\colon p^*p_!X \to X$ is given by the following composite:
	\begin{align*}
		p^* p_!X \overset{l.b.c.}{\simeq} {\pr_1}_! \pr_2^*X \xRightarrow{u^*_{\Delta}}
		{\pr_1}_! \Delta_* \Delta^* \pr_2^*X \xRightarrow[\simeq]{\Nm_{\Delta}^{-1}}
		{\pr_1}_! \Delta_! \Delta^* \pr_2^*X \simeq X.
	\end{align*}
	Applying left base change to the pullback diagram
	\[\begin{tikzcd}
		A_i \times_B A \rar{\iota_i \times_B A} \dar[swap]{\pr_1} \drar[pullback] &
		A_i \times_B A \dar[swap]{\pr_1} \rar{\pr_2} \drar[pullback] & A \dar{p} \\
		A_i \rar{\iota_i} &
		A_i \rar{p_i} & B
	\end{tikzcd}\]
	gives an equivalence $p_i^*p_!X \simeq {\pr_1}_!\pr_2^*X$. Since $T$ is atomic, the diagonal $\Delta_{p_i}\colon A_i \to A_i \times_B A_i \hookrightarrow \bigsqcup_{i=1}^n A_i \times_B A_i = A_i \times_B A$ is a disjoint summand inclusion. Writing $g\colon C \to A_i \times_B A$ for the complement summand, we observe that $\Cc(A_i \times_B A) = \Cc(A_i \sqcup C) \simeq \Cc(A_i) \times \Cc(C)$ and that the object $\pr_2^*X \in \Cc(A_i \times_B A)$ corresponds to the pair $(X_i,X_C)$ for some $X_C \in \Cc(C)$. Plugging in the map $X_C \to *$ to the zero-object $*$ of $\Cc(C)$ thus gives a map ${\pr_1}_!(X_i,X_C) \to {\pr_1}_!(X_i,*) \simeq X_i$. Looking at the construction of $\Nm_{\Delta}$ in \cref{lem:NormMapDisjointSummandInclusions}, one sees that the resulting composite $p_i^*p_!X \to X_i$ is precisely $\iota_i^*\Nmadj_p$.

	One may now observe that this second description of the norm map is precisely the construction of \cite{nardin2016exposeIV}, after making the following translations in notation:
	\begin{align*}
		&B \leftrightarrow V, && A \leftrightarrow U, && p \leftrightarrow I, && \bigsqcup_{i=1}^n A_i \leftrightarrow \bigsqcup_{W \in \rm{Orbit}(U)} W, \\
		&p_! \leftrightarrow \bigsqcup_I, && p_i^* \leftrightarrow \delta_{W/V}, && \iota_i \leftrightarrow (W \subseteq U), && \iota_i^*\Nmadj_p \leftrightarrow (\chi_{[W \subseteq U]})_*.
	\end{align*}
	This finishes the proof.
\end{proof}

Next, we will show that the $P$-semiadditive $T$-functors from $\Cc$ to $\Dd$ form a parame\-trized subcategory of $\ulFun_T(\Cc,\Dd)$. This will rely on the following general criterion in the spirit of Lemma~\ref{lemma:restriction-preserves-cocompl}:

\begin{lemma}\label{lem:SemiadditiveFunctorsClosedUnderBaseChange}
Let $f\colon\PSh(S)\to\PSh(T)$ be a cocontinuous functor that preserves pullbacks. Let $P\subset S$ and $Q\subset T$ be atomic orbital subcategories and assume that for every $p\colon A\to B$ in $P$ we have $(f(p)\colon f(A) \to f(B))\in\ul{\mathbb F}^Q_T(f(B))$. Then:
\begin{enumerate}
    \item The functor $f^*\colon\Cat_T\to\Cat_S$ sends (pointed) $T$-$\infty$-categories with finite $Q$-coproducts to (pointed) $S$-$\infty$-categories with finite $P$-coproducts, and dually for finite $Q$-products and finite $P$-products.
    \item If $F\colon \Cc \to \Dd$ is a $T$-functor such that $\Cc$ is pointed with finite $Q$-coproducts and $\Dd$ has finite $Q$-products, then the relative norm map $\Nm_p^{f^*F}$ for any $B\in\PSh(S),p\in \ul{\mathbb F}^P_S(B)$ agrees with the relative norm map $\Nm_{f(p)}^F$.
    \item The functor $f^*\colon\Cat_T\to\Cat_S$ sends $Q$-semiadditive $T$-categories to $P$-semiadditive $S$-categories and $Q$-semiadditive $T$-functors to $P$-semiadditive $S$-functors.
\end{enumerate}
\end{lemma}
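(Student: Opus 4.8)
The plan is to reduce each of the three assertions to the pointwise criteria established in \Cref{sec:OrbitalSubcategory} and \Cref{sec:AtomicSubcategory}, exactly in the style of the proofs of \Cref{lemma:restriction-preserves-cocompl} and \Cref{cor:cocontinuity-restr}. The basic observation is that $(f^*\Cc)(B)=\Cc(f(B))$ for $B\in S$, that restriction along a morphism $g$ of $S$ is restriction along $f(g)$ in $T$, and that—since $f$ is cocontinuous—$f$ preserves disjoint unions, the empty presheaf, and disjoint summand inclusions; hence $f^*$ visibly preserves pointedness and fiberwise finite (co)products. For part~(1), let $\Cc$ be a pointed $T$-$\infty$-category admitting finite $Q$-coproducts. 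For a morphism $p\colon A\to B$ in $P$, the functor $p^*\colon(f^*\Cc)(B)\to(f^*\Cc)(A)$ is $f(p)^*\colon\Cc(f(B))\to\Cc(f(A))$, which admits a left adjoint by \Cref{rmk:limitExtensionRestrictedClass} since $f(p)\in\ul{\mathbb F}^Q_T(f(B))$ and $\Cc$ admits finite $Q$-coproducts. Given a pullback square in $\PSh(S)$ as in \Cref{prop:finitePcoproducts}(2), its image under $f$ is again a pullback whose relevant leg lies in $\ul{\mathbb F}^Q_T$, and the Beck--Chevalley map for $f^*\Cc$ agrees with the Beck--Chevalley map for $\Cc$, which is an equivalence by \Cref{rmk:limitExtensionRestrictedClass}. \Cref{prop:finitePcoproducts} then shows that $f^*\Cc$ admits finite $P$-coproducts; the statement about finite products is dual (apply the dual of \Cref{prop:finitePcoproducts}, or pass to opposite $\infty$-categories).

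For part~(2), fix $B\in\PSh(T)$ and $(p\colon A\to B)\in\ul{\mathbb F}^P_S(B)$; cocontinuity of $f$ together with the hypothesis gives $(f(p)\colon f(A)\to f(B))\in\ul{\mathbb F}^Q_T(f(B))$, and since $P$ and $Q$ are atomic the diagonals $\Delta_p$ and $\Delta_{f(p)}=f(\Delta_p)$ are disjoint summand inclusions by \Cref{prop:FinitePSetsHasDisjointDiagonalInclusions}. Unwinding \Cref{cstr:NormMapFunctor}, \Cref{cstr:NormMap}, and the concise formula of \Cref{rem:concise_norm_maps}, the transformation $\Nm^{f^*F}_p$ is assembled from the pullback square \eqref{eq:PullbackDiagramNormMap} for $p$, the disjoint summand inclusion $\Delta_p$, the parametrized adjoints $p_!,p_*,(\Delta_p)_!,(\Delta_p)_*$, the equivalence $\Nm_{\Delta_p}$ of \Cref{lem:NormMapDisjointSummandInclusions}, the relevant base change equivalences, and the (co)units of $F$. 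Applying $f$ carries each of these to the corresponding datum for $f(p)$: $f$ preserves the pullback square, preserves disjoint summand inclusions, and by the identifications of part~(1) (and \Cref{rmk:limitExtensionRestrictedClass}) the parametrized left and right adjoints for $f^*\Cc$ and $f^*\Dd$ are identified, under restriction along $f$, with those for $\Cc$ and $\Dd$, while the base change equivalences match because $f$ preserves pullbacks. A step-by-step comparison of the two constructions then yields $\Nm^{f^*F}_p\simeq\Nm^F_{f(p)}$; taking $\Dd=\Cc$ and $F=\id$ gives $\Nm^{f^*\Cc}_p\simeq\Nm^\Cc_{f(p)}$.

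Part~(3) is then immediate: if $\Cc$ is $Q$-semiadditive then $f^*\Cc$ is pointed with finite $P$-(co)products by part~(1), and for every $p$ in $\mathbb F^P_S$ the norm map $\Nm^{f^*\Cc}_p\simeq\Nm^\Cc_{f(p)}$ is an equivalence by \Cref{cor:NormEquivalencesForUlFinPSets}, as $f(p)$ lies in $\ul{\mathbb F}^Q_T(f(B))$; hence $f^*\Cc$ is $P$-semiadditive. Similarly, if $F\colon\Cc\to\Dd$ is a $Q$-semiadditive $T$-functor then $\Nm^{f^*F}_p\simeq\Nm^F_{f(p)}$ is an equivalence for every $p$ in $\mathbb F^P_S$ by \Cref{cor:RelativeNormEquivalencesForUlFinPSets}, so $f^*F$ is $P$-semiadditive. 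I expect the only real obstacle to be the coherence bookkeeping in part~(2): one must verify that the parametrized adjoints and the base change / Beck--Chevalley equivalences entering the (relative) norm map for $f^*\Cc$ and $f^*\Dd$ are genuinely \emph{identified}, not merely abstractly equivalent, with their counterparts for $\Cc$ and $\Dd$ under restriction along $f$. This is precisely the kind of argument already carried out in the proofs of \Cref{lemma:restriction-preserves-cocompl} and \Cref{cor:cocontinuity-restr}, so no new idea is required, but it demands unwinding the definitions of \Cref{cstr:NormMap} and \Cref{cstr:NormMapFunctor} in full.
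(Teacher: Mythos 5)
Your proof is correct and follows essentially the same path as the paper's: the paper deduces part~(1) from \Cref{lemma:restriction-preserves-cocompl} and its dual, whereas you rerun the same Beck--Chevalley argument directly via \Cref{prop:finitePcoproducts} and \Cref{rmk:limitExtensionRestrictedClass}, but the substance is identical. Parts~(2) and~(3) are handled exactly as in the paper (direct inspection of the norm-map construction, then an immediate consequence), just spelled out in more detail.
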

\begin{proof}
	It is clear that $f^*$ preserves pointedness. Moreover, as $f$ preserves coproducts, it more generally maps $\ul{\mathbb F}^P_S(B)$ into $\ul{\mathbb F}^Q_T(f(B))$, so part (1) is an instance of Lemma~\ref{lemma:restriction-preserves-cocompl} and its dual. Part (2) follows similarly by direct inspection of the construction of the norm maps, and (3) is an immediate consequence of (2).
\end{proof}

\begin{definition}
	Let $\Cc$ and $\Dd$ be $T$-$\infty$-categories such that $\Cc$ is pointed and admits finite $P$-coproducts and $\Dd$ admits finite $P$-products. We define $\ulFun^{\Poplus}_T(\Cc,\Dd)$ as the full subcategory $\ulFun_T(\Cc,\Dd)$ spanned at level $B \in T$ by the $P$-semiadditive $T_{/B}$-functors $F\colon \pi^*_B\Cc \to \pi_B^*\Dd$ for $B \in T$.
\end{definition}

This does indeed form a $T$-subcategory by the previous lemma applied to the maps $T_{/f}\colon T_{/A}\to T_{/B}$ for all $f\colon A\to B$ in $T$, cf.~the proof of Lemma~\ref{lemma:FunL-T-subcat}.

We think of a $P$-semiadditive $T$-functor as a functor which sends finite $P$-coproducts to finite $P$-products. Hence we expect that this condition should be preserved when precomposing (resp.\ postcomposing) with a $T$-functor which preserves finite $P$-coproducts (resp.\ finite $P$-products). The following result shows that this is indeed the case.

\begin{proposition}
	\label{prop:CompositionSemiadditiveFunctors}
	Let $F\colon \Cc \to \Dd$ be a $P$-semiadditive $T$-functor, where $\Cc$ and $\Dd$ are as in \cref{def:PSemiadditiveFunctor}, and let $(p\colon A \to B) \in \ulfinPsets(B)$.
	\begin{enumerate}[(1)]
		\item \label{it:PrecompositionWithColimitPreserving} Let $\Cc'$ be another pointed $T$-category admitting finite $P$-coproducts and let $G\colon \Cc' \to \Cc$ be a pointed $T$-functor which preserves finite $P$-coproducts. Then the norm map $\Nm^{FG}_{p}\colon F_BG_Bp_! \Rightarrow p_*F_BG_B$ of $FG$ with respect to $p$ is given by the composite
		\[\begin{tikzcd}
			{\Cc'(A)} & {\Cc(A)} & {\Dd(A)} \\
			{\Cc'(B)} & {\Cc(B)} & {\Dd(B),}
			\arrow["{F_A}", from=1-2, to=1-3]
			\arrow["{G_A}", from=1-1, to=1-2]
			\arrow["{p_!}"', from=1-1, to=2-1]
			\arrow["{p_!}"', from=1-2, to=2-2]
			\arrow["{p_*}", from=1-3, to=2-3]
			\arrow["{F_B}"', from=2-2, to=2-3]
			\arrow["{G_B}"', from=2-1, to=2-2]
			\arrow["{\BC_!^{-1}}"{description}, Rightarrow, from=2-1, to=1-2]
			\arrow["{\Nm^F_p}"{description}, Rightarrow, from=2-2, to=1-3]
		\end{tikzcd}\]
		where $\BC_!\colon p_!G(A)\iso G(B)p_!$ denotes the Beck-Chevalley equivalence of $G$. In particular the composite $F \circ G\colon \Cc' \to \Dd$ is again $P$-semiadditive.
		\item \label{it:PostcompositionWithLimitPreserving} Let $\Dd'$ be another $T$-$\infty$-category which admits finite $P$-products and let $H\colon \Dd \to \Dd'$ be a $T$-functor which preserves finite $P$-products. Then the norm map $\Nm^{HF}_p\colon H_BF_Bp_! \Rightarrow p_*H_BF_B$ of $HF$ at $p$ is given by the composite
		\[
		\begin{tikzcd}
			{\Cc(A)} & {\Dd(A)} & {\Dd'(A)} \\
			{\Cc(B)} & {\Dd(B)} & {\Dd'(B),}
			\arrow["{F_A}", from=1-1, to=1-2]
			\arrow["{p_!}"', from=1-1, to=2-1]
			\arrow["{p_*}", from=1-2, to=2-2]
			\arrow["{F_B}"', from=2-1, to=2-2]
			\arrow["{\Nm^F_p}"{description}, Rightarrow, from=2-1, to=1-2]
			\arrow["{p_*}", from=1-3, to=2-3]
			\arrow["{H_A}", from=1-2, to=1-3]
			\arrow["{H_B}"', from=2-2, to=2-3]
			\arrow["{\BC_*}"{description}, Rightarrow, from=2-2, to=1-3]
		\end{tikzcd}\]
		where $\BC_*\colon H(A) p_* \iso p_* H(A)$ denotes the Beck-Chevalley equivalence of $H$. In particular the composite $H \circ F\colon \Cc \to \Dd$ is again $P$-semiadditive.
	\end{enumerate}
\end{proposition}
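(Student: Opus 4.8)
The plan is to unwind both norm transformations via \Cref{cstr:NormMapFunctor} and then reduce each claim either to the already-available \Cref{lem:NormVsBeckChevalley} or to a purely formal manipulation of mates, with no genuinely new input.

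For part~\eqref{it:PrecompositionWithColimitPreserving}, recall from \Cref{cstr:NormMapFunctor} that $\Nm^{FG}_p$ is the transformation adjoint, under the adjunction $p^*\dashv p_*$ in $\Dd$, to the composite $p^*F_BG_Bp_!\simeq F_AG_Ap^*p_!\xRightarrow{F_AG_A\Nmadj^{\Cc'}_p}F_AG_A$ (with $p_!,p^*$ formed in $\Cc'$ on the right). Since $G\colon\Cc'\to\Cc$ is pointed and preserves finite $P$-coproducts, \Cref{lem:NormVsBeckChevalley} applies to $G$, and because its Beck--Chevalley map $\BC_!\colon p_!G_A\iso G_Bp_!$ is then an equivalence, that lemma identifies $G_A\Nmadj^{\Cc'}_p$ with the composite read off its square backwards along $\BC_!^{-1}$, namely $G_Ap^*p_!\xRightarrow{\BC_!^{-1}}p^*p_!G_A\xRightarrow{\Nmadj^{\Cc}_pG_A}G_A$ (up to the restriction-compatibility equivalences of $G$). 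Whiskering by $F_A$ and using that $F_A\Nmadj^{\Cc}_p$ is, by \Cref{cstr:NormMapFunctor}, adjoint to $\Nm^F_p$, the compatibility of mates with pasting shows that $\Nm^{FG}_p$ is exactly the pasting of the $\BC_!^{-1}$-square of $G$ with the $\Nm^F_p$-square, which is the diagram displayed in the statement.

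For part~\eqref{it:PostcompositionWithLimitPreserving}, the argument is purely formal. Write $\BC_*\colon H_Bp_*\iso p_*H_A$ for the mate of the restriction-compatibility equivalence $p^*H_B\simeq H_Ap^*$ of the $T$-functor $H$; this mate is an equivalence because $H$ preserves finite $P$-products. I would compute the adjunct, under $p^*\dashv p_*$ in $\Dd'$, of the candidate composite $H_BF_Bp_!\xRightarrow{H_B\Nm^F_p}H_Bp_*F_A\xRightarrow{\BC_*F_A}p_*H_AF_A$. Using the defining property of the mate $\BC_*$ (that $c^*_p\circ p^*\BC_*$ agrees with $H_Ac^*_p$ after the identification $p^*H_Bp_*\simeq H_Ap^*p_*$), the naturality of $p^*H_B\simeq H_Ap^*$, and the fact that $\Nm^F_p$ is adjoint to $p^*F_Bp_!\simeq F_Ap^*p_!\xRightarrow{F_A\Nmadj^{\Cc}_p}F_A$, this adjunct simplifies to $p^*H_BF_Bp_!\simeq H_AF_Ap^*p_!\xRightarrow{H_AF_A\Nmadj^{\Cc}_p}H_AF_A$, which is precisely the composite whose adjunct defines $\Nm^{HF}_p$. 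Hence the candidate composite equals $\Nm^{HF}_p$, as claimed.

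Finally, the ``in particular'' clauses are immediate. In \eqref{it:PrecompositionWithColimitPreserving} the transformation $\Nm^F_p$ is an equivalence since $F$ is $P$-semiadditive and $\BC_!$ is an equivalence since $G$ preserves finite $P$-coproducts, so their pasting $\Nm^{FG}_p$ is an equivalence for every $p$ in $\finPsets$; as moreover $\Cc'$ is pointed with finite $P$-coproducts and $\Dd$ admits finite $P$-products, $FG$ satisfies the hypotheses of \Cref{def:PSemiadditiveFunctor} and is therefore $P$-semiadditive. The same reasoning handles \eqref{it:PostcompositionWithLimitPreserving} with $\BC_*$ in place of $\BC_!$. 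I expect the only real difficulty to be organisational --- keeping the various squares ($p^*F_B\simeq F_Ap^*$, $p^*H_B\simeq H_Ap^*$, and the Beck--Chevalley squares of $F$, $G$ and $H$) pasted in the correct order and orientation --- since \Cref{lem:NormVsBeckChevalley} and the compatibility of mates with pasting already carry the conceptual weight.
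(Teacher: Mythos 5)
Your proposal is correct and follows essentially the same route as the paper: part~(1) reduces to \cref{lem:NormVsBeckChevalley} whiskered by $F_A$, and part~(2) is the formal computation with the defining property of $\BC_*$, naturality, and the triangle identity, followed by the observation that a pasting of equivalences is an equivalence. The paper organises the verifications as explicit commutative diagrams where you phrase them as adjunct computations, but these are the same argument presented differently.
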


\begin{proof}
	The description of $\Nm^{FG}_p$ follows from the commutative diagram
	\[\begin{tikzcd}[cramped]
		{F_BG_Bp_!} & {p_*p^*F_BG_Bp_!} & {p_*F_Ap^*G_Bp_!} &[30] {p_*F_AG_Ap^*p_!} \\
		{F_Bp_!G_A} & {p_*p^*F_Bp_!G_A} & {p_*F_Ap^*p_!G_A} & {p_*F_AG_A.}
		\arrow["{u^*_p}", from=1-1, to=1-2]
		\arrow["\simeq", from=1-2, to=1-3]
		\arrow["\simeq", from=1-3, to=1-4]
		\arrow["{p_*F_AG_A\Nmadj_p}", from=1-4, to=2-4]
		\arrow["{\BC_!^{-1}}", from=1-3, to=2-3]
		\arrow["{p_*F_A\Nmadj_pG_A}", from=2-3, to=2-4]
		\arrow["{\BC_!^{-1}}", from=1-2, to=2-2]
		\arrow["{\BC_!^{-1}}"', from=1-1, to=2-1]
		\arrow["{u^*_p}", from=2-1, to=2-2]
		\arrow["\simeq", from=2-2, to=2-3]
		\arrow["{\Nm^F_pG_A}"', bend right = 10, from=2-1, to=2-4]
	\end{tikzcd}\]
	The middle and left square commute by naturality, and the right square by Lemma \ref{lem:NormVsBeckChevalley}.
	The description of $\Nm^{HF}_p$ follows from the commutative diagram
	\[\begin{tikzcd}
		{H_BF_Bp_!} & {H_Bp_*p^*F_Bp_!} & {H_Bp_*F_Ap^*p_!} &[30] {H_Bp_*F_A} \\
		{p_*p^*H_BF_Bp_!} & {p_*H_Ap^*F_Bp_!} & {p_*H_AF_Ap^*p_!} & {p_*H_AF_A,}
		\arrow["{u^*_p}"', from=1-1, to=2-1]
		\arrow["{H_Bu^*_p}"', from=1-1, to=1-2]
		\arrow["\simeq"', from=2-1, to=2-2]
		\arrow["\simeq"', from=2-2, to=2-3]
		\arrow["{\BC_*}"', "\simeq", from=1-2, to=2-2]
		\arrow["\simeq", from=1-2, to=1-3]
		\arrow["{\BC_*}"', "\simeq", from=1-3, to=2-3]
		\arrow["{H_Bp_*F_A\Nmadj_p}"', from=1-3, to=1-4]
		\arrow["{p_*H_AF_A\Nmadj_p}"', from=2-3, to=2-4]
		\arrow["{\BC_*}", "\simeq"', from=1-4, to=2-4]
		\arrow["{H_B\Nm^F_p}", bend left = 10, from=1-1, to=1-4]
	\end{tikzcd}\]
	where the middle and right square commute by naturality while the left-most square commutes by definition of the Beck-Chevalley equivalence $\BC_*$ and the triangle identity.
\end{proof}

\begin{corollary}
	\label{cor:AdjunctionSemiadditiveFunctorsIntoPointedObjects}
	Let $\Cc$ and $\Dd$ be $T$-$\infty$-categories such that $\Cc$ is pointed and admits finite $P$-coproducts, and $\Dd$ admits finite $P$-products. Then post-composition with the forgetful functor $\Dd_* \to \Dd$ induces an equivalence of $T$-$\infty$-categories
	\begin{align*}
		\ulFun_T^{\Poplus}(\Cc,\Dd_*) \iso \ulFun_T^{\Poplus}(\Cc,\Dd).
	\end{align*}
\end{corollary}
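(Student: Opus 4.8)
The plan is to reduce the statement to the non-pointed case already handled implicitly by \Cref{cor:AdjunctionPointednessInternal}, together with a comparison of the relevant norm maps. First I would recall the chain of equivalences: by \Cref{cor:AdjunctionPointednessInternal} (applied in the slices $T_{/B}$), post-composition with the counit $\Dd_* \to \Dd$ already induces an equivalence $\ulFun_T^{*}(\Cc,\Dd_*) \iso \ulFun_T^{*}(\Cc,\Dd)$ between the $T$-$\infty$-categories of pointed $T$-functors. So it suffices to show that this equivalence restricts to an equivalence on the full $T$-subcategories of $P$-semiadditive functors, i.e.\ that a pointed $T_{/B}$-functor $F\colon \pi_B^*\Cc \to \pi_B^*\Dd_*$ is $P$-semiadditive if and only if its composite $q\circ F$ with the forgetful functor $q\colon \Dd_* \to \Dd$ is $P$-semiadditive. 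Since $P$-semiadditivity is checked after base change to each $B$ (and slices of $\Dd_*$ are again pointed objects, by the construction preceding \Cref{cor:AdjunctionPointedness}), it is enough to do this at a single level.

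The key input is that the forgetful functor $q\colon \Dd_* \to \Dd$ preserves finite $P$-products, which follows from \Cref{lem:ForgetfulFunctorPreservesTLimitsPointedCase} applied to $\bbU = \ulfinPsets$ (using that the fold maps are handled by fiberwise finite products, which $q$ also preserves since $\Dd_* \to \Dd$ preserves fiberwise limits as a right adjoint in each fiber). So $q$ is a $T$-functor of the kind considered in \Cref{prop:CompositionSemiadditiveFunctors}\eqref{it:PostcompositionWithLimitPreserving}: by that proposition, for every $(p\colon A\to B)\in\ulfinPsets(B)$ the relative norm map $\Nm^{qF}_p$ is obtained from $\Nm^F_p$ by post-composing with the Beck-Chevalley equivalence $\BC_*\colon q_A p_* \iso p_* q_A$ of $q$. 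Hence $\Nm^{qF}_p$ is an equivalence if and only if $\Nm^F_p$ is — but we must be slightly careful, because a priori $\Nm^F_p$ is a transformation of functors into $\Dd_*(B)$ whereas what we directly know is only about $\Nm^{qF}_p$. The point is that $q_B\colon \Dd_*(B)\to\Dd(B)$ is conservative (it is the forgetful functor from a slice under a terminal object, hence conservative), so $\Nm^F_p$ is an equivalence iff $q_B\Nm^F_p$ is, and the diagram from \Cref{prop:CompositionSemiadditiveFunctors}\eqref{it:PostcompositionWithLimitPreserving} identifies $q_B$ applied to $\Nm^F_p$ with $\Nm^{qF}_p$ up to the Beck-Chevalley equivalence. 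This gives the desired equivalence of $P$-semiadditivity conditions for $F$ and $qF$.

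Putting it together: the equivalence $\ulFun_T^{*}(\Cc,\Dd_*)\iso\ulFun_T^{*}(\Cc,\Dd)$ of \Cref{cor:AdjunctionPointednessInternal} carries the full $T$-subcategory $\ulFun_T^{\Poplus}(\Cc,\Dd_*)$ onto $\ulFun_T^{\Poplus}(\Cc,\Dd)$ (noting that both are full $T$-subcategories of the respective pointed functor categories, and that $\Cc$ pointed already forces all $T$-functors out of $\Cc$ considered here to be pointed at the zero object level, so there is no discrepancy between $\ulFun_T^*$ and $\ulFun_T$ in the relevant spots; alternatively, one observes that a $P$-semiadditive functor automatically preserves the final/zero object, by the $A=\emptyset$ case of condition $(\ast)$ in \Cref{def:PSemiadditiveFunctor}). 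Restricting the equivalence of $T$-$\infty$-categories to these full $T$-subcategories yields the claim.

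The main obstacle I anticipate is purely bookkeeping: making sure the identification of $\Nm^{qF}_p$ with $\BC_*\circ q_B(\Nm^F_p)$ from \Cref{prop:CompositionSemiadditiveFunctors}\eqref{it:PostcompositionWithLimitPreserving} is set up so that one really concludes ``$\Nm^{qF}_p$ equivalence $\iff$ $\Nm^F_p$ equivalence,'' which requires knowing both that $q$ preserves finite $P$-products (so that \Cref{prop:CompositionSemiadditiveFunctors} applies and $\BC_*$ is an \emph{equivalence}) and that $q_B$ is conservative. Neither is deep, but the first uses \Cref{lem:ForgetfulFunctorPreservesTLimitsPointedCase} for the class $\ulfinPsets$ rather than for $T$-(co)completeness, so one should double-check that the statement there is phrased for an arbitrary class $\bbU$ of $T$-$\infty$-groupoids — which it is. Everything else is a formal consequence of the already-established \Cref{cor:AdjunctionPointednessInternal}.
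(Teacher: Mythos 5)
Your proof is correct and follows essentially the same route as the paper's: reduce via \cref{cor:AdjunctionPointednessInternal} to showing that a pointed $T$-functor $F\colon\Cc\to\Dd_*$ is $P$-semiadditive iff $q\circ F$ is, and then deduce this from \cref{prop:CompositionSemiadditiveFunctors}\eqref{it:PostcompositionWithLimitPreserving} together with conservativity of $q\colon\Dd_*\to\Dd$ and the fact that $q$ preserves finite $P$-products via \cref{lem:ForgetfulFunctorPreservesTLimitsPointedCase}. One small inaccuracy in a parenthetical: you treat the fold maps as a separate case needing a separate argument (and invoke that $q$ ``preserves fiberwise limits as a right adjoint in each fiber''). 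The fold maps $\nabla\colon\bigsqcup_{i=1}^n B\to B$ are already objects of $\ulfinPsets(B)$ (each component is $\id_B\in P$), so \cref{lem:ForgetfulFunctorPreservesTLimitsPointedCase} with $\bbU=\ulfinPsets$ already covers them; and the justification ``right adjoint'' requires finite coproducts in $\Dd(B)$ which are not assumed here — though $q$ does preserve limits regardless, since the forgetful functor from an under-category creates limits. Neither issue affects the validity of the argument.
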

\begin{proof}
	By \cref{cor:AdjunctionPointednessInternal} it remains to show that a pointed $T$-functor $\Cc \to \Dd_*$ is $P$-semiadditive if and only if its composition with $\Dd_* \to \Dd$ is $P$-semiadditive. This follows from \cref{prop:CompositionSemiadditiveFunctors} since the $T$-functor $\Dd_* \to \Dd$ is conservative and preserves $T$-limits by \cref{lem:ForgetfulFunctorPreservesTLimitsPointedCase}.
\end{proof}

\begin{corollary}
	\label{cor:SemiadditiveFunctorsUnderBaseChangeAdjunction}
	Let $\Cc$ be a pointed $T$-$\infty$-category which admits finite $P$-coproducts and let $\Dd$ be a $T_{/B}$-$\infty$-category which admits finite $P$-products. Let $B \in T$ and consider a $T_{/B}$-functor $F\colon \pi_B^*\Cc \to\Dd$. Then $\pi_B^*\Cc$ is pointed with finite $P$-coproducts, $\pi_{B*}\Dd$ has finite $P$-products, and $F$ is $P$-semiadditive if and only if the corresponding functor $\widetilde{F}\colon \Cc \to {\pi_B}_*\Dd$ is $P$-semiadditive.
\end{corollary}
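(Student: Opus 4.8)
The plan is to run the argument of \Cref{prop:slice-adj-cc} with ``$T$-cocontinuous'' systematically replaced by ``$P$-semiadditive'', using \Cref{lem:SemiadditiveFunctorsClosedUnderBaseChange} to transport $P$-semiadditivity across the base-change functors and \Cref{prop:CompositionSemiadditiveFunctors} to handle pre- and post-composition by the unit and counit of the adjunction $\pi_B^*\dashv\pi_{B*}$ of \Cref{ex:baseChangeCategory}. The first step is to observe that the functors $\pi_B\colon\PSh(T)_{/B}\to\PSh(T)$ and $\blank\times B\colon\PSh(T)\to\PSh(T)_{/B}\simeq\PSh(T_{/B})$ are both cocontinuous and pullback-preserving ($\pi_B$ is a left adjoint and pullbacks in $\PSh(T)_{/B}$ are computed in $\PSh(T)$; $\blank\times B$ is cocontinuous since products are universal in the $\infty$-topos $\PSh(T)$), and that both carry morphisms of $P$ to finite $P$-sets --- for $\pi_B$ this is immediate, while $p\times B$ is a base change of $p$ along $C\times B\to C$ and hence lies in $\ulfinPsets(C\times B)$ by orbitality of $P$ in the presheaf form of \Cref{rmk:limitExtensionRestrictedClass}. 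Thus \Cref{lem:SemiadditiveFunctorsClosedUnderBaseChange}(1), applied to $\pi_B^*$ and to $\pi_{B*}=(\blank\times B)^*$, immediately yields the first two assertions (that $\pi_B^*\Cc$ is pointed with finite $P$-coproducts and $\pi_{B*}\Dd$ admits finite $P$-products), and likewise that $\pi_{B*}\pi_B^*\Cc$ is pointed with finite $P$-coproducts and $\pi_B^*\pi_{B*}\Dd$ admits finite $P$-products.

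For the ``only if'' direction, factor $\widetilde F$ as $\Cc\xrightarrow{\eta}\pi_{B*}\pi_B^*\Cc\xrightarrow{\pi_{B*}F}\pi_{B*}\Dd$. Applying \Cref{lem:SemiadditiveFunctorsClosedUnderBaseChange}(3) to $\blank\times B$ shows $\pi_{B*}F$ is $P$-semiadditive whenever $F$ is, so by \Cref{prop:CompositionSemiadditiveFunctors}\eqref{it:PrecompositionWithColimitPreserving} it remains to see that the unit $\eta$ is a pointed $T$-functor preserving finite $P$-coproducts. Unwinding \Cref{ex:baseChangeCategory} identifies $\pi_{B*}\pi_B^*\Cc\simeq\Cc(\blank\times B)$, with $\eta$ given at $A\in T$ by restriction along the projection $A\times B\to A$; this plainly preserves the zero object and fiberwise finite coproducts, and for $p\colon A'\to A$ in $P$ the Beck--Chevalley map $p_!\circ(\eta)_{A'}\Rightarrow(\eta)_A\circ p_!$ is the one associated to the pullback square
\[
\begin{tikzcd}
A'\times B\arrow[r,"\pr_{A'}"]\arrow[d,"p\times\id_B"']\arrow[dr,"\lrcorner"{very near start},phantom] & A'\arrow[d,"p"]\\
A\times B\arrow[r,"\pr_A"'] & A
\end{tikzcd}
\]
in $\PSh(T)$, hence an equivalence because $\Cc$ admits finite $P$-coproducts (\Cref{prop:finitePcoproducts}, in the presheaf form of \Cref{rmk:limitExtensionRestrictedClass}). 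This is exactly the computation carried out for the unit in the proof of \Cref{prop:slice-adj-cc}, and combined with \Cref{prop:CompositionSemiadditiveFunctors}\eqref{it:PrecompositionWithColimitPreserving} it shows $\widetilde F$ is $P$-semiadditive.

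Dually, for the ``if'' direction factor $F$ as $\pi_B^*\Cc\xrightarrow{\pi_B^*\widetilde F}\pi_B^*\pi_{B*}\Dd\xrightarrow{\epsilon}\Dd$; \Cref{lem:SemiadditiveFunctorsClosedUnderBaseChange}(3) applied to $\pi_B$ shows the first factor is $P$-semiadditive, so by \Cref{prop:CompositionSemiadditiveFunctors}\eqref{it:PostcompositionWithLimitPreserving} it suffices to check that the counit $\epsilon$ preserves finite $P$-products. Here $\pi_B^*\pi_{B*}\Dd$ has value $\Dd(D\times B)$ at $(D\to B)$, and $\epsilon$ is restriction along the graph $(\id_D,\mathrm{str}_D)\colon D\to D\times B$ of the structure map; the relevant Beck--Chevalley maps are again those of explicit pullback squares in $\PSh(T)_{/B}$, obtained from the identity $\mathrm{str}_D\circ q=\mathrm{str}_{D'}$ for $q\colon D'\to D$ over $B$ in $P$, and these are equivalences since $\Dd$ admits finite $P$-products --- precisely the computation for the counit in \Cref{prop:slice-adj-cc}. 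This finishes the proof.

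I expect the only real friction to be bookkeeping: setting up the slice atomic orbital subcategory $P\subseteq T_{/B}$ (via \Cref{ex:OrbitalClosed}\eqref{it:SliceOfOrbitalIsOrbital}) and checking that $\blank\times B$ genuinely lands in the finite $P$-sets \emph{over} $B$ --- rather than merely in finite $P$-sets after a further pullback --- so that \Cref{lem:SemiadditiveFunctorsClosedUnderBaseChange} is applicable, after which everything reduces to the two pullback-square diagram chases already present in \Cref{prop:slice-adj-cc}.
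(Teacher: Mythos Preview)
Your proof is correct and follows essentially the same approach as the paper: the paper's proof simply says ``This follows from Lemma~\ref{lem:SemiadditiveFunctorsClosedUnderBaseChange} and Proposition~\ref{prop:CompositionSemiadditiveFunctors} by the same arguments as in the proof of Proposition~\ref{prop:adjunct-cocont},'' and you have spelled out exactly those arguments. Your choice to point directly to \Cref{prop:slice-adj-cc} rather than \Cref{prop:adjunct-cocont} is in fact slightly more apt, since the corollary only concerns the representable case $B\in T$, for which the proof of \Cref{prop:adjunct-cocont} itself defers to \Cref{prop:slice-adj-cc}.
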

\begin{proof}
	This follows from Lemma~\ref{lem:SemiadditiveFunctorsClosedUnderBaseChange} and Proposition~\ref{prop:CompositionSemiadditiveFunctors} by the same arguments as in the proof of Proposition~\ref{prop:slice-adj-cc}.
\end{proof}

\begin{corollary}
	Let $\Cc$ be a pointed $T$-$\infty$-category with finite coproducts, let $\Dd$ be a $T$-$\infty$-category with finite products, and let $X\in\PSh(T)$ arbitrary. Then $(F\colon\Cc\to\ul\Fun_T(\ul X,\Dd))\in\ul\Fun_T(\Cc,\Dd)(X)$ defines an object of $\ul\Fun_T^{P\text-\oplus}(\Cc,\Dd)(X)$ if and only if it is $P$-semiadditive.
	\begin{proof}
		If $X$ is representable, this is an instance of the previous proposition. In the general case, we then simply observe analogously to the proof of Proposition~\ref{prop:adjunct-cocont} that the functors $\ul\Fun_T(\ul X,\Dd)\to\ul\Fun_T(\ul A,\Dd)$ for maps $\ul A\to\ul X$ with $A\in T$ are jointly conservative and preserve finite $P$-products, so that the claim follows from Proposition~\ref{prop:CompositionSemiadditiveFunctors}.
	\end{proof}
\end{corollary}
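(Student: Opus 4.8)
The plan is to reduce to the case where $X$ is representable, where the statement is precisely \Cref{cor:SemiadditiveFunctorsUnderBaseChangeAdjunction}, and then to bootstrap to an arbitrary presheaf $X$ by writing it as a colimit of representables, exactly along the lines of the proof of \Cref{prop:adjunct-cocont}.

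First I would dispose of the representable case $X=B\in T$. By \Cref{cor:TCategoryOfTFunctors} we may identify $\ul\Fun_T(\Cc,\Dd)(B)$ with $\Fun_{T_{/B}}(\pi_B^*\Cc,\pi_B^*\Dd)$, under which $\ul\Fun_T^{\Poplus}(\Cc,\Dd)(B)$ is by definition the full subcategory of $P$-semiadditive $T_{/B}$-functors. The same corollary supplies an equivalence $\ul\Fun_T(\ul B,\Dd)\simeq\pi_{B*}\pi_B^*\Dd$, so that the base-change adjunction $\pi_B^*\dashv\pi_{B*}$ matches a $T_{/B}$-functor $\pi_B^*\Cc\to\pi_B^*\Dd$ with its adjunct $T$-functor $\Cc\to\ul\Fun_T(\ul B,\Dd)$. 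Since $\pi_B^*\Cc$ is pointed with finite $P$-coproducts and $\pi_{B*}\pi_B^*\Dd$ has finite $P$-products, \Cref{cor:SemiadditiveFunctorsUnderBaseChangeAdjunction} says the former functor is $P$-semiadditive if and only if its adjunct is, which is precisely the claim for $X=B$.

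Next, for a general $X\in\PSh(T)$ I would use the co-Yoneda equivalence $\ul X\simeq\colim_{A\in T_{/X}}\ul A$ in $\Cat_T$ and the fact that $\ul\Fun_T(\blank,\Dd)$ turns colimits into limits, obtaining $\ul\Fun_T(\ul X,\Dd)\simeq\lim_{A\in(T_{/X})^\op}\ul\Fun_T(\ul A,\Dd)$. Likewise, the limit-extension of the full $T$-subcategory $\ul\Fun_T^{\Poplus}(\Cc,\Dd)\subseteq\ul\Fun_T(\Cc,\Dd)$ exhibits $\ul\Fun_T^{\Poplus}(\Cc,\Dd)(X)$ as the full subcategory of $\ul\Fun_T(\Cc,\Dd)(X)\simeq\lim_A\ul\Fun_T(\Cc,\Dd)(A)$ on those objects whose restriction to every $A\to X$ with $A\in T$ lies in $\ul\Fun_T^{\Poplus}(\Cc,\Dd)(A)$. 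Under the natural identification $\ul\Fun_T(\Cc,\Dd)(X)\simeq\Fun_T(\Cc,\ul\Fun_T(\ul X,\Dd))$ of \Cref{cor:TCategoryOfTFunctors2}, the restriction of $F$ along $A\to X$ becomes the composite $\Cc\xrightarrow{F}\ul\Fun_T(\ul X,\Dd)\to\ul\Fun_T(\ul A,\Dd)$, so by the representable case it remains to show that $F$ is $P$-semiadditive if and only if all such composites are $P$-semiadditive.

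The ``only if'' direction is immediate from \Cref{prop:CompositionSemiadditiveFunctors}\eqref{it:PostcompositionWithLimitPreserving}, since the restriction functors $\ul\Fun_T(\ul X,\Dd)\to\ul\Fun_T(\ul A,\Dd)$ are precomposition functors and so preserve finite $P$-products (\Cref{prop:CoLimitsInFunctorCategories} and the corollary following it). For the converse, these restriction functors are jointly conservative --- at each level $B$ they are the projections out of a limit of $\infty$-categories --- and \Cref{prop:CompositionSemiadditiveFunctors}\eqref{it:PostcompositionWithLimitPreserving} exhibits the relative norm transformation of each composite as $\Nm^F_p$ post-composed levelwise with a restriction functor, up to that functor's Beck-Chevalley equivalence; hence if every composite norm transformation is an equivalence, so is $\Nm^F_p$ for every $p\colon A\to B$ in $\finPsets$, i.e.\ $F$ is $P$-semiadditive. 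I expect the main obstacle to be bookkeeping rather than mathematical content: one must keep careful track of the chain of identifications between $\ul\Fun_T(\Cc,\Dd)(X)$, $\Fun_T(\Cc,\ul\Fun_T(\ul X,\Dd))$ and the levelwise limit descriptions, and check that the full-subcategory inclusions and the restriction maps are compatible with all of them --- precisely the diagram-chasing already carried out in the proof of \Cref{prop:adjunct-cocont}.
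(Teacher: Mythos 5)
Your proposal is correct and follows essentially the same route as the paper's proof: dispose of the representable case via \Cref{cor:SemiadditiveFunctorsUnderBaseChangeAdjunction}, then bootstrap to general $X$ by observing that the restriction functors $\ul\Fun_T(\ul X,\Dd)\to\ul\Fun_T(\ul A,\Dd)$ are jointly conservative and preserve finite $P$-products, and applying \Cref{prop:CompositionSemiadditiveFunctors}. Your write-up spells out more detail (the limit decomposition of $\ul\Fun_T(\ul X,\Dd)$, the bookkeeping of identifications) than the paper does, but the structure and key citations are identical.
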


\begin{lemma}
	\label{lem:SemiadditiveFunctorsClosedUnderLimits}
	Let $\Cc$ and $\Dd$ be $T$-$\infty$-categories such that $\Cc$ is pointed and admits finite $P$-coproducts and $\Dd$ admits finite $P$-products. Let $\bbU$ be a class of $T$-$\infty$-categories, and assume that $\Dd$ admits $\bbU$-limits. Then the $T$-$\infty$-category $\ulFun^{\Poplus}_T(\Cc, \Dd)$ also admits $\bbU$-limits and the inclusion $\ulFun^{\Poplus}_T(\Cc, \Dd) \hookrightarrow \ulFun_T(\Cc, \Dd)$ preserves $\bbU$-limits.
\end{lemma}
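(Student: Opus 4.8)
The plan is to build the $\bbU$-limits in $\ulFun^{\Poplus}_T(\Cc,\Dd)$ by restricting those of the ambient functor category $\ulFun_T(\Cc,\Dd)$, which exist by Proposition~\ref{prop:CoLimitsInFunctorCategories}. Since $\ulFun^{\Poplus}_T(\Cc,\Dd)$ is a full $T$-subcategory of $\ulFun_T(\Cc,\Dd)$, it suffices to show that the relevant limit functor carries diagrams valued in the subcategory back into it; the adjunction $\diag\dashv\lim$ then automatically restricts, yielding both assertions.

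First I would reduce to the case where $T$ has a terminal object. Fixing $B\in T$ and $K\in\bbU(B)$, Corollary~\ref{cor:TCategoryOfTFunctors} gives $\pi_B^*\ulFun_T(\Cc,\Dd)\simeq\ulFun_{T_{/B}}(\pi_B^*\Cc,\pi_B^*\Dd)$, and unwinding the definition of $\ulFun^{\Poplus}$ this restricts to $\pi_B^*\ulFun^{\Poplus}_T(\Cc,\Dd)\simeq\ulFun^{\Poplus}_{T_{/B}}(\pi_B^*\Cc,\pi_B^*\Dd)$, with all hypotheses preserved by Lemma~\ref{lem:SemiadditiveFunctorsClosedUnderBaseChange}. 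So I may assume $T$ has a terminal object, $K$ is an honest $T$-$\infty$-category, and $\Dd$ admits $K$-indexed limits, and the goal is to produce a right adjoint to $\diag\colon\ulFun^{\Poplus}_T(\Cc,\Dd)\to\ulFun_T(K,\ulFun^{\Poplus}_T(\Cc,\Dd))$ that is compatible with the limit functor of $\ulFun_T(\Cc,\Dd)$.

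Next I would pass through the cartesian-closure equivalence $\ulFun_T(K,\ulFun_T(\Cc,\Dd))\simeq\ulFun_T(\Cc,\ulFun_T(K,\Dd))$, under which a $K$-diagram $D$ corresponds to a single $T$-functor $\hat D\colon\Cc\to\ulFun_T(K,\Dd)$. The diagonal corresponds to postcomposition with $\diag\colon\Dd\to\ulFun_T(K,\Dd)$, hence --- since postcomposition preserves adjunctions --- the limit functor corresponds to postcomposition with $\lim_K\colon\ulFun_T(K,\Dd)\to\Dd$, so $\lim_K D\simeq\lim_K\circ\,\hat D$. By the corollary following Proposition~\ref{prop:CoLimitsInFunctorCategories}, $\ulFun_T(K,\Dd)$ admits finite $P$-products and the restriction functors $k^*\colon\ulFun_T(K,\Dd)\to\ulFun_T(\ul{B'},\Dd)$ along the points $k\colon\ul{B'}\to K$ preserve them; these functors are also jointly conservative, since $\ulFun_T(\blank,\Dd)$ turns the co-Yoneda colimit presentation of $K$ into a limit. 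The commutative diagram proving Proposition~\ref{prop:CompositionSemiadditiveFunctors}(2) makes no use of $P$-semiadditivity of the first factor, so it applies to $H=k^*$ and $F=\hat D$ and shows that $k^*\Nm^{\hat D}_p$ differs from $\Nm^{k^*\hat D}_p$ by the (invertible) Beck--Chevalley map of $k^*$; and under Corollary~\ref{cor:SemiadditiveFunctorsUnderBaseChangeAdjunction} the functor $k^*\hat D$ corresponds to the component $D_k\in\ulFun_T(\Cc,\Dd)(B')$ of the diagram, compatibly with norm maps. Therefore, if $D$ takes values in $\ulFun^{\Poplus}_T(\Cc,\Dd)$, every $\Nm^{D_k}_p$ is an equivalence, hence so is every $k^*\Nm^{\hat D}_p$, hence $\Nm^{\hat D}_p$ is an equivalence for all $p\in\finPsets$ by joint conservativity, i.e.\ $\hat D$ is $P$-semiadditive. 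Finally, $\lim_K\colon\ulFun_T(K,\Dd)\to\Dd$ is a parametrized right adjoint between $T$-$\infty$-categories admitting finite $P$-products, so it preserves them, and Proposition~\ref{prop:CompositionSemiadditiveFunctors}(2) applied to $F=\hat D$, $H=\lim_K$ shows $\lim_K D\simeq\lim_K\circ\,\hat D$ is $P$-semiadditive; thus $\lim_K$ restricts to $\ulFun^{\Poplus}_T(\Cc,\Dd)$, and the adjunction $\diag\dashv\lim_K$ restricts accordingly.

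The main obstacle is the plumbing of this last step: identifying the limit of $D$ inside the internal functor category with $\lim_K\circ\hat D$, and reducing ``$\Nm^{\hat D}_p$ is an equivalence'' to the pointwise norm maps $\Nm^{D_k}_p$ via the evaluation-at-a-point functors. This is exactly where one needs that the norm-map identity of Proposition~\ref{prop:CompositionSemiadditiveFunctors}(2) holds for an arbitrary $T$-functor $F$ (not only a $P$-semiadditive one), that the evaluation functors $k^*$ preserve finite $P$-products, and that they jointly detect equivalences. The other ingredients --- existence of $\bbU$-limits and of finite $P$-products in $\ulFun_T(K,\Dd)$, preservation of parametrized limits by parametrized right adjoints, and the compatibility of $\ulFun^{\Poplus}$ with base change --- are recorded in the excerpt or are standard.
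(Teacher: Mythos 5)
Your proof follows the same overall strategy as the paper's: reduce to the case that $T$ has a terminal object, identify a $K$-diagram $D$ with a $T$-functor $\hat D\colon\Cc\to\ulFun_T(K,\Dd)$ via cartesian closure, identify the pointwise limit with $\lim_K\circ\hat D$, and apply Proposition~\ref{prop:CompositionSemiadditiveFunctors}\eqref{it:PostcompositionWithLimitPreserving} using that $\lim_K$, as a parametrized right adjoint, preserves finite $P$-products. So on the main mechanism you agree with the paper.

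Where you go further is that you explicitly verify the step the paper treats tacitly, namely the correspondence between ``$D$ is valued in $\ulFun^{\Poplus}_T(\Cc,\Dd)$'' and ``$\hat D\colon\Cc\to\ulFun_T(K,\Dd)$ is $P$-semiadditive.'' The paper's proof simply starts from a $P$-semiadditive $F$, and the unnumbered corollary after Corollary~\ref{cor:SemiadditiveFunctorsUnderBaseChangeAdjunction} only establishes the correspondence when $K=\ul X$ is a $T$-$\infty$-groupoid, whereas the class $\bbU$ here can contain arbitrary $T$-$\infty$-categories. You close this gap by the natural extension of the same method: the evaluation functors $k^*\colon\ulFun_T(K,\Dd)\to\ulFun_T(\ul{B'},\Dd)$ preserve finite $P$-products (Proposition~\ref{prop:CoLimitsInFunctorCategories}) and are jointly conservative, and --- as you correctly note --- the norm-map formula in Proposition~\ref{prop:CompositionSemiadditiveFunctors}\eqref{it:PostcompositionWithLimitPreserving} is proved without using $P$-semiadditivity of $F$, so it may be applied to an arbitrary $\hat D$ to reduce invertibility of $\Nm^{\hat D}_p$ to invertibility of the pointwise $\Nm^{D_k}_p$. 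This is a genuine, worthwhile addition. One quibble: your justification of joint conservativity via ``the co-Yoneda colimit presentation of $K$'' is imprecise for a general $T$-$\infty$-category $K$ (as opposed to a $T$-$\infty$-groupoid); the cleaner statement is that a morphism in $\ulFun_T(K,\Dd)(B)\simeq\Fun_{T_{/B}}(\pi_B^*K,\pi_B^*\Dd)$ is an equivalence iff for each $A\in T_{/B}$ and each $x\in K(A)$ the evaluation at $x$ is one, which is exactly restriction along $\langle x\rangle\colon\ul A\to K$. With that fix, the argument is complete.
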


\begin{proof}
	First note that the $T$-$\infty$-category $\ulFun_T(\Cc, \Dd)$ admits $\bbU$-limits by \cref{prop:CoLimitsInFunctorCategories}. Let $K \in \bbU(B)$ be a $T_{/B}$-$\infty$-category in $\bbU$, and let $F\colon \pi_B^*\Cc \to \ulFun_T(K,\pi_B^*\Dd)$ be a $P$-semiadditive $T_{/A}$-functor. We need to show that the $T_{/B}$-functor $\lim_KF\colon \pi_B^*\Cc \to \pi_B^*\Dd$ is again $P$-semiadditive. To simplify the notation, we will assume that $B$ is the final object of $T$ by replacing $T$ by $T_{/B}$, and thus we may identify $\pi_B^*\Cc$ and $\pi_B^*\Dd$ with $\Cc$ and $\Dd$, respectively.
	Since parametrized limits in $\ulFun_T(\Cc, \Dd)$ are computed pointwise by \cref{prop:CoLimitsInFunctorCategories}, the functor $\lim_KF\colon \Cc \to \Dd$ is given by the composite
	\begin{align*}
		\Cc \xrightarrow{F} \ulFun_T(K,\Dd) \xrightarrow{\lim_K} \Dd.
	\end{align*}
	Note that the $T$-functor $\lim_K\colon \ulFun_T(K,\Dd) \to \Dd$, being right adjoint to the diagonal $\Dd \to \ulFun_T(K,\Dd)$, preserves all parametrized limits and thus in particular all finite $P$-products. It then follows from \cref{prop:CompositionSemiadditiveFunctors} that $\lim_KF$ is $P$-semiadditive as desired.
\end{proof}

\begin{corollary}\label{cor:BilinearFunctors}
	Let $\Cc$ and $\Dd$ be pointed $T$-$\infty$-categories admitting finite $P$-coproducts, and let $\Ee$ be a $T$-$\infty$-category admitting finite $P$-products. Then the composite equivalence
	\[
	\ulFun_T(\Cc,\ulFun_T(\Dd,\Ee)) \simeq \ulFun_T(\Cc \times \Dd,\Ee) \simeq \ulFun_T(\Dd,\ulFun_T(\Cc,\Ee))
	\]
	restricts to an equivalence
	\[
	\ulFun^{\Poplus}_T(\Cc,\ulFun^{\Poplus}_T(\Dd,\Ee)) \simeq \ulFun^{\Poplus}_T(\Dd,\ulFun^{\Poplus}_T(\Cc,\Ee)).
	\]
\end{corollary}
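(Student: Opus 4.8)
The plan is to identify both $\ulFun_T^{\Poplus}(\Cc,\ulFun_T^{\Poplus}(\Dd,\Ee))$ and $\ulFun_T^{\Poplus}(\Dd,\ulFun_T^{\Poplus}(\Cc,\Ee))$ with one and the same full $T$-subcategory of $\ulFun_T(\Cc\times\Dd,\Ee)$, namely the $T$-functors that are ``$P$-semiadditive in each of the two variables separately'' -- a condition manifestly symmetric in $\Cc$ and $\Dd$. To begin, the statement is well-posed: by \Cref{lem:SemiadditiveFunctorsClosedUnderLimits} the inclusion $\iota\colon\ulFun_T^{\Poplus}(\Dd,\Ee)\hookrightarrow\ulFun_T(\Dd,\Ee)$ is a fully faithful $T$-functor whose source admits finite $P$-products and which preserves them (recall finite $P$-products are $\ulfinPsets$-limits, and $\Ee$ has them), so the iterated parametrized functor categories in the statement make sense; also $\ulFun_T(\Cc\times\Dd,\Ee)$ admits finite $P$-products by \Cref{prop:CoLimitsInFunctorCategories}. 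Since every subcategory in sight is full, it suffices to prove, for an \emph{arbitrary} $\infty$-category $T$ with atomic orbital $P\subseteq T$, that the two full subcategories $\Fun_T^{\Poplus}(\Cc,\ulFun_T^{\Poplus}(\Dd,\Ee))$ and $\Fun_T^{\Poplus}(\Dd,\ulFun_T^{\Poplus}(\Cc,\Ee))$ of $\Fun_T(\Cc\times\Dd,\Ee)$ coincide: applying this to each slice $T_{/B}$ -- using that $\pi_B^{*}\ulFun_T^{\Poplus}(\Dd,\Ee)\simeq\ulFun_{T_{/B}}^{\Poplus}(\pi_B^{*}\Dd,\pi_B^{*}\Ee)$, the evident enhancement of \Cref{cor:TCategoryOfTFunctors}(2) via \Cref{ex:OrbitalClosed}(1) and \Cref{lem:SemiadditiveFunctorsClosedUnderBaseChange}(1) -- then yields the equality of the two full $T$-subcategories at level $B$, for all $B$.

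So fix a $T$-functor $G\colon\Cc\times\Dd\to\Ee$, with mates $F\colon\Cc\to\ulFun_T(\Dd,\Ee)$ and $F'\colon\Dd\to\ulFun_T(\Cc,\Ee)$ under the cartesian closed structure on $\Cat_T$. For $B\in T$ and $c\in\Cc(B)$, write $G_c\colon\pi_B^{*}\Dd\to\pi_B^{*}\Ee$ for the $T_{/B}$-functor obtained by plugging $c$ (viewed as a $T_{/B}$-functor $\ul 1\to\pi_B^{*}\Cc$) into the first slot of $\pi_B^{*}G$, and dually $G_d\colon\pi_B^{*}\Cc\to\pi_B^{*}\Ee$ for $d\in\Dd(B)$; both make sense as potential $P$-semiadditive functors by \Cref{lem:SemiadditiveFunctorsClosedUnderBaseChange}(1). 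Under the identification $\ulFun_T(\Dd,\Ee)(B)\simeq\Fun_{T_{/B}}(\pi_B^{*}\Dd,\pi_B^{*}\Ee)$ of \Cref{cor:TCategoryOfTFunctors}, the functor $F_B$ sends $c\in\Cc(B)$ to $G_c$; moreover evaluation at an object $d'\in\Dd(B')$ (for $B'\to B$) on $\Fun_{T_{/B}}(\pi_B^{*}\Dd,\pi_B^{*}\Ee)$ is precomposition with $d'$, hence preserves finite $P$-products by \Cref{prop:CoLimitsInFunctorCategories}, and such evaluations are jointly conservative. The heart of the proof is the following assertion: \emph{$F$ is $P$-semiadditive and factors through $\iota$ if and only if every $G_c$ and every $G_d$ is $P$-semiadditive}. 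Granting this (and its mirror with $\Cc$, $\Dd$ exchanged): the objects of $\Fun_T^{\Poplus}(\Cc,\ulFun_T^{\Poplus}(\Dd,\Ee))$ are exactly the $P$-semiadditive $F$ that factor through $\iota$, hence exactly the $G$ with all $G_c$, $G_d$ $P$-semiadditive; the objects of $\Fun_T^{\Poplus}(\Dd,\ulFun_T^{\Poplus}(\Cc,\Ee))$ admit the identical description; and this description is symmetric and visibly preserved by the symmetry equivalence $\ulFun_T(\Cc\times\Dd,\Ee)\simeq\ulFun_T(\Dd\times\Cc,\Ee)$. Thus the composite equivalence of the statement restricts as claimed.

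It remains to prove the assertion. That $F$ factors through the \emph{full} $T$-subcategory $\ulFun_T^{\Poplus}(\Dd,\Ee)$ means precisely that $F_B(c)=G_c$ lies in $\ulFun_T^{\Poplus}(\Dd,\Ee)(B)$ for all $B$ and $c$, i.e.\ that each $G_c$ is $P$-semiadditive; this is a definition-chase. For the other half one shows -- independently of whether $F$ factors through $\iota$ -- that $F$ is $P$-semiadditive if and only if every $G_d$ is. If $F$ is $P$-semiadditive, then $\pi_B^{*}F$ is $P$-semiadditive for every $B$ by \Cref{lem:SemiadditiveFunctorsClosedUnderBaseChange}(3) applied to the cocontinuous, pullback-preserving forgetful functor $\PSh(T_{/B})\to\PSh(T)$, and post-composing it with the finite-$P$-product-preserving evaluation $\ev_{d}$ again gives a $P$-semiadditive functor by \Cref{prop:CompositionSemiadditiveFunctors}(2); since this composite is $G_d$, every $G_d$ is $P$-semiadditive. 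Conversely, for $p\colon A\to B$ in $\finPsets$ the relative norm $\Nm^{F}_{p}\colon F_B\,p_{!}\Rightarrow p_{*}\,F_A$ is a natural transformation of functors landing in $\ulFun_T(\Dd,\Ee)(B)\simeq\Fun_{T_{/B}}(\pi_B^{*}\Dd,\pi_B^{*}\Ee)$, so it is an equivalence as soon as it becomes one after applying each evaluation $\ev_{d'}$; but combining \Cref{lem:SemiadditiveFunctorsClosedUnderBaseChange}(2) (compatibility of relative norms with restriction along $\pi_{B'}$) with the composition formula \Cref{prop:CompositionSemiadditiveFunctors}(2) for $\ev_{d'}$, the transformation $\ev_{d'}(\Nm^{F}_{p})$ agrees, up to the Beck--Chevalley equivalence for $\ev_{d'}$, with the relative norm $\Nm^{G_{d'}}$ of a base change of $p$ -- an equivalence, since $G_{d'}$ is $P$-semiadditive and $P$-semiadditivity of functors is stable under base change. \textbf{The crux of the whole argument is precisely this last compatibility}, that applying an evaluation functor to the relative norm map of $F$ recomputes the relative norm of the corresponding partial functor; this is exactly where one uses that the right adjoints $p_{*}$ and the adjoint norm maps inside a parametrized functor category are computed pointwise (\Cref{prop:CoLimitsInFunctorCategories}), here repackaged through \Cref{prop:CompositionSemiadditiveFunctors} and \Cref{lem:SemiadditiveFunctorsClosedUnderBaseChange}.
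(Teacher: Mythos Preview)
Your proof is correct and follows the same strategy as the paper: both sides are identified with the full $T$-subcategory of $\ulFun_T(\Cc\times\Dd,\Ee)$ consisting of $T$-functors that are $P$-semiadditive in each variable separately, a manifestly symmetric condition. The paper's proof is much terser---it simply cites \Cref{lem:SemiadditiveFunctorsClosedUnderLimits} and \Cref{prop:CoLimitsInFunctorCategories} and phrases the partial functors in adjoint form as $T$-functors $F(-,Y)\colon\Cc\to\ulFun_T(\ul B,\Ee)$ rather than as $T_{/B}$-functors $G_d$---but your unpacking of why the relative norm of $F$ can be tested after evaluations is exactly the content hidden behind those citations. One small remark: your appeal to \Cref{prop:CompositionSemiadditiveFunctors}(2) in the backward direction is formally outside that proposition's stated hypotheses (it assumes $F$ is already $P$-semiadditive), but the composition formula for $\Nm^{HF}_p$ established in its proof holds without that assumption, so the argument goes through.
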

\begin{proof}
	It follows immediately from \Cref{lem:SemiadditiveFunctorsClosedUnderLimits} and \Cref{prop:CoLimitsInFunctorCategories} that both sides correspond to the full subcategory of $\ulFun_T(\Cc \times \Dd,\Ee)$ spanned by those $T$-functors which are $P$-semiadditive in both variables. Here we say a $T$-functor $F\colon \Cc \times \Dd \to \Ee$ is \textit{$P$-semiadditive in both variables} if for every $B \in T$ and $X\colon \ul{B} \to \Cc$, the $T$-functor
	\[
	F(X,-)\colon \Dd \to \ulFun_T(\ul{B},\Ee)
	\]
	adjoint to the composite $\ul{B} \times \Dd \xrightarrow{X \times \Dd} \Cc \times \Dd \xrightarrow{F} \Ee$
	is $P$-semiadditive and similarly for every $Y\colon \ul{B} \to \Dd$ the $T$-functor
	\[
	F(-,Y)\colon \Cc \to \ulFun_T(\ul{B},\Ee)
	\]
	adjoint to $\Cc \times \ul{B} \xrightarrow{\Cc \times Y} \Cc \times \Dd \xrightarrow{F} \Ee$ is $P$-semiadditive.
\end{proof}

We now come to the main result of this subsection: the $P$-semiadditivity of the $T$-$\infty$-category $\ulFun_T^{\Poplus}(\Cc,\Dd)$.

\begin{proposition}[cf.\ \cite{nardin2016exposeIV}*{Proposition~5.8}]
	\label{prop:SemiadditiveFunctorsFormSemiadditiveCategory}
	Let $\Cc$ and $\Dd$ be $T$-$\infty$-categories such that $\Cc$ is pointed and admits finite $P$-coproducts and $\Dd$ admits finite $P$-products. Then the $T$-$\infty$-category $\ulFun^{\Poplus}_T(\Cc, \Dd)$ is $P$-semiadditive.
\end{proposition}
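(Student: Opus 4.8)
Write $\mathcal E:=\ulFun^{\Poplus}_T(\Cc,\Dd)$. The strategy is to verify the hypotheses of the recognition criterion \Cref{prop:CharacterizationPSemiadditivity}, which then yields $P$-semiadditivity of $\mathcal E$ — including, as part of the conclusion, the existence of finite $P$-coproducts. Concretely, I need three things: that $\mathcal E$ is pointed, that $\mathcal E$ admits finite $P$-products, and that for every $p\colon A\to B$ in $\finPsets$ there is a natural transformation $\mu_p\colon p_*p^*\Rightarrow\id_{\mathcal E(B)}$ satisfying conditions (a) and (b) of that proposition.

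The first two points are quick. Since $\Cc$ is pointed, $\ulFun_T^*(\Cc,\Dd)$ is pointed by \Cref{cor:FunctorCategoryPointed}, and tracing through the equivalence of \Cref{cor:AdjunctionPointednessInternal} identifies its $T$-zero object with the constant $T$-functor at the $T$-final object of $\Dd$; this functor is trivially $P$-semiadditive (its relative norm maps are morphisms between $T$-final objects), so it lies in the full subcategory $\mathcal E\subseteq\ulFun_T^*(\Cc,\Dd)$ — note that any $P$-semiadditive $T_{/B}$-functor sends the $T_{/B}$-zero object (which, as $\Cc$ is pointed, coincides with the $T_{/B}$-final object) to the $T_{/B}$-final object by the $A=\emptyset$ case of the defining condition — and hence $\mathcal E$ is pointed. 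That $\mathcal E$ admits finite $P$-products, and that the inclusion $\mathcal E\hookrightarrow\ulFun_T(\Cc,\Dd)$ preserves them, is \Cref{lem:SemiadditiveFunctorsClosedUnderLimits} applied with $\bbU=\ulfinPsets$. In particular $p_*$ on $\mathcal E$ may be computed inside $\ulFun_T(\Cc,\Dd)$, where by \Cref{lemma:restriction_functor_cat} the functor $p^*$ is whiskering with $T_{/p}$ and, by \Cref{prop:CoLimitsInFunctorCategories}, $p_*$ is the pointwise parametrized right Kan extension built from $p_*$ in $\Dd$.

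The heart of the matter is the construction of $\mu_p$ for $p\colon A\to B$ in $\finPsets$. Given $G\in\mathcal E(B)$, i.e. a $P$-semiadditive $T_{/B}$-functor, the computation of $p_*$ just recalled identifies $p_*p^*G$ with the $T_{/B}$-functor obtained from the $A$-component $G_A$ of $G$ as "$p_*\circ G_A\circ p^*$", with restriction and right Kan extension taken in $\Dd$. Since $G$ is $P$-semiadditive, the relative norm map of \Cref{cstr:NormMapFunctor} for $G$, precomposed with $p^*$, supplies a natural equivalence $G_B\circ p_!\circ p^*\iso p_*p^*G$, where the source-side operators $p_!,p^*$ are taken in $\Cc$ — legitimate since $\Cc$ admits finite $P$-coproducts. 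Composing the inverse of this equivalence with $G_B$ applied to the counit $c^!_p\colon p_!p^*\Rightarrow\id_{\Cc}$ of the adjunction $p_!\dashv p^*$ in $\Cc$ produces $\mu_p(G)\colon p_*p^*G\Rightarrow G$, manifestly natural in $G$. Checking that these component maps assemble into a transformation of $T_{/B}$-functors uses the naturality of the relative norm maps together with the functoriality \Cref{prop:CompositionSemiadditiveFunctors}.

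It then remains to verify conditions (a) and (b). Under the identification above, condition (a) — that $p^*X\xrightarrow{\Nmadjdual_p}p^*p_*p^*X\xrightarrow{p^*\mu_p}p^*X$ is homotopic to the identity — reduces to the triangle identity for $p_!\dashv p^*$ in $\Cc$ together with the defining property of the adjoint norm map, i.e. to a statement purely about $\Cc$ (\Cref{cstr:NormMap}, \Cref{lem:NormMapDisjointSummandInclusions}, \Cref{lem:NormVsBeckChevalley}). Condition (b), the compatibility of $\mu_p$ with the double base change along the pullback square defining $p\times_B p$, reduces similarly to the calculus of adjoint norm maps in $\Cc$ under base change and composition (\Cref{prop:adjNormsVsBaseChange}, \Cref{prop:adjNormsVsComposition}, \Cref{lem:alpha-compatible}), once more using functoriality in $G$ via \Cref{prop:CompositionSemiadditiveFunctors}. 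The step I expect to be the genuine obstacle is the coherent identification of the parametrized right Kan extension $p_*p^*G$ with $G_B\circ p_!\circ p^*$: one must check that the $P$-semiadditivity of $G$ really does interchange the parametrized limit computing $p_*$ on the functor category with a parametrized colimit on the source side, naturally in $G$ and compatibly with restriction in $B$. Once this is in place, conditions (a) and (b) become a matter of unwinding the norm calculus already developed in the preceding subsections.
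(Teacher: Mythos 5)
Your proposal is correct and follows essentially the same route as the paper: after checking pointedness (via \Cref{cor:FunctorCategoryPointed}) and closure under finite $P$-products (\Cref{lem:SemiadditiveFunctorsClosedUnderLimits}), it invokes the recognition criterion \Cref{prop:CharacterizationPSemiadditivity} with $\mu_p(G)$ built as $p_*p^*G\simeq p_*G^Ap^*\xrightarrow{(\Nm^G_p)^{-1}}Gp_!p^*\xrightarrow{Gc_p^!}G$, exactly as in the paper. The one organizational difference is that the paper first reduces to $\Dd$ pointed via \Cref{cor:AdjunctionSemiadditiveFunctorsIntoPointedObjects}, which simplifies the verification of condition (a) (it lets one identify $\Nmadjdual_p p^*G$ as a whiskering), whereas you work with $\Dd$ general and argue pointedness of the functor category from pointedness of $\Cc$; both work, but for the diagram chase in (a) you would find the reduction a worthwhile first step.
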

\begin{proof}
	By \cref{cor:AdjunctionSemiadditiveFunctorsIntoPointedObjects}, we may assume that $\Dd$ is pointed. It follows from \Cref{cor:FunctorCategoryPointed} that $\ulFun^{\Poplus}_T(\Cc, \Dd)$ is pointed and from \cref{lem:SemiadditiveFunctorsClosedUnderLimits} that $\ulFun^{\Poplus}_T(\Cc, \Dd)$ admits finite $P$-products. These are computed pointwise, meaning that for $p\colon A \to B$ in $\finPsets$ the map \[p_*\colon \ulFun^{\Poplus}(\Cc,\ulFun_T(\ul{A},\Dd))\to \ulFun^{\Poplus}(\Cc,\ulFun_T(\ul{B},\Dd))\] is given by post-composition with $p_*\colon \ulFun_T(\ul{A},\Dd) \to \ulFun_T(\ul{B},\Dd)$.

	To show that $\ulFun^{\Poplus}_T(\Cc, \Dd)$ is $P$-semiadditive, we will apply the recognition principle from \cref{prop:CharacterizationPSemiadditivity}. For every morphism $p\colon A \to B$ in $\finPsets$ and every $P$-semiadditive $T_{/B}$-functor $G\colon \pi_B^*\Cc \to \pi_B^*\Dd$, we define a natural transformation $\mu_pG\colon p_*p^*G \to G$. For notational simplicity, we will construct this in the case where $B = 1$ is a terminal object of $T$; the general case is obtained by replacing $T$ by $T_{/B}$. In this case, $\mu_p G$ is defined as the following composite:
	\[
	p_*p^*G \simeq p_*G^Ap^* \xrightarrow{(\Nm^G_{p})^{-1}} G p_!p^* \xrightarrow{G c^!_p} G;
	\]
	here we denote by $G^A\colon \ulFun_T(\ul{A},\Cc) \to \ulFun_T(\ul{A},\Dd)$ the $T$-functor induced by $G$. We need to check that conditions (a) and (b) of \cref{prop:CharacterizationPSemiadditivity} are satisfied. Condition (b) follows directly from the definitions, using \Cref{prop:CompositionSemiadditiveFunctors}\eqref{it:PostcompositionWithLimitPreserving} to compute the norm map of $p_*F$ in terms of the norm map of $F$ and the right base change equivalence $p^*p_* \simeq (\pr_2)_*\pr_1^*$. For condition (a), we need to show that for every $P$-semiadditive $T$-functor $G\colon \Cc \to \Dd$, the composite
	\[
	p^*G \xrightarrow{\Nmadjdual_p p^*G} p^*p_*p^*G \xrightarrow{p^*\mu_pG} p^*G
	\]
	is homotopic to the identity in $\Fun^{{\Pbiprod}}_{T_{/A}}(\pi_A^*\Cc,\pi_A^*\Dd) \simeq \Fun^{{\Pbiprod}}_{T}(\Cc,\ulFun_T(\ul{A},\Dd))$. Observe that pointedness of $\Dd$ guarantees that the transformation $\Nmadjdual_pp^*G\colon p^*G \to p^*p_*p^*G$ is given by whiskering $p^*G$ with the transformation $\Nmadjdual\!{}^{\Dd}_p\colon \id \to p^*p_*$. Spelling out the definitions, we are therefore interested in the composite along the top right in the following diagram:
	\[\hskip-21.66pt\hfuzz=22pt
	\begin{tikzcd}[cramped, column sep = 10]
		{p^*G} & {{\pr_2}_*\Delta_*\Delta^*\pr_1^*p^*G} & {{\pr_2}_*\Delta_!\Delta^*\pr_1^*p^*G} & {{\pr_2}_*\pr_1^*p^*G} & {p^*p_*p^*G} \\
		{G^Ap^*} & {{\pr_2}_*\Delta_*\Delta^*\pr_1^*G^Ap^*} & {{\pr_2}_*\Delta_!\Delta^*\pr_1^*G^Ap^*} & {{\pr_2}_*\pr_1^*G^A} & {p^*p_*G^Ap^*} \\
		&&&& {p^*Gp_!p^*} & {p^*G} \\
		{G^Ap^*} & {G^A{\pr_2}_!\Delta_!\Delta^*\pr_1^*p^*} && {G^A{\pr_2}_!\pr_1^*p^*} & {G^Ap^*p_!p^*} & {G^Ap^*} \\
		{G^Ap^*} & {G^A(\pr_2)_!\Delta_!\Delta^*\pr_2^*p^*} && {G^A{\pr_2}_!\pr_2^*p^*} && {G^Ap^*}
		\arrow["{l.b.c.}", from=4-4, to=4-5]
		\arrow["{c^!_{\Delta}}", from=4-2, to=4-4]
		\arrow["\simeq", from=4-1, to=4-2]
		\arrow["\simeq"', from=4-5, to=3-5]
		\arrow["\simeq", from=2-1, to=2-2]
		\arrow["{c^!_{\Delta}}", from=2-3, to=2-4]
		\arrow["{r.b.c.}", from=2-4, to=2-5]
		\arrow["{\Nm_{\Delta}^{-1}}", from=2-2, to=2-3]
		\arrow[Rightarrow, no head, from=2-1, to=4-1]
		\arrow["{\Nmadjdual_pp^*G}", bend left = 14pt, from=1-1, to=1-5]
		\arrow["\simeq"', from=1-1, to=1-2]
		\arrow["{\Nm_{\Delta}^{-1}}", from=1-2, to=1-3]
		\arrow["\simeq"', from=1-1, to=2-1]
		\arrow["\simeq"', from=1-2, to=2-2]
		\arrow["\simeq"', from=1-3, to=2-3]
		\arrow["\simeq"', from=1-4, to=2-4]
		\arrow["\simeq"', from=1-5, to=2-5]
		\arrow["{c^!_{\Delta}}", from=1-3, to=1-4]
		\arrow["{r.b.c.}"', from=1-4, to=1-5]
		\arrow["{c^!_p}", from=3-5, to=3-6]
		\arrow["{p^*\mu_p G}", bend left = 6pt, from=1-5, to=3-6]
		\arrow["\simeq"', from=4-6, to=3-6]
		\arrow["{c^!_p}", from=4-5, to=4-6]
		\arrow["{(\Nm^G_p)^{-1}}", from=2-5, to=3-5]
		\arrow[Rightarrow, no head, from=4-1, to=5-1]
		\arrow[Rightarrow, no head, from=4-6, to=5-6]
		\arrow["{c^!_{\Delta}}", from=5-2, to=5-4]
		\arrow["\simeq", from=4-2, to=5-2]
		\arrow["\simeq", from=5-1, to=5-2]
		\arrow["\simeq", from=4-4, to=5-4]
		\arrow["{c^!_{\pr_2}}", from=5-4, to=5-6]
		\arrow[bend right = 10pt, Rightarrow, no head, from=5-1, to=5-6]
		\arrow["{(1)}"{description}, draw=none, from=4-1, to=2-5]
		\arrow["{(2)}"{description}, draw=none, from=5-4, to=4-6]
	\end{tikzcd}
	\]
	As the composite along the bottom left is the identity, it remains to show that this diagram commutes. Except for (1) and (2), all squares commute either by definition or by naturality, and the commutativity of square (2) follows from the triangle identity. The commutativity of (1) follows from the following commutative diagram:
	\[\hskip-30.03pt\hfuzz=30.1pt
	\begin{tikzcd}
		{G^A} & {{\pr_2}_*\Delta_*\Delta^*\pr_1^*G^A} & {{\pr_2}_*\Delta_!\Delta^*\pr_1^*G^A} & {{\pr_2}_*\pr_1^*G^A} & {p^*p_*G^A} \\
		{G^A} & {{\pr_2}_*\Delta_*G^A\Delta^*\pr_1^*} & {{\pr_2}_*\Delta_!G^A\Delta^*\pr_1^*} & {{\pr_2}_*\pr_1^*G^A} & {p^*p_*G^A} \\
		& {{\pr_2}_*G^{A \times A}\Delta_!\Delta^*\pr_1^*} & {{\pr_2}_*G^{A \times A}\Delta_!\Delta^*\pr_1^*} & {{\pr_2}_*G^{A \times A}\pr_1^*} & {p^*Gp_!} \\
		{G^A} & {G^A{\pr_2}_!\Delta_!\Delta^*\pr_1^*} && {G^A{\pr_2}_!\pr_1^*} & {G^Ap^*p_!.}
		\arrow["{l.b.c.}", from=4-4, to=4-5]
		\arrow["{c^!_{\Delta}}", from=4-2, to=4-4]
		\arrow["{\Nm^G_p}"', from=3-5, to=2-5]
		\arrow["{r.b.c.}"', from=2-4, to=2-5]
		\arrow["{\Nm^{F}_{\pr_1}}"', from=4-4, to=3-4]
		\arrow["\simeq"', from=3-4, to=2-4]
		\arrow["\simeq"', from=2-1, to=2-2]
		\arrow["{\Nm^{G^{A \times A}}_{\Delta}}", from=3-2, to=2-2]
		\arrow["\simeq", from=4-1, to=4-2]
		\arrow[""{name=0, anchor=center, inner sep=0}, Rightarrow, no head, from=2-1, to=4-1]
		\arrow["\simeq"', from=4-5, to=3-5]
		\arrow["{(4)}"{description}, draw=none, from=3-4, to=3-5]
		\arrow["{\Nm^{G^A}_{\pr_1}}", from=4-2, to=3-2]
		\arrow["\simeq"', from=1-1, to=1-2]
		\arrow[Rightarrow, no head, from=2-5, to=1-5]
		\arrow["{c^!_{\Delta}}", from=1-3, to=1-4]
		\arrow["{r.b.c.}", from=1-4, to=1-5]
		\arrow["{\Nm_{\Delta}^{-1}}", from=1-2, to=1-3]
		\arrow["\simeq"', from=2-2, to=1-2]
		\arrow["{\Nm_{\Delta}}"', from=2-3, to=2-2]
		\arrow["\simeq"', from=2-3, to=1-3]
		\arrow["{c^!_{\Delta}}", from=3-3, to=3-4]
		\arrow["{\BC_!}", from=2-3, to=3-3]
		\arrow[Rightarrow, no head, from=3-2, to=3-3]
		\arrow[Rightarrow, no head, from=2-4, to=1-4]
		\arrow["{(2)}"{description}, draw=none, from=3-2, to=2-3]
		\arrow["{(3)}"{description}, draw=none, from=3-3, to=1-4]
		\arrow[Rightarrow, no head, from=2-1, to=1-1]
		\arrow["{(1)}"{description, pos=0.8}, draw=none, from=0, to=3-2]
	\end{tikzcd}
	\]
	The unlabeled squares commute by naturality. The fact that (1) commutes follows from \cref{cor:NormsVsComposition}, while the commutativity of (4) follows from \cref{cor:NormsVsBaseChange}. The commutativity of (2) and (3) easily follows from the definitions. This finishes the proof.
\end{proof}

\begin{proposition}
	\label{prop:FunctorSemiadditiveVsColimitPreserving}
	Let $\Cc$ be a pointed $T$-$\infty$-category which admits finite $P$-coproducts, and suppose $\Dd$ is $P$-semiadditive. Then a $T$-functor $F\colon \Cc \to \Dd$ is $P$-semiadditive if and only if it preserves finite $P$-coproducts. In particular we get that $\ulFun_T^{\Poplus}(\Cc,\Dd)$ and $\ulFun_T^{{\Pcoprod}}(\Cc,\Dd)$ are the same subcategory of $\ulFun_T(\Cc,\Dd)$.

	Analogously, suppose $\Cc$ is a $P$-semiadditive $T$-$\infty$-category, and suppose $\Dd$ admits finite $P$-products. Then a $T$-functor $G\colon \Cc \to \Dd$ is $P$-semiadditive if and only if it preserves finite $P$-products. In particular $\ulFun_T^{\Poplus}(\Cc,\Dd)$ and $\ulFun_T^{\Pprod}(\Cc,\Dd)$ are the same subcategory of $\ulFun_T(\Cc,\Dd)$.
\end{proposition}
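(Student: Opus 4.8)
The plan is to play the relative norm map $\Nm^F_p$ off against the Beck--Chevalley transformation $\BC_!$ of $F$, using the compatibility between the two recorded in \Cref{lem:NormVsBeckChevalley}, and to translate preservation of finite $P$-coproducts into invertibility of $\BC_!$ via \Cref{prop:PreservingFinitePcoproducts}. For the first statement, the implication ``$F$ preserves finite $P$-coproducts $\Rightarrow$ $F$ is $P$-semiadditive'' is essentially formal: such an $F$ is automatically pointed (it preserves initial objects, which coincide with zero objects in the pointed category $\Dd(B)$), and then $F=\id_{\Dd}\circ F$ is $P$-semiadditive by \Cref{prop:CompositionSemiadditiveFunctors}\eqref{it:PrecompositionWithColimitPreserving}, since $\id_{\Dd}$ is $P$-semiadditive because $\Dd$ is.

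For the converse, I would first note that a $P$-semiadditive $F$ is pointed, by the $A=\emptyset$ instance of condition $(\ast)$ of \Cref{def:PSemiadditiveFunctor} together with the fact that $\Dd$, being $P$-semiadditive, is pointed. Thus \Cref{lem:NormVsBeckChevalley} applies to $F$; transposing its commuting square along the adjunction $p^*\dashv p_*$ and unwinding \Cref{cstr:NormMap,cstr:NormMapFunctor} should show that for every $(p\colon A\to B)\in\ulfinPsets(B)$ the composite
\[
p_!F_A\xRightarrow{\BC_!}F_Bp_!\xRightarrow{\Nm^F_p}p_*F_A
\]
agrees with $\Nm^{\Dd}_pF_A$. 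As $\Dd$ is $P$-semiadditive, $\Nm^{\Dd}_p$ is an equivalence (\Cref{cor:NormEquivalencesForUlFinPSets}), hence so is $\Nm^{\Dd}_pF_A$; therefore $\Nm^F_p$ is an equivalence if and only if $\BC_!$ is. Applying this with $p$ ranging over the fold maps $\nabla\colon\bigsqcup_{i=1}^nB\to B$ and over the morphisms of $P$ — all of which lie in $\finPsets$ — the $P$-semiadditivity of $F$ yields that $\BC_!$ is an equivalence for all such $p$. Invertibility of $\BC_!$ on the fold maps is exactly preservation of fiberwise finite coproducts, so \Cref{prop:PreservingFinitePcoproducts} now gives that $F$ preserves finite $P$-coproducts. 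The assertion $\ulFun_T^{\Poplus}(\Cc,\Dd)=\ulFun_T^{\Pcoprod}(\Cc,\Dd)$ then follows by applying the equivalence just proved over each slice $T_{/B}$, noting via \Cref{lem:SemiadditiveFunctorsClosedUnderBaseChange} that $\pi_B^*\Cc$ is pointed with finite $P$-coproducts and $\pi_B^*\Dd$ is $P$-semiadditive.

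The second statement is dual. Here ``$G$ preserves finite $P$-products $\Rightarrow$ $G$ is $P$-semiadditive'' follows from \Cref{prop:CompositionSemiadditiveFunctors}\eqref{it:PostcompositionWithLimitPreserving} applied to $\id_{\Cc}$ (which is $P$-semiadditive, as $\Cc$ is) and $G$. For the converse, using that a $P$-semiadditive $T$-functor into $\Dd$ factors through $\Dd_*$ up to equivalence (\Cref{cor:AdjunctionSemiadditiveFunctorsIntoPointedObjects}), and that $\Dd_*$ is pointed, admits finite $P$-products, and the forgetful functor $\Dd_*\to\Dd$ preserves finite $P$-products and $T$-limits (\Cref{lem:ForgetfulFunctorPreservesTLimitsPointedCase}), one reduces to the case $\Dd$ pointed. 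Then I would invoke the evident analogue of \Cref{lem:NormVsBeckChevalley} for the dual adjoint norm map $\Nmadjdual$ — proved by dualizing that lemma's diagram chase — to express $\Nm^G_p$ in terms of the right Beck--Chevalley map $\BC_*$ of $G$ and the absolute norm $\Nm^{\Cc}_p$ of the ($P$-semiadditive, hence norm-invertible) source $\Cc$, and conclude as before using the dual of \Cref{prop:PreservingFinitePcoproducts}; the statement about functor categories again follows by passing to slices.

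The only step I expect to require real care is the transposition used in the converse of the first statement: one must check that the mate, along $p^*\dashv p_*$, of the square of \Cref{lem:NormVsBeckChevalley} (which is phrased in terms of the adjoint norm maps $\Nmadj$) is exactly the triangle relating $\Nm^F_p\circ\BC_!$ to $\Nm^{\Dd}_pF_A$. This is just naturality of the adjunction bijection in its source variable together with compatibility of mates with whiskering, but it is where the substance of the argument lies; everything else is a two-out-of-three argument and the reductions already carried out in \Cref{prop:PreservingFinitePcoproducts,cor:AdjunctionSemiadditiveFunctorsIntoPointedObjects}. Setting up the dual of \Cref{lem:NormVsBeckChevalley} needed for the second statement is routine and introduces no new ideas.
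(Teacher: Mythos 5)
Your proof is correct, and the engine driving it — the commuting triangle
\[
p_!F_A \xRightarrow{\BC_!} F_Bp_! \xRightarrow{\Nm^F_p} p_*F_A \;=\; \Nm^{\Dd}_pF_A,
\]
obtained by transposing \Cref{lem:NormVsBeckChevalley} over $p^*\dashv p_*$, followed by two-out-of-three and the identification of coproduct preservation with invertibility of $\BC_!$ — is exactly the paper's argument for the first half. (The paper phrases both implications as a single two-out-of-three; your separate treatment of the forward implication via \Cref{prop:CompositionSemiadditiveFunctors} is equivalent.)

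Where you deviate is the second half. You propose reducing to the case that $\Dd$ is pointed via \Cref{cor:AdjunctionSemiadditiveFunctorsIntoPointedObjects} and \Cref{lem:ForgetfulFunctorPreservesTLimitsPointedCase}, and then proving a ``dual'' of \Cref{lem:NormVsBeckChevalley} for $\Nmadjdual$. This works, but is heavier than needed: the paper instead derives the triangle $\Nm^F_p = \BC_* \circ F_B\Nm^{\Cc}_p$ directly from a two-cell diagram whose only inputs are the triangle identity for $p^*\dashv p_*$ and naturality of $\BC_*$. That diagram makes no use of $\Nmadjdual^{\Dd}_p$ at all — only $\Nmadj^{\Cc}_p$, which exists because $\Cc$ is pointed with finite $P$-coproducts — so no reduction to $\Dd$ pointed is required and no dual lemma needs to be formulated. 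Both routes arrive at the same triangle and the same two-out-of-three conclusion; the paper's just trims the preliminaries.
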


\begin{proof}
	We start with the first case. Observe that in both cases $F$ is pointed so that \Cref{lem:NormVsBeckChevalley} applies. Adjoining over $p^*$ to the right gives a commutative triangle
	\[\begin{tikzcd}
		F_B p_! \drar{\Nm^F_p} \\
		p_!F_A \uar{\BC_!} \rar[swap]{\Nm^{\Dd}_pF_A} & p_*F_A.
	\end{tikzcd}\]
	Since $\Dd$ is a $P$-semiadditive, the bottom map is an equivalence. It thus follows from the two-out-of-three property that $\BC_!\colon p_!F_A \Rightarrow F_Bp_!$ is an equivalence if and only if $\Nm^F_p\colon F_Bp_! \Rightarrow p_*F_A$ is, proving the result.

	Next we consider the second case. Just as before the result follows from the commutativity of the triangle
	\[
	\begin{tikzcd}
		& {p_*F_A} \\
		{F_Bp_!} & {F_Bp_*,}
		\arrow["{F_B \Nm^{\Cc}_{p}}"', from=2-1, to=2-2]
		\arrow["{\BC_*}"', from=2-2, to=1-2]
		\arrow["{\Nm_p^F}", from=2-1, to=1-2]
	\end{tikzcd}
	\]
	which in turn follows from the commutative diagram
	\[\begin{tikzcd}
		{p_*p^*F_Bp_!} & {p_* F_A p^* p_!} & {p_*F_A} \\
		{F_Bp_!} & {F_Bp_*p^*p_!} & {F_Bp_*}
		\arrow["{u_p^*}"', from=2-1, to=1-1]
		\arrow["{{\Nmadj_{p}^{\Cc}}}"', from=2-2, to=2-3]
		\arrow["{{\Nmadj_{p}^{\Cc}}}", from=1-2, to=1-3]
		\arrow["{\BC_*}"', from=2-3, to=1-3]
		\arrow["{\BC_*}"', from=2-2, to=1-2]
		\arrow["{u_p^*}"', from=2-1, to=2-2]
		\arrow["\simeq", from=1-1, to=1-2]
	\end{tikzcd}\]
	The left square commutes by the triangle identity and the right by naturality.
\end{proof}

\begin{corollary}
\label{cor:SemiadditiveFunctorsBetweenSemiadditiveCats}
Let $\Cc$ and $\Dd$ be $P$-semiadditive $T$-$\infty$-categories. Then a $T$-functor $F\colon \Cc \to \Dd$ preserves finite $P$-coproducts if and only if it preserves finite $P$-products. \qednow
\end{corollary}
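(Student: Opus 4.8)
The plan is to deduce this immediately by chaining together the two halves of Proposition~\ref{prop:FunctorSemiadditiveVsColimitPreserving}. First I would note that since $\Cc$ is $P$-semiadditive, it is in particular pointed and admits finite $P$-coproducts (this is just unpacking Definition~\ref{def:PSemiadditivity}), so the first half of that proposition applies with this $\Cc$ and with the $P$-semiadditive target $\Dd$: it tells us that $F\colon\Cc\to\Dd$ preserves finite $P$-coproducts if and only if $F$ is $P$-semiadditive in the sense of \Cref{def:PSemiadditiveFunctor}.

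Next I would invoke the second half of the same proposition. Here the relevant hypotheses are that $\Cc$ is $P$-semiadditive and that $\Dd$ admits finite $P$-products — both of which hold, the latter again because $\Dd$ is $P$-semiadditive. That half yields that $F$ is $P$-semiadditive if and only if $F$ preserves finite $P$-products. Concatenating the two displayed equivalences — preserves finite $P$-coproducts $\Leftrightarrow$ $P$-semiadditive $\Leftrightarrow$ preserves finite $P$-products — gives exactly the assertion of the corollary.

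I do not expect a genuine obstacle here: all of the substantive work has already been carried out in Proposition~\ref{prop:FunctorSemiadditiveVsColimitPreserving}, whose proof rests on the triangular identities relating the relative norm $\Nm^F_p$ to the absolute norms $\Nm^{\Cc}_p$ and $\Nm^{\Dd}_p$ via \Cref{lem:NormVsBeckChevalley}. The only points worth double-checking while writing this up are bookkeeping: that a $P$-semiadditive $T$-$\infty$-category indeed satisfies simultaneously the pointedness, finite-$P$-coproduct, and finite-$P$-product hypotheses needed by \emph{both} halves of the proposition, so that the two equivalences can legitimately be composed. This is immediate from the definitions, so the corollary is a one-paragraph formal consequence.
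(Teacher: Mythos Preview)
Your proposal is correct and is exactly the intended argument: the paper states this corollary with no proof (just \qednow), since it follows immediately from the two halves of Proposition~\ref{prop:FunctorSemiadditiveVsColimitPreserving} precisely as you describe.
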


There exists a characterization of $P$-semiadditivity which does not make reference to the norm maps: it suffices for finite $P$-products to commute with finite $P$-coproducts.

\begin{corollary}
	\label{cor:SemiadditiveIffProductsCommuteWithCoproducts}
	Let $\Cc$ be a pointed $T$-$\infty$-category which admits finite $P$-products and finite $P$-coproducts. Then the following conditions are equivalent:
	\begin{enumerate}[(1)]
		\item The $T$-$\infty$-category $\Cc$ is $P$-semiadditive
		\item For every morphism $p \colon A \to B$ in $\finPsets$, the $T_{/B}$-functor
        \[
        p_*\colon \ulFun_{T_{/B}}(\ul{A},\pi^*_B\Cc) \to \pi^*_B\Cc
        \] preserves finite $P$-coproducts.
	\end{enumerate}
\end{corollary}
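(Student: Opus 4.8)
The plan is to deduce both implications from one concrete model of the functor $p_*$. Fix $(p\colon A\to B)\in\finPsets$ and set $\mathcal G\mathrel{:=}\ulFun_{T_{/B}}(\ul A,\pi_B^*\Cc)$. Combining \Cref{cor:TCategoryOfTFunctors} with the parametrized Yoneda lemma \Cref{cor:YonedaLemmaPresheaves}, one identifies $\mathcal G(C\xrightarrow{c}B)\simeq\Cc(A\times_B C)$ for $(C\to B)\in T_{/B}$, naturally in $C$, in such a way that the diagonal $p^*=\diag\colon\pi_B^*\Cc\to\mathcal G$ is at level $C$ given by restriction along the projection $\pr_C\colon A\times_B C\to C$. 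Since $\pr_C$ is a base change of $p$, it lies in $\ulfinPsets(C)$, so $\diag$ admits levelwise left and right adjoints; hence, by the pointwise criterion \Cref{prop:pointwisecriterionadjoints}, the $T_{/B}$-functor $p_*$ is given at level $C$ by $(\pr_C)_*$ and its parametrized left adjoint $p_!$ by $(\pr_C)_!$. Both $\mathcal G$ and $\pi_B^*\Cc$ admit finite $P$-coproducts (the former by the corollary following \Cref{prop:CoLimitsInFunctorCategories}, using \Cref{lem:SemiadditiveFunctorsClosedUnderBaseChange} for the latter), so the statement makes sense.

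For $(1)\Rightarrow(2)$: if $\Cc$ is $P$-semiadditive then so is $\pi_B^*\Cc$ by \Cref{lem:SemiadditiveFunctorsClosedUnderBaseChange}. The levelwise norm maps $\Nm_{\pr_C}\colon(\pr_C)_!\to(\pr_C)_*$ are then equivalences by \Cref{cor:NormEquivalencesForUlFinPSets}, and by the base-change compatibility \Cref{cor:NormsVsBaseChange} they assemble into an equivalence $p_!\iso p_*$ of $T_{/B}$-functors $\mathcal G\to\pi_B^*\Cc$. As a parametrized left adjoint, $p_!$ preserves all parametrized colimits, in particular finite $P$-coproducts; hence so does $p_*$.

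For $(2)\Rightarrow(1)$: by \Cref{cor:PSemiadditivity} it suffices to show that $\Cc$ is fiberwise semiadditive and that $\Nm_p$ is an equivalence for every $p\in P$. Applying $(2)$ to a fold map $\nabla\colon B\sqcup B\to B$ identifies $\nabla_*$ with the fiberwise product functor $(X,Y)\mapsto X\times Y$ on $\pi_B^*\Cc\times\pi_B^*\Cc$; evaluating its assumed preservation of fiberwise coproducts on the decomposition $(X,Y)\simeq(X,0)\sqcup(0,Y)$ yields the canonical map $X\sqcup Y\iso X\times Y$ in $\Cc(B)$, so every $\Cc(B)$ is semiadditive, and together with the fact that the $f^*$ preserve fiberwise finite products and coproducts (from \Cref{prop:finitePcoproducts} and its dual) this shows $\Cc$ is fiberwise semiadditive. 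Now fix $p\in P$ and apply the Beck--Chevalley criterion for preservation of $\bbU$-colimits (the lemma following \Cref{lem:UColimitsVsAdjointable}) to the $T_{/B}$-functor $p_*$ and the morphism $u=p\colon(A\to B)\to\ul 1_{T_{/B}}$, which defines a finite $P$-set morphism in $T_{/B}$ since $p\in P$. Unwinding the resulting Beck--Chevalley transformation through the identification of the first paragraph, one sees that it is exactly the double Beck--Chevalley map $p_!(\pr_2)_*\Rightarrow p_*(\pr_1)_!$ attached to the pullback square~\eqref{eq:PullbackDiagramNormMap}. By \Cref{lem:NormMapInTermsOfExchangeMap} this map, pre- and post-composed with the equivalences coming from the disjoint summand inclusion $\Delta\colon A\to A\times_B A$ (which hold because $P$ is atomic and $\Cc$ is pointed), computes $\Nm_p$; hence $(2)$ forces $\Nm_p$ to be an equivalence, and \Cref{cor:PSemiadditivity} concludes.

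The only real work lies in the two ``unwinding'' steps: checking that the identification $\mathcal G(C)\simeq\Cc(A\times_B C)$ is natural in $C$ in a way compatible with the adjoints $\diag$, $p_!$, $p_*$, and matching the abstract Beck--Chevalley transformation produced by the colimit-preservation criterion with the concrete double Beck--Chevalley map of \Cref{lem:NormMapInTermsOfExchangeMap}. Both are routine diagram chases in the calculus of mates, relying on the compatibilities of norm maps with base change and composition recorded in \Cref{cor:NormsVsBaseChange,cor:NormsVsComposition}; I do not expect either to require a genuinely new idea.
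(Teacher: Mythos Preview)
Your proof is correct and the core of the $(2)\Rightarrow(1)$ direction---unwinding the assumed preservation of $P$-coproducts by $p_*$ to the double Beck--Chevalley map for the self-pullback of $p$ and then invoking \Cref{lem:NormMapInTermsOfExchangeMap}---is exactly what the paper does. The differences are matters of economy rather than substance.

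For $(1)\Rightarrow(2)$, the paper avoids your ``assembly'' step entirely: it observes that both $\pi_B^*\Cc$ and $\ulFun_{T_{/B}}(\ul A,\pi_B^*\Cc)$ are $P$-semiadditive (the latter via \Cref{lem:SemiadditiveFunctorsClosedUnderBaseChange} applied to $A\times_B(-)$ and the identification $\ulFun_{T_{/B}}(\ul A,-)\simeq(\pi_A)_*\pi_A^*(-)$), notes that $p_*$ preserves finite $P$-products as a right adjoint, and then concludes by \Cref{cor:SemiadditiveFunctorsBetweenSemiadditiveCats}. Your route---building a levelwise norm equivalence $p_!\simeq p_*$ from the $\Nm_{\pr_C}$---works too, but the step where you ``assemble'' these into a parametrized natural equivalence via \Cref{cor:NormsVsBaseChange} is not automatic in the $\infty$-categorical setting: pointwise equivalences together with homotopy-commuting naturality squares do not by themselves produce a natural transformation. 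You would either need to recognize the assembled map as the parametrized norm of $\pi_B^*\Cc$, or else bypass the assembly by verifying the Beck--Chevalley criterion of \Cref{prop:PreservingFinitePcoproducts} for $p_*$ directly using the pointwise norm equivalences (which amounts, once unwound, to the paper's argument).

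For $(2)\Rightarrow(1)$, your separation into the fiberwise case (via fold maps) and the case $p\in P$ is unnecessary: the paper simply lets $p$ range over all of $\finPsets$ and applies the double Beck--Chevalley argument uniformly, obtaining the norm equivalence $\Nm_p$ for every such $p$ at once, which is the definition of $P$-semiadditivity. Your fiberwise argument is correct (the map you obtain is indeed the identity-matrix norm map), but it duplicates work already covered by the general case.
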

\begin{proof}
    Suppose $\Cc$ is $P$-semiadditive. Then so are the $T_{/B}$-$\infty$-categories $\pi^*_B\Cc$ and $\Fun_{T_{/B}}(\ul{A},\pi^*_B\Cc)$. Given a morphism $p\colon A \to B$, the $T_{/B}$-functor $p_*$ is a right adjoint of $p^*$ so preserves finite $P$-products. By \Cref{cor:SemiadditiveFunctorsBetweenSemiadditiveCats}, it follows that $p_*$ also preserves finite $P$-coproducts, proving that (1) implies (2).

    Conversely, applying (2) to the finite $P$-coproduct $p_!$ gives that the double Beck-Chevalley map $p_!{\pr_2}_* \Rightarrow p_*{\pr_1}_!$ associated to the pullback square \eqref{eq:PullbackDiagramNormMap} is an equivalence. It thus follows from \Cref{lem:NormMapInTermsOfExchangeMap} that the norm map $\Nm_p$ is an equivalence, showing that (2) implies (1).
\end{proof}

We finish this subsection by observing that passing to the $T$-$\infty$-category of $P$-semiadditive $T$-functors out of a small $T$-$\infty$-category $\Cc$ preserves presentability.

\begin{proposition}\label{prop:PsemiadditiveFunctorsPresentable}
	Let $\Cc$ be a small pointed $T$-$\infty$-category which admits finite $P$-coproducts. Let $\Dd$ be a presentable $T$-$\infty$-category, so that $\Dd$ in particular admits finite $P$-products by \cref{rmk:PresentableHasLimits}. Then the $T$-$\infty$-category $\ul{\Fun}^{{\Pbiprod}}(\Cc,\Dd)$ is again presentable and the inclusion
	\[
	\ul{\Fun}^{{\Pbiprod}}(\Cc,\Dd) \subset \ul{\Fun}(\Cc,\Dd)
	\]
	admits a left adjoint.
\end{proposition}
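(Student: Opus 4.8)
\emph{Strategy.} The plan is to exhibit $\ulFun_T^{\Poplus}(\Cc,\Dd)$ as an accessible Bousfield localization of $\ulFun_T(\Cc,\Dd)$ and then quote \Cref{ex:accessible-Bousfield-presentable}. Since $\Cc$ is small and $\Dd$ is presentable, $\ulFun_T(\Cc,\Dd)$ is presentable by \cite{martiniwolf2022presentable}*{Corollary~6.2.6}, and, using \Cref{rmk:PresentableHasLimits}, it admits all finite $P$-products, so the construction of \Cref{cstr:NormMapFunctor} is available. It thus suffices to produce a small parametrized family $S$ of morphisms in $\ulFun_T(\Cc,\Dd)$ with $\mathrm{Loc}_S(\ulFun_T(\Cc,\Dd))=\ulFun_T^{\Poplus}(\Cc,\Dd)$; the associated reflector then provides the desired left adjoint to the inclusion.

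\emph{Construction of $S$.} First I would fix $B\in T$ and work inside $\ulFun_T(\Cc,\Dd)(B)\simeq\Fun_{T_{/B}}(\pi_B^*\Cc,\pi_B^*\Dd)$, i.e.\ replace $(T,\Cc,\Dd)$ by $(T_{/B},\pi_B^*\Cc,\pi_B^*\Dd)$ --- legitimate by \Cref{ex:OrbitalClosed}\eqref{it:SliceOfOrbitalIsOrbital} --- so that it is enough to recognize the $P$-semiadditive $T$-functors $\Cc\to\Dd$ as the $S_0$-local objects of $\Fun_T(\Cc,\Dd)$ for a small set $S_0$, naturally in $B$. Because $T$ and $\Cc$ are small, $\finPsets$ and each $\Cc(A)$ are essentially small. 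Fix $(p\colon A\to B)\in\finPsets$ and $X\in\Cc(A)$. The evaluation $T$-functor $\ev_X\colon\ulFun_T(\Cc,\Dd)\to\pi_{A*}\pi_A^*\Dd$, $F\mapsto F_A(X)$, preserves all $T$-limits and $T$-colimits since these are computed pointwise in functor categories (\Cref{prop:CoLimitsInFunctorCategories}); hence by the parametrized adjoint functor theorem (\cite{martiniwolf2022presentable}*{Proposition~6.3.1}) it admits a left adjoint $\mathrm{Fr}_X\colon\pi_{A*}\pi_A^*\Dd\to\ulFun_T(\Cc,\Dd)$. By \Cref{cstr:NormMapFunctor}, the relative norm map is a natural transformation $\Nm^F_p\colon F_B(p_!X)\to p_*(F_A(X))$ in $F$; passing to mapping spaces and composing the adjunctions $\mathrm{Fr}_{p_!X}\dashv\ev_{p_!X}$, $\mathrm{Fr}_X\dashv\ev_X$ with $p^*\dashv p_*$, the Yoneda lemma converts ``post-composition with $\Nm^F_p$'' into a morphism $s_{p,X,Y}\colon\mathrm{Fr}_X(p^*Y)\to\mathrm{Fr}_{p_!X}(Y)$ in $\Fun_T(\Cc,\Dd)$ for each $Y\in\Dd(B)$, characterized by the property that $F$ is right orthogonal to $s_{p,X,Y}$ precisely when $\Hom_{\Dd(B)}(Y,F_B(p_!X))\to\Hom_{\Dd(B)}(Y,p_*F_A(X))$ is an equivalence. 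As the functor $Y\mapsto\bigl[\Hom_{\Dd(B)}(Y,F_B(p_!X))\to\Hom_{\Dd(B)}(Y,p_*F_A(X))\bigr]$ sends colimits to limits, it suffices to let $Y$ range over a small set of colimit-generators of $\Dd(B)$; letting $S_0$ consist of the resulting (now set-sized) family of $s_{p,X,Y}$, a $T$-functor $F$ is $S_0$-local if and only if all of its relative norm maps $\Nm^F_p$ for $p\in\finPsets$ are equivalences, i.e.\ if and only if $F$ is $P$-semiadditive. Performing this over all slices and closing up under restriction yields the parametrized family $S$.

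\emph{Conclusion.} By construction and by the definition of $S$-locality in \Cref{ex:accessible-Bousfield-presentable}, an object $F\in\ulFun_T(\Cc,\Dd)(B)$ is $S$-local if and only if $\pi_B^*F$ is a $P$-semiadditive $T_{/B}$-functor, which is exactly the condition defining $\ulFun_T^{\Poplus}(\Cc,\Dd)(B)$. Hence $\mathrm{Loc}_S(\ulFun_T(\Cc,\Dd))=\ulFun_T^{\Poplus}(\Cc,\Dd)$, and \Cref{ex:accessible-Bousfield-presentable} shows that this $T$-$\infty$-category is presentable and that the inclusion into $\ulFun_T(\Cc,\Dd)$ admits a left adjoint.

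\emph{Main obstacle.} The delicate point is the corepresentability step: one must carefully trace the chain of adjunctions to rewrite the $F$-varying norm transformation $\Nm^F_p$ as an honest morphism between free functors, and verify that the family may be cut down to a set by restricting $Y$ to generators of $\Dd(B)$. This cut-down mirrors the fact that $P$-semiadditivity is stable under $\kappa$-filtered colimits for $\kappa$ above the accessibility ranks of the (set-many) functors $p_*$, since $p_!$ is cocontinuous and a $\kappa$-filtered colimit of equivalences is an equivalence; combined with closure under $T$-limits --- which is \Cref{lem:SemiadditiveFunctorsClosedUnderLimits} --- this also gives an alternative, more conceptual route to presentability through the recognition of reflective localizations.
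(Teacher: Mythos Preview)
Your proposal is correct and follows essentially the same approach as the paper: both exhibit $\ulFun_T^{\Poplus}(\Cc,\Dd)$ as the $S$-local objects for a parametrized family obtained by corepresenting the norm conditions via a left adjoint to evaluation (produced by the adjoint functor theorem) and cutting down using generators of $\Dd(B)$, then invoking \Cref{ex:accessible-Bousfield-presentable}. The only cosmetic difference is that the paper factors evaluation as $\Fun_T(\Cc,\Dd)\xrightarrow{\ev_B}\Fun(\Cc(B),\Dd(B))$ and names the corepresenting object explicitly as $y(x)\otimes d_i$, whereas you use the parametrized evaluation $\ev_X$ directly; the content is the same.
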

\begin{proof}
	We will exhibit $\ul{\Fun}^{{\Pbiprod}}_T(\Cc,\Dd) $ as the $T$-$\infty$-category of $S$-local objects for a parametrized family $S$ of morphisms in $\ul{\Fun}_T(\Cc,\Dd)$ (i.e.\ a set $S(B)$ of morphisms of $\Fun_{T_{/B}}(\pi_B^*\Cc,\pi_B^*\Dd)$ for every $B\in T$ which are closed under restriction). Then \cref{ex:accessible-Bousfield-presentable} implies both statements of the proposition. Since we may prove the statement after pulling back to every slice of $T$, we may assume without loss of generality that $T$ has a final object. We will describe a set $S'(1)$ of morphisms in $\Fun_T(\Cc,\Dd)$ such that $F$ is $P$-semiadditive if and only if $F$ is $S'(1)$-local; the set $S'(B)$ at any other object $B \in T$ is given by the analogous procedure applied to the slice $T_{/B}$. We then define $S(A)$ to be the union of the restriction of $S'(B)$ along every map $A\rightarrow B$ in $T$. Note that an object $F\in\ul\Fun_T(\Cc,\Dd)(A)$ is $S(A)$-local if and only if $f_* F\in\ul\Fun_T(\Cc,\Dd)(B)$ (with $f_*$ denoting the right adjoint to restriction as before) is $S'(B)$-local for every $f\colon A\rightarrow B$ in $T$. By \Cref{lem:SemiadditiveFunctorsClosedUnderLimits} this is equivalent to $F$ being $S'(A)$-local.

	By definition, a $T$-functor $F\colon \Cc\rightarrow \Dd$ is $T$-semiadditive if and only if it preserves $T$-final objects and the norm map $\Nm_p\colon F_B\circ p_!\Rightarrow p_*\circ F_A$ is an equivalence for every $p\colon A\rightarrow B$ in $\finPsets$. By presentability of $\Dd(B)$, there exists a set $\{d_i\}$ of generating objects of $\Dd(B)$ for every $B \in \finTsets$, which we may assume to be closed under restriction along maps in $\finTsets$. It follows that $F$ is semiadditive if and only if for every morphism $p\colon A \to B$ in $\finPsets$, every generator $d_i \in \Dd(B)$ and every $x \in \Cc(A)$ the following two maps of spaces are equivalences:
	\begin{enumerate}[(1)]
		\item $\Hom_{\Dd(B)}(d_i, F_B(*)) \to \Hom_{\Dd(B)}(d_i,*) \simeq *$;
		\item $\Hom_{\Dd(B)}(d_i,F_B(p_!(x)) \rightarrow \Hom_{\Dd(B)}(d_i, p_*(F_A(x))) \simeq \Hom_{\Dd(A)}(p^*(d_i), F_A(x))$.
	\end{enumerate}
	Note that this is a set's worth of conditions. We claim that these maps of spaces are obtained by applying $\Hom_{\Fun_T(\Cc,\Dd)}(-,F)$ to a certain set of maps $S'(1)$ in $\Fun_T(\Cc,\Dd)$. Since the maps are natural in the functor $F$, it suffices to prove that the source and target of each map are corepresented. Note that the functor $F\mapsto \ast$ is corepresented by the initial object of $\Fun_T(\Cc,\Dd)$. Therefore it will suffice to show that functors in $F$ of the form $\Hom_{\Dd(B)}(y,F_B(x))$ are corepresented. First recall the standard fact that the assignment $F \mapsto \Hom_{\Dd(B)}(d_i,F_B(x))$ is corepresented by the functor $y(x)\otimes d_i\colon \Cc(B)\rightarrow \Dd(B)$ in $\Fun(\Cc(B),\Dd(B))$. Here $y(x) = \Hom_{\Cc(B)}(x,-)\colon \Cc(B) \to \Spc$ denotes the Yoneda embedding, while the functor $- \otimes d_i\colon \Spc \to \Dd(B)$ denotes the standard tensoring over spaces in the cocomplete category $\Dd(B)$. To prove the claim, it thus remains to show that the evaluation functor
	\[ \ev_B\colon \Fun_T(\Cc,\Dd)\rightarrow \Fun(\Cc(B),\Dd(B))
	\]
	admits a left adjoint. Note that by \Cref{prop:CoLimitsInFunctorCategories} it preserves colimits and limits. Since both source and target are presentable the existence of the required left adjoint follows immediately from the adjoint functor theorem \cite{HTT}*{Corollary 5.5.2.9}.
\end{proof}

\subsection{Finite pointed \texorpdfstring{\for{toc}{$P$}\except{toc}{$\bm P$}}{P}-sets}

We will now introduce the $T$-$\infty$-category $\ulfinptdPsets$ of \textit{finite pointed $P$-sets} for an atomic orbital subcategory $P \subseteq T$ and prove that it is the free pointed $T$-$\infty$-category admitting finite $P$-coproducts.

\begin{definition}
	\label{def:FinitePointedPSets}
	Let $P \subseteq T$ be an atomic orbital subcategory. We define the subcategory $\ulfinptdPsets \subseteq \ul{\Spc}_{T,*}$ of \textit{finite pointed $P$-sets} as the inverse image of the subcategory $\ulfinPsets \subseteq \ul{\Spc}_{T}$ under the forgetful functor $\ul{\Spc}_{T,*} \to \ul{\Spc}_{T}$: it contains those pointed $T$-spaces $(X,f,s) \in \ul{\Spc}_{T,*}(B)$ whose underlying $T$-space $(f\colon X \to B)$ is in $\ulfinPsets$.

	Note that $\ulfinptdPsets$ is equivalent to $(\ulfinPsets)_*$, the pointed objects in the $T$-$\infty$-category of finite $P$-sets.
\end{definition}

\begin{notation}
	\label{nota:DisjointBasepoint}
	By \Cref{ex:AdjunctionGivesParametrizedAdjunctionOnTObjects}, the forgetful functor $\ul{\Spc}_{T,*} \to \ul{\Spc}_{T}$ admits a left adjoint $(-)_+\colon \ul{\Spc}_T \to \ul{\Spc}_{T,*}$. It is given at $B \in T$ by the functor
	\begin{align*}
		(-)_+\colon \PSh(T)_{/B} \to (\PSh(T)_{/B})_*\colon (X,f) \mapsto (X_+,f_+,s),
	\end{align*}
	where $X_+ := X \sqcup B$, where $f_+ := (f,\id)\colon X \sqcup B \to B$ and where $s\colon B \hookrightarrow X \sqcup B$ is the canonical inclusion. We will often abuse notation and write $X_+$ or $(X,f)_+$ instead of $(X_+,f_+,s)$.

	Observe that the $T$-functor $(-)_+\colon \ul{\Spc}_T \to \ul{\Spc}_{T,*}$ of \Cref{nota:DisjointBasepoint} restricts to a $T$-functor $(-)_+\colon \ulfinPsets \to \ulfinptdPsets$ which is left adjoint to the forgetful functor $\fgt\colon \ulfinptdPsets \to \ulfinPsets$.
\end{notation}

\begin{lemma}
	\label{lem:FinitePtdPSetsDisjointlyBased}
	Let $P \subseteq T$ be an atomic orbital subcategory. Then the $T$-functor $(-)_+\colon \ulfinPsets \to \ulfinptdPsets$ is essentially surjective: any finite pointed $P$-set $(Y,p,s) \in \ulfinptdPsets(B)$  is equivalent to one of the form $X_+$ for some $(X,q) \in \ulfinPsets(B)$.
\end{lemma}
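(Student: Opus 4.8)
The plan is to show that the section $s$ of a finite pointed $P$-set splits off the basepoint as a disjoint summand, and that the complementary summand is again a finite $P$-set. First I would take $(Y,p,s)\in\ulfinptdPsets(B)$, so that $(p\colon Y\to B)\in\ulfinPsets(B)$ and $s\colon B\to Y$ is a section of $p$. Since $P$ is atomic orbital, \Cref{prop:FinitePSetsHasDisjointDiagonalInclusions} shows that the diagonal $\Delta_p\colon Y\to Y\times_B Y$ is a disjoint summand inclusion. Exactly as in the proof of \Cref{lem:AtomicVsDisjointDiagonals}, the section $s$ is obtained from $\Delta_p$ by base change along $(\id_Y,sp)\colon Y\to Y\times_B Y$; as disjoint summand inclusions in $\PSh(T)$ are stable under base change, it follows that $s\colon B\to Y$ is itself a disjoint summand inclusion. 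Writing $q'\colon X\hookrightarrow Y$ for the complementary summand and $q\coloneqq p\circ q'\colon X\to B$, the resulting equivalence $Y\simeq X\sqcup B$ is compatible with the structure maps down to $B$ (so that $p$ corresponds to $(q,\id_B)\colon X\sqcup B\to B$) and with the sections, which exhibits $(Y,p,s)$ as the disjointly based object $X_+$ of \Cref{nota:DisjointBasepoint} — provided $(X,q)$ is a finite $P$-set.

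So the remaining point is that $(X,q)\in\ulfinPsets(B)$, which will simultaneously show $X_+\in\ulfinptdPsets(B)$. For this I would decompose $p$ as $(p_i)_{i\in I}\colon \bigsqcup_{i\in I}Y_i\to B$ with each $Y_i$ representable and each $p_i\in P$, and exploit that every representable presheaf is indecomposable in $\PSh(T)$: indeed $\PSh(T)_{/Y_i}\simeq\PSh(T_{/Y_i})$ and $T_{/Y_i}$ is connected as it has a terminal object. Pulling the summands $B\hookrightarrow Y$ and $X\hookrightarrow Y$ back along each inclusion $Y_i\hookrightarrow Y$ and using disjointness and universality of coproducts, we get complementary summands $B_i,X_i\hookrightarrow Y_i$ with $B\simeq\bigsqcup_iB_i$ and $X\simeq\bigsqcup_iX_i$; indecomposability of $Y_i$ forces each $Y_i$ to lie entirely inside $B$ or entirely inside $X$. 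Hence $X\simeq\bigsqcup_{i\in I_X}Y_i$ for some subset $I_X\subseteq I$, and on the summand $Y_i$ (for $i\in I_X$) the map $q$ restricts to $p_i\in P$. Thus $q$ is a disjoint union of morphisms in $P$, i.e.\ $(X,q)\in\ulfinPsets(B)$, which finishes the argument.

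The only genuinely delicate step is the last one — transferring the finite-$P$-set structure from $Y$ to the complement $X$ — and this is exactly where atomicity of $P$ is essential: for a merely orbital $P$ the section $s$ need not split off any summand, and \Cref{prop:FinitePSetsHasDisjointDiagonalInclusions} fails. The indecomposability of representables is the other small input, but it is immediate from the existence of terminal objects in over-categories.
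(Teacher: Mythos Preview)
Your proof is correct, but it takes a different and somewhat longer route than the paper's. The paper argues directly: writing $Y=\bigsqcup_{i=1}^n A_i$ with each $A_i\in T$ and $p_i\colon A_i\to B$ in $P$, the section $s\colon B\to\bigsqcup_i A_i$ must factor through a single summand $A_i$ because $B$ is representable (hence indecomposable). This gives a section of $p_i\in P$, so by the retract characterization of atomicity (\cref{lem:AtomicVsDisjointDiagonals}) the map $p_i$ is an equivalence, identifying $B$ with $A_i$; the remaining summands then form $X$, which is automatically a finite $P$-set.

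By contrast, you first show that $s$ is a disjoint summand inclusion (via \cref{prop:FinitePSetsHasDisjointDiagonalInclusions} and base change), and only afterwards verify that the complement $X$ is a finite $P$-set by aligning the two coproduct decompositions of $Y$ using indecomposability of the $Y_i$. Both arguments use indecomposability of representables and the atomicity hypothesis, but in opposite places: the paper applies indecomposability to the \emph{source} $B$ of the section and then invokes atomicity via the retract criterion, while you invoke atomicity first (through the diagonal) and then use indecomposability of the \emph{summands} $Y_i$ of the target. The paper's route is more economical in that the finite-$P$-set structure on $X$ comes for free, whereas your approach has the mild advantage that the first step (that $s$ is a disjoint summand inclusion) holds without any representability assumption on $B$.
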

\begin{proof}
	By definition, we may write $Y = \bigsqcup_{i=1}^n A_i$ as a finite disjoint union such that each map $p_i\colon A_i \to B$ is in $P$. The section $s\colon B \to \bigsqcup_{i=1}^n A_i$ must factor as $B \to A_i \hookrightarrow \bigsqcup_{i=1}^n A_i$ for some $i$. But this implies that the map $B \to A_i$ is a section of $p_i\colon A_i \to B$, so by \cref{lem:AtomicVsDisjointDiagonals} it must be an equivalence, exhibiting $B$ as a disjoint summand of $Y$. Defining $X$ as the disjoint union of the remaining summands gives the desired equivalence $Y \simeq X_+$ over $B$.
\end{proof}

\begin{notation}
	We will assume all pointed $P$-set over $B \in T$ are given to us in the form $X_+ = X \sqcup B$ for $(X,q) \in \ulfinPsets(B)$. This convention is justified by \Cref{lem:FinitePtdPSetsDisjointlyBased}. We emphasize that the maps $X_+\rightarrow Y_+$ of finite pointed $P$-sets over $B$ are not assumed to respect this decomposition, i.e.\ they might not be induced by maps in $\finPsets(B)$.
\end{notation}

\begin{lemma}
	\label{lem:FindisjptdPSetsHasFinitePColimits}
	The $T$-$\infty$-category $\ulfinptdPsets$ from \Cref{def:FinitePointedPSets} admits finite $P$-coproducts and the inclusion $\ulfinptdPsets \hookrightarrow \ul{\Spc}_{T,*}$ preserves finite $P$-coproducts. Furthermore, for any other $T$-$\infty$-category $\Dd$ which admits finite $P$-coproducts, a $T$-functor $F\colon \ulfinptdPsets \to \Dd$ preserves finite $P$-coproducts if and only if the composite $F \circ (-)_+$ does.
\end{lemma}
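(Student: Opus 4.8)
The plan is to bootstrap the assertion from the unpointed case (Corollary~\ref{cor:FinPSetsHasFinitePColimits}) together with the general facts about pointed objects established in Section~\ref{subsec:PointedTCategories}. First I would recall from Corollary~\ref{cor:FinPSetsHasFinitePColimits} that $\ulfinPsets$ admits finite $P$-coproducts and that the inclusion into $\ul\Spc_T$ preserves them, and from Example~\ref{ex:PointedTSpacesPAdjointable} that $\ul\Spc_{T,*}$ is $T$-cocomplete; in particular it admits finite $P$-coproducts. Since $\ulfinptdPsets=(\ulfinPsets)_*$ by Definition~\ref{def:FinitePointedPSets}, and since passing to pointed objects of an $\infty$-category with terminal object is a standard operation, the cleanest route is to check via Proposition~\ref{prop:finitePcoproducts} that $\ulfinptdPsets$ is closed under finite $P$-coproducts inside $\ul\Spc_{T,*}$: fiberwise finite coproducts of pointed finite $P$-sets have underlying $T$-space a fiberwise finite coproduct of finite $P$-sets, hence again a finite $P$-set by Corollary~\ref{cor:FinPSetsHasFinitePColimits}; and the left adjoint $p_!$ on $\ul\Spc_{T,*}$, computed by the explicit pushout formula in Example~\ref{ex:PointedTSpacesPAdjointable}, sends a pointed finite $P$-set over $A$ to one over $B$ whose underlying $T$-space is $p_!$ applied in $\ul\Spc_T$ to the underlying finite $P$-set (using that $p_!$ on $\ul\Spc_T$ is postcomposition with $p$ and that the relevant pushout along a section produces exactly the "add a disjoint basepoint over $B$" construction). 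The Beck--Chevalley condition for $\ulfinptdPsets$ is inherited from that of $\ul\Spc_{T,*}$ since the forgetful functor $\ul\Spc_{T,*}\to\ul\Spc_T$ is conservative and the relevant adjoints restrict. This gives the first two sentences of the lemma.

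For the last sentence I would argue as follows. The "only if" direction is immediate: $(-)_+\colon\ulfinPsets\to\ulfinptdPsets$ preserves finite $P$-coproducts, because it is the restriction of the left adjoint $(-)_+\colon\ul\Spc_T\to\ul\Spc_{T,*}$ (Notation~\ref{nota:DisjointBasepoint}), and a left adjoint $T$-functor preserves all parametrized colimits, in particular finite $P$-coproducts; so if $F$ preserves finite $P$-coproducts then so does the composite $F\circ(-)_+$. For the converse, suppose $F\circ(-)_+$ preserves finite $P$-coproducts. By Proposition~\ref{prop:PreservingFinitePcoproducts} it suffices to check that $F$ preserves fiberwise finite coproducts and that the Beck--Chevalley maps $p_!\circ F_A\Rightarrow F_B\circ p_!$ are equivalences for $p\colon A\to B$ in $P$. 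The key input is Lemma~\ref{lem:FinitePtdPSetsDisjointlyBased}: every object of $\ulfinptdPsets(B)$ is equivalent to $X_+$ for some $(X,q)\in\ulfinPsets(B)$, so every object of $\ulfinptdPsets$ lies in the essential image of $(-)_+$. Thus for fiberwise coproducts one writes a finite collection of pointed finite $P$-sets over $B$ as $(X_i)_+$, observes that their fiberwise coproduct in $\ulfinptdPsets(B)$ is $(\bigsqcup_i X_i)_+$ — using the explicit description of coproducts of pointed objects as wedges, which here amounts to gluing the $X_i$ along the common disjoint basepoint over $B$, i.e.\ exactly the disjoint union of the $X_i$ together with a single new basepoint — and then applies the hypothesis to $F\circ(-)_+$; similarly for $p_!$, using that $p_!(X_+)\simeq(p_!X)_+$ as noted above.

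The main obstacle I anticipate is the bookkeeping around the identification $p_!(X_+)\simeq(p_!X)_+$ in $\ul\Spc_{T,*}$ and, relatedly, the identification of fiberwise coproducts in $\ulfinptdPsets$ with "disjoint union plus one basepoint." Both follow from the explicit pushout description of $p_!$ in Example~\ref{ex:PointedTSpacesPAdjointable} and from the fact that, for an object $(X,q)$ of $\ul\Spc_T$ over $B$, the basepoint-free formation of coproducts in the pointed slice $(\PSh(T)_{/B})_*$ is computed by a pushout over $B$; unwinding this requires a short but careful diagram chase with the pushout squares, together with the observation that in $\PSh(T)$ coproducts are disjoint and universal, so that gluing copies of $B$ along $B$ yields a single copy of $B$. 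Once these two structural identifications are in place, every remaining step is a formal consequence of the results of Section~\ref{subsec:PointedTCategories} and the criteria of Proposition~\ref{prop:finitePcoproducts} and Proposition~\ref{prop:PreservingFinitePcoproducts}.
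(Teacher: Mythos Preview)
Your proposal is correct and follows essentially the same approach as the paper: both arguments show closure of $\ulfinptdPsets$ under finite $P$-coproducts inside $\ul\Spc_{T,*}$ by combining the unpointed case (Corollary~\ref{cor:FinPSetsHasFinitePColimits}) with the fact that $(-)_+$ is a left adjoint and is essentially surjective (Lemma~\ref{lem:FinitePtdPSetsDisjointlyBased}). The paper compresses your explicit identifications $p_!(X_+)\simeq(p_!X)_+$ and $\bigvee_i(X_i)_+\simeq(\bigsqcup_i X_i)_+$ into the single slogan ``every cocone in $\ulfinptdPsets$ indexed by a finite $P$-set comes from $\ulfinPsets$,'' but the content is the same.
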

\begin{proof}
	By \Cref{ex:PointedTSpacesPAdjointable}, it suffices to prove that $\ulfinptdPsets$ is closed under finite $P$-coproducts in $\ul{\Spc}_{T,*}$. By \Cref{cor:FinPSetsHasFinitePColimits}, the $T$-category $\ulfinPsets$ admits finite $P$-coproducts and these are preserved by the (left adjoint) $T$-functor $(-)_+\colon \ulfinPsets \to \ulfinptdPsets$. Conversely it follows from \Cref{lem:FinitePtdPSetsDisjointlyBased} that every diagram in $\ulfinptdPsets$ indexed by a finite $P$-set comes from $\ulfinPsets$. The claim follows.
\end{proof}

Let $S^0\colon \ul{1} \to \ulfinptdPsets$ denote the $T$-functor given at $B \in T$ by the object $B_+ \in \ulfinptdPsets(B)$. The goal of the remainder of this subsection is to show that this map exhibits the $T$-$\infty$-category $\ulfinptdPsets$ as the free pointed $T$-$\infty$-category admitting finite $P$-coproducts.

If $\Ee$ is an $\infty$-category admitting a final object $*$, we let $\Ee_+ \subseteq \Ee_*$ denote the full subcategory of pointed objects $* \to Z$ for which there exists a pointed equivalence $Z \simeq X \sqcup *$ for some $X \in \Ee$. If $\Ee$ admits finite coproducts, then $\Ee_+$ also admits finite coproducts and the functor $(-)_+\colon \Ee \to \Ee_+\colon X \mapsto X_+ := X \sqcup *$ preserves finite coproducts. Furthermore $\Ee_+$ is pointed. We will show that the functor $(-)_+\colon \Ee \to \Ee_+$ is universal among coproduct preserving functors from $\Ee$ into a pointed $\infty$-category.

\begin{lemma}
	\label{lem:DisjointlyBasedObjectsFormFreePointing}
	Let $\Ee$ and $\Dd$ be $\infty$-categories admitting finite coproducts. Assume that $\Ee$ admits a final object and that $\Dd$ is pointed. Then precomposition with the functor $(-)_+\colon \Ee \to \Ee_+$ induces an equivalence
	\begin{align*}
		\Fun^{\sqcup,*}(\Ee_+,\Dd) \iso \Fun^{\sqcup}(\Ee,\Dd).
	\end{align*}
\end{lemma}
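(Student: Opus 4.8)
The plan is to realize $\mathcal E_+$ as a localization of $\mathcal E_*$ and to compare the resulting universal property to the one we already know for $\mathcal E_*$. Recall that since $\mathcal E$ admits finite coproducts and a final object, the slice $\mathcal E_*$ is pointed and admits finite coproducts, and the left adjoint $(-)_*\colon \mathcal E\to\mathcal E_*$ to the forgetful functor preserves finite coproducts; restricting along $(-)_*$ gives an equivalence $\Fun^{\sqcup}(\mathcal E_*,\mathcal D)\iso\Fun^{\sqcup}(\mathcal E,\mathcal D)$ for $\mathcal D$ pointed with finite coproducts, by the standard universal property of adding a disjoint basepoint (this is the non-parametrized shadow of the arguments around \Cref{cor:AdjunctionPointedness}; alternatively one can cite \cite{HA}*{Remark~4.8.1.8}). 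The content of the present lemma is that this restriction factors through $\Fun^{\sqcup,*}(\mathcal E_+,\mathcal D)$ and that the induced functor is still an equivalence.

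First I would observe that the inclusion $\mathcal E_+\hookrightarrow\mathcal E_*$ is fully faithful and closed under finite coproducts (a finite coproduct of disjointly based objects is again disjointly based, since $(X\sqcup *)\sqcup_*(Y\sqcup *)\simeq (X\sqcup Y)\sqcup *$), and that $(-)_*\colon\mathcal E\to\mathcal E_*$ lands in $\mathcal E_+$ — indeed by definition $X_* = X_+$ for all $X\in\mathcal E$. Hence $(-)_+\colon\mathcal E\to\mathcal E_+$ is a finite-coproduct-preserving functor out of $\mathcal E$ and, moreover, its essential image generates $\mathcal E_+$ under finite coproducts: every object of $\mathcal E_+$ is by definition pointed-equivalent to some $X_+$. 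Next, precomposition with the inclusion $\iota\colon\mathcal E_+\hookrightarrow\mathcal E_*$ gives a functor $\iota^*\colon\Fun^{\sqcup,*}(\mathcal E_*,\mathcal D)\to\Fun^{\sqcup,*}(\mathcal E_+,\mathcal D)$, and since $\iota\circ(-)_+\simeq(-)_*$, the triangle
\[
\begin{tikzcd}
\Fun^{\sqcup,*}(\mathcal E_*,\mathcal D)\arrow[r,"\iota^*"]\arrow[dr,"\sim"'] & \Fun^{\sqcup,*}(\mathcal E_+,\mathcal D)\arrow[d,"((-)_+)^*"]\\
& \Fun^{\sqcup}(\mathcal E,\mathcal D)
\end{tikzcd}
\]
commutes, with the diagonal an equivalence. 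So it suffices to show $\iota^*$ is an equivalence; equivalently, that every finite-coproduct-preserving pointed functor $\mathcal E_+\to\mathcal D$ extends essentially uniquely along $\iota$ to a finite-coproduct-preserving pointed functor $\mathcal E_*\to\mathcal D$.

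For this I would use left Kan extension along $\iota$. Since $\mathcal D$ admits finite coproducts and $\mathcal E_+\hookrightarrow\mathcal E_*$ is fully faithful with every object of $\mathcal E_*$ a finite colimit — in fact a finite coproduct — of objects in the image of $\iota$ (write $X\sqcup *$ with $X\in\mathcal E$, then $\iota(X_+)\simeq X\sqcup *$; more generally an arbitrary pointed object $* \to Z$ sits in a coproduct decomposition only after passing to $\mathcal E_+$, so here one genuinely uses that we are extending \emph{from} $\mathcal E_+$, whose objects exhaust $\mathcal E_*$), the left Kan extension $\iota_!F$ exists, preserves finite coproducts, and restricts back to $F$ on $\mathcal E_+$ because $\iota$ is fully faithful. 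It remains to check $\iota_!F$ is pointed (sends the zero object of $\mathcal E_*$ to the zero object of $\mathcal D$): the zero object of $\mathcal E_*$ is $*$ with its unique basepoint, which equals $\emptyset_+ = \iota(\emptyset_+)$, so $\iota_!F(*)\simeq F(\emptyset_+)\simeq 0$ since $F$ preserves the empty coproduct. Uniqueness of the extension follows since any two finite-coproduct-preserving functors $\mathcal E_*\to\mathcal D$ agreeing on $\mathcal E_+$ must agree, as $\mathcal E_+$ generates $\mathcal E_*$ under finite coproducts and both functors preserve them.

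The main obstacle I anticipate is making precise the claim that $\mathcal E_*$ is generated under finite coproducts by the essential image of $\iota$ — i.e.\ that \emph{every} pointed object $(*\to Z)$ in $\mathcal E_*$, not just those of the special form $X\sqcup *$, is a finite coproduct of disjointly based objects. This is in fact false in general for an arbitrary $\mathcal E$; what is true, and all that is needed, is the identity $X_* = X_+$ together with the fact that $(-)_*$ is a \emph{left adjoint}, so one should not try to build the extension by a naive Kan extension along $\iota$ but rather argue directly: a finite-coproduct-preserving pointed $G\colon\mathcal E_*\to\mathcal D$ is determined by $G\circ(-)_*\colon\mathcal E\to\mathcal D$ because $(-)_*$ is the universal finite-coproduct-preserving pointed functor out of $\mathcal E$ (this is precisely the equivalence $\Fun^{\sqcup,*}(\mathcal E_*,\mathcal D)\iso\Fun^{\sqcup}(\mathcal E,\mathcal D)$ cited at the start), and $(-)_*$ factors through $\iota$. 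Chasing this through the commuting triangle above shows $\iota^*$ is an equivalence with inverse the composite $\Fun^{\sqcup,*}(\mathcal E_+,\mathcal D)\xrightarrow{((-)_+)^*}\Fun^{\sqcup}(\mathcal E,\mathcal D)\xrightarrow{\sim}\Fun^{\sqcup,*}(\mathcal E_*,\mathcal D)$, and hence $((-)_+)^*$ itself is an equivalence, which is the assertion of the lemma.
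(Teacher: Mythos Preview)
Your argument has a genuine gap at its starting point. The claimed equivalence
\[
((-)_+)^*\colon \Fun^{\sqcup,*}(\mathcal E_*,\mathcal D)\iso\Fun^{\sqcup}(\mathcal E,\mathcal D)
\]
is not what is proved around \Cref{cor:AdjunctionPointedness}: that result says $(-)_*$ is a \emph{right} adjoint to the inclusion of pointed categories, hence yields an equivalence $\Fun^*(\Cc,\Dd_*)\simeq\Fun^*(\Cc,\Dd)$ for $\Cc$ pointed --- a statement about $\Dd_*$ as a \emph{target}, not about $\mathcal E_*$ as a source. The reference to \cite{HA}*{Remark~4.8.1.8} does not help either, as that is a statement in $\PrL$ about functors preserving all colimits, whereas here $\mathcal D$ is only assumed to have finite coproducts. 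In fact your claimed equivalence is false in general: combined with the lemma it would force $\iota^*\colon\Fun^{\sqcup,*}(\mathcal E_*,\mathcal D)\to\Fun^{\sqcup,*}(\mathcal E_+,\mathcal D)$ to be an equivalence for all such $\mathcal D$, hence (by Yoneda in $\Cat^{\sqcup,\pt}_\infty$) that $\iota\colon\mathcal E_+\hookrightarrow\mathcal E_*$ is an equivalence --- but this fails whenever $\mathcal E$ has a pointed object that is not disjointly based, e.g.\ the based circle in $\mathcal E=\Spc$. Put differently, the free pointed $\infty$-category with finite coproducts on $\mathcal E$ is $\mathcal E_+$, not $\mathcal E_*$; this is exactly \Cref{cor:DisjointlyBasedObjectsFormFreePointing}, the immediate consequence of the lemma you are trying to prove.

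Even granting your $\alpha$, the final deduction from the triangle is incomplete: from $\alpha=\gamma\circ\beta$ with $\alpha$ an equivalence you only get that $\beta=\iota^*$ is split mono and $\gamma=((-)_+)^*$ is split epi; showing $\beta\circ\Psi\simeq\id$ requires an additional argument you do not give. The paper avoids $\mathcal E_*$ altogether and argues directly on $\mathcal E_+$: it writes down an explicit inverse $F\mapsto\widetilde F$ via the formula $\widetilde F(X_+):=\cofib(F(*)\to F(X_+))$, and the point is that the decomposition $X_+\simeq X\sqcup *$ forces this particular cofiber to exist (and to equal $F(X)$) using only pointedness and finite coproducts in $\mathcal D$ --- precisely what would fail for a general object of $\mathcal E_*$.
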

\begin{proof}
	We claim an inverse is given by sending a finite-coproduct-preserving functor $F\colon \Ee \to \Dd$ to the functor $\widetilde{F}\colon \Ee_+ \to \Dd$ defined by the formula
	\begin{align*}
		\widetilde{F}(X_+) := \cofib(F(*) \to F(X_+)).
	\end{align*}
	Observe that this colimit exists and is equivalent to $F(X)$ by the following pushout diagram:
	\[\begin{tikzcd}
		* \dar \rar \drar[pushout] & F(*) \dar \rar & * \dar \\
		F(X) \rar & F(X_+) \rar & F(X).
	\end{tikzcd}\]
	Here the left square is a pushout since $\Dd$ is pointed and $F$ preserves finite coproducts, and it thus follows from the pasting law of pushout diagrams that the right square is a pushout as well. This proves that the composition $\widetilde{F} \circ (-)_+$ is equivalent to $F$. It is easily observed that $\widetilde{F}$ is pointed and preserves finite coproducts.

	Now assume we are given a pointed functor $\widetilde{F}\colon \Ee_+ \to \Dd$ which preserves finite coproducts. It remains to show that for every object $Z \in \Ee_+$ the canonical map
	\begin{align*}
		\cofib(\widetilde{F}(*_+) \to \widetilde{F}(Z_+)) \to \widetilde{F}(Z)
	\end{align*}
	is an equivalence. This follows from the fact that $Z_+$ is a coproduct in $\Ee_+$ of $Z$ and $*_+$ and that $\widetilde{F}$ preserves coproducts by assumption.
\end{proof}

Let $\Cat^{\sqcup}_{\infty} \subseteq \Cat_{\infty}$ denote the (non-full) subcategory consisting of $\infty$-categories which admit finite coproducts and functors which preserve finite coproducts. Let $\Cat^{\sqcup,\pt}_{\infty} \subseteq \Cat^{\sqcup,*}_{\infty} \subseteq \Cat^{\sqcup}_{\infty}$ denote the full subcategories spanned by those $\infty$-categories with finite coproducts which admit a zero object or admit a final object, respectively.

\begin{corollary}
	\label{cor:DisjointlyBasedObjectsFormFreePointing}
	The inclusion $\Cat^{\sqcup,\pt}_{\infty} \hookrightarrow \Cat^{\sqcup,*}_{\infty}$ admits a left adjoint \[(-)_+\colon \Cat^{\sqcup,*}_{\infty} \to \Cat^{\sqcup,\pt}_{\infty}\] which on objects sends $\Ee$ to $\Ee_+$.
\end{corollary}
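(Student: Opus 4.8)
The plan is to deduce this corollary from \Cref{lem:DisjointlyBasedObjectsFormFreePointing} by means of the pointwise criterion for the existence of adjoint functors (see, e.g., \cite{HTT}*{Proposition~5.2.4.2}). Write $\iota\colon\Cat^{\sqcup,\pt}_\infty\hookrightarrow\Cat^{\sqcup,*}_\infty$ for the inclusion, which is fully faithful. The discussion preceding \cref{lem:DisjointlyBasedObjectsFormFreePointing} shows that for every $\Ee\in\Cat^{\sqcup,*}_\infty$ the $\infty$-category $\Ee_+$ lies in $\Cat^{\sqcup,\pt}_\infty$ and that $(-)_+\colon\Ee\to\Ee_+$ preserves finite coproducts, so it defines a morphism $\eta_\Ee\colon\Ee\to\iota(\Ee_+)$ in $\Cat^{\sqcup,*}_\infty$. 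I would then verify that $\eta_\Ee$ exhibits $\Ee_+$ as a reflection of $\Ee$ along $\iota$; granting this, the cited criterion shows that the assignment $\Ee\mapsto\Ee_+$ underlies a left adjoint to $\iota$ with unit $\eta$, which is the asserted statement.

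To check the reflection property, fix $\Dd\in\Cat^{\sqcup,\pt}_\infty$. Since $\iota$ is fully faithful, it remains to show that precomposition with $\eta_\Ee$ induces an equivalence of mapping spaces $\Hom_{\Cat^{\sqcup,\pt}_\infty}(\Ee_+,\Dd)\to\Hom_{\Cat^{\sqcup,*}_\infty}(\Ee,\iota\Dd)$. As $\Cat^{\sqcup,\pt}_\infty$ and $\Cat^{\sqcup,*}_\infty$ are full subcategories of $\Cat^\sqcup_\infty$, these mapping spaces are the cores of $\Fun^\sqcup(\Ee_+,\Dd)$ and $\Fun^\sqcup(\Ee,\Dd)$ respectively. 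The one point requiring care is that $\Fun^\sqcup(\Ee_+,\Dd)$ coincides with the subcategory $\Fun^{\sqcup,*}(\Ee_+,\Dd)$ appearing in \cref{lem:DisjointlyBasedObjectsFormFreePointing}: indeed, since $\Ee_+$ is pointed its zero object is the empty coproduct, hence is preserved by any finite-coproduct-preserving functor, while the zero object of the pointed $\infty$-category $\Dd$ is also terminal; thus every coproduct-preserving functor $\Ee_+\to\Dd$ automatically preserves the final object. Now \cref{lem:DisjointlyBasedObjectsFormFreePointing} says precisely that precomposition with $(-)_+$ is an equivalence $\Fun^{\sqcup,*}(\Ee_+,\Dd)\iso\Fun^\sqcup(\Ee,\Dd)$, and passing to cores gives the required equivalence of mapping spaces.

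Since the substance is entirely contained in \cref{lem:DisjointlyBasedObjectsFormFreePointing}, I do not expect a genuine obstacle here; the only thing to be attentive to is the identification $\Fun^\sqcup(\Ee_+,\Dd)=\Fun^{\sqcup,*}(\Ee_+,\Dd)$ above, which is what allows the functor-category equivalence of the lemma — phrased with the extra final-object-preservation hypothesis on the source — to be read off directly as a statement about mapping spaces in $\Cat^{\sqcup,\pt}_\infty$, and the (entirely formal) bookkeeping needed to invoke the adjoint-functor criterion.
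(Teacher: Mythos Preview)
Your proof is correct and follows essentially the same approach as the paper: both reduce to the pointwise adjoint criterion by showing that precomposition with $(-)_+\colon\Ee\to\Ee_+$ induces an equivalence on mapping spaces in $\Cat^{\sqcup}_\infty$, invoking \Cref{lem:DisjointlyBasedObjectsFormFreePointing}. You spell out a bit more carefully why the final-object-preservation hypothesis in the lemma is automatic (since the zero object of $\Ee_+$ is the empty coproduct), which the paper leaves implicit.
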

\begin{proof}
	We need to show that for any $\Ee \in \Cat^{\sqcup,*}_{\infty}$ and any $\Dd \in \Cat^{\sqcup,\pt}_{\infty}$, the precomposition with the map $(-)_+\colon \Ee \to \Ee_+$ induces an equivalence \[\Hom_{\Cat^{\sqcup}_{\infty}}(\Ee_+,\Dd) \iso \Hom_{\Cat^{\sqcup}_{\infty}}(\Ee,\Dd).\] This is immediate from \Cref{lem:DisjointlyBasedObjectsFormFreePointing}.
\end{proof}

\begin{corollary}
	\label{cor:FinptdPSetsFree}
	Let $\Dd$ be a pointed $T$-$\infty$-category $\Dd$ which admits finite $P$-coproducts. Then composition with $S^0\colon \ul{1} \to \ulfinptdPsets$ induces an equivalence of $T$-$\infty$-categories
	\begin{align*}
		\ulFun_T^{{\Pcoprod},*}(\ulfinptdPsets,\Dd) \to \ulFun_T(\ul{1},\Dd) \simeq \Dd.
	\end{align*}
\end{corollary}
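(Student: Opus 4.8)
The plan is to deduce this from the universal property of $\ulfinPsets$ established in \Cref{cor:FinPSetsFreeOnFinitePColimits} together with a ``free pointing'' argument. Write $\ast\colon\ul 1\to\ulfinPsets$ for the $T$-final object and $(\blank)_+\colon\ulfinPsets\to\ulfinptdPsets$ for the disjoint-basepoint $T$-functor of \Cref{nota:DisjointBasepoint}, so that $S^0=(\blank)_+\circ\ast$. Since $(\blank)_+$ preserves finite $P$-coproducts (\Cref{lem:FindisjptdPSetsHasFinitePColimits}), precomposition with it is a $T$-functor $\ulFun_T^{{\Pcoprod},*}(\ulfinptdPsets,\Dd)\to\ulFun_T^{{\Pcoprod}}(\ulfinPsets,\Dd)$ through which the $S^0$-evaluation map factors as the $\ast$-evaluation map; the latter being an equivalence onto $\Dd$ by \Cref{cor:FinPSetsFreeOnFinitePColimits}, it suffices to prove that this precomposition $T$-functor is an equivalence. (I also use here that, $\Dd$ being pointed, a finite-$P$-coproduct-preserving $T$-functor into $\Dd$ automatically preserves the $T$-final object, it being the empty $P$-coproduct; the same applies with $\ulfinptdPsets$ in place of $\Dd$, so $\ulFun_T^{{\Pcoprod},*}(\ulfinptdPsets,\Dd)$ is just $\ulFun_T^{{\Pcoprod}}(\ulfinptdPsets,\Dd)$.)

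Equivalences of $T$-$\infty$-categories can be checked at each level $B\in T$, and by \Cref{cor:TCategoryOfTFunctors} the value at $B$ of a parametrized functor category is a functor category over $T_{/B}$, the full subcategories of functors preserving finite $P$-coproducts or $T$-final objects restricting accordingly. Unwinding the definitions gives natural identifications $\pi_B^*\ulfinPsets\simeq\ul{\finSets}^{P}_{T_{/B}}$ and $\pi_B^*\ulfinptdPsets\simeq\ul{\finSets}^{P}_{T_{/B},*}$ for the induced orbital subcategory $P\subseteq T_{/B}$ (\Cref{ex:OrbitalClosed}\eqref{it:SliceOfOrbitalIsOrbital}) — using \Cref{lem:baseChangeOfTObjects} for the ambient $T$-$\infty$-categories of ($T$-)spaces — under which $\pi_B^*(\blank)_+$ becomes the disjoint-basepoint functor of $T_{/B}$. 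Noting also that $\pi_B^*\Dd$ remains pointed with finite $P$-coproducts (\Cref{lem:SemiadditiveFunctorsClosedUnderBaseChange}), I reduce to the unparametrized statement: for every atomic orbital $P\subseteq T$ and every pointed $T$-$\infty$-category $\Dd$ admitting finite $P$-coproducts, precomposition with $(\blank)_+$ is an equivalence of $\infty$-categories $\Fun_T^{{\Pcoprod}}(\ulfinptdPsets,\Dd)\iso\Fun_T^{{\Pcoprod}}(\ulfinPsets,\Dd)$.

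To prove this I first drop the $P$-coproduct conditions. The $T$-$\infty$-category $\ulfinPsets$ refines to a functor $T\catop\to\Cat^{\sqcup,*}_{\infty}$ (each level $(\finPsets)_{/B}$ has fiberwise finite coproducts and a final object, and pullback preserves both by orbitality), $\Dd$ refines to a functor $T\catop\to\Cat^{\sqcup,\pt}_{\infty}$ (by \Cref{prop:finitePcoproducts} and pointedness), and $\ulfinptdPsets$ is obtained from $\ulfinPsets$ by postcomposing with $(\blank)_+\colon\Cat^{\sqcup,*}_{\infty}\to\Cat^{\sqcup,\pt}_{\infty}$, using \Cref{lem:FinitePtdPSetsDisjointlyBased} to identify $((\finPsets)_{/B})_+$ with $\ulfinptdPsets(B)$; correspondingly $(\blank)_+\colon\ulfinPsets\to\ulfinptdPsets$ is the levelwise adjunction unit. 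Postcomposing the adjunction of \Cref{cor:DisjointlyBasedObjectsFormFreePointing} along $T\catop$ and plugging in the cotensors $\Dd^K$ (again pointed with fiberwise finite coproducts) for $K\in\Cat_{\infty}$, exactly as in the proof of \Cref{cor:AdjunctionPointednessInternal}, shows that precomposition with $(\blank)_+$ is an equivalence between the $\infty$-categories of $T$-functors which fiberwise preserve finite coproducts. Finally, \Cref{lem:FindisjptdPSetsHasFinitePColimits} says that such a $T$-functor $\ulfinptdPsets\to\Dd$ preserves finite $P$-coproducts if and only if its restriction along $(\blank)_+$ does; as the $P$-coproduct-preserving functors sit as full subcategories on both sides, the equivalence restricts to one between them, giving the claim.

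I expect the main work to be bookkeeping rather than a conceptual obstacle: spelling out the levelwise identification $\pi_B^*\ulfinptdPsets\simeq\ul{\finSets}^{P}_{T_{/B},*}$ and its compatibility with $(\blank)_+$, and carrying out the cotensor argument of the third paragraph carefully. The one genuinely load-bearing external input is \Cref{lem:FindisjptdPSetsHasFinitePColimits}, which is what lets the free pointing of $\ulfinPsets$ interact correctly with the parametrized $P$-coproducts.
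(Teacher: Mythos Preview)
Your proposal is correct and follows essentially the same route as the paper's proof: factor $S^0$ as $(\blank)_+\circ\ast$, invoke \Cref{cor:FinPSetsFreeOnFinitePColimits} to reduce to showing that precomposition with $(\blank)_+$ is an equivalence, prove this first at the level of fiberwise-coproduct-preserving functors via the cotensor/Yoneda argument using \Cref{cor:DisjointlyBasedObjectsFormFreePointing}, and then restrict to $P$-coproduct-preserving functors using the last part of \Cref{lem:FindisjptdPSetsHasFinitePColimits}. The only difference is cosmetic: you spell out the slice identifications $\pi_B^*\ulfinPsets\simeq\ul{\finSets}^{P}_{T_{/B}}$ explicitly, whereas the paper absorbs this into a one-line ``replacing $T$ by $T_{/B}$''.
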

\begin{proof}
	Note that $S^0$ is the composite $\ul{1} \xrightarrow{*} \ulfinPsets \xrightarrow{(-)_+} \ulfinptdPsets$. By \cref{cor:FinPSetsFreeOnFinitePColimits} it thus suffices to show that composition with the $T$-functor $(-)_+\colon \ulfinPsets \to \ulfinptdPsets$ induces an equivalence $\ulFun_T^{{\Pcoprod},*}(\ulfinptdPsets,\Dd) \iso \ulFun_T^{{\Pcoprod}}(\ulfinPsets,\Dd)$. It in fact suffices to show that it induces an equivalence between $T$-$\infty$-categories of \emph{fiberwise coproduct} preserving functors. Namely by the last part of \Cref{lem:FindisjptdPSetsHasFinitePColimits} this equivalence will restrict to the subcategories of $P$-coproduct preserving functors on either side. Replacing $T$ by $T_{/B}$ for every $B \in T$, it suffices to prove this on underlying $\infty$-categories. Note that the subcategory $\Cat_T^{\sqcup} \subseteq \Cat_T$ is closed under cotensoring by $\Cat_{\infty}$ and that there is a canonical equivalence $\Hom_{\Cat_{\infty}}(\Ee,\Fun^{\sqcup}_T(\Cc,\Dd)) \simeq \Hom_{\Cat_T^{\sqcup}}(\Cc,\Dd^{\Ee})$ for $\Ee \in \Cat_{\infty}$ and $\Cc,\Dd \in \Cat_T^{\sqcup}$. By the Yoneda lemma it will thus suffice to show that the functor $(-)_+\colon \ulfinPsets \to \ulfinptdPsets$ induces an equivalence $\Hom_{\Cat_T^{\sqcup}}(\ulfinptdPsets,\Dd) \iso \Hom_{\Cat_T^{\sqcup}}(\ulfinPsets,\Dd)$. This is immediate from \Cref{cor:DisjointlyBasedObjectsFormFreePointing}.
\end{proof}

\subsection{\texorpdfstring{\for{toc}{$P$}\except{toc}{$\bm P$}}{P}-commutative monoids}
Fix an atomic orbital subcategory $P\subseteq T$. In this subsection we will introduce the notion of a \textit{$P$-commutative monoid} in a $T$-$\infty$-category $\Dd$ admitting finite $P$-products. Furthermore we will show that the $T$-$\infty$-category $\ul{\CMon}^{P}(\Dd)$ of $P$-commutative monoids in $\Dd$ is the terminal $P$-semiadditive $T$-$\infty$-category equipped with a finite $P$-product preserving $T$-functor to $\Dd$.

\begin{definition}[$P$-commutative monoids, cf.\ \cite{nardin2016exposeIV}*{Definition~5.9}]
	Let $\Dd$ be a $T$-$\infty$-category which admits finite $P$-products. A \textit{$P$-commutative monoid object of $\Dd$} is a $P$-semiadditive $T$-functor $M\colon \ulfinptdPsets \to \Dd$. We define the $T$-$\infty$-category $\ulPCMon(\Dd)$ of \textit{$P$-commutative monoids} in $\Dd$ as
	\begin{align*}
		\ulPCMon(\Dd) :=\ulFun^{\Pbiprod}_T(\ulfinptdPsets,\Dd).
	\end{align*}
	We define the forgetful functor $\mathbb{U}\colon \ulPCMon(\Dd) \to \Dd$ to be given by precomposition with the $T$-functor $S^0\colon \ul{1} \to \ulfinptdPsets$.

	As a special case, we define the $T$-$\infty$-category $\ul{\mathrm{CMon}}_T^P$ of \textit{$P$-commutative monoids} as
	\begin{align*}
		\ul{\mathrm{CMon}}^{P}_T := \ulPCMon(\ul{\Spc}_T).
	\end{align*}
\end{definition}

Combining our previous results, we can immediately deduce the universal property of $P$-commutative monoids. We spell this out in the following series of statements.

\begin{proposition}
	\label{cor:CommutativeMonoidsAreSemiadditive}
	For every $T$-$\infty$-category $\Dd$ admitting finite $P$-products, the $T$-$\infty$-category $\ulPCMon(\Dd)$ is $P$-semiadditive. Furthermore, the forgetful functor $\ulPCMon(\Dd) \to \Dd$ preserves finite $P$-products.
\end{proposition}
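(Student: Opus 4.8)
The plan is to deduce both assertions directly from results established earlier in this section, so that the proof amounts to bookkeeping rather than any new construction.

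For the $P$-semiadditivity of $\ulPCMon(\Dd)=\ulFun^{\Poplus}_T(\ulfinptdPsets,\Dd)$, I would invoke \Cref{prop:SemiadditiveFunctorsFormSemiadditiveCategory} with $\Cc=\ulfinptdPsets$. Its hypotheses are that the source is a pointed $T$-$\infty$-category admitting finite $P$-coproducts and that the target admits finite $P$-products; the latter is the standing assumption on $\Dd$. That $\ulfinptdPsets$ admits finite $P$-coproducts is \Cref{lem:FindisjptdPSetsHasFinitePColimits}. That it is pointed follows from the identification $\ulfinptdPsets\simeq(\ulfinPsets)_*$ of \Cref{def:FinitePointedPSets}: the $T$-$\infty$-category $\ulfinPsets$ has a $T$-final object, given at $B\in T$ by $\id_B$ (cf.\ \Cref{cor:FinPSetsFreeOnFinitePColimits}), so $(\ulfinPsets)_*$ lands in the subcategory of pointed $T$-$\infty$-categories by construction of the $(-)_*$ functor. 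Feeding this into \Cref{prop:SemiadditiveFunctorsFormSemiadditiveCategory} gives the first claim.

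For the forgetful functor $\mathbb{U}\colon\ulPCMon(\Dd)\to\Dd$, I would write it as a composite of two finite-$P$-product-preserving $T$-functors. Since $\mathbb{U}$ is by definition precomposition along $S^0\colon\ul{1}\to\ulfinptdPsets$, it factors as
\[
\ulPCMon(\Dd)=\ulFun^{\Poplus}_T(\ulfinptdPsets,\Dd)\hookrightarrow\ulFun_T(\ulfinptdPsets,\Dd)\xrightarrow{(S^0)^*}\ulFun_T(\ul{1},\Dd)\simeq\Dd.
\]
The inclusion preserves finite $P$-products (indeed all $\ulfinPsets$-limits that $\Dd$ admits) by \Cref{lem:SemiadditiveFunctorsClosedUnderLimits} with $\bbU=\ulfinPsets$. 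The precomposition functor $(S^0)^*$ preserves finite $P$-products by \Cref{prop:CoLimitsInFunctorCategories}: that statement handles $K$-indexed parametrized limits for a single $T$-$\infty$-category $K$, and the case of $\ulfinPsets$-limits follows by running it after base change along $\pi_B^*$ for each $B\in T$ and each $K\in\ulfinPsets(B)$, using that $\Dd$ admits finite $P$-products. Composing, $\mathbb{U}$ preserves finite $P$-products.

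I do not anticipate a genuine obstacle; the only points needing care are verifying that $\ulfinptdPsets$ is actually pointed (not merely equipped with a $T$-final object), which is what licenses the use of \Cref{prop:SemiadditiveFunctorsFormSemiadditiveCategory}, and making the passage from \Cref{prop:CoLimitsInFunctorCategories} to the statement about $\ulfinPsets$-limits explicit via slices.
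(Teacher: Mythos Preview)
Your proposal is correct and matches the paper's proof essentially verbatim: the paper likewise invokes \Cref{prop:SemiadditiveFunctorsFormSemiadditiveCategory} with $\Cc=\ulfinptdPsets$ for the first claim, and combines \Cref{lem:SemiadditiveFunctorsClosedUnderLimits} with \Cref{prop:CoLimitsInFunctorCategories} for the second. Your extra care in verifying that $\ulfinptdPsets$ is pointed and in spelling out the factorization of $\mathbb{U}$ is just making explicit what the paper leaves implicit.
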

\begin{proof}
	The first statement is a special case of \cref{prop:SemiadditiveFunctorsFormSemiadditiveCategory} for $\Cc = \ulfinptdPsets$. The second statement is a special case of \cref{lem:SemiadditiveFunctorsClosedUnderLimits} combined with \cref{prop:CoLimitsInFunctorCategories}.
\end{proof}

\begin{proposition}
	\label{prop:CommutativeMonoidsInSemiadditiveCategories}
	Given a $T$-$\infty$-category $\Dd$ admitting finite $P$-products, \[\mathbb{U}\colon \ulPCMon(\Dd) \to \Dd\] is an equivalence if and only if $\Dd$ is $P$-semiadditive.
\end{proposition}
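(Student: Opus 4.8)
The statement is an equivalence of two conditions; the substantive implication is that $P$-semiadditivity of $\Dd$ forces $\mathbb{U}$ to be an equivalence, and I would deduce this by chaining together \Cref{prop:FunctorSemiadditiveVsColimitPreserving} and \Cref{cor:FinptdPSetsFree}. Assume $\Dd$ is $P$-semiadditive; then $\Dd$ is pointed and admits finite $P$-coproducts as well as finite $P$-products. Since $\ulfinptdPsets$ is pointed and admits finite $P$-coproducts by \Cref{lem:FindisjptdPSetsHasFinitePColimits}, \Cref{prop:FunctorSemiadditiveVsColimitPreserving} identifies the $T$-subcategories
\[
\ulPCMon(\Dd)=\ulFun_T^{\Poplus}(\ulfinptdPsets,\Dd)=\ulFun_T^{\Pcoprod}(\ulfinptdPsets,\Dd).
\]
Now I would observe that the basepoint decoration is automatic here: a $T$-functor between pointed $T$-$\infty$-categories that preserves finite $P$-coproducts in particular preserves the colimit indexed by the empty finite $P$-set $\emptyset\to B$ over each $B$, and this colimit is the $T$-zero object, hence also the $T$-final object. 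Therefore $\ulFun_T^{\Pcoprod}(\ulfinptdPsets,\Dd)=\ulFun_T^{{\Pcoprod},*}(\ulfinptdPsets,\Dd)$, and \Cref{cor:FinptdPSetsFree} says precisely that composition with $S^{0}\colon\ul{1}\to\ulfinptdPsets$ induces an equivalence $\ulFun_T^{{\Pcoprod},*}(\ulfinptdPsets,\Dd)\iso\Dd$. Since $\mathbb{U}$ is by definition given by this composition (restricted to $\ulPCMon(\Dd)$), it is an equivalence.

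For the converse, recall from \Cref{cor:CommutativeMonoidsAreSemiadditive} that $\ulPCMon(\Dd)$ is $P$-semiadditive for any $\Dd$ admitting finite $P$-products. If $\mathbb{U}$ is an equivalence, then $\Dd$ is equivalent as a $T$-$\infty$-category to $\ulPCMon(\Dd)$. Being $P$-semiadditive is invariant under equivalences of $T$-$\infty$-categories — an equivalence preserves pointedness, finite $P$-products, finite $P$-coproducts, and carries norm maps to norm maps (by \Cref{lem:NormVsBeckChevalley}, or by direct inspection of \Cref{cstr:NormMap}) — so $\Dd$ is $P$-semiadditive.

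I do not anticipate a real obstacle: the argument is essentially a bookkeeping exercise in matching the decorations ($\Pbiprod$ and $\Poplus$ denote the same thing, and the $*$-decoration collapses in the pointed setting) and in applying \Cref{prop:FunctorSemiadditiveVsColimitPreserving} and \Cref{cor:FinptdPSetsFree} on the level of $T$-$\infty$-categories — including all slices $T_{/B}$, where the same hypotheses persist — so that the displayed chain of identifications and equivalences composes. The only substep needing a brief justification is the collapse of the $*$-decoration, which I indicated above via the empty finite $P$-coproduct.
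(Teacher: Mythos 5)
Your proof is correct and matches the paper's: both directions use \Cref{cor:CommutativeMonoidsAreSemiadditive} for the converse and combine \Cref{prop:FunctorSemiadditiveVsColimitPreserving} with \Cref{cor:FinptdPSetsFree} for the substantive implication. You also make explicit the collapse of the $*$-decoration (a finite-$P$-coproduct-preserving functor between pointed $T$-$\infty$-categories preserves the empty coproduct and hence the $T$-final object), a step the paper leaves tacit when passing from $\ulFun_T^{\Pcoprod}(\ulfinptdPsets,\Dd)$ to the $\ulFun_T^{\Pcoprod,*}(\ulfinptdPsets,\Dd)$ appearing in \Cref{cor:FinptdPSetsFree}.
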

\begin{proof}
	As $\ulPCMon(\Dd)$ is $P$-semiadditive by \Cref{cor:CommutativeMonoidsAreSemiadditive}, one direction is immediate. Conversely, if $\Dd$ is $P$-semiadditive, then \Cref{prop:FunctorSemiadditiveVsColimitPreserving} provides an equivalence
	\begin{align*}
		\ulPCMon(\Dd) = \ulFun_T^{\Pbiprod}(\ulfinptdPsets,\Dd) \simeq \ulFun_T^{\Pcoprod}(\ulfinptdPsets,\Dd).
	\end{align*}
	The result thus follows from \Cref{cor:FinptdPSetsFree}.
\end{proof}

\begin{corollary}[cf.\ \cite{nardin2016exposeIV}*{Corollary~5.11.1}]\label{cor:UniversalPropCMon}
	Let $\Cc$ and $\Dd$ be $T$-$\infty$-categories such that $\Cc$ is pointed and admits finite $P$-coproducts and $\Dd$ admits finite $P$-products. Then postcomposition with the forgetful functor $\mathbb{U}\colon \ulPCMon(\Dd) \to \Dd$ induces an equivalence
	\begin{align*}
		\Fun_T^{\Pcoprod}(\Cc,\ulPCMon(\Dd)) \to \Fun_T^{\Pbiprod}(\Cc,\Dd).
	\end{align*}
\end{corollary}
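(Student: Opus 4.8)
The plan is to deduce this from the self-enrichment of semiadditive functor categories (Corollary~\ref{cor:BilinearFunctors}) together with the recognition result Proposition~\ref{prop:CommutativeMonoidsInSemiadditiveCategories}. In fact I would establish the stronger statement that postcomposition with $\mathbb{U}$ induces an equivalence of $T$-$\infty$-categories
\[
\ulFun^{\Poplus}_T(\Cc,\ulPCMon(\Dd)) \iso \ulFun^{\Poplus}_T(\Cc,\Dd);
\]
the corollary then follows by passing to underlying $\infty$-categories and invoking Proposition~\ref{prop:FunctorSemiadditiveVsColimitPreserving}, which identifies $\Fun^{\Pcoprod}_T(\Cc,\ulPCMon(\Dd))$ with $\Fun^{\Poplus}_T(\Cc,\ulPCMon(\Dd))$ since $\ulPCMon(\Dd)$ is $P$-semiadditive by Proposition~\ref{cor:CommutativeMonoidsAreSemiadditive}.

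First I would collect the inputs. The $T$-$\infty$-category $\ulfinptdPsets$ is pointed (being a $T$-$\infty$-category of pointed objects) and admits finite $P$-coproducts by Lemma~\ref{lem:FindisjptdPSetsHasFinitePColimits}, so Corollary~\ref{cor:BilinearFunctors} applies to the triple $(\Cc,\ulfinptdPsets,\Dd)$ and provides a natural equivalence
\[
\ulFun^{\Poplus}_T(\Cc,\ulPCMon(\Dd)) = \ulFun^{\Poplus}_T\big(\Cc,\ulFun^{\Poplus}_T(\ulfinptdPsets,\Dd)\big) \iso \ulFun^{\Poplus}_T\big(\ulfinptdPsets,\ulFun^{\Poplus}_T(\Cc,\Dd)\big) = \ulPCMon\big(\ulFun^{\Poplus}_T(\Cc,\Dd)\big).
\]
Here the $T$-$\infty$-category $\ulFun^{\Poplus}_T(\Cc,\Dd)$ admits finite $P$-products by Lemma~\ref{lem:SemiadditiveFunctorsClosedUnderLimits} (applied with $\bbU=\ulfinPsets$) and is $P$-semiadditive by Proposition~\ref{prop:SemiadditiveFunctorsFormSemiadditiveCategory}; hence Proposition~\ref{prop:CommutativeMonoidsInSemiadditiveCategories} shows that the forgetful functor
\[
\mathbb{U}\colon \ulPCMon\big(\ulFun^{\Poplus}_T(\Cc,\Dd)\big) \iso \ulFun^{\Poplus}_T(\Cc,\Dd)
\]
is an equivalence. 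Composing the two displayed equivalences yields the desired equivalence $\ulFun^{\Poplus}_T(\Cc,\ulPCMon(\Dd)) \iso \ulFun^{\Poplus}_T(\Cc,\Dd)$.

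It remains to identify this composite with postcomposition by $\mathbb{U}\colon \ulPCMon(\Dd)\to\Dd$, i.e.\ with precomposition by $S^0\colon\ul1\to\ulfinptdPsets$. As recorded in the proof of Corollary~\ref{cor:BilinearFunctors}, both $\ulFun^{\Poplus}_T(\Cc,\ulFun^{\Poplus}_T(\ulfinptdPsets,\Dd))$ and $\ulFun^{\Poplus}_T(\ulfinptdPsets,\ulFun^{\Poplus}_T(\Cc,\Dd))$ are identified, compatibly with the symmetry equivalence, with the full subcategory of $\ulFun_T(\Cc\times\ulfinptdPsets,\Dd)$ on the functors that are $P$-semiadditive in each variable separately. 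Under the first identification, postcomposition with $\mathbb{U}$ is precomposition along $\id_\Cc\times S^0\colon\Cc\times\ul1\to\Cc\times\ulfinptdPsets$, and under the second identification this same operation is precomposition along $S^0$ in the $\ulfinptdPsets$-variable, which is exactly the forgetful functor of $\ulPCMon(\ulFun^{\Poplus}_T(\Cc,\Dd))$. Thus the square formed by the equivalence of Corollary~\ref{cor:BilinearFunctors}, postcomposition by $\mathbb{U}$ on the left, $\mathbb{U}$ on the right, and the identity at the bottom commutes, and combining with the equivalence of Proposition~\ref{prop:CommutativeMonoidsInSemiadditiveCategories} proves the $\ulFun$-level statement. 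Applying $\Gamma$ and rewriting the source via Proposition~\ref{prop:FunctorSemiadditiveVsColimitPreserving} as above gives the corollary. The only genuinely delicate point is this last bookkeeping step, which amounts to the naturality of the equivalence of Corollary~\ref{cor:BilinearFunctors} in the finite-pointed-$P$-sets variable evaluated on $S^0$; all the remaining content is a direct application of earlier results.
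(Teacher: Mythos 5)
Your proof is correct and follows essentially the same route as the paper: rewrite the source via \Cref{prop:FunctorSemiadditiveVsColimitPreserving}, swap the two functor variables with \Cref{cor:BilinearFunctors}, and finish with \Cref{prop:SemiadditiveFunctorsFormSemiadditiveCategory} and \Cref{prop:CommutativeMonoidsInSemiadditiveCategories}. The only difference is that you explicitly verify the resulting composite equivalence agrees with postcomposition by $\mathbb{U}$ (by tracing through the identification with bi-$P$-semiadditive functors on $\Cc\times\ulfinptdPsets$), a bookkeeping step the paper's proof leaves implicit; this is a reasonable and correct addition rather than a change of method.
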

\begin{proof}
	By \Cref{prop:FunctorSemiadditiveVsColimitPreserving}, the left-hand side is equal to the $T$-$\infty$-category of $P$-semiadditive $T$-functors $\Cc \to \ulPCMon(\Dd)$. By \Cref{cor:BilinearFunctors} this is in turn equivalent to $\ulPCMon(\ulFun_T^{\Pbiprod}(\Cc,\Dd))$. The claim thus follows by combining \Cref{prop:SemiadditiveFunctorsFormSemiadditiveCategory} and \Cref{prop:CommutativeMonoidsInSemiadditiveCategories}.
\end{proof}

\begin{corollary}\label{cor:adjunctionCMon}
	The inclusion $\Cat_T^{\Pbiprod} \hookrightarrow \Cat_T^{\Pprod}$ of the $T$-$\infty$-category of $P$-semiadditive $T$-$\infty$-categories and $P$-semiadditive $T$-functors into the $T$-$\infty$-category of $T$-$\infty$-categories admitting finite $P$-products and the finite $P$-product preserving $T$-functors admits a right adjoint given by
	\begin{equation*}
		\ulPCMon(-)\colon \Cat_T^{\Pprod} \to \Cat_T^{\Pbiprod}. \qednow
	\end{equation*}
\end{corollary}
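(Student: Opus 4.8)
The plan is to deduce \Cref{cor:adjunctionCMon} formally from \Cref{cor:UniversalPropCMon}, exactly as one proves that any construction satisfying a universal property of the appropriate shape assembles into an adjoint functor. The first point is that $\ulPCMon(-)$ is indeed a well-defined functor $\Cat_T^{\Pprod}\to\Cat_T^{\Pbiprod}$: on objects it lands in $P$-semiadditive $T$-$\infty$-categories by \Cref{cor:CommutativeMonoidsAreSemiadditive}, and on morphisms a finite $P$-product preserving $T$-functor $G\colon\Dd\to\Dd'$ induces $\ulFun^{\Pbiprod}_T(\ulfinptdPsets, G)$, which is again a morphism in $\Cat_T^{\Pbiprod}$ by \Cref{prop:CompositionSemiadditiveFunctors}\eqref{it:PostcompositionWithLimitPreserving} (postcomposition with a finite $P$-product preserving functor sends $P$-semiadditive $T$-functors to $P$-semiadditive $T$-functors); functoriality in $G$ is clear. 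The forgetful functor $\mathbb U\colon\ulPCMon(\Dd)\to\Dd$, being precomposition with $S^0\colon\ul1\to\ulfinptdPsets$, is natural in $\Dd$ and will serve as the counit of the adjunction.

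Next I would produce the adjunction equivalence on mapping spaces (and more: on full parametrized mapping $T$-$\infty$-categories, which is what yields an adjunction of $T$-$\infty$-categories). For $\Cc$ a $P$-semiadditive $T$-$\infty$-category and $\Dd$ a $T$-$\infty$-category admitting finite $P$-products, I claim postcomposition with $\mathbb U$ gives an equivalence
\[
\ulFun^{\Pbiprod}_T(\Cc,\ulPCMon(\Dd))\iso\ulFun^{\Pprod}_T(\Cc,\Dd).
\]
Since $\Cc$ is $P$-semiadditive, \Cref{prop:FunctorSemiadditiveVsColimitPreserving} identifies the left-hand side with $\ulFun^{\Pcoprod}_T(\Cc,\ulPCMon(\Dd))$ and also identifies $\ulFun^{\Pbiprod}_T(\Cc,\Dd)$ with $\ulFun^{\Pprod}_T(\Cc,\Dd)$; so it suffices to show postcomposition with $\mathbb U$ gives $\ulFun^{\Pcoprod}_T(\Cc,\ulPCMon(\Dd))\iso\ulFun^{\Pbiprod}_T(\Cc,\Dd)$. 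That is precisely the content of \Cref{cor:UniversalPropCMon}: although that corollary is stated for $\Cc$ merely pointed with finite $P$-coproducts, a $P$-semiadditive $\Cc$ satisfies those hypotheses. (Strictly speaking \Cref{cor:UniversalPropCMon} is stated on underlying $\infty$-categories $\Fun_T$; the parametrized statement follows by replacing $T$ by $T_{/B}$ and using that all the relevant constructions—$\ulPCMon$, $\ulFun^{\Pcoprod}_T$, $\ulFun^{\Pbiprod}_T$, and the functor $S^0$—are compatible with the base-change functors $\pi_B^*$, which is the substance of \Cref{lem:SemiadditiveFunctorsClosedUnderBaseChange} and \Cref{prop:slice-adj-cc}.)

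From here the conclusion is standard $2$-categorical bookkeeping: one checks that the equivalences above are natural in $\Cc\in\Cat_T^{\Pbiprod}$ and in $\Dd\in\Cat_T^{\Pprod}$, so that by (the $T$-$\infty$-categorical analogue of) the Yoneda lemma $\ulPCMon(-)$ is right adjoint to the inclusion $\iota\colon\Cat_T^{\Pbiprod}\hookrightarrow\Cat_T^{\Pprod}$, with counit $\mathbb U$. The unit $\Cc\to\ulPCMon(\iota\Cc)$ on a $P$-semiadditive $\Cc$ is the image of $\id_{\Cc}$ under the inverse equivalence, which by \Cref{prop:CommutativeMonoidsInSemiadditiveCategories} is an equivalence — consistent with $\iota$ being fully faithful. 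The main obstacle is not any single hard step but the careful verification of naturality of the chain of equivalences in both variables, and in particular checking that \Cref{cor:UniversalPropCMon} (or rather its parametrized enhancement) is genuinely natural in $\Cc$ and $\Dd$ as functors on the subcategories $\Cat_T^{\Pbiprod}$ and $\Cat_T^{\Pprod}$; this is where one must invoke \Cref{prop:CompositionSemiadditiveFunctors} to see that composition with the relevant functors preserves $P$-semiadditivity, so that all maps in sight are morphisms in the correct subcategories.
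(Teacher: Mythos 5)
Your argument is correct and takes the route the paper intends: \Cref{cor:adjunctionCMon} is recorded without separate proof precisely because it is immediate from \Cref{cor:UniversalPropCMon} together with \Cref{cor:CommutativeMonoidsAreSemiadditive} and \Cref{prop:FunctorSemiadditiveVsColimitPreserving}. One clarification worth making: despite the phrase ``$T$-$\infty$-category of $P$-semiadditive $T$-$\infty$-categories'' in the statement, both $\Cat_T^{\Pbiprod}$ and $\Cat_T^{\Pprod}$ are by \Cref{def:PSemiadditivity} ordinary $\infty$-categories (subcategories of $\Cat_T$), so the adjunction being asserted is one of ordinary $\infty$-categories; your parenthetical about upgrading to parametrized mapping $T$-$\infty$-categories, and the accompanying appeal to compatibility with $\pi_B^*$, are therefore superfluous here. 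The $\Fun_T$-level equivalence from \Cref{cor:UniversalPropCMon} (after passing to cores and translating between $\Pcoprod$, $\Poplus$, $\Pprod$ via \Cref{prop:FunctorSemiadditiveVsColimitPreserving}, as you do) is exactly the required datum, and the functoriality of $\ulPCMon(-)$ together with the naturality in both variables that you flag as the ``main obstacle'' are in fact automatic from the universal-arrow criterion for adjunctions (cf.\ \cite{HTT}*{Proposition~5.2.4.2}), so no separate verification of either is needed.
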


We are also interested in a presentable version of \cref{cor:adjunctionCMon}.

\begin{lemma}\label{lemma:forgetful-RA}
	Let $\Cc$ be a presentable $T$-$\infty$-category. Then $\mathbb U\colon\ul\CMon^P(\Cc)\to\Cc$ admits a left adjoint $\mathbb P$.
	\begin{proof}
		The functor $\ev_{S^0}\colon\ul\Fun_T(\ul{\mathbb F}^P_{T,*},\Cc)\to\Cc$ admits a left adjoint by \cite{martiniwolf2021limits}*{Theorem~6.3.5 and Corollary~6.3.7}. The claim follows as also the inclusion $\ul\CMon^P(\Cc)\hookrightarrow\ul\Fun_T(\ul{\mathbb F}^P_{T,*},\Cc)$ admits a left adjoint by Proposition~\ref{prop:PsemiadditiveFunctorsPresentable}.
	\end{proof}
\end{lemma}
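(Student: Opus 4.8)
The plan is to realize $\mathbb U$ as a composite of two $T$-functors, each of which is visibly a right adjoint in a parametrized adjunction, and then define $\mathbb P$ as the composite of the two left adjoints in the opposite order. Recall that by definition $\ul\CMon^P(\Cc)=\ulFun^{\Pbiprod}_T(\ulfinptdPsets,\Cc)$ is a full $T$-subcategory of $\ulFun_T(\ulfinptdPsets,\Cc)$, and that $\mathbb U$ is precomposition along the $T$-functor $S^0\colon\ul 1\to\ulfinptdPsets$. Using the identification $\ulFun_T(\ul 1,\Cc)\simeq\Cc$ from \Cref{cor:YonedaLemmaPresheaves}, we may thus factor
\[
\mathbb U\colon\ \ul\CMon^P(\Cc)\xhookrightarrow{\ \iota\ }\ulFun_T(\ulfinptdPsets,\Cc)\xrightarrow{\ \ev_{S^0}\ }\Cc,
\]
where $\iota$ is the inclusion of the full $T$-subcategory of $P$-semiadditive $T$-functors and $\ev_{S^0}=(S^0)^*$ is precomposition along $S^0$.

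First I would show that $\ev_{S^0}$ admits a (parametrized) left adjoint. Since $\ulfinptdPsets$ is a small $T$-$\infty$-category and $\Cc$ is presentable, hence $T$-cocomplete, the parametrized left Kan extension along $S^0\colon\ul 1\to\ulfinptdPsets$ exists, giving a $T$-functor $(S^0)_!\colon\Cc\simeq\ulFun_T(\ul1,\Cc)\to\ulFun_T(\ulfinptdPsets,\Cc)$ left adjoint to $\ev_{S^0}$; see \cite{martiniwolf2021limits}. (Alternatively one can note that $\ulFun_T(\ulfinptdPsets,\Cc)$ is presentable and $\ev_{S^0}$ preserves $T$-limits, which are computed pointwise by \Cref{prop:CoLimitsInFunctorCategories}, and invoke the parametrized adjoint functor theorem.) Next, the inclusion $\iota$ admits a left adjoint by \Cref{prop:PsemiadditiveFunctorsPresentable}: its hypotheses hold because $\ulfinptdPsets$ is small, pointed, and admits finite $P$-coproducts by \Cref{lem:FindisjptdPSetsHasFinitePColimits}, while $\Cc$ is presentable.

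Writing $L_{\ev}$ and $L_\iota$ for the two left adjoints just obtained, the composite $\mathbb P:=L_\iota\circ L_{\ev}\colon\Cc\to\ul\CMon^P(\Cc)$ is then left adjoint to $\mathbb U=\ev_{S^0}\circ\iota$, since composites of parametrized adjunctions are again parametrized adjunctions. Both inputs are already available in the preceding sections, so the argument is essentially formal; the only point requiring a little care is to match $\ev_{S^0}$ with the hypotheses of the parametrized Kan extension statement — namely that $S^0\colon\ul 1\to\ulfinptdPsets$ is a map of small $T$-$\infty$-categories and that presentability of $\Cc$ provides the required $T$-cocompleteness — and I do not expect any genuine obstacle beyond this bookkeeping.
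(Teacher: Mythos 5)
Your proof is correct and takes essentially the same route as the paper: both factor $\mathbb U$ as the inclusion $\ul\CMon^P(\Cc)\hookrightarrow\ul\Fun_T(\ulfinptdPsets,\Cc)$ followed by $\ev_{S^0}$, obtain a left adjoint to $\ev_{S^0}$ via parametrized Kan extension (Martini--Wolf), and a left adjoint to the inclusion from \Cref{prop:PsemiadditiveFunctorsPresentable}. Your write-up is just slightly more explicit about the bookkeeping.
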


\begin{definition}
	We define $\PrRTsemi$ to be the full subcategory of $\PrRT$ spanned by those presentable $T$-$\infty$-categories which are moreover $P$-semiadditive. Similarly we define $\PrLTsemi$.
\end{definition}

\begin{proposition}\label{prop:CMon_restricts_to_pres}
	The functor $\ulPCMon$ restricts to a functor
	\[\ulPCMon\colon \PrRT\rightarrow \PrRTsemi\]
	right adjoint to the inclusion.
\end{proposition}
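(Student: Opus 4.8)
The strategy is to restrict the adjunction of \Cref{cor:adjunctionCMon} --- between the inclusion $\CatTPSemi\hookrightarrow\CatTPProd$ and its right adjoint $\ulPCMon$ --- along the faithful forgetful functors $\PrRT\to\CatTPProd$ and $\PrRTsemi\to\CatTPSemi$. These forgetful functors are well defined since a presentable $T$-$\infty$-category admits finite $P$-products by \Cref{rmk:PresentableHasLimits}, a right adjoint $T$-functor preserves finite $P$-products, and a right adjoint $T$-functor between $P$-semiadditive $T$-$\infty$-categories is moreover $P$-semiadditive by \Cref{cor:SemiadditiveFunctorsBetweenSemiadditiveCats}. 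Recall also, from \Cref{cor:UniversalPropCMon} and \Cref{prop:CommutativeMonoidsInSemiadditiveCategories}, that the counit of \Cref{cor:adjunctionCMon} at $\Cc$ is the forgetful functor $\mathbb U\colon\ulPCMon(\Cc)\to\Cc$, while its unit $\eta_\Dd\colon\Dd\to\ulPCMon(\Dd)$ at a $P$-semiadditive $\Dd$ is inverse to the equivalence $\mathbb U\colon\ulPCMon(\Dd)\to\Dd$. Hence it suffices to verify three things: (a) that $\ulPCMon$ carries objects and morphisms of $\PrRT$ into $\PrRTsemi$; (b) that $\mathbb U\colon\ulPCMon(\Cc)\to\Cc$ lies in $\PrRT$ for every presentable $\Cc$; and (c) that $\eta_\Dd$ lies in $\PrRTsemi$ for every presentable $P$-semiadditive $\Dd$. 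Granting (a)--(c), the functor $\ulPCMon$ together with the unit and counit of \Cref{cor:adjunctionCMon} all restrict to the subcategories $\PrRT,\PrRTsemi\subseteq\Cat_T$; since the mapping spaces of these subcategories are unions of connected components of those of $\Cat_T$, the triangle identities are inherited, and we obtain the asserted adjunction.

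Conditions (b) and (c) are quick: by \Cref{lemma:forgetful-RA} the forgetful functor $\mathbb U$ admits the parametrized left adjoint $\mathbb P$ when $\Cc$ is presentable, so it lies in $\PrRT$; and $\eta_\Dd$ is an equivalence, as just noted, so it lies in the full subcategory $\PrRTsemi\subseteq\PrRT$. For the object part of (a), observe that $\ulfinptdPsets$ is a small pointed $T$-$\infty$-category admitting finite $P$-coproducts by \Cref{def:FinitePointedPSets} and \Cref{lem:FindisjptdPSetsHasFinitePColimits}, so for presentable $\Cc$ the $T$-$\infty$-category $\ulPCMon(\Cc)=\ulFun^{\Pbiprod}_T(\ulfinptdPsets,\Cc)$ is presentable by \Cref{prop:PsemiadditiveFunctorsPresentable} and $P$-semiadditive by \Cref{cor:CommutativeMonoidsAreSemiadditive}; hence $\ulPCMon(\Cc)\in\PrRTsemi$.

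It remains to treat the morphism part of (a). Let $G\colon\Cc\to\Dd$ be a right adjoint $T$-functor between presentable $T$-$\infty$-categories, with parametrized left adjoint $F$. Since $G$ preserves finite $P$-products, \Cref{prop:CompositionSemiadditiveFunctors}\eqref{it:PostcompositionWithLimitPreserving} shows that postcomposition with $G$ sends $P$-commutative monoids to $P$-commutative monoids, so $\ulPCMon(G)$ is a $T$-functor $\ulPCMon(\Cc)\to\ulPCMon(\Dd)$, and the square relating it to the postcomposition functor $\ulFun_T(\ulfinptdPsets,G)$ and the full inclusions $\ulPCMon(\Cc)\hookrightarrow\ulFun_T(\ulfinptdPsets,\Cc)$, $\ulPCMon(\Dd)\hookrightarrow\ulFun_T(\ulfinptdPsets,\Dd)$ commutes. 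As the postcomposition $2$-functor $\ulFun_T(\ulfinptdPsets,\blank)$ preserves adjunctions, $F\dashv G$ yields a parametrized adjunction $\ulFun_T(\ulfinptdPsets,F)\dashv\ulFun_T(\ulfinptdPsets,G)$; composing $\ulFun_T(\ulfinptdPsets,F)$ with the reflector $\ulFun_T(\ulfinptdPsets,\Cc)\to\ulPCMon(\Cc)$ of \Cref{prop:PsemiadditiveFunctorsPresentable} produces a $T$-functor $\ulPCMon(\Dd)\to\ulPCMon(\Cc)$ which a short diagram chase --- using the two adjunctions, the commuting square, and full faithfulness of the inclusion $\ulPCMon(\Dd)\hookrightarrow\ulFun_T(\ulfinptdPsets,\Dd)$ --- exhibits as a parametrized left adjoint of $\ulPCMon(G)$. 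Thus $\ulPCMon(G)$ lies in $\PrRTsemi$, and $\ulPCMon$ restricts to a functor $\PrRT\to\PrRTsemi$, completing the proof.

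The main obstacle is precisely this morphism part of (a): one needs a \emph{parametrized} left adjoint of $\ulPCMon(G)$, not merely fiberwise ones, and the reflective-localization argument above --- while avoiding a parametrized adjoint functor theorem --- must be carried out with care, checking that the adjunctions $F\dashv G$, reflector-against-inclusion, and their postcompositions are genuine adjunctions of $T$-functors and that the relevant square commutes coherently. The remaining step, that an adjunction in $\Cat_T$ restricts to subcategories once its two functors, unit and counit do, is routine given that these subcategories are unions of connected components on mapping spaces.
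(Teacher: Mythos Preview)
Your proof is correct and follows essentially the same approach as the paper's. Both arguments check that $\ulPCMon(\Cc)$ is presentable via \Cref{prop:PsemiadditiveFunctorsPresentable}, construct the left adjoint of $\ulPCMon(G)$ as the composite of the inclusion, postcomposition with $F$, and the reflector $L^{\Pbiprod}$, and verify that $\mathbb U$ is a right adjoint via \Cref{lemma:forgetful-RA} while the other (co)unit is an equivalence; you are just somewhat more explicit about the ``restriction of adjunction'' framework and the inheritance of the triangle identities.
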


\begin{proof}
	Let $\Cc$ be a presentable $T$-$\infty$-category. Note that by \cref{prop:PsemiadditiveFunctorsPresentable}, $\ulPCMon(\Cc)$ is again presentable. Furthermore suppose $G\colon \Cc\rightarrow \Dd$ is a right adjoint between presentable $T$-$\infty$-categories, and denote its left adjoint by $F$. Note that $G$ preserves finite $P$-products, and so induces a functor $\ulPCMon(G)\colon\ulPCMon(\Cc)\rightarrow \ulPCMon(\Dd)$. Because $G$ preserves local objects, the composite
	\[\begin{tikzcd}
		{\ulPCMon(\Dd)} & {\ulFun_T(\ulfinptdPsets,\Dd)} & {\ulFun_T(\ulfinptdPsets,\Cc)} & {\ulPCMon(\Cc)}
		\arrow["F", from=1-2, to=1-3]
		\arrow[hook, from=1-1, to=1-2]
		\arrow["{L^{{\Pbiprod}}(-)}", from=1-3, to=1-4]
	\end{tikzcd}\]
	is left adjoint to $\ulPCMon(R)$, where $L^{{\Pbiprod}}$ refers to the left adjoint of the inclusion $\ulPCMon\subset \ulFun_T(\ulfinptdPsets,\Cc)$ constructed in \Cref{prop:PsemiadditiveFunctorsPresentable}.

	Finally, the unit $\mathbb U$ is a right adjoint by Lemma~\ref{lemma:forgetful-RA} while the counit is even an equivalence by Proposition~\ref{prop:CommutativeMonoidsInSemiadditiveCategories}.
\end{proof}

\begin{corollary}\label{cor:PresentableUniversalPropertyCMon}
	There exists an adjunction
	\[\ulPCMon(-)\colon \PrLT \rightleftarrows \PrLTsemi\noloc \mathrm{incl}.\] Furthermore the unit $\mathbb{P}\colon \Cc\rightarrow \ulPCMon(\Cc)$ is left adjoint to the forgetful functor $\mathbb{U}$.
\end{corollary}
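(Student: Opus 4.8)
The plan is to deduce this formally from \Cref{prop:CMon_restricts_to_pres} by passing to opposite $T$-$\infty$-categories. Recall from the definition of $\PrLT$ that there is a canonical equivalence $\PrLT\simeq(\PrRT)\catop$ which is the identity on objects and sends a left adjoint $T$-functor to its right adjoint. Since $\PrLTsemi$ and $\PrRTsemi$ are by definition the full subcategories of $\PrLT$ and $\PrRT$, respectively, spanned by the \emph{same} collection of objects, namely the presentable $P$-semiadditive $T$-$\infty$-categories, this equivalence restricts to an equivalence $\PrLTsemi\simeq(\PrRTsemi)\catop$ under which the inclusion $\PrLTsemi\hookrightarrow\PrLT$ is identified with the opposite of the inclusion $\PrRTsemi\hookrightarrow\PrRT$.

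Now \Cref{prop:CMon_restricts_to_pres} provides an adjunction $\mathrm{incl}\colon\PrRTsemi\rightleftarrows\PrRT\noloc\ulPCMon$ in which the inclusion is the left adjoint. Passing to opposite categories exchanges the roles of the two adjoints (if $L\dashv R$ then $R\catop\dashv L\catop$), turning this into an adjunction $\ulPCMon\colon(\PrRT)\catop\rightleftarrows(\PrRTsemi)\catop\noloc\mathrm{incl}$; under the equivalences of the previous paragraph this is precisely an adjunction $\ulPCMon(-)\colon\PrLT\rightleftarrows\PrLTsemi\noloc\mathrm{incl}$. On objects the resulting left adjoint still sends $\Cc$ to $\ulPCMon(\Cc)$, since passing to opposites does not affect objects and $\ulPCMon$ acts on objects by the construction of $P$-commutative monoids. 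This establishes the adjunction.

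It remains to identify the unit. For $\Cc\in\PrRT$, the component at $\Cc$ of the adjunction of \Cref{prop:CMon_restricts_to_pres} that is a $T$-functor $\ulPCMon(\Cc)\to\Cc$ -- that is, the counit, since $\mathrm{incl}$ is the left and $\ulPCMon$ the right adjoint -- is by construction the forgetful functor $\mathbb U\colon\ulPCMon(\Cc)\to\Cc$, which by \Cref{lemma:forgetful-RA} admits a left adjoint $\mathbb P$. Passing to opposite categories turns the counit of an adjunction into the unit of the opposite adjunction; hence the unit of $(\ulPCMon\dashv\mathrm{incl})$ in $\PrLT$, evaluated at $\Cc$, is the morphism of $\PrLT$ corresponding under $\PrLT\simeq(\PrRT)\catop$ to $\mathbb U\colon\ulPCMon(\Cc)\to\Cc$ -- namely the left adjoint $\mathbb P\colon\Cc\to\ulPCMon(\Cc)$ of $\mathbb U$. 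This proves the final assertion.

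The only point requiring care is the bookkeeping of which functor is the left and which the right adjoint, and the verification that the unit produced this way genuinely coincides with the left adjoint $\mathbb P$ of $\mathbb U$ from \Cref{lemma:forgetful-RA}; no further mathematical input is needed. Alternatively, one could verify the universal property directly on mapping $\infty$-categories, combining the adjoint functor theorem with \Cref{cor:UniversalPropCMon} and \Cref{prop:FunctorSemiadditiveVsColimitPreserving}, but the opposite-category argument above is shorter.
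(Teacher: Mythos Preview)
Your proposal is correct and follows exactly the approach the paper uses: apply the equivalence $\PrLT\simeq(\PrRT)\catop$ to the adjunction of \Cref{prop:CMon_restricts_to_pres}. The paper's proof is a single sentence to this effect; you have simply spelled out the bookkeeping (restriction to the semiadditive full subcategories, swap of left/right adjoints and unit/counit under opposites, and identification of the resulting unit with the left adjoint $\mathbb P$ of $\mathbb U$) in more detail.
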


\begin{proof}
	Consider the adjunction constructed in \cref{prop:CMon_restricts_to_pres} and apply the equivalence $\PrLT\simeq (\PrRT)\catop$.
\end{proof}

For ease of reference we record the strongest results obtained above in one omnibus theorem:

\begin{theorem}\label{thm:Semiadd_omnibus}
	Let $\Cc$ be a $T$-$\infty$-category with finite $P$-products. The functor $\mathbb{U}\colon\ulPCMon(\Cc)\to \Cc$ exhibits $\ulPCMon(\Cc)$ as the $P$-semiadditive envelope of $\Cc$, i.e.~for every $P$-semiadditive  $T$-$\infty$-category $\tcat$ postcomposition with $\mathbb{U}$ induces an equivalence
	\begin{equation*}
		\ul\Fun^{\Pprod}(\tcat, \mathbb{U})\colon\ul\Fun^{\Pbiprod}(\tcat,\ulPCMon(\Cc))\to \ul\Fun^{\Pprod}(\tcat, \Cc).
	\end{equation*}
	Suppose now that $\Dd$ is moreover presentable. Then the left adjoint $\mathbb{P}$ of $\mathbb{U}$ exhibits $\ulPCMon(\Cc)$ as the presentable $P$-semiadditive completion of $\Cc$, i.e.~for any presentable $P$-semiadditive $T$-$\infty$-category $\tcat$ precomposition with $\mathbb{P}$ yields an equivalence
	\begin{equation*}
		\ul\Fun^\textup{L}(\mathbb{P}, \tcat)\colon\ul\Fun^\textup{L}(\ulPCMon(\Cc), \tcat)\to\ul\Fun^\textup{L}(\Cc, \tcat).\qednow
	\end{equation*}
\end{theorem}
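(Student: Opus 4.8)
The plan is to deduce both assertions from the results already established in \Cref{subsec:PSemiadditiveTFunctors} together with \Cref{cor:UniversalPropCMon} and \Cref{cor:PresentableUniversalPropertyCMon}, the only genuinely new ingredient being the passage from the \emph{underlying} $\infty$-categories of functor $T$-$\infty$-categories (the form in which those earlier statements are phrased) to the parametrized functor $T$-$\infty$-categories appearing here. I will use throughout that a $T$-functor $\Phi\colon\mathcal A\to\mathcal B$ is an equivalence as soon as $\Phi(B)\colon\mathcal A(B)\to\mathcal B(B)$ is an equivalence of $\infty$-categories for every $B\in T$, together with the identifications $\ul\Fun_T(\mathcal X,\mathcal Y)(B)\simeq\Fun_{T_{/B}}(\pi_B^*\mathcal X,\pi_B^*\mathcal Y)$ and $\pi_B^*\ul\Fun_T(\mathcal X,\mathcal Y)\simeq\ul\Fun_{T_{/B}}(\pi_B^*\mathcal X,\pi_B^*\mathcal Y)$ from \Cref{cor:TCategoryOfTFunctors}.

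For the first statement I would begin by rewriting both sides: since $\tcat$ is $P$-semiadditive and $\ulPCMon(\Cc)$ is $P$-semiadditive by \Cref{cor:CommutativeMonoidsAreSemiadditive}, \Cref{prop:FunctorSemiadditiveVsColimitPreserving} identifies $\ul\Fun^{\Poplus}_T(\tcat,\ulPCMon(\Cc))$ with $\ul\Fun^{\Pcoprod}_T(\tcat,\ulPCMon(\Cc))$ and $\ul\Fun^{\Pprod}_T(\tcat,\Cc)$ with $\ul\Fun^{\Poplus}_T(\tcat,\Cc)$; under these identifications the map in question is simply postcomposition with $\mathbb U\colon\ulPCMon(\Cc)\to\Cc$. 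To see it is an equivalence I evaluate at an arbitrary $B\in T$. Using \Cref{lem:baseChangeOfTObjects} (so that $\pi_B^*\ul{\Spc}_{T,*}\simeq\ul{\Spc}_{T_{/B},*}$) and unwinding the defining conditions of \Cref{def:FinitePointedPSets}, one identifies $\pi_B^*\ulfinptdPsets$ with the $T_{/B}$-$\infty$-category of finite pointed $P$-sets for the induced orbital subcategory of $T_{/B}$ (\Cref{ex:OrbitalClosed}); combining this with \Cref{cor:TCategoryOfTFunctors} and the base-change compatibility of $\ul\Fun^{\Poplus}$ from \Cref{lem:SemiadditiveFunctorsClosedUnderBaseChange} yields $\pi_B^*\ulPCMon_T(\Cc)\simeq\ulPCMon_{T_{/B}}(\pi_B^*\Cc)$, compatibly with the forgetful functors. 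The map on values at $B$ is then exactly the functor of \Cref{cor:UniversalPropCMon} applied over $T_{/B}$ to $\pi_B^*\tcat$ (which is pointed with finite $P$-coproducts, $\tcat$ being $P$-semiadditive) and $\pi_B^*\Cc$, hence an equivalence; since this holds for all $B$, we are done.

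For the second statement, assume in addition that $\Cc$ is presentable, so that $\mathbb U$ admits the left adjoint $\mathbb P$ by \Cref{lemma:forgetful-RA}. Evaluating at each $B\in T$ as before — using that $\pi_B^*\Cc$ is again presentable (\Cref{cor:cocontinuity-restr}), that $\pi_B^*\tcat$ is again presentable and $P$-semiadditive (\Cref{lem:SemiadditiveFunctorsClosedUnderBaseChange}), and that $\pi_B^*\mathbb P$ is identified with the unit $\mathbb P$ over $T_{/B}$ by uniqueness of adjoints — it suffices to prove: for $\Cc$ presentable and $\tcat$ presentable $P$-semiadditive, precomposition with $\mathbb P$ is an equivalence of $\infty$-categories $\Gamma\ul\Fun^{\textup L}_T(\ulPCMon(\Cc),\tcat)\to\Gamma\ul\Fun^{\textup L}_T(\Cc,\tcat)$. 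Now \Cref{cor:PresentableUniversalPropertyCMon} directly gives only that this is an equivalence on cores, since the adjunction it provides identifies the mapping spaces $\Hom_{\PrLT}(\ulPCMon(\Cc),\tcat)\simeq\Hom_{\PrLT}(\Cc,\tcat)$ (recall $\PrLTsemi\subseteq\PrLT$ is full and the core of $\Gamma\ul\Fun^{\textup L}_T$ between presentable $T$-$\infty$-categories is $\Hom_{\PrLT}$). To upgrade to an equivalence of $\infty$-categories I would cotensor: for a small $\infty$-category $\Ee$, the cotensor $\tcat^{\Ee}=\ul\Fun_T(\const_\Ee,\tcat)$ is again presentable and $P$-semiadditive, since limits, colimits and hence norm maps in it are computed pointwise over $\Ee$ by \Cref{prop:CoLimitsInFunctorCategories}, and cartesian closedness of $\Cat_T$ together with \Cref{prop:CoLimitsInFunctorCategories} gives $\ul\Fun^{\textup L}_T(\mathcal X,\tcat)^{\Ee}\simeq\ul\Fun^{\textup L}_T(\mathcal X,\tcat^{\Ee})$. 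Applying the core-level statement to $\tcat^{\Ee}$ and rewriting $\Hom_{\Cat_\infty}(\Ee,-)=\iota((-)^{\Ee})$ produces a natural equivalence $\Hom_{\Cat_\infty}(\Ee,\Gamma\ul\Fun^{\textup L}_T(\ulPCMon(\Cc),\tcat))\simeq\Hom_{\Cat_\infty}(\Ee,\Gamma\ul\Fun^{\textup L}_T(\Cc,\tcat))$ for all small $\Ee$, whence the claim by the Yoneda lemma; a final check identifies the resulting equivalence with precomposition along $\mathbb P$.

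The bulk of the work is the routine-but-delicate base-change bookkeeping in both steps — verifying that $\ulfinptdPsets$, $\ulPCMon$, $\mathbb U$ and $\mathbb P$ are all suitably compatible with the restriction functors $\pi_B^*$ — for which every needed input is already in place. The one step requiring an actual idea rather than unwinding of definitions is the cotensoring-and-Yoneda argument in the second statement, which is needed precisely because \Cref{cor:PresentableUniversalPropertyCMon} is formulated as a $1$-categorical adjunction and therefore a priori controls only mapping spaces, not the full $\infty$-categories of cocontinuous functors.
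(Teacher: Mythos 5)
The proposal is correct, and it is a faithful spelling-out of what the paper intends: the omnibus theorem carries a `\qednow' and is explicitly framed as a record of results already established, so there is no proof to compare against verbatim. Your derivation of the first statement is sound, though the levelwise reduction over $B\in T$ is not actually needed: the proof of \Cref{cor:UniversalPropCMon} already runs entirely through $T$-$\infty$-category-valued results (\Cref{prop:FunctorSemiadditiveVsColimitPreserving}, \Cref{cor:BilinearFunctors}, \Cref{cor:CommutativeMonoidsAreSemiadditive}, \Cref{prop:CommutativeMonoidsInSemiadditiveCategories}), so it delivers the stated equivalence of $T$-$\infty$-categories directly, even though its conclusion is phrased in terms of underlying $\infty$-categories. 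Your slicing argument gives the same conclusion at the cost of the base-change bookkeeping you acknowledge.

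For the second statement you correctly identify the genuine gap: \Cref{cor:PresentableUniversalPropertyCMon} is an adjunction in the $\infty$-category $\PrLT$, so it a priori controls only the mapping spaces $\Hom_{\PrLT}$, not the full categories $\Fun^{\textup{L}}_T$. Your cotensoring-and-Yoneda argument (replacing $\tcat$ by $\tcat^{\Ee}=\ul\Fun_T(\const_\Ee,\tcat)$, which stays presentable by \cite{martiniwolf2022presentable}*{Corollary~6.2.6} and $P$-semiadditive since norms are computed pointwise over $\Ee$ by \Cref{prop:CoLimitsInFunctorCategories}, then varying $\Ee$) is a correct and clean way to upgrade the core-level statement to an equivalence of $\infty$-categories, after which the levelwise reduction again promotes it to a $T$-$\infty$-category equivalence. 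An equally standard alternative, closer to what the paper does in the proof of \Cref{thm:cocomplete-univ-cmon}, is to pass to right adjoints and identify $\ul\Fun^{\textup{L}}_T(\mathcal X,\tcat)$ with $\ul\Fun^{\textup{R}}_T(\tcat,\mathcal X)^{\op}$, then deduce the claim from the first half by restricting to the $T$-subcategories of $T$-limit-preserving functors; both routes land in the same place. The one thing worth stating explicitly if you were to write this up in full is the final identification of the Yoneda-produced equivalence with precomposition by $\mathbb P$, which you flag but defer.
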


Combining the result above with the universal property of $\ul{\Spc}_{T}$ already shows that we have for any presentable $P$-semiadditive $T$-$\infty$-category $\mathcal D$ an equivalence $\ulFun^\textup{L}_T(\ul{\CMon}^P_T,\mathcal D) \simeq\mathcal D$ of $T$-$\infty$-categories. As our final result in this subsection we will generalize this to the case where $\mathcal D$ is merely assumed to be $T$-cocomplete:

\begin{theorem}\label{thm:cocomplete-univ-cmon}
	Let $\mathcal D$ be a locally small $T$-cocomplete $P$-semiadditive $T$-$\infty$-category. Then evaluation at $\mathbb P(*)$ defines an equivalence
	\begin{equation}\label{eq:ccucm}
		\ulFun^\textup{L}_T(\ul{\CMon}^P_T,\mathcal D) \xrightarrow{\;\simeq\;} \mathcal D.
	\end{equation}
\end{theorem}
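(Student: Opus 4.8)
\textit{Proof idea.} The plan is to realize $\ul{\CMon}^P_T$ as a reflective Bousfield localization of a $T$-$\infty$-category of $T$-presheaves and then chain together the universal property of presheaves (\cref{prop:Universal_Property_Presheaves}) with that of localizations. By definition and the Remark following \cref{ex:accessible-Bousfield-presentable}, $\ul{\CMon}^P_T = \ulFun^{\Poplus}_T(\ulfinptdPsets,\ul{\Spc}_T)$ sits inside $\ulFun_T(\ulfinptdPsets,\ul{\Spc}_T) = \ul\PSh_T(\ulfinptdPsets^\op)$, and by (the proof of) \cref{prop:PsemiadditiveFunctorsPresentable} this inclusion is a reflective localization at a parametrized \emph{set} $S$ of morphisms, with reflector $L$. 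Writing $y\colon \ulfinptdPsets^\op \hookrightarrow \ul\PSh_T(\ulfinptdPsets^\op)$ for the Yoneda embedding, and recalling from \cref{lemma:forgetful-RA} that $\mathbb U = \ev_{S^0}$ and $\mathbb P$ is the composite of $L$ with the left adjoint of $\ev_{S^0}\colon \ul\PSh_T(\ulfinptdPsets^\op)\to\ul{\Spc}_T$, we have $\mathbb P(*) \simeq L(y(S^0))$.

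The first step is the localization argument: for any $T$-cocomplete locally small $\mathcal D$, precomposition with $L$ identifies $\ulFun^\textup{L}_T(\ul{\CMon}^P_T,\mathcal D)$ with the full $T$-subcategory of $\ulFun^\textup{L}_T(\ul\PSh_T(\ulfinptdPsets^\op),\mathcal D)$ spanned by the $S$-inverting functors. This uses only that a reflective localization is idempotent together with the fact that colimits in the localization are computed by applying $L$ to colimits upstairs (so that a $T$-functor out of $\ul{\CMon}^P_T$ is $T$-cocontinuous iff its precomposite with $L$ is); it requires no presentability of the target and can be cited from \cite{martiniwolf2021limits} (cf.\ \cite{martiniwolf2022presentable}). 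Combined with \cref{prop:Universal_Property_Presheaves}, restriction along $y$ then identifies the left-hand side of \eqref{eq:ccucm} with the full $T$-subcategory of $\ulFun_T(\ulfinptdPsets^\op,\mathcal D)$ on those $f$ whose left Kan extension $y_!f\colon \ul\PSh_T(\ulfinptdPsets^\op)\to\mathcal D$ inverts $S$.

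The main content — and the step I expect to be the main obstacle — is to identify this condition on $f$ with the condition that $f$ preserve finite $P$-products; here $\ulfinptdPsets^\op$ is pointed with finite $P$-products, being the opposite of the pointed $T$-$\infty$-category $\ulfinptdPsets$ with finite $P$-coproducts of \cref{lem:FindisjptdPSetsHasFinitePColimits}. For this one unwinds the explicit description of $S$ from the proof of \cref{prop:PsemiadditiveFunctorsPresentable}: its elements are assembled from the corepresentable functors $y(x)\otimes d$ and encode exactly that the relative norm maps of a functor $\ulfinptdPsets\to\ul{\Spc}_T$ are equivalences. Feeding these through the $T$-cocontinuous functor $y_!f$ (which restricts to $f$ along $y$) turns them into the corresponding norm comparison maps for $f$; because $\mathcal D$ is $P$-semiadditive, the distinction between finite $P$-products and finite $P$-coproducts in the target collapses (\cref{prop:FunctorSemiadditiveVsColimitPreserving}, \cref{cor:SemiadditiveFunctorsBetweenSemiadditiveCats}), and the condition reduces to preservation of finite $P$-products. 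The careful bookkeeping of norm maps along $y_!f$, using the calculus of \cref{subsec:PSemiadditiveTFunctors}, is where the real work lies.

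Granting this, we obtain $\ulFun^\textup{L}_T(\ul{\CMon}^P_T,\mathcal D) \simeq \ulFun^{\Pprod}_T(\ulfinptdPsets^\op,\mathcal D)$. Since $\mathcal D$ is $P$-semiadditive it is in particular pointed, so any finite-$P$-product-preserving $T$-functor into $\mathcal D$ automatically sends the zero object of $\ulfinptdPsets^\op$ (the empty $P$-product) to the zero object of $\mathcal D$, and hence preserves initial objects; thus this functor category coincides with the $T$-$\infty$-category of co-pointed, $P$-product-preserving functors, and applying \cref{cor:FinptdPSetsFree} to $\mathcal D^\op$ (pointed with finite $P$-coproducts) and passing to opposites yields an equivalence $\ulFun^{\Pprod}_T(\ulfinptdPsets^\op,\mathcal D)\xrightarrow{\ \simeq\ }\mathcal D$ given by evaluation at $S^0$. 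Chasing $\mathbb P(*)\simeq L(y(S^0))$ through this chain of equivalences shows that the resulting composite $\ulFun^\textup{L}_T(\ul{\CMon}^P_T,\mathcal D)\to\mathcal D$ is evaluation at $\mathbb P(*)$, as claimed. (As a consistency check, specializing to presentable $P$-semiadditive $\mathcal D$ recovers the statement already obtained by combining \cref{thm:Semiadd_omnibus} with \cref{cor:Universal_Property_T_Spaces}.)
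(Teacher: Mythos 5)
Your overall skeleton — realize $\ul\CMon^P_T$ as a parametrized Bousfield localization of $\ul\PSh_T(\ulfinptdPsets^\op)$, combine the universal property of presheaves (\cref{prop:Universal_Property_Presheaves}) with the universal property of a localization, and finish with \cref{cor:FinptdPSetsFree} — uses the right ingredients, and you have correctly flagged the one step that carries the weight. That step, however, is left as a gap, and as sketched it doesn't quite go through: you propose to apply $y_!f$ directly to the generating morphisms of $S$ and read off the norm comparison maps for $f$, but $y_!f$ is $T$-cocontinuous rather than $T$-left exact, so it does not transport the defining pullback/adjointability data behind the norm maps; the elements of $S$ from \cref{prop:PsemiadditiveFunctorsPresentable} are only produced abstractly via corepresentability (tensors $y(x)\otimes d_i$ against generators $d_i$ of $\ul\Spc_T(B)$), and making ``$y_!f$ inverts $S$ $\Leftrightarrow$ $f$ preserves finite $P$-products'' precise along these lines is a genuine piece of work that the proposal does not supply.

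The paper's proof sidesteps this entirely by passing to right adjoints at the very start: by \cref{cor:Universal_Property_T_Spaces} and the correspondence $\ul\Fun^\textup{L}_T(\Cc,\Dd)\simeq\ul\Fun^\textup{R}_T(\Dd,\Cc)^\op$, the map \eqref{eq:ccucm} is identified with $\ul\Fun^\textup{R}_T(\Dd,\mathbb U)\colon\ul\Fun^\textup{R}_T(\Dd,\ul\CMon^P_T)\to\ul\Fun^\textup{R}_T(\Dd,\ul\Spc_T)$. Full faithfulness of this map is then free, because these are full subcategories of $\ul\Fun^{\Pbiprod}_T(\Dd,\ul\CMon^P_T)$ and $\ul\Fun^{\Pprod}_T(\Dd,\ul\Spc_T)$ respectively and the first half of \cref{thm:Semiadd_omnibus} (which, unlike the second half, needs no presentability of $\Dd$) already asserts that $\mathbb U_*$ is an equivalence between those. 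Essential surjectivity is then an explicit construction: given $X\in\Gamma(\Dd)$, take the $P$-coproduct preserving $\phi\colon\ulfinptdPsets\to\Dd^\op$ with $\phi(S^0)\simeq X$ from \cref{cor:FinptdPSetsFree}, extend to a left adjoint $\Phi\colon\ul\Fun_T(\ulfinptdPsets,\ul\Spc_T)\to\Dd$ by \cref{prop:Universal_Property_Presheaves}, and check that $\Phi$ factors through $L^{\Pbiprod}$ by verifying its right adjoint takes values in $\ul\CMon^P_T$. Crucially, \cref{rk:ra-Yoneda-extension} gives this right adjoint as $Y\mapsto\maps(\phi(-),Y)$, and that this is $P$-semiadditive is immediate from the construction of $\phi$ (it sends $P$-coproducts to $P$-products, by $P$-semiadditivity of $\Dd$) together with continuity of $\maps(-,Y)$. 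In other words, the paper verifies the condition you are after — ``$y_!f$ factors through $L$'' — not by computing $y_!f$ on generating local equivalences, but by the dual criterion that the right adjoint of $y_!f$ be pointwise local, which is transparent from the nerve formula. If you want to save your formulation, this is exactly the argument that closes your acknowledged gap.
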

\begin{proof}
	Appealing to the universal property of $\ul\Spc_T$ and passing to adjoints, we see that $(\ref{eq:ccucm})$ agrees up to equivalence with the map
	\begin{equation*}
		\ul\Fun^\textup{R}_T(\mathcal D,\mathbb U)\colon\ul\Fun^\textup{R}_T(\mathcal D,\ul\CMon^P_T)\to\ul\Fun^\textup{R}_T(\mathcal D,\ul\Spc_T)
	\end{equation*}
	between parametrized categories of \emph{right adjoint} functors. In particular it is fully faithful by the first half of Theorem~\ref{thm:Semiadd_omnibus}, so it only remains to prove essential surjectivity.

	Replacing $\Dd$ by $\ul\Fun_T(\ul A,\Dd)$ for $A\in T$, it will be enough to construct for every $X\in\Gamma(\Dd)$ a $T$-left adjoint $F\colon\ul\CMon^P\to\mathcal D$ with $F(\mathbb P(*))\simeq X$.

	For this, we use the universal property of $\ul{\mathbb F}^P_{T,*}$ (Lemma~\ref{cor:FinptdPSetsFree}) to obtain a $P$-coproduct preserving functor $\phi\colon\ul{\mathbb F}^P_{T,*}\to\mathcal D^\op$ sending $S^0$ to $X$, which we may then extend to a left adjoint $\Phi\colon\ul\Fun_T(\ul{\mathbb F}^P_{T,*},\ul\Spc_T)\to\mathcal D$ via Proposition~\ref{prop:Universal_Property_Presheaves}. To complete the proof it suffices now to prove that $\Phi$ factors through the Bousfield localization $L^{{\Pbiprod}}\colon\Fun_T(\ul{\mathbb F}^P_{T,*},\ul\Spc_T)\to\ul\CMon^P$, or equivalently that its right adjoint takes values in $\ul\CMon^P$. However, by Remark~\ref{rk:ra-Yoneda-extension} the value of this right adjoint on $Y\in \mathcal D(A)$ is given by the composite
	\begin{equation*}
		\pi_A^*\ul{\mathbb F}^P_{T,*}\xrightarrow{\pi_A^*\phi} \pi_A^*\mathcal D^\op\xrightarrow{\maps(\blank,Y)}\Spc_{T_{/A}}\simeq\pi_A^*\ul\Spc_T
	\end{equation*}
	and the first functor sends $\pi_A^*P$-coproducts to $\pi_A^*P$-products by construction of $\phi$ and semiadditivity of $\mathcal D$ while the second one even preserves all $\pi_A^*T$-limits that exist in $\pi_A^*\mathcal D^\op$ \cite{martiniwolf2021limits}*{Corollary~4.4.9}.
\end{proof}

For use in future work, we record the following result elaborating on the construction of the inverse to $(\ref{eq:ccucm})$:

\begin{proposition}
	Write $j\colon (\ul{\mathbb F}^P_{T,*})^\op\to\ul\CMon^P_T$ for the unique finite $P$-product preserving functor sending $S^0$ to $\mathbb P(*)$. Then the restriction $j^*\colon\ul\Fun_T(\ul\CMon^P_T,\Dd)\to\ul\Fun_T((\ul{\mathbb F}^P_{T,*})^\op,\Dd)$ admits a left adjoint $j_!$, and $j^*$ and $j_!$ restrict to mutually inverse equivalences $\ul\Fun_T^\textup{L}(\ul\CMon^P_T,\Dd)\simeq\ul\Fun_T^{P\text-\times}((\ul{\mathbb F}^P_{T,*})^\op,\Dd)$.
	\begin{proof}
		Let us write $\bar\jmath$ for the composition of the Yoneda embedding $y\colon(\ul{\mathbb F}^P_{T,*})^\op\to\ul\Fun_T(\ul{\mathbb F}^P_{T,*},\ul\Spc_T)$ with the localization $L^{P\text-\oplus}$. We will first prove the proposition with $\bar\jmath$ in lieu of $j$, and then conclude in the end that in fact $j\simeq\bar\jmath$.

		\cite{martiniwolf2021limits}*{Theorem~7.1.1} shows that for any $T$-cocomplete ($P$-semiadditive) $\Dd$ the restriction $y^*\colon\ul\Fun_T(\ul\Fun_T(\ul{\mathbb F}^P_{T,*},\ul\Spc_T),\Dd)\to\ul\Fun_T((\ul{\mathbb F}^P_{T,*})^\op,\Dd)$ has a left adjoint $y_!$ inducing an equivalence $\ul\Fun_T((\ul{\mathbb F}^P_{T,*})^\op,\Dd)\simeq\ul\Fun^\text{L}_T(\ul\Fun_T(\ul{\mathbb F}^P_{T,*},\ul\Spc_T),\Dd)$, while Theorem~6.3.5 and Corollary~6.3.7 of \emph{op.~cit.}~show that $\bar\jmath^*$ admits a left adjoint $\bar\jmath_!$.

		We claim that $\bar\jmath_!$ and $\bar\jmath^*$ restrict to functors $\ul\Fun_T^{P\text-\times}\hskip 0pt minus 1pt((\ul{\mathbb F}^P_{T,*})^\op\hskip 0pt minus 1.5pt,\hskip0pt minus .25pt\Dd)\hskip0pt minus 1pt\rightleftarrows\hskip 0pt minus 1pt\ul\Fun_T^\text{L}(\ul\CMon^P_T\hskip 0pt minus .5pt,\hskip 0pt minus .5pt\Dd)$, which are then automatically adjoint to each other again. As the right adjoint $\bar\jmath^*$ in this adjunction is then moreover an equivalence by the previous theorem together with Corollary~\ref{cor:FinptdPSetsFree}, they will then be mututally inverse equivalences.

		To prove the claim, note that we have seen in the proof of the previous theorem that $y_!\colon\ul\Fun_T^{P\text-\times}((\ul{\mathbb F}^P_{T,*})^\op,\ul\Spc_T)\to\ul\Fun_T^\text{L}(\ul\Fun_T(\ul{\mathbb F}^P_{T,*},\ul\Spc_T),\Dd)$ factors through the fully faithful functor $(L^{P\text-\oplus})^*\colon \ul\Fun_T^\text{L}(\ul\CMon^P_T,\Dd)\to\ul\Fun_T^\text{L}(\ul\Fun_T(\ul{\mathbb F}^P_{T,*},\ul\Spc_T),\Dd)$. If we write $f$ for the resulting functor $\ul\Fun_T^{P\text-\times}((\ul{\mathbb F}^P_{T,*})^\op,\ul\Spc_T)\to\ul\Fun_T^\text{L}(\ul\CMon^P_T,\Dd)$, then for any $A\in T$, $X\in\ul\Fun_T^{P\text-\times}((\ul{\mathbb F}^P_{T,*})^\op,\Dd)(A)$, and $Y\in\ul\Fun_T(\ul\CMon^P_T,\Dd)(A)$
		\begin{align*}
			\maps(f(X),Y)&\simeq \maps ((L^{P\text-\oplus})^*f(X), (L^{P\text-\oplus})^*Y)\simeq \maps(y_!X,(L^{P\text-\oplus})^*Y)\\&\simeq\maps(X,y^*(L^{P\text-\oplus})^*Y)=\maps(X,\bar\jmath^*Y)\simeq\maps(\bar\jmath_!X,Y)
		\end{align*}
		by full faithfulness, the definition of $f$, and adjunction, respectively. It follows from the ordinary Yoneda Lemma that $f(X)\simeq\bar\jmath_!(X)$, and in particular the latter lives in $\ul\Fun^\text{L}_T(\ul\CMon^P_T,\Dd)(A)$, i.e.~$\bar\jmath_!$ maps $\ul\Fun_T^{P\text-\times}((\ul{\mathbb F}^P_{T,*})^\op,\Dd)$ into $\ul\Fun^\text{L}_T(\ul\CMon^P_T,\Dd)$.

		Conversely, any $T$-cocontinuous $\ul\CMon^P_T\to\Dd$ has to arise via the construction of the previous theorem, i.e.~$f$ is essentially surjective. But $\bar\jmath^*f=y^*(L^{P\text-\oplus})^*f\simeq y^*y_!\simeq\id$, so $\bar\jmath^*$ restricts to
		\begin{equation}\label{eq:barjmath-restriction}
			\bar\jmath^*\colon\ul\Fun^\text{L}_T(\ul\CMon^P_T,\Dd)\to\ul\Fun^{P\text-\times}_T((\ul{\mathbb F}^P_{T,*})^\op,\Dd).
		\end{equation}
		It only remains to show that $\bar\jmath$ agrees with $j$ as constructed in the statement of the proposition. As $\bar\jmath(S^0)\simeq\mathbb P(*)$, we only need to show that $\bar\jmath$ preserves finite $P$-products. But this follows at once by taking $\Dd=\ul\CMon^P_T$ and chasing the identity through $(\ref{eq:barjmath-restriction})$.
	\end{proof}
\end{proposition}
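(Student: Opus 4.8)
The plan is to replace $j$ by a more convenient model, prove the statement for that model, and identify the two at the end. Let $y\colon(\ul{\mathbb F}^P_{T,*})^\op\to\ulFun_T(\ul{\mathbb F}^P_{T,*},\ul\Spc_T)$ be the parametrized Yoneda embedding and set $\bar\jmath\mathrel{:=}L^{\Poplus}\circ y$, where $L^{\Poplus}$ is the left adjoint to the inclusion $\ul\CMon^P_T\hookrightarrow\ulFun_T(\ul{\mathbb F}^P_{T,*},\ul\Spc_T)$ provided by Proposition~\ref{prop:PsemiadditiveFunctorsPresentable}. The goal of the first part is to show: for every $T$-cocomplete $P$-semiadditive $\Dd$, the restriction $\bar\jmath^*$ has a left adjoint $\bar\jmath_!$, and the two restrict to mutually inverse equivalences $\ulFun_T^{\Pprod}((\ul{\mathbb F}^P_{T,*})^\op,\Dd)\simeq\ulFun_T^\textup{L}(\ul\CMon^P_T,\Dd)$. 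To set this up I would invoke two inputs of Martini--Wolf: restriction $y^*$ along the Yoneda embedding admits a left adjoint $y_!$ inducing, for $T$-cocomplete $\Dd$, an equivalence $\ulFun_T((\ul{\mathbb F}^P_{T,*})^\op,\Dd)\simeq\ulFun_T^\textup{L}(\ulFun_T(\ul{\mathbb F}^P_{T,*},\ul\Spc_T),\Dd)$ (\cite{martiniwolf2021limits}*{Theorem~7.1.1}), and $\bar\jmath^*$ admits a left adjoint $\bar\jmath_!$ on all of $\ulFun_T(\ul\CMon^P_T,\Dd)$ (\cite{martiniwolf2021limits}*{Theorem~6.3.5 and Corollary~6.3.7}).

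The crux is to check that $\bar\jmath_!$ preserves the relevant full subcategories. Exactly as in the proof of Theorem~\ref{thm:cocomplete-univ-cmon}, for a $P$-product preserving $X\in\ulFun_T^{\Pprod}((\ul{\mathbb F}^P_{T,*})^\op,\Dd)$ the $T$-cocontinuous functor $y_!(X)$ factors through the fully faithful $(L^{\Poplus})^*\colon\ulFun_T^\textup{L}(\ul\CMon^P_T,\Dd)\hookrightarrow\ulFun_T^\textup{L}(\ulFun_T(\ul{\mathbb F}^P_{T,*},\ul\Spc_T),\Dd)$, producing a functor $f\colon\ulFun_T^{\Pprod}((\ul{\mathbb F}^P_{T,*})^\op,\Dd)\to\ulFun_T^\textup{L}(\ul\CMon^P_T,\Dd)$. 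Then a mapping-space computation — using full faithfulness of $(L^{\Poplus})^*$, the defining property of $f$, and the two adjunctions above — gives $\maps(f(X),Y)\simeq\maps(\bar\jmath_!(X),Y)$ naturally in $Y$, whence $f(X)\simeq\bar\jmath_!(X)$ by the Yoneda lemma; in particular $\bar\jmath_!$ lands in $\ulFun_T^\textup{L}(\ul\CMon^P_T,\Dd)$. Conversely, the essential surjectivity argument in the proof of Theorem~\ref{thm:cocomplete-univ-cmon} (together with Corollary~\ref{cor:FinptdPSetsFree}) shows that every $T$-cocontinuous functor $\ul\CMon^P_T\to\Dd$ is of the form $f(X)$, so $f$ is essentially surjective; since $\bar\jmath^* f=y^*(L^{\Poplus})^* f\simeq y^* y_!\simeq\id$, the functor $\bar\jmath^*$ likewise restricts to $\ulFun_T^\textup{L}(\ul\CMon^P_T,\Dd)\to\ulFun_T^{\Pprod}((\ul{\mathbb F}^P_{T,*})^\op,\Dd)$. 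The restricted pair is then adjoint automatically, and the restricted $\bar\jmath^*$ is an equivalence by Theorem~\ref{thm:cocomplete-univ-cmon} together with Corollary~\ref{cor:FinptdPSetsFree}; hence so is $\bar\jmath_!$.

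Finally I would identify $\bar\jmath$ with $j$: by Corollary~\ref{cor:FinptdPSetsFree} there is a unique finite $P$-product preserving functor $(\ul{\mathbb F}^P_{T,*})^\op\to\ul\CMon^P_T$ sending $S^0$ to $\mathbb P(*)$; since $\bar\jmath(S^0)\simeq\mathbb P(*)$ by the construction of $\mathbb P$, it suffices to see that $\bar\jmath$ preserves finite $P$-products, which follows by taking $\Dd=\ul\CMon^P_T$ in the equivalence just proved and chasing the identity functor through $\bar\jmath^*$: its preimage is $\bar\jmath$, which therefore lies in the $P$-product preserving subcategory. With $\bar\jmath\simeq j$ this completes the argument, and $j_!\mathrel{:=}\bar\jmath_!$ is the required left adjoint. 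I expect the main obstacle to be precisely the middle step — verifying that the abstractly produced $\bar\jmath_!$ agrees on $P$-product preserving inputs with the composite built out of $y_!$ and the localization $L^{\Poplus}$, so that $\bar\jmath_!$ genuinely preserves the two full subcategories; once that compatibility is in place everything else is formal, flowing from Theorem~\ref{thm:cocomplete-univ-cmon}, the freeness of parametrized presheaves, and the Bousfield localization of Proposition~\ref{prop:PsemiadditiveFunctorsPresentable}.
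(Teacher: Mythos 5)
Your proposal follows the paper's own proof essentially verbatim: replace $j$ by $\bar\jmath=L^{\Poplus}\circ y$, invoke \cite{martiniwolf2021limits}*{Theorems~6.3.5, 7.1.1, Corollary~6.3.7} for the two left adjoints, use the Yoneda/mapping-space calculation to show $\bar\jmath_!\simeq f$ lands in $\ul\Fun_T^\textup{L}(\ul\CMon^P_T,\Dd)$, obtain the converse restriction from $\bar\jmath^*f\simeq\id$, and conclude $j\simeq\bar\jmath$ by the universal property of $\ul{\mathbb F}^P_{T,*}$. The reasoning is correct and complete (you even quietly fix a small typo in the paper, where the source of $f$ should have $\Dd$ rather than $\ul\Spc_T$).
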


We can now slightly strengthen the second half of Theorem~\ref{thm:Semiadd_omnibus} in the case of $\ul\Spc_T$:

\begin{corollary}\label{cor:cocomplete-univ-cmon-relative}
	Let $\mathcal S$ be a $T$-$\infty$-category equivalent to $\ul\Spc_T$ and let $\mathcal D$ be any locally small $P$-semiadditive $T$-cocomplete $T$-$\infty$-category. Then precomposition with the $T$-functor $\mathbb P \colon \mathcal S \to \ul{\CMon}^P(\mathcal S)$ induces an equivalence
	\begin{equation*}
		\ul\Fun^\textup{L}_T(\ul\CMon^P(\mathcal S),\mathcal D)\xrightarrow{\;\simeq\;}\ul\Fun^\textup{L}_T(\mathcal S,\mathcal D).\qednow
	\end{equation*}
\end{corollary}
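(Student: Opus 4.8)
The plan is to reduce this to the absolute statement of \Cref{thm:cocomplete-univ-cmon} together with the universal property of $\ul\Spc_T$ from \Cref{cor:Universal_Property_T_Spaces}, via a two-out-of-three argument. Since $\mathcal S$ is equivalent to $\ul\Spc_T$ it is in particular presentable, so the functor $\mathbb U\colon\ul\CMon^P(\mathcal S)\to\mathcal S$ admits a left adjoint $\mathbb P$ by \Cref{lemma:forgetful-RA}. Being a parametrized left adjoint, $\mathbb P$ is $T$-cocontinuous, and the same holds after base change along any $\pi_A\colon T_{/A}\to T$ since there $\pi_A^*\mathbb P$ is left adjoint to $\pi_A^*\mathbb U$. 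Hence precomposition $\ul\Fun_T(\mathbb P,\mathcal D)$ carries $T_{/A}$-cocontinuous functors to $T_{/A}$-cocontinuous functors for every $A$, so it restricts to a $T$-functor
\[
	\mathbb P^*\colon\ul\Fun^\textup{L}_T(\ul\CMon^P(\mathcal S),\mathcal D)\longrightarrow\ul\Fun^\textup{L}_T(\mathcal S,\mathcal D),
\]
which is the map in the statement.

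Next I would record that, writing $*\colon\ul1\to\mathcal S$ for the $T$-final object and $\ev_c$ for evaluation at an object $c$ (i.e.\ restriction along $c\colon\ul1\to(-)$ composed with the identification $\ul\Fun_T(\ul1,\mathcal D)\simeq\mathcal D$), the diagram
\[
\begin{tikzcd}[column sep=large]
	\ul\Fun^\textup{L}_T(\ul\CMon^P(\mathcal S),\mathcal D)\arrow[rr,"\mathbb P^*"]\arrow[dr,"\ev_{\mathbb P(*)}"'] && \ul\Fun^\textup{L}_T(\mathcal S,\mathcal D)\arrow[dl,"\ev_*"]\\
	& \mathcal D &
\end{tikzcd}
\]
commutes: restricting a $T$-functor first along $\mathbb P$ and then along $*$ is restriction along the composite $\mathbb P\circ*\colon\ul1\to\ul\CMon^P(\mathcal S)$, which is exactly the object $\mathbb P(*)$.

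Finally I would feed in the two inputs. Functoriality of $\ulPCMon(-)$ turns the chosen equivalence $\mathcal S\simeq\ul\Spc_T$ into an equivalence $\ul\CMon^P(\mathcal S)\simeq\ul\CMon^P_T$ compatible with the units $\mathbb P$ (by uniqueness of adjoints) and with the $T$-final objects, so that $\mathbb P(*)\in\Gamma(\ul\CMon^P(\mathcal S))$ corresponds to $\mathbb P(*)\in\Gamma(\ul\CMon^P_T)$; thus \Cref{thm:cocomplete-univ-cmon} shows $\ev_{\mathbb P(*)}$ is an equivalence. Likewise \Cref{cor:Universal_Property_T_Spaces}, transported along $\mathcal S\simeq\ul\Spc_T$ (which sends the terminal object of $\PSh(T)$ to $*$), shows $\ev_*$ is an equivalence. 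By the commuting triangle and two-out-of-three, $\mathbb P^*$ is an equivalence, as desired. The only points requiring genuine (if routine) care are the bookkeeping of these transport-of-structure identifications and the verification that $\mathbb P^*$ really lands in the $\ul\Fun^\textup{L}$-subcategory; there is no substantial obstacle.
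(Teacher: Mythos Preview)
Your proposal is correct and is exactly the argument the paper intends: the corollary carries a \texttt{\textbackslash qednow} with no explicit proof, being an immediate consequence of \Cref{thm:cocomplete-univ-cmon} combined with the universal property of $\ul\Spc_T$ (\Cref{cor:Universal_Property_T_Spaces}), and your two-out-of-three triangle is the standard way to spell this out.
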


\begin{remark}
	We will prove in forthcoming work that \Cref{cor:cocomplete-univ-cmon-relative} in fact holds for any presentable $T$-$\infty$-category $\mathcal S$.
\end{remark}

\subsection{Commutative monoids in \texorpdfstring{\for{toc}{$\ul{\Ee}_T$}\except{toc}{$\bm{\ul{\Ee}_T}$}}{ET}}\label{subsec:P-semiadditive-Un}
Let $\Ee$ be an $\infty$-category. Recall that a $T$-functor $F\colon \ulfinptdPsets \to \ul{\Ee}_T$ corresponds to a functor $\widetilde{F}\colon \smallint \ulfinptdPsets \to \Ee$ of $\infty$-categories, see \Cref{lem:AdjunctionGrothendieckConstructionTObjects}. We will now give a characterization of those functors $\widetilde{F}$ whose associated $T$-functor $F$ is a $P$-semiadditive monoid in $\ul{\Ee}_T$. We start with an explicit description of the adjoint norm map $\Nmadj_p\colon p^*p_! \Rightarrow \id$ associated to $\ulfinptdPsets$.

\begin{lemma}
	\label{lem:UnitMapDisjointSummandInclusion}
	Let $P \subseteq T$ be an atomic orbital subcategory. Consider a map $p\colon A \to B$ in $\finPsets$ and let $f\colon X \to A$ and $g\colon Y \to A$ be a morphisms in $\PSh(T)$. Then the map $1 \times_p 1\colon X \times_A Y \to X \times_B Y$ is a disjoint summand inclusion.
\end{lemma}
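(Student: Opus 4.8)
The plan is to exhibit $1\times_p1$ as a base change of the diagonal $\Delta_p\colon A\to A\times_B A$ and then combine \Cref{prop:FinitePSetsHasDisjointDiagonalInclusions} with the fact that disjoint summand inclusions in $\PSh(T)$ are stable under pullback.

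First I would record that there is a pullback square
\[
\begin{tikzcd}
	X\times_A Y \rar \dar \drar[pullback] & X\times_B Y \dar{f\times_B g} \\
	A \rar{\Delta_p} & A\times_B A
\end{tikzcd}
\]
in $\PSh(T)$, in which the top horizontal map is $1\times_p1$ and the right vertical map is induced by $f$ on the first and $g$ on the second factor. This is a routine manipulation of limits: an object of the fibre product $(X\times_B Y)\times_{A\times_B A}A$ consists of points $x\in X$, $y\in Y$ with a common image in $B$ together with a point $a\in A$ such that $(f(x),g(y))=(a,a)$ in $A\times_B A$, i.e.\ $f(x)=a=g(y)$; this is exactly the data of a point of $X\times_A Y$, and one checks the resulting identification is natural and carries the projection to $X\times_B Y$ to $1\times_p1$. (Alternatively, one assembles the square from the two evident pullback squares via the pasting law.)

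Next I would observe that, since $p\colon A\to B$ lies in $\finPsets$, it defines an object of $\ulfinPsets(B)$ in the sense of the limit extension of \Cref{rmk:limitExtensionRestrictedClass}: any map from a representable into $B$ factors through a single summand of $B$, and the pullback of $p$ along it is a finite disjoint union of morphisms in $P$ by the definition of $\finPsets$ and orbitality of $P$. Hence \Cref{prop:FinitePSetsHasDisjointDiagonalInclusions} applies to $p$ and shows that $\Delta_p\colon A\to A\times_B A$ is a disjoint summand inclusion in $\PSh(T)$.

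Finally, disjoint summand inclusions in $\PSh(T)$ are stable under base change: coproducts in the presheaf topos are disjoint and universal, so a decomposition $A\times_B A\simeq A\sqcup C$ pulls back along $f\times_B g$ to a decomposition exhibiting $X\times_A Y$ as a disjoint summand of $X\times_B Y$. Feeding this into the square above shows that $1\times_p1$ is a disjoint summand inclusion. The only step requiring any care is the verification of the pullback square; the rest is a direct application of results already in hand, so I do not anticipate a genuine obstacle.
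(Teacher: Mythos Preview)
Your proposal is correct and follows exactly the same approach as the paper: the paper's proof is the one-line observation that $1\times_p 1$ is the base change of the diagonal $\Delta_p\colon A\to A\times_B A$ along $f\times_B g$, together with an appeal to \Cref{prop:FinitePSetsHasDisjointDiagonalInclusions}. Your version simply spells out the pullback square and the closure of disjoint summand inclusions under base change in more detail.
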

\begin{proof}
	Using \Cref{prop:FinitePSetsHasDisjointDiagonalInclusions}, this follows directly from the observation that the map $X \times_A Y \to X \times_B Y$ is a base change of the disjoint summand inclusion $\Delta\colon A \to A \times_B A$ along the map $f \times_B g\colon X \times_B Y \to A \times_B A$.
\end{proof}

\begin{construction}
	\label{cstr:AdjointNormMapFinitePointedPSets}
	Consider a morphism $p\colon A \to B$ in $\finPsets$. For any finite $P$-set $(X,q) \in \ulfinPsets(A)$, the unit map $(1,q)\colon X \to X \times_B A = p^*p_!X$ is a disjoint summand inclusion by \Cref{lem:UnitMapDisjointSummandInclusion}, and thus we may choose an identification
	\begin{align*}
		X \times_B A \simeq X \sqcup J_X
	\end{align*}
	for some finite $P$-set $J_X \in \ulfinPsets(A)$. In particular we obtain a map $p^*p_!(X_+) \to X_+$ in $\ulfinPsets(A)$ defined as the following composite:
	\begin{align*}
		p^*p_!(X_+) \simeq (X \times_B A)_+ \simeq (X \sqcup J_X)_+ \to X_+,
	\end{align*}
	where the last map projects away the disjoint component $J_X$ to the disjoint basepoint.
\end{construction}

\begin{lemma}\label{lem:norm_is_project_away}
	The map $p^*p_!(X_+) \to X_+$ constructed in \Cref{cstr:AdjointNormMapFinitePointedPSets} is homotopic to the adjoint norm map $\Nmadj_p\colon p^*p_!(X_+) \to X_+$ associated to the $T$-$\infty$-category $\ulfinptdPsets$.
\end{lemma}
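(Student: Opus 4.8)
The plan is to unwind the definition of $\Nmadj_p$ from \Cref{cstr:NormMap}\eqref{it:AdjointNormMap} in the special case $\Cc = \ulfinptdPsets$ and compare it termwise with the projection map of \Cref{cstr:AdjointNormMapFinitePointedPSets}. Recall from \Cref{rem:concise_norm_maps} that $\Nmadj_p$ is the composite
\[
p^* p_! \overset{l.b.c.}{\simeq} {\pr_1}_! \pr_2^* \xRightarrow{u^*_{\Delta}} {\pr_1}_! \Delta_* \Delta^* \pr_2^* \overset{\Nm^{-1}_{\Delta}}{\simeq} {\pr_1}_! \Delta_! \Delta^* \pr_2^* \simeq \id,
\]
where the pullback square involves $\pr_1, \pr_2\colon A \times_B A \to A$ and $\Delta\colon A \to A \times_B A$. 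So the first step is to identify, using \Cref{ex:PointedTSpacesPAdjointable}, all the functors appearing here when $\Cc = \ulfinptdPsets$: the functor $p_!$ is pushforward along $p$ (in pointed $T$-spaces, via the pushout formula in \Cref{ex:PointedTSpacesPAdjointable}), the functor $p^*$ is pullback, and for the disjoint summand inclusion $\Delta$ the functors $\Delta_!, \Delta_*$ and the norm $\Nm_\Delta$ are the ones described concretely in \Cref{lem:NormMapDisjointSummandInclusions}, i.e.\ $\Delta_!$ fills in the disjoint complement with the disjoint basepoint while $\Delta_*$ fills it in with the (non-disjoint) terminal section, and $\Nm_\Delta$ is the canonical comparison between the two.

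The second step is to evaluate this composite on a disjointly based object $X_+$ for $(X,q)\in\ulfinPsets(A)$. Applying $p_!$ to $X_+ = X\sqcup A$ gives (via the pushout in \Cref{ex:PointedTSpacesPAdjointable}) the object $(p_!X)_+$ over $B$, and then $p^*(p_!X)_+ \simeq (p^*p_!X)_+ \simeq (X\times_B A)_+$ by base change for the underlying $T$-spaces; via \Cref{lem:UnitMapDisjointSummandInclusion} we may write $X\times_B A \simeq X \sqcup J_X$, so $p^*p_!(X_+)\simeq (X\sqcup J_X)_+$. Under the left base change equivalence this is ${\pr_1}_!\pr_2^*(X_+)$, where $\pr_2^*(X_+) = (X\times_B A\to A\times_B A)_+$, and the key computational point is that the diagonal $\Delta\colon A\to A\times_B A$ pulls this back to $X$ itself (since $X\times_B A$ restricted along $\Delta$ is $X\times_A A \simeq X$), while the disjoint complement $J_X$ is exactly the part of $X\times_B A$ lying over the complement of the diagonal. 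Feeding this through $u^*_\Delta$ followed by $\Nm_\Delta^{-1}$ replaces the terminal-section filling by the disjoint-basepoint filling; concretely, on $X\sqcup J_X$ this is precisely the operation of collapsing $J_X$ to the basepoint while leaving $X$ alone, after which ${\pr_1}_!\Delta_!\Delta^*\pr_2^*(X_+)\simeq X_+$. This is exactly the projection map of \Cref{cstr:AdjointNormMapFinitePointedPSets}.

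The main obstacle, and the only place where real care is required, is bookkeeping the various base change equivalences so that they are genuinely \emph{compatible} — one must make sure that the identification $X\times_B A \simeq X\sqcup J_X$ chosen in \Cref{cstr:AdjointNormMapFinitePointedPSets} matches, up to the canonical comparison, the decomposition induced by the disjoint summand inclusion $\Delta\colon A\hookrightarrow A\times_B A$ pulled back along $X\times_B A\to A\times_B A$, and that $\Delta^*\pr_1^* \simeq \id \simeq \Delta^*\pr_2^*$ is used consistently on both the source and target side. Concretely I would organize this as a single commuting diagram of functors $\ulfinptdPsets(A)\to\ulfinptdPsets(A)$ with the composite along the top being $\Nmadj_p$ in its $\Delta$-description and the composite along the bottom being the projection, with the vertical comparison maps being the base change equivalences for the cube of pullback squares relating $X, A\times_B A, X\times_B A$ over $A$ and $B$; all small squares then commute by naturality of base change and by the construction of $\Nm_\Delta$ in \Cref{lem:NormMapDisjointSummandInclusions}. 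Since everything takes place in $\ul\Spc_{T,*}$, where the adjoints $p_!, \Delta_!, \Delta_*$ are given by the wholly explicit colimit/limit formulas of \Cref{ex:TSpacesPAdjointable} and \Cref{ex:PointedTSpacesPAdjointable}, no genuinely hard input is needed beyond this diagram chase, and I would present it as such rather than writing out the (large but routine) verification in full.
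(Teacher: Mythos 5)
Your proposal is correct and follows essentially the same route as the paper: both proofs reduce the claim to the decomposition of $A\times_B A$ into $A\sqcup J_A$ along the diagonal, pull it back along $X\times_B A\to A\times_B A$ to obtain $X\times_B A\simeq X\sqcup J_X$, and then observe that the adjoint norm collapses the complementary summand $J_X$ to the basepoint. The only cosmetic difference is that you unwind $\Nmadj_p$ via the expanded formula in \Cref{rem:concise_norm_maps}, whereas the paper packages the same computation by citing \Cref{lem:AlphaIsDiagonalMatrix} (the ``diagonal matrix'' description of $\alpha\colon\pr_2^*\Rightarrow\pr_1^*$), which encapsulates exactly the bookkeeping you flagged as the crux.
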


\begin{proof}
	Choose a map $J_A \hookrightarrow A \times_B A$ exhibiting $J_A$ as a complement of the disjoint summand inclusion $\Delta\colon A \hookrightarrow A \times_B A$. The resulting equivalence $A \times_B A \simeq A \sqcup J_A$ induces an equivalence $\ulfinptdPsets(A \times_B A) \simeq \ulfinptdPsets(A \sqcup J_A) \simeq \ulfinptdPsets(A) \times \ulfinptdPsets(J_A)$. Pulling back the decomposition $A \times_B A \simeq A \sqcup J_A$ along the map $X \times_B A \to A \times_B A$ gives a decomposition $X \times_B A \simeq X \sqcup J_X$, and it follows that the object $\pr_2^*(X_+) \simeq (X \times_B A)_+ \in \ulfinptdPsets(A \times_B A)$ corresponds to the pair $(X_+,{J_X}_+) \in \ulfinptdPsets(A) \times \ulfinptdPsets(J_A)$. By \Cref{lem:AlphaIsDiagonalMatrix}, the transformation $\alpha\colon \pr_2^* \Rightarrow \pr_1^*$ corresponds to a transformation of functors into $\ulfinptdPsets(A) \times \ulfinptdPsets(J_A)$ which on the first component is the identity and on the second component is the zero-map which projects everything onto the disjoint basepoint. The description from \cref{cstr:AdjointNormMapFinitePointedPSets} follows.
\end{proof}

\begin{notation}\label{notation:elements-of-grothendieck}
	We will abuse notation and denote objects of the unstraightening $\smallint \ulfinptdPsets$ by pairs $(A,X_+)$, where $A \in T$ and $(X,q\colon X \to A) \in \ulfinPsets(A)$ is a finite $P$-set. We will specify $q$ explicitly whenever confusion might arise.
\end{notation}

\begin{construction}[Parametrized Segal map]
	\label{cstr:ParametrizedSegalMap}
	Consider a map $p\colon A \to B$ in $P$, a map $C \to B$ in $T$ and a finite pointed $P$-set $X_+ \to A$ in $\ulfinptdPsets(A)$. Since $p$ is in $P$, the pullback $A \times_B C$ of $p$ along $C \to B$ may be written as a disjoint union of maps $p_i\colon C_i \to C$ in $P$:
	\[\begin{tikzcd}
		\bigsqcup_{i=1}^n C_i \rar \dar[swap]{(p_i)_{i=1}^n} \drar[pullback] & A \dar{p} \\
		C \rar & B.
	\end{tikzcd}\]
	We will we construct for each $i \in \{1, \dots, n\}$ a \textit{parametrized Segal map}
	\begin{equation*}
		\rho_{i}\colon  (C, (X \times_B C)_+) \to (C_i,(X \times_A C_i)_+)
	\end{equation*}
	in $\smallint \ulfinptdPsets$. To give such a map, we need to provide a map $C_i \to C$ in $T$, which we simply take to be the map $p_i\colon C_i \to C$, and a map $p_i^*(X \times_B C)_+ \simeq (X \times_B C_i)_+ \to (X \times_A C_i)_+$ in $\ulfinptdPsets(C_i)$. Recall from \Cref{lem:UnitMapDisjointSummandInclusion} that the map $X \times_A C_i \to X \times_B C_i$ is a disjoint summand inclusion, so that we may choose an equivalence
	\begin{align*}
		(X \times_B C_i) \simeq (X \times_A C_i) \sqcup J_i,
	\end{align*}
	where $J_i \to C_i$ is some finite $P$-set. The required map $(X \times_B C_i)_+ \simeq (X \times_A C_i)_+ \vee {J_i}_+ \to (X \times_A C_i)_+$ is now given by projecting away the second summand.
\end{construction}

\begin{proposition}\label{prop:CMon_in_T_objects}
	Let $\Ee$ be an $\infty$-category and consider a $T$-functor $F\colon \ulfinptdPsets \to \ul{\Ee}_T$. Denote by $\widetilde{F}\colon \smallint \ulfinptdPsets \to \Ee$ the functor associated to $F$ under the equivalence of \Cref{lem:AdjunctionGrothendieckConstructionTObjects}. Then $F$ is a $P$-semiadditive monoid in $\ul{\Ee}_T$ if and only if $F$ is fiberwise semiadditive and for every map $p\colon A \to B$ in $P$, every map $f\colon C \to B$ in $T$ and every finite pointed $P$-set $X_+ \in \ulfinptdPsets(A)$, the map
	\begin{equation*}
		(\widetilde{F}(\rho_i))_{i=1}^n\colon \widetilde{F} (C,(X \times_B C)_+) \to \prod_{i=1}^n \widetilde{F}(C_i,(X \times_A C_i)_+)
	\end{equation*}
	induced by the parametrized Segal maps is an equivalence.
\end{proposition}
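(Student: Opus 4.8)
The plan is to transport the statement across the equivalence $\Fun_T(\ulfinptdPsets,\ul{\Ee}_T)\simeq\Fun(\smallint\ulfinptdPsets,\Ee)$ of \Cref{lem:AdjunctionGrothendieckConstructionTObjects}, using the explicit description from \Cref{rmk:unwinding_T_object_adj}, and then to test the defining equivalences of $P$-semiadditivity objectwise. As a preliminary reduction, I would observe that, exactly as in the proof of \Cref{cor:PSemiadditivity} — using that relative norm maps are compatible with composition and base change (this follows from \Cref{prop:adjNormsVsComposition} and \Cref{prop:adjNormsVsBaseChange} applied to the adjoint norm maps of the source), that the relative norm of a disjoint union of maps is the product of the relative norms, and that both $\ulfinptdPsets$ and $\ul{\Ee}_T$ send coproducts in $T$ to products of $\infty$-categories — the $T$-functor $F$ is a $P$-semiadditive monoid if and only if $F$ is fiberwise semiadditive (equivalently, the relative norm of every fold map is an equivalence) and the relative norm $\Nm^F_p\colon F_B\circ p_!\Rightarrow p_*\circ F_A$ is an equivalence for every $p\colon A\to B$ in $P$. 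Since the first condition appears verbatim in the statement, it remains to show that, for $p\in P$, this latter condition is equivalent to the parametrized Segal maps of \Cref{cstr:ParametrizedSegalMap} associated to $p$ becoming equivalences after applying $\widetilde F$.

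Fix $p\colon A\to B$ in $P$ and a finite pointed $P$-set $X_+\in\ulfinptdPsets(A)$. As $\Nm^F_p(X_+)$ is a morphism in $\ul{\Ee}_T(B)=\Fun((T_{/B})\catop,\Ee)$, it is an equivalence if and only if it is one after evaluating at every object $f\colon C\to B$ of $T_{/B}$. For the source, \Cref{rmk:unwinding_T_object_adj} together with the identification of $p_!(X_+)$ with the disjointly based presheaf $(X\to B)_+$ obtained by composing the structure map with $p$ (valid since both the disjoint-basepoint functor and the restriction functors commute with $p_!$, cf.\ \Cref{ex:PointedTSpacesPAdjointable}) gives $(F_B\,p_!(X_+))(f)\simeq\widetilde F\big(C,(X\times_B C)_+\big)$. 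For the target, writing the base change $A\times_B C\simeq\bigsqcup_{i=1}^n C_i$ as a disjoint union of maps $p_i\colon C_i\to C$ in $P$ with projection $g\colon\bigsqcup_i C_i\to A$, I would use the right base change equivalence $f^*\circ p_*\simeq(\bigsqcup_i p_i)_*\circ g^*$ on $\ul{\Ee}_T$, the splitting $\ul{\Ee}_T(\bigsqcup_i C_i)\simeq\prod_i\ul{\Ee}_T(C_i)$ with $(\bigsqcup_i p_i)_*\simeq\prod_i(p_i)_*$, fiberwise semiadditivity of $F$, and again \Cref{rmk:unwinding_T_object_adj}, to identify $(p_*\,F_A(X_+))(f)\simeq\prod_{i=1}^n\widetilde F\big(C_i,(X\times_A C_i)_+\big)$. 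Finally, I would unwind \Cref{cstr:NormMapFunctor} and invoke \Cref{lem:norm_is_project_away}, which identifies the adjoint norm map $\Nmadj_p\colon p^*p_!(X_+)\to X_+$ for $\ulfinptdPsets$ with the projection killing the disjoint complement $J_X$ of $X$ in $X\times_B A$; under the two identifications above this turns $\Nm^F_p(X_+)$, evaluated at $f$, into precisely the map $(\widetilde F(\rho_i))_{i=1}^n$ induced by the Segal maps $\rho_i\colon(C,(X\times_B C)_+)\to(C_i,(X\times_A C_i)_+)$, whose defining disjoint-summand decompositions $X\times_B C_i\simeq(X\times_A C_i)\sqcup J_i$ are exactly the base changes of the complement appearing in \Cref{lem:norm_is_project_away}. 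Reading this chain of identifications in both directions proves the proposition.

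The technical heart is the target computation together with the last matching step: one has to keep careful track of the right base change equivalence for $p_*$ on $\ul{\Ee}_T$, use fiberwise semiadditivity of $F$ to collapse the limit occurring in the pointwise formula for $p_*$ into the finite product over the orbits $C_i$, and then check that the comparison produced this way is not merely abstractly equivalent to $(\widetilde F(\rho_i))_{i=1}^n$ but literally that map, compatibly with all the identifications in play. This is a long but essentially formal diagram chase; the only non-formal inputs are \Cref{lem:norm_is_project_away}, \Cref{lem:UnitMapDisjointSummandInclusion}, the explicit description in \Cref{rmk:unwinding_T_object_adj}, and the norm calculus of \Cref{prop:adjNormsVsBaseChange} and \Cref{prop:adjNormsVsComposition}.
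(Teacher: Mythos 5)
Your proof takes essentially the same route as the paper: first reduce $P$-semiadditivity of $F$, via the composition and base-change calculus of relative norms, to fiberwise semiadditivity together with the condition that $\Nm^F_p$ is an equivalence for $p$ in $P$; then test this pointwise at each $(C,f)\in T_{/B}$ using the unwinding of the Grothendieck adjunction (\Cref{rmk:unwinding_T_object_adj}) and the identification of $\Nmadj_p$ in $\ulfinptdPsets$ from \Cref{lem:norm_is_project_away}. This is the same scaffolding as the paper's argument; the computations of source and target agree, and the final matching to the parametrized Segal maps is the same. The chief difference is presentational: the paper packages the target by passing to the limit-extended functor $\bar F$ over $\int_{\finTsets}\ulfinptdPsets$ and reading off the product from the fact that $\bar F$ sends coproducts of finite $T$-sets to products, whereas you run the right base change equivalence $f^*p_*\simeq(\bigsqcup_i p_i)_* g^*$ and the pointwise formula for right Kan extension by hand. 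Both work.

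One small inaccuracy: you attribute the collapse of the right Kan extension $(p_i)_*Z$ at $\id_C$ to ``fiberwise semiadditivity of $F$.'' That is not where it comes from. The comma category controlling $(p_i)_*Z(\id_C)$ is (the opposite of) $T_{/C_i}$, which has a terminal object $\id_{C_i}$; hence the limit is just $Z(\id_{C_i})$ by the initial-object argument. Likewise, the splitting over the orbits is the statement that $\ul{\Ee}_T$ — as a limit extension — sends coproducts in $\PSh(T)$ to products of $\infty$-categories, which is a property of $\ul{\Ee}_T$, not of $F$. So the product $(p_*F_A(X_+))(f)\simeq\prod_i\widetilde F(C_i,(X\times_A C_i)_+)$ is automatic and does not invoke fiberwise semiadditivity at all; that hypothesis enters only in the reduction to $p\in P$ at the beginning. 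This does not invalidate the proof — the hypothesis is available anyway — but the extra ingredient is not doing the work you assign to it, and tracking which equivalences genuinely depend on which hypotheses matters in a statement of the form ``if and only if.''
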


\begin{proof}
	By \Cref{cor:PSemiadditivity}, the $T$-functor $F$ is $P$-semiadditive if and only if it is fiberwise semiadditive and for all maps $p\colon A \to B$ in $P$ the transformation $\Nm^F_p\colon F_B \circ p_! \Rightarrow p_* \circ F_A$ of functors $\ulfinptdPsets(A) \to \ul{\Ee}_T(B) = \Fun(T_{/B}\catop, \Ee)$ is an equivalence. Since we may check this pointwise, it suffices to show that for every finite $P$-set $X_+ \in \ulfinptdPsets(A)$ and every object $f\colon C \to B$ of $T_{/B}$, the induced map
	\begin{align*}
		F_B(p_!(A,X_+))(C,f)
		\to
		(p_*(F_A(A,X_+))(C,f)
	\end{align*}
	is an equivalence. By definition, this map is given by the composite
	\[\begin{tikzcd}[cramped]
		{F_B((B,X_+))(C,f)} & {p_*p^*(F_B((B,X_+)))(C,f)} \\
		& {F_A(p^*p_!(A,X_+))(p^*(C,f))} & {	F_A(A,X_+)(p^*(C,f)).}
		\arrow["\sim", from=1-2, to=2-2]
		\arrow["{u_p^*}", from=1-1, to=1-2]
		\arrow["{\Nmadj_p}", from=2-2, to=2-3]
	\end{tikzcd}\]
	To make this composite explicit it will be useful to consider the objects of $\ul{\Ee}_T(B)$ as functors from $(\finTsets_{/B})\catop$ to $\Ee$ by limit extending. Similarly it will be useful to consider $F$ as a natural transformation of functors from $\finTsets$ to $\Cat_\infty$ by again limit extending. If we make both of these extensions we may again apply \cref{lem:AdjunctionGrothendieckConstructionTObjects} to conclude that $F$ is induced by a functor $\bar{F}\colon \int_\finTsets \ulfinptdPsets \rightarrow \Ee$. Namely we recall from
	\cref{rmk:unwinding_T_object_adj} that given a $T$-set $X$ and a pointed $P$-set $Y\rightarrow X$ over $X$, $F_X(X,Y_+)(f\colon Z\rightarrow X) = \bar{F}(f^*(X,Y_+)) = \bar{F}(Z,(Y\times_X Z)_+)$. Using this identification we find that the composite above is equivalent to
	\[\hskip-7.83pt\hfuzz=8pt\begin{tikzcd}[cramped]
		{\bar{F}(C,X\times_B C)} & {\bar{F}(C\times_B A,X\times_B(C\times_B A))} & {\bar{F}(C\times_B A, X\times_A (C\times_B A)),}
		\arrow["{\bar{F}(\phi_p)}", from=1-1, to=1-2]
		\arrow["{\bar{F}(\Nmadj_p)}", from=1-2, to=1-3]
	\end{tikzcd}\]
	where $\phi_p$ is a cocartesian edge expressing $X\times_B(C\times_B A)$ as a pullback of $X\times_B C$ along $u_p\colon C\times_B A\rightarrow C$. Now recall that $\bar{F}$ was defined to be the limit extension of $F$, and so given a decomposition $C\times_B A \simeq \coprod C_i$, we find that
	\[\bar{F}(C\times_B A, X\times_A (C\times_B A)) \iso \prod \bar{F}(C_i, X\times_A C_i).\] To conclude we would like to show that projecting the composite above to any factor agrees with the map constructed in \cref{cstr:ParametrizedSegalMap}. For this observe that by definition applying $\bar{F}$ to a cocartesian edge over $\iota\colon C_j \hookrightarrow C\times_B A$ gives the projection \[\pr_j\colon \prod_i \bar{F}(C_i, X\times_A C_i) \rightarrow \bar{F}(C_j,X\times_A C_j) \] Therefore we can compute the top-right way around the following commutative diagram
	\[\hskip-6.44pt\hfuzz=7pt\begin{tikzcd}[cramped]
		{\bar{F}(C,X\times_B C)} & {\bar{F}(C\times_B A,X\times_B(C\times_B A))} & {\bar{F}(C\times_B A, X\times_A (C\times_B A))} \\
		& {\bar{F}(C_j,X\times_B C_j)} & {\bar{F}(C_j,X\times_A C_j)}
		\arrow["{\bar{F}(\phi_p)}", from=1-1, to=1-2]
		\arrow["{\bar{F}(\Nmadj_p)}", from=1-2, to=1-3]
		\arrow["{\bar{F}(\phi_{\iota})}", from=1-3, to=2-3]
		\arrow["{\bar{F}(\iota^*(\Nmadj_p))}"', from=2-2, to=2-3]
		\arrow["{\bar{F}(\phi_{\iota})}"', from=1-2, to=2-2]
		\arrow["{\bar{F}(\phi_{p_j})}"', from=1-1, to=2-2]
	\end{tikzcd}\]
	by instead going along the bottom. Once again $\phi_{\iota}$ is our notation for a cocartesian edge over $\iota$. Because cocartesian edges compose we see that $\phi_{p_j}$ is a cocartesian edge witnessing $X\times_B C_j$ as the pullback of $X\times_B C$ along the map $C_j\rightarrow C$. Using the description of $\Nmadj_p$ given in \cref{lem:norm_is_project_away} we find that $\iota^*(\Nmadj_p))$ is equivalent to the map $X\times_B C_i\rightarrow X\times_A C_i$ given in \cref{cstr:ParametrizedSegalMap}. Finally note that by definition $\bar{F}$ agrees with $\widetilde{F}$ on the full subcategory over $T\subset \finTsets$. Therefore the proposition follows.
\end{proof}

We now show that the $P$-semiadditivity of a functor $\tilde{F}\colon \smallint \ulfinptdPsets \to \Ee$ in fact follows from substantially less than the previous proposition suggests.

\begin{observation}\label{obs:unique-segal-map}
	Let $X_+\in \ulfinptdPsets(A)$ be a finite pointed $P$-set, and let $p\colon A\rightarrow B$ be a map in $P$. Furthermore let $C\rightarrow B$ be the identity of $B$. Considering the parametrized Segal maps associated to this data, we note that $A\times_B B = A$, so there is just one. We call this map $\rho_{p,X}$. If $X = A_+$, we simply write $\rho_p$.
\end{observation}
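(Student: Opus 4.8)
The plan is to unwind Construction~\ref{cstr:ParametrizedSegalMap} in the special case where the chosen map into $B$ is $\mathrm{id}_B$. Recall that, given $p\colon A\to B$ in $P$, a map $C\to B$ in $T$, and $X_+\in\ulfinptdPsets(A)$, the parametrized Segal maps are indexed by a decomposition $A\times_B C\simeq\bigsqcup_{i=1}^n C_i$ of the base change $A\times_B C\to C$ into morphisms $p_i\colon C_i\to C$ lying in $P$; so the first step is to identify this base change when $C=B$ and the structure map is the identity.

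For this I would use that pullback along an identity is canonically the identity, so $A\times_B B\simeq A$ with projection to $B$ given by $p$ itself. Since $p$ is already a single morphism of $P$ and $A$ is representable --- hence not a nontrivial coproduct in $\PSh(T)$ --- the only decomposition of $p\colon A\to B$ as a disjoint union of $P$-morphisms over $B$ (up to reindexing) is the one-term family with $C_1=A$ and $p_1=p$. Thus $n=1$, Construction~\ref{cstr:ParametrizedSegalMap} outputs a single parametrized Segal map, and the notation $\rho_{p,X}$ (and $\rho_p$ for $X=A$) is unambiguous.

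Finally, for later use I would also record the explicit description of this map: under the canonical identifications $X\times_B B\simeq X$ and $X\times_A A\simeq X$ it is the morphism $(B,X_+)\to(A,X_+)$ of $\smallint\ulfinptdPsets$ whose underlying $T$-morphism is $p$ and whose fiber component is $p^*X_+\simeq(X\times_B A)_+\simeq(X\sqcup J_X)_+\to X_+$, collapsing the summand $J_X$ of Construction~\ref{cstr:AdjointNormMapFinitePointedPSets} onto the basepoint --- that is, exactly the map describing the adjoint norm transformation $\Nmadj_p$ in Lemma~\ref{lem:norm_is_project_away}. This last point is the only one requiring any care, and even there the work is purely bookkeeping: chasing the various canonical pullback squares appearing in Construction~\ref{cstr:ParametrizedSegalMap}, all of which collapse because one of the legs is an identity. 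There is no genuine obstacle.
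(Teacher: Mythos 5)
Your argument is correct and matches the paper's intent: the paper presents this as an unproved observation, simply noting that $A\times_B B=A$ forces $n=1$ in Construction~\ref{cstr:ParametrizedSegalMap}, which is exactly your reasoning. Your additional remark that the fiber component of $\rho_p$ recovers the adjoint norm map from Lemma~\ref{lem:norm_is_project_away} is accurate bookkeeping and is in fact implicitly used later in the proof of Proposition~\ref{prop:parametrized-segal-special-map}, so including it is harmless but not needed for the observation itself.
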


\begin{proposition}\label{prop:parametrized-segal-special-map}
	Let $\Ee$ be an $\infty$-category and consider a $T$-functor $F\colon \ulfinptdPsets \to \ul{\Ee}_T$ which corresponds to a functor $\widetilde{F}\colon \smallint \ulfinptdPsets \to \Ee$ of $\infty$-categories. Then $F$ is a $P$-semiadditive monoid in $\ul{\Ee}_T$ if and only if $F$ is fiberwise semiadditive and for every map $p\colon A \to B$ in $P$, the map
	\begin{equation*}
		\widetilde{F}(\rho_{p})\colon \widetilde{F}(B,A_+) \to  \widetilde{F}(A,A_+)
	\end{equation*}
	is an equivalence.
\end{proposition}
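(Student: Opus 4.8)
The plan is to deduce this from \Cref{prop:CMon_in_T_objects}, whose criterion involves \emph{all} parametrized Segal maps $\rho_i$ for arbitrary $p\colon A\to B$ in $P$, arbitrary $f\colon C\to B$ in $T$, and arbitrary finite pointed $P$-set $X_+\in\ulfinptdPsets(A)$; our job is to show that fiberwise semiadditivity together with the special maps $\rho_p$ already forces all these equivalences. First I would observe that the parametrized Segal maps are compatible with base change in the evident sense: given $p\colon A\to B$ in $P$ and $f\colon C\to B$ in $T$ with pullback decomposition $\bigsqcup_i C_i\to C$, the map $\rho_i$ for this data is (up to the canonical identifications, using that $P$ is closed under base change and that everything in sight is computed in $\PSh(T)$) the map $\rho_{p_i,X\times_BC}$ associated to $p_i\colon C_i\to C$ in $P$, the identity of $C$, and the finite pointed $P$-set $(X\times_BC_i)_+$; more precisely, after restricting along $p_i\colon C_i\to C$. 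So it suffices to treat the case $C=B=\id$, i.e.\ to show that $\widetilde F(\rho_{p,X})\colon\widetilde F(B,(X)_+)\to\widetilde F(A,(X\times_BA)_+)$ (with $X\to B$ the given structure map, composed with $p$) is an equivalence for all $p\colon A\to B$ in $P$ and all $X_+\in\ulfinptdPsets(B)$; wait — I need to be careful about which object $X$ lives over. Let me restate: given $p\colon A\to B$ in $P$ and a finite pointed $P$-set $X_+\in\ulfinptdPsets(A)$, the single parametrized Segal map $\rho_{p,X}\colon(B,(X)_+)\to(A,(X\times_AA)_+)=(A,X_+)$ — here $X\to A\to B$ viewed over $B$ on the source — must be shown to go to an equivalence under $\widetilde F$.

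Next I would reduce an arbitrary finite pointed $P$-set $X_+$ over $A$ to the generator $A_+$. By \Cref{lem:FinitePtdPSetsDisjointlyBased} we may write $X_+$ with $X\to A$ in $\ulfinPsets(A)$, and by \Cref{def:finPsets} decompose $X=\bigsqcup_{j=1}^m X_j$ with each $q_j\colon X_j\to A$ in $P$. Fiberwise semiadditivity of $F$ (equivalently of $\widetilde F$ restricted to each fiber $\ulfinptdPsets(A)$, which sends wedge sums to products) gives $\widetilde F(A,X_+)\simeq\prod_{j=1}^m\widetilde F(A,{X_j}_+)$ compatibly. The key geometric point is that the parametrized Segal map $\rho_{p,X}$ decomposes accordingly: pulling back $p$ along $pq_j\colon X_j\to B$ and invoking orbitality plus the concrete description of $\rho$ in \Cref{cstr:ParametrizedSegalMap} (the disjoint-summand splittings are strictly compatible with disjoint unions in the source), one checks that under the product decomposition $\widetilde F(\rho_{p,X})$ becomes the product of the maps $\widetilde F(\rho_{p\circ ?,\,X_j})$; and each of those, after the base-change reduction above applied along $q_j$, is identified with $\widetilde F(\rho_{p'_j})$ for the map $p'_j\colon X_j\times_B A\to X_j$ in (the slice version of) $P$, using also that $\widetilde F$ restricted to the fibers sends $\rho$ for a \emph{disjoint summand inclusion} to an equivalence — which again follows from fiberwise semiadditivity since then the norm map is the identity-matrix map of \Cref{lem:AlphaIsDiagonalMatrix}. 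Hence everything is reduced to the hypothesized maps $\widetilde F(\rho_p)$ being equivalences, after replacing $T$ by a slice $T_{/X_j}$; but the hypothesis is insensitive to such base change since $P$ and the construction of $\rho$ are stable under passing to slices (\Cref{ex:OrbitalClosed}\eqref{it:SliceOfOrbitalIsOrbital}), and $F$ restricted to $\ulfinptdPsets_{T_{/X_j}}\simeq\pi_{X_j}^*\ulfinptdPsets$ is still fiberwise semiadditive with the analogous Segal maps going to equivalences — this is where one quietly uses that the whole setup is compatible with the functor $T_{/f}$, exactly as in \Cref{lemma:restriction_functor_cat} and \Cref{lem:SemiadditiveFunctorsClosedUnderBaseChange}.

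I would then assemble: by the two reductions, the general Segal condition of \Cref{prop:CMon_in_T_objects} holds iff fiberwise semiadditivity holds and all $\widetilde F(\rho_p)$ are equivalences for $p$ in $P$ (across all slices, but this is subsumed), which is precisely the claimed criterion. For the converse direction, if $F$ is a $P$-semiadditive monoid in $\ul\Ee_T$ then \Cref{prop:CMon_in_T_objects} gives the full Segal condition, and specializing to $C=B$ and $X=A_+$ yields that $\widetilde F(\rho_p)$ is an equivalence, while fiberwise semiadditivity is part of $P$-semiadditivity by \Cref{cor:PSemiadditivity}; this direction is immediate.

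The main obstacle I anticipate is the bookkeeping in the first two paragraphs: making precise that the parametrized Segal maps of \Cref{cstr:ParametrizedSegalMap} are strictly (or at least coherently) compatible both with base change along maps $C\to B$ in $T$ and with disjoint-union decompositions of the pointed $P$-set in the source. The construction of $\rho_i$ involves a non-canonical choice of splitting $X\times_BC_i\simeq(X\times_AC_i)\sqcup J_i$, so one must argue that these splittings can be chosen compatibly (or that the resulting maps in $\smallint\ulfinptdPsets$ are independent of the choice up to the homotopies that matter) — this is the kind of routine-but-delicate verification that \Cref{lem:norm_is_project_away} and \Cref{lem:AlphaIsDiagonalMatrix} are designed to streamline, and I would lean on the identification of $\rho$ with the concrete "project away the complement of the diagonal" description rather than manipulating choices directly. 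Once that compatibility is in hand, everything else is a direct application of fiberwise semiadditivity and the slice-stability of the hypotheses.
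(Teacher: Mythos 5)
Your first two reductions track the paper's proof: you reduce the full Segal condition of \Cref{prop:CMon_in_T_objects} to the case $C=B$, obtaining the single maps $\rho_{p,X}$, and then use fiberwise semiadditivity again to decompose $X=\bigsqcup X_j$ into orbits, reducing to the case where $X=C$ is a single object of $T$ with structure map $q\colon C\to A$ in $P$. The necessity direction is also handled correctly. The problem is your third reduction, which is the crux of the whole proposition, and it does not work as stated.

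You propose to identify $\widetilde F(\rho_{p,C})$ with $\widetilde F(\rho_{p'})$ for $p'\colon C\times_BA\to C$ by a base-change argument after slicing over $C$. But $\rho_{p,C}$ is a morphism $(B,C_+)\to(A,C_+)$ in $\smallint\ulfinptdPsets$, while $\rho_{p'}$ is a morphism between the entirely different objects $(C,(C\times_BA)_+)$ and $(C\times_BA,(C\times_BA)_+)$; the functor $\widetilde F$ has no mechanism to compare its values on unrelated morphisms. Slicing cannot repair this: neither $B$ nor $A$ admits a structure map to $C$ (the morphisms $p$ and $q$ point the other way), so $\rho_{p,C}$ does not arise from a morphism over the slice $T_{/C}$, and the slice-insensitivity you invoke has no morphism to which it could apply.

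The missing ingredient is much more elementary. Since $P$ is closed under composition, both $q\colon C\to A$ and $pq\colon C\to B$ lie in $P$, and there is a commuting triangle
\[
\begin{tikzcd}
(B,C_+) \arrow[rd, "\rho_{pq}"'] \arrow[r, "\rho_{p,C}"] &  (A,C_+) \arrow[d, "\rho_q"]  \\
&   (C,C_+)
\end{tikzcd}
\]
in $\smallint\ulfinptdPsets$, as one readily checks from \Cref{cstr:ParametrizedSegalMap}. Both $\rho_q$ and $\rho_{pq}$ are special Segal maps of exactly the form controlled by your hypothesis, so $\widetilde F(\rho_q)$ and $\widetilde F(\rho_{pq})$ are equivalences, and $\widetilde F(\rho_{p,C})$ is one by two-out-of-three. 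This also renders moot the bookkeeping you worry about at the end: no compatibility of disjoint-summand splittings across different $\rho$'s is ever needed, because the only commutativity one has to check is the triangle above.
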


\begin{proof}
	First we observe that $F$ is a $P$-semiadditive monoid in $\ul{\Ee}_T$ if and only if $F$ is fiberwise semiadditive and for every map $p\colon A \to B$ in $P$ and every finite pointed $P$-set $X_+ \in \ulfinptdPsets(A)$, the map
	\begin{equation*}
		\widetilde{F}(\rho_{p,X})\colon \widetilde{F}(B,X_+) \to  \widetilde{F}(A,X_+)
	\end{equation*}
	is an equivalence. For this it suffices to observe that the following triangle commutes
	\[
	\begin{tikzcd}[column sep = large]
		{\widetilde{F} (C,(X \times_B C)_+)} & {\prod_{i=1}^n \widetilde{F}(C_i,(X \times_A C_i)_+)} \\
		& {\prod_{i=1}^n \widetilde{F}(C,(X \times_A C_i)_+).}
		\arrow[from=1-1, to=2-2]
		\arrow["{(\widetilde{F}(\rho_i))_{i=1}^n}", from=1-1, to=1-2]
		\arrow["{(\widetilde{F}(\rho_{p_i,X\times_A C_i}))_{i=1}^n}"', from=2-2, to=1-2]
	\end{tikzcd}
	\]
	Next suppose that $X= \coprod C_i$. We note that by fiberwise semi-additivity of $F$, $\tilde{F}(\rho_{p,X})$ is equal to a product of the $\tilde{F}(\rho_{p,C_i})$, and therefore we can further reduce to the case where $X = C$ is in $T$. Write $q\colon C\rightarrow A$ for the map in $P$ expressing $C$ as a finite $P$-set over $A$. Finally we claim that the following diagram
	\[
	\begin{tikzcd}
		(B,C_+) \arrow[rd, "\rho_{pq}"] \arrow[r, "\rho_{p,C}"] &  (A,C_+) \arrow[d, "\rho_p"]  \\
		&   (C,C_+)
	\end{tikzcd}
	\] commutes in $\int \ulfinptdPsets$. This can readily be checked from the definitions. Therefore after applying $\tilde{F}$, the 2-out-of-3 property implies that it suffices to assume that $\tilde{F}(\rho_p)$ is an equivalence for all $p\in P$.
\end{proof}

\begin{remark}\label{rmk:otherlevels}
	While \Cref{prop:parametrized-segal-special-map} gives an explicit description of the underlying $\infty$-category of $\ulPCMon(\ul{\Ee}_T)$, a similar analysis in fact describes the whole $T$-$\infty$-category $\ulPCMon(\ul{\Ee}_T)$. At an object $B' \in T$, it consists of those $T$-functors $F\colon \ulfinptdPsets \times \ul{B'} \to \ul{\Ee}_T$ whose curried map $F'\colon \ulfinptdPsets \to \ulFun(\ul{B'},\ul{\Ee}_T)$ is $P$-semiadditive, see \Cref{cor:SemiadditiveFunctorsUnderBaseChangeAdjunction}. On the other hand, the $T$-functor $F$ corresponds to a functor $\tilde{F}\colon \smallint (\ulfinptdPsets \times \ul{B'}) \rightarrow \Ee$ by \Cref{lem:AdjunctionGrothendieckConstructionTObjects}. Carrying out the same analysis as in the proofs of \Cref{prop:CMon_in_T_objects} and \Cref{prop:parametrized-segal-special-map} shows that $F$ corresponds to a $P$-semiadditive functor $F'\colon \ulfinptdPsets \to \ulFun_T(\ul{B'},\ul{\Ee}_T)$ if and only if the following conditions are satisfied:
	\begin{itemize}
		\item The $T$-functor $F'$ is fiberwise semiadditive; put differently, for any $f\colon B\to B'$ the restriction of $\tilde F$ to the (non-full) subcategory $\ul{\mathbb F}^P_{T,*}(B)\times\{f\}\subset\ul{\mathbb F}^P_{T,*}(B)\times\ul{B'}(B)\subset\smallint \big(\ulfinptdPsets \times \ul{B'}\big)$ is semiadditive in the usual sense.
		\item For every map $p \colon A \to B$ in $P$ and every map $f\colon B\to B'$ in $T$, the map
		\[
		\tilde{F}(\rho_p,p)\colon \tilde{F}(B,A_+,f) \to \tilde{F}(A,A_+,p \circ f)
		\]
		is an equivalence.
	\end{itemize}
\end{remark}

\section{The universal property of special global \texorpdfstring{$\Gamma$}{Gamma}-spaces}\label{sec:univ-prop-global-Gamma}
In this section we want to identify the global $\infty$-category of $\Orb$-commutative monoids in global spaces with the various models of \emph{globally} and \emph{$G$-globally coherently commutative monoids} studied in \cite{schwede2018global}*{Chapter 2} and \cite{g-global}*{Chapter 2}. In particular, after evaluating at the trivial group, this will yield an equivalence between the underlying ordinary $\infty$-category of $\Orb$-commutative monoids in global spaces with Schwede's \emph{ultra-commutative monoids} with respect to finite groups.

For this, the model based on so-called \emph{(special) $G$-global $\Gamma$-spaces} will be the most convenient; we recall the relevant theory in \ref{subsec:GGlobalGammaSpaces} below and show how $G$-global $\Gamma$-spaces assemble into a global $\infty$-category $\ul{\GammaS}^\text{gl}$. In~\ref{subsec:comp-all-Gamma} we will then identify $\ul{\GammaS}^\text{gl}$ with a certain parametrized functor category, from which we will deduce the desired comparison between \emph{special} $G$-global $\Gamma$-spaces and $\ul\CMon^{\Orb}(\ul\Spc_{\Glo})$ in~\ref{subsec:universal-gamma}. This will then immediately imply various universal properties of global $\Gamma$-spaces, including Theorem~\ref{introthm:universal-prop-gamma} from the introduction.

\subsection{A reminder on \texorpdfstring{\for{toc}{$G$}\except{toc}{$\bm G$}}{G}-global \texorpdfstring{\for{toc}{$\Gamma$}\except{toc}{$\bm\Gamma$}}{Γ}-spaces} \label{subsec:GGlobalGammaSpaces}
Segal \cite{segal} introduced \emph{(special) $\Gamma$-spaces} as a model of commutative monoids in the $\infty$-category of spaces, and an equivariant generalization of his theory was later established by Shimakawa \cite{shimakawa}. We will be concerned with the following $G$-global refinement \cite{g-global}*{Section~2.2} of this story:

\begin{definition}
	We write $\Gamma$ for the category of finite pointed sets and pointed maps. For any $n\ge0$ we let $n^+\mathrel{:=}\{0,\dots,n\}$ with basepoint $0$.

	We moreover write $\cat{$\bm\Gamma$-$\bm{E\mathcal M}$-$\bm G$-SSet}$ for the category of functors $\Gamma\to \cat{$\bm{E\mathcal M}$-$\bm G$-SSet}$. A map $f\colon X\to Y$ in $\cat{$\bm\Gamma$-$\bm{E\mathcal M}$-$\bm G$-SSet}$ (i.e.~a natural transformation) is called a \emph{$G$-global level weak equivalence} if $f(S_+)\colon X(S_+)\to Y(S_+)$ is a $(G\times\Sigma_S)$-global weak equivalence (with respect to the $\Sigma_S$-action induced by the tautological action on $S$) for every finite set $S$.

	Similarly, we write $\cat{$\bm\Gamma$-$\bm G$-$\bm{\mathcal I}$-SSet}$ for the category of functors $X\colon\Gamma\to \cat{$\bm G$-$\bm{\mathcal I}$-SSet}$, and we define $G$-global level weak equivalences in $\cat{$\bm\Gamma$-$\bm G$-$\bm{\mathcal I}$-SSet}$ analogously.
\end{definition}

We will refer to objects of either of these categories as \emph{$G$-global $\Gamma$-spaces}. Beware that \cite{g-global} reserves this name for functors $X$ for which $X(0^+)$ is a terminal object, while for us the above definition will be more useful. However, we will later only be interested in so-called \emph{special} $G$-global $\Gamma$-spaces, for which this technicality will turn out to be irrelevant, see Proposition~\ref{prop:special-pointed-vs-unpointed} below.

\subsubsection{Model categorical properties} Just like in the unstable case we have the following Elmendorf type theorem expressing the homotopy theory of special $G$-global $\Gamma$-spaces in terms of enriched presheaves:

\begin{proposition}\label{prop:elmendorf-Gamma}
	The $G$-global level weak equivalences are part of a simplicial combinatorial model structure on $\cat{$\bm\Gamma$-$\bm{E\mathcal M}$-$\bm G$-SSet}$.

	Moreover, if we write $\OGglGamma\subset \cat{$\bm\Gamma$-$\bm{E\mathcal M}$-$\bm G$-SSet}$ for the full subcategory spanned by the objects $\Gamma_{H,S,\phi}\mathrel{:=}(\Gamma(S_+,\blank)\times E\mathcal M\times G_\phi)/H$ (where $H$ is a finite group, $S$ a finite $H$-set, $\phi\colon H\to G$ a homomorphism, and $G_\phi$ denotes $G$ with $H$ acting from the right via $\phi$), then the enriched Yoneda embedding induces a functor \[ \Phi_\Gamma\colon\cat{$\bm\Gamma$-$\bm{E\mathcal M}$-$\bm G$-SSet}\to\PSH(\OGglGamma)\]
	which is the right half of a Quillen equivalence when we equip the right hand side with the projective model structure.
	\begin{proof}
		For any finite group $H$, any finite $H$-set $S$, and any homomorphism $\phi\colon H\to G$, the functor $X\mapsto X(S_+)^\phi$ preserves filtered colimits, pushouts along injections, and it is corepresented by $\Gamma_{H,S,\phi}$ (via evaluation at $[\id,1,1]$). Thus, the objects of $\OGglGamma$ form a \emph{set of orbits} in the sense of \cite{dwyer-kan-elmendorf}*{2.1}, and the above statements are instances of Theorems~2.2 and~3.1 of \emph{op.~cit.}
	\end{proof}
\end{proposition}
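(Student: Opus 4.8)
The plan is to deduce everything from the general machinery of Dwyer--Kan's equivariant Elmendorf theorem \cite{dwyer-kan-elmendorf}, exactly as in the unstable case (Proposition~\ref{prop:elmendorf}). First I would recall the setup of \emph{op.~cit.}: given a bicomplete simplicially enriched category $\mathcal C$ together with a set $\mathcal O$ of objects (``orbits''), one gets a projective-type model structure on $\mathcal C$ whenever the functors $\maps(O,\blank)\colon\mathcal C\to\cat{SSet}$ for $O\in\mathcal O$ jointly detect weak equivalences and fibrations defined by a suitable ``$\mathcal O$-cellular'' condition, and moreover the enriched Yoneda embedding $\mathcal C\to\PSH(\mathcal O_\mathcal{O})$ (restriction of the full Yoneda embedding to the full subcategory $\mathcal O_\mathcal O$ spanned by $\mathcal O$) is the right half of a Quillen equivalence onto $\PSH(\mathcal O_\mathcal{O})$ with the projective model structure. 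So the entire content is to verify that the objects $\Gamma_{H,S,\phi}$ form such a set of orbits inside $\cat{$\bm\Gamma$-$\bm{E\mathcal M}$-$\bm G$-SSet}$ and that the induced cellular weak equivalences are precisely the $G$-global level weak equivalences.

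The key steps, in order: (i) Identify $\maps(\Gamma_{H,S,\phi},X)$ with $X(S_+)^\phi$ via evaluation at the canonical $0$-simplex $[\id_{S_+},1,1]$; this is formal from the defining colimit presentation $\Gamma_{H,S,\phi}=(\Gamma(S_+,\blank)\times E\mathcal M\times G_\phi)/H$ together with the enriched Yoneda lemma for the representable $\Gamma(S_+,\blank)$, the free $E\mathcal M$-action, and the free right $H$-action on $E\mathcal M$, which also lets one pass the fixed points inside. (ii) Check the point-set hypotheses of \cite{dwyer-kan-elmendorf}*{2.1}: the functors $X\mapsto X(S_+)^\phi$ preserve filtered colimits and pushouts along (level-wise) injections, because $(\blank)^\phi$ does so in $\cat{SSet}$ (fixed points commute with filtered colimits and with pushouts along injections of simplicial sets) and these are computed level-wise in $\Gamma$ and in $\cat{$\bm{E\mathcal M}$-$\bm G$-SSet}$. (iii) Identify the ``cellular'' weak equivalences determined by this set of orbits with the $G$-global level weak equivalences: a map $f$ is a cellular equivalence iff $f(S_+)^\phi$ is a weak homotopy equivalence for all $S$, $H$, $S\in\cat{$\bm H$-Set}$, $\phi\colon H\to G$; unwinding the definition of $(G\times\Sigma_S)$-global weak equivalence (Kan-fixed-points under all graph subgroups $\Gamma_{H',\psi}\subset \Sigma_S\times(G\times\Sigma_S)$... — more precisely under all universal subgroups of $\mathcal M$ mapping to $G\times\Sigma_S$) one sees these conditions match up, using Lemma~\ref{lemma:universal-embeddings} to reduce to the universal-embedding case. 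Then Theorems~2.2 and~3.1 of \cite{dwyer-kan-elmendorf} give the model structure, its simplicial and combinatorial character, and the Quillen equivalence $\Phi_\Gamma$.

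The main obstacle I expect is step (iii): correctly matching the indexing data. The orbits $\Gamma_{H,S,\phi}$ are indexed by a \emph{finite $H$-set} $S$, and the hom space is $X(S_+)^\phi$ where $X(S_+)$ carries a $(G\times\Sigma_S)$-action, so the relevant fixed points for a $(G\times\Sigma_S)$-global weak equivalence involve \emph{graph subgroups of universal subgroups of $\mathcal M$ mapping into $\Sigma_S$}, and one must check that the collection $\{(H,S,\phi)\}$ of orbit data produces exactly these. This is essentially the content of the bookkeeping in \cite{g-global}*{Section~2.2}, and the honest thing to do is to cite the precise statements there (or spell out the bijection between orbit types and ``universal $(G\times\Sigma_S)$-global weak equivalence data'') rather than re-derive it; the other steps are genuinely routine once this identification is in place. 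A secondary, much smaller point is the passage between the ``pointed'' convention used here (arbitrary $X(0^+)$) and the ``reduced'' convention of \cite{g-global}; for the \emph{level} model structure this is harmless, and it is flagged as such in the statement, so no work is needed here beyond the forward reference to Proposition~\ref{prop:special-pointed-vs-unpointed}.
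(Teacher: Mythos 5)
Your proposal follows essentially the same route as the paper's own proof: verify that the objects $\Gamma_{H,S,\phi}$ corepresent the functors $X\mapsto X(S_+)^\phi$ and that these functors preserve filtered colimits and pushouts along injections, so that they form a set of orbits in the sense of Dwyer--Kan, and then invoke Theorems~2.2 and~3.1 of \cite{dwyer-kan-elmendorf}. The only point of divergence is your step~(iii), which you flag as a potential obstacle: in fact the matching of the orbit-detected cellular weak equivalences with the $G$-global level weak equivalences is a routine definition unwinding (a pair $(H,\phi\colon H\to G,S\in\cat{$\bm H$-Set})$ with $H\subset\mathcal M$ universal is exactly the data $(S,H,\psi\colon H\to G\times\Sigma_S)$ appearing in the definition of a levelwise $(G\times\Sigma_S)$-global weak equivalence), and the paper silently absorbs it into the phrase ``the above statements are instances.''
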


\begin{remark}\label{rk:OGgl-Gamma-morphsim-spaces}
	We can make the morphism spaces in $\OGglGamma$ explicit, analogously to Remark~\ref{rk:OGgl-morphism-spaces}: as observed in the above proof, we have for any $(H,S,\phi)$ as above and any $G$-global $\Gamma$-space $X$ an isomorphism
	\begin{equation*}
		\epsilon\colon\maps(\Gamma_{H,S,\phi},X)\to X(S_+)^\phi
	\end{equation*}
	given by evaluation at $[\id,1,1]$. Specializing this to $X=\Gamma_{K,T,\psi}$, we see that $\OGglGamma$ is a $(2,1)$-category (the quotient $\Gamma_{K,T,\psi}=(\Gamma(T_+,\blank)\times E\mathcal M\times G_\psi)/K$ being the nerve of a groupoid as $K$ acts freely on $E\mathcal M$) and that $n$-simplices of $\maps(\Gamma_{H,S,\phi},\Gamma_{K,T,\psi})$ correspond to $\phi$-fixed classes $[f;u_0,\dots,u_n;g]$ where $f\colon T_+\to S_+$, $u_0,\dots,u_n\in\mathcal M$, and $g\in G$.

	Moreover, a direct computation shows that under the above identification composition is given by
	\begin{equation*}
		[f';u'_0,\dots,u_n';g'][f;u_0,\dots,u_n;g]=[ff';u_0u_0',\dots, u_nu_n';gg']
	\end{equation*}
	and that the following diagram commutes for any $X\in\cat{$\bm\Gamma$-$\bm{E\mathcal M}$-$\bm G$-SSet}$:
	\begin{equation*}
		\begin{tikzcd}[column sep=huge]
			\Phi(X)(\Gamma_{K,T,\psi})\arrow[d,"\epsilon"']\arrow[r, "{\Phi(X)[f;u_0,\dots,u_n;g]}"] &[3em] \Phi(X)(\Gamma_{H,S,\phi})\arrow[d,"\epsilon"]\\
			X(T_+)^\psi\arrow[r, "X(f)\circ \big({[u_0,\dots,u_n;g]\cdot\blank}\big)"'] & X(S_+)^\phi\llap.
		\end{tikzcd}
	\end{equation*}
\end{remark}

\subsubsection{The global $\infty$-category of global $\Gamma$-spaces}
Letting $G$ vary, the categories $\cat{$\bm\Gamma$-$\bm{E\mathcal M}$-$\bm G$-SSet}$ together with the $G$-global weak equivalences assemble into a global relative category with functoriality given by restrictions (apply Lemma~\ref{lemma:alpha-star-EM} with $\alpha$ replaced by $\alpha\times\Sigma_S$). Localizing, we then get a global $\infty$-category $\ul{\Gamma\mathscr S}^\textup{gl}$. Analogously, we obtain a global $\infty$-category $\ul{\Gamma\mathscr S}^\text{gl}_{\mathcal I}$ whose value at a finite group $G$ is the localization of $\cat{$\bm\Gamma$-$\bm G$-$\bm{\mathcal I}$-SSet}$ at the $G$-global weak equivalences, with functoriality given via restrictions.

\begin{proposition}\label{prop:Gamma-I-vs-EM}
	The evaluation functor $\ev_\omega$ induces an equivalence $\ul\GammaS^\textup{gl}_{\mathcal I}\simeq\ul\GammaS^\textup{gl}$.
	\begin{proof}
		Precisely the same argument as in \cite{g-global}*{Theorem~2.2.33} shows that the functor $\ev_\omega\colon\cat{$\bm\Gamma$-$\bm G$-$\bm{\mathcal I}$-SSet}\to \cat{$\bm\Gamma$-$\bm{E\mathcal M}$-$\bm G$-SSet}$ admits a homotopy inverse for any $G$ (given by applying the homotopy inverse of $\cat{$\bm{\mathcal I}$-SSet}\to\cat{$\bm{E\mathcal M}$-SSet}$ levelwise).
	\end{proof}
\end{proposition}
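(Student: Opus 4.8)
The statement to prove is Proposition~\ref{prop:Gamma-I-vs-EM}: that $\ev_\omega\colon \ul\GammaS^\textup{gl}_{\mathcal I}\to\ul\GammaS^\textup{gl}$ is an equivalence of global $\infty$-categories. Since both global $\infty$-categories are obtained by pointwise Dwyer--Kan localization of global relative categories and the map $\ev_\omega$ is induced by a strictly $2$-natural, levelwise homotopical transformation of functors $\sGlo^\op\to\cat{RelCat}$ (this is exactly the discussion preceding the proposition, using that $\ev_\omega$ commutes strictly with restrictions after pulling through the $G$-actions), it suffices to check that $\ev_\omega$ induces an equivalence on each localization $\Cc[W^{-1}]$ individually --- the functoriality of $L$ then upgrades this to an equivalence of global $\infty$-categories. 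So the plan is to reduce to a fixed finite group $G$ and show that $\ev_\omega\colon\cat{$\bm\Gamma$-$\bm G$-$\bm{\mathcal I}$-SSet}\to\cat{$\bm\Gamma$-$\bm{E\mathcal M}$-$\bm G$-SSet}$ is an equivalence of relative categories, i.e.\ induces an equivalence on the associated $\infty$-categorical localizations at the $G$-global weak equivalences.

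The most direct route, and the one the authors signal in the proof sketch, is to exhibit a homotopy inverse constructed levelwise. Concretely: Theorem~\ref{thm:global-model-I} tells us $\ev_\omega\colon\cat{$\bm G$-$\bm{\mathcal I}$-SSet}\to\cat{$\bm{E\mathcal M}$-$\bm G$-SSet}_\textup{inj.~$G$-gl}$ is the left half of a Quillen equivalence; in particular, as a functor of relative categories (with $G$-global weak equivalences on both sides) it admits a homotopy inverse --- one may take the derived right adjoint, or more explicitly the composite of a fibrant replacement in $\cat{$\bm{E\mathcal M}$-$\bm G$-SSet}$ with the right adjoint of $\ev_\omega$. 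Call this levelwise homotopy inverse $R$. Applying $R$ objectwise (i.e.\ $R\circ\blank\colon X\mapsto (S_+\mapsto R(X(S_+)))$) produces a functor $\cat{$\bm\Gamma$-$\bm{E\mathcal M}$-$\bm G$-SSet}\to\cat{$\bm\Gamma$-$\bm G$-$\bm{\mathcal I}$-SSet}$. The key point is that a map of $G$-global $\Gamma$-spaces is a $G$-global weak equivalence precisely when it is a levelwise one --- a map $f$ such that $f(S_+)$ is a $(G\times\Sigma_S)$-global weak equivalence for every finite set $S$ --- so both the unit $\id\Rightarrow \ev_\omega R$ and counit $R\,\ev_\omega\Rightarrow\id$, being levelwise the (co)unit of the Quillen equivalence and hence levelwise $G$-global weak equivalences, are $G$-global weak equivalences of $\Gamma$-spaces. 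This shows $\ev_\omega$ and $R$ are mutually inverse on $\infty$-categorical localizations. This is exactly the argument of \cite{g-global}*{Theorem~2.2.33}, and the cleanest way to present it here is: (i) recall that $G$-global weak equivalences of $\Gamma$-spaces are detected levelwise; (ii) invoke the levelwise Quillen equivalence of Theorem~\ref{thm:global-model-I}; (iii) conclude that $\ev_\omega$ is a homotopy equivalence of relative categories; (iv) pass to the global $\infty$-category via functoriality of $L$.

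The one subtlety --- and the main thing to be careful about --- is the difference, flagged in the text after the definition of $G$-global $\Gamma$-spaces, between the authors' convention (all functors $\Gamma\to$ spaces) and the convention of \cite{g-global} (only those $X$ with $X(0^+)$ terminal): the fibrant replacement functor $R$ used to build the homotopy inverse acts on the $1$-category $\cat{$\bm{E\mathcal M}$-$\bm G$-SSet}$ without reference to this condition, so there is nothing to check there, but one should make sure the cited Quillen equivalence of Theorem~\ref{thm:global-model-I} and hence its homotopy inverse is being applied to the (larger) category of \emph{all} $\Gamma$-diagrams; since the $\Gamma$-structure is preserved strictly by applying $\ev_\omega$ and $R$ levelwise and plays no role in the argument beyond bookkeeping, this poses no genuine difficulty --- it is purely a matter of phrasing. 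Alternatively, one could avoid even discussing a homotopy inverse and instead argue that $\ev_\omega$ is a relative functor that, level by level, induces an equivalence of localizations, and that $\Gamma$-indexed diagrams valued in a relative category localize to the functor $\infty$-category out of $\Gamma$ (a general fact about Reedy-type levelwise model structures), so that $\ev_\omega$ induces $\Fun(\Gamma,\cdot)$ of an equivalence; I would only take this route if the levelwise homotopy-inverse argument turned out to need more setup than expected. I expect no real obstacle: the heart of the matter --- that $\ev_\omega$ is a levelwise Quillen equivalence --- is already established in Theorem~\ref{thm:global-model-I}, and everything else is the formal observation that levelwise weak equivalences are exactly the $G$-global weak equivalences of $\Gamma$-spaces together with the functoriality of Dwyer--Kan localization already recorded for global $\infty$-categories.
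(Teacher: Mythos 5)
Your overall outline — reduce to a fixed $G$, exhibit a homotopy inverse of $\ev_\omega$ on $\Gamma$-spaces by applying a homotopy inverse of the unstable comparison levelwise, and conclude via functoriality of the Dwyer--Kan localization — is correct and matches the paper's strategy. However, your proposed construction of the homotopy inverse has a real gap.

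You take $R = U \circ P$, where $P$ is a fibrant replacement for the \emph{injective $G$-global} model structure on $\cat{$\bm{E\mathcal M}$-$\bm G$-SSet}$ and $U$ is the right adjoint of $\ev_\omega$, and then apply this $R$ levelwise over $\Gamma$. The resulting zig-zags $X \to R\ev_\omega X$ and $Y \to PY \leftarrow \ev_\omega R Y$ are levelwise \emph{$G$}-global weak equivalences. But as you yourself quote, a map of $G$-global $\Gamma$-spaces is a $G$-global weak equivalence exactly when it is a $(G\times\Sigma_S)$-global weak equivalence at level $S_+$, with respect to the $\Sigma_S$-action induced from the indexing set $S$. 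The fibrant replacement $P$ was built to produce $G$-globally fibrant objects; there is no reason for $\iota\colon Y(S_+)\to P(Y(S_+))$ (with the $\Sigma_S$-action transported through $P$) to be a $(G\times\Sigma_S)$-global weak equivalence — the $G$-global condition only sees $\phi$-fixed points for homomorphisms $\phi\colon H\to G$, not for $\phi\colon H\to G\times\Sigma_S$. So the zig-zag you write down is \emph{not} known to be a $G$-global weak equivalence of $\Gamma$-spaces, and your conclusion that "levelwise $G$-global weak equivalences" suffices conflates two different conditions.

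What the paper actually does, and what makes the argument go through, is use the explicit homotopy inverse of the \emph{unstable} comparison $\ev_\omega\colon\cat{$\bm{\mathcal I}$-SSet}\to\cat{$\bm{E\mathcal M}$-SSet}$ (for the trivial group) constructed in \cite{g-global}. The crucial property of that construction, established there, is that it is a homotopy inverse \emph{uniformly in the group acting}: for any group $K$ acting on $X \in \cat{$\bm{\mathcal I}$-SSet}$, the zig-zag comparing $X$ and $R\ev_\omega X$ is a $K$-global weak equivalence. Applying that $R$ levelwise and pulling through the $G\times\Sigma_S$-actions then \emph{does} give $(G\times\Sigma_S)$-global weak equivalences at each level. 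This uniformity is a genuine additional input beyond the bare statement that the $G$-level comparison is a Quillen equivalence. To repair your argument you would either need to cite this uniformity or else replace $P$ by a fibrant replacement that is simultaneously a $(G\times\Sigma_S)$-globally acyclic cofibration at every level $S_+$ — but the latter is not what the injective $G$-global small-object-argument replacement gives you out of the box. (A smaller point: the directions of your unit/counit are swapped — for $\ev_\omega\dashv U$, the unit goes $\id\Rightarrow R\ev_\omega$ and the comparison on the other side is the zig-zag $\id\to P \leftarrow \ev_\omega R$ — but this is a notational slip, not the essential issue.)
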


For every $G$-global $\Gamma$-space $X$, evaluating at $1^+$ (with trivial action) yields an \emph{underlying $G$-global space} $X(1^+)$, and this obviously yields a global functor $\mathbb U\colon\ul\GammaS^\text{gl}\to\ul\S^\text{gl}$. For later use we record:

\begin{lemma}
	The global functor $\mathbb U$ admits a left adjoint, which is pointwise induced by $\Gamma(1^+,\blank)\times \blank$.
	\begin{proof}
		By the Yoneda Lemma we have an adjunction
		\begin{equation*}
			\Gamma(1^+,\blank)\times\blank\colon\cat{$\bm{E\mathcal M}$-SSet}\rightleftarrows\cat{$\bm\Gamma$-$\bm{E\mathcal M}$-SSet} :\ev_{1^+},
		\end{equation*}
		and for every finite group $G$ pulling through the $G$-actions yields an adjunction $\cat{$\bm{E\mathcal M}$-$\bm G$-SSet}\rightleftarrows\cat{$\bm\Gamma$-$\bm{E\mathcal M}$-$\bm G$-SSet}$ of $1$-categories such that both functors are homotopical. In particular, $\mathbb U$ admits a pointwise adjoint of the above form.

		For the Beck-Chevalley condition it suffices now to observe that since all functors are homotopical, the Beck-Chevalley comparison map of $\infty$-categorical localizations can be modelled by the $1$-categorical Beck-Chevalley map, and the latter is even the identity by construction.
	\end{proof}
\end{lemma}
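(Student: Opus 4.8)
The plan is to prove the two claims—existence of a pointwise left adjoint, and verification of the Beck-Chevalley condition—separately, invoking the pointwise criterion for parametrized adjoints (\Cref{prop:pointwisecriterionadjoints}).

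First I would establish the pointwise adjunction. For a fixed finite group $G$, the evaluation functor $\ev_{1^+}\colon\cat{$\bm\Gamma$-SSet}\to\cat{SSet}$ has a left adjoint $\Gamma(1^+,\blank)\times\blank$ by the enriched Yoneda lemma (since $1^+$ corepresents $\ev_{1^+}$ on the category of $\Gamma$-sets, being the representing object $\Gamma(1^+,\blank)$ itself). Pulling through the strict $(E\mathcal M\times G)$-actions, this lifts to a $1$-categorical adjunction $\Gamma(1^+,\blank)\times\blank\colon\cat{$\bm{E\mathcal M}$-$\bm G$-SSet}\rightleftarrows\cat{$\bm\Gamma$-$\bm{E\mathcal M}$-$\bm G$-SSet} :\ev_{1^+}$. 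Both functors are homotopical: $\ev_{1^+}$ is homotopical essentially by definition of the $G$-global level weak equivalences (restricting to $S=1^+$ with trivial $\Sigma_S$-action), and $\Gamma(1^+,\blank)\times\blank$ is homotopical because smashing (here, taking product against a fixed simplicial set) with $\Gamma(1^+,\blank)_n \cong \coprod_n \Delta^0$ levelwise is just taking an $n$-fold coproduct, which preserves $(G\times\Sigma_S)$-global weak equivalences. A homotopical adjunction between relative categories descends to an adjunction on Dwyer-Kan localizations; hence each $\mathbb U(G)\colon\GammaS^\text{gl}_G\to\S^\text{gl}_G$ admits a left adjoint of the stated form.

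Next I would verify the Beck-Chevalley condition. For a homomorphism $\alpha\colon G\to G'$, we must show that the canonical transformation $(\Gamma(1^+,\blank)\times\blank)\circ\alpha^*\Rightarrow\alpha^*\circ(\Gamma(1^+,\blank)\times\blank)$ (the mate of the commuting square of right adjoints $\ev_{1^+}\circ\alpha^*=\alpha^*\circ\ev_{1^+}$) is an equivalence on localizations. The point is that this $\infty$-categorical Beck-Chevalley map is modelled by the $1$-categorical one, precisely because all four functors in the square are homotopical—so the localization functors are compatible with the relevant (co)units and one can compare the mates levelwise. But on the $1$-categorical level the square $\ev_{1^+}\circ\alpha^*=\alpha^*\circ\ev_{1^+}$ of \emph{right adjoints} commutes strictly (restriction along $\alpha$ manifestly commutes with evaluation at $1^+$, since $\alpha^*$ acts on the value $X(1^+)$ by the same formula it uses on $X(S_+)$ for any $S$), and likewise the square of \emph{left adjoints} $(\Gamma(1^+,\blank)\times\blank)\circ\alpha^*=\alpha^*\circ(\Gamma(1^+,\blank)\times\blank)$ commutes strictly—restriction along $\alpha$ and the functor $\Gamma(1^+,\blank)\times\blank$ act on disjoint coordinates of the data. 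Hence the $1$-categorical Beck-Chevalley map is the identity, and therefore so is its image in the localization.

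The main obstacle is the second step: making precise the assertion that the $\infty$-categorical Beck-Chevalley comparison map of Dwyer-Kan localizations is modelled by its $1$-categorical counterpart. One clean way is to observe that, since all functors involved are homotopical, they induce functors on localizations fitting into a square commuting up to a specified natural isomorphism, and the $\infty$-categorical mate is computed from the localized unit and counit, which are the images of the $1$-categorical unit and counit under localization; one then checks that forming the mate commutes with localization in this situation. Granting this—which is a formal, if slightly fiddly, coherence point—the verification reduces to the trivial observation above that the $1$-categorical Beck-Chevalley transformation is an identity. Once both pointwise conditions are in hand, \Cref{prop:pointwisecriterionadjoints} produces the desired parametrized left adjoint to $\mathbb U$, pointwise of the claimed form.
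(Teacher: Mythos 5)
Your proof follows essentially the same route as the paper: establish the $1$-categorical adjunction $\Gamma(1^+,\blank)\times\blank\dashv\ev_{1^+}$ via the (enriched) Yoneda lemma, pull it through the $G$-actions, observe that both functors are homotopical so the adjunction descends to localizations, and then note that the Beck--Chevalley transformation can be computed $1$-categorically where it is literally the identity. The extra detail you supply on homotopicality of the left adjoint is correct in substance (one should note that $\Sigma_S$ acts nontrivially on $\Gamma(1^+,S_+)$, so the phrase ``$n$-fold coproduct'' is slightly loose; what makes it work is that graph-subgroup fixed points split across the two factors, with the relevant map becoming $f$ restricted along the $G$-component), and your acknowledgement that the ``model the $\infty$-categorical mate by the $1$-categorical one'' step is a formal coherence point to be granted matches the level of rigor the paper itself adopts.
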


\subsubsection{Specialness}
Just like in the non-equivariant case, in the theory of global coherent commutativity one typically isn't interested in \emph{all} $G$-global $\Gamma$-spaces, but only those satisfying a certain `specialness' condition (although the fact that there are \emph{non-special} $G$-global $\Gamma$-spaces is what will make this model so convenient for our comparison):

\begin{definition}[cf.~\cite{g-global}*{Definition~2.2.50}]
	A $G$-global $\Gamma$-space $X\colon\Gamma\to\cat{$\bm{E\mathcal M}$-$\bm G$-SSet}$ is called \emph{special} if for every finite set $S$ the \emph{Segal map}
	\begin{equation*}
		\rho\colon X(S_+)\to\prod_{s\in S} X(1^+)
	\end{equation*}
	induced by the characteristic maps $\chi_s\colon S_+\to 1^+$ of the elements $s\in S$ is a $(G\times\Sigma_S)$-global weak equivalence.

	We write $\ul\GammaS^{\text{gl, spc}}\subset\ul\GammaS^{\text{gl}}$ for the full global subcategory spanned in degree $G$ by the special $G$-global $\Gamma$-spaces, and $\ul\GammaS^{\text{gl, spc}}_*\subset\ul\GammaS^{\text{gl}}$ for those special $\Gamma$-spaces $X$ for which $X(0^+)$ is terminal in the $1$-categorical sense (and not just $G$-globally weakly equivalent to a terminal object).

	Analogously, we define specialness for elements of $\cat{$\bm\Gamma$-$\bm G$-$\bm{\mathcal I}$-SSet}$, yielding nested full global subcategories $\ul\GammaS^{\text{gl, spc}}_{\mathcal I,*}\subset\ul\GammaS^{\text{gl, spc}}_{\mathcal I}\subset\ul\GammaS^{\text{gl}}_{\mathcal I}$.
\end{definition}

\begin{proposition}\label{prop:special-pointed-vs-unpointed}
	All maps in the commutative diagram
	\begin{equation*}
		\begin{tikzcd}
			\ul\GammaS_{\mathcal I,*}^\textup{gl, spc}\arrow[r,hook]\arrow[d, "\ev_\omega"'] & \ul\GammaS_{\mathcal I}^\textup{gl, spc}\arrow[d, "\ev_\omega"]\\
			\ul\GammaS_*^\textup{gl, spc}\arrow[r, hook] & \ul\GammaS^\textup{gl, spc}
		\end{tikzcd}
	\end{equation*}
	of global $\infty$-categories are equivalences.
	\begin{proof}
		For the left hand vertical arrow this is part of \cite{g-global}*{Corollary~2.2.53}. We will now show that the lower horizontal inclusion is an equivalence; the argument for the top inclusion is then similar, and with this established the proposition will follow by $2$-out-of-$3$.

		To prove the claim, we now fix a finite group $G$ and observe that the inclusion $\cat{$\bm\Gamma$-$\bm{E\mathcal M}$-$\bm G$-SSet}_*\hookrightarrow\cat{$\bm\Gamma$-$\bm{E\mathcal M}$-$\bm G$-SSet}$ of those $G$-global $\Gamma$-spaces $X$ with $X(0^+)=*$ admits a left adjoint given by quotienting out $X(0^+)$, i.e.~forming the pushout
		\begin{equation*}
			\begin{tikzcd}
				\const X(0^+)\arrow[r]\arrow[d]\arrow[dr, phantom, "\ulcorner"{very near end}] & X\arrow[d]\\
				*\arrow[r] & X/X(0^+)
			\end{tikzcd}
		\end{equation*}
		where the top map is induced by the unique pointed maps $0^+\to S_+$ for varying $S$.
		It will therefore be enough that the right hand vertical map is a $G$-global level weak equivalence if $X$ is special. But indeed, in this case $\const X(0^+)\to *$ is a $G$-global level weak equivalence (as $X(0^+)$ is $G$-globally and hence also $(G\times\Sigma_T)$-globally weakly contractible for any $T$ by the special case $S=\varnothing$ of the Segal condition), while for \emph{any} $\Gamma$-space the top map is an injective cofibration as $X(0^+)\to X(S_+)$ admits a retraction via functoriality. The claim then follows as pushouts along injective cofibrations preserve $G$-global level weak equivalences by \cite{g-global}*{Lemma~1.1.14} applied levelwise.
	\end{proof}
\end{proposition}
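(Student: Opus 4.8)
The plan is to prove the four claimed equivalences by reducing everything to the two evaluation functors $\ev_\omega$ (for which we will invoke the Quillen equivalence $\cat{$\bm{\mathcal I}$-SSet}\rightleftarrows\cat{$\bm{E\mathcal M}$-SSet}$ levelwise) together with two "forget the basepoint / quotient out $X(0^+)$" adjunctions, one on the $\mathcal I$-side and one on the $E\mathcal M$-side. The key structural input is that the diagram of global $\infty$-categories commutes (which is immediate, since all four functors are induced by strictly $2$-natural transformations of global relative categories), so it suffices to prove that three of the four maps are equivalences and deduce the fourth by $2$-out-of-$3$. I would take as already established (via \cite{g-global}*{Corollary~2.2.53}) that the left vertical map $\ev_\omega\colon\ul\GammaS_{\mathcal I,*}^\textup{gl, spc}\to\ul\GammaS_*^\textup{gl, spc}$ is an equivalence, and concentrate on the two horizontal inclusions.

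For the lower horizontal inclusion $\ul\GammaS_*^\textup{gl, spc}\hookrightarrow\ul\GammaS^\textup{gl, spc}$: I would fix a finite group $G$ and exhibit, at the $1$-categorical level, a left adjoint to the inclusion $\cat{$\bm\Gamma$-$\bm{E\mathcal M}$-$\bm G$-SSet}_*\hookrightarrow\cat{$\bm\Gamma$-$\bm{E\mathcal M}$-$\bm G$-SSet}$, namely $X\mapsto X/X(0^+)$, the pushout of $*\leftarrow\const X(0^+)\to X$ where the right map is induced by the unique pointed maps $0^+\to S_+$. These quotient maps are strictly natural in $G$ (via restrictions) and homotopical, so they assemble into a global functor; it remains only to check that when $X$ is \emph{special}, the canonical map $X\to X/X(0^+)$ is a $G$-global level weak equivalence. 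For this I would argue pointwise in the finite set $S$: the Segal condition at $S=\varnothing$ shows $X(0^+)$ is $G$-globally weakly contractible, hence $(G\times\Sigma_T)$-globally weakly contractible for every $T$; thus $\const X(0^+)\to *$ is a $G$-global level weak equivalence. On the other hand the top map $\const X(0^+)\to X$ is a levelwise injective cofibration for \emph{any} $\Gamma$-space, since each $X(0^+)\to X(S_+)$ admits a retraction by functoriality (the map $S_+\to 0^+$). Invoking \cite{g-global}*{Lemma~1.1.14} levelwise --- pushouts along injective cofibrations preserve $G$-global level weak equivalences --- completes the verification. On localizations, a localization of a category at the weak equivalences, restricted to a reflective subcategory all of whose reflection maps are weak equivalences, is an equivalence, so the lower inclusion becomes an equivalence of global $\infty$-categories.

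The top horizontal inclusion $\ul\GammaS_{\mathcal I,*}^\textup{gl, spc}\hookrightarrow\ul\GammaS_{\mathcal I}^\textup{gl, spc}$ is handled by the same argument with $\cat{$\bm{\mathcal I}$-SSet}$ in place of $\cat{$\bm{E\mathcal M}$-SSet}$: the inclusion $\cat{$\bm\Gamma$-$\bm G$-$\bm{\mathcal I}$-SSet}_*\hookrightarrow\cat{$\bm\Gamma$-$\bm G$-$\bm{\mathcal I}$-SSet}$ of those $X$ with $X(0^+)$ a (strict) terminal object admits the left adjoint $X\mapsto X/X(0^+)$, again pointwise a pushout along a levelwise injective cofibration, and again a $G$-global weak equivalence on special objects because $X(0^+)$ is $G$-globally weakly contractible; here one uses that $\ev_\omega$ detects $G$-global weak equivalences and the injective-cofibration stability statement for $\cat{$\bm G$-$\bm{\mathcal I}$-SSet}$. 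Having the left, top and bottom maps as equivalences, $2$-out-of-$3$ in the commuting square forces the right vertical $\ev_\omega\colon\ul\GammaS_{\mathcal I}^\textup{gl, spc}\to\ul\GammaS^\textup{gl, spc}$ to be an equivalence as well, finishing the proof.

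\textbf{Main obstacle.} I expect the only real subtlety to be verifying cleanly that the quotient map $X\to X/X(0^+)$ is a $G$-global level weak equivalence for special $X$ --- in particular organizing the argument so that the contractibility of $X(0^+)$ is genuinely $(G\times\Sigma_S)$-global for every finite set $S$ (not merely $G$-global), and citing the correct left-properness / injective-cofibration stability statements levelwise; once that is in place, the passage from the pointwise reflective adjunctions to equivalences of localized global $\infty$-categories, and the final $2$-out-of-$3$, are formal.
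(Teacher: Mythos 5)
Your proposal is correct and follows the paper's own argument in every essential step: the same reflection $X\mapsto X/X(0^+)$ realized as a pushout along the injective cofibration $\const X(0^+)\to X$, the same use of the $S=\varnothing$ Segal condition to get $(G\times\Sigma_T)$-global contractibility of $X(0^+)$, the same appeal to \cite{g-global}*{Lemma~1.1.14} for preservation of weak equivalences under such pushouts, and the same ``argue the $\mathcal I$-case similarly, then finish by $2$-out-of-$3$'' closing move. The only difference is that you spell out a bit more of the $\mathcal I$-side argument (detection of weak equivalences by $\ev_\omega$) which the paper leaves implicit under ``similar.''
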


\subsection{Global \texorpdfstring{\for{toc}{$\Gamma$}\except{toc}{$\bm\Gamma$}}{Γ}-spaces as parametrized functors}\label{subsec:comp-all-Gamma}
In this section we will prove the key computational ingredient to the universal property of special global $\Gamma$-spaces in form of the following description of the global $\infty$-category $\ul\GammaS^\text{gl}$ of all global $\Gamma$-spaces:

\begin{theorem}\label{thm:Gamma-presheaves}
	There exists an equivalence of global $\infty$-categories \[\Xi\colon\ul\GammaS^{\textup{gl}}\simeq\ul\Fun_{\Glo}(\ul{\mathbb F}_{\Glo,*}^{\Orb},\ul\Spc)\]
	together with a natural equivalence filling
	\begin{equation*}
		\begin{tikzcd}
			\ul\GammaS^{\textup{gl}}\arrow[r, "\Xi"]\arrow[d, "\mathbb U"'] & \ul\Fun_{\Glo}(\ul{\mathbb F}_{\Glo,*}^{\Orb},\ul\Spc)\arrow[d, "\ev_{(\id_1)_+}"]\\
			\ul\S^\textup{gl}\arrow[r, "\simeq"'] & \ul\Spc_{\Glo}
		\end{tikzcd}
	\end{equation*}
	where the unlabelled arrow on the bottom is `the' essentially unique equivalence (see Theorem~\ref{thm:global-spaces}).
\end{theorem}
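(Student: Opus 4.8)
The plan is to mimic the proof of Theorem~\ref{thm:global-spaces-comparison}: instead of comparing $\ul\GammaS^\text{gl}$ to the parametrized functor category directly, I would first re-express the levels of $\ul\GammaS^\text{gl}$ as localizations of \emph{model categories of enriched presheaves}, then compare the indexing $(2,1)$-categories to (a slice of) $\Glo$, and finally invoke the $\infty$-categorical presheaf comparison already used in the unstable case. The key structural input is Proposition~\ref{prop:elmendorf-Gamma}, which identifies $\cat{$\bm\Gamma$-$\bm{E\mathcal M}$-$\bm G$-SSet}$ (with $G$-global level equivalences) with $\PSH(\OGglGamma)$; so the task reduces to: (i) produce a \emph{strict} $(2,1)$-category $\rOgl_{\Gamma,G}$ with an equivalence $\rOgl_{\Gamma,G}\simeq \OGglGamma$, exactly as $\rOgl_G\simeq\Ogl_G$ in Lemma~\ref{lemma:Ogl-vs-rOgl}; (ii) upgrade the functors $\Phi_\Gamma$ to a \emph{strictly $2$-natural} transformation $\Psi_\Gamma$ in $\sGlo$ after replacing $\OGglGamma$ by $\rOgl_{\Gamma,G}$, as in Propositions~\ref{prop:Psi-equivalence} and~\ref{prop:Psi-2-natural}; and (iii) identify $\rOgl_{\Gamma,\bullet}$ with the simplicial-category-valued functor $G\mapsto(\rOgl_G/\text{finite pointed sets})$ or more precisely with a functor whose levelwise Duskin nerve computes $\ul{\mathbb F}^{\Orb}_{\Glo,*}$.

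\textbf{Key steps in order.} First I would define $\rOgl_{\Gamma,G}$: objects are quadruples $(H,S,\phi)$ of a universal subgroup $H\subset\mathcal M$, a finite $H$-set $S$, and $\phi\colon H\to G$; a $1$-cell $(H,S,\phi)\to(K,T,\psi)$ is a tuple $(f,u,g,\sigma)$ with $f\colon T\to S$ suitably equivariant along $\sigma$, and $(u,g,\sigma)$ as in $\rOgl_G$, with $2$-cells given by elements of $K$ acting as before. One checks (strict $2$-functoriality, analogous to the construction of $\rOgl_G$) and builds $\mu_\Gamma\colon\rOgl_{\Gamma,G}\to\OGglGamma$ sending $(H,S,\phi)$ to $\Gamma_{H,S,\phi}$, which is an equivalence of $(2,1)$-categories by the same fixed-point and morphism-space computations recorded in Remark~\ref{rk:OGgl-Gamma-morphsim-spaces}. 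Next I would define $\Psi_\Gamma\colon\cat{$\bm\Gamma$-$\bm{E\mathcal M}$-$\bm G$-SSet}\to\PSH(\rOgl_{\Gamma,G})$ by $\Psi_\Gamma(X)(H,S,\phi)=X(S_+)^\phi$, with enriched functoriality dictated by the formula in Remark~\ref{rk:OGgl-Gamma-morphsim-spaces}; the same `formal extension along the isomorphisms $\epsilon$' argument as in Proposition~\ref{prop:Psi-equivalence} shows $\Psi_\Gamma\cong\mu_\Gamma^*\circ\Phi_\Gamma$, hence it descends to an equivalence of $\infty$-categorical localizations. Extending $G\mapsto\rOgl_{\Gamma,G}$ to a strict $(2,1)$-functor $\rOgl_{\Gamma,\bullet}\colon\sGlo\to\cat{Cat}_\Delta$ via $\alpha_!(H,S,\phi)=(H,S,\alpha\phi)$ (and $2$-cells acting in the $G$-variable only), one checks $2$-naturality of $\Psi_\Gamma$ verbatim as in Proposition~\ref{prop:Psi-2-natural}. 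Then, exactly as in the proof of Theorem~\ref{thm:global-spaces}, post-composing with a simplicial fibrant replacement and the canonical map $\nerve\PSH^{\cat{Kan}}(A)\to\PSh(\nerve_\Delta A)$ yields a pointwise-localizing global functor, hence an equivalence $\ul\GammaS^\text{gl}\simeq\PSh(\nerve_\Delta\rOgl_{\Gamma,\bullet})$.

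\textbf{Identifying the target.} It remains to produce a strictly $2$-natural equivalence of $\cat{Cat}_\Delta$-valued functors $\rOgl_{\Gamma,\bullet}\Rightarrow \text{(something computing }\ul{\mathbb F}^{\Orb}_{\Glo,*}\text{)}$. Forgetting the $S$-datum gives a projection $\rOgl_{\Gamma,G}\to\rOgl_G\simeq\sGlo_{/G}$ (Lemmas~\ref{lemma:Glo-vs-rOgl},~\ref{lemma:Glo-rOgl-natural}), and the fiber over a fixed $(H,\phi)$ is the category of finite $H$-sets and ($H$-)equivariant pointed maps between their $+$-constructions, i.e.\ finite pointed $\Orb_H$-sets. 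Concretely I would argue that $\nerve_\Delta\rOgl_{\Gamma,G}$, with its functoriality and its fibration over $\Glo_{/G}$, is equivalent to the unstraightening of the $\Glo_{/G}$-$\infty$-category $\ulfinptdPsets$ (for $T=\Glo$, $P=\Orb$), using the combinatorial description of $\ul{\mathbb F}^{\Orb}_{\Glo,*}$ from Definition~\ref{def:FinitePointedPSets} and Lemma~\ref{lem:FinitePtdPSetsDisjointlyBased} together with Proposition~\ref{prop:comparison-slices} to replace $\Glo_{/G}$-objects over all $G$ by $\Glo$-objects; combined with \Cref{lem:AdjunctionGrothendieckConstructionTObjects} this gives $\PSh(\nerve_\Delta\rOgl_{\Gamma,\bullet})\simeq\ul\Fun_{\Glo}(\ul{\mathbb F}^{\Orb}_{\Glo,*},\ul\Spc)$. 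Finally, the commuting square with $\mathbb U$ follows because under all these identifications $\mathbb U=\ev_{1^+}$ corresponds on the presheaf side to restriction along the object $(H,1^+,\phi)$ with trivial $H$-action, which is exactly $\ev_{(\id_1)_+}$ after the slice/Yoneda identifications, together with the fact that the unstable comparison of Theorem~\ref{thm:global-spaces} is built from precisely the functors $\Psi$ and $\gamma$ that $\Psi_\Gamma$ and $\rOgl_{\Gamma,\bullet}$ restrict to along $S=1^+$.

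\textbf{Main obstacle.} I expect the genuinely delicate step to be (iii): carefully matching the strict-$2$-functor $\rOgl_{\Gamma,\bullet}$ (with its bespoke flipped-order composition conventions) to the parametrized $\infty$-category $\ulfinptdPsets$ built abstractly over the slices $\Glo_{/G}$, i.e.\ checking that the fibers, the restriction functoriality in $G$, and the conjugation $2$-cells all line up — and in particular that the object $(H,1^+_{\text{triv}},\phi)$ is sent to the disjointly-based generator $S^0$, so that the bottom square really commutes. The model-categorical steps (i)–(ii) are essentially a transcription of Section~\ref{subsec:UniversalPropertyGlobalSpaces} with an extra $\Gamma$-variable carried along, and the $\infty$-categorical presheaf comparison is quoted wholesale; the bookkeeping of indexing $2$-categories is where the real work lies.
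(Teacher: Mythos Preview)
Your strategy is essentially the paper's, and steps (i)--(ii) are almost verbatim what the paper does (the paper's indexing $(2,1)$-category is $\combFinOrbSets{G}$, which is your $\rOgl_{\Gamma,G}$ up to an op; the paper's $\delta$ and $\Psi_\Gamma$ are your $\mu_\Gamma$ and $\Psi_\Gamma$). The genuine organizational difference is in step~(iii), and the paper's way of handling it is worth knowing: rather than defining $\rOgl_{\Gamma,G}$ directly and then trying to recognize it \emph{a posteriori} as an unstraightening, the paper \emph{defines} its indexing category as the $2$-categorical Grothendieck construction of $(\mathcal F^+_\bullet\times\sGlo(\blank,G))\circ\gamma$. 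With this definition, the hard identification you flag becomes two ingredients: the comparison between $2$-categorical and $\infty$-categorical Grothendieck constructions (Construction~\ref{constr:grothendieck-2-functor}, quoting \cite{grothendieck-unstraightening}), and the already-established equivalence $\ul{\mathbb F}^{\Orb}_{\Glo,*}\simeq\nerve\mathcal F^+_\bullet$ of Corollary~\ref{cor:F-vs-F-pointed}. This avoids having to check by hand that your fibration over $\rOgl_G$ has the correct fibres, restriction functoriality, and $2$-cells --- all of that is packaged into the structural comparison of unstraightenings.

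For the commuting square, your sketch (``restrict along $S=1^+$'') points in the right direction but is vague about exactly which equivalences you are tracking through. The paper instead passes to vertical left adjoints and uses the universal property of $\ul\Spc_{\Glo}$ to reduce to chasing the terminal object, then reduces once more (via the forgetful/constant adjunctions $\ul\S^\text{gl}(1)\rightleftarrows\Spc$ and $\ul\Spc_{\Glo}(1)\rightleftarrows\Spc$) to a non-parametrized check at the single object $(1;1^+;1)$. This is cleaner than trying to match the two presheaf comparisons levelwise, and you should expect something like it to be necessary: the bottom equivalence $\ul\S^\text{gl}\simeq\ul\Spc_{\Glo}$ is characterized only up to the universal property, so you ultimately have to invoke that universal property somewhere.
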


\subsubsection{A model of finite $\Orb$-sets} The proof of the theorem will occupy this whole subsection. As the first step, we will recognize $\ul{\mathbb F}^{\Orb}_{\Glo}$ and $\ul{\mathbb F}^{\Orb}_{\Glo,*}$ as some familiar global $1$-categories:

\begin{construction}
	For any finite group $G$, we write $\mathcal F_G$ for the category of finite $G$-sets. The assignment $G\mapsto \mathcal F_G$ becomes a strict $2$-functor in $\sGlo^\op$ via restrictions, and we denote the resulting global category by $\mathcal F_\bullet$.

	We moreover write $\mathcal F^+_\bullet$ for the corresponding global category of pointed finite $G$-sets.
\end{construction}

\begin{lemma}\label{lemma:F-vs-F}
	There is an essentially unique equivalence of global $\infty$-categories $\ul{\mathbb F}_{\textup{Glo}}^{\textup{Orb}}\simeq\nerve\mathcal F_\bullet$. Up to isomorphism, this sends $(H\hookrightarrow G)\in\ul{\mathbb F}^{\Orb}_{\Glo}(G)$ to $G/H\in\mathcal F_G$ for all finite groups $H\subset G$.
	\begin{proof}
		By Corollary~\ref{cor:FinPSetsFreeOnFinitePColimits} there is an essentially unique global functor $\ul{\mathbb F}_{\Glo}^{\Orb}\to\nerve\mathcal F_\bullet$ that preserves $\Orb$-coproducts and the terminal object. It remains to construct any such equivalence and prove that it admits the above description.

		By construction the left hand side is a subcategory of $\ul\Spc_{\Glo}$. On the other hand, we have a fully faithful functor of global $\infty$-categories $\iota\colon\nerve\mathcal F_\bullet\to\ul\S^\textup{gl}$ that is given by sending a finite $G$-set $X$ to $X$ considered as a discrete simplicial set with trivial $E\mathcal M$-action. It then suffices to show that the unique equivalence $F\colon \ul\Spc_{\Glo}\to\ul\S^\text{gl}$ restricts accordingly and admits the above description.

		For this we first observe that indeed $F(i\colon H\hookrightarrow G)\simeq G/H$ for every $H\subset G$: namely, $i$ can be identified with $i_!p^*(*)$ where $p\colon H\to 1$ is the unique homomorphism, and since $F$ is an equivalence it follows that $F(i)\simeq i_!p^*F(*)=i_!p^*(*)$, which can in turn be identified with $G/H$ by Lemma~\ref{lemma:alpha-star-EM}.

		As a consequence of Corollary~\ref{cor:FinPSetsHasFinitePColimits}, each $\mathbb F_{\Glo}^{\Orb}(G)$ is closed under (ordinary) finite coproducts, so $F$ preserves them (as a functor to $\ul\S^\text{gl}$). Together with the above computation, it immediately follows that $F$ restricts to an essentially surjective functor $\ul{\mathbb F}_{\Glo}^{\Orb}\to\textup{ess im}\,\iota$ as claimed.
	\end{proof}
\end{lemma}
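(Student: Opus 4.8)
The statement has two parts: first, the existence of an equivalence $\ul{\mathbb F}^{\Orb}_{\Glo}\simeq\nerve\mathcal F_\bullet$; second, the identification of this equivalence on objects (sending $(H\hookrightarrow G)$ to $G/H$). The plan is to reduce both to the universal property of $\ul{\mathbb F}^{\Orb}_{\Glo}$ established in \Cref{cor:FinPSetsFreeOnFinitePColimits}: since $\Orb\subseteq\Glo$ is orbital (\Cref{ex:Orb_Orbital}), that corollary tells us there is an essentially unique $\Orb$-coproduct-preserving global functor out of $\ul{\mathbb F}^{\Orb}_{\Glo}$ which hits the terminal object, once we know that the target $\nerve\mathcal F_\bullet$ admits finite $\Orb$-coproducts. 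So it suffices to \emph{produce} one such equivalence by hand and check it has the asserted description; essential uniqueness then takes care of the rest.

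To produce it, I would not try to build a functor $\ul{\mathbb F}^{\Orb}_{\Glo}\to\nerve\mathcal F_\bullet$ directly. Instead, both categories sit inside the global $\infty$-category of global spaces: $\ul{\mathbb F}^{\Orb}_{\Glo}$ is by \Cref{def:ulFinPSets} a parametrized subcategory of $\ul\Spc_{\Glo}$, and there is an evident fully faithful global functor $\iota\colon\nerve\mathcal F_\bullet\to\ul\S^\textup{gl}$ sending a finite $G$-set to the corresponding discrete simplicial set with trivial $E\mathcal M$-action (fully faithful because on mapping spaces it is, levelwise, the inclusion of a discrete set of $G$-equivariant maps, which matches the computation of mapping spaces in $\S^\textup{gl}_G$). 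Now \Cref{thm:global-spaces} (really its proof, constructing the equivalence $\ul\Spc_{\Glo}\simeq\ul\S^\textup{gl}$) gives an equivalence $F\colon\ul\Spc_{\Glo}\iso\ul\S^\textup{gl}$. The heart of the argument is then to show that $F$ restricts to an equivalence between the subcategory $\ul{\mathbb F}^{\Orb}_{\Glo}$ and the essential image of $\iota$, and that under this identification $F(i\colon H\hookrightarrow G)\simeq G/H$.

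The key computation is the object-level one: for an inclusion $i\colon H\hookrightarrow G$ and $p\colon H\to 1$ the unique homomorphism, one has $i\simeq i_!\,p^*(*)$ inside $\ul\Spc_{\Glo}$ (this is just the description of corepresentables in $\ul{\mathbb F}^{\Orb}_{\Glo}(G)$ as $i_!$ applied to the terminal object). Since $F$ is an equivalence of global $\infty$-categories it commutes with the parametrized left adjoints $i_!$ and the restrictions $p^*$, hence $F(i)\simeq i_!\,p^*F(*)\simeq i_!\,p^*(*)$, and this last object is computed in $\ul\S^\textup{gl}$ by \Cref{lemma:alpha-star-EM} — the left adjoint $i_!$ along $\cat{$\bm{E\mathcal M}$-$\bm{H}$-SSet}\to\cat{$\bm{E\mathcal M}$-$\bm{G}$-SSet}$ applied to the point is $G\times_H *\cong G/H$ with trivial $E\mathcal M$-action, i.e.\ $\iota(G/H)$. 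Finally, by \Cref{cor:FinPSetsHasFinitePColimits} each $\mathbb F^{\Orb}_{\Glo}(G)$ is closed under ordinary finite coproducts (the fold maps $\bigsqcup B\to B$ lie in the fiberwise part), so $F$ carries such finite coproducts of objects $i\colon H\hookrightarrow G$ to finite coproducts of the $G/H$, which are exactly the general objects of $\mathcal F_G$ up to isomorphism. Thus $F$ restricts to an essentially surjective, fully faithful global functor $\ul{\mathbb F}^{\Orb}_{\Glo}\to\nerve\mathcal F_\bullet$ (via the identification of $\nerve\mathcal F_\bullet$ with $\mathrm{ess\,im}\,\iota$), which is the desired equivalence.

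The main obstacle I anticipate is purely bookkeeping rather than conceptual: one needs to be careful that the abstract equivalence $F$ genuinely commutes with the \emph{parametrized} left adjoints $i_!$ — this is where one invokes that an equivalence in $\PrLT$ (or even just a globally cocontinuous equivalence) preserves all parametrized colimits, hence the relevant $i_!$'s and their Beck–Chevalley squares — and that the fully-faithfulness of $\iota$ and the closure of $\mathbb F^{\Orb}_{\Glo}(G)$ under fiberwise coproducts really do let us conclude essential surjectivity onto all of $\mathcal F_G$ (every finite $G$-set decomposes as a disjoint union of orbits $G/H$). None of this requires a serious calculation, so a couple of sentences citing \Cref{cor:FinPSetsHasFinitePColimits}, \Cref{cor:FinPSetsFreeOnFinitePColimits}, \Cref{thm:global-spaces}, and \Cref{lemma:alpha-star-EM} should suffice; the routine verification that $\iota$ is fully faithful can be left to the reader or dispatched by comparing mapping spaces with \Cref{prop:elmendorf} / \Cref{rk:OGgl-morphism-spaces}.
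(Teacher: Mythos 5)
Your proposal is correct and follows essentially the same route as the paper: reduce to essential uniqueness via \Cref{cor:FinPSetsFreeOnFinitePColimits}, embed both sides into $\ul\Spc_{\Glo}\simeq\ul\S^\textup{gl}$ via the universal equivalence $F$ of \Cref{thm:global-spaces} and the discrete-simplicial-set inclusion $\iota$, compute $F(i)\simeq i_!p^*(*)\simeq G/H$ using \Cref{lemma:alpha-star-EM}, and conclude essential surjectivity from closure under fiberwise coproducts (\Cref{cor:FinPSetsHasFinitePColimits}). The only cosmetic difference is that you flag more explicitly that $F$ commutes with the parametrized $i_!$ because it is a (cocontinuous) equivalence, a point the paper leaves implicit.
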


\begin{corollary}\label{cor:F-vs-F-pointed}
	There is an essentially unique equivalence $\theta\colon\ul{\mathbb F}_{\Glo,*}^{\Orb}\simeq\nerve\mathcal F_\bullet^+$. Up to isomorphism, this sends $(H\hookrightarrow G)_+$ to $G/H_+$ for all finite groups $H\subset G$.
	\begin{proof}
		The existence of such an equivalence is immediate from the previous lemma. For the uniqueness part, it suffices by Corollary~\ref{cor:FinptdPSetsFree} that any autoequivalence of $\mathcal F_1^+$ preserves $1^+$ up to isomorphism, which is immediate from the observation that this is the only non-zero object without non-trivial automorphisms.
	\end{proof}
\end{corollary}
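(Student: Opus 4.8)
The plan is to obtain $\theta$ by applying the pointed-objects construction fiberwise to the equivalence of \Cref{lemma:F-vs-F}, and then to read off essential uniqueness from the universal property of $\ulfinptdPsets$ recorded in \Cref{cor:FinptdPSetsFree}.

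\emph{Existence.} I would first record two identifications. On one side, $\ul{\mathbb F}_{\Glo,*}^{\Orb}\simeq(\ul{\mathbb F}_{\Glo}^{\Orb})_*$ by the remark in \Cref{def:FinitePointedPSets}. On the other side, $\nerve\mathcal F_\bullet^+\simeq(\nerve\mathcal F_\bullet)_*$: fiberwise over $\Glo$ the category $\mathcal F_G^+$ of pointed finite $G$-sets is the undercategory of the one-point $G$-set inside $\mathcal F_G$, and the nerve of a $1$-categorical undercategory agrees with the corresponding slice of the nerve, all compatibly with the restriction functoriality in $\Glo$. Since the functor $(-)_*\colon\Cat_\Glo^*\to\Cat_\Glo^{\pt}$ of \Cref{cor:AdjunctionPointedness} is functorial, applying it to the equivalence of \Cref{lemma:F-vs-F} yields an equivalence $\theta\colon\ul{\mathbb F}_{\Glo,*}^{\Orb}\simeq\nerve\mathcal F_\bullet^+$. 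For the description on objects, I would trace a disjointly based finite $\Orb$-set through this chain: under $\ul{\mathbb F}_{\Glo,*}^{\Orb}(G)\simeq(\ul{\mathbb F}_{\Glo}^{\Orb}(G))_*$ the object $(H\hookrightarrow G)_+$ corresponds to $(H\hookrightarrow G)\sqcup\id_G$ together with the inclusion of the summand $\id_G$, and since the equivalence of \Cref{lemma:F-vs-F} preserves finite $\Orb$-coproducts and the $T$-final object and sends $(H\hookrightarrow G)$ to $G/H$ (and $\id_G$ to the one-point $G$-set), the image is $G/H\sqcup *$ with the inclusion of the added point, i.e.\ $G/H_+$.

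\emph{Essential uniqueness.} Both $\ul{\mathbb F}_{\Glo,*}^{\Orb}$ and $\nerve\mathcal F_\bullet^+$ are pointed global $\infty$-categories admitting finite $\Orb$-coproducts (the one-point $G$-set is a zero object of $\mathcal F_G^+$, and finite $\Orb$-coproducts exist by \Cref{lem:FindisjptdPSetsHasFinitePColimits}, transported along $\theta$), so any equivalence $\psi\colon\ul{\mathbb F}_{\Glo,*}^{\Orb}\to\nerve\mathcal F_\bullet^+$ automatically preserves finite $\Orb$-coproducts and the $T$-final object, and hence lies in the full subcategory to which \Cref{cor:FinptdPSetsFree} applies. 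By that corollary $\psi$ is determined up to contractible choice by the object $\psi(S^0)\in\mathcal F_1^+$ obtained by evaluating at $S^0\colon\ul{1}\to\ulfinptdPsets$. Since $\psi$ is an equivalence preserving the $T$-final object, $\psi(S^0)$ is a non-terminal object of $\mathcal F_1^+$ with trivial automorphism group; but every object of $\mathcal F_1^+$ is isomorphic to some $n^+$ with automorphism group $\Sigma_n$, so the only such objects are $0^+$ and $1^+$, of which $0^+$ is the terminal (indeed zero) object. Hence $\psi(S^0)\cong 1^+$, and since $\mathrm{Aut}_{\mathcal F_1^+}(1^+)$ is trivial, \Cref{cor:FinptdPSetsFree} shows that the space of such equivalences $\psi$ is contractible, which is the asserted essential uniqueness; the object description for $\theta$ was established above.

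The argument is entirely formal; the only points that require a little care are the $2$-natural identification $\nerve\mathcal F_\bullet^+\simeq(\nerve\mathcal F_\bullet)_*$ and the observation that an equivalence of global $\infty$-categories between our two categories automatically preserves the finite $\Orb$-coproducts and $T$-final objects needed to invoke \Cref{cor:FinptdPSetsFree}. Neither is a genuine obstacle, since equivalences preserve all limits and colimits; the substantive content is already contained in \Cref{lemma:F-vs-F} and \Cref{cor:FinptdPSetsFree}.
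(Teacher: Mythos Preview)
Your proof is correct and follows essentially the same approach as the paper: existence via applying the pointed-objects construction to \Cref{lemma:F-vs-F}, and uniqueness via \Cref{cor:FinptdPSetsFree} together with the observation that $1^+$ is characterized in $\mathcal F_1^+$ as the unique non-zero object with trivial automorphism group. The paper phrases the uniqueness step as showing that any autoequivalence of $\mathcal F_1^+$ fixes $1^+$, while you work directly with equivalences $\ul{\mathbb F}_{\Glo,*}^{\Orb}\to\nerve\mathcal F_\bullet^+$, but this is only a cosmetic difference.
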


\subsubsection{Grothendieck constructions}
Thanks to \Cref{Rmk:FunTobjects}, understanding the global functor category $\ul\Fun_{\Glo}(\ul{\mathbb F}_{\Glo,*}^{\Orb},\ul\Spc)$ is equivalent to understanding the unstraightenings $\textstyle\int \ul{\mathbb F}^{\Orb}_{\Glo,*}\times\ul G$
of the diagram $\ul{\mathbb F}^{\Orb}_{\Glo,*}\times\ul G\colon \Glo\catop \to \Cat_{\infty}$ naturally in $G \in \Glo$. However as an upshot of the previous subsection, the functors $\ul{\mathbb F}^{\Orb}_{\Glo,*}\times\ul G$ are modelled by strict $2$-functors of strict $(2,1)$-categories, which will allow us to give a reasonably explicit description in terms of the classical Grothendieck construction:

\begin{construction}\label{constr:grothendieck-2-functor}
	Let $\mathscr C$ be a strict $(2,1)$-category. We recall (see \cite{buckley-grothendieck}*{Construction~2.2.1} or \cite{grothendieck-unstraightening}*{Definition 6.1}) the Grothendieck construction $\tcatUn{C} F$ for a strict $2$-functor $F\colon\mathscr C\to\cat{Cat}_{(2,1)}$ into the $(2,1)$-category of $(2,1)$-categories:
	\begin{enumerate}[(1)]
		\item The objects of $\tcatUn{C}F$ are given by
		pairs $(c,X)$ with $c\in\mathscr C$ and $X\in F(c)$
		\item A morphism from $(c,X)$ to $(d,Y)$ is given by a pair of a map $f\colon c\to d$ and a map $g\colon F(f)(X)\to Y$ in $F(d)$; if $(f',g')\colon (d,Y)\to (e,Z)$ is another morphism, then their composite is
		\begin{equation*}
			(f',g')(f,g)=\big(f'f, F(f'f)(X)=F(f')F(f)(X)\xrightarrow{F(f')(g)} F(f')(Y)\xrightarrow{g'} Z).
		\end{equation*}
		\item A $2$-cell $(f_1,g_1)\Rightarrow (f_2,g_2)$ between maps $(c,X)\to (d,Y)$ is given by a $2$-cell $\sigma\colon f_1\Rightarrow f_2$ in $\mathscr C$ together with a $2$-cell
		\begin{equation*}
			\begin{tikzcd}[row sep=small]
				F(f_1)(X)\arrow[dd, "F(\sigma)"']\arrow[dr, bend left=10pt, "g_1", ""'{name=A}]\\
				& Y\rlap.\\
				F(f_2)(X)\arrow[from=A,Rightarrow, "\tau"]\arrow[ur, bend right=10pt, "g_2"']
			\end{tikzcd}
		\end{equation*}
		in $F(d)$. If $(\rho,\zeta)\colon (f_2,g_2)\Rightarrow (f_3,g_3)$ is another $2$-cell, then the composite $(\rho,\zeta)\circ(\sigma,\tau)$ is given by the composition in $\mathscr C$ and the pasting
		\begin{equation*}
			\begin{tikzcd}
				F(f_1)(X)\arrow[d, "F(\sigma)"']\arrow[dr, bend left=15pt, "g_1", ""'{name=A}]\\
				F(f_2)(X)\arrow[d, "F(\rho)"']\arrow[r, "g_2"{description}, ""'{name=B,yshift=-3pt}]\arrow[Rightarrow,from=A, "\tau"] &[1em] Y\\
				F(f_3)(X)\arrow[ur, bend right=15pt, "g_3"']\arrow[Rightarrow,from=B,"\zeta"']
			\end{tikzcd}
		\end{equation*}
		in $F(d)$. Moreover, if $(\sigma',\tau')\colon (f_1',g_1')\Rightarrow(f_2',g_2')$ is a $2$-cell between maps $(d,Y)\to (e,Z)$, then the horizontal composite $(\sigma',\tau')\odot(\sigma,\tau)$ is given by the horizontal composite $\sigma'\odot\sigma$ and the pasting
		\begin{equation*}
			\begin{tikzcd}[column sep=large]
				F(f_1'f_1)(X)\arrow[d, "F(f_1')F(\sigma)(X)"']\arrow[dr, bend left=15pt, "F(f_1')(g_1)", ""'{name=A}]\\[1.5em]
				F(f_1'f_2)(X)\arrow[r, "F(f_1')(g_2)"{description}]\arrow[from=A, Rightarrow, "F(f_1')(\tau)"']\arrow[d, "F(\sigma')(F(f_2)(X))"'] &[2em] F(f_1')(Y)\arrow[d, "F(\sigma')(Y)"']\arrow[dr, bend left=15pt, "g_1'", ""'{name=B}]\\[1.5em]
				F(f_2'f_2)(X)\arrow[r, "F(f_2')(g_2)"'] & F(f_2')(Y) \arrow[r, "g_2'"']\arrow[from=B,Rightarrow, "\tau'"] & Z
			\end{tikzcd}
		\end{equation*}
		where the square commutes as $F(\sigma')$ is a natural transformation $F(f_1')\Rightarrow F(f_2')$.

	\end{enumerate}
	This comes with a natural strict $2$-functor $\pi\colon\tcatUn{C} F\to\mathscr C$ given by projecting onto the first coordinate. By~\cite{grothendieck-unstraightening}*{Proposition~2.15} the homotopy coherent nerve of this functor is a cocartesian fibration representing $\nerve_\Delta\circ F$. Put differently, there is a natural equivalence $\int(\nerve_\Delta\circ F)\simeq\nerve_\Delta(\tcatUn{C}F)$ over $\nerve_\Delta(C)$ from the usual marked unstraightening to the homotopy coherent nerve of the 2-categorical Grothendieck construction which preserves cocartesian edges.

	We can also describe the behaviour of this equivalence on fibers as follows: for any $c\in\mathscr C$ the composition
	\begin{equation*}
		\nerve_\Delta F(c)\hookrightarrow \nerve_\Delta(\textstyle\tcatUn{C}F)\simeq \int(\nerve_\Delta \circ F)
	\end{equation*}
	of the natural embedding with the above equivalence agrees with the usual identification of $\nerve_\Delta F(c)$ with the fiber of the unstraightening $\int(\nerve_\Delta\circ F)$ over $c$, see~\cite{grothendieck-unstraightening}*{proof of Proposition~6.25}. In particular, for the cocartesian fibration $\nerve_\Delta(\tcatUn{C}F)$ the notation $(c,X)$ (with $X\in F(c)$) for vertices is compatible with Notation~\ref{notation:elements-of-grothendieck}. As the above equivalence moreover preserves cocartesian edges, we also immediately deduce the analogous statement for $1$-simplices.
\end{construction}

\begin{construction}
	For every finite group $G$, we define a strict $(2,1)$-category $\combFinOrbSets{G}$ as follows: Sending a finite group $H$ to the product of the strict $(2,1)$-category $\mathcal{F}^+_H$ of finite pointed $H$-sets and the groupoid $\sGlo(H,\hskip0pt minus .5ptG)\hskip0pt minus 1pt\mathrel{:=}\hskip0pt minus 1pt\HOM_{\sGlo}(H,\hskip0pt minus .5ptG)$ of group homomorphisms $H \to G$ and conjugations defines a strict 2-functor
	\[
	\mathcal{F}^+_\bullet\times {\sGlo}(-,G)\colon \sGlo\catop \to \cat{Cat}_{(2,1)}.
	\]
	Composing this functor with the equivalence of strict $(2,1)$-categories $\gamma\colon \rOgl \iso \sGlo$ from \cref{cons:gamma_rOglvsGlo}, we obtain a strict 2-functor
	\[
	\FinOrbSets{G} := \left( \mathcal{F}^+_\bullet\times {\sGlo}(-,G)\right) \circ \gamma \colon (\rOgl)\catop \to \cat{Cat}_{(2,1)}.
	\]
	As before, we let $\combFinOrbSets{G}$ denote the 2-categorical Grothendieck construction of $\FinOrbSets{G}$. The assignment $G \mapsto \combFinOrbSets{G}$ then becomes a strict $2$-functor $\combFinOrbSets{\bullet}\colon \sGlo \to \cat{Cat}_{(2,1)}$ via (post)composition in $\sGlo$.
\end{construction}

As promised we can now prove:

\begin{proposition}\label{prop:Theta-grothendieck}
	There exists an equivalence
	\begin{equation*}
		\Theta_G\colon\nerve_\Delta\big(\combFinOrbSets{G}\big)=\nerve_\Delta\big(\tcatUn{}(\mathcal F_\bullet^+\times\cat{Glo}(\blank,G))\circ\gamma\big)\xrightarrow{\;\simeq\;}\textstyle\int \ul{\mathbb F}_{\Glo,*}^{\Orb}\times\ul G
	\end{equation*}
	of $\infty$-categories natural in $G\in\Glo$ with the following properties:
	\begin{enumerate}
		\item For all $H\in\rOgl$ and $\phi\colon H\to G$ in $\sGlo$, the following diagram commutes up to equivalence:
		\begin{equation}\label{diag:Theta-G-on-fibers}
			\begin{tikzcd}
				\nerve_\Delta(\mathcal F^+_H\times\{\phi\})\arrow[d,hook] \arrow[r, "\theta_H"] & \ul{\mathbb F}^{\Orb}_{\Glo,*} \times\{\phi\}\arrow[d,hook]\\
				\nerve_\Delta(\mathcal F^+_H\times\sGlo(H,G))\arrow[d] & \mathbb F^{\Orb}_{\Glo}\times\ul{G}(H)\arrow[d]\\
				\nerve_\Delta(\combFinOrbSets{G})\arrow[r, "\Theta_G"'] & \int\ul{\mathbb F}^{\Orb}_{\Glo,*}\times\ul G
			\end{tikzcd}
		\end{equation}
		where $\theta$ is the equivalence from Corollary~\ref{cor:F-vs-F-pointed} and the bottom vertical arrows are the chosen identifications of the fibers over $H$.

		In particular, $\Theta_G$ restricts to an equivalence between the non-full subcategory $\nerve_\Delta\big(\combFinOrbSets{G}\big)_\phi\subset \nerve_\Delta\big(\combFinOrbSets{G}\big)$ with objects of the form $(H;X,\phi)$ (for $X\in\mathcal F_H^+$) and morphisms only those that are the identities in $H$ and $\phi$ (i.e.~the image of $\mathcal F^+_H\times\{\phi\}$ under the chosen identification) and the analogous full subcategory on the right.

		\item For all maps $(\alpha,u)\colon K\to H$ in $\rOgl$ and $f\colon \alpha^*X\to Y$ in $\mathcal F^+_K$, the map $\Theta_G(\alpha,u;f,\id_{\phi\alpha})$ agrees up to equivalence with $(\alpha;\theta_K(f),\id_{\phi\alpha})$.
	\end{enumerate}
	\begin{proof}
		Specializing the above discussion we have a natural equivalence
		\begin{equation*}
			\nerve_\Delta\big(\combFinOrbSets{G}\big)\simeq\textstyle\int\nerve_\Delta\circ\FinOrbSets{G}=\int \nerve_\Delta\circ (\mathcal F_\bullet^+\times\sGlo(\blank,G))\circ\gamma
		\end{equation*}
		between the $2$-categorical and $\infty$-categorical Grothendieck construction sending the map $(\alpha,u;f,\id_{\phi\alpha})$ on the left to the map of the same name on the right and such that for every $H\in\rOgl$ the induced map on fibers respects the identifications with $\mathcal F^+_H\times\sGlo(H,G)$.

		On the other hand, as $\gamma\colon\rOgl\to\sGlo$ is an equivalence, the right hand side is in turn naturally equivalent to the unstraightening $\int\nerve_\Delta\circ(\mathcal F^+_\bullet\times\cat{Glo}(\blank,G))$ over $\Glo^\op$ by an equivalence sending $(\alpha,u;f,\id_{\phi\alpha})$ to $(\alpha;f,\id_{\phi\alpha})$ up to equivalence; again, under our chosen identifications this is just the identity on fibers.

		Finally, by construction of the $\infty$-categorical Yoneda embedding we have an equivalence $\upsilon\colon\nerve_\Delta(\cat{Glo}(L,G))\simeq\Glo(L,G)=\ul{G}(L)$ natural in both variables sending $\psi\colon L\to G$ to $\psi$, which together with the global equivalence $\theta$ from Corollary~\ref{cor:F-vs-F-pointed} induces an equivalence $\int\nerve_\Delta(\mathcal F^+_\bullet\times\sGlo(\blank,G))\simeq\int\ul{\mathbb F}^{\Orb}_{\Glo,*}\times\ul G$ sending $(\alpha;f,\id_{\phi\alpha})$ to $(\alpha;\theta_K(f),\id_{\phi\alpha})$ and that is given under the chosen identifications of the fibers over $H$ by $\theta_H\times \upsilon$. The commutativity of $(\ref{diag:Theta-G-on-fibers})$ follows immediately, which completes the proof of the proposition.
	\end{proof}
\end{proposition}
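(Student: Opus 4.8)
The plan is to build the equivalence $\Theta_G$ in three stages, peeling off one layer of structure at a time, and to verify the two compatibility statements at each stage so that they propagate automatically to the composite. The key observation is that all the categories in sight on the combinatorial side are (nerves of) strict $(2,1)$-categories obtained by Grothendieck constructions of strict $2$-functors, so the heavy lifting has already been done in \Cref{constr:grothendieck-2-functor}: there we recorded that the homotopy coherent nerve of the $2$-categorical Grothendieck construction $\tcatUn{C}F$ is naturally equivalent, over $\nerve_\Delta(\mathscr C)$ and preserving cocartesian edges, to the $\infty$-categorical unstraightening $\int(\nerve_\Delta\circ F)$, and that this equivalence is compatible with the chosen identifications of fibers (both on objects and on $1$-simplices).

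First I would apply this directly to $\mathscr C=\rOgl$ and the strict $2$-functor $\FinOrbSets{G}=(\mathcal F^+_\bullet\times\sGlo(\blank,G))\circ\gamma$, obtaining a natural equivalence
\[
\nerve_\Delta\big(\combFinOrbSets{G}\big)\simeq\textstyle\int\nerve_\Delta\circ\FinOrbSets{G}
\]
that sends $(\alpha,u;f,\id_{\phi\alpha})$ to the morphism of the same name and is the identity on fibers under the standard identifications. Next, since $\gamma\colon\rOgl\to\sGlo$ is an equivalence of strict $(2,1)$-categories (\Cref{lemma:Glo-vs-rOgl}), precomposition with it induces an equivalence on unstraightenings; chasing through, $\int\nerve_\Delta\circ(\mathcal F^+_\bullet\times\sGlo(\blank,G))\circ\gamma\simeq\int\nerve_\Delta\circ(\mathcal F^+_\bullet\times\sGlo(\blank,G))$ over $\Glo^\op$, again essentially the identity on fibers and sending $(\alpha;f,\id)$ to $(\alpha;f,\id)$. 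Finally I would replace the two component $2$-functors by their homotopy-coherent counterparts: the global equivalence $\theta\colon\ul{\mathbb F}^{\Orb}_{\Glo,*}\simeq\nerve\mathcal F^+_\bullet$ of \Cref{cor:F-vs-F-pointed} identifies the first factor, while the naturality of the $\infty$-categorical Yoneda embedding provides a natural equivalence $\upsilon\colon\nerve_\Delta(\cat{Glo}(L,G))\simeq\Glo(L,G)=\ul G(L)$ sending $\psi$ to $\psi$, which identifies the second factor with $\ul G$. These two together give the desired equivalence $\int\nerve_\Delta\circ(\mathcal F^+_\bullet\times\sGlo(\blank,G))\simeq\int\ul{\mathbb F}^{\Orb}_{\Glo,*}\times\ul G$, which on fibers over $H$ is $\theta_H\times\upsilon$ and which sends $(\alpha;f,\id_{\phi\alpha})$ to $(\alpha;\theta_K(f),\id_{\phi\alpha})$. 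Composing the three stages yields $\Theta_G$.

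For property (1), the commutativity of diagram \eqref{diag:Theta-G-on-fibers} follows by tracing the inclusion $\nerve_\Delta(\mathcal F^+_H\times\{\phi\})\hookrightarrow\nerve_\Delta(\combFinOrbSets{G})$ through the three stages: at each stage the effect on fibers over $H$ is recorded above (identity, identity, then $\theta_H\times\upsilon$), so the composite carries $\nerve_\Delta(\mathcal F^+_H\times\{\phi\})$ to $\ul{\mathbb F}^{\Orb}_{\Glo,*}\times\{\phi\}$ via $\theta_H$ precisely as claimed; the statement about the restriction to the subcategories $\nerve_\Delta(\combFinOrbSets{G})_\phi$ is then formal, since $\Theta_G$ respects the projections to $\Glo^\op$ and the $\ul G$-coordinate and hence matches the morphisms that are identities in $H$ and $\phi$ on both sides. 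Property (2) is the statement about $1$-simplices of the form $(\alpha,u;f,\id_{\phi\alpha})$, and it likewise follows by tracking such a morphism through the three stages, using in \Cref{constr:grothendieck-2-functor} the fact that the $2$-categorical-to-$\infty$-categorical comparison preserves the labels of $1$-simplices and cocartesian edges.

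The main obstacle I anticipate is bookkeeping rather than conceptual: one must be careful that the various "natural in $G$" claims really are compatible, i.e.\ that each of the three stage-equivalences is natural in $G\in\Glo$ (for the last stage this uses naturality of the Yoneda embedding in \emph{both} variables, cf.\ \cite{HHLNa} or \cite{Ramzi2022Yoneda}, and for the $\gamma$-stage it uses the strict $2$-naturality of $\gamma$ from \Cref{lemma:Glo-rOgl-natural}). A second delicate point is ensuring that "the identifications of the fibers" used implicitly on the $\int\ul{\mathbb F}^{\Orb}_{\Glo,*}\times\ul G$ side are the ones compatible with \Cref{notation:elements-of-grothendieck}; this is exactly what the last paragraph of \Cref{constr:grothendieck-2-functor} was set up to guarantee, so it should be invoked rather than re-proved. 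Beyond these coherence checks, every step is an application of a previously established equivalence, so no genuinely new computation is required.
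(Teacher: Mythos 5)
Your proposal follows exactly the three-stage decomposition the paper itself uses: first the comparison between the $2$-categorical and $\infty$-categorical Grothendieck constructions from \Cref{constr:grothendieck-2-functor}, then re-indexing along the equivalence $\gamma\colon\rOgl\to\sGlo$, and finally replacing the two component $2$-functors by $\ul{\mathbb F}^{\Orb}_{\Glo,*}$ (via $\theta$) and $\ul G$ (via the Yoneda equivalence $\upsilon$), tracking the effect on fibers and on the distinguished morphisms $(\alpha,u;f,\id)$ through each stage. This matches the paper's argument step for step, including the naturality and fiber-identification bookkeeping.
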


\subsubsection{Global $\Gamma$-spaces as enriched functors}
Thanks to the above proposition, we can replace the somewhat mysterious $\infty$-categorical unstraightenings $\textstyle\int \ul{\mathbb F}^{\Orb}_{\Glo,*}\times\ul G$ by the homotopy coherent nerves of the much more explicit $(2,1)$-categories $\combFinOrbSets{G}$. These are suitably combinatorial to in turn admit a comparison to the $\OGglGamma$'s:

\begin{construction}\label{constr:ogglgamma-vs-grothendieck}
	Let $G$ be a finite group. We define $\delta\colon (\combFinOrbSets{G})\catop\to\OGglGamma$ as follows:
	\begin{enumerate}[(1)]
		\item An object $(H;S_+,\phi)$ consisting of a universal subgroup $H\subset\mathcal M$, a finite pointed $H$-set $S_+$ and a homomorphism $\phi\colon H\to G$ is sent to $(\Gamma(S_+,\blank)\times E\mathcal M\times G_\phi)/H$.
		\item A morphism $(u\in\mathcal M,\sigma\colon H\to K;f\colon \sigma^*T_+\to S_+;g\in G)$ is sent to the map induced by $\Gamma(f,\blank)\times (\blank \cdot (u,g))$, i.e.~the map corresponding to $[f;u;g]$ under the identification from Remark~\ref{rk:OGgl-Gamma-morphsim-spaces}.
		\item A $2$-cell $k\colon (u,\sigma;f,g)\Rightarrow (u',\sigma';f',g')$ (for $k\in K\subset\mathcal M$) is sent to the $2$-cell corresponding to $[f;u'k, u;g]$.
	\end{enumerate}
\end{construction}

\begin{proposition}\label{prop:delta-equivalence}
	The assignment $\delta$ is well-defined (i.e.~the above indeed represent morphisms and $2$-cells in $\OGglGamma$) and is an equivalence of $(2,1)$-categories.
	\begin{proof}
		We break this up into several steps.

		\emph{It is well-defined on morphisms and a full 1-functor:} If $(u,\sigma;f;g)$ is a morphism $(H,S_+,\phi)\to (K,T_+,\psi)$ in the opposite of the Grothendieck construction, then $hu=u\sigma(h)$ for all $h\in H$ as $(u,\sigma)$ is a morphism $H\to K$ in $\rOgl$; moreover, $c_g\psi\sigma=\phi$ as $g$ is a morphism $\phi\to\psi\sigma$ in $\sGlo(K,G)$, while $(h\cdot \blank)\circ f=f\circ(\sigma(h)\cdot \blank)$ for all $h\in H$ as $f$ is a map of (pointed) $H$-sets. Thus,
		\begin{equation*}
			(h,\phi(h))\cdot[f;u;g]=[(h\cdot \blank)\circ f; hu; \phi(h)g]=
			[f\circ(\sigma(h)\cdot \blank); u\sigma(h); g\psi(\sigma(h))]
			=[f;u;g],
		\end{equation*}
		i.e.~$[f;u;g]$ is indeed $\phi$-fixed. Note that we can also deduce this statement from (the easy direction of) \cite{g-global}*{Lemma~1.2.38}: namely, if we consider $\Gamma(S_+,T_+)\times G_\psi$ as a $(G\times H)\times K$-biset, where $G$ acts on $G$ from the left, $H$ acts on $S_+$ from the left, and $K$ acts from the right via its given action on $T_+$ and its action on $G$ via $\psi$, then swapping the factors defines an isomorphism
		\begin{equation*}
			(\Gamma(T_+,S_+)\times E\mathcal M\times G_\psi)/K\big)^\phi \cong
			\big(E\mathcal M\times_K (\Gamma(T_+,S_+)\times G)\big)^{(\id_H,\phi)}
		\end{equation*}
		where the right hand side is the usual balanced product; \emph{loc.~cit.} then says that $[u;f;g]$ defines a vertex of the right hand side if \emph{and only if} there exists a homomorphism $\sigma\colon H\to K$ (necessarily unique) such that $hu=u\sigma(h)$ for all $h\in H$ and moreover $(h,\phi(h))\cdot(f,g)=(f,g)\cdot \sigma(h)$, i.e.~$f$ is equivariant as a map $\sigma^*T_+\to S_+$ and $\phi=c_g\psi\sigma$. From the `only if' part we then immediately deduce that the above is surjective on morphisms: a preimage of $[u;f;g]$ is given by $(u,\sigma;f;g)$.

		The equality $\delta(1,1;\id_{S_+},1)=[1;\id_{S_+};1]$ shows that $\delta$ preserves identities. To see that it is also compatible with composition of $1$-morphisms (whence a $1$-functor), we let $(u',\sigma';f',g')$ be a map $(K;T_+;\psi)\to(L;U_+;\zeta)$ in the opposite category (so that $\sigma'\colon K\to L$ is a homomorphism and $f'\colon (\sigma')^*U_+\to T_+$ an equivariant map). Then indeed
		\begin{align*}
			\delta\big((u',\sigma';f',g')(u,\sigma;f,g)\big)&\overset{\!(*)\!}{=}\delta(uu',\sigma'\sigma;ff';gg')\\
			&=[ff';uu';gg']=\delta(u';f';g')\delta(u;f;g)
		\end{align*}
		where the somewhat surprising formula $(*)$ for the composition in the Grothendieck construction comes from the fact that $\sigma^*$ does not change underlying maps of sets nor the group elements representing maps in $\sGlo(\blank,G)$.

		\emph{It is well-defined on $2$-cells and a locally fully faithful $2$-functor:}
		First, let us show that $\delta$ defines fully faithful functors
		\begin{equation}\label{diag:delta-on-homs}
			\maps\big((H;S_+,\phi),(K;T_+,\psi)\big)\to\maps\big(\Gamma_{H,S,\phi}, \Gamma_{K,T,\psi}\big)
		\end{equation}
		for all objects $(H;S_+,\phi)$ and $(K;T_+,\psi)$. For this it will be enough to prove this after postcomposing with the isomorphism $\epsilon$ to $(\Gamma_{K,T;\psi})^\phi$.

		If now $(u_1,\sigma_1;f_1;g_1)$ and $(u_2,\sigma_2;f_2;g_2)$ are morphisms $(H;S_+;\phi)\rightrightarrows (K;T_+;\psi)$, then \cite{g-global}*{Lemma~1.2.74} shows that we have a bijection between morphisms $[f_1;u_1;g_1]\to[f_2;u_2;g_2]$ in $\Gamma_{K,T,\psi}^\phi$ and elements $k\in K$ such that $f_1=f_2(k\cdot \blank)$, $g_1=g_2\psi(k)$, and $\sigma_2=c_k\sigma_1$, which is explicitly given by $k\mapsto[f_1;u_2k,u_1;g_1]$. The last condition precisely says that $k$ is a $2$-cell $(u_1,\sigma_1)\Rightarrow (u_2,\sigma_2)$ in $\rOgl$, while the remaining two conditions say that $(f_2;g_2)\circ \FinOrbSets{G}(k)= (f_1;g_1)$, which is precisely the compatibility condition for $2$-cells in the Grothendieck construction. Thus, $(\ref{diag:delta-on-homs})$ is well-defined and bijective on morphisms. To see that it is indeed a functor, we observe that $\delta(1)=\id$ by design, and that for any further $2$-cell $k'\colon (u_2,\sigma_2;f_2;g_2)\Rightarrow (u_3,\sigma_3;f_3;g_3)$ we have
		\begin{align*}
			\delta(k')\circ\delta(k)&=[f_2;u_3k',u_2;g_2]\circ[f_1;u_2k;u_1;g_1]\\
			&\stackrel{\kern-3pt(*)\kern-3pt}{=}[f_1;u_3k'k,u_2k;g_1]\circ[f_1;u_2k;u;g_1]\\
			&=[f_1;u_3k'k,u_1;g_1]=\delta(k'k)=\delta(k'\circ k),
		\end{align*}
		where the equality $(*)$ uses $[f_2;u_3k',u_2;g_2]=[f_2(k\cdot \blank),u_3k'k,u_2k;g_2\psi(k)]$ together with the above relations.

		To complete the current step, it now only remains to show that $\delta$ is compatible with horizontal composition of $2$-cells, i.e.~if $(u_1',\sigma_1';f_1';g_1'),(u_2',\sigma_2';f_2';g_2')\colon (K;T_+;\psi)\rightrightarrows (L;U_+;\zeta)$ are parallel morphisms and $\ell\colon (u_1',\sigma_1';f_1';g_1')\Rightarrow(u_2',\sigma_2';f_2';g_2')$, then $\delta(\ell\odot k)=\delta(\ell)\odot\delta(k)$. Plugging in the definitions, the left hand side is given by $\delta(\ell\sigma_2(k))=[f_1f_1'; u_2u_2'\ell\sigma_2(k), u_1u_1'; g_1g_1']$ while the right hand side evaluates to $[f_1'; u_2'\ell, u_1'; g_1']\odot[f_1; u_2k, u_1; g_1]=[f_1f_1'; u_2k u_2'\ell; u_1u_1'; g_1g_1']$. But $ ku_2'=u_2'\sigma_2'(k)$ as $(u_2',\sigma_2')$ is a morphism, while $\sigma_2'(k)\ell=\ell\sigma_2(k)$ as $\ell$ is a $2$-cell, whence $u_2ku_2'\ell=u_2u_2'\ell\sigma_2(k)$ as desired.

		\emph{The $2$-functor $\delta$ is an equivalence:} We have shown above that $\delta$ is a $2$-functor, surjective on $1$-cells, and bijective on $2$-cells. As it is clearly surjective on objects, the claim follows immediately.
	\end{proof}
\end{proposition}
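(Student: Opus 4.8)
The plan is to verify that $\delta$ is a strict $2$-functor which is bijective on objects, essentially surjective on $1$-cells, and fully faithful on $2$-cells; since both $(\combFinOrbSets{G})\catop$ and $\OGglGamma$ are $(2,1)$-categories, these three properties together force $\delta$ to be an equivalence of $(2,1)$-categories. Bijectivity on objects is immediate from the construction, so all the content lies in the other two points. The only genuinely nontrivial inputs will be the two combinatorial descriptions of balanced products from \cite{g-global}, namely Lemma~1.2.38 (describing the $\phi$-fixed vertices of $E\mathcal M\times_K(\Gamma(T_+,S_+)\times G)$) and Lemma~1.2.74 (describing the morphisms between such vertices); everything else is bookkeeping, and the chief source of friction will be the reversed composition conventions that appear in $\rOgl$, in the Grothendieck construction over it, and in the multiplication formulas of \Cref{rk:OGgl-Gamma-morphsim-spaces}.

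First I would treat well-definedness and $1$-functoriality. Given a morphism $(u,\sigma;f;g)$ from $(H;S_+,\phi)$ to $(K;T_+,\psi)$ in the opposite Grothendieck construction, the assertion that the class $[f;u;g]$ is $\phi$-fixed unwinds to the three equivariance relations built into that data: $hu=u\sigma(h)$ because $(u,\sigma)$ is a morphism of $\rOgl$, $c_g\psi\sigma=\phi$ because $g$ realizes $\phi\Rightarrow\psi\sigma$ in $\sGlo(K,G)$, and $(h\cdot\blank)\circ f=f\circ(\sigma(h)\cdot\blank)$ because $f$ is equivariant as a map $\sigma^*T_+\to S_+$. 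A clean way to see this, and simultaneously to obtain essential surjectivity of $\delta$ on $1$-cells, is to invoke \cite{g-global}*{Lemma~1.2.38}, which identifies the $\phi$-fixed vertices with precisely those classes $[u;f;g]$ admitting a (necessarily unique) such $\sigma$. Preservation of identities is read off the formula, and preservation of composition comes down to observing that the composite $(uu',\sigma'\sigma;ff';gg')$ in the Grothendieck construction is sent to $[ff';uu';gg']$, which is the composite of $[f';u';g']$ and $[f;u;g]$ by \Cref{rk:OGgl-Gamma-morphsim-spaces}, since restriction along $\sigma$ changes neither underlying maps of sets nor the group elements representing maps into $\sGlo(K,G)$.

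Next I would show that $\delta$ is bijective on $2$-cells and compatible with their composition. After postcomposing with the isomorphism $\epsilon\colon\maps(\Gamma_{H,S,\phi},\Gamma_{K,T,\psi})\iso(\Gamma_{K,T,\psi})^\phi$, this reduces to analysing the $2$-cells of $(\Gamma_{K,T,\psi})^\phi$, which by \cite{g-global}*{Lemma~1.2.74} are in bijection with elements $k\in K$ satisfying $f_1=f_2(k\cdot\blank)$, $g_1=g_2\psi(k)$ and $\sigma_2=c_k\sigma_1$, explicitly via $k\mapsto[f_1;u_2k,u_1;g_1]$. The relation $\sigma_2=c_k\sigma_1$ says exactly that $k$ is a $2$-cell $(u_1,\sigma_1)\Rightarrow(u_2,\sigma_2)$ in $\rOgl$, while the remaining two say $(f_2;g_2)\circ\FinOrbSets{G}(k)=(f_1;g_1)$, which is the compatibility condition defining a $2$-cell in the Grothendieck construction; hence $\delta$ is bijective on $2$-cells. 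It then remains to check that the resulting matching $k\leftrightarrow[f_1;u_2k,u_1;g_1]$ respects identities, vertical composition, and horizontal composition of $2$-cells; these are short computations using the multiplication and whiskering formulas of \Cref{rk:OGgl-Gamma-morphsim-spaces} together with the three relations above.

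Combining these two steps, $\delta$ is a strict $2$-functor that is bijective on objects and $2$-cells and essentially surjective on $1$-cells, hence an equivalence of $(2,1)$-categories. I expect the main obstacle to be notational rather than conceptual: in the horizontal-composition check one must carefully juggle several reversed orders at once --- for instance verifying $u_2ku_2'\ell=u_2u_2'\ell\sigma_2(k)$ by using both that $(u_2',\sigma_2')$ is a morphism and that $\ell$ is a $2$-cell --- but no idea beyond the two cited lemmas is required.
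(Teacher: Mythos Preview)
Your proposal is correct and follows essentially the same approach as the paper's proof: the same three-step structure (well-definedness and $1$-functoriality via \cite{g-global}*{Lemma~1.2.38}, bijectivity on $2$-cells and compatibility with compositions via \cite{g-global}*{Lemma~1.2.74}, then the conclusion), and even the same key identity $u_2ku_2'\ell=u_2u_2'\ell\sigma_2(k)$ in the horizontal-composition check. The only cosmetic difference is that the paper phrases the $1$-cell statement as surjectivity rather than essential surjectivity, but this makes no difference for the argument.
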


Together with the Elmendorf Theorem for $G$-global $\Gamma$-spaces, we can now describe the global relative category of global $\Gamma$-spaces in terms of suitable simplicially enriched functor categories. The structure of the argument is very similar to the arguments following \Cref{cstr:PsiUnstable}.

\begin{construction}
	We define \[\Psi_\Gamma\colon \cat{$\bm\Gamma$-$\bm{E\mathcal M}$-$\bm G$-SSet}\to\FUN(\combFinOrbSets{G}, \cat{SSet})\] as follows:
	\begin{enumerate}[(1)]
		\item If $X$ is any $G$-global $\Gamma$-space, then $\Psi_\Gamma(X)(H;S_+;\phi) = X(S_+)^\phi$. If $(K;T_+;\psi)$ is another object, then an $n$-simplex
		\begin{equation}\label{eq:psi-Gamma-n-simpl}
			(u_0,\sigma_0;f_0;g_0)\xRightarrow{k_1}(u_1,\sigma_1;f_1;g_1)\xRightarrow{k_2}\cdots\xRightarrow{k_n}(u_n,\sigma_n;f_n;g_n)
		\end{equation}
		of $\maps((K;T_+;\psi),(H;S_+;\phi))$ is sent to the composition \[\big((u_nk_n\cdots k_1,\dots, u_1k_1,u_0;g_0)\cdot \blank\big)\circ X(f_0).\]
		\item If $f\colon X\to Y$ is any map of $G$-global $\Gamma$-spaces, then \[\Psi_\Gamma(f)(H;S_+;\phi)=f(S_+)^\phi\colon X(S_+)^\phi\to Y(S_+)^\phi.\]
	\end{enumerate}
\end{construction}

\begin{proposition}\label{prop:Psi-Gamma-equivalence}
	The assignment $\Psi_\Gamma$ is well-defined and it descends to an equivalence when we localize the source at the $G$-global level weak equivalences and the target at the levelwise weak homotopy equivalences.
	\begin{proof}
		One argues precisely as in the proof of Proposition~\ref{prop:Psi-equivalence} that $\Psi_\Gamma$ is well-defined and isomorphic (via corepresentability) to the composite
		\begin{equation*}
			\cat{$\bm\Gamma$-$\bm{E\mathcal M}$-$\bm G$-SSet}\xrightarrow{\Phi_\Gamma}\FUN((\OGglGamma)^\op,\cat{SSet})\xrightarrow{\delta^*}\FUN(\combFinOrbSets{G}, \cat{SSet}).
		\end{equation*}
		The claim now follows from Proposition~\ref{prop:elmendorf-Gamma} together with Proposition~\ref{prop:delta-equivalence}.
	\end{proof}
\end{proposition}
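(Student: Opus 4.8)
The statement to prove is Proposition~\ref{prop:Psi-Gamma-equivalence}: the assignment $\Psi_\Gamma$ is well-defined and descends to an equivalence on localizations. As the parenthetical remark in the statement already indicates, the strategy is to mimic the proof of the unstable analogue Proposition~\ref{prop:Psi-equivalence}. The key idea is that $\Psi_\Gamma$ will not be analyzed directly, but identified up to isomorphism with the composite
\[
\cat{$\bm\Gamma$-$\bm{E\mathcal M}$-$\bm G$-SSet}\xrightarrow{\Phi_\Gamma}\FUN((\OGglGamma)^\op,\cat{SSet})\xrightarrow{\delta^*}\FUN(\combFinOrbSets{G}, \cat{SSet}),
\]
where $\Phi_\Gamma$ is the enriched Yoneda/Elmendorf functor of Proposition~\ref{prop:elmendorf-Gamma} and $\delta\colon(\combFinOrbSets G)^\op\to\OGglGamma$ is the equivalence of $(2,1)$-categories from Construction~\ref{constr:ogglgamma-vs-grothendieck}, shown to be an equivalence in Proposition~\ref{prop:delta-equivalence}. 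Once this identification is in place, well-definedness of $\Psi_\Gamma$ is automatic (both $\Phi_\Gamma$ and $\delta^*$ visibly land in enriched functor categories and are enriched-functorial), and the descent to an equivalence on localizations follows immediately: $\Phi_\Gamma$ is the right half of a Quillen equivalence between the $G$-global level model structure and the projective model structure (Proposition~\ref{prop:elmendorf-Gamma}), so it induces an equivalence of $\infty$-categorical localizations at the $G$-global level weak equivalences resp.\ levelwise weak homotopy equivalences; and $\delta^*$ is restriction of enriched presheaves along an equivalence of simplicially enriched categories, hence a (Dwyer--Kan) equivalence, so it too induces an equivalence on localizations at the levelwise weak homotopy equivalences. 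The composite of two such equivalences is an equivalence.

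\textbf{Carrying it out.} First I would fix a finite group $G$ and an object $X\in\cat{$\bm\Gamma$-$\bm{E\mathcal M}$-$\bm G$-SSet}$, and recall from Remark~\ref{rk:OGgl-Gamma-morphsim-spaces} that evaluation at $[\id,1,1]$ gives a natural isomorphism $\epsilon\colon\Phi_\Gamma(X)(\Gamma_{H,S,\phi})=\maps(\Gamma_{H,S,\phi},X)\xrightarrow{\sim}X(S_+)^\phi$. Since on objects $\delta$ sends $(H;S_+,\phi)$ to $\Gamma_{H,S,\phi}$, the $\epsilon$'s provide a candidate natural isomorphism $\delta^*\Phi_\Gamma(X)\cong\Psi_\Gamma(X)$ at the level of objects. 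As in the proof of Proposition~\ref{prop:Psi-equivalence}, the point is that there is a \emph{unique} enriched functor structure on the object-assignment $(H;S_+,\phi)\mapsto X(S_+)^\phi$ making the $\epsilon$'s into an enriched natural isomorphism from $\delta^*\Phi_\Gamma(X)$, namely the one transported along $\delta$ and $\epsilon$, and I would check that this transported structure is exactly the formula defining $\Psi_\Gamma(X)$ on morphism spaces. Concretely, by the commuting square in Remark~\ref{rk:OGgl-Gamma-morphsim-spaces} describing how $\Phi_\Gamma$ acts on a morphism $[f;u_0,\dots,u_n;g]$ of $\OGglGamma$, this reduces to the assertion that the composite
\[
\maps_{\combFinOrbSets G}\big((K;T_+,\psi),(H;S_+,\phi)\big)\xrightarrow{\delta}\maps_{\OGglGamma}(\Gamma_{H,S,\phi},\Gamma_{K,T,\psi})\xrightarrow{\epsilon}(\Gamma_{K,T,\psi})^\phi
\]
sends the $n$-simplex $(\ref{eq:psi-Gamma-n-simpl})$ to $[f_0;u_nk_n\cdots k_1,\dots,u_1k_1,u_0;g_0]$; as the target is the nerve of a groupoid, it suffices to check this after restricting to each edge $0\to m$, where it becomes the defining formula $\delta(k_m\cdots k_1)=[f_0;u_mk_m\cdots k_1,u_0;g_0]$ from Construction~\ref{constr:ogglgamma-vs-grothendieck}(3) (using that $\delta$ is a functor, i.e.\ the composition-compatibility established in Proposition~\ref{prop:delta-equivalence}, together with the relations $f_i=f_0(k_i\cdots k_1\cdot\blank)$ and $g_i=g_0\psi(k_i\cdots k_1)$ valid along a chain of $2$-cells). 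Having matched $\Psi_\Gamma(X)$ with $\delta^*\Phi_\Gamma(X)$ as enriched functors for each $X$, I would then observe that $\Psi_\Gamma(f)=f(S_+)^\phi$ is enriched-natural because the enriched functor structures on both $\Psi_\Gamma(X)$ and $\Psi_\Gamma(Y)$ are given by acting with simplices of $E\mathcal M\times G$ and applying $X$, resp.\ $Y$, to pointed maps $f_0$ — operations which commute with an equivariant map $f$ — and that the $\epsilon$'s are natural in $X$ (checkable objectwise, where it is tautological). This yields a natural isomorphism of functors $\Psi_\Gamma\cong\delta^*\circ\Phi_\Gamma$.

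\textbf{The main obstacle.} The conceptual content is all borrowed from the unstable case and from Proposition~\ref{prop:delta-equivalence}, so the only real work is the bookkeeping in the previous paragraph: verifying that the transported enriched-functor structure literally equals the stated formula for $\Psi_\Gamma$ on the nose, tracking the flipped multiplication orders in $\rOgl$, $\OGglGamma$, and the Grothendieck construction, and confirming that the pointed-set variable $f_0$ (absent in the unstable setting) threads through correctly. I expect this to be a routine but slightly lengthy diagram-chase, entirely parallel to the computation in the proof of Proposition~\ref{prop:Psi-equivalence} combined with the composition and horizontal-composition identities proven for $\delta$ in Proposition~\ref{prop:delta-equivalence}; there should be no genuine difficulty, only care. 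With the isomorphism $\Psi_\Gamma\cong\delta^*\circ\Phi_\Gamma$ established, the conclusion is immediate: $\Phi_\Gamma$ descends to an equivalence of localizations by Proposition~\ref{prop:elmendorf-Gamma}, $\delta^*$ does so by Proposition~\ref{prop:delta-equivalence} (restriction along a Dwyer--Kan equivalence, hence inducing an equivalence on localizations at levelwise weak homotopy equivalences), and therefore so does their composite $\Psi_\Gamma$.
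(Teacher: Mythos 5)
Your proposal is exactly the paper's proof: the paper states that one argues precisely as in Proposition~\ref{prop:Psi-equivalence} to identify $\Psi_\Gamma$ with $\delta^*\circ\Phi_\Gamma$ and then concludes by Propositions~\ref{prop:elmendorf-Gamma} and~\ref{prop:delta-equivalence}, and your write-up carries out the analogous bookkeeping that the paper leaves implicit. No gaps.
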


\begin{proposition}\label{prop:Psi-Gamma-natural}
	The maps $\Psi_\Gamma$ are strictly $2$-natural in $\sGlo$ (where the right hand side is a $2$-functor in $G$ as before).
	\begin{proof}
		We again break this up into two steps:

		\textit{The $\Psi_\Gamma$'s are $1$-natural:} Let $\alpha\colon G\to G'$ be a group homomorphism. We will first show that we have for every $G$-global $\Gamma$-space $X$ an equality of enriched functors $\Psi_\Gamma(\alpha^*X)=\Psi_\Gamma(X)\circ\big(\tcatUn{}(\mathcal F_\bullet^+\times\sGlo(\blank,\alpha))\circ\gamma\big)$. To prove this, we first observe that this holds on objects as $X(S_+)^{\alpha\phi}=(\alpha^*X)(S_+)^\phi$ for all universal $H\subset\mathcal M,\phi\colon H\to G$. Given now an $n$-simplex $(\ref{eq:psi-Gamma-n-simpl})$ of $\maps((H;S_+;\phi),(K;T_+;\psi))$, it is straight-forward to check that both $\Psi_\Gamma(\alpha^*X)$ and $\Psi_\Gamma(X)\circ(\tcatUn{}(\mathcal F_\bullet^+\times\sGlo(\blank,\alpha))\circ\gamma)$ send this to the restriction of the composite
		\begin{equation*}
			\big((u_nk_n\cdots k_1,\dots, u_1k_1,u_0;\alpha(g))\cdot\blank\big)\circ X(f).
		\end{equation*}
		With this established, naturality on morphisms can be checked levelwise, i.e.~after evaluating at each $(H;S_+;\phi)$. However, for any map $f$ both $\Psi(\alpha^*f)(H;S_+;\phi)$ and $\Psi(f)(\tcatUn{}(\mathcal F_\bullet^+\times\sGlo(\blank,\alpha))\circ\gamma)(H;S_+;\phi)$ are simply given by a restriction of $f(S_+)$.

		\textit{The $\Psi_\Gamma$'s are $2$-natural}: It only remains to show that for each $\alpha,\beta\colon G\to G'$ and $g'\colon\alpha\Rightarrow\beta$ the two pastings
		\begin{equation*}
			\begin{tikzcd}
				\cat{$\bm\Gamma$-$\bm{E\mathcal M}$-$\bm{G'}$-SSet}\arrow[r, bend left=15pt, shift left=2.5pt, "\alpha^*"{name=a}]\arrow[r, bend right=15pt, shift right=2.5pt, "(\alpha')^*"'{name=b}] & \cat{$\bm\Gamma$-$\bm{E\mathcal M}$-$\bm{G}$-SSet} \arrow[r, "\Psi_\Gamma"] & \FUN({\combFinOrbSets{G}}, \cat{SSet})
				\twocell[from=a,to=b, "\scriptstyle g'"{xshift=7.5pt}]
			\end{tikzcd}
		\end{equation*}
		and
		\begin{equation*}
			\begin{tikzcd}
				\cat{$\bm\Gamma$-$\bm{E\mathcal M}$-$\bm{G'}$-SSet}\arrow[r,"\Psi_\Gamma"] & \FUN(\combFinOrbSets{G'}, \cat{SSet}) \arrow[r, bend left=15pt, shift left=2.5pt, "\alpha^*"{name=a}]\arrow[r, bend right=15pt, shift right=2.5pt, "(\alpha')^*"'{name=b}] & \FUN\big({\combFinOrbSets{G}}, \cat{SSet}\big)\twocell[from=a,to=b, "\scriptstyle g'"{xshift=7.5pt}]
			\end{tikzcd}
		\end{equation*}
		agree. However, as we have already established $1$-naturality, this can be again checked pointwise in $\cat{$\bm\Gamma$-$\bm{E\mathcal M}$-$\bm{G'}$-SSet}$ and levelwise in $\combFinOrbSets{G}$, where both are simply given by restriction of the action of $g'$.
	\end{proof}
\end{proposition}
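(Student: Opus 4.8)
The plan is to mirror the proof of Proposition~\ref{prop:Psi-2-natural}, splitting the verification into $1$-naturality and $2$-naturality and exploiting that $\Psi_\Gamma$ is given by an explicit formula, namely acting by simplices of $E\mathcal M\times G$ (followed by $X$ applied to an underlying pointed set map). In particular I would \emph{not} try to deduce the statement from the isomorphism $\Psi_\Gamma\cong\delta^*\circ\Phi_\Gamma$ of Proposition~\ref{prop:Psi-Gamma-equivalence}, since the Elmendorf functor $\Phi_\Gamma$ is itself only pseudonatural; the whole point of the combinatorial model $\combFinOrbSets{G}$ is that the strictified functor $\Psi_\Gamma$ becomes honestly $2$-natural.

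First I would establish $1$-naturality. Fix a homomorphism $\alpha\colon G\to G'$; one must show that
\[
\begin{tikzcd}[cramped]
	\cat{$\bm\Gamma$-$\bm{E\mathcal M}$-$\bm{G'}$-SSet}\arrow[r,"\alpha^*"]\arrow[d,"\Psi_\Gamma"'] & \cat{$\bm\Gamma$-$\bm{E\mathcal M}$-$\bm{G}$-SSet}\arrow[d,"\Psi_\Gamma"]\\
	\FUN(\combFinOrbSets{G'},\cat{SSet})\arrow[r] & \FUN(\combFinOrbSets{G},\cat{SSet})
\end{tikzcd}
\]
commutes strictly, the lower map being precomposition with the strict $2$-functor $\combFinOrbSets{G}\to\combFinOrbSets{G'}$ obtained by post-composing the $\sGlo(\blank,G)$-coordinate with $\alpha$. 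I would first check equality on objects: for a $G'$-global $\Gamma$-space $X$ both composites send $X$ to the enriched functor with value $(\alpha^*X)(S_+)^\phi=X(S_+)^{\alpha\phi}$ at an object $(H;S_+,\phi)$. Then I would check equality on the hom $\cat{SSet}$-objects of $\combFinOrbSets{G}$ by unwinding the definition of $\Psi_\Gamma$: an $n$-simplex as in $(\ref{eq:psi-Gamma-n-simpl})$ of $\maps\big((H;S_+,\phi),(K;T_+,\psi)\big)$ is sent along either path to (the restriction of) $\big((u_nk_n\cdots k_1,\dots,u_1k_1,u_0;\alpha(g_0))\cdot\blank\big)\circ X(f_0)$, since the Grothendieck-construction functoriality leaves the underlying set map $f_0$ and the $\mathcal M$-elements untouched and only applies $\alpha$ to the $G$-component. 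With equality on objects in hand, naturality on a map $f\colon X\to Y$ of $\Gamma$-spaces follows levelwise, as both composites evaluate at $(H;S_+,\phi)$ to the restriction of $f(S_+)$.

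Next I would establish $2$-naturality. Given $\alpha,\alpha'\colon G\to G'$ and a conjugation $g'\colon\alpha\Rightarrow\alpha'$ in $\sGlo$, one must show that the two pastings of $\Psi_\Gamma$ with $g'$ — one formed on the source side, one on the target side — coincide. Since $1$-naturality is already available, this can be checked pointwise in $\cat{$\bm\Gamma$-$\bm{E\mathcal M}$-$\bm{G'}$-SSet}$ and levelwise in $\combFinOrbSets{G}$, where in both cases the comparison map is simply a restriction of the action of $g'\in G'$, and hence the two agree.

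The only real obstacle is bookkeeping: one must carefully line up the definition of $\Psi_\Gamma$, the definition of the $2$-categorical Grothendieck construction underlying $\FinOrbSets{G}=(\mathcal F^+_\bullet\times\sGlo(\blank,G))\circ\gamma$, and its functoriality in $G$, in order to see that both paths through the square produce literally the same simplicial map — not merely isomorphic ones. There is no conceptual content here; $\Psi_\Gamma$ was designed precisely so that this strict compatibility is manifest. Some care is nonetheless needed because composition in the Grothendieck construction reverses the order of multiplication in the $\mathcal M$-coordinate (and in the $\sGlo(\blank,G)$-coordinate), so one should double-check that the composite of the chosen representatives comes out in exactly the asserted order.
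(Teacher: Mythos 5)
Your proposal is correct and mirrors the paper's own proof essentially step for step: split into $1$- and $2$-naturality, check $1$-naturality on objects, then on hom $\cat{SSet}$-objects via the explicit $n$-simplex formula (with $\alpha(g_0)$ replacing $g_0$), then on morphisms of $\Gamma$-spaces levelwise, and finally deduce $2$-naturality from $1$-naturality by a pointwise/levelwise check that both pastings restrict the $g'$-action. The remarks about avoiding the $\Phi_\Gamma\circ\delta$ route (since $\Phi_\Gamma$ is only pseudonatural) and about the reversed composition order in the Grothendieck construction are sound and consistent with the paper, though not explicitly emphasized there.
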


\subsubsection{The comparison} Putting everything together we now get:

\begin{proof}[Proof of Theorem~\ref{thm:Gamma-presheaves}]
	Arguing precisely as in the proof of Theorem~\ref{thm:global-spaces}, we deduce from Propositions~\ref{prop:Psi-Gamma-equivalence} and~\ref{prop:Psi-Gamma-natural} that we have an equivalence of global $\infty$-categories
	\begin{equation*}
		\ul\GammaS^\text{gl}\simeq \Fun(\nerve_\Delta\combFinOrbSets{\bullet},\Spc)
	\end{equation*}
	given on objects in degree $G$ by sending a $G$-global $\Gamma$-space $X$ to $\nerve_\Delta(P\circ\Psi_\Gamma(X))$ where $P$ is our favourite simplically enriched Kan fibrant replacement functor.

	On the other hand, Proposition~\ref{prop:Theta-grothendieck} provides an equivalence between the right hand side and $\Fun(\int\ul{\mathbb F}^{\Orb}_{\Glo}\times\ul{(\blank)},\Spc)$. The desired equivalence now follows as \Cref{Rmk:FunTobjects} also gives a natural equivalence
	\begin{equation}
		\label{eq:glo-functors-vs-unstraightening}
		\ul\Fun_{\Glo}(\ul{\mathbb F}^{\Orb}_{\Glo,*},\ul{\Spc}_{\Glo})\simeq \Fun(\textstyle\int(\ul{\mathbb F}^{\Orb}_{\Glo,*}\times\ul{(\blank)}), \Spc).
	\end{equation}

	It remains to construct an equivalence filling the diagram on the left in
	\begin{equation*}
		\begin{tikzcd}
			\ul\GammaS^\text{gl}\arrow[d, "\mathbb U"'] \arrow[r, "\Xi"] & \ul\Fun_{\Glo}(\ul{\mathbb F}^{\Orb}_{\Glo,*},\ul\Spc_{\Glo})\arrow[d, "\ev_{\id_+}"]\\
			\ul\S^\text{gl}\arrow[r, "\simeq"'] & \ul\Spc_{\Glo}
		\end{tikzcd}
		\quad
		\begin{tikzcd}
			\ul\GammaS^\text{gl}\arrow[r, "\Xi"] & \ul\Fun_{\Glo}(\ul{\mathbb F}^{\Orb}_{\Glo,*},\ul\Spc_{\Glo})\\
			\ul\S^\text{gl}\arrow[r, "\simeq"']\arrow[u, "{\Gamma(1^+,\blank)\times\blank}"] & \ul\Spc_{\Glo}\arrow[u, "\textup{left Kan ext.}"']
		\end{tikzcd}
	\end{equation*}
	for which it is enough by passing to vertical left adjoints (as the horizontal maps are equivalences) to construct an equivalence filling the diagram on the right. By the universal property of $\ul\Spc_{\Glo}$ it is in turn enough for this to chase through the terminal object. Now the forgetful functor $\cat{$\bm{E\mathcal M}$-SSet}\to\cat{SSet}$ sending an $E\mathcal M$-simplicial set to its underlying non-equivariant homotopy type is obviously homotopical right Quillen with left adjoint given by $E\mathcal M\times\blank$; passing to associated $\infty$-categories, we obtain an adjunction $\ul\S^\text{gl}(1)\rightleftarrows\Spc$ and as $E\mathcal M\simeq*$ by \cite{g-global}*{Example~1.2.35}, we see that the left adjoint preserves the terminal objects. On the other hand, as $1$ is a terminal object of $\Glo$, the evaluation functor $\ev_1\colon\ul\Spc_{\Glo}(1)\to\Spc$ similarly admits a left adjoint given by $\const\colon\Spc\to\ul\Spc_{\Glo}(1)$, which again preserves the terminal object. In particular, we see by another application of the universal property of $\ul\Spc_{\Glo}$ that the equivalence $\ul\S^\text{gl}\simeq\ul\Spc_{\Glo}$ is compatible with these adjunctions.

	We are therefore reduced to constructing a natural equivalence filling the diagram on the left in
	\begin{equation*}\hfuzz=30pt\hskip-29.48pt
		\begin{tikzcd}
			\ul\GammaS^\text{gl}(1)\arrow[r, "\Xi"] & \Fun_{\Glo}(\ul{\mathbb F}^{\Orb}_{\Glo,*},\ul{\Spc}_{\Glo})\\
			\Spc\arrow[u, "{\Gamma(1^+,\blank)\times E\mathcal M\times\blank}"] \arrow[r, equal] & \Spc\arrow[u]
		\end{tikzcd}
		\qquad
		\begin{tikzcd}
			\ul\GammaS^\text{gl}(1)\arrow[r, "\Xi"]\arrow[d, "\textup{forget}\circ\mathbb U"'] & \Fun_{\Glo}(\ul{\mathbb F}^{\Orb}_{\Glo,*},\ul{\Spc}_{\Glo})\arrow[d, "\ev_1\circ\ev_{\id_+}"]\\
			\Spc \arrow[r, equal] & \Spc,
		\end{tikzcd}
	\end{equation*}
	for which it is then by the same argument as before enough to construct a natural equivalence filling the diagram on the right. By Remark~\ref{rk:grothendieck-vs-evaluation}, the composite of the right hand vertical map with the equivalence $(\ref{eq:glo-functors-vs-unstraightening})$ from the construction of $\Xi$ is given by evaluating at $(1;1,1)$. However, by the description of $\Theta_1$ from Proposition~\ref{prop:Theta-grothendieck}, $\Theta_1(1;1,1)=(1;1,1)$, so it follows by construction of $\Xi$ that the upper path through this diagram is induced by the homotopical functor $P\circ\Psi_\Gamma(\blank)(1;1,1)\colon \cat{$\bm\Gamma$-$\bm{E\mathcal M}$-SSet}\to\cat{Kan}$. However, by definition $\Psi_\Gamma(\blank)(1;1,1)$ is precisely the functor sending a global $\Gamma$-space $X$ to $X(1^+)$ considered as a non-equivariant space, so the claim follows.
\end{proof}

\subsection{Proof of Theorem~\ref{introthm:universal-prop-gamma}}\label{subsec:universal-gamma} Building on the above we will now prove a comparison between \emph{special} $G$-global $\Gamma$-spaces and $\ul\CMon^{\Orb}_{\Glo}(\ul\Spc_{\Glo}).$ Recall from \Cref{ex:eqsemi} the notion of equivariant semiadditivity.

\begin{theorem}\label{thm:Xi-restr}
	There exists an essentially unique pair of an equivariantly semiadditive functor $\Xi\colon\ul\GammaS^\textup{gl, spc}\to\ul\CMon^{\Orb}(\ul\Spc_{\Glo})$ together with a natural equivalence filling
	\begin{equation}\label{diag:defining-special-Xi}
		\begin{tikzcd}
			\ul\GammaS^\textup{gl, spc}\arrow[d, "\mathbb U"']\arrow[r, "\Xi"] & \ul\CMon^{\Orb}(\ul\Spc_{\Glo})\arrow[d, "\mathbb U=\ev_{\id_+}"]\\
			\ul\S^\textup{gl}\arrow[r, "\simeq"'] & \ul\Spc_{\Glo}\rlap.
		\end{tikzcd}
	\end{equation}
	Moreover, $\Xi$ is an equivalence.
\end{theorem}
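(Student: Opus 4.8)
The plan is to deduce \Cref{thm:Xi-restr} from \Cref{thm:Gamma-presheaves}, restricting the equivalence $\Xi\colon\ul\GammaS^\textup{gl}\simeq\ul\Fun_{\Glo}(\ul{\mathbb F}^{\Orb}_{\Glo,*},\ul\Spc_{\Glo})$ of that theorem to the full global subcategories on both sides. On the left we have the full global subcategory $\ul\GammaS^\textup{gl,spc}\subset\ul\GammaS^\textup{gl}$ of special $G$-global $\Gamma$-spaces. On the right, recall that $\ul\CMon^{\Orb}(\ul\Spc_{\Glo})=\ul\Fun_{\Glo}^{\Orb\text-\oplus}(\ul{\mathbb F}^{\Orb}_{\Glo,*},\ul\Spc_{\Glo})$ is the full global subcategory of $\ul\Fun_{\Glo}(\ul{\mathbb F}^{\Orb}_{\Glo,*},\ul\Spc_{\Glo})$ spanned by the $\Orb$-semiadditive (equivalently, by \Cref{prop:FunctorSemiadditiveVsColimitPreserving}, the $\Orb$-coproduct-preserving, since $\ul\Spc_{\Glo}$ is $\Orb$-semiadditive) global functors. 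So the key step is:

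\emph{Claim.} Under the equivalence $\Xi$ of \Cref{thm:Gamma-presheaves}, a $G$-global $\Gamma$-space $X$ is special if and only if the corresponding global functor $\Xi(X)\colon\pi_G^*\ul{\mathbb F}^{\Orb}_{\Glo,*}\to\pi_G^*\ul\Spc_{\Glo}$ preserves finite $\Orb$-coproducts. To prove this, I would first use the explicit description of the underlying $\infty$-category of $\ulPCMon(\ul{\Ee}_T)$ from \Cref{prop:parametrized-segal-special-map} (and \Cref{rmk:otherlevels} for the remaining levels of the global $\infty$-category), applied to $T=\Glo$, $P=\Orb$, $\Ee=\Spc$: a global functor into $\ul\Spc_{\Glo}$ is $\Orb$-semiadditive if and only if it is fiberwise semiadditive and, for every injective homomorphism $p\colon H\hookrightarrow G$, the parametrized Segal map $\widetilde{\Xi(X)}(G, G/H_+)\to\widetilde{\Xi(X)}(H, H/H_+)$ induced by $\rho_p$ is an equivalence. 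Unwinding the explicit formulas for $\Xi$ (built from $\Psi_\Gamma$, $\delta$, $\Theta_G$), I would identify $\widetilde{\Xi(X)}$ on the relevant objects with the spaces $X(G/H_+)^{\text{(universal }H)}$, and the parametrized Segal map with (the $\infty$-categorical shadow of) the equivariant Segal map $X(G/H_+)\to\prod_{G/H}X(1^+)$ that defines specialness in \Cref{prop:special-pointed-vs-unpointed} — using that $G/H$ as a pointed set is $(G/H)_+$ and the characteristic maps $\chi_s\colon (G/H)_+\to1^+$ are exactly the morphisms in $\mathcal F^+$ being applied. The fiberwise semiadditivity condition corresponds, after the same unwinding, to the specialness of $X$ as an \emph{ordinary} $G$-global $\Gamma$-space with respect to the $\Sigma_S$-global structure, i.e.\ to the Segal condition for the fold maps $S_+\to 1^+$; together these two families of conditions are exactly the Segal condition for all $\chi_s\colon S_+\to1^+$ over all finite $G$-sets $S$, which is specialness (cf.\ \Cref{cor:PSemiadditivity}, which says $\Orb$-semiadditivity $=$ fiberwise semiadditivity $+$ norm equivalences along $\Orb$).

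Granting the Claim, $\Xi$ restricts to an equivalence $\ul\GammaS^\textup{gl,spc}\simeq\ul\CMon^{\Orb}(\ul\Spc_{\Glo})$ of global $\infty$-categories (a fully faithful functor that is essentially surjective onto a full subcategory restricts to an equivalence of those subcategories). That $\ul\CMon^{\Orb}(\ul\Spc_{\Glo})$ is equivariantly semiadditive is \Cref{cor:CommutativeMonoidsAreSemiadditive}; hence so is $\ul\GammaS^\textup{gl,spc}$ via the equivalence. The compatibility square $(\ref{diag:defining-special-Xi})$ is obtained by restricting the corresponding square in \Cref{thm:Gamma-presheaves}: one only needs that $\ev_{(\id_1)_+}$ and $\mathbb U=\ev_{S^0}$ agree on $\ul\CMon^{\Orb}(\ul\Spc_{\Glo})$, which holds because $S^0$ is pointwise $(\id_B)_+ = B_+$ and under $\theta$ of \Cref{cor:F-vs-F-pointed} this is $1/1_+$, i.e.\ the image of $\id_1$, and that $\mathbb U$ on $\ul\GammaS^\textup{gl,spc}$ is the restriction of $\mathbb U$ on $\ul\GammaS^\textup{gl}$ (both are evaluation at $1^+$); the square $(\ref{diag:defining-special-Xi})$ is then literally the restriction of the square in \Cref{thm:Gamma-presheaves} along the two full-subcategory inclusions. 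Finally, for essential uniqueness: by \Cref{cor:UniversalPropCMon} (or the omnibus \Cref{thm:Semiadd_omnibus}), an equivariantly semiadditive global functor into $\ul\CMon^{\Orb}(\ul\Spc_{\Glo})$ is determined by its composite with $\mathbb U$ into $\ul\Spc_{\Glo}$; so the pair $(\Xi,\text{equivalence in }(\ref{diag:defining-special-Xi}))$ is pinned down by the bottom equivalence $\ul\S^\textup{gl}\simeq\ul\Spc_{\Glo}$, which is itself essentially unique by \Cref{thm:global-spaces}.

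The main obstacle I anticipate is the bookkeeping in the Claim: carefully tracing an injective homomorphism $p\colon H\hookrightarrow G$ and the object $G/H_+$ through the chain of equivalences $\Psi_\Gamma$, $\delta$, $\Theta_G$ (together with $\gamma$, $\theta$, and the Grothendieck-construction identifications) to see that the abstractly-defined parametrized Segal map $\rho_p$ of \Cref{cstr:ParametrizedSegalMap} is modelled by the concrete equivariant Segal map, and likewise that the fiberwise-semiadditivity condition matches the $\Sigma_S$-equivariant Segal condition. A cleaner route, which I would pursue in parallel, is to avoid computing $\rho_p$ by hand and instead argue structurally: both ``special'' and ``$\Orb$-semiadditive'' cut out the \emph{smallest} reflective (accessible Bousfield-localizing) full global subcategory on which the relevant classes of maps become equivalences, together with the observation that $\Xi$ is an equivalence of global $\infty$-categories and hence matches such localizations; combined with the (known) fact that specialness and equivariant semiadditivity agree \emph{after evaluating at each group} — which is Shimakawa's equivariant Segal machine plus Nardin's identification in the $G$-equivariant case, or can be read off from \Cref{prop:parametrized-segal-special-map} at each fixed $G$ — this forces the two global subcategories to correspond under $\Xi$. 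Either way, once the Claim is in hand, the rest is formal.
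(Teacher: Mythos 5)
Your proposal follows essentially the same route as the paper: restrict the equivalence $\Xi$ of \Cref{thm:Gamma-presheaves} to the special/$\Orb$-semiadditive subcategories, verify the key claim that a $G$-global $\Gamma$-space is special iff $\Xi(X)$ is $\Orb$-semiadditive by tracing the parametrized Segal maps of \Cref{prop:parametrized-segal-special-map} and \Cref{rmk:otherlevels} through the explicit equivalences, and then get uniqueness from the universal property of $\ul\CMon^{\Orb}(\ul\Spc_{\Glo})$. The paper packages the "tracing" step exactly as you anticipate — \Cref{prop:special-rephrased} decomposes specialness into a fiberwise condition plus a condition on the characteristic maps $\chi\colon H/K_+\to 1^+$, and \Cref{lemma:comparison-of-collapse-maps} identifies $\theta(\rho_p)$ with $\chi$ — so the bookkeeping you flag as the main obstacle is precisely what those two auxiliary results supply. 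One small caution: the parametrized Segal map $\rho_p$ corresponds to the \emph{single} characteristic map $\chi\colon G/H_+\to 1^+$ (with a change of base group), not the full product Segal map $X(G/H_+)\to\prod_{G/H}X(1^+)$; recovering the latter requires combining it with fiberwise semiadditivity, which is exactly the content of \Cref{prop:special-rephrased} (the reference to \Cref{prop:special-pointed-vs-unpointed} you make in passing should be to that proposition instead).
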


As the notation suggest, we will in fact show that the equivalence $\Xi$ from Theorem~\ref{thm:Gamma-presheaves} restricts accordingly and is still an equivalence. For this let us first translate our definition of specialness into something that is more akin to the characterization of equivariant semiadditivity given in Subsection~\ref{subsec:P-semiadditive-Un}:

\begin{proposition}\label{prop:special-rephrased}
	A $G$-global $\Gamma$-space $X$ is special if and only if the following conditions are satisfied for every universal subgroup $H\subset\mathcal M$ and every homomorphism $\phi\colon H\to G$:
	\begin{enumerate}[(1)]
		\item For all finite $H$-sets $S,T$ the collapse maps $S_+\gets S_+\vee T_+\to T_+$ induce a weak homotopy equivalence $X(S_+\vee T_+)^\phi\to X(S_+)^\phi\times X(T_+)^\phi$.
		\item For all $K\subset H$ the composite map
		\begin{equation*}
			X(H/K_+)^\phi\hookrightarrow X(H/K_+)^{\phi|_K}\xrightarrow{X(\chi)^{\phi|_K}} X(1^+)^{\phi|_K},
		\end{equation*}
		is a weak homotopy equivalence, where $\chi\colon H/K_+\to 1^+$ is the characteristic map of $[1]=K\in H/K$.
	\end{enumerate}
	\begin{proof}
		Let us first assume that $X$ is special. Then we have a commutative diagram
		\begin{equation*}
			\begin{tikzcd}
				X(S_+\vee T_+)\arrow[r]\arrow[d,"\rho"'] & X(S_+)\times X(T_+)\arrow[d,"\rho\times\rho"]\\
				\prod_{S\sqcup T} X(1^+)\arrow[r, "\cong"'] & \prod_{S} X(1^+)\times \prod_T X(1^+)
			\end{tikzcd}
		\end{equation*}
		where the top horizontal map is again induced by the collapse maps. By assumption, the left hand vertical map is a $(G\times\Sigma_{S\sqcup T})$-global weak equivalence, hence also a $(G\times H)$-global weak equivalence with respect to the $H$-action on $S\sqcup T$. Similarly, one shows that the right hand vertical map is a $(G\times H)$-global weak equivalence, and hence so is the top horizontal map by $2$-out-of-$3$. Taking fixed points with respect to $(\phi,\id)\colon H\to G\times H$ then establishes Condition $(1)$.

		In order to verify Condition~$(2)$, we first note that we have for any $H$-space $Y$ an isomorphism $\big(\prod_{H/K} Y\big)^H \cong Y^K$ via projection to the factor indexed by $[1]$. Applying this to $Y=(\phi,\id_H)^*X(1^+)$ we then get a commutative diagram
		\begin{equation}\label{diag:segal-vs-characteristic}
			\begin{tikzcd}[row sep=small]
				& \left(\prod\nolimits_{H/K} X(1^+)\right)^\phi\arrow[dd,"\cong"',"\pr_{[1]}"]\\
				X(H/K_+)^\phi\arrow[ur, bend left=15pt, "\rho"]\arrow[dr, bend right=15pt, "X(\chi)"']\\
				& X(1^+)^{\phi|_K}
			\end{tikzcd}
		\end{equation}
		in which the top map is a weak homotopy equivalence by specialness. The claim follows by 2-out-of-3.

		Conversely, assume $X$ is a $G$-global $\Gamma$-space satisfying Conditions $(1)$ and $(2)$. We want to show that for every finite set $S$ the Segal map $X(S_+)\to\prod_S X(1^+)$ is a $(G\times\Sigma_S)$-global weak equivalence, i.e.~for every universal subgroup $H\subset\mathcal M$, every $H$-action on $S$ (i.e.~homomorphism $\rho\colon H\to\Sigma_S)$, and every homomorphism $\phi\colon H\to G$ it induces a weak homotopy equivalence $X(S_+)^\phi\to\big(\prod_{S}X(1^+)\big)^\phi$. Using Condition $(1)$ one readily reduces to the case that $S$ is transitive, i.e.~$S=H/K$ for some $K\subset H$; however, in this case the claim again follows by applying $2$-out-of-$3$ to the commutative diagram $(\ref{diag:segal-vs-characteristic})$.
	\end{proof}
\end{proposition}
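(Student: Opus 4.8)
The plan is to prove both implications by repeatedly applying two-out-of-three to reduce to the defining property of specialness. Throughout, for a finite $H$-set $S$ with action $\rho_S\colon H\to\Sigma_S$ and a homomorphism $\phi\colon H\to G$, one reads $X(S_+)^\phi$ as the fixed points of the $E\mathcal M\times(G\times\Sigma_S)$-simplicial set $X(S_+)$ along the graph of $(\phi,\rho_S)\colon H\to G\times\Sigma_S$, so that by definition $X$ is special exactly when the Segal map $\rho\colon X(S_+)\to\prod_{s\in S}X(1^+)$ induces a weak homotopy equivalence on $\phi$-fixed points for every such $(H,S,\phi)$. The one computational input is the \emph{Segal square}: for finite $H$-sets $S,T$, naturality of $X$ together with the fact that the characteristic maps $\chi_s$ for $s\in S\sqcup T$ are grouped according to whether $s\in S$ or $s\in T$ produces a commutative square of $E\mathcal M\times(G\times H)$-simplicial sets
\[\begin{tikzcd}[row sep=large, column sep=large]
X\bigl((S\sqcup T)_+\bigr)\arrow[r]\arrow[d,"\rho"'] & X(S_+)\times X(T_+)\arrow[d,"\rho\times\rho"]\\
\textstyle\prod_{S\sqcup T} X(1^+)\arrow[r,"\cong"'] & \bigl(\textstyle\prod_{S} X(1^+)\bigr)\times\bigl(\textstyle\prod_T X(1^+)\bigr)
\end{tikzcd}\]
whose top arrow is induced by the two collapse maps $S_+\leftarrow S_+\vee T_+\to T_+$ and whose bottom arrow is the reindexing isomorphism. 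A second, purely formal ingredient is the \emph{orbit identification}: for an $E\mathcal M\times G$-simplicial set $Y$, with $H$ acting on $\prod_{H/K}Y$ by permuting factors, projection to the trivial coset is an isomorphism $\bigl(\prod_{H/K}Y\bigr)^\phi\cong Y^{\phi|_K}$ because $\mathrm{Stab}_H(eK)=K$; applied to $Y=X(1^+)$ and combined with $\rho=(X(\chi_s))_s$ this gives a commutative triangle identifying $\pr_{eK}\circ\rho$ with the composite $X(H/K_+)^\phi\hookrightarrow X(H/K_+)^{\phi|_K}\xrightarrow{X(\chi)^{\phi|_K}}X(1^+)^{\phi|_K}$ of condition~(2).

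For the forward direction I would assume $X$ special. For condition~(1), in the Segal square the left vertical is a $(G\times\Sigma_{S\sqcup T})$-global weak equivalence by specialness, hence a $(G\times H)$-global weak equivalence since restriction along $\id_G\times\rho_{S\sqcup T}$ is homotopical (Lemma~\ref{lemma:alpha-star-EM}); the right vertical $\rho\times\rho$ is a product of $(G\times H)$-global weak equivalences by the same argument applied to $\rho$ over $S$ and over $T$; and the bottom is an isomorphism. Two-out-of-three makes the top arrow a $(G\times H)$-global weak equivalence, and passing to $\phi$-fixed points (legitimate since $H$ is universal and $(\phi,\id_H)$ is a homomorphism) yields exactly the map of~(1). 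For condition~(2), specialness applied to $S=H/K$ says $\rho^\phi$ is a weak homotopy equivalence; feeding this through the orbit identification and the triangle above gives~(2).

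For the reverse direction I would assume~(1) and~(2), fix data $(H,\rho_S,\phi)$, and prove $\rho^\phi\colon X(S_+)^\phi\to\bigl(\prod_S X(1^+)\bigr)^\phi$ is a weak homotopy equivalence by decomposing $S$ into $H$-orbits and inducting on their number. In the inductive step one splits $S=S'\sqcup(S\setminus S')$ with $S'$ a single orbit and takes $\phi$-fixed points of the Segal square: the top arrow is a weak homotopy equivalence by~(1), the bottom becomes an isomorphism, the right vertical becomes $\rho^\phi\times\rho^\phi$, so two-out-of-three propagates the claim. The base cases are $S=\varnothing$, where $X(0^+)^\phi\simeq *$ follows from~(1) with $S$ a one-point $H$-set and $T=\varnothing$, and $S=H/K$ transitive, which is precisely~(2) read backwards through the orbit identification and two-out-of-three, as in the forward direction.

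The homotopy theory used is only two-out-of-three and the fact that restriction along group homomorphisms preserves global weak equivalences; the step I expect to demand the most care is the equivariant bookkeeping. One must check that the Segal square is genuinely equivariant for the \emph{combined} permutation-and-$\Gamma$ functoriality, that passage to $\phi$-fixed points commutes with the finite products and with the reindexing isomorphism, and that the orbit identification $\bigl(\prod_{H/K}Y\bigr)^\phi\cong Y^{\phi|_K}$ is natural enough that the triangle relating $\rho$, the projection $\pr_{eK}$, and $X(\chi)$ commutes on the nose. None of this is conceptually hard, but reconciling the fixed-point conventions and the two $H$-actions in play — the tautological one from $H\subset\mathcal M\subset E\mathcal M$ and the one induced by the $H$-set structure on $S$ — is where an honest proof spends its effort.
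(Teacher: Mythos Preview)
Your proposal is correct and follows essentially the same approach as the paper: the same Segal square for condition~(1) and the reduction step, the same orbit identification $(\prod_{H/K}Y)^\phi\cong Y^{\phi|_K}$ and commutative triangle for condition~(2) and the transitive base case, and two-out-of-three throughout. The only cosmetic difference is that you spell out the reduction in the reverse direction as an induction on the number of orbits (including the empty base case), whereas the paper just says ``using Condition~(1) one readily reduces to the case that $S$ is transitive.''
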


In order to relate this to our characterization of equivariant semiadditive functors into $\ul{\Spc}_{\Glo}$ we note:

\begin{lemma}\label{lemma:comparison-of-collapse-maps}
	Let $p\colon K\hookrightarrow H$ be an inclusion of finite groups (hence a map in $\Orb$). Then the essentially unique equivalence $\theta\colon\ul{\mathbb F}_{\Glo,*}^{\Orb}\simeq\nerve\mathcal F^+_\bullet$ (see Corollary~\ref{cor:F-vs-F-pointed}) sends the map $\rho_{p}\colon p^*p_!(\id_+)\to\id_+$ in $\ul{\mathbb F}^{\Orb}_{\Glo,*}(K)$ from Observation~\ref{obs:unique-segal-map} up to isomorphism to the map $\chi\colon H/K_+\to 1^+$ in $\mathcal F_K^+$ from Proposition~\ref{prop:special-rephrased}.
	\begin{proof}
		By construction, $\rho_{p}$ is characterized by the properties that $\rho_{p}\eta=\id$ and $\rho_{p,L}j=0$ for some (hence any) complement $j\colon C\to p^*p_!(\iota)$ of $\eta$. Now the inclusion $1^+\to H/K_+$ of the coset $[1]$ qualifies as a unit $1^+\to p^*p_!1^+$, and with respect to this choice of $\eta$ the map $\chi\colon H/K_+\to1^+$ obviously admits the analogous description.

		If we now assume for ease of notation that $\theta(\id_1)_+=1^+$ (instead of them just being isomorphic), then the calculus of mates provides us with an isomorphism $\alpha\colon  H/K_+\cong\theta(p^*p_!(\id_+))$ in $\mathcal F^+_K$ fitting into a commutative diagram
		\begin{equation}\label{diag:units-f-sets}
			\begin{tikzcd}
				1^+\arrow[d,"\eta"']\arrow[r,equal] & \theta(\id_+)\arrow[d, "\theta(\eta)"]\\
				H/K_+\arrow[r, "\cong", "\alpha"'] & \theta(p^*p_!\id_+)\rlap,
			\end{tikzcd}
		\end{equation}
		and we claim that $\chi$ is actually equal to $\theta(\rho_{p})\alpha$. Indeed,
		\begin{equation*}
			\chi\eta=\id_{1^+}=\theta(\id_{\id_+})=\theta(\rho_{p}\eta)=\theta(\rho_{p})\theta(\eta)= \theta(\rho_{p})\alpha\eta,
		\end{equation*}
		where the last equation uses the commutativity of $(\ref{diag:units-f-sets})$. On the other hand, if $j\colon C\to p^*p_!\id_+$ is a complement of $\eta$, then $\theta(j)$ is a complement of $\theta(\eta)$ (as $\theta$ preserves coproducts), so $\alpha^{-1}\theta(j)$ is a complement of $\eta\colon 1^+\to H/K_+$ in $\mathcal F_K^+$ by commutativity of $(\ref{diag:units-f-sets})$ again. But then $\chi(\alpha^{-1}\theta(j))=0=\theta(0)=\theta(\rho_{p}j)=\theta(\rho_{p,L})\alpha(\alpha^{-1}\theta(j))$, which finishes the proof.
	\end{proof}
\end{lemma}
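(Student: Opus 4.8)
The plan is to match universal characterizations of $\rho_p$ and $\chi$ and pass between them through $\theta$. First I would unwind the construction of $\rho_p$: by Observation~\ref{obs:unique-segal-map} together with Construction~\ref{cstr:ParametrizedSegalMap}, applied with $A=K$, $B=H$, $p\colon K\hookrightarrow H$, $C=H$ and $C\to B$ the identity (so that there is a single index $i=1$ with $C_1=K$, $p_1=p$), the map $\rho_p\colon p^*p_!(\id_+)\to\id_+$ arises as follows. The disjoint summand inclusion $X\times_A C_1\to X\times_B C_1$ of Lemma~\ref{lem:UnitMapDisjointSummandInclusion} is, in this special case, the diagonal $\Delta\colon K\to K\times_H K$, and $p^*p_!(\id_+)$ is precisely $(\mathrm{pr}_1\colon K\times_H K\to K)_+$ with $\Delta$ the unit $\eta$ of the adjunction $p_!\dashv p^*$ evaluated at $\id_+$. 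Choosing a complement $j\colon C\to p^*p_!(\id_+)$ of $\eta$, so that $p^*p_!(\id_+)\simeq\id_+\vee C$, the map $\rho_p$ is the projection onto the first summand; equivalently, among morphisms $p^*p_!(\id_+)\to\id_+$ in $\ulfinptdPsets(K)$, it is the unique one with $\rho_p\circ\eta=\id_{\id_+}$ and $\rho_p\circ j=0$, for any choice of complement $j$.

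Next I would transport this description through $\theta$. Since $\theta$ is an equivalence of global $\infty$-categories (Corollary~\ref{cor:F-vs-F-pointed}), it commutes with the restriction functors $p^*$ and preserves the parametrized left adjoints $p_!$ together with their units. Using the explicit description of $\theta$, it sends $\id_+=(\id_K)_+$ to $1^+$ (the one-point pointed $K$-set with trivial action), it sends $p_!(\id_+)=(K\hookrightarrow H)_+$ to $H/K_+\cong\ind^H_K(1^+)$, and hence $p^*p_!(\id_+)$ to $\res^H_K\ind^H_K(1^+)=\res^H_K(H/K_+)=H/K_+$; moreover it carries the unit $\eta$ to the classical unit $1^+\to\res^H_K(H/K_+)$ of $\ind^H_K\dashv\res^H_K$, which is the inclusion of the coset $[1]=eK$. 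Consequently $\theta(j)$ is a complement in $\mathcal F^+_K$ of the inclusion of $[1]$; since complements of a pointed sub-$K$-set are unique in the discrete setting, this is, up to the canonical identification, the sub-pointed-$K$-set $H/K\smallsetminus\{[1]\}$ with disjoint basepoint.

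Finally I would observe that $\chi\colon H/K_+\to1^+$ solves the same two equations: by definition of the characteristic map, $\chi$ composed with the inclusion of $[1]$ is $\id_{1^+}$, and $\chi$ annihilates every coset other than $[1]$, so $\chi$ composed with any complement of the inclusion of $[1]$ is $0$. By the universal property of the wedge $H/K_+\simeq 1^+\vee\theta(C)$ in the $1$-category $\mathcal F^+_K$, a morphism out of it is determined by its two components; since $\theta(\rho_p)$ (by the previous paragraph and functoriality of $\theta$) and $\chi$ have the same two components, they agree after the canonical identifications $\theta(\id_+)\cong1^+$ and $\theta(p^*p_!(\id_+))\cong H/K_+$, which is exactly the assertion that $\theta$ carries $\rho_p$ to $\chi$ up to isomorphism.

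The main obstacle is purely a matter of care rather than of substance: one must check that the disjoint summand inclusion produced by Construction~\ref{cstr:ParametrizedSegalMap} really is the adjunction unit $\eta$ (equivalently the diagonal $\Delta_p$), and that $\theta$ — being only an equivalence of $T$-$\infty$-categories — nevertheless transports units of $p_!\dashv p^*$ to units of $\ind^H_K\dashv\res^H_K$ compatibly with the concrete descriptions of $\ulfinptdPsets(K)$ and $\mathcal F^+_K$. Phrasing the whole argument in terms of structure preserved by any equivalence of global $\infty$-categories (parametrized adjoints and their units, disjoint summand inclusions and their unique discrete complements) reduces the genuinely model-dependent input to the identification $\theta(\id_+)\cong1^+$ and the classical formula $s\mapsto[e,s]$ for the induction–restriction unit.
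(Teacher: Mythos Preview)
Your proposal is correct and follows essentially the same route as the paper: both characterize $\rho_p$ uniquely by $\rho_p\eta=\id$ and $\rho_p j=0$ for a complement $j$ of the unit, observe that $\chi$ satisfies the analogous equations in $\mathcal F^+_K$, and conclude by transporting through $\theta$ using that an equivalence of global $\infty$-categories preserves parametrized adjoints and their units. The paper makes the transport step slightly more explicit by writing down the mate isomorphism $\alpha\colon H/K_+\cong\theta(p^*p_!(\id_+))$ and the commuting square with the two units, which is exactly the point you flag as ``the main obstacle''; your formulation in terms of structure preserved by any equivalence is the same content packaged more abstractly.
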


\begin{proof}[Proof of Theorem~\ref{thm:Xi-restr}]
	By the universal property of $\ul\CMon^{\Orb}(\ul\Spc_{\Glo})$ it will suffice to construct such an equivalence, for which we will show that the equivalence $\Xi$ from Theorem~\ref{thm:Gamma-presheaves} restricts accordingly, i.e.~that a $G$-global $\Gamma$-space $X$ is special if and only if $\Xi(X)\colon \pi_G^*\ul{\mathbb F}_{\Glo,*}^{\Orb}\to\pi_G^*\ul\Spc_{\Glo}$ is $\pi^*_G\Orb$-semiadditive.

	For this, let us write $\hat\Xi(X)$ for the functor $\int\ul{\mathbb F}_{\Glo,*}^{\Orb}\times\ul G\to\Spc$ corresponding to $\Xi(X)$. Plugging in the construction of $\Xi$, this is simply given by the restriction of $\nerve_\Delta(P\circ\Psi_\Gamma(X))\colon\nerve_\Delta(\combFinOrbSets{})\to \nerve_\Delta(\cat{Kan})=\Spc$ (where $P$ is a fixed fibrant replacement again) along the inverse of the equivalence $\Theta_G\colon\int\ul{\mathbb F}_{\Glo,*}^{\Orb}\simeq\nerve_\Delta\combFinOrbSets{}$ from Proposition~\ref{prop:Theta-grothendieck}. On the other hand, \Cref{rmk:otherlevels} shows that $\Xi(X)$ is semiadditive if and only if $\hat\Xi(X)$ is fiberwise semiadditive and sends the Segal maps (defined there) to equivalences.

	\textit{Fiberwise semiadditivity.} We will first show that $X$ satisfies Condition~$(1)$ of Proposition~\ref{prop:special-rephrased} if and only if $\hat\Xi(X)$ is fiberwise semiadditive. Namely, $\hat\Xi(X)$ is fiberwise semiadditive if and only if its restriction to the non-full subcategories spanned by the objects $(H;X,\phi)$ and the maps of the form $(\id;f,\id)$ for each \emph{universal} $H\subset\mathcal M$ and $\phi\colon H\to G$ is semiadditive (as the universal subgroups of $\mathcal M$ account for all objects of $\Glo$ up to isomorphism). As $\Theta_G$ identifies this with the corresponding full subcategory $\nerve_\Delta(\combFinOrbSets{G})_\phi\subset\nerve_\Delta(\combFinOrbSets{G})$ via an equivalence by Proposition~\ref{prop:Theta-grothendieck}, we conclude that $\hat\Xi(X)$ is fiberwise semiadditive if and only if $\Theta_G^*\hat\Xi(X)$ is semiadditive when restricted to each $\nerve_\Delta(\combFinOrbSets{G})_\phi$. But by the explicit construction of $\Psi_\Gamma$, we immediately see that the latter condition for $\Theta_G^*\hat\Xi(X)\simeq \nerve_\Delta(P\circ\Psi_\Gamma(X))$ is equivalent for every fixed $\phi$ to $X(\blank)^\phi$ sending coproducts of finite pointed $H$-sets to products, which is precisely what we wanted to prove.

	\textit{Segal maps.} To complete the proof, it will now suffice to show that $X$ satisfies Condition~$(2)$ of Proposition~\ref{prop:special-rephrased} if and only if $\hat\Xi(X)$ sends the parametrized Segal maps $\rho\colon (H;\iota_+,\phi)\to (K;\id_+,\phi\iota)$ (where $\iota\colon K\hookrightarrow H$ is an inclusion of universal subgroups and $\phi\colon H\to G$ is a homomorphism) in $\int\ul{\mathbb F}_{\Glo,*}^{\Orb}\times\ul G$ to equivalences. However, by the description of $\Theta_G$ given in Proposition~\ref{prop:Theta-grothendieck} together with the computation in Lemma~\ref{lemma:comparison-of-collapse-maps}, we conclude that $\Theta_G^{-1}(\rho)$ is given up to equivalence by $(\iota,1;\chi,\id_{\phi\iota})\colon (H;H/K_+,\phi)\to (K;1^+,\phi\iota)$, and by the explicit construction of $\Psi_\Gamma$ we see that $P\circ \Psi_\Gamma$ sends this up to weak equivalence to the map $X(H/K_+)^\phi\to X(1^+)^{\phi|_K}$ from Proposition~\ref{prop:special-rephrased} as desired.
\end{proof}

We can now leverage the above comparison in order to deduce a universal property of $\ul\GammaS^{\text{gl, spc}}$.

\begin{theorem}\label{thm:global-gamma-relative}
	The functor $\mathbb U\colon\ul\GammaS^\textup{gl, spc}\to\ul\S^\textup{gl}$ exhibits $\ul\GammaS^\textup{gl, spc}$ as the equivariantly semiadditive envelope of $\ul\S^\textup{gl}$, i.e.~for every equivariantly semiadditive global $\infty$-category $\Cc$ we have an equivalence
	\begin{equation*}
		\ul\Fun_{\Glo}^{\Pprod}(\Cc,\mathbb U)\colon\ul\Fun_{\Glo}^{\Pbiprod}(\Cc,\ul\GammaS^\textup{gl, spc})\xrightarrow{\;\simeq\;} \ul\Fun_{\Glo}^{\Pprod}(\Cc,\ul\S^\textup{gl}).
	\end{equation*}
	Moreover, $\mathbb U$ admits a left adjoint $\mathbb P$ which exhibits $\ul\GammaS^\textup{gl, spc}$ as the equivariantly semiadditive completion in the following sense: for every globally cocomplete equivariantly semiadditive global $\infty$-category $\mathcal D$ we have an equivalence
	\begin{equation*}
		\ul\Fun_{\Glo}^\textup{L}(\mathbb P,\mathcal D)\colon\ul\Fun^\textup{L}_{\Glo}(\ul\GammaS^\textup{gl, spc},\mathcal D)\xrightarrow{\;\simeq\;}\ul\Fun^\textup{L}_{\Glo}(\ul\S^\textup{gl},\mathcal D).
	\end{equation*}
	\begin{proof}
		The existence of the left adjoint follows formally from Theorem~\ref{thm:Xi-restr} and the fact that $\mathbb U\colon\ul\CMon^{\Orb}_{\Glo}\to\ul\Spc_{\Glo}$ admits a left adjoint (see~Corollary~\ref{cor:PresentableUniversalPropertyCMon}).

		Now the free-forgetful adjunction $\ul\S^\textup{gl}\rightleftarrows\ul\CMon^{\Orb}_{\Glo}(\ul\S^\text{gl})$ has both of the above universal properties by \cref{thm:Semiadd_omnibus} and Corollary~\ref{cor:cocomplete-univ-cmon-relative}), so it suffices to show that the equivalence $\Xi$ from Theorem~\ref{thm:Xi-restr} is compatible with the free-forgetful adjunctions in the sense that there are natural equivalences filling
		\begin{equation*}
			\begin{tikzcd}[cramped]
				\ul\GammaS^\textup{gl, spc}\arrow[d, "\mathbb U"']\arrow[r, "\Xi"] & \ul\CMon^{\Orb}(\ul\Spc_{\Glo})\arrow[d, "\mathbb U=\ev_{\id_+}"]\\
				\ul\S^\textup{gl}\arrow[r, "\simeq"'] & \ul\Spc^{\Glo}.
			\end{tikzcd}
			\quad\text{and}\quad
			\begin{tikzcd}[cramped]
				\ul\GammaS^\textup{gl, spc}\arrow[from=d, "\mathbb P"]\arrow[r, "\Xi"] & \ul\CMon^{\Orb}(\ul\Spc_{\Glo})\arrow[from=d, "\mathbb P"']\\
				\ul\S^\textup{gl}\arrow[r, "\simeq"'] & \ul\Spc^{\Glo}\llap.
			\end{tikzcd}
		\end{equation*}
		However, as $\Xi$ is an equivalence it suffices to prove the first statement, which is simply the defining property of $\Xi$.
	\end{proof}
\end{theorem}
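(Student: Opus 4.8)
The plan is to deduce everything from the equivalence $\Xi\colon\ul\GammaS^\textup{gl, spc}\xrightarrow{\simeq}\ul\CMon^{\Orb}(\ul\Spc_{\Glo})$ of \Cref{thm:Xi-restr}, by transporting along it the universal properties of the free $\Orb$-commutative monoid construction recorded in \Cref{thm:Semiadd_omnibus} and \Cref{cor:cocomplete-univ-cmon-relative}. Throughout I identify $\ul\S^\textup{gl}$ with $\ul\Spc_{\Glo}$ via \Cref{thm:global-spaces-comparison}, and hence $\ul\CMon^{\Orb}(\ul\S^\textup{gl})$ with $\ul\CMon^{\Orb}(\ul\Spc_{\Glo})$; the crucial input from \Cref{thm:Xi-restr} is that under these identifications $\Xi$ intertwines $\mathbb U\colon\ul\GammaS^\textup{gl, spc}\to\ul\S^\textup{gl}$ with the forgetful functor $\mathbb U=\ev_{\id_+}\colon\ul\CMon^{\Orb}(\ul\Spc_{\Glo})\to\ul\Spc_{\Glo}$.

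First I would produce the left adjoint $\mathbb P$. Since $\ul\Spc_{\Glo}$ is presentable (\Cref{ex:T_Spaces_Presentable}), the forgetful functor $\mathbb U\colon\ul\CMon^{\Orb}(\ul\Spc_{\Glo})\to\ul\Spc_{\Glo}$ has a left adjoint $\mathbb P$ by \Cref{lemma:forgetful-RA}. Transporting along the equivalences $\Xi$ and $\ul\S^\textup{gl}\simeq\ul\Spc_{\Glo}$ and using the compatibility of $\Xi$ with $\mathbb U$, uniqueness of adjoints produces a left adjoint $\mathbb P\colon\ul\S^\textup{gl}\to\ul\GammaS^\textup{gl, spc}$ of $\mathbb U$ together with a natural equivalence filling the evident square relating $\mathbb P$ on both sides. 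This is the only point where presentability of the target enters; the global $\infty$-categories $\Cc$ and $\mathcal D$ appearing below need only be equivariantly semiadditive, resp.\ globally cocomplete and equivariantly semiadditive.

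Next I would transport the two universal properties. \Cref{thm:Semiadd_omnibus} gives that for every equivariantly semiadditive $\Cc$ the map $\ul\Fun^{\Pprod}_{\Glo}(\Cc,\mathbb U)\colon\ul\Fun^{\Pbiprod}_{\Glo}(\Cc,\ul\CMon^{\Orb}(\ul\Spc_{\Glo}))\to\ul\Fun^{\Pprod}_{\Glo}(\Cc,\ul\Spc_{\Glo})$ is an equivalence; postcomposing the source with the equivalence $\ul\Fun^{\Pbiprod}_{\Glo}(\Cc,\Xi)$ and invoking that $\Xi$ intertwines the two forgetful functors yields the first displayed equivalence. For the second universal property, \Cref{thm:Semiadd_omnibus} only covers presentable targets, so I would instead invoke \Cref{cor:cocomplete-univ-cmon-relative} — applicable since $\ul\Spc_{\Glo}$ is of the required form $\ul\Spc_T$ — to get that $\ul\Fun^{\textup{L}}_{\Glo}(\mathbb P,\mathcal D)\colon\ul\Fun^{\textup{L}}_{\Glo}(\ul\CMon^{\Orb}(\ul\Spc_{\Glo}),\mathcal D)\to\ul\Fun^{\textup{L}}_{\Glo}(\ul\Spc_{\Glo},\mathcal D)$ is an equivalence for every locally small globally cocomplete equivariantly semiadditive $\mathcal D$; transporting along $\Xi$ and using the compatibility of $\Xi$ with $\mathbb P$ from the previous step gives the second displayed equivalence.

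The substantive work has already been carried out in \Cref{thm:Xi-restr} (and in the computational \Cref{thm:Gamma-presheaves} underlying it), so I expect no genuine obstacle at this stage: the only thing to watch is bookkeeping, namely that the equivalences obtained after transport are exactly the maps $\ul\Fun^{\Pprod}_{\Glo}(\Cc,\mathbb U)$ and $\ul\Fun^{\textup{L}}_{\Glo}(\mathbb P,\mathcal D)$ named in the statement rather than merely \emph{some} equivalences. This is immediate from the commuting squares of \Cref{thm:Xi-restr} together with the $2$-functoriality of $\ul\Fun_{\Glo}(\Cc,-)$ and $\ul\Fun_{\Glo}(-,\mathcal D)$, since $\Xi$ is an honest equivalence of global $\infty$-categories.
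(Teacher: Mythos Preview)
Your proposal is correct and follows essentially the same approach as the paper: both transport the universal properties of $\ul\CMon^{\Orb}(\ul\Spc_{\Glo})$ from \Cref{thm:Semiadd_omnibus} and \Cref{cor:cocomplete-univ-cmon-relative} along the equivalence $\Xi$ of \Cref{thm:Xi-restr}, using that $\Xi$ is by construction compatible with the forgetful functors (hence with their left adjoints). The paper cites \Cref{cor:PresentableUniversalPropertyCMon} rather than \Cref{lemma:forgetful-RA} for the existence of $\mathbb P$, but this is immaterial.
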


Together with Theorem~\ref{thm:cocomplete-univ-cmon} we moreover get Theorem~\ref{introthm:universal-prop-gamma} from the introduction:

\begin{theorem}\label{thm:global-gamma}
	Let $\mathcal D$ be a globally cocomplete and equivariantly semiadditive global $\infty$-category. Then evaluation at $\mathbb P(*)$ provides an equivalence
	\[\ul\Fun^\textup{L}_{\Glo}(\ul\GammaS^\textup{gl, spc},\mathcal D)\xrightarrow{\;\sim\;} \mathcal D.\] Put differently, $\ul{\GammaS}^\textup{gl, spc}$ is the free globally cocomplete (or presentable) equivariantly semiadditive global $\infty$-category on one generator (namely, the free global special $\Gamma$-space $\mathbb P(*)$).\qed
\end{theorem}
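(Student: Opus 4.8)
The plan is to deduce this statement directly from the results already established, so that the proof is a short chain of equivalences with no new computations. First I would recall that by Theorem~\ref{thm:global-gamma-relative}, the left adjoint $\mathbb P$ of the forgetful functor $\mathbb U\colon\ul\GammaS^\textup{gl, spc}\to\ul\S^\textup{gl}$ exhibits $\ul\GammaS^\textup{gl, spc}$ as the equivariantly semiadditive completion of $\ul\S^\textup{gl}$; concretely, for any globally cocomplete equivariantly semiadditive $\mathcal D$ precomposition with $\mathbb P$ induces an equivalence
\[
	\ul\Fun^\textup{L}_{\Glo}(\ul\GammaS^\textup{gl, spc},\mathcal D)\xrightarrow{\;\simeq\;}\ul\Fun^\textup{L}_{\Glo}(\ul\S^\textup{gl},\mathcal D).
\]
Next I would invoke Theorem~\ref{thm:global-spaces} (the universal property of $\ul\S^\textup{gl}$): for any globally cocomplete global $\infty$-category $\mathcal D$, evaluation at $*\in\ul\S^\textup{gl}$ induces an equivalence $\ul\Fun^\textup{L}_{\Glo}(\ul\S^\textup{gl},\mathcal D)\xrightarrow{\;\simeq\;}\mathcal D$. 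Composing these two equivalences gives the desired equivalence $\ul\Fun^\textup{L}_{\Glo}(\ul\GammaS^\textup{gl, spc},\mathcal D)\xrightarrow{\;\simeq\;}\mathcal D$.

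The remaining point is to identify this composite with evaluation at $\mathbb P(*)$. For this I would note that the equivalence of Theorem~\ref{thm:global-gamma-relative} is given by $\mathbb P^*$, the composite equivalence is $\ev_*\circ\,\mathbb P^*$, and for a globally cocontinuous functor $F\colon\ul\GammaS^\textup{gl, spc}\to\mathcal D$ one has $(\mathbb P^*F)(*)=F(\mathbb P(*))$ on objects, with the analogous identification at all levels $B\in\Glo$ and compatibly with restrictions; hence $\ev_*\circ\,\mathbb P^*\simeq\ev_{\mathbb P(*)}$ as functors out of $\ul\Fun^\textup{L}_{\Glo}(\ul\GammaS^\textup{gl, spc},\mathcal D)$. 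This is a formal naturality check rather than a computation. One should also record that $\ul\GammaS^\textup{gl, spc}$ is presentable: it is equivalent to $\ul\CMon^{\Orb}(\ul\Spc_{\Glo})$ by Theorem~\ref{thm:Xi-restr}, which is presentable by Proposition~\ref{prop:PsemiadditiveFunctorsPresentable} (and equivariantly semiadditive by Proposition~\ref{cor:CommutativeMonoidsAreSemiadditive}), and in particular globally cocomplete, so that the phrase ``free globally cocomplete (or presentable)'' is justified.

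There is essentially no hard obstacle here: both universal properties being cited — Theorem~\ref{thm:global-spaces} and Theorem~\ref{thm:global-gamma-relative} — have already been proved, and the only thing to be careful about is that the statement of Theorem~\ref{thm:global-gamma-relative} is phrased in terms of $\mathcal S\simeq\ul\Spc_{\Glo}$ and $\ul\CMon^{\Orb}(\mathcal S)$, so one must transport it across the equivalences $\ul\S^\textup{gl}\simeq\ul\Spc_{\Glo}$ (Theorem~\ref{thm:global-spaces-comparison}) and $\ul\GammaS^\textup{gl, spc}\simeq\ul\CMon^{\Orb}(\ul\Spc_{\Glo})$ (Theorem~\ref{thm:Xi-restr}); this is exactly what the proof of Theorem~\ref{thm:global-gamma-relative} already does, so it can simply be cited. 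If anything, the mildly delicate step is bookkeeping the compatibility between the generator $\mathbb P(*)\in\ul\GammaS^\textup{gl, spc}$ and the generator $\mathbb P(*)\in\ul\CMon^{\Orb}(\ul\Spc_{\Glo})$ under $\Xi$, but this too is part of the content of Theorem~\ref{thm:global-gamma-relative} (the compatibility of $\Xi$ with the free-forgetful adjunctions), hence already available.
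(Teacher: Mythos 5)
Your proposal is correct and matches the paper's intent. The paper derives \Cref{thm:global-gamma} by combining \Cref{thm:cocomplete-univ-cmon} (universal property of $\ul\CMon^{\Orb}_{\Glo}$ under global cocompleteness) with the equivalence $\Xi$ of \Cref{thm:Xi-restr}, whereas you compose \Cref{thm:global-gamma-relative} with \Cref{thm:global-spaces}; but \Cref{thm:global-gamma-relative} is itself obtained from \Cref{cor:cocomplete-univ-cmon-relative} (a corollary of \Cref{thm:cocomplete-univ-cmon}) and $\Xi$, so the two chains of equivalences are the same up to reassociation, and the identification $\ev_*\circ\,\mathbb P^*\simeq\ev_{\mathbb P(*)}$ is, as you note, formal.
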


Using Propositions~\ref{prop:Gamma-I-vs-EM} and~\ref{prop:special-pointed-vs-unpointed} we can deduce several variants of the above theorems. Let us make two of them explicit:

\begin{corollary}
	There exist equivalences \[\ul\GammaS^\textup{gl}_{\mathcal I}\simeq\ul\Fun_{\Glo}(\ul{\mathbb F}^{\Orb}_{\Glo,*},\ul\Spc_{\Glo})\qquad\text{and}\qquad \ul\GammaS^\textup{gl, spc}_{\mathcal I,*}\simeq\ul\CMon^{\Orb}_{\Glo}\] fitting into a commutative diagram
	\begin{equation*}
		\begin{tikzcd}
			\ul\GammaS^\textup{gl, spc}_{\mathcal I,*}\arrow[r, hook]\arrow[d,"\simeq"'] & \ul\GammaS^\textup{gl}_{\mathcal I}\arrow[d, "\simeq"{description}] \arrow[r, "\ev_{1^+}"] &[1em] \ul\S^\textup{gl}_{\mathcal I}\arrow[d,"\simeq"]\\
			\ul\CMon^{\Orb}_{\Glo}\arrow[r,hook] & \ul\Fun_{\Glo}(\ul{\mathbb F}^{\Orb}_{\Glo,*},\ul\Spc_{\Glo})\arrow[r, "\ev_{(\id_1)_+}"'] & \ul\Spc_{\Glo}
		\end{tikzcd}
	\end{equation*}
	where the equivalence on the right is the unique one (see Corollary~\ref{cor:SglI-univ-prop}).\qed
\end{corollary}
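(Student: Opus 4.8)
The plan is to assemble both equivalences, and the commuting diagram, from comparison results already in hand: Propositions~\ref{prop:Gamma-I-vs-EM} and~\ref{prop:special-pointed-vs-unpointed} (which say $\ev_\omega$ and the relevant inclusions of special $\Gamma$-spaces are all equivalences), Theorems~\ref{thm:Gamma-presheaves} and~\ref{thm:Xi-restr} (the comparisons $\Xi$), and the rigidity of $\ul\S^\textup{gl}_{\mathcal I}$ from Corollaries~\ref{cor:EM-vs-I} and~\ref{cor:SglI-univ-prop}.

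First I would define the middle vertical equivalence as the composite
\[
\ul\GammaS^\textup{gl}_{\mathcal I}\xrightarrow{\;\ev_\omega\;}\ul\GammaS^\textup{gl}\xrightarrow{\;\Xi\;}\ul\Fun_{\Glo}(\ul{\mathbb F}^{\Orb}_{\Glo,*},\ul\Spc_{\Glo}),
\]
using Proposition~\ref{prop:Gamma-I-vs-EM} and Theorem~\ref{thm:Gamma-presheaves}. For the left-hand equivalence, Proposition~\ref{prop:special-pointed-vs-unpointed} gives an equivalence $\ul\GammaS^\textup{gl, spc}_{\mathcal I,*}\xrightarrow{\ev_\omega}\ul\GammaS^\textup{gl, spc}_*\hookrightarrow\ul\GammaS^\textup{gl, spc}$, and Theorem~\ref{thm:Xi-restr} says that $\Xi$ restricts to an equivalence $\ul\GammaS^\textup{gl, spc}\simeq\ul\CMon^{\Orb}_{\Glo}$; composing these yields $\ul\GammaS^\textup{gl, spc}_{\mathcal I,*}\simeq\ul\CMon^{\Orb}_{\Glo}$. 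By construction all of these functors are restrictions of their ``un-special'' counterparts.

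Next I would verify the two squares. The left square is built entirely from full-subcategory inclusions together with (restrictions of) $\ev_\omega$ and $\Xi$; since all of these agree on objects with their big counterparts, the square is determined on objects, where its commutativity is exactly the statements that $\ev_\omega$ preserves specialness (Proposition~\ref{prop:special-pointed-vs-unpointed}) and that $\Xi$ carries special $\Gamma$-spaces into $\ul\CMon^{\Orb}_{\Glo}$ (Theorem~\ref{thm:Xi-restr}). For the right square, the one substantive point is that $\ev_{1^+}$ commutes with $\ev_\omega$: on the level of global relative categories, $\ev_\omega$ is applied levelwise over $\Gamma$ while $\ev_{1^+}$ picks out the $\Gamma$-level $1^+$, so $\ev_{1^+}\circ\ev_\omega=\ev_\omega\circ\ev_{1^+}$ strictly, and both functors are homotopical and $2$-natural in $G$; hence there is a canonical equivalence $\ev_{1^+}\circ\ev_\omega\simeq\ev_\omega\circ\ev_{1^+}$ of global functors after localization. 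Combining this with the commuting square of Theorem~\ref{thm:Gamma-presheaves} --- in which $\mathbb U=\ev_{1^+}$ on $\ul\GammaS^\textup{gl}$ and the bottom arrow is the essentially unique equivalence $\ul\S^\textup{gl}\simeq\ul\Spc_{\Glo}$ --- reduces the commutativity of the right square to the claim that
\[
\big(\ul\S^\textup{gl}\xrightarrow{\;\simeq\;}\ul\Spc_{\Glo}\big)\circ\big(\ul\S^\textup{gl}_{\mathcal I}\xrightarrow{\;\ev_\omega\;}\ul\S^\textup{gl}\big)\;\simeq\;\big(\ul\S^\textup{gl}_{\mathcal I}\xrightarrow{\;\simeq\;}\ul\Spc_{\Glo}\big),
\]
the right-hand side being the essentially unique equivalence of Corollary~\ref{cor:SglI-univ-prop}. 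This holds because the left-hand composite is globally cocontinuous (both factors being equivalences) and sends the terminal object to the terminal object, so by the universal property of $\ul\S^\textup{gl}_{\mathcal I}$ it is the unique such functor. With both squares in hand, the outer rectangle commutes automatically.

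I do not expect a genuine obstacle here, since every ingredient is already proved; the one place demanding care rather than routine bookkeeping is making sure the strict compatibilities (of $\ev_{1^+}$ with $\ev_\omega$, and of the various restrictions of $\ev_\omega$, $\Xi$ and the inclusions) are recorded at the level of the global relative categories and the $2$-natural transformations used to build $\Xi$, so that they descend to \emph{coherent} equivalences of global $\infty$-categories. Everything else is formal, with the uniqueness clause of Corollary~\ref{cor:SglI-univ-prop} supplying the only nontrivial identification.
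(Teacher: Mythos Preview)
Your proposal is correct and follows exactly the route the paper intends: the corollary is stated with a \qed and is introduced by the sentence ``Using Propositions~\ref{prop:Gamma-I-vs-EM} and~\ref{prop:special-pointed-vs-unpointed} we can deduce several variants of the above theorems,'' so the paper's proof is precisely the assembly from $\ev_\omega$, $\Xi$, and the uniqueness of the equivalence $\ul\S^\textup{gl}_{\mathcal I}\simeq\ul\Spc_{\Glo}$ that you have written out in detail.
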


\begin{corollary}\label{cor:univ-property-Gamma-I}
	The forgetful functor $\mathbb U\colon\ul\GammaS^\textup{gl, spc}_{\mathcal I,*}\to\ul\S_{\mathcal I}^\textup{gl}$ exhibits $\ul\GammaS^\textup{gl, spc}_{\mathcal I,*}$ as the universal equivariantly semiadditive envelope of $\ul\S_{\mathcal I}^\textup{gl}$. Moreover, it admits a left adjoint $\mathbb P$, exhibiting $\ul\GammaS^\textup{gl, spc}_{\mathcal I,*}$ as the equivariantly semiadditive completion of $\ul\S_{\mathcal I}^\textup{gl}$.\qed
\end{corollary}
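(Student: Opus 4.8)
The plan is to transport the universal properties established in Theorem~\ref{thm:global-gamma-relative} along the equivalences of Corollary~\ref{cor:EM-vs-I} and Proposition~\ref{prop:special-pointed-vs-unpointed}. First I would upgrade Proposition~\ref{prop:special-pointed-vs-unpointed} to a commutative square of global $\infty$-categories that also records the forgetful functors. The maps $\ev_\omega$ assemble into a strictly $2$-natural transformation of strict $2$-functors $\sGlo^\op\to\cat{RelCat}$ (by design, see Subsection~\ref{subsec:GGlobalGammaSpaces}), and the underlying-space functors $\ev_{1^+}$ are likewise strictly $2$-natural and strictly commute with $\ev_\omega$ on the level of model categories, since $\ev_{1^+}$ is induced by the representable $\Gamma(1^+,-)$. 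Hence we obtain a commuting square of strict $2$-functors before localization, which by the functoriality of Dwyer--Kan localization recalled in Subsection~\ref{subsec:GlobalCatOfGlobalSpaces} descends to a commuting square of global functors
\[
\begin{tikzcd}
\ul\GammaS^\textup{gl, spc}_{\mathcal I,*}\arrow[r, "\simeq"]\arrow[d, "\mathbb U"'] & \ul\GammaS^\textup{gl, spc}\arrow[d, "\mathbb U"]\\
\ul\S^\textup{gl}_{\mathcal I}\arrow[r, "\ev_\omega"', "\simeq"] & \ul\S^\textup{gl}
\end{tikzcd}
\]
whose horizontal arrows are equivalences (the bottom one by Corollary~\ref{cor:EM-vs-I}, the top one by Proposition~\ref{prop:special-pointed-vs-unpointed}). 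Here I should also note that the identification $\ul\GammaS^\textup{gl, spc}_{\mathcal I,*}\simeq\ul\GammaS^\textup{gl, spc}_{\mathcal I}$ in that proposition is compatible with $\mathbb U$, which is immediate as it is the inclusion of a full global subcategory and $\ev_{1^+}$ is defined uniformly in the degree.

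Granting the square, the first assertion follows by transport of structure exactly as in the proof of Theorem~\ref{thm:global-gamma-relative}. For an equivariantly semiadditive global $\infty$-category $\Cc$, applying $\ul\Fun^{\Pbiprod}_{\Glo}(\Cc,-)$ and $\ul\Fun^{\Pprod}_{\Glo}(\Cc,-)$ to the horizontal equivalences of the square and using its commutativity identifies $\ul\Fun^{\Pprod}_{\Glo}(\Cc,\mathbb U)$ for $\ul\GammaS^\textup{gl, spc}_{\mathcal I,*}$ with the corresponding functor for $\ul\GammaS^\textup{gl, spc}$; the latter is an equivalence by Theorem~\ref{thm:global-gamma-relative}, hence so is the former. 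For the adjoint, I define $\mathbb P\colon\ul\S^\textup{gl}_{\mathcal I}\to\ul\GammaS^\textup{gl, spc}_{\mathcal I,*}$ as the composite of $\ev_\omega$, the functor $\mathbb P$ of Theorem~\ref{thm:global-gamma-relative}, and the inverse of the top equivalence; since adjunctions are preserved under composition with equivalences and the square identifies the two copies of $\mathbb U$, this $\mathbb P$ is left adjoint to $\mathbb U$. The semiadditive-completion statement transports in the same way: for globally cocomplete equivariantly semiadditive $\mathcal D$, applying $\ul\Fun^\textup{L}_{\Glo}(-,\mathcal D)$ to the horizontal equivalences (which trivially preserve global cocompleteness and global cocontinuity) identifies $\ul\Fun^\textup{L}_{\Glo}(\mathbb P,\mathcal D)$ for $\ul\GammaS^\textup{gl, spc}_{\mathcal I,*}$ with that for $\ul\GammaS^\textup{gl, spc}$, which is an equivalence by Theorem~\ref{thm:global-gamma-relative}.

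The only step requiring genuine care is the upgrade in the first paragraph: verifying that $\ev_\omega$ is compatible with the two forgetful functors $\mathbb U$ coherently enough to produce a commuting square of \emph{global} functors, rather than merely a pointwise-commuting family. I expect this to cost essentially nothing beyond bookkeeping, since everything in sight --- $\ev_\omega$, $\ev_{1^+}$, and the inclusion of special objects --- is already strictly $2$-natural at the model-categorical level, so the square commutes on the nose before passing to localizations and the desired coherence is automatic. All remaining steps are formal manipulations with equivalences and with the functoriality of parametrized functor categories in their two arguments.
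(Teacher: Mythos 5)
Your proposal is correct and matches the paper's intended route: the paper introduces this corollary with the phrase ``Using Propositions~\ref{prop:Gamma-I-vs-EM} and~\ref{prop:special-pointed-vs-unpointed} we can deduce several variants of the above theorems,'' which is precisely the transport of Theorem~\ref{thm:global-gamma-relative} along the equivalences $\ev_\omega$ and the pointed/unpointed comparison, compatibly with the forgetful functors. Your observation that the relevant square of forgetful functors commutes strictly before localization (because $\ev_{1^+}$ and $\ev_\omega$ commute on the nose at the model-categorical level) and hence descends to a commuting square of global functors is exactly the bookkeeping that makes the \qed legitimate.
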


\begin{remark}
	\cite{g-global} also discusses various other models of `$G$-globally coherently commutative monoids,' for example \emph{$G$-ultra-commutative monoids} (Definition~2.1.25 of \emph{op.~cit.}) or \emph{$G$-parsummable simplicial sets} (Definition~2.1.10). Similarly, \cite{global-operads}*{Definition~3.9} introduces a notion of \emph{global $E_\infty$-operads}, and for any global $E_\infty$-operad $\mathcal O$, considering $\mathcal O$-algebras in $\cat{$\bm{E\mathcal M}$-$\bm G$-SSet}$ (with respect to the trivial $G$-action on $\mathcal O$) yields a concept of \emph{$G$-global $E_\infty$-algebras}.

	All of these models are related via suitable zig-zags of Quillen equivalences by \cite{g-global}*{Chapter~2} and \cite{global-operads}*{Section~4}, and while these can be somewhat complicated (especially on the operadic side of things), in each case they are by design strictly compatible with restrictions along group homomorphisms and moreover at least one of the adjoints is homotopical, so that they lift to equivalences of associated global $\infty$-categories in the same way as before. As moreover each of them is readily seen to be compatible with the respective forgetful functors, we obtain universal properties in the above spirit for each of these models.

	Conversely, while each of these comparisons comes from a concrete (and sometimes ad-hoc) model categorical construction, this tells us that \emph{a posteriori}, once we have passed to parametrized $\infty$-categories, these comparisons are actually canonical and completely characterized by lying over the forgetful functors.
\end{remark}

\section{Parametrized stability}
In this section, we will introduce the notion of a $P$-stable $T$-$\infty$-category: a $T$-$\infty$-category which is both $P$-semiadditive and fiberwise stable.

\subsection{Fiberwise stable \texorpdfstring{\for{toc}{$T$-$\infty$}\except{toc}{$\bm T$-$\bm\infty$}}{T-∞}-categories}
\begin{definition}
	We say a $T$-$\infty$-category $\Cc$ is \textit{fiberwise stable}
	if the following conditions are satisfied:
	\begin{enumerate}[(1)]
		\item For every object $B \in T$, the $\infty$-category $\Cc(B)$ is stable;
		\item For every morphism $\beta\colon B' \to B$, the restriction functor $\beta^*\colon \Cc(B) \to \Cc(B')$ is exact.
	\end{enumerate}
	Equivalently, $\Cc$ is fiberwise stable if the functor $\Cc\colon T\catop \to \Cat_{\infty}$ factors through the (non-full) subcategory $\Cat^{\st}_{\infty} \subseteq \Cat_{\infty}$ of stable $\infty$-categories and exact functors.
	We let $\Cat^{\st}_T$ denote the $\infty$-category $\Fun(T\catop,\Cat^{\st}_{\infty})$ of fiberwise stable $T$-$\infty$-categories.
\end{definition}

\begin{definition}
	Denote by $\Cat_{\infty}^{\lex} \subseteq \Cat_{\infty}$ the (non-full) subcategory spanned by the $\infty$-categories admitting finite limits and the finite-limit-preserving functors between them. We let $\Cat^{\lex}_T$ denote the functor $\infty$-category $\Fun(T\catop,\Cat^{\lex}_{\infty})$ of $T$-$\infty$-categories $\Cc$ admitting fiberwise finite limits (cf.\ \cref{def:fiberwiseCocompleteness}) and $T$-functors preserving fiberwise finite limits.
\end{definition}

\begin{definition}
	Let $\Cc$ and $\Dd$ be two $T$-$\infty$-categories with finite limits. We write $\ulFun_T^{\lex}(\Cc,\Dd)$ for the full subcategory of $\ulFun_T(\Cc,\Dd)$ spanned on level $B\in T$ by those functors $F\colon \pi_B^* \Cc\rightarrow \pi_B^* \Dd$ which preserve fiberwise finite limits.

	When $\Cc$ and $\Dd$ are both fiberwise stable, we will write $\ulFun_T^{\ex}(\Cc,\Dd)$ for $\ulFun_T^{\lex}(\Cc,\Dd)$.
\end{definition}

\begin{construction}[Fiberwise stabilization]
	Let $\Cc \in \Cat^{\lex}_T$ be a $T$-$\infty$-category which has fiberwise finite limits. We define the $T$-$\infty$-category $\ul{\Sp}^{\fib}(\Cc)$, called the \textit{fiberwise stabilization of $\Cc$}, as the composite
	\begin{align*}
		T\catop \xrightarrow{\Cc} \Cat_{\infty}^{\lex} \xrightarrow{\Sp} \Cat_\infty^{\st}.
	\end{align*}
	This construction assembles into a functor $\ul{\Sp}^{\fib}\colon \Cat^{\lex}_T \to \Cat^{\st}_T$.
\end{construction}

\begin{example}
	The $T$-$\infty$-category $\ul{\Sp}_T$ of naive $T$-spectra is the fiberwise stabilization of the $T$-$\infty$-category $\ul{\Spc}_T$ of $T$-spaces.

	More generally, if $\Ee$ is an $\infty$-category admitting finite limits, then the fiberwise stabilization of the $T$-$\infty$-category $\ul{\Ee}_T$ of $T$-objects in $\Ee$ is the $T$-$\infty$-category $\ul{\Sp(\Ee)}_T$ of $T$-objects in the stabilization $\Sp(\Ee)$. Indeed, this follows easily from the equivalence $\Sp(\Fun(-,\Ee)) \simeq \Fun(-,\Sp(\Ee))$ from \cite{HA}*{Remark 1.4.2.9}.
\end{example}

\begin{remark}\label{rem:limit_ext_of_SpC}
	As a right adjoint, the stabilization functor $\Sp\colon \Cat_{\infty}^{\lex} \to \Cat_\infty^{\st}$ preserves limits, which in both the source and target are computed in $\Cat_\infty$. It follows that the limit extension of $\ul{\Sp}^{\fib}(\Cc)$ to the presheaf category $\PSh(T)$ is given by
	postcomposing the limit extension of $\Cc$ to $\PSh(T)$ with the functor $\Sp$.
\end{remark}

\begin{remark}
	We will use that the functor $\Sp\colon \Cat_{\infty}^{\lex} \to \Cat_{\infty}^{\st}$ is in fact functorial in natural transformations of finite limit preserving functors, i.e.\ that $\Sp$ refines to a 2-functor between homotopy 2-categories. Given that taking functor categories forms such a functor, this immediately follows from the definition of $\Sp(\Cc)$ as a full subcategory of $\Fun(\Spc^{\mathrm{fin}}_\ast,\Cc)$, see \cite{HA}*{Definition 1.4.2.8}. (Using the same argument, one can in fact show that $\Sp$ is an $(\infty,2)$-functor.)

	It follows in particular that stabilization preserves adjunctions between left exact functors.
\end{remark}

\begin{proposition}\label{prop:SpfibRightAdjoint}
	The functor $\ul{\Sp}^{\fib}\colon \Cat^{\lex}_T \to \Cat^{\st}_T$ is right adjoint to the fully faithful inclusion $\Cat^{\st}_T \subseteq \Cat^{\lex}_T$.
\end{proposition}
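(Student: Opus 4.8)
The plan is to deduce the parametrized statement from the pointwise (non-parametrized) adjunction between $\Sp\colon\Cat_\infty^{\lex}\to\Cat_\infty^{\st}$ and the inclusion $\Cat_\infty^{\st}\subseteq\Cat_\infty^{\lex}$, using that both functor categories $\Cat_T^{\lex}=\Fun(T\catop,\Cat_\infty^{\lex})$ and $\Cat_T^{\st}=\Fun(T\catop,\Cat_\infty^{\st})$ are defined by postcomposition. First I would recall that an adjunction $L\dashv R$ between $\infty$-categories $\Cc$ and $\Dd$ induces an adjunction $\Fun(T\catop,\Cc)\rightleftarrows\Fun(T\catop,\Dd)$ by postcomposition, with unit and counit obtained by whiskering the original unit and counit; applying this to $L=\mathrm{incl}\colon\Cat_\infty^{\st}\hookrightarrow\Cat_\infty^{\lex}$ and $R=\Sp$ immediately yields that $\ul{\Sp}^{\fib}=\Sp\circ(-)$ is right adjoint to the inclusion $\Cat_T^{\st}=(\mathrm{incl})\circ(-)\colon\Cat_T^{\st}\hookrightarrow\Cat_T^{\lex}$. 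Full faithfulness of the parametrized inclusion is automatic from full faithfulness of $\mathrm{incl}\colon\Cat_\infty^{\st}\hookrightarrow\Cat_\infty^{\lex}$, since postcomposition with a fully faithful functor is fully faithful (equivalently, the counit $\mathrm{incl}\circ\Sp\Rightarrow\id$ being an equivalence on $\Cat_\infty^{\st}$ whiskers to an equivalence).

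Concretely, the key steps I would carry out are: (1) observe that $\Cat_\infty^{\lex}$ and $\Cat_\infty^{\st}$, together with the inclusion and $\Sp$, form a genuine adjunction of $\infty$-categories — this is exactly the statement that $\Sp$ is a right adjoint to the forgetful functor, which follows from the universal property of stabilization as in \cite{HA}*{Section~1.4}; (2) invoke the general fact that for any $\infty$-category $\mathcal J$, postcomposition $\Fun(\mathcal J,-)$ sends adjunctions to adjunctions, applied with $\mathcal J=T\catop$; (3) identify the resulting right adjoint $\Fun(T\catop,\Sp)$ with $\ul{\Sp}^{\fib}$ by definition, and the resulting left adjoint $\Fun(T\catop,\mathrm{incl})$ with the inclusion $\Cat_T^{\st}\subseteq\Cat_T^{\lex}$; (4) note that the inclusion is fully faithful because $\Cat_\infty^{\st}\subseteq\Cat_\infty^{\lex}$ is. This gives precisely the asserted adjunction. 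I would also remark, as the text preceding the proposition does, that the relevant $2$-functoriality of $\Sp$ ensures $\ul{\Sp}^{\fib}$ really is a functor between the stated $\infty$-categories (i.e.\ lands in exact functors), so the formulation of the statement makes sense.

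The only mildly delicate point — and the one I would spell out rather than leave implicit — is that the adjunction $\mathrm{incl}\dashv\Sp$ in step (1) should be used in a form that is genuinely functorial, i.e.\ as an adjunction of $\infty$-categories (a counit transformation $\mathrm{incl}\circ\Sp\Rightarrow\id$ and unit $\id\Rightarrow\Sp\circ\mathrm{incl}$ satisfying the triangle identities), not merely as a levelwise equivalence of mapping spaces; only then does whiskering produce the parametrized adjunction with the correct universal property. I expect this to be entirely routine given \cite{HA}, since stabilization is constructed there precisely as such a right adjoint (e.g.\ $\Sp(\Cc)$ as a limit of the tower under $\Omega$, or as excisive functors out of $\Spc^{\mathrm{fin}}_*$), and the preceding remark in the excerpt already records the needed $2$-functoriality. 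So there is no real obstacle here — the proof is a formal consequence of the pointwise statement plus the observation that everything in sight is defined by postcomposition; the "hard part" is merely being careful that the pointwise input is invoked at the right level of structure.
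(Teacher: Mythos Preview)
Your proposal is correct and matches the paper's proof essentially verbatim: the paper simply notes that $\Fun(T\catop,-)$ preserves adjunctions and then invokes \cite{HA}*{Corollary~1.4.2.23} for the pointwise adjunction $\mathrm{incl}\dashv\Sp$. Your discussion is more detailed (and your caution about needing a genuine adjunction rather than a levelwise bijection is well-placed), but the argument is the same.
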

\begin{proof}
	Since $\Fun(T\catop,-)\colon \Cat_{\infty} \to \Cat_{\infty}$ preserves adjunctions, this is immediate from the fact that the stabilization functor $\Sp\colon \Cat^{\lex}_\infty \to \Cat^{\st}_\infty$ is right adjoint to the fully faithful inclusion $\Cat^{\st}_\infty \subseteq \Cat^{\lex}_\infty$ by \cite{HA}*{Corollary~1.4.2.23}.
\end{proof}

\begin{lemma}\label{lem:FunintoSpfib}
	Consider $\Cc \in \Cat^{\st}_T$ and $\Dd \in \Cat^{\lex}_T$. Composition with the adjunction counit $\Omega^{\infty}\colon \ul{\Sp}^{\fib}(\Dd) \to \Dd$ induces an equivalence of $T$-$\infty$-categories
	\begin{align*}
		\ulFun_T^{\ex}(\Cc,\ul{\Sp}^{\fib}(\Dd)) \iso \ulFun_T^{\lex}(\Cc,\Dd).
	\end{align*}
\end{lemma}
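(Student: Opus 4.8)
The statement is a parametrized analogue of \cite{HA}*{Corollary~1.4.2.23}, and the natural strategy is to reduce it to that non-parametrized fact by checking things levelwise via the pointwise criteria established in Section~\ref{sec:parametrizedHigherCategoryTheory}. Concretely, I would argue that $\ulFun_T^{\ex}(\Cc,\ul{\Sp}^{\fib}(\Dd))$ and $\ulFun_T^{\lex}(\Cc,\Dd)$ are both full $T$-subcategories of $\ulFun_T(\Cc,\ul{\Sp}^{\fib}(\Dd))$ and $\ulFun_T(\Cc,\Dd)$ respectively, and that the $T$-functor induced by postcomposition with $\Omega^\infty$ restricts between them; the content is then that this restricted functor is an equivalence, which can be checked on each value $B \in T$ by \Cref{cor:TCategoryOfTFunctors}.

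First I would set up the reduction. By \Cref{cor:TCategoryOfTFunctors}(2)-(3) we have $\ulFun_T(\Cc,\ul{\Sp}^{\fib}(\Dd))(B) \simeq \Fun_{T_{/B}}(\pi_B^*\Cc,\pi_B^*\ul{\Sp}^{\fib}(\Dd))$, and since $\pi_B^*$ preserves fiberwise finite limits (it is just restriction of presheaves along $T_{/B} \to T$) one checks using \Cref{rem:limit_ext_of_SpC} that $\pi_B^*\ul{\Sp}^{\fib}(\Dd) \simeq \ul{\Sp}^{\fib}_{T_{/B}}(\pi_B^*\Dd)$. Hence it suffices to prove the statement on underlying $\infty$-categories, i.e.\ that postcomposition with $\Omega^\infty$ gives an equivalence
\[
\Fun_T^{\ex}(\Cc,\ul{\Sp}^{\fib}(\Dd)) \iso \Fun_T^{\lex}(\Cc,\Dd),
\]
and then apply this with $T$ replaced by every slice $T_{/B}$. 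Now a $T$-functor $F\colon \Cc \to \ul{\Sp}^{\fib}(\Dd)$ corresponds, via \Cref{lem:AdjunctionGrothendieckConstructionTObjects} and \Cref{Rmk:FunTobjects} applied to $\Sp(\Dd)$ as a functor $T\catop\to\Cat_\infty^{\lex}$, to a natural transformation of functors $T\catop\to\Cat_\infty$ from $\Cc$ to $\Sp\circ\Dd$; being exact (resp.\ being in the essential image after applying $\Omega^\infty$) is a pointwise condition on the components $\Cc(B)\to\Sp(\Dd(B))$. So the claim follows by applying \cite{HA}*{Corollary~1.4.2.23} (more precisely, the fact that $\Omega^\infty$ induces $\Fun^{\ex}(\mathcal E,\Sp(\mathcal D))\simeq\Fun^{\lex}(\mathcal E,\mathcal D)$ for $\mathcal E$ stable and $\mathcal D$ left exact) componentwise, together with the observation that a natural transformation of functors $T\catop\to\Cat_\infty$ is exact/left-exact in each component exactly when it defines a morphism in the relevant functor category.

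Alternatively — and this is probably cleaner to write — one can mimic the proof of \Cref{cor:AdjunctionPointednessInternal}: use that $\Cat_T^{\st}\subseteq\Cat_T^{\lex}$ is a reflective subcategory with reflector $\ul\Sp^{\fib}$ (\Cref{prop:SpfibRightAdjoint}), that $\Cat_T^{\lex}$ is cotensored over $\Cat_\infty$ with $(\ul{\Sp}^{\fib}(\Dd))^{\mathcal E}\simeq\ul{\Sp}^{\fib}(\Dd^{\mathcal E})$ (since both $\Sp$ and $(-)^{\mathcal E}$ are computed pointwise and commute), and then run the Yoneda argument: for every $\infty$-category $\mathcal E$,
\[
\Hom_{\Cat_\infty}(\mathcal E,\Fun_T^{\ex}(\Cc,\ul{\Sp}^{\fib}(\Dd))) \simeq \Hom_{\Cat_T^{\st}}(\Cc,\ul{\Sp}^{\fib}(\Dd^{\mathcal E})) \simeq \Hom_{\Cat_T^{\lex}}(\Cc,\Dd^{\mathcal E}) \simeq \Hom_{\Cat_\infty}(\mathcal E,\Fun_T^{\lex}(\Cc,\Dd)),
\]
where the middle equivalence is the adjunction of \Cref{prop:SpfibRightAdjoint}; one then checks this chain is induced by $\Omega^\infty$. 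I expect the main obstacle to be purely bookkeeping: verifying that the subcategory $\Cat_T^{\lex}\subseteq\Cat_T$ is closed under cotensoring by $\Cat_\infty$ and that the canonical comparison $(\ul{\Sp}^{\fib}(\Dd))^{\mathcal E}\to\ul{\Sp}^{\fib}(\Dd^{\mathcal E})$ is an equivalence — both reduce to pointwise statements about $\Sp$ and $\Fun(\mathcal E,-)$ commuting — and then confirming, as in the footnote-style caveat elsewhere in the paper, that the resulting equivalence of underlying $\infty$-categories is genuinely the one induced by postcomposition with the counit $\Omega^\infty$ rather than merely an abstract equivalence.
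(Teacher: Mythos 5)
Your second (``cleaner'') approach is essentially the paper's proof. The paper first observes that the statement holds on groupoid cores, as a formal consequence of the adjunction $\textup{incl} \dashv \ul{\Sp}^\fib$ from \Cref{prop:SpfibRightAdjoint}; it then reduces to underlying $\infty$-categories by replacing $T$ with $T_{/B}$ for varying $B$, and upgrades from cores to $\infty$-categories using the cotensoring of $\Cat_T$ over $\Cat_\infty$: for every small $\infty$-category $K$, the induced map $\core\bigl(\Fun_T^{\ex}(\Cc,\ul{\Sp}^\fib(\Dd))^K\bigr) \to \core\bigl(\Fun_T^{\lex}(\Cc,\Dd)^K\bigr)$ agrees with the map induced by $(\Omega^\infty)^K$, which is again a fiberwise stabilization. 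This is exactly the Yoneda-against-$\Cat_\infty$ mechanism you describe, since $\Hom_{\Cat_\infty}(K,\mathcal X)\simeq\core(\mathcal X^K)$; even the caveat you flag at the end (that the equivalence should genuinely be the one induced by $\Omega^\infty$) is handled in the paper by the same cotensoring observation.

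Your first approach does not hold up as written, for two reasons. First, \Cref{lem:AdjunctionGrothendieckConstructionTObjects} and \Cref{Rmk:FunTobjects} concern the functor $\Ee\mapsto\ul{\Ee}_T$ of $T$-objects in a fixed $\infty$-category $\Ee$; they do not apply to $\ul{\Sp}^\fib(\Dd)$, which is in general not of that form (and the identification of a $T$-functor with a natural transformation of $\Cat_\infty$-valued diagrams needs no lemma — it is the definition). Second, and more substantively, ``apply \cite{HA}*{Corollary~1.4.2.23} componentwise'' does not by itself produce an equivalence of $\infty$-categories: $\Fun_T(\Cc,\Dd)$ is a limit indexed by the twisted arrow category of $T^\op$ rather than a product over $B\in T$, so the pointwise equivalences $\Fun^{\ex}(\Cc(B),\Sp(\Dd(B)))\iso\Fun^{\lex}(\Cc(B),\Dd(B))$ do not in an obvious way assemble into an equivalence of parametrized functor categories without tracking coherences. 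The cotensoring argument in your second approach is precisely what supplies the missing coherence, so the first approach is best read as a motivating heuristic rather than a proof.
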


\begin{proof}
	It immediately follows from \cref{prop:SpfibRightAdjoint} that the map
	\begin{equation*}
		\core\Fun_T(\Cc,\Omega^\infty)\colon\core\Fun^\text{ex}_T(\Cc,\ul{\Sp}^{\fib}(\Dd))\to\core\Fun^\text{lex}_T(\Cc,\Dd)
	\end{equation*}
	is an equivalence. We will now show that this already holds before passing to cores. Replacing $T$ by $T_{/B}$ for varying $B\in T$ then yields the proof of the proposition. For this it will be enough to show that for every small $\infty$-category $K$ the induced map $\iota\big(\Fun^\text{ex}_T(\Cc,\ul{\Sp}^{\fib}(\Dd))^K)\to\iota\big(\Fun^\text{lex}_T(\Cc,\Dd)^K)$ is an equivalence. But this agrees up to equivalence with the map induced by $(\Omega^\infty)^K\colon\ul{\Sp}^{\fib}(\Dd)^K\to\Dd^K$; the claim follows as this is again the stabilization of $\Dd^K$.
\end{proof}

The fiberwise stabilization of a $T$-$\infty$-category $\Cc$ inherits certain parametrized limits from $\Cc$. Since this is clear for limits along constant $T$-$\infty$-categories, we focus on limits along $T$-$\infty$-groupoids.

\begin{lemma}
	\label{lem:AdjointabilityInSpFib}
	Let $\bbU$ be a class of $T$-$\infty$-groupoids, and let $\Cc$ be a $\bbU$-complete $T$-$\infty$-category which admits fiberwise finite limits. Then $\ul{\Sp}^{\fib}(\Cc)$ is $\bbU$-complete and the $T$-functor $\ul{\Sp}^{\fib}(\Cc) \to \Cc$ preserves $\bbU$-limits.
\end{lemma}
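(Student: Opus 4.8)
The plan is to use the pointwise criterion \cref{lem:UColimitsVsAdjointable} for $\bbU$-limits and the fact that $\ul{\Sp}^{\fib}(\Cc)$ is built levelwise by applying the (right adjoint, hence limit-preserving) stabilization functor $\Sp\colon\Cat_\infty^{\lex}\to\Cat_\infty^{\st}$, which moreover refines to a $2$-functor between homotopy $2$-categories and therefore preserves adjunctions between left exact functors. Concretely, let $(p\colon A\to B)\in\bbU(B)$ with $B\in T$. By $\bbU$-completeness of $\Cc$, the restriction functor $p^*\colon\Cc(B)\to\Cc(A)$ admits a right adjoint $p_*\colon\Cc(A)\to\Cc(B)$ satisfying the Beck–Chevalley condition of \cref{lem:UColimitsVsAdjointable}. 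The restriction functor $p^*$ is left exact (being a restriction functor in $\Cc\in\Cat^{\lex}_T$), so its right adjoint $p_*$ is automatically left exact as well, and hence the adjunction $p^*\dashv p_*$ lives in the $2$-category underlying $\Cat^{\lex}_\infty$. Applying $\Sp$ to this adjunction produces an adjunction $\Sp(p^*)\dashv\Sp(p_*)$; but by construction of $\ul{\Sp}^{\fib}(\Cc)$ we have $\Sp(p^*)=p^*\colon\ul{\Sp}^{\fib}(\Cc)(B)\to\ul{\Sp}^{\fib}(\Cc)(A)$, so this exhibits a right adjoint $p_*$ to the restriction functor on $\ul{\Sp}^{\fib}(\Cc)$.

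The next step is to verify the Beck–Chevalley condition for these right adjoints on $\ul{\Sp}^{\fib}(\Cc)$. Given a pullback square \eqref{eq:PullbackSquareTCocompleteness} in $\PSh(T)$ with $\beta\colon B'\to B$ in $T$ and $p\colon A\to B$ in $\bbU$ (using \cref{rem:limit_ext_of_SpC} so that the relevant values of $\ul{\Sp}^{\fib}(\Cc)$ on presheaves are again obtained by postcomposing with $\Sp$), I would argue that the Beck–Chevalley transformation $\beta^*\circ p_*\Rightarrow p'_*\circ\alpha^*$ for $\ul{\Sp}^{\fib}(\Cc)$ is the image under the $2$-functor $\Sp$ of the corresponding Beck–Chevalley transformation for $\Cc$. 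This uses that $\Sp$ is a $2$-functor compatible with the formation of mates: since $\Sp$ preserves the naturality square (being levelwise postcomposition) and preserves adjunctions, it sends the mate of that square to the mate of its image. The Beck–Chevalley transformation for $\Cc$ is an equivalence by $\bbU$-completeness of $\Cc$, and $2$-functors preserve equivalences, so the transformation for $\ul{\Sp}^{\fib}(\Cc)$ is an equivalence too. Combined with fiberwise completeness of $\ul{\Sp}^{\fib}(\Cc)$ (which follows since $\Sp$ preserves finite limits, or more directly from the fact that stable $\infty$-categories and exact functors have and preserve all finite limits), \cref{lem:UColimitsVsAdjointable} then gives that $\ul{\Sp}^{\fib}(\Cc)$ is $\bbU$-complete.

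Finally, to see that the $T$-functor $\Omega^\infty\colon\ul{\Sp}^{\fib}(\Cc)\to\Cc$ preserves $\bbU$-limits, I would apply the dual of the ``reader exercise'' lemma following \cref{lem:UColimitsVsAdjointable} characterizing $\bbU$-limit preservation: it suffices to check that for every $(p\colon A\to B)\in\bbU(B)$ the Beck–Chevalley transformation $\beta^*\circ(\Omega^\infty)_*\Rightarrow\cdots$ — more precisely, the transformation $(\Omega^\infty)_B\circ p_*\Rightarrow p_*\circ(\Omega^\infty)_A$ associated to the naturality square of $\Omega^\infty$ with respect to $p^*$ — is an equivalence. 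But $\Omega^\infty\colon\Sp(\Cc(B))\to\Cc(B)$ is right adjoint to the suspension spectrum functor and hence left exact, and the adjunction counit $\Omega^\infty\circ p^*\simeq p^*\circ\Omega^\infty$ coming from $\ul{\Sp}^{\fib}$ being defined levelwise means the relevant square commutes strictly; the induced mate is then an equivalence by a direct check, or alternatively one observes that $\Omega^\infty$ preserves all limits (being a right adjoint levelwise) so the comparison is forced to be an equivalence. The main obstacle I anticipate is making the compatibility of $\Sp$ with mates fully precise — i.e.\ genuinely using that $\Sp$ is a $2$-functor (or $(\infty,2)$-functor) rather than merely a $1$-functor — but the excerpt has already flagged this $2$-functoriality as available, so the argument reduces to bookkeeping with mates. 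The rest is routine once \cref{lem:UColimitsVsAdjointable} and \cref{rem:limit_ext_of_SpC} are in hand.
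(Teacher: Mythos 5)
Your proof is correct and follows essentially the same strategy as the paper: apply the $2$-functor $\Sp$ to the adjunction $p^*\dashv p_*$ in $\Cat_\infty^{\lex}$ (noting $p_*$ is automatically left exact as a right adjoint), use $2$-functoriality to transport the Beck–Chevalley equivalences, and read off preservation of $\bbU$-limits by $\Omega^\infty$ from the construction. One small caveat: your "alternatively" remark for the last step — that $\Omega^\infty$ preserving all limits levelwise "forces" the parametrized Beck–Chevalley map to be an equivalence — is not a valid shortcut, since fiberwise limit preservation does not by itself imply the parametrized Beck–Chevalley condition; but your primary argument via the $2$-naturality of $\Omega^\infty$ (equivalently, that $\Sp$ sends the original mate to the new one) is the right one and matches the paper.
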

\begin{proof}
	We will use the characterization of \cref{lem:UColimitsVsAdjointable}. Given a morphism $p\colon A \to B$ in $\bbU$, applying the functor $\Sp\colon \Cat_{\infty}^{\lex} \to \Cat_{\infty}^{\st}$ to the adjunction
	\[
	p^*\colon \Cc(B) \rightleftarrows \Cc(A) \noloc p_*
	\]
	shows that the functor $\Sp(p^*)\colon \Sp(\Cc(B)) \to \Sp(\Cc(A))$ admits a right adjoint given by $\Sp(p_*)\colon \Sp(\Cc(A)) \to \Sp(\Cc(B))$. Furthermore, for a pullback square
	\[
	\begin{tikzcd}
		A' \dar[swap]{p'} \rar{\alpha} \drar[pullback] & A \dar{p} \\
		B' \rar{\beta} & B
	\end{tikzcd}
	\]
	in $\PSh(T)$ with $p\colon A \to B$ in $\bbU$ and $\beta\colon B' \to B$ in $T$, the resulting Beck-Chevalley transformation $\Sp(p_*) \circ \Sp(\beta^*) \Rightarrow \Sp(\alpha^*) \circ \Sp(p'_*)$ is given by applying $\Sp$ to the Beck-Chevalley transformation $p_* \circ \beta^* \Rightarrow \alpha^* \circ p'_*$, and thus is again an equivalence. This shows that $\ul{\Sp}^{\fib}(\Cc)$ is again $\bbU$-complete. It is immediate from this construction that the $T$-functor $\ul{\Sp}^{\fib}(\Cc) \to \Cc$ preserves $\bbU$-limits, finishing the proof.
\end{proof}

Fiberwise stabilization preserves parametrized presentability.

\begin{definition}
	We define $\PrRTst$ to be the full subcategory of $\PrRT$ spanned by those presentable $T$-$\infty$-categories which are also fiberwise stable. The subcategory $\PrLTst \subseteq \PrLT$ is defined similarly.
\end{definition}

\begin{proposition}
	The functor $\ul{\Sp}^{\fib}\colon \Cat^{\lex}_T\rightarrow \Cat^{\st}_T$ restricts to a functor
	\[
	\ul{\Sp}^{\fib}\colon \PrRT\rightarrow \PrRTst
	\]
	which is right adjoint to the inclusion $\PrRTst \hookrightarrow \PrRT$.
\end{proposition}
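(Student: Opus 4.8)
The plan is to deduce the statement from \Cref{prop:SpfibRightAdjoint} by restricting the coreflection $\ul{\Sp}^{\fib}\colon\Cat_T^{\lex}\to\Cat_T^{\st}$ to the relevant subcategories of presentable $T$-$\infty$-categories. First observe that $\PrRT$ may be regarded as a (non-full) subcategory of $\Cat_T^{\lex}$: its objects are fiberwise complete by \Cref{rmk:PresentableHasLimits}, and its morphisms, being parametrized right adjoints, preserve all parametrized limits, in particular fiberwise finite limits; likewise the full subcategory $\PrRTst\subseteq\PrRT$ sits inside $\Cat_T^{\st}$. Then there are three things to verify: (i) $\ul{\Sp}^{\fib}$ sends objects of $\PrRT$ to objects of $\PrRTst$; (ii) it sends morphisms of $\PrRT$ to morphisms of $\PrRTst$; (iii) the unit and counit of the adjunction of \Cref{prop:SpfibRightAdjoint} restrict to these subcategories. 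Given (i)--(iii), the triangle identities immediately show that the restricted pair $\PrRTst\rightleftarrows\PrRT$ is again an adjunction with $\ul{\Sp}^{\fib}$ as right adjoint.

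\textbf{Presentability (step (i)).} Let $\Cc$ be presentable. Fiberwise stability of $\ul{\Sp}^{\fib}(\Cc)$ is immediate from the construction, and fiberwise presentability follows since the stabilization of a presentable $\infty$-category is presentable and $\Sp$ restricts to a functor $\PrL\to\PrL^{\st}$ (so that each restriction functor $\beta^*$, being colimit-preserving and left-exact, is carried to a colimit-preserving functor), see \cite{HA}*{Section~1.4.4}. It remains to check $T$-cocompleteness, and since $\ul{\Sp}^{\fib}(\Cc)$ is fiberwise presentable it suffices to exhibit $\ul{\Spc}_T$-indexed colimits. As $\Cc$ is also $T$-complete by \Cref{rmk:PresentableHasLimits}, \Cref{lem:AdjointabilityInSpFib} shows $\ul{\Sp}^{\fib}(\Cc)$ is $\ul{\Spc}_T$-complete; fiberwise presentability then forces each restriction $p^*$ to admit a left adjoint in addition to the right adjoint $\Sp(p_*)$, and the left Beck--Chevalley condition is the total mate of the right one, exactly as in \Cref{rmk:PresentableHasLimits}. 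Hence $\ul{\Sp}^{\fib}(\Cc)$ is $\ul{\Spc}_T$-cocomplete, and therefore presentable.

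\textbf{Morphisms and (co)units (steps (ii), (iii)).} A morphism $G\colon\Cc\to\Dd$ of $\PrRT$ is left-exact, so $\ul{\Sp}^{\fib}(G)$ is defined, with presentable source and target by (i). I would check it is a parametrized right adjoint using the pointwise criterion \Cref{prop:pointwisecriterionadjoints}: fiberwise, $\Sp(G(B))$ is a right adjoint of presentable $\infty$-categories (its left adjoint is the $\PrL$-stabilization of the left adjoint $F(B)$ of $G(B)$), and the Beck--Chevalley transformation for a restriction $\beta$ is obtained by applying $\Sp$ to the mate of the identity $G(A)\beta^*\simeq\beta^*G(B)$, hence an equivalence. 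Thus $\ul{\Sp}^{\fib}(G)$ admits a parametrized left adjoint and, being between fiberwise stable objects, is a morphism of $\PrRTst$. For (iii), the counit of \Cref{prop:SpfibRightAdjoint} is $\Omega^{\infty}\colon\ul{\Sp}^{\fib}(\Cc)\to\Cc$; for $\Cc$ presentable it admits a parametrized left adjoint $\Sigma^{\infty}$, again by \Cref{prop:pointwisecriterionadjoints} (fiberwise $\Sigma^{\infty}\dashv\Omega^{\infty}$, with Beck--Chevalley holding because each $\beta^*$ is pointed and preserves the colimits computing $\Sigma^{\infty}$), so $\Omega^{\infty}$ lies in $\PrRT$; and the unit $\Cc\to\ul{\Sp}^{\fib}(\Cc)$ for $\Cc\in\PrRTst$ is an equivalence since $\Cc$ is already fiberwise stable, hence lies in $\PrRTst$.

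\textbf{Main obstacle.} The genuinely delicate points are the $T$-cocompleteness in (i) and the existence of the left adjoints in (ii) and of $\Sigma^{\infty}$: these all involve $\Sp$ interacting with the parametrized \emph{left} adjoints $p_!$ (and fiberwise left adjoints $F(B)$), which are not left-exact, so one cannot simply apply $\ul{\Sp}^{\fib}$ to them. The cleanest route is to reduce everything at the fiberwise level to the non-parametrized fact that $\Sp$ restricts to adjoints $\PrL\to\PrL^{\st}$ and $\PrR\to\PrR^{\st}$, after which the pointwise criteria of \Cref{prop:pointwisecriterionadjoints} and \Cref{lem:AdjointabilityInSpFib} do the rest; alternatively one can sidestep it by presenting $\ul{\Sp}^{\fib}(\Cc)$ as the limit in $\PrRT$ of the tower $\cdots\to\Cc_*\xrightarrow{\Omega}\Cc_*\to\cdots$ and invoking closure of $\PrRT$ under limits together with projection formulas.
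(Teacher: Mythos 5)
Your proposal is correct and follows the same basic strategy as the paper — restrict the coreflection from \Cref{prop:SpfibRightAdjoint} and verify that objects, morphisms, unit and counit land in the presentable world — and both arguments bottom out in the same fact, namely that on presentable $\infty$-categories fiberwise stabilization is realized by the monoidal functor $-\otimes\Sp\colon\PrL\to\PrL$, which preserves adjunctions.

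There is one place where you genuinely deviate from the paper. For $T$-cocompleteness in step (i), the paper simply applies $-\otimes\Sp$ to the left adjoints $p_!$ and their Beck--Chevalley squares directly, mimicking the proof of \Cref{lem:AdjointabilityInSpFib} but on the left-adjoint side. You instead invoke \Cref{lem:AdjointabilityInSpFib} for the \emph{right} adjoints, produce the left adjoints $p_!$ of $\ul{\Sp}^{\fib}(\Cc)$ by the adjoint functor theorem, and then deduce the left Beck--Chevalley condition from the right one by passing to total mates. This is valid, but the justification needs a word of care: what is actually true is that the left Beck--Chevalley map for the $(p,\beta)$-square is an equivalence if and only if the \emph{right} Beck--Chevalley map for the \emph{transposed} $(\alpha,p')$-square is, and one must therefore observe that \Cref{lem:AdjointabilityInSpFib} applies to the transposed square too (since $\bbU=\ul\Spc_T$ contains all morphisms). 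Your one-line appeal to \Cref{rmk:PresentableHasLimits} points in the reverse direction and is a touch imprecise, though the argument is morally sound. The paper's route via $-\otimes\Sp$ avoids the mate gymnastics entirely, which is what makes it cleaner; the price your version pays is hidden precisely in the ``main obstacle'' paragraph, where (as you correctly note) one cannot apply $\Sp$ to the non-left-exact $p_!$'s and must instead tensor with $\Sp$ in $\PrL$. Steps (ii) and (iii) are fine once this is made precise, and your observation that the unit is an equivalence on fiberwise stable objects is exactly what the paper uses implicitly by citing \Cref{prop:SpfibRightAdjoint}.
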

\begin{proof}
	We first show that the fiberwise stabilization of a presentable $T$-$\infty$-category $\Cc$ is again presentable. By \cite{HA}*{Proposition~1.4.4.4, Example 4.8.1.23}, $\ul{\Sp}^{\fib}(\Cc)$ is given by the composite
	\[
	T\catop \xrightarrow{\Cc} \PrL \xrightarrow{- \otimes \Sp} \PrL,
	\]
	proving that $\ul{\Sp}^{\fib}(\Cc)$ is again fiberwise presentable. Since the functor $- \otimes \Sp\colon \PrL \to \PrL$ preserves adjunctions, one deduces the existence of left adjoints $f_!$ for all morphisms $f\colon A \to B$ in $\PSh(T)$ satisfying the Beck-Chevalley conditions, similar to the proof of \cref{lem:AdjointabilityInSpFib}. This shows that $\ul{\Sp}^{\fib}(\Cc)$ is again a presentable $T$-$\infty$-category.	One can similarly show that if $L\dashv R$ is an adjunction between presentable $T$-$\infty$-categories, then $L\otimes \Sp \dashv \ul\Sp^{\fib}(R)$ is again an adjunction. This shows that $\ul{\Sp}^{\fib}$ restricts to a functor $\PrRT\rightarrow \PrRTst$. It is right adjoint to the inclusion $\PrRTst \hookrightarrow \PrRT$ by \Cref{prop:SpfibRightAdjoint}.
\end{proof}

Applying the equivalence $(\PrRT)\catop \simeq \PrLT$, we obtain:

\begin{corollary}\label{cor:SpfibPresentableAdjunction}
	The construction $\Cc \mapsto \ul{\Sp}^{\fib}(\Cc)$ defines a functor
	\[
	\ul{\Sp}^{\fib} \colon \PrLT \to \PrLTst
	\]
	which is left adjoint to the inclusion functor $\mathrm{incl}\colon \PrLTst \hookrightarrow \PrLT$. \qednow
\end{corollary}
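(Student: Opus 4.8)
The plan is to deduce the statement directly from the preceding proposition by passing to opposite categories. First I would recall the canonical equivalence $\PrLT \simeq (\PrRT)\catop$ (i.e.\ \cite{martiniwolf2022presentable}*{Proposition~6.4.7}): it is the identity on objects and sends a left adjoint $T$-functor to its right adjoint. Since fiberwise stability is a property of the underlying $T$-$\infty$-category, and since $\PrLTst \subseteq \PrLT$ and $\PrRTst \subseteq \PrRT$ are by definition the full subcategories spanned by the same objects (the fiberwise stable presentable $T$-$\infty$-categories), this equivalence restricts to an equivalence $\PrLTst \simeq (\PrRTst)\catop$.

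Next I would invoke the previous proposition, which provides an adjunction in which the inclusion $\mathrm{incl}\colon \PrRTst \hookrightarrow \PrRT$ is the \emph{left} adjoint and $\ul{\Sp}^{\fib}\colon \PrRT \to \PrRTst$ is the \emph{right} adjoint. Applying $(-)\catop$ swaps the two roles, so $(\ul{\Sp}^{\fib})\catop\colon (\PrRT)\catop \to (\PrRTst)\catop$ becomes left adjoint to $(\mathrm{incl})\catop$. Transporting this adjunction along the two equivalences $\PrLT \simeq (\PrRT)\catop$ and $\PrLTst \simeq (\PrRTst)\catop$ from the first step then yields an adjunction between $\PrLT$ and $\PrLTst$ whose right adjoint is the inclusion $\PrLTst \hookrightarrow \PrLT$ and whose left adjoint is a functor $\PrLT \to \PrLTst$.

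Finally I would observe that this left adjoint is given on objects by $\Cc \mapsto \ul{\Sp}^{\fib}(\Cc)$: since both equivalences used are the identity on objects and an opposite functor has the same action on objects as the original, the functor so obtained sends $\Cc$ to $\ul{\Sp}^{\fib}(\Cc)$, as claimed (on morphisms it sends a left adjoint $T$-functor $L$ to the fiberwise stabilization $L \otimes \Sp$, dually to the description of $\ul{\Sp}^{\fib}$ on right adjoints in the proof of the previous proposition). There is no real obstacle here; the only point meriting the one-line justification above is that the equivalence $\PrLT \simeq (\PrRT)\catop$ restricts to the fiberwise stable subcategories, and everything else is the formal fact that opposites carry adjunctions to adjunctions.
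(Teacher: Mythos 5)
Your proposal is correct and follows exactly the same route as the paper: the paper's proof is literally the single sentence ``Applying the equivalence $(\PrRT)\catop \simeq \PrLT$, we obtain:'' followed by the corollary statement, and you have simply spelled out in full what that one sentence leaves to the reader — that the equivalence is the identity on objects, that it restricts to the fiberwise stable full subcategories, and that passing to opposites swaps the roles of the left and right adjoint from the preceding proposition.
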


\subsection{\texorpdfstring{\for{toc}{$P$}\except{toc}{$\bm P$}}{P}-stable \texorpdfstring{\for{toc}{$T$-$\infty$}\except{toc}{$\bm T$-$\bm\infty$}}{T-∞}-categories} \label{subsec:PStableTCats}
Let us fix an atomic orbital subcategory $P\subseteq T$.

\begin{definition}
	We say a $T$-$\infty$-category $\Cc$ has \textit{finite $P$-limits} if it has fiberwise finite limits and finite $P$-products. We say a functor $F\colon \Cc\to \Dd$ between $T$-$\infty$-categories with finite $P$-limits \textit{preserves finite $P$-limits} if it preserves fiberwise finite limits and finite $P$-products. We define $\Cat_T^{\Plex}$ to be the (non-full) subcategory of $\Cat_T$ spanned by the $T$-$\infty$ categories which admit finite $P$-limits and those functors which preserve finite $P$-limits.

	Let $\Cc$ and $\Dd$ be two $T$-$\infty$-categories  with finite $P$-limits, we define $\Fun_T^{\Plex}(\Cc,\Dd)$ to be the full subcategory of $\Fun_T(\Cc,\Dd)$ spanned on level $B$ by those functors $F\colon\pi_B^* \Cc\rightarrow \pi_B^* \Dd$ which preserve finite $P$-limits. This is a $T$-subcategory by the dual of Lemma~\ref{lemma:restriction-preserves-cocompl}.
\end{definition}

\begin{remark}
	Unravelling the above definition, a $T$-$\infty$-category $\Cc$ has finite $P$-limits if and only if it has $\bbU$-limits in the sense of Definition~\ref{def:UColimits} for $\bbU$ the union of $\ul{\mathbb F}^P_T\subseteq\ul{\Spc}_T\subseteq\ul{\text{cat}}_T$ and the full subcategory spanned by the constant finite $T$-$\infty$-categories. Analogously, a $T$-functor preserves finite $P$-limits if and only if it preserves $\bbU$-limits in the sense of \emph{loc.\ cit.}
	
	It is possible to express the condition of having $P$-limits alternatively as the existence of limits indexed by a class of \textit{$P$-finite} $T$-$\infty$-categories. To this end, let $\ul{\Cat}_T^{P\text{-fin}}$ be the smallest subcategory of $\ul{\Cat}_T$ which is closed under $P$-finite colimits and contains $[0]$ and $[1]$. Note that this class of $T$-$\infty$-categories contains $\bbU$. It follows from \cite{martiniwolf2022presentable}*{Proposition~A.3.1} that a $T$-$\infty$-category admits finite $P$-limits if and only if it admits $I$-indexed limits for every $I \in \ul{\Cat}_T^{P\text{-finite}}$. While this alternative perspective on finite $P$-limits is the inspiration for our terminology, it is more convenient for the purposes of this paper to work with the previous simpler definition.
\end{remark}

\begin{definition}[cf.\ \cite{nardin2016exposeIV}*{Definition~7.1}]
	\label{def:PStability}
	A $T$-$\infty$-category $\Cc$ is \textit{$P$-stable} if it is fiberwise stable and $P$-semiadditive. We define $\Cat_T^{\Pst}$ to be the full subcategory of $\Cat_T^{\Plex}$ spanned by the $P$-stable $T$-$\infty$-categories.

	When $\Cc$ and $\Dd$ are both $P$-stable $T$-$\infty$-categories, we will write $\Fun_T^{\Pex}(\Cc,\Dd)$ for $\Fun_T^{\Plex}(\Cc,
	\Dd)$.
\end{definition}

\begin{example}\label{ex:eq_stability}
	Applied to the pair $\Orb\subset \Glo$ we obtain a notion of $\Orb$-stability for global $\infty$-categories. We will refer to this as \emph{equivariant stability}.
\end{example}

\begin{lemma}
	Let $\Cc$ be a $T$-$\infty$-category. If $\Cc$ admits finite $P$-limits, then so does $\ulPCMon(\Cc)$.
\end{lemma}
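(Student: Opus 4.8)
The plan is to split the claim into its two halves: finite $P$-products and fiberwise finite limits, each of which is a kind of parametrized limit, and to exploit that $\ulPCMon(\Cc) = \ulFun^{\Pbiprod}_T(\ulfinptdPsets,\Cc)$ is by definition a full $T$-subcategory of the functor $T$-$\infty$-category $\ulFun_T(\ulfinptdPsets,\Cc)$. First I would recall from \Cref{prop:CoLimitsInFunctorCategories} that if $\Cc$ admits $K$-indexed parametrized limits for some class of indexing $T$-$\infty$-categories, then so does $\ulFun_T(\ulfinptdPsets,\Cc)$, and these are computed pointwise (i.e.\ the precomposition-preservation statement of that proposition together with the fact that the relevant diagonal has the pointwise left/right adjoint). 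Applying this with the class of constant $T$-$\infty$-categories on finite diagrams gives fiberwise finite limits in $\ulFun_T(\ulfinptdPsets,\Cc)$, and applying it with the class $\ulfinPsets$ gives finite $P$-products in $\ulFun_T(\ulfinptdPsets,\Cc)$. So the ambient functor $T$-$\infty$-category has finite $P$-limits.

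The substantive point is then that the full $T$-subcategory $\ulPCMon(\Cc)\subseteq\ulFun_T(\ulfinptdPsets,\Cc)$ is closed under these parametrized limits. For finite $P$-products this is exactly \Cref{lem:SemiadditiveFunctorsClosedUnderLimits}: taking $\bbU = \ulfinPsets$ there, that lemma states that $\ulFun^{\Poplus}_T(\ulfinptdPsets,\Cc)$ admits $\ulfinPsets$-limits and the inclusion into $\ulFun_T(\ulfinptdPsets,\Cc)$ preserves them; note the hypotheses of that lemma are satisfied since $\ulfinptdPsets$ is pointed and admits finite $P$-coproducts (\Cref{lem:FindisjptdPSetsHasFinitePColimits}) and $\Cc$ admits finite $P$-products by assumption. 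For fiberwise finite limits, I would likewise invoke \Cref{lem:SemiadditiveFunctorsClosedUnderLimits} with $\bbU$ the class of constant $T$-$\infty$-categories on a finite diagram $L$ — such $\bbU$ does lie among the classes allowed in that lemma — concluding that $\ulPCMon(\Cc)$ admits $\const_L$-indexed limits and the inclusion preserves them; running over all finite $L$ gives fiberwise finite limits. Combining the two closure statements, $\ulPCMon(\Cc)$ inherits finite $P$-limits from $\ulFun_T(\ulfinptdPsets,\Cc)$, hence from $\Cc$.

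The only mild subtlety — and the step I would be most careful about — is bookkeeping about which ``class of $T$-$\infty$-categories'' one feeds into \Cref{lem:SemiadditiveFunctorsClosedUnderLimits}: that lemma is phrased for a class $\bbU$ of $T$-$\infty$-categories in the sense of \Cref{def:ClassOfTCategories}, and one must check that both ``constant on a fixed finite diagram'' and $\ulfinPsets$ assemble into such classes (the former is clear; the latter is noted to be a class of $T$-$\infty$-groupoids right after \Cref{def:ulFinPSets}). Granting this, the argument is a direct assembly of \Cref{prop:CoLimitsInFunctorCategories} and \Cref{lem:SemiadditiveFunctorsClosedUnderLimits} with no new computation required; the same reasoning also immediately yields the dual half of the forthcoming development (that $P$-stability is inherited), which is presumably why the authors isolate this lemma here.
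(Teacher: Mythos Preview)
Your proposal is correct and takes essentially the same approach as the paper: the paper's proof is the single line ``This is a special case of \Cref{lem:SemiadditiveFunctorsClosedUnderLimits},'' and your argument is precisely a careful unpacking of that citation, splitting finite $P$-limits into fiberwise finite limits and finite $P$-products and feeding the corresponding classes into that lemma.
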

\begin{proof}
	This is a special case of \cref{lem:SemiadditiveFunctorsClosedUnderLimits}.
\end{proof}

\begin{definition}[\cite{nardin2016exposeIV}*{Definition~7.3}]
	\label{def:PStabilization}
	\label{def:PGenuineTSpectra}
	Let $\Cc$ be a $T$-$\infty$-category which admits finite $P$-limits. Then the \textit{$P$-stabilization of $\Cc$} is the $T$-$\infty$-category $\ul{\Sp}^{P}(\Cc)$ defined as
	\begin{align*}
		\ul{\Sp}^{P}(\Cc) := \ul{\Sp}^{\fib}(\ulPCMon(\Cc)),
	\end{align*}
	the fiberwise stabilization of the $T$-$\infty$-category of $P$-commutative monoids in $\Cc$. As a special case, we define the $T$-$\infty$-category $\ul{\Sp}^{P}_T$ of \textit{$P$-genuine $T$-spectra} as
	\begin{align*}
		\ul{\Sp}^{P}_T := \ul{\Sp}^{P}(\ul{\Spc}_T),
	\end{align*}
	the $P$-stabilization of the $T$-$\infty$-category of $T$-spaces.
\end{definition}

The next lemma shows that the $P$-stabilization of a $T$-$\infty$-category with finite $P$-limits is indeed $P$-stable.

\begin{lemma}
	\label{lem:SpfibAgainPSemi}
	Let $\Cc$ be a $P$-semiadditive $T$-$\infty$-category with finite $P$-limits. Then $\ul{\Sp}^{\fib}(\Cc)$ is again $P$-semiadditive, and thus in particular $P$-stable.
\end{lemma}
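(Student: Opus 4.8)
I would check the two structural requirements for $P$-semiadditivity (pointedness together with finite $P$-products and finite $P$-coproducts) and then that the norm maps are equivalences, in each case transporting the corresponding statement from $\Cc$ through the fiberwise stabilization functor. Since $\ul{\Sp}^{\fib}(\Cc)$ is fiberwise stable by construction, it is in particular pointed and admits all fiberwise finite limits and colimits, so once $P$-semiadditivity is established the ``$P$-stable'' conclusion is immediate from \Cref{def:PStability}. By hypothesis $\Cc$ has finite $P$-limits, hence in particular fiberwise finite limits, and it is $\ulfinPsets$-complete; so \Cref{lem:AdjointabilityInSpFib} (applied with $\bbU=\ulfinPsets$) shows that $\ul{\Sp}^{\fib}(\Cc)$ is again $\ulfinPsets$-complete, i.e.\ admits finite $P$-products, and that the counit $\Omega^\infty\colon\ul{\Sp}^{\fib}(\Cc)\to\Cc$ preserves them.

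For finite $P$-coproducts I would use \Cref{prop:finitePcoproducts}: fiberwise finite coproducts are available, so it remains to produce, for each $p\colon A\to B$ in $P$, a left adjoint to $p^*$ satisfying Beck--Chevalley. Here I would exploit that $\Cc$ is $P$-semiadditive: by \Cref{lem:CharacterizationPSemiadditiveTCategories} (or directly \Cref{def:PSemiadditivity}) the functor $p_!$ in $\Cc$ agrees with the right adjoint $p_*$, hence is left exact; since $\Sp$ is a $2$-functor on $\Cat_\infty^{\lex}$ preserving adjunctions between left exact functors, applying it fiberwise turns the adjunction $p_!\dashv p^*$ of $\Cc$ into an adjunction in $\ul{\Sp}^{\fib}(\Cc)$, and applying $\Sp$ to the (left) Beck--Chevalley equivalences in $\Cc$ (which exist because $\Cc$ admits finite $P$-coproducts) produces the corresponding Beck--Chevalley equivalences for $\ul{\Sp}^{\fib}(\Cc)$. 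Thus $\ul{\Sp}^{\fib}(\Cc)$ admits finite $P$-coproducts, and more generally finite $P$-(co)products over arbitrary presheaves by \Cref{rmk:limitExtensionRestrictedClass}.

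It then remains to show that for every $p\colon A\to B$ in $\finPsets$ the norm map $\Nm_p$ of $\ul{\Sp}^{\fib}(\Cc)$ is an equivalence. The plan is to observe that the entire \Cref{cstr:NormMap} is built out of data that the $2$-functor $\Sp$ preserves: base change equivalences (they are mates, preserved by $2$-functoriality), units and counits of the adjunctions $p^*\dashv p_!$, $p^*\dashv p_*$, $\Delta^*\dashv\Delta_*$, $\Delta_!\dashv\Delta^*$, and the identifications $\Cc(X\sqcup Y)\simeq\Cc(X)\times\Cc(Y)$ (preserved since $\Sp$ is a right adjoint, hence product-preserving). Consequently $\Nmadj_p^{\ul{\Sp}^{\fib}(\Cc)}$ is identified with $\Sp(\Nmadj_p^{\Cc})$ under these identifications, and passing to adjoints, $\Nm_p^{\ul{\Sp}^{\fib}(\Cc)}\simeq\Sp(\Nm_p^{\Cc})$. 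Since $\Cc$ is $P$-semiadditive, $\Nm_p^{\Cc}$ is an equivalence, and $\Sp$ carries equivalences of functors to equivalences, so $\Nm_p^{\ul{\Sp}^{\fib}(\Cc)}$ is an equivalence; hence $\ul{\Sp}^{\fib}(\Cc)$ is $P$-semiadditive, and being fiberwise stable, $P$-stable.

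\textbf{Main obstacle.} The delicate point is the compatibility of $\Sp$ with the norm map $\Nm_\Delta$ of \Cref{lem:NormMapDisjointSummandInclusions} for a disjoint summand inclusion $\Delta$: unlike the rest of the construction this map is not purely $2$-categorical but is produced by inserting the unique map $\emptyset\to 1$ in the complementary factor. One must check that under $\Sp$ the decomposition $\Cc(A\times_BA)\simeq\Cc(A)\times\Cc(C)$, the identifications of $\Delta_!$, $\Delta_*$ as $X\mapsto(X,\emptyset)$, $X\mapsto(X,1)$, and the map $\emptyset\to 1$ all go to the analogous data in $\ul{\Sp}^{\fib}(\Cc)$ — using that $\Sp$ sends both $\emptyset$ and $1$ to the zero object $0$, so that $\Sp(\Nm_\Delta^{\Cc})$ is the essentially unique (iso)morphism supported on $0$, which is precisely $\Nm_\Delta^{\ul{\Sp}^{\fib}(\Cc)}$ (an equivalence, as $\ul{\Sp}^{\fib}(\Cc)$ is pointed). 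Alternatively, one can sidestep part of this bookkeeping by invoking \Cref{lem:NormMapInTermsOfExchangeMap} to reduce ``$\Nm_p$ is an equivalence'' (given that $\Nm_\Delta$ is always an equivalence in a pointed category) to ``the double Beck--Chevalley map $p_!\pr_{2*}\Rightarrow p_*\pr_{1!}$ is an equivalence'', and then transport this single Beck--Chevalley equivalence from $\Cc$ through the $2$-functor $\Sp$.
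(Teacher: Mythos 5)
Your proof is correct, but it takes a longer route than the paper's. The paper's proof is shorter because it invokes a more economical criterion: rather than verifying the definition of $P$-semiadditivity directly (which requires first establishing that $\ul{\Sp}^{\fib}(\Cc)$ admits finite $P$-coproducts and then checking $\Nm_p$ is an equivalence), it applies characterization (4) of \Cref{lem:CharacterizationPSemiadditiveTCategories}: it suffices that $\ul{\Sp}^{\fib}(\Cc)$ admits finite $P$-products and that $\Nmadjdual_p\colon \id \Rightarrow p^*p_*$ is the \emph{unit of an adjunction} $p_*\dashv p^*$. Finite $P$-products are supplied by \Cref{lem:AdjointabilityInSpFib}, as you observe; and since $\Nmadjdual_p$ for $\ul{\Sp}^{\fib}(\Cc)$ is $\Sp$ applied to $\Nmadjdual_p^{\Cc}$, the unit condition is immediate from the fact that $\Sp$ preserves adjunctions and $\Cc$ is $P$-semiadditive. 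This neatly bypasses the need to separately construct left adjoints $p_!$ and verify left Beck--Chevalley in $\ul{\Sp}^{\fib}(\Cc)$ (the existence of $P$-coproducts falls out automatically from the adjunction $p_*\dashv p^*$), and packages the transport-through-$\Sp$ argument as a single clean invocation of ``$\Sp$ preserves adjunctions'' rather than an equivalence-by-equivalence chase through \Cref{cstr:NormMap}. Note that the ``main obstacle'' you identify (compatibility of $\Sp$ with $\Nm_\Delta$) does not actually go away in the paper's approach — $\Nmadjdual_p$ is also built from $\alpha$ and hence from $\Nm_\Delta$ — but the paper dispatches it with a single sentence (``the adjunction data \dots\ is inherited from $\Cc$ by fiberwise stabilizing''), which your more careful discussion of this point essentially justifies. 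In short: your argument and the paper's rest on the same two facts ($\Sp$ preserves $P$-products via \Cref{lem:AdjointabilityInSpFib}, and $\Sp$ is a $2$-functor preserving adjunctions), but the paper leverages the self-dual/adjunction-theoretic reformulation of $P$-semiadditivity to avoid most of the bookkeeping you carry out explicitly.
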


\begin{proof}
	The $T$-$\infty$-category $\ul{\Sp}^{\fib}(\Cc)$ is fiberwise pointed and admits finite $P$-products by \cref{lem:AdjointabilityInSpFib}. By \cref{lem:CharacterizationPSemiadditiveTCategories}, it will suffice to show that for every morphism $p\colon A \to B$ in $\finPsets$ the dual adjoint norm map $\Nmadjdual_p\colon \id \to \Sp(p^*)\Sp(p_*)$ exhibits $\Sp(p^*)$ as a right adjoint of $\Sp(p_*)$. Since the adjunction data for $\ul{\Sp}^{\fib}(\Cc)$ is inherited from $\Cc$ by fiberwise stabilizing, the dual adjoint norm map for $\ul{\Sp}^{\fib}(\Cc)$ is obtained by applying the stabilization functor to the map $\Nmadjdual^{\Cc}_p\colon \id \to p^*p_*$. As stabilization preserves adjunctions, the claim thus follows from $P$-semiadditivity of $\Cc$.
\end{proof}

\begin{corollary}\label{cor:adjunctionPstable}
	The functor $\ul{\Sp}^{P}\colon \Cat_T^{\Plex} \to \Cat_T^{\Pst}$ is right adjoint to the inclusion $\Cat_T^{\Pst} \hookrightarrow \Cat_T^{\Plex}$.
\end{corollary}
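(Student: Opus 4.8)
The plan is to derive this corollary by assembling the two adjunctions that have already been established in the excerpt, namely the $P$-semiadditive envelope adjunction $\ulPCMon(-)\colon \Cat_T^{\Pprod} \to \Cat_T^{\Pbiprod}$ from \Cref{cor:adjunctionCMon} and the fiberwise stabilization adjunction $\ul{\Sp}^{\fib}\colon \Cat_T^{\lex} \to \Cat_T^{\st}$ from \Cref{prop:SpfibRightAdjoint}, and then restricting both to the relevant subcategories so that their composite becomes an adjunction between $\Cat_T^{\Plex}$ and $\Cat_T^{\Pst}$. The key structural observation is that $\ul{\Sp}^{P} = \ul{\Sp}^{\fib} \circ \ulPCMon$ by \Cref{def:PStabilization}, so the claimed adjunction should be the composite of adjunctions; the content of the proof is checking that all the functors involved land in the correct subcategories.

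First I would observe that the inclusion $\Cat_T^{\Pst} \hookrightarrow \Cat_T^{\Plex}$ factors through intermediate subcategories in a way compatible with the two given adjunctions. More precisely: a $P$-stable $T$-$\infty$-category is in particular fiberwise stable and $P$-semiadditive, so it has finite $P$-limits (fiberwise finite limits come from fiberwise stability, and finite $P$-products are part of $P$-semiadditivity); thus $\Cat_T^{\Pst}$ sits inside $\Cat_T^{\Plex}$, and also inside the category of $P$-semiadditive $T$-$\infty$-categories with finite $P$-limits. I would then check that on one hand $\ulPCMon$ restricts to a functor from $\Cat_T^{\Plex}$ (= $T$-$\infty$-categories with finite $P$-limits) to the category of $P$-semiadditive $T$-$\infty$-categories with finite $P$-limits, using \Cref{cor:CommutativeMonoidsAreSemiadditive} for $P$-semiadditivity and the lemma just above \Cref{def:PStabilization} for the existence of finite $P$-limits; this restricted $\ulPCMon$ is right adjoint to the inclusion by \Cref{cor:adjunctionCMon} (one only needs to note that a finite-$P$-limit-preserving functor is in particular a finite-$P$-product-preserving functor, so the relevant hom-$T$-$\infty$-categories match up). On the other hand, $\ul{\Sp}^{\fib}$ restricts to a functor from $P$-semiadditive $T$-$\infty$-categories with finite $P$-limits into $\Cat_T^{\Pst}$ by \Cref{lem:SpfibAgainPSemi} (which supplies $P$-semiadditivity of $\ul{\Sp}^{\fib}(\Cc)$ hence $P$-stability), and it is right adjoint to the inclusion $\Cat_T^{\Pst} \hookrightarrow \{\text{$P$-semiadditive with finite $P$-limits}\}$ by restricting \Cref{prop:SpfibRightAdjoint}; here one uses that on a fiberwise stable target, exact functors and left-exact functors coincide, and that the relevant categories of morphisms agree.

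Composing these two restricted adjunctions gives that $\ul{\Sp}^{P} = \ul{\Sp}^{\fib} \circ \ulPCMon \colon \Cat_T^{\Plex} \to \Cat_T^{\Pst}$ is right adjoint to the composite of the two inclusions, which is exactly the inclusion $\Cat_T^{\Pst} \hookrightarrow \Cat_T^{\Plex}$. The only genuinely careful part — and the one I expect to be the main (if modest) obstacle — is the bookkeeping of subcategories: one must verify that the morphisms match up at each stage, i.e. that a $T$-functor between $P$-stable $T$-$\infty$-categories preserves finite $P$-limits if and only if it is exact, and that the chain of subcategory inclusions $\Cat_T^{\Pst} \hookrightarrow \{\text{$P$-semiadd., finite $P$-limits}\} \hookrightarrow \Cat_T^{\Plex}$ is fully faithful on each intermediate piece, so that no morphisms are lost or gained when composing the unit/counit data. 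Once this verification is in place, the corollary follows formally from the composability of adjunctions, with the triangle identities inherited from those of the two factors.
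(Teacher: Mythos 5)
Your argument is essentially identical to the paper's: factor $\ul{\Sp}^P = \ul{\Sp}^{\fib} \circ \ulPCMon$, restrict the adjunctions from \Cref{prop:SpfibRightAdjoint} (using \Cref{lem:SpfibAgainPSemi}) and \Cref{cor:adjunctionCMon} through the intermediate category of $P$-semiadditive $T$-$\infty$-categories with finite $P$-limits, and compose. The paper's proof does exactly this, just more tersely.

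One of your bookkeeping checks is misstated, although it is not load-bearing: a $T$-functor between $P$-stable $T$-$\infty$-categories does \emph{not} preserve finite $P$-limits merely because it is fiberwise exact --- the former is strictly stronger, since it also requires compatibility with the right adjoints $p_*$ for $p \in P$, which fiberwise exactness alone does not guarantee. But there is no need for the two notions to coincide. The category $\Cat_T^{\Pst}$ is by definition a full subcategory of $\Cat_T^{\Plex}$, so its morphisms are the $P$-lex ($=$ $P$-ex) ones by fiat; what one actually verifies for the restriction of the $\ul{\Sp}^{\fib}$-adjunction is that $\ul{\Sp}^{\fib}$ carries $P$-lex functors to $P$-lex functors and that the counit $\Omega^\infty$ is itself $P$-lex, the latter being supplied by \Cref{lem:AdjointabilityInSpFib}. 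With that correction your restriction argument goes through exactly as the paper intends.
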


\begin{proof}
	\cref{lem:SpfibAgainPSemi} shows that the adjunction of \cref{prop:SpfibRightAdjoint} restricts to an adjunction
	\[\mathrm{incl} \colon \Cat_T^{P-\st} \rightleftarrows \Cat_T^{\lex,{\Pbiprod}}\noloc \ul{\Sp}^{\fib}(-).\] Composing this with the adjunction of \cref{cor:adjunctionCMon} gives the statement.
\end{proof}

From the adjunction of $\infty$-categories from \cref{cor:adjunctionPstable}, we may immediately deduce an equivalence at the level of $T$-$\infty$-categories of functors.

\begin{definition}
	We define the $T$-functor $\Omega^\infty\colon \ul{\Sp}^{P}(\Cc)\rightarrow \Cc$ to be the counit of the adjunction from \cref{cor:adjunctionPstable}. Explicitly it is given by the composite
	\[\ul{\Sp}^{\textup{fib}}(\ulPCMon(\Cc))\xrightarrow{\Omega^\infty} \ulPCMon(\Cc) \xrightarrow{\mathbb{U}} \Cc,\]
	where the first functor is the infinite loop space functor and the second functor is given by evaluation at $S^0\colon \ul{1} \to \ulfinptdPsets$.
\end{definition}

\begin{proposition}\label{prop:charact-stabilization-semiadditive}
	Let $\Dd$ be a $T$-$\infty$-category with finite $P$-limits. For every $P$-stable $T$-$\infty$-category $\Cc$, composition with $\Omega^\infty\colon \ul{\Sp}^{P}(\Cc)\rightarrow \Cc$ induces an equivalence of $T$-$\infty$-categories
	\begin{equation*}
		\ul\Fun_T(\Cc,\Omega^\infty)\colon\ul\Fun_T^{\textup{$P$-ex}}(\Cc,\ul{\Sp}^{P}(\Dd))\to\ul\Fun_T^{\textup{$P$-lex}}(\Cc,\Dd).
	\end{equation*}
	\begin{proof}
		This follows by combining \cref{cor:UniversalPropCMon} and \cref{lem:FunintoSpfib}.
	\end{proof}
\end{proposition}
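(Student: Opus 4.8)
The plan is to split $\ul{\Sp}^{P}$ into its two defining layers and run the universal property of each. Recall that $\ul{\Sp}^{P}(\Dd)=\ul{\Sp}^{\fib}(\ulPCMon(\Dd))$ and that, by definition, the counit $\Omega^{\infty}\colon\ul{\Sp}^{P}(\Dd)\to\Dd$ is the composite
\[
\ul{\Sp}^{\fib}(\ulPCMon(\Dd))\xrightarrow{\ \Omega^{\infty}\ }\ulPCMon(\Dd)\xrightarrow{\ \mathbb U\ }\Dd .
\]
Since $\Dd$ has finite $P$-limits, so does $\ulPCMon(\Dd)$ by \cref{lem:SemiadditiveFunctorsClosedUnderLimits}, and $\ulPCMon(\Dd)$ is $P$-semiadditive by \cref{cor:CommutativeMonoidsAreSemiadditive}; hence $\ul{\Sp}^{P}(\Dd)$ is $P$-stable by \cref{lem:SpfibAgainPSemi}, so that $\ulFun^{\Pex}_{T}(\Cc,\ul{\Sp}^{P}(\Dd))$ is defined. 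Throughout I will use that $\Cc$, being $P$-stable, is both fiberwise stable (the input for the stabilization layer) and $P$-semiadditive, hence pointed with finite $P$-coproducts (the input for the semiadditivity layer).

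For the \emph{stabilization layer} I would apply \cref{lem:FunintoSpfib} with $\ulPCMon(\Dd)\in\Cat^{\lex}_{T}$ in place of $\Dd$: as $\Cc$ is fiberwise stable, composition with $\Omega^{\infty}$ gives an equivalence of $T$-$\infty$-categories $\ulFun^{\ex}_{T}(\Cc,\ul{\Sp}^{P}(\Dd))\iso\ulFun^{\lex}_{T}(\Cc,\ulPCMon(\Dd))$. I would then check that this restricts to the full $T$-subcategories of functors additionally preserving finite $P$-products, giving $\ulFun^{\Pex}_{T}(\Cc,\ul{\Sp}^{P}(\Dd))\iso\ulFun^{\Plex}_{T}(\Cc,\ulPCMon(\Dd))$. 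This is because the comparison functor and its inverse both respect the finite-$P$-product condition: postcomposition with $\Omega^{\infty}$ does, since $\Omega^{\infty}$ preserves finite $P$-products by \cref{lem:AdjointabilityInSpFib}, while the inverse sends a fiberwise left exact $G\colon\Cc\to\ulPCMon(\Dd)$ to $\ul{\Sp}^{\fib}(G)$ precomposed with the equivalence $\Cc\simeq\ul{\Sp}^{\fib}(\Cc)$ (valid as $\Cc$ is already fiberwise stable), and $\ul{\Sp}^{\fib}$ carries finite-$P$-product-preserving functors to finite-$P$-product-preserving functors by the same argument as in the proof of \cref{lem:SpfibAgainPSemi}.

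For the \emph{semiadditivity layer} I would invoke \cref{cor:UniversalPropCMon} — whose proof in fact produces an equivalence of $T$-$\infty$-categories $\ulFun^{\Pcoprod}_{T}(\Cc,\ulPCMon(\Dd))\iso\ulFun^{\Pbiprod}_{T}(\Cc,\Dd)$ via composition with $\mathbb U$ — and note that, since $\Cc$ is $P$-semiadditive and both $\ulPCMon(\Dd)$ and $\Dd$ have finite $P$-products, \cref{prop:FunctorSemiadditiveVsColimitPreserving} identifies the source with $\ulFun^{\Pprod}_{T}(\Cc,\ulPCMon(\Dd))$ and the target with $\ulFun^{\Pprod}_{T}(\Cc,\Dd)$. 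Again I would check that composition with $\mathbb U$ restricts to the subcategories of functors additionally preserving fiberwise finite limits; on the source these are precisely the $P$-lex functors $\Cc\to\ulPCMon(\Dd)$, matching the output of the previous layer, and on the target the $P$-lex functors $\Cc\to\Dd$. Here $\mathbb U=\ev_{S^{0}}$ preserves all fiberwise limits that exist, since limits in $\ulPCMon(\Dd)$ are computed pointwise by \cref{lem:SemiadditiveFunctorsClosedUnderLimits} and \cref{prop:CoLimitsInFunctorCategories}; conversely, it reflects the property of preserving fiberwise finite limits among $P$-commutative-monoid-valued $T$-functors, because every object of $\ulfinptdPsets$ is, $P$-semiadditively, assembled from $S^{0}$ via the functors $p_{!}$ ($p\in P$) and finite coproducts, so the pointwise values of a $P$-commutative monoid $G(c)$ are obtained from $\mathbb U\bigl(G(c)\bigr)$ by applying the right adjoints $p_{*}$ together with finite products, all of which preserve fiberwise finite limits.

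Splicing the two equivalences and observing that their composite is composition with $\mathbb U\circ\Omega^{\infty}=\Omega^{\infty}$ yields the proposition. (Wherever \cref{cor:UniversalPropCMon} is used as literally stated, one upgrades from underlying $\infty$-categories to $T$-$\infty$-categories by the usual reduction: replace $T$ by $T_{/B}$ for each $B\in T$ and use that $\ulfinptdPsets$, $\ulPCMon$ and $\ul{\Sp}^{P}$ are compatible with base change along $\pi_{B}$, exactly as in the proofs of \cref{cor:AdjunctionPointednessInternal} and \cref{lem:FunintoSpfib}.) The routine invocations aside, the real content — and the step I expect to require the most care — is the bookkeeping just described: verifying that each of the two equivalences restricts to the subcategory cut out by the \emph{other} preservation condition, i.e.\ that $\Omega^{\infty}$, $\mathbb U$, and the inverses of the two comparison equivalences transport the properties ``preserves finite $P$-products'' and ``preserves fiberwise finite limits'' correctly.
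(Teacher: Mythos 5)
Your proposal is correct and takes the same route as the paper, which proves this simply by citing \cref{cor:UniversalPropCMon} and \cref{lem:FunintoSpfib}. You have made explicit the bookkeeping the paper leaves to the reader: factoring $\Omega^\infty$ as $\mathbb U\circ\Omega^\infty_{\fib}$, verifying that the equivalence from \cref{lem:FunintoSpfib} (applied with $\ulPCMon(\Dd)$ in place of $\Dd$) restricts to the subcategories of functors preserving finite $P$-products, and that the equivalence underlying \cref{cor:UniversalPropCMon}, after identifying ${\Pcoprod}$ with ${\Pprod}$ via \cref{prop:FunctorSemiadditiveVsColimitPreserving}, restricts to the subcategories of fiberwise-left-exact functors — all of which are correct.
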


\begin{lemma}
	Let $\bbU$ be a family of $T$-$\infty$-groupoids, and let $\Cc$ be a $\bbU$-complete $T$-$\infty$-category which admits finite $P$-limits. Then also $\ul{\Sp}^P(\Cc)$ is $\bbU$-complete and the $T$-functor $\Omega^{\infty}\colon \ul{\Sp}^P(\Cc) \to \Cc$ preserves $\bbU$-limits.
\end{lemma}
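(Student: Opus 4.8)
The plan is to unwind the definitions $\ul{\Sp}^P(\Cc)=\ul{\Sp}^{\fib}(\ulPCMon(\Cc))$ and the factorization $\Omega^\infty\colon\ul{\Sp}^{\fib}(\ulPCMon(\Cc))\xrightarrow{\Omega^\infty}\ulPCMon(\Cc)\xrightarrow{\mathbb U}\Cc$ of the infinite loop space $T$-functor, and then to chain together three results proved above: stability of $\bbU$-limits under passage to $P$-semiadditive functor categories (\cref{lem:SemiadditiveFunctorsClosedUnderLimits}), compatibility of fiberwise stabilization with $\bbU$-limits (\cref{lem:AdjointabilityInSpFib}), and the fact that precomposition preserves parametrized limits (\cref{prop:CoLimitsInFunctorCategories}).

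First I would record that $\ulfinptdPsets$ is pointed and admits finite $P$-coproducts by \cref{lem:FindisjptdPSetsHasFinitePColimits}, and that $\Cc$ admits finite $P$-products since it admits finite $P$-limits. Hence \cref{lem:SemiadditiveFunctorsClosedUnderLimits}, applied with the given class $\bbU$ of $T$-$\infty$-groupoids (which is in particular a class of $T$-$\infty$-categories in the sense of \cref{def:ClassOfTCategories}), shows that $\ulPCMon(\Cc)=\ulFun^{\Poplus}_T(\ulfinptdPsets,\Cc)$ admits $\bbU$-limits and that the inclusion $\ulPCMon(\Cc)\hookrightarrow\ulFun_T(\ulfinptdPsets,\Cc)$ preserves them. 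Moreover $\ulPCMon(\Cc)$ admits finite $P$-limits, hence in particular fiberwise finite limits, so it lies in $\Cat^{\lex}_T$, and \cref{lem:AdjointabilityInSpFib} applies to it: it gives that $\ul{\Sp}^{\fib}(\ulPCMon(\Cc))=\ul{\Sp}^P(\Cc)$ is $\bbU$-complete and that $\Omega^\infty\colon\ul{\Sp}^{\fib}(\ulPCMon(\Cc))\to\ulPCMon(\Cc)$ preserves $\bbU$-limits. This already settles $\bbU$-completeness of $\ul{\Sp}^P(\Cc)$.

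Finally, to see that $\Omega^\infty\colon\ul{\Sp}^P(\Cc)\to\Cc$ preserves $\bbU$-limits, I would factor it as $\ul{\Sp}^{\fib}(\ulPCMon(\Cc))\xrightarrow{\Omega^\infty}\ulPCMon(\Cc)\hookrightarrow\ulFun_T(\ulfinptdPsets,\Cc)\xrightarrow{(S^0)^*}\ulFun_T(\ul1,\Cc)\simeq\Cc$: the first arrow preserves $\bbU$-limits by the previous step, the inclusion does by \cref{lem:SemiadditiveFunctorsClosedUnderLimits}, and the precomposition $(S^0)^*$ does by \cref{prop:CoLimitsInFunctorCategories}. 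There is no genuine obstacle here — the argument is pure bookkeeping, and the only point requiring a moment's care is verifying the hypotheses of \cref{lem:AdjointabilityInSpFib}, namely that $\ulPCMon(\Cc)$ is simultaneously $\bbU$-complete and equipped with fiberwise finite limits, which is exactly what the first step furnishes.
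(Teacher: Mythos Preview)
Your proof is correct and follows exactly the approach the paper takes: the paper's proof is the one-liner ``This follows immediately from \Cref{lem:AdjointabilityInSpFib} and \Cref{lem:SemiadditiveFunctorsClosedUnderLimits},'' and you have simply unpacked that citation, additionally invoking \cref{prop:CoLimitsInFunctorCategories} to make explicit that the evaluation $(S^0)^*$ preserves $\bbU$-limits.
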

\begin{proof}
	This follows immediately from \Cref{lem:AdjointabilityInSpFib} and \Cref{lem:SemiadditiveFunctorsClosedUnderLimits}.
\end{proof}

As before, $P$-stabilization restricts to an adjunction on presentable $T$-$\infty$-categories.

\begin{lemma}
\label{lem:SpPresentableAdjunction}
	The construction $\Cc \mapsto \ul{\Sp}^{P}(\Cc)$ defines a functor
	\[
	\ul{\Sp}^{P}\colon \PrLT\rightarrow \PrLTPst
	\]
	which is left adjoint to the inclusion $\PrLTPst \hookrightarrow \PrLT$.
\end{lemma}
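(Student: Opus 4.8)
The plan is to exhibit $\ul{\Sp}^{P} = \ul{\Sp}^{\fib}\circ\ulPCMon$ as a composite of two adjunctions that have already been established. By Corollary~\ref{cor:PresentableUniversalPropertyCMon}, the functor $\ulPCMon(-)$ restricts to a functor $\PrLT\to\PrLTsemi$ which is left adjoint to the inclusion $\PrLTsemi\hookrightarrow\PrLT$. It therefore suffices to show that $\ul{\Sp}^{\fib}(-)$ restricts to a functor $\PrLTsemi\to\PrLTPst$ that is left adjoint to the inclusion $\PrLTPst\hookrightarrow\PrLTsemi$; composing the two left adjoints then produces a left adjoint to the composite inclusion $\PrLTPst\hookrightarrow\PrLTsemi\hookrightarrow\PrLT$, which is exactly the inclusion in the statement, and the composite functor is by definition $\ul{\Sp}^{P}$.

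For the restriction of $\ul{\Sp}^{\fib}$, recall from Corollary~\ref{cor:SpfibPresentableAdjunction} that $\ul{\Sp}^{\fib}$ already sends $\PrLT$ into $\PrLTst$. If $\Cc\in\PrLTsemi$, then $\Cc$ is $T$-complete by Remark~\ref{rmk:PresentableHasLimits}, hence in particular admits finite $P$-limits, so Lemma~\ref{lem:SpfibAgainPSemi} applies and shows that $\ul{\Sp}^{\fib}(\Cc)$ is again $P$-semiadditive; being also fiberwise stable and presentable, it lies in $\PrLTPst$. Thus $\ul{\Sp}^{\fib}$ restricts to a functor $\PrLTsemi\to\PrLTPst$. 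Since $\PrLTsemi\subseteq\PrLT$ and $\PrLTPst\subseteq\PrLT$ are full subcategories, the adjunction equivalence of Corollary~\ref{cor:SpfibPresentableAdjunction} restricts verbatim: for $\Cc\in\PrLTsemi$ and $\Dd\in\PrLTPst$ one has natural equivalences $\Hom_{\PrLTsemi}(\ul{\Sp}^{\fib}(\Cc),\Dd)\simeq\Hom_{\PrLT}(\ul{\Sp}^{\fib}(\Cc),\Dd)\simeq\Hom_{\PrLT}(\Cc,\Dd)\simeq\Hom_{\PrLTsemi}(\Cc,\Dd)$, which exhibit the restricted $\ul{\Sp}^{\fib}$ as left adjoint to $\PrLTPst\hookrightarrow\PrLTsemi$. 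Functoriality on morphisms (left adjoint $T$-functors) is automatic since $\ul{\Sp}^{P}$ is being defined as a composite of two functors already defined on these categories.

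There is essentially no hard step: everything reduces to assembling adjunctions whose existence was proved earlier. The one point worth a sentence of care — the closest thing to an obstacle — is verifying that $\ul{\Sp}^{\fib}$ genuinely preserves $P$-semiadditivity on presentable inputs, so that the second adjunction really does restrict to the claimed subcategories; this is exactly what Lemma~\ref{lem:SpfibAgainPSemi} provides, using the automatic $T$-completeness of presentable $T$-$\infty$-categories from Remark~\ref{rmk:PresentableHasLimits}. Dually, one could run the entire argument for $\PrRT$ and transport along the equivalence $\PrLT\simeq(\PrRT)\catop$, but the direct composition above is the cleanest route.
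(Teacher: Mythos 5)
Your proof is correct and takes the same approach as the paper, which simply says to combine Corollary~\ref{cor:SpfibPresentableAdjunction} and Corollary~\ref{cor:PresentableUniversalPropertyCMon}; you have spelled out the composition of adjunctions, the restriction of $\ul{\Sp}^{\fib}$ to $\PrLTsemi\to\PrLTPst$ via Lemma~\ref{lem:SpfibAgainPSemi}, and the full-subcategory bookkeeping that the paper leaves implicit. This mirrors the proof of the non-presentable analogue \Cref{cor:adjunctionPstable} in the paper.
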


\begin{proof}
	Combine \cref{cor:SpfibPresentableAdjunction} and \cref{cor:PresentableUniversalPropertyCMon}.
\end{proof}

\begin{definition}
	If $\Cc$ is a presentable $T$-$\infty$-category, we write $\Sigma^\infty_+\colon \Cc\rightarrow \ul{\Sp}^P(\Cc)$ for the left adjoint of the forgetful functor $\Omega^\infty\colon \ul{\Sp}^P(\Cc) \rightarrow \Cc$. It is the unit of the adjunction in \Cref{lem:SpPresentableAdjunction}.
\end{definition}

We record the results of this section in the following theorem for easy reference:

\begin{theorem}\label{thm:Stable_omnibus}
	Let $\Cc$ be a $T$-$\infty$-category with finite $P$-limits. The functor $\Omega^\infty\colon\ul\Sp^P(\Cc)\to \Cc$ exhibits $\ul\Sp^P(\Cc)$ as the $P$-stable envelope of $\Cc$, i.e.~for every $P$-stable  $T$-$\infty$-category $\tcat$ postcomposition with $\Omega^\infty$ induces an equivalence
	\begin{equation*}
		\ul\Fun^{{\Plex}}(\tcat, \Omega^\infty)\colon\ul\Fun^{\Pex}(\tcat,\ul\Sp^P(\Cc))\to \ul\Fun^{\Plex}(\tcat, \Cc).
	\end{equation*}
	Suppose now that $\Cc$ is moreover presentable. Then the left adjoint $\Sigma^\infty_+$ of $\Omega^\infty$ exhibits $\ul\Sp^P(\Cc)$ as the presentable $P$-stable completion of $\Cc$, i.e.~for any presentable $P$-stable $T$-$\infty$-category $\tcat$ precomposition with $\Sigma^\infty_+$ yields an equivalence
	\begin{equation*}
		\ul\Fun^\textup{L}(\Sigma^\infty_+, \tcat)\colon\ul\Fun^\textup{L}(\ul\Sp^P(\Cc), \tcat)\to\ul\Fun^\textup{L}(\Cc, \tcat).\qednow
	\end{equation*}
\end{theorem}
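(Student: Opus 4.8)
The plan is to deduce both halves from results already established. The first assertion requires essentially no work: it is \Cref{prop:charact-stabilization-semiadditive} after relabelling. Taking the category ``$\Dd$'' with finite $P$-limits in that proposition to be our $\Cc$, and letting $\tcat$ play the role of the $P$-stable category ``$\Cc$'' there, the equivalence $\ul\Fun^{\Pex}(\tcat,\ul\Sp^P(\Cc))\iso\ul\Fun^{\Plex}(\tcat,\Cc)$ asserted there is exactly the claim, with the equivalence induced by $\Omega^\infty$; and by \Cref{cor:adjunctionPstable} this $\Omega^\infty$ is the counit of the adjunction $\mathrm{incl}\dashv\ul\Sp^P$ between $\CatTPSt$ and $\CatTPLex$, as in the statement.

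For the second assertion I would start from \Cref{lem:SpPresentableAdjunction}: when $\Cc$ is presentable, $\ul\Sp^P=\ul\Sp^{\fib}\circ\ulPCMon$ restricts to a left adjoint $\ul\Sp^P\colon\PrLT\to\PrLTPst$ of the inclusion, with unit $\Sigma^\infty_+$. This adjunction of $\infty$-categories already yields the statement on \emph{groupoid cores}: for every presentable $P$-stable $\tcat$, precomposition with $\Sigma^\infty_+$ induces an equivalence $\Hom_{\PrLT}(\ul\Sp^P(\Cc),\tcat)\simeq\Hom_{\PrLT}(\Cc,\tcat)$, i.e.\ an equivalence of cores of $\Fun^{\textup{L}}_T(\ul\Sp^P(\Cc),\tcat)$ and $\Fun^{\textup{L}}_T(\Cc,\tcat)$. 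It remains to upgrade this to an equivalence of the full parametrized functor $\infty$-categories $\ul\Fun^{\textup{L}}_T(\ul\Sp^P(\Cc),\tcat)\to\ul\Fun^{\textup{L}}_T(\Cc,\tcat)$, and I would carry out this routine upgrade in two steps. First, all the constructions in play --- $\ulPCMon(-)$ by \Cref{lem:SemiadditiveFunctorsClosedUnderBaseChange}, fiberwise stabilization $\ul\Sp^{\fib}(-)$ since it is computed pointwise, the properties of being presentable and $P$-stable, and the unit $\Sigma^\infty_+$ --- are compatible with base change along $\pi_B\colon T_{/B}\to T$; hence \Cref{lem:SpPresentableAdjunction} applies over every slice $T_{/B}$ and gives, naturally in $B\in T$, an equivalence $\core\Fun^{\textup{L}}_{T_{/B}}(\pi_B^*\ul\Sp^P(\Cc),\pi_B^*\tcat)\simeq\core\Fun^{\textup{L}}_{T_{/B}}(\pi_B^*\Cc,\pi_B^*\tcat)$ induced by $\Sigma^\infty_+$. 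Second, to promote each core equivalence to an equivalence of $\infty$-categories I would cotensor: for a small $\infty$-category $K$ the cotensor $\tcat^K=\ul\Fun_T(\const_K,\tcat)$ is again presentable and $P$-stable --- fiberwise stable because stable $\infty$-categories are closed under cotensors, and $P$-semiadditive because the adjoints $p_!,p_*$ and the norm maps of $\tcat^K$ are computed objectwise in $K$ from those of $\tcat$ --- and on cores $\Fun^{\textup{L}}_{T_{/B}}(-,\tcat^K)\simeq\Fun^{\textup{L}}_{T_{/B}}(-,\tcat)^K$, so running the core equivalence with $\tcat$ replaced by $\tcat^K$ for all $K$ identifies the two functor $\infty$-categories via the Yoneda lemma in $\Cat_\infty$; naturality in $B$ then assembles these into the desired equivalence of $T$-$\infty$-categories.

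The one genuine obstacle is the bookkeeping of the second step: checking carefully that presentability and $P$-stability survive the base changes and cotensors involved, and that the slice-wise equivalences of \Cref{lem:SpPresentableAdjunction} cohere along restriction. All of this is routine given the results above, and parallels the $\ul\Fun^{\textup{L}}$-upgrade already implicit in the second half of \Cref{thm:Semiadd_omnibus}; indeed one could alternatively organise the second assertion as a composite $\ul\Fun^{\textup{L}}_T(\ul\Sp^P(\Cc),\tcat)=\ul\Fun^{\textup{L}}_T(\ul\Sp^{\fib}(\ulPCMon(\Cc)),\tcat)\simeq\ul\Fun^{\textup{L}}_T(\ulPCMon(\Cc),\tcat)\simeq\ul\Fun^{\textup{L}}_T(\Cc,\tcat)$, where the first equivalence is a $\ul\Fun^{\textup{L}}$-statement for $\ul\Sp^{\fib}$ obtained the same way from \Cref{cor:SpfibPresentableAdjunction} and the second is the second half of \Cref{thm:Semiadd_omnibus}.
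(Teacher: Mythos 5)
Your first assertion matches the paper exactly: the theorem's first half is Proposition~\ref{prop:charact-stabilization-semiadditive} after relabelling (note the paper has a small typo there, writing $\Omega^\infty\colon\ul\Sp^P(\Cc)\to\Cc$ where $\ul\Sp^P(\Dd)\to\Dd$ is meant). For the second half, the paper simply records the statement with a $\qednow$, treating it as an immediate corollary of the $\PrLT$-level adjunction in \Cref{lem:SpPresentableAdjunction} combined with \Cref{prop:charact-stabilization-semiadditive}; what you do is make explicit the routine-but-nontrivial upgrade from the resulting equivalence of mapping \emph{spaces} in $\PrLT$ to an equivalence of the parametrized functor $T$-$\infty$-categories $\ul\Fun^{\textup{L}}_T$, by slicing over $B\in T$ and cotensoring by small $\infty$-categories $K$. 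That argument works: $\tcat^K=\ul\Fun_T(\const_K,\tcat)$ is again presentable and $P$-stable (norm maps and stability are checked objectwise in $K$), base change along $\pi_B$ is compatible with $\ulPCMon$ and $\ul\Sp^{\fib}$, and full subcategories of the form $\Fun^{\textup{L}}$ commute with cotensoring, so the Yoneda lemma in $\Cat_\infty$ upgrades the core equivalence to a category equivalence pointwise, and the maps already assemble into the $T$-functor $\ul\Fun^\textup{L}_T(\Sigma^\infty_+,\tcat)$ so a pointwise equivalence is a $T$-equivalence. Your alternative decomposition $\ul\Fun^{\textup{L}}_T(\ul\Sp^{\fib}(\ulPCMon(\Cc)),\tcat)\simeq\ul\Fun^{\textup{L}}_T(\ulPCMon(\Cc),\tcat)\simeq\ul\Fun^{\textup{L}}_T(\Cc,\tcat)$ is in fact closer in spirit to what the paper does in the passage \emph{after} the omnibus: there it proves an unnamed proposition giving precisely the $\ul\Sp^{\fib}$-step of this composite (for the stronger hypothesis of $T$-cocomplete, not just presentable, target), via a right-adjoint argument using \Cref{lem:FunintoSpfib} and \Cref{lemma:non-param-cocompl-univ-sp} rather than cotensoring. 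Both routes are fine; the paper's later argument has the advantage of working for merely $T$-cocomplete targets, while yours keeps closer to the bare $\PrLT$-adjunction.
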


As a simple consequence, we get that the $T$-$\infty$-category $\ul{\Sp}^{P}_T$ of $P$-genuine $T$-spectra is the free presentable $P$-stable $T$-$\infty$-category on a single generator. As in the $P$-semiadditive setting of \Cref{subsec:P-semiadditive-Un}, we can strengthen this to the $T$-cocomplete setting:

\begin{theorem}\label{thm:cocomplete-univ-sp}
	Let $\mathcal D$ be a locally small $T$-cocomplete $P$-stable $T$-$\infty$-category. Then evaluating at $\Sigma^\infty_+(*)$ yields an equivalence
	\begin{equation*}
		\ul\Fun_T^\textup{L}(\ul\Sp^P_T,\mathcal D)\xrightarrow{\;\simeq\;}\mathcal D.
	\end{equation*}
\end{theorem}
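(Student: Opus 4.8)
The plan is to imitate the proof of \Cref{thm:cocomplete-univ-cmon}, using the identification $\ul\Sp^P_T=\ul\Sp^{\fib}(\ul\CMon^P_T)$ from \Cref{def:PStabilization}. By the universal property of $\ul\Spc_T$ (\Cref{cor:Universal_Property_T_Spaces}) and passing to adjoints, the map of the theorem agrees up to equivalence with the functor $\Omega^\infty\circ(\blank)\colon\ul\Fun^\textup{R}_T(\mathcal D,\ul\Sp^P_T)\to\ul\Fun^\textup{R}_T(\mathcal D,\ul\Spc_T)$ between parametrized $\infty$-categories of right adjoint functors, just as in \Cref{thm:cocomplete-univ-cmon}; here one uses that $\ul\Sp^P_T$ and $\ul\Spc_T$ are presentable and $\mathcal D$ is locally small, so the relevant adjoints exist. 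It thus remains to show this functor is an equivalence.

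For full faithfulness, note that right adjoint $T$-functors between presentable $T$-$\infty$-categories preserve $T$-limits and in particular finite $P$-limits, so the source and target above are full $T$-subcategories of $\ul\Fun^{\Pex}_T(\mathcal D,\ul\Sp^P_T)$ and $\ul\Fun^{\Plex}_T(\mathcal D,\ul\Spc_T)$ respectively (using that $\mathcal D$ is $P$-stable). The first half of \Cref{thm:Stable_omnibus}, applied with $\Cc=\ul\Spc_T$ and the $P$-stable $T$-$\infty$-category $\mathcal D$, identifies these two ambient $T$-$\infty$-categories via $\Omega^\infty\circ(\blank)$. Since $\Omega^\infty$ is the composite of the counit $\ul\Sp^{\fib}(\ul\CMon^P_T)\to\ul\CMon^P_T$ of the fiberwise stabilization adjunction with $\mathbb U=\ev_{S^0}$ — which admits the left adjoint $\mathbb P$ by \Cref{lemma:forgetful-RA} — it is itself a right adjoint; hence $\Omega^\infty\circ(\blank)$ restricts to a functor between the right-adjoint subcategories, which is then the restriction of an equivalence to full subcategories and so fully faithful.

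The essential surjectivity is the main point. Here I would factor the comparison map of the theorem as
\begin{equation*}
	\ul\Fun^\textup{L}_T(\ul\Sp^{\fib}(\ul\CMon^P_T),\mathcal D)\xrightarrow{(\Sigma^\infty_+)^*}\ul\Fun^\textup{L}_T(\ul\CMon^P_T,\mathcal D)\xrightarrow{\mathbb P^*}\ul\Fun^\textup{L}_T(\ul\Spc_T,\mathcal D)\xrightarrow{\;\simeq\;}\mathcal D,
\end{equation*}
where the last map is \Cref{cor:Universal_Property_T_Spaces} and the middle one is an equivalence by \Cref{cor:cocomplete-univ-cmon-relative} (valid since $\mathcal D$ is $T$-cocomplete and $P$-semiadditive). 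As $\Sigma^\infty_+\colon\ul\Spc_T\to\ul\Sp^P_T$ is the composite $\ul\Spc_T\xrightarrow{\mathbb P}\ul\CMon^P_T\xrightarrow{\Sigma^\infty_+}\ul\Sp^P_T$, this composite functor is precomposition with $\Sigma^\infty_+$ followed by evaluation at the terminal object, i.e.\ evaluation at $\Sigma^\infty_+(*)$; so it suffices to show that $(\Sigma^\infty_+)^*$ is an equivalence. This amounts to a strengthening of the fiberwise stabilization adjunction \Cref{cor:SpfibPresentableAdjunction} to merely $T$-cocomplete targets: for presentable $\Cc$ and locally small $T$-cocomplete fiberwise stable $\mathcal D$, precomposition with $\Sigma^\infty_+\colon\Cc\to\ul\Sp^{\fib}(\Cc)$ induces an equivalence $\ul\Fun^\textup{L}_T(\ul\Sp^{\fib}(\Cc),\mathcal D)\xrightarrow{\simeq}\ul\Fun^\textup{L}_T(\Cc,\mathcal D)$. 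I expect this to be the hard part.

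To prove this last statement I would again pass to right adjoints: a right adjoint $\mathcal D\to\ul\Sp^{\fib}(\Cc)$ preserves fiberwise finite limits and, $\mathcal D$ being fiberwise stable, is fiberwise exact, so \Cref{lem:FunintoSpfib} compares $\ul\Fun^{\ex}_T(\mathcal D,\ul\Sp^{\fib}(\Cc))$ with $\ul\Fun^{\lex}_T(\mathcal D,\Cc)$ via $\Omega^\infty$, and full faithfulness on the right-adjoint subcategories follows as before. The delicate point is essential surjectivity: for a right adjoint $H\colon\mathcal D\to\Cc$ one must check that its unique fiberwise exact lift $\widetilde H$ with $\Omega^\infty\widetilde H\simeq H$ (from \Cref{lem:FunintoSpfib}) is again a right adjoint. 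That $\widetilde H$ preserves all $T$-limits is clear — the desuspensions $\Omega^{\infty-n}\widetilde H$ are equivalent to $H\circ\Omega^n$ since exact functors commute with $\Omega$, and each $H\circ\Omega^n$ preserves $T$-limits — so the remaining task is to produce a left adjoint of $\widetilde H$, which I expect to do by extending the left adjoint $F$ of $H$ along $\Sigma^\infty_+$ directly, using that each $\mathcal D(B)$ is canonically tensored over $\Sp$. Alternatively one could bypass this lemma and mirror the proof of \Cref{thm:cocomplete-univ-cmon} verbatim: present $\ul\Sp^P_T$ as an accessible Bousfield localization of $\ul\Fun_T(\ulfinptdPsets,\ul\Sp_T)\simeq\ul\Sp^{\fib}(\ul\PSh_T((\ul{\mathbb F}^P_{T,*})^\op))$ (using that $\ul\Sp^{\fib}$ preserves accessible localizations, as $\ul\Sp^{\fib}(\Cc)(B)\simeq\Cc(B)\otimes\Sp$), build the required $T$-cocontinuous functor out of this presheaf-type $T$-$\infty$-category via \Cref{prop:Universal_Property_Presheaves} and \Cref{lem:FunintoSpfib}, and verify via the explicit formula for the right adjoint of a Yoneda extension (\Cref{rk:ra-Yoneda-extension}) — together with $P$-semiadditivity and fiberwise stability of $\mathcal D$ — that it descends through the localization. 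The bookkeeping is heavier than in \Cref{thm:cocomplete-univ-cmon}, but the structure is the same.
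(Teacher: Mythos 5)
Your proposal follows the paper's strategy closely: factor the comparison map through \Cref{cor:cocomplete-univ-cmon-relative}, reduce to proving that precomposition with $\Sigma^\infty_+\colon\ul\CMon^P_T\to\ul\Sp^{\fib}(\ul\CMon^P_T)$ is an equivalence for locally small $T$-cocomplete fiberwise stable targets, and handle full faithfulness by passing to right adjoints and invoking \Cref{thm:Stable_omnibus} together with \Cref{lem:FunintoSpfib}. This is exactly the shape of the paper's argument — the cocomplete strengthening of the fiberwise stabilization universal property is stated and proved as a standalone proposition immediately before the theorem, and the theorem itself is then a one-paragraph reduction to it via \Cref{thm:cocomplete-univ-cmon}.

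The place where you are loose is the hard step you yourself flag. You want to build a left adjoint to the exact lift $\widetilde H$ by ``extending the left adjoint $F$ of $H$ along $\Sigma^\infty_+$''; but extending $F$ along $\Sigma^\infty_+$ to a left adjoint $T$-functor out of $\ul\Sp^{\fib}(\Cc)$ is essentially \emph{the statement you are trying to prove}, so as phrased this is circular unless you carry out the $\Sp$-tensoring construction in full, and that requires real work when $\mathcal D$ is not presentable (the standard universal property in \cite{HA}*{Corollary~1.4.4.5} needs presentability). The paper instead constructs the right adjoint lift directly: writing $\Sp(\Cc(B))$ as the inverse limit of the tower $\cdots\xrightarrow{\Omega}\Cc(B)_*\xrightarrow{\Omega}\Cc(B)_*$, one defines $G_\infty$ out of $\mathcal D(B)$ by the compatible family $G_i := G\Sigma^i$, observes each $G_i$ is a right adjoint (since $\Sigma^i$ is invertible by stability of $\mathcal D(B)$), and concludes $G_\infty$ is a right adjoint by citing a theorem that limits of right adjoints along right adjoints are right adjoints (\cite{descent-lim}*{Theorem B}). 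The parametrized statement is then obtained by checking the Beck--Chevalley condition via compatibility of mates, not by showing $\widetilde H$ is $T$-limit-preserving and applying an adjoint functor theorem (which is unavailable since $\mathcal D$ is only locally small). Your observation that $\widetilde H$ preserves $T$-limits is correct but by itself does not produce a left adjoint in the non-presentable setting; the limit-of-right-adjoints trick is what does the work, and is worth internalizing. Your second alternative (Bousfield localization, mirroring \Cref{thm:cocomplete-univ-cmon} verbatim) is not what the paper does and would require checking that $\ul\Sp^{\fib}$ is compatible with accessible localizations in a way that is true but not developed in the text.
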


For the proof we will first consider the following non-parametrized version strengthening \cite{HA}*{Corollary~1.4.4.5}:

\begin{lemma}\label{lemma:non-param-cocompl-univ-sp}
	Let $\mathcal C$ be a presentable $\infty$-category and let $\mathcal D$ be cocomplete and stable. Then we have equivalences
	\begin{align*}
		\Fun^\textup{L}(\Sigma^\infty_+,\mathcal D)\colon \Fun^\textup{L}(\Sp(\mathcal C),\mathcal D) &\xrightarrow{\;\simeq\;} \Fun^\textup{L}(\mathcal C,\mathcal D)\\
		\Fun^\textup{R}(\mathcal D,\Omega^\infty)\colon \Fun^\textup{R}(\mathcal D,\Sp(\mathcal C)) &\xrightarrow{\;\simeq\;} \Fun^\textup{R}(\mathcal D,\mathcal C)
	\end{align*}
	of categories of left adjoint and categories of right adjoint functors, respectively.
 \end{lemma}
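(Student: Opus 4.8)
The plan is to reduce the statement to the case where $\mathcal D$ is presentable, which has already been established above, by writing a general cocomplete stable $\mathcal D$ as a filtered union of presentable subcategories.

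First I would reduce the second (right-adjoint) equivalence to the first. Since $\mathcal C$ and $\Sp(\mathcal C)$ are presentable and $\mathcal D$ is locally small, the adjoint functor theorem \cite{HTT}*{Corollary~5.5.2.9} shows that a functor out of $\mathcal C$ or out of $\Sp(\mathcal C)$ is a left adjoint precisely if it is cocontinuous, and that every right-adjoint functor $\mathcal D\to\mathcal C$ (resp.\ $\mathcal D\to\Sp(\mathcal C)$) is the right adjoint of such a functor. Passing a left adjoint to its right adjoint therefore identifies $\Fun^{\textup{R}}(\mathcal D,\mathcal C)\simeq\Fun^{\textup{L}}(\mathcal C,\mathcal D)^{\op}$ and $\Fun^{\textup{R}}(\mathcal D,\Sp(\mathcal C))\simeq\Fun^{\textup{L}}(\Sp(\mathcal C),\mathcal D)^{\op}$, under which postcomposition with $\Omega^\infty$ corresponds to precomposition with its left adjoint $\Sigma^\infty_+$; so it suffices to show that $\Fun^{\textup{L}}(\Sigma^\infty_+,\mathcal D)$ is an equivalence. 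When $\mathcal D$ is in addition presentable this is exactly \cref{thm:Stable_omnibus}, applied with $T$ the terminal $\infty$-category and $P=\core T$ (so that parametrized functor categories are ordinary functor categories, $\ul{\Sp}^{P}$ is the ordinary stabilization, and $\Sigma^\infty_+$ is the unit of the adjunction of \cref{lem:SpPresentableAdjunction}).

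To bootstrap to a general cocomplete stable (locally small) $\mathcal D$, I would use the standard fact that $\mathcal D$ is the filtered union of those full subcategories $\mathcal D_i\subseteq\mathcal D$ which are closed under small colimits and presentable; each such $\mathcal D_i$ is automatically stable, being a colimit-closed full subcategory of a stable $\infty$-category. As $\mathcal C$ and $\Sp(\mathcal C)$ are presentable, they are generated under small colimits by a small set of objects, so any cocontinuous functor out of either lands in some $\mathcal D_i$ and factors through it by a cocontinuous functor; since the inclusions $\mathcal D_i\hookrightarrow\mathcal D$ are fully faithful and cocontinuous, this gives $\Fun^{\textup{L}}(\mathcal E,\mathcal D)\simeq\colim_i\Fun^{\textup{L}}(\mathcal E,\mathcal D_i)$ for $\mathcal E\in\{\mathcal C,\Sp(\mathcal C)\}$. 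Feeding in the presentable case for each $\mathcal D_i$ and using naturality in $i$ of the equivalences $\Fun^{\textup{L}}(\Sp(\mathcal C),\mathcal D_i)\xrightarrow{\;\simeq\;}\Fun^{\textup{L}}(\mathcal C,\mathcal D_i)$ then yields the desired equivalence $\Fun^{\textup{L}}(\Sp(\mathcal C),\mathcal D)\simeq\Fun^{\textup{L}}(\mathcal C,\mathcal D)$.

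I expect this last step to be the main obstacle: making precise the decomposition of $\mathcal D$ into presentable colimit-closed subcategories (this is where local smallness of $\mathcal D$ is used) and justifying that a cocontinuous functor out of a presentable $\infty$-category factors through one of them. An alternative that sidesteps decomposing $\mathcal D$ is to exploit accessibility of $\mathcal C$ and $\Sp(\mathcal C)$: for a sufficiently large regular cardinal $\kappa$ one has $\Fun^{\textup{L}}(\mathcal C,\mathcal D)\simeq\Fun^{\kappa}(\mathcal C^{(\kappa)},\mathcal D)$, the functors preserving $\kappa$-small colimits, by \cite{HTT}*{Proposition~5.5.1.9}, and likewise for $\Sp(\mathcal C)$, with $\Sp(\mathcal C)^{(\kappa)}$ identified with the ``small'' stabilization $\colim\bigl((\mathcal C^{(\kappa)})_+\xrightarrow{\;\Sigma\;}(\mathcal C^{(\kappa)})_+\xrightarrow{\;\Sigma\;}\cdots\bigr)$; the claim then follows because $\Sigma$ acts invertibly on the stable category $\mathcal D$, the technical cost now being shifted to identifying the $\kappa$-compact objects of $\Sp(\mathcal C)$.
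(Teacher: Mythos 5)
You take a genuinely different route from the paper. The paper proves essential surjectivity on the right-adjoint side \emph{directly}: given a right adjoint $G\colon\mathcal D\to\mathcal C$, it reduces to pointed $\mathcal C$ and then observes that the functors $G_i:=G\Sigma^i$ assemble (via $\Omega G\simeq G\Omega$ and $\Sigma$ being invertible on stable $\mathcal D$) into a cone over the tower $\cdots\xrightarrow{\Omega}\mathcal C\xrightarrow{\Omega}\mathcal C$ whose limit is $\Sp(\mathcal C)$, producing $G_\infty\colon\mathcal D\to\Sp(\mathcal C)$ with $\Omega^\infty G_\infty\simeq G$; each $G_i$ is a right adjoint, hence so is the limit $G_\infty$ by a general result that a limit of right adjoints in $\Cat_\infty$ is again a right adjoint. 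This never requires decomposing $\mathcal D$.

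The genuine gap in your argument is the ``standard fact'' that a locally small cocomplete $\infty$-category $\mathcal D$ is a filtered union of presentable, colimit-closed full subcategories. I do not believe this is a theorem, and it is exactly where the accessibility obstruction lives. The candidate $\mathcal D_i$ is the colimit closure of a small $S_0\subseteq\mathcal D$, i.e.\ the essential image of the colimit extension $F\colon\PSh(S_0)\to\mathcal D$. Local smallness of $\mathcal D$ does give $F$ a right adjoint, but there is no reason the class of $F$-equivalences in $\PSh(S_0)$ should be of small generation, so the image need not be an accessible localization of $\PSh(S_0)$ and hence need not be presentable; this is precisely what separates ``locally small and cocomplete'' from ``presentable.'' A secondary issue is that a colimit-closed full subcategory of a stable $\infty$-category need not be closed under $\Omega=\Sigma^{-1}$, so even granting presentability you would still have to arrange stability of the $\mathcal D_i$ before invoking \cref{thm:Stable_omnibus}. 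Your alternative via $\kappa$-compact objects is closer in spirit to something provable, but the identification of $\Sp(\mathcal C)^{\kappa}$ with $\colim_\Sigma(\mathcal C^{\kappa})_+$ already fails up to idempotent completion for $\mathcal C=\Spc$, $\kappa=\omega$, and one would still need a ``small'' universal property of stabilization; the paper's tower argument sidesteps all of this.
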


\begin{proof}
		It suffices to prove the second statement. Since full faithfulness follows from the usual universal property of spectrification \cite{HA}*{Corollary~1.4.2.23}, it only remains to prove essential surjectivity, i.e.~for every right adjoint $G\colon\mathcal D\to\mathcal C$ we can find a \emph{right adjoint} $G_\infty\colon\mathcal D\to\Sp(\mathcal C)$ such that $\Omega^\infty G_\infty\simeq G$.

		For this we first observe that $G$ lifts to a functor $G_*\colon \mathcal D\simeq\mathcal D_*\to\mathcal C_*$ as $\mathcal D$ is pointed and $G$ preserves terminal objects; moreover, this is again a right adjoint functor by the dual of \cite{HTT}*{Proposition~5.2.5.1}. Replacing $\mathcal C$ by $\mathcal C_*$ if necessary, we may therefore assume without loss of generality that $\mathcal C$ is pointed.

		We now define $G_i\mathrel{:=} G\Sigma^i\colon\mathcal D\to\mathcal C$ for all $i\ge 0$. Then we have equivalences
		\begin{equation*}
			\Omega G_{i+1} = \Omega G\Sigma^{i+1}\simeq G\Omega\Sigma^{i+1}\simeq G\Sigma^i=G_i,
		\end{equation*}
		and so we get
		\begin{equation*}
			G_\infty\colon \mathcal D\to\Sp(\mathcal C)=\lim\big(\cdots\xrightarrow{\Omega}\mathcal C\xrightarrow\Omega\mathcal C\big)
		\end{equation*}
		with $\Omega^\infty G_\infty\simeq G_0=G$ by passing to limits. However, each $G_i$ for $i<\infty$ is a right adjoint (as $G$ is and since $\Sigma^i$ is even an equivalence by stability), whence so is the limit map $G_\infty$ by \cite{descent-lim}*{Theorem~B}.
\end{proof}

\begin{corollary}
	In the above situation, let $G\colon\mathcal D\to\Sp(\mathcal C)$ be an exact functor. Then $G$ admits a left adjoint if and only if $\Omega^\infty\circ G$ does.\qed
\end{corollary}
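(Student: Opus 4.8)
The plan is to deduce both implications by assembling the two equivalences already available — the $\Fun^{\textup{R}}$-equivalence of Lemma~\ref{lemma:non-param-cocompl-univ-sp} and the $\Fun^{\ex}/\Fun^{\lex}$-equivalence coming from the universal property of spectrification (Lemma~\ref{lem:FunintoSpfib} with $T=\ast$, equivalently \cite{HA}*{Corollary~1.4.2.23}) — into one commuting square of full subcategories of functor categories, and then reading off the desired statement as a short diagram chase.

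The forward direction is immediate. If $G$ admits a left adjoint $F\colon\Sp(\mathcal C)\to\mathcal D$, then $G$ is a right adjoint functor; since $\mathcal C$ is presentable, $\Omega^\infty\colon\Sp(\mathcal C)\to\mathcal C$ is a right adjoint (to $\Sigma^\infty_+$), and a composite of right adjoints is again a right adjoint, so $\Omega^\infty\circ G$ admits a left adjoint — explicitly $F\circ\Sigma^\infty_+$.

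For the converse I would record three facts and combine them. First, a right adjoint functor out of $\mathcal D$ preserves all limits, hence is exact when the target is $\Sp(\mathcal C)$ (stable) and left exact when the target is $\mathcal C$; thus $\Fun^{\textup{R}}(\mathcal D,\Sp(\mathcal C))$ is a full subcategory of $\Fun^{\ex}(\mathcal D,\Sp(\mathcal C))$ and $\Fun^{\textup{R}}(\mathcal D,\mathcal C)$ a full subcategory of $\Fun^{\lex}(\mathcal D,\mathcal C)$. Second, postcomposition with $\Omega^\infty$ is an equivalence $\Fun^{\ex}(\mathcal D,\Sp(\mathcal C))\xrightarrow{\ \sim\ }\Fun^{\lex}(\mathcal D,\mathcal C)$ by Lemma~\ref{lem:FunintoSpfib} (with $\mathcal D$ in the role of the stable category and $\mathcal C$ in the role of the left-exact one), using that $\mathcal D$ is stable so exactness and left exactness coincide for functors out of it. Third, that same functor $\Omega^\infty\circ(-)$ restricts — by composition of adjoints one way, and by the essential surjectivity in Lemma~\ref{lemma:non-param-cocompl-univ-sp} the other way — to the equivalence $\Fun^{\textup{R}}(\mathcal D,\Sp(\mathcal C))\xrightarrow{\ \sim\ }\Fun^{\textup{R}}(\mathcal D,\mathcal C)$ of Lemma~\ref{lemma:non-param-cocompl-univ-sp} (whose hypotheses — $\mathcal C$ presentable, $\mathcal D$ cocomplete and stable — are in force here). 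Since an equivalence that carries one full subcategory equivalently onto another detects membership in that subcategory, an exact functor $H\colon\mathcal D\to\Sp(\mathcal C)$ lies in $\Fun^{\textup{R}}$ if and only if $\Omega^\infty\circ H$ does. Applying this to $H=G$, which is exact by hypothesis, gives exactly the claim.

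I do not anticipate any real obstacle: the only point that needs (a line of) care is that the $\Fun^{\textup{R}}$-equivalence of Lemma~\ref{lemma:non-param-cocompl-univ-sp} and the $\Fun^{\ex}/\Fun^{\lex}$-equivalence are induced by one and the same functor $\Omega^\infty\circ(-)$, so that the square of inclusions commutes; everything else is formal. (Implicitly one also uses the standard convention that ``$G$ admits a left adjoint'' means ``$G$ is a right adjoint functor'', i.e.\ an object of $\Fun^{\textup{R}}$, which is what makes the full subcategory $\Fun^{\textup{R}}$ the relevant one.)
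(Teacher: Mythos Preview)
Your proof is correct and essentially the same as the paper's (implicit) argument: the paper marks this \qed as an immediate consequence of Lemma~\ref{lemma:non-param-cocompl-univ-sp}, and the deduction you spell out---combining the $\Fun^{\textup{R}}$-equivalence with the full faithfulness of $\Omega^\infty\circ(-)$ on $\Fun^{\ex}$ coming from \cite{HA}*{Corollary~1.4.2.23}---is exactly what is needed to conclude that the exact lift $G$ agrees with the right-adjoint lift provided by the lemma. Your square-of-full-subcategories formulation is just a clean repackaging of this.
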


\begin{proposition}
	Let $\mathcal C$ be a presentable $T$-$\infty$-category and let $\mathcal D$ be a $T$-cocomplete fiberwise stable $T$-$\infty$-category. Then we have equivalences
	\begin{align*}
		\ul\Fun^\textup{L}_T(\Sigma^\infty_+,\mathcal D)\colon\ul\Fun^\textup{L}_T(\ul\Sp^{\fib}(\mathcal C),\mathcal D)&\xrightarrow{\;\simeq\;} \ul\Fun^\textup{L}_T(\mathcal C,\mathcal D)\\
		\ul\Fun^\textup{R}_T(\mathcal D,\Omega^\infty)\colon\ul\Fun^\textup{R}_T(\mathcal D,\ul\Sp^{\fib}(\mathcal C))&\xrightarrow{\;\simeq\;} \ul\Fun^\textup{R}_T(\mathcal D,\mathcal C).
	\end{align*}
\end{proposition}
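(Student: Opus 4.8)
I will prove the second equivalence, concerning categories of right adjoint functors; the first then follows formally by passing to adjoints inside $\Cat_T$, exactly as in the proof of \Cref{lemma:non-param-cocompl-univ-sp}: sending a $T$-left-adjoint to its $T$-right-adjoint identifies $\ul\Fun^\textup{L}_T(\ul\Sp^{\fib}(\mathcal C),\mathcal D)$ with $\ul\Fun^\textup{R}_T(\mathcal D,\ul\Sp^{\fib}(\mathcal C))^{\op}$ and $\ul\Fun^\textup{L}_T(\mathcal C,\mathcal D)$ with $\ul\Fun^\textup{R}_T(\mathcal D,\mathcal C)^{\op}$, carrying precomposition with $\Sigma^\infty_+$ to postcomposition with $\Omega^\infty$ since $\Sigma^\infty_+\dashv\Omega^\infty$. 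Next I would reduce the second equivalence to an unparametrized statement: a $T$-functor is an equivalence if and only if it is an equivalence on every fibre, and the fibre of $\ul\Fun^\textup{R}_T(\mathcal D,\Omega^\infty)$ over $B\in T$ is, after replacing $T$ by $T_{/B}$ — which is harmless since $\pi_B^*$ preserves $T$-cocompleteness (\Cref{cor:cocontinuity-restr}) and fibrewise stability, sends presentable $T$-$\infty$-categories to presentable $T_{/B}$-$\infty$-categories, and commutes with $\ul\Sp^{\fib}$ — the underlying functor of $\infty$-categories $\Fun^\textup{R}_T(\mathcal D,\ul\Sp^{\fib}(\mathcal C))\to\Fun^\textup{R}_T(\mathcal D,\mathcal C)$ given by postcomposition with $\Omega^\infty$. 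So it suffices to show this last functor is an equivalence whenever $\mathcal C$ is presentable and $\mathcal D$ is $T$-cocomplete and fibrewise stable.

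For full faithfulness I would observe that $\Fun^\textup{R}_T(\mathcal D,\ul\Sp^{\fib}(\mathcal C))$ and $\Fun^\textup{R}_T(\mathcal D,\mathcal C)$ are full subcategories of $\Fun^\textup{ex}_T(\mathcal D,\ul\Sp^{\fib}(\mathcal C))$ and $\Fun^\textup{lex}_T(\mathcal D,\mathcal C)$ respectively (a right adjoint $T$-functor is exact into a fibrewise stable category and left exact in general, and being a right adjoint is a property), that \Cref{lem:FunintoSpfib} identifies these two larger categories via postcomposition with $\Omega^\infty$, and that restricting a fully faithful functor along a full subcategory of its source again yields a fully faithful functor. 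Since $\Omega^\infty$ is itself a $T$-right-adjoint when $\mathcal C$ is presentable, postcomposition with it sends $T$-right-adjoints to $T$-right-adjoints, so this fully faithful functor indeed takes values in $\Fun^\textup{R}_T(\mathcal D,\mathcal C)$.

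For essential surjectivity, given a $T$-right-adjoint $G\colon\mathcal D\to\mathcal C$ — in particular left exact — I would set $\hat G\coloneqq\ul\Sp^{\fib}(G)$, precomposed with the counit equivalence $\mathcal D\xrightarrow{\sim}\ul\Sp^{\fib}(\mathcal D)$ of \Cref{prop:SpfibRightAdjoint} (valid as $\mathcal D$ is fibrewise stable); naturality of the counit $\Omega^\infty$ gives $\Omega^\infty\hat G\simeq G\,\Omega^\infty_{\mathcal D}\simeq G$. It then remains to check that $\hat G$ is a $T$-right-adjoint, via the pointwise criterion \Cref{prop:pointwisecriterionadjoints}. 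Each fibre $\hat G(B)=\Sp(G(B))\colon\mathcal D(B)\to\Sp(\mathcal C(B))$ satisfies $\Omega^\infty\hat G(B)\simeq G(B)$, which is a right adjoint, so $\hat G(B)$ admits a left adjoint by the corollary to \Cref{lemma:non-param-cocompl-univ-sp} (applicable since $\mathcal C(B)$ is presentable and $\mathcal D(B)$ is cocomplete and stable). And the Beck–Chevalley map of $\hat G$ along a morphism $f\colon A\to B$ can be computed componentwise under the description of $\Sp(\mathcal C(B))$ as an inverse limit along loop functors, where it becomes the Beck–Chevalley map of $G$ whiskered with the $n$-fold suspension autoequivalences of the categories $\mathcal D(B)$; these commute with the exact restriction functors $f^*_{\mathcal D}$, so the Beck–Chevalley map of $\hat G$ is an equivalence because that of $G$ is.

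The main obstacle is exactly this essential-surjectivity step: making precise that the Beck–Chevalley conditions are preserved under fibrewise stabilization, together with the associated bookkeeping of transporting all the $T$-adjunction data through $\ul\Sp^{\fib}$ by means of \Cref{prop:pointwisecriterionadjoints}. Everything else, including the reduction to slices and the unparametrized input from \Cref{lemma:non-param-cocompl-univ-sp}, is formal once one knows that $\pi_B^*$ commutes with $\ul\Sp^{\fib}$ and preserves presentability and $T$-cocompleteness.
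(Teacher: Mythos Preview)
Your reduction to global sections by slicing, the full-faithfulness argument via \Cref{lem:FunintoSpfib}, the construction of the lift $\hat G$, and the appeal to the corollary of \Cref{lemma:non-param-cocompl-univ-sp} for pointwise left adjoints are all correct and match the paper's strategy (the paper abbreviates all of this as ``arguing as before'').

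The genuine gap is exactly where you flag it: the Beck--Chevalley verification. The map to be checked is a transformation
\[
\hat F(A)\,f^*_{\Sp\Cc}\;\Longrightarrow\; f^*_{\Dd}\,\hat F(B)
\]
between \emph{left adjoint} functors $\Sp(\Cc(B))\to\Dd(A)$. Left adjoints out of an inverse limit do not decompose along that limit, so there is no well-defined ``level $n$'' at which this becomes the Beck--Chevalley map of $G$ whiskered with $\Sigma^n$. What does decompose componentwise is the naturality square of $\hat G$ itself---but that square commutes by construction and is not what is at stake. The pointwise left adjoints $\hat F(B)$ are only characterised by a universal property and have no explicit description in terms of the $G_n=G\Sigma^n$; in particular they are not built as colimits of the $\Omega^n F$.

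The paper circumvents this as follows. In the pasting
\[
\begin{tikzcd}
\Dd(A)\arrow[r, "\hat G"] & \ul\Sp^{\fib}(\Cc)(A)\arrow[r, "\Omega^\infty"] & \Cc(A)\\
\Dd(B)\arrow[u, "f^*"]\arrow[r, "\hat G"'] & \ul\Sp^{\fib}(\Cc)(B)\arrow[u, "f^*"']\arrow[r, "\Omega^\infty"'] & \Cc(B)\arrow[u, "f^*"']
\end{tikzcd}
\]
both the right-hand square and the total rectangle have invertible mates, since $\Omega^\infty$ and $\Omega^\infty\hat G\simeq G$ are parametrized right adjoints. Compatibility of mates with pasting then shows that the mate of the left-hand square becomes an equivalence after precomposition with $\Sigma^\infty_+\colon\Cc(B)\to\Sp(\Cc(B))$. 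Now the first equivalence of \Cref{lemma:non-param-cocompl-univ-sp} says that precomposition with $\Sigma^\infty_+$ is an equivalence, hence conservative, on left-adjoint functors into the stable cocomplete $\Dd(A)$; so the mate is already an equivalence before precomposing. This is the missing idea: rather than analysing $\hat F$ directly, one tests the Beck--Chevalley map through $\Sigma^\infty_+$ and uses the non-parametrized universal property to conclude.
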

\begin{proof}
	Arguing as before, it suffices to show that any right adjoint $g\colon\mathcal D\to\mathcal C$ lifts to a \emph{right adjoint} $G\colon\mathcal D\to\ul\Sp^{\fib}(\mathcal C)$. However, by Lemma~\ref{lem:FunintoSpfib} there exists a fiberwise left exact functor $G$ lifting $g$, and by the previous corollary this admits a \emph{pointwise} left adjoint $F$; it only remains to show that for every $t\colon A\to B$ in $T$ the Beck-Chevalley map $Ft^*\Rightarrow t^*F$ is an equivalence.

	However, for the diagram
	\begin{equation*}
	\begin{tikzcd}
		\mathcal D(A)\arrow[r, "G"] & \ul\Sp^{\fib}(\Cc)(A)\arrow[r, "\Omega^\infty"] & \ul\Cc(A)\\
		\mathcal D(B)\arrow[u, "t^*"]\arrow[r, "G"'] & \ul\Sp^{\fib}(\Cc)(B)\arrow[u, "t^*"']\arrow[r, "\Omega^\infty"'] & \Cc(B)\arrow[u, "t^*"']
	\end{tikzcd}
	\end{equation*}
	both the mate of the total square as well as the mate of the right hand square are equivalences as $g$ and $\Omega^\infty$ are parametrized right adjoints. By the compatiblity of mates with pasting we conclude that $Ft^*\Rightarrow t^*F$ becomes an equivalence after precomposition with $\Sigma^\infty_+\colon\ul\Cc(B)\to\ul\Sp^{\fib}(\Cc)(B)$. Therefore the claim follows by the first half of Lemma~\ref{lemma:non-param-cocompl-univ-sp}.
\end{proof}

\begin{proof}[Proof of Theorem~\ref{thm:cocomplete-univ-sp}]
	By the same reduction as in the semiadditive case (Theorem~\ref{thm:cocomplete-univ-cmon}), we only have to construct for each given $X\in\Gamma(\Dd)$ a left adjoint functor $F\colon\ul\Sp^P_T\to\mathcal D$ with $F(\Sigma^\infty_+(1))\simeq X$.

	To this end, we simply observe that Theorem~\ref{thm:cocomplete-univ-cmon} provides us with a left adjoint $f\colon\ul\CMon^P_T\to \mathcal D$ with $f(\mathbb P(1))\simeq X$, and by the previous proposition
	$f$ factors as
	\begin{equation*}
		\ul\CMon^P_T\xrightarrow{\Sigma^\infty} \ul\Sp^{\fib}(\ul\CMon^P_T)=\ul\Sp^P_T\xrightarrow{F}\mathcal D
	\end{equation*}
	for some left adjoint $F$, which is then the desired functor.
\end{proof}

\begin{corollary}\label{cor:cocomplete-univ-sp-relative}
	Let $\mathcal S$ be a $T$-$\infty$-category equivalent to $\ul\Spc_T$ and let $\mathcal D$ be a locally small $T$-cocomplete $P$-stable $T$-$\infty$-category. Then we have an equivalence
	\begin{equation*}
		\ul\Fun^\textup{L}_T(\Sigma^\infty_+,\mathcal D)\colon\ul\Fun^\textup{L}_T(\ul\Sp^P(\mathcal S),\mathcal D)\xrightarrow{\;\simeq\;}\ul\Fun^\textup{L}_T(\mathcal S,\mathcal D).\qednow
	\end{equation*}
\end{corollary}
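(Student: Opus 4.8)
The plan is to deduce this formally from two universal properties already established: the universal property of $\ul\Spc_T$ as the free $T$-cocomplete $T$-$\infty$-category on one generator (Corollary~\ref{cor:Universal_Property_T_Spaces}), and the $T$-cocomplete universal property of the $P$-stabilization $\ul\Sp^P_T$ (Theorem~\ref{thm:cocomplete-univ-sp}); the argument is the $P$-stable analogue of the one behind Corollary~\ref{cor:cocomplete-univ-cmon-relative}.

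First I would dispatch the bookkeeping. Since $\mathcal S\simeq\ul\Spc_T$, it is presentable (Example~\ref{ex:T_Spaces_Presentable}), hence in particular $T$-complete and so admits finite $P$-limits; thus the $P$-stabilization $\ul\Sp^P(\mathcal S)$ is defined and $P$-stable (Lemma~\ref{lem:SpfibAgainPSemi}), and its left adjoint $\Sigma^\infty_+\colon\mathcal S\to\ul\Sp^P(\mathcal S)$ exists (Lemma~\ref{lem:SpPresentableAdjunction}). Transporting a chosen equivalence $\Phi\colon\mathcal S\xrightarrow{\sim}\ul\Spc_T$ through the functor $\ul\Sp^P$ yields an equivalence $\ul\Sp^P(\Phi)\colon\ul\Sp^P(\mathcal S)\xrightarrow{\sim}\ul\Sp^P_T$; by naturality of the unit it is compatible with $\Sigma^\infty_+$, and since any equivalence of $T$-$\infty$-categories preserves the $T$-final object, it carries the object $\Sigma^\infty_+(*)\in\Gamma(\ul\Sp^P(\mathcal S))$ to $\Sigma^\infty_+(*)\in\Gamma(\ul\Sp^P_T)$.

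Next I would form the triangle
\[
\begin{tikzcd}[column sep=huge]
\ul\Fun^\textup{L}_T(\ul\Sp^P(\mathcal S),\mathcal D)\arrow[r,"\ul\Fun^\textup{L}_T(\Sigma^\infty_+,\mathcal D)"]\arrow[dr,"\ev_{\Sigma^\infty_+(*)}"'] & \ul\Fun^\textup{L}_T(\mathcal S,\mathcal D)\arrow[d,"\ev_*"]\\
& \mathcal D
\end{tikzcd}
\]
and check that it commutes: for a $T$-left adjoint $F\colon\ul\Sp^P(\mathcal S)\to\mathcal D$, precomposing with $\Sigma^\infty_+$ and then evaluating at the $T$-functor $\ul 1\to\mathcal S$ picking out the $T$-final object is the same as evaluating $F$ at the composite $\ul 1\to\mathcal S\xrightarrow{\Sigma^\infty_+}\ul\Sp^P(\mathcal S)$, which is precisely the $T$-functor picking out $\Sigma^\infty_+(*)$. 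This identification is natural and passes to all slices $T_{/B}$, so the triangle commutes as a diagram of $T$-$\infty$-categories.

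Finally, the right-hand leg $\ev_*$ is an equivalence by Corollary~\ref{cor:Universal_Property_T_Spaces} applied to the $T$-cocomplete $T$-$\infty$-category $\mathcal D$ (transported along $\Phi$), and the left-hand leg $\ev_{\Sigma^\infty_+(*)}$ is an equivalence by Theorem~\ref{thm:cocomplete-univ-sp} applied to $\mathcal D$ (which is locally small, $T$-cocomplete and $P$-stable by hypothesis), transported along $\ul\Sp^P(\Phi)$ using the identification of generators from the first step. By the two-out-of-three property, $\ul\Fun^\textup{L}_T(\Sigma^\infty_+,\mathcal D)$ is an equivalence. The argument is entirely formal; the only point demanding any care is the transport-of-structure step ensuring that $\Sigma^\infty_+$ and the generator match under the equivalence $\mathcal S\simeq\ul\Spc_T$, which is routine since equivalences preserve $T$-final objects and $\Sigma^\infty_+$ is characterized by a natural adjunction.
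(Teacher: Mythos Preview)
Your proof is correct and follows exactly the approach the paper intends: the corollary is stated with no explicit proof (it carries a \qednow), as it is immediate from combining Theorem~\ref{thm:cocomplete-univ-sp} with Corollary~\ref{cor:Universal_Property_T_Spaces} via the commuting triangle you describe, after transporting along the equivalence $\mathcal S\simeq\ul\Spc_T$. You have simply spelled out the routine details the paper leaves implicit.
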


\section{The universal property of global spectra}
\label{sec:UniversalPropertyGlobalSpectra}
In this section, we will prove the main result of this article: an interpretation of the global $\infty$-category of global spectra, defined via certain localizations of symmetric $G$-spectra generalizing \cites{schwede2018global,hausmann-global}, in terms of the abstract stabilization procedure we have described in the previous section.

\subsection{Stable \texorpdfstring{\for{toc}{$G$}\except{toc}{$\bm G$}}{G}-global homotopy theory}
\label{subsec:GGlobalSpectra}
We start by recalling the $\infty$-category of $G$-global spectra for a finite group $G$, and then show how these assemble for varying $G$ into a global $\infty$-category $\ul\mySp^\text{gl}$.

\begin{definition}
	We write $\cat{Spectra}$ for the category of \emph{symmetric spectra} in the sense of \cite{hss}*{Definition~1.2.2}. We will as usual evaluate symmetric spectra more generally at all finite sets (and not only at the standard sets $\{1,\dots,n\}$ for $n\ge0$), see~e.g.~\cite{hausmann-equivariant}*{2.4}.

	We write $\cat{$\bm G$-Spectra}$ for the category of $G$-objects in $\cat{Spectra}$ and call its objects \emph{(symmetric) $G$-spectra}.
\end{definition}

For a finite group $G$, we refer the reader to \cite{hausmann-equivariant}*{Definition~2.35} for the definition of \emph{$G$-stable equivalences} of symmetric $G$-spectra, to which we will refer as \emph{$G$-weak equivalences} below.

\begin{definition}
	Let $G$ be a finite group and let $f\colon X\to Y$ be a map of symmetric $G$-spectra. We call $f$ a \emph{$G$-global weak equivalence} if $\phi^*f$ is an $H$-weak equivalence for every group homomorphism $\phi\colon H\to G$ (not necessarily injective).
\end{definition}

\begin{theorem}[See~\cite{g-global}*{Proposition~3.1.20 and Theorem~3.1.41}]\label{thm:spectra-global-model-structure}
	There is a unique (combinatorial) model structure on $\cat{$\bm G$-Spectra}$ with
	\begin{itemize}
		\item weak equivalences the $G$-global weak equivalences \emph{and}
		\item acyclic fibrations those maps $f$ such that $f(A)^\phi$ is an acyclic Kan fibration for all finite sets $A$, all $H\subset\Sigma_A$, and all $\phi\colon H\to G$.
	\end{itemize}
	We call this the \emph{projective $G$-global model structure}.\qed
\end{theorem}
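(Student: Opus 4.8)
The uniqueness in the theorem is formal: a model structure is determined by its weak equivalences together with its acyclic fibrations, since the cofibrations are then forced to be the maps with the left lifting property against the acyclic fibrations and the fibrations are the maps with the right lifting property against the acyclic cofibrations. So the content is the \emph{existence} of a combinatorial model structure with the prescribed acyclic fibrations whose weak equivalences are exactly the $G$-global weak equivalences. The plan is to build it in three steps: first construct a \emph{level} model structure by transfer, then left Bousfield localize to impose an $\Omega$-spectrum condition, and finally identify the resulting weak equivalences with the $G$-global weak equivalences. The description of the acyclic fibrations will then be automatic, because left Bousfield localization does not alter the cofibrations and hence does not alter the acyclic fibrations.

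\textbf{Step 1: the level model structure.} A symmetric $G$-spectrum is a sequence of $(\Sigma_A\times G)$-simplicial sets $X(A)$, indexed by finite sets $A$, with equivariant structure maps, so passing to the families $\big(X(A)\big)_A$ defines a right adjoint from $\cat{$\bm G$-Spectra}$ to the product over all $A$ of the categories of $(\Sigma_A\times G)$-simplicial sets, with left adjoint the family of free functors $F_A$. Equip each factor with the $(\Sigma_A\times G)$-equivariant model structure on simplicial sets whose acyclic fibrations are the maps $f$ with $f^{\Gamma_{H,\phi}}$ an acyclic Kan fibration for all $H\subset\Sigma_A$ and $\phi\colon H\to G$ (a ``graph-subgroup'' analogue of the $G$-global model structure recalled in \Cref{subsec:reminder-global-spaces}), and transfer the product model structure along this adjunction. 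One checks the hypotheses of the transfer theorem: combinatoriality is clear, the right adjoint preserves filtered colimits, and the acyclicity condition — that relative $\{F_A j\}$-cell complexes are level weak equivalences for $j$ a generating acyclic cofibration — follows from the explicit ``wedge of shifts'' formula for the composites $\mathrm{Ev}_B\circ F_A$ together with the stability of the relevant equivariant weak equivalences under inductions, smashing with spheres, and transfinite composition. This produces the \emph{$G$-global level model structure}, whose acyclic fibrations, by the transfer description, are the maps $f$ with each $\mathrm{Ev}_A f$ an acyclic fibration in the base — i.e.\ exactly the maps with $f(A)^{\Gamma_{H,\phi}}$ an acyclic Kan fibration for all $A$, all $H\subset\Sigma_A$ and all $\phi\colon H\to G$, which is the class named in the theorem.

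\textbf{Step 2: stabilization.} Left Bousfield localize the level model structure at the set of stabilization maps $F_{A\sqcup*}\big(\mathbb S^1\wedge\blank\big)\to F_A(\blank)$ evaluated on a set of generating cofibrant objects, whose local objects will be the $\Omega$-spectra. Existence of this localization of a combinatorial, left proper model category is standard; left properness I would obtain by comparison with the levelwise injective model structure (same weak equivalences, all objects cofibrant, hence left proper), using that projective level cofibrations are levelwise injective. Call the result the projective $G$-global stable model structure. Since left Bousfield localization keeps the cofibrations, it keeps the acyclic fibrations, so these are precisely the level acyclic fibrations computed in Step~1, as required.

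\textbf{Step 3 (the main obstacle): the weak equivalences.} It remains to show that the $S$-local equivalences of the localization are exactly the $G$-global weak equivalences. One first characterizes the local (stably fibrant) objects as the level-fibrant symmetric $G$-spectra $X$ for which the adjoint structure maps $X(A)\to\Omega X(A\sqcup*)$ are $(\Sigma_A\times G)$-equivariant weak equivalences — the \emph{$G$-global $\Omega$-spectra} — by a generators-versus-$\Omega$-spectra detection argument. Then, following Hausmann's treatment of equivariant symmetric spectra \cite{hausmann-equivariant} in its $G$-global refinement, one introduces $G$-global stable homotopy groups (via $\Gamma_{H,\phi}$-fixed points and a telescope construction), proves that a map is an $S$-local equivalence iff it induces isomorphisms on all of them, and matches this with the condition that $\phi^*f$ be an $H$-stable equivalence for every $\phi\colon H\to G$, using the corresponding non-global characterization from \cite{hausmann-equivariant}. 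The verifications underlying this — that the $G$-global stable equivalences satisfy two-out-of-three, are closed under the relevant filtered colimits and cofiber sequences, and have the $\Omega$-spectra as local objects — constitute the genuine technical work and are exactly what \cite{g-global}*{Section~3.1} is devoted to; everything before Step~3 is a formal application of the transfer and Bousfield-localization machinery.
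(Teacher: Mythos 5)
The paper gives no proof of this statement: it is imported directly from \cite{g-global}*{Proposition~3.1.20 and Theorem~3.1.41}, and the theorem is stated as a citation (which is why it carries no proof environment in the source). There is therefore no in-paper argument to compare your proposal against.

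That said, your three-step outline — transfer to obtain a $G$-global level model structure, left Bousfield localize at the stabilization maps, then identify the local equivalences with the $G$-global weak equivalences — is the standard architecture and matches how the result is established in the cited source. The uniqueness reduction is correct: acyclic fibrations determine cofibrations by lifting, and weak equivalences together with cofibrations determine the model structure, so prescribing both classes pins down at most one model structure. The observation that left Bousfield localization leaves the cofibrations, and hence the acyclic fibrations, unchanged is precisely why the class named in the theorem survives the passage from the level to the stable model structure. You correctly isolate Step 3 — verifying that the $S$-local equivalences are exactly the maps $f$ for which $\phi^*f$ is an $H$-stable equivalence for every $\phi\colon H\to G$ — as the substantive technical content. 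Note, however, that as written your proposal defers this step entirely to \cite{g-global}*{Section~3.1} and \cite{hausmann-equivariant} rather than carrying it out, so what you have is an outline of the argument rather than a self-contained proof. That is acceptable here only because the paper itself treats the result as a black box; in a context where an actual proof were required, Step 3 is the place where all the work lives and where the claimed description of the fibrant objects, the compatibility of equivariant homotopy groups with filtered colimits and cofiber sequences, and the detection property of the generators would all need to be verified.
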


\begin{remark}
	For $G=1$ the above was first studied by Hausmann \cite{hausmann-global}, who also exhibited it as a Bousfield localization of Schwede's \emph{global orthogonal spectra} \cite{schwede2018global}*{4.1} at certain `$\mathcal Fin$-global weak equivalences,' see \cite{hausmann-global}*{Theorem~5.3}.
\end{remark}

\begin{lemma}[See~\cite{g-global}*{Lemma~3.1.49}]\label{lemma:alpha-star-rQ-spectra}
	Let $\alpha\colon G\to H$ be a homomorphism. Then the adjunction
	\begin{equation*}
		\alpha_!\colon\cat{$\bm G$-Spectra}_{\projGgl{G}}\rightleftarrows\cat{$\bm{H}$-Spectra}_{\projGgl{H}} \noloc \alpha^*
	\end{equation*}
	is a Quillen adjunction with homotopical right adjoint.\qed
\end{lemma}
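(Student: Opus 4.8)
The plan is to verify the two assertions separately: homotopicality of $\alpha^*$ is essentially formal, while the Quillen adjunction is extracted from the explicit generators of the model structure of \cref{thm:spectra-global-model-structure}.

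\textbf{Homotopicality of $\alpha^*$.} Here I would start from the definition: a map $f\colon X\to Y$ of symmetric $H$-spectra is an $H$-global weak equivalence precisely when $\psi^*f$ is a $K$-weak equivalence for every group homomorphism $\psi\colon K\to H$. Given such an $f$ and a homomorphism $\phi\colon L\to G$, one has $\phi^*(\alpha^*f)=(\alpha\phi)^*f$, and since $\alpha\phi\colon L\to H$ is again a group homomorphism this is an $L$-weak equivalence; hence $\alpha^*f$ is a $G$-global weak equivalence. Thus $\alpha^*\colon\cat{$\bm H$-Spectra}\to\cat{$\bm G$-Spectra}$ is homotopical, which is the second assertion.

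\textbf{The Quillen adjunction.} For this I would use that, by \cref{thm:spectra-global-model-structure}, the projective $G$-global model structure is combinatorial, hence cofibrantly generated, and that \cite{g-global}*{Section~3.1} provides explicit generating cofibrations and generating trivial cofibrations built from the semifree symmetric $G$-spectra $F_{A,\phi}$ attached to pairs $(H'\subseteq\Sigma_A,\ \phi\colon H'\to G)$ of a subgroup of the symmetric group on a finite set $A$ together with a homomorphism into $G$: the generating cofibrations are of the form $F_{A,\phi}\smashp(\partial\Delta^n_+\hookrightarrow\Delta^n_+)$, and the generating trivial cofibrations come from the horn inclusions together with (mapping-cylinder replacements of) the stabilization maps $\zeta_{A,\phi}$ that impose the $\Omega$-spectrum condition. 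Since $\alpha_!$ is a left adjoint it commutes with colimits and with smashing with pointed simplicial sets, and it sends the free $G$-spectrum functor to the free $H$-spectrum functor (both being left adjoint to the underlying-spectrum functor, which is compatible with restriction); hence $\alpha_!F_{A,\phi}\cong F_{A,\alpha\phi}$, and $\alpha_!$ carries each generating cofibration, resp.\ generating trivial cofibration, for $G$ to the corresponding one for $H$, only postcomposing the indexing homomorphism with $\alpha$. As the cofibrations (resp.\ trivial cofibrations) of a cofibrantly generated model category are exactly the retracts of relative cell complexes on the generating cofibrations (resp.\ generating trivial cofibrations), and $\alpha_!$ preserves retracts and relative cell complexes, it follows that $\alpha_!$ preserves cofibrations and trivial cofibrations, so $\alpha_!$ is left Quillen and $(\alpha_!,\alpha^*)$ is a Quillen adjunction.

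\textbf{Main obstacle.} The one nontrivial point is the final matching step: $\alpha_!$ must be checked against the generators of the \emph{stable} (Bousfield-localized) model structure, in particular against the stabilization maps $\zeta_{A,\phi}$, since the fibrations and trivial cofibrations of a Bousfield localization have no elementary description. A cleaner but essentially equivalent route I would keep in mind is to first note that $(\alpha_!,\alpha^*)$ is a Quillen adjunction for the $G$-global and $H$-global \emph{level} model structures --- where the generators are the $F_{A,\phi}$ smashed with (trivial) cofibrations of pointed simplicial sets, which $\alpha_!$ visibly handles --- and then to descend to the Bousfield localizations by the standard criterion: it suffices that $\alpha^*$ send the fibrant objects of the $H$-global stable structure, namely the level-fibrant $H$-global $\Omega$-spectra, to local objects of the $G$-global stable structure. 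Level-fibrancy is preserved because $\alpha^*$ is level right Quillen, and the $\Omega$-spectrum condition is preserved by the same reindexing as for homotopicality, using $(\alpha^*X)(A)^{\phi}=X(A)^{\alpha\phi}$: the $G$-global $\Omega$-spectrum condition for $\alpha^*X$, tested against homomorphisms $\phi\colon L\to G$, reduces to the $H$-global $\Omega$-spectrum condition for $X$ tested against the $\alpha\phi\colon L\to H$. Either way the content is bookkeeping with the global-spectra generators of \cite{g-global}, and nothing deeper is involved.
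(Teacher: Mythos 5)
Your proposal is correct, and since the paper itself defers the proof to the cited reference (\cite{g-global}*{Lemma~3.1.49}), what you have written is a reasonable reconstruction of the expected argument.

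The homotopicality of $\alpha^*$ is handled correctly and is indeed formal: the $G$-global weak equivalences are \emph{defined} by restriction along all homomorphisms into $G$, and composing with $\alpha$ reindexes this family into the $H$-global one.

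For the Quillen adjunction, your ``cleaner'' second route is the one that actually closes the argument, and you are right to be suspicious of the first. In a left Bousfield localization the generating trivial cofibrations are produced by a small-object argument and do \emph{not}, in general, coincide with the original generating trivial cofibrations together with (cylinder replacements of) the localizing maps; so the claim that $\alpha_!$ ``carries each generating trivial cofibration to the corresponding one'' is not well-posed as stated, unless one has an explicit set in hand. (For $G$-symmetric spectra Hausmann does produce explicit generating trivial cofibrations, and \cite{g-global} likely proceeds similarly, so a version of the direct matching can be made to work — but it requires quoting that specific set rather than a general principle.) By contrast, the argument you give in the last paragraph is robust: establish the level Quillen pair from the generating cofibrations (where the identification $\alpha_!F_{A,\phi}\cong F_{A,\alpha\phi}$ suffices), then descend through the localization by checking that $\alpha^*$ takes $H$-globally fibrant $\Omega$-spectra to $G$-globally local objects, which again reduces to the reindexing $(\alpha^*X)(A)^\phi=X(A)^{\alpha\phi}$. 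That is the intended shape of the proof.
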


There are also \emph{injective} analogues of the above model structures that will become useful below:

\begin{theorem}[See \cite{g-global}*{Corollary~3.1.46}]\label{thm:spectra-injective-global-model-structure}
	There is a unique (combinatorial) model structure on $\cat{$\bm G$-Spectra}$ with
	\begin{itemize}
		\item weak equivalences the $G$-global weak equivalences \emph{and}
		\item cofibrations the injective cofibrations (i.e.~levelwise injections).
	\end{itemize}
	We call this the \emph{injective $G$-global model structure}.\qed
\end{theorem}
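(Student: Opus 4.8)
The theorem is \cite{g-global}*{Corollary~3.1.46}; for completeness I outline how one would prove it directly. Uniqueness is formal, a model structure being determined by its cofibrations and weak equivalences, so the content is existence. The plan is to invoke J.~Smith's recognition theorem for combinatorial model structures: it suffices to produce a set $I$ of morphisms of $\cat{$\bm G$-Spectra}$ generating (under pushout, transfinite composition and retract) exactly the injective cofibrations, together with the class $W$ of $G$-global weak equivalences, and to check that (a) $W$ is an accessible and accessibly embedded subcategory of the arrow category, closed under retracts and satisfying two-out-of-three, (b) every map with the right lifting property against $I$ lies in $W$, and (c) the class $\mathrm{cof}(I)\cap W$ of acyclic injective cofibrations is closed under pushouts and transfinite composition. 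Smith's theorem then produces a combinatorial model structure whose cofibrations are precisely the injective cofibrations and whose weak equivalences are precisely the $G$-global weak equivalences.

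First I would record that $\cat{$\bm G$-Spectra}$ is locally presentable — it is a category of diagrams of pointed simplicial sets over a small ordinary category — so its monomorphisms are generated by a set $I$ (boundary inclusions of simplices at each spectrum level, smashed with the relevant free $G$- and $\Sigma_n$-cells); in particular the generating cofibrations of the projective $G$-global model structure of \Cref{thm:spectra-global-model-structure} are monomorphisms. Condition (b) is then immediate: a map with the right lifting property against all of $I$ has in particular the right lifting property against the projective generating cofibrations, hence is a projective acyclic fibration, i.e.\ a map $f$ with $f(A)^\phi$ an acyclic Kan fibration for all finite $A$, $H\subset\Sigma_A$, $\phi\colon H\to G$, and is therefore a $G$-global weak equivalence. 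For condition (c), monomorphisms are stable under transfinite composition and — pushouts being connected colimits, computed on underlying diagrams of simplicial sets, where monomorphisms are pushout-stable — also under pushout; that pushouts along monomorphisms and transfinite composites preserve $G$-global weak equivalences follows from left properness of the projective $G$-global model structure together with closure of $W$ under filtered colimits, the latter inherited from the corresponding properties of $H$-stable equivalences of symmetric $H$-spectra.

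The main obstacle is condition (a), specifically the accessibility of $W$. Unwinding the definition, $W$ is the intersection, over all finite groups $H$ and all homomorphisms $\phi\colon H\to G$, of the preimages under the (colimit-preserving, hence accessible) restriction functors $\phi^*$ of the classes of $H$-stable equivalences, which are themselves accessible, being closed under filtered colimits and detected by the fixed-point functors underlying \cite{hausmann-equivariant}*{Definition~2.35}. A priori this is an intersection over a proper class of conditions, so the key point — carried out in \cite{g-global} — is to show that it suffices to test $\phi^*f$ on a small set of representatives $(H,\phi)$, after which the intersection is accessible and accessibly embedded; two-out-of-three and retract-closure are then automatic. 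An alternative to Smith's theorem is to realize the injective $G$-global model structure as a (left Bousfield) localization of the pointwise injective level model structure, or to apply a general existence result for such injective/mixed model structures as used in \cite{HA}; along either route, left properness of the projective structure and pushout-stability of the acyclic injective cofibrations remain the crux.
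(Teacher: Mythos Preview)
The paper does not prove this statement: it is quoted from \cite{g-global}*{Corollary~3.1.46} and marked with \qed, so there is nothing in the paper to compare your argument against. Your outline via Smith's recognition theorem is a reasonable sketch of how such injective model structures are produced, and your alternative suggestion of obtaining it as a Bousfield localization of an injective \emph{level} model structure is in fact closer to how the cited reference proceeds.

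Two small imprecisions are worth flagging. First, the concern about a ``proper class'' of conditions in~(a) is overstated: up to isomorphism there are only countably many finite groups $H$, hence only a set of pairs $(H,\phi)$, and accessibility of $W$ follows once one knows that the $H$-stable equivalences form an accessible class (which they do, being the weak equivalences of a combinatorial model structure). Second, your justification of~(c) via ``left properness of the projective $G$-global model structure'' does not quite say what you need: left properness concerns pushouts along \emph{projective} cofibrations, whereas condition~(c) requires stability of $G$-global weak equivalences under pushout along arbitrary levelwise injections, which is a strictly stronger statement. This stronger fact does hold---levelwise injections behave as $h$-cofibrations and the relevant cofiber/long exact sequence arguments go through, cf.\ \cite{g-global}*{Lemma~1.1.14} for the unstable analogue cited elsewhere in the paper---but it is not a consequence of projective left properness alone.
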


\subsubsection{Relation to unstable $G$-global homotopy theory}
Passing to pointwise localizations as before, we get a global $\infty$-category $\ul\mySp^\text{gl}$ such that $\ul\mySp^\text{gl}(G)=\mySp^\text{gl}_G$ is the $\infty$-category of $G$-global spectra, with functoriality given via restriction. Let us now relate this to the unstable models from \ref{subsec:reminder-global-spaces}.

\begin{construction}
	Let $X$ be an $\mathcal I$-space (or, more generally, an $I$-space). Then we define its \emph{unbased suspension spectrum} $\Sigma^\bullet_+X$, cf.~\cite{sagave-schlichtkrull}*{discussion before Proposition 3.19}, via
	\begin{equation*}
		(\Sigma^\bullet_+X)(A)\mathrel{:=}S^A\smashp X(A)_+=\Sigma^A_+ X(A)
	\end{equation*}
	with the diagonal $\Sigma_A$-action and with structure maps given by
	\begin{align*}
		S^A\smashp (\Sigma^\bullet_+X)(B)&=S^A\smashp \big(S^B\smashp X(B)_+\big)\cong S^{A\amalg B}\smashp X(B)_+\\
		&\xrightarrow{S^{A\amalg B}\smashp X(\textup{incl})} S^{A\amalg B}\smashp X(A\amalg B)_+=(\Sigma^\bullet_+X)(A\amalg B)
	\end{align*}
	where the unlabelled isomorphism is the canonical one.

	This has a right adjoint $\Omega^\bullet$ (e.g.~by the Special Adjoint Functor Theorem); for any finite group $G$, we get an induced adjunction $\cat{$\bm G$-$\bm{\mathcal I}$-SSet}\rightleftarrows\cat{$\bm G$-Spectra}$ by pulling through the $G$-actions, which we again denote by $\Sigma^\bullet_+\dashv\Omega^\bullet$.
\end{construction}

\begin{warn}
	Beware that \cite{g-global} uses different (more elaborate) notation for the right adjoint, reserving the above for the right adjoint of $\Sigma^\bullet_+\colon\cat{$\bm G$-$\bm I$-SSet}\to \cat{$\bm G$-Spectra}$. However, as the latter adjoint will play no role here, we have decided to use the above, simpler notation.
\end{warn}

\begin{lemma}[See \cite{g-global}*{Proposition~3.2.2, Corollary~3.2.6, and Remark~3.2.7}]\label{lemma:sigma-qa}
	The above functor $\Sigma^\bullet_+$ preserves $G$-global weak equivalences and it is part of a Quillen adjunction
	\begin{equation*}
		\Sigma^\bullet_+\colon\cat{$\bm G$-$\bm{\mathcal I}$-SSet}_\textup{G-gl}\rightleftarrows\cat{$\bm G$-Spectra}_{\projGgl{G}} \noloc\Omega^\bullet.\qednow
	\end{equation*}
\end{lemma}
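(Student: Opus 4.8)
The plan is to prove the two assertions separately: first, that $\Sigma^\bullet_+$ is homotopical, i.e.\ sends $G$-global weak equivalences of $\cat{$\bm G$-$\bm{\mathcal I}$-SSet}$ to $G$-global weak equivalences of $\cat{$\bm G$-Spectra}$; and second, that $\Sigma^\bullet_+$ preserves cofibrations. Granting these, left Quillen-ness is automatic: a trivial cofibration is in particular a weak equivalence, so by homotopicality its image is a weak equivalence, and by cofibration-preservation its image is a cofibration, hence a trivial cofibration; together with preservation of cofibrations this is exactly the left Quillen condition. Since both model structures are cofibrantly generated, it is moreover enough to check cofibration-preservation on a set of generating cofibrations. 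Note that the adjunction $\Sigma^\bullet_+\dashv\Omega^\bullet$ itself is already in hand from the construction, so nothing further is needed there.

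For preservation of cofibrations: $\Sigma^\bullet_+$ is a left adjoint, hence colimit-preserving, so it suffices to check it on the generating cofibrations of the $G$-global model structure on $\cat{$\bm G$-$\bm{\mathcal I}$-SSet}$ (\Cref{thm:global-model-I}), which come from the projective model structure on enriched functors $\mathcal I\to\cat{$\bm G$-SSet}$ and have the form (free $\mathcal I$-$G$-cell) smashed with a boundary inclusion $\partial\Delta^n\hookrightarrow\Delta^n$ and an appropriate $G$-orbit. Applying $\Sigma^\bullet_+=S^A\smashp(-)_+$ to such a generator produces a free symmetric $G$-spectrum on a $(G\times\Sigma_A)$-cell of the same shape, which is a cofibration for the projective $G$-global model structure (\Cref{thm:spectra-global-model-structure}) by the explicit description of its generating cofibrations in \cite{g-global}*{Section~3.1}. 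This step is essentially bookkeeping: one matches the indexing category $\mathcal I$ and the simplicial enrichment on the two sides and tracks how a representable $\mathcal I$-diagram is turned into a free symmetric spectrum; no homotopy theory enters.

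The real content is homotopicality, and here the first move is to reduce to a statement not referring to the target group. By definition a map $g$ of symmetric $G$-spectra is a $G$-global weak equivalence iff $\phi^*g$ is an $H$-stable equivalence for every homomorphism $\phi\colon H\to G$; since $\Sigma^\bullet_+$ is built levelwise from $X(A)\mapsto S^A\smashp X(A)_+$ and merely pulls through the group action, one has $\phi^*\Sigma^\bullet_+\cong\Sigma^\bullet_+\phi^*$, and by homotopicality of restriction on the unstable side (\Cref{lemma:alpha-star-I}) the map $\phi^*f$ is an $H$-global weak equivalence of $\mathcal I$-$H$-simplicial sets whenever $f$ is a $G$-global weak equivalence. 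Thus it is enough to show, for each finite group $K$, that $\Sigma^\bullet_+$ carries $K$-global weak equivalences of $\mathcal I$-$K$-simplicial sets to $K$-stable equivalences of symmetric $K$-spectra. For this one invokes the detection of $K$-global (in particular $K$-stable) equivalences by genuine equivariant homotopy groups: after passing to semistable representatives, a map of symmetric $K$-spectra is a $K$-global weak equivalence iff it induces isomorphisms on the ``true'' homotopy groups $\pi_k^\psi$ for all $k$ and all $\psi$ from a universal subgroup of $\mathcal M$. The key computation is then a natural isomorphism expressing $\pi_k^\psi(\Sigma^\bullet_+X)$ as a filtered colimit, over the finite subsets $A\subset\omega$, of stable homotopy groups of the relevant fixed-point spaces of the simplicial sets $X(A)$. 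Since taking $\psi$-fixed points and stable homotopy groups commutes with this filtered colimit, and since a $K$-global weak equivalence of $\mathcal I$-$K$-simplicial sets is by definition one inducing weak homotopy equivalences on all fixed-point spaces of $X(\omega)=\colim_{A\subset\omega}X(A)$ (\Cref{thm:global-model-I}), the conclusion follows.

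I expect this homotopy-group computation to be the main obstacle: one must set up the genuine equivariant homotopy groups of $\Sigma^\bullet_+X$ correctly, verify the filtered-colimit formula, and reconcile the ``universal subgroup of $\mathcal M$'' indexing visible on the unstable side with the subgroups of the symmetric groups $\Sigma_A$ that a symmetric $K$-spectrum a priori records --- this is precisely why the construction is run over the thickened category $\mathcal I$ rather than over $I$, so that $X(\omega)$ acquires the $E\mathcal M$-action that the genuine homotopy groups of a $G$-global symmetric spectrum see. The details of this argument are carried out in \cite{g-global}*{Proposition~3.2.2}, together with the supporting material on true homotopy groups and semistability; the reduction steps and the cofibration bookkeeping above are then formal.
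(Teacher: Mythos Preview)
The paper gives no proof for this lemma: it is stated with a citation in the heading and ends immediately with a QED symbol. Your proposal therefore goes well beyond what the paper does, sketching the actual argument rather than importing it as a black box.

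Your outline is broadly sound --- reduce to homotopicality plus cofibration-preservation, check the latter on generators, and for the former use the compatibility $\phi^*\Sigma^\bullet_+\cong\Sigma^\bullet_+\phi^*$ together with homotopicality of restriction on the unstable side to reduce to a homotopy-group computation. One minor imprecision: after restricting along $\phi\colon H\to G$ you need an $H$-\emph{stable} equivalence of symmetric $H$-spectra, which is detected by the genuine homotopy groups $\pi_k^{H'}$ for subgroups $H'\leq H$, not by homomorphisms from universal subgroups of $\mathcal M$ (the latter would be the $H$-\emph{global} condition). The passage from $H$-global weak equivalences of $\mathcal I$-$H$-simplicial sets to these $\pi_k^{H'}$-isomorphisms is indeed the nontrivial computation, and since you defer it to the same reference the paper cites, this slip is harmless for the overall structure.
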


In particular, we get a global functor $\Sigma^\bullet_+\colon\ul\S^\text{gl}\to\ul\mySp^\text{gl}$, and this admits a pointwise adjoint $\cat{R}\Omega^\bullet$ as Quillen adjunctions induce adjunctions of $\infty$-categories. In fact we have:

\begin{proposition}\label{prop:looping-parametrized-ra}
	The global functor $\Sigma^\bullet_+\colon\ul\S^\textup{gl}\to\ul\mySp^\textup{gl}$ admits a parametrized right adjoint, given pointwise by the right derived functors $\cat{R}\Omega^\bullet$.
\end{proposition}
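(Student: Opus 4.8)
The plan is to invoke the pointwise criterion for parametrized adjoints, Proposition~\ref{prop:pointwisecriterionadjoints}. Its first condition holds by Lemma~\ref{lemma:sigma-qa}: for every finite group $G$ the Quillen adjunction $\Sigma^\bullet_+\dashv\Omega^\bullet$ induces an adjunction of $\infty$-categories $\Sigma^\bullet_+\colon\S^\textup{gl}_G\rightleftarrows\mySp^\textup{gl}_G\noloc\cat{R}\Omega^\bullet$, so that the fiberwise functor $\Sigma^\bullet_+(G)$ admits the right adjoint $\cat{R}\Omega^\bullet$. It thus remains to verify the Beck--Chevalley condition: for every group homomorphism $\alpha\colon H\to G$ the mate
\[
\alpha^*\circ\cat{R}\Omega^\bullet_G\implies\cat{R}\Omega^\bullet_H\circ\alpha^*
\]
of the naturality square of the global functor $\Sigma^\bullet_+$ is an equivalence.

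The key observation is that at the level of model categories the functors $\Sigma^\bullet_+$ and $\Omega^\bullet$ commute \emph{strictly} with restriction along $\alpha$, since they are defined by pulling the relevant constructions through the group actions; likewise the unit and counit of $\Sigma^\bullet_+\dashv\Omega^\bullet$ are strictly natural in $\alpha$. Combined with the fact that $\Sigma^\bullet_+$ (Lemma~\ref{lemma:sigma-qa}) and the restriction functors $\alpha^*$ on $\cat{$\bm G$-$\bm{\mathcal I}$-SSet}$ (Lemma~\ref{lemma:alpha-star-I}) and on $\cat{$\bm G$-Spectra}$ (Lemma~\ref{lemma:alpha-star-rQ-spectra}) are all homotopical, this will show that the \emph{point-set} Beck--Chevalley transformation $\alpha^*\Omega^\bullet\Rightarrow\Omega^\bullet\alpha^*$ is the identity. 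We will moreover use that, by Lemma~\ref{lemma:alpha-star-rQ-spectra}, restriction $\alpha^*\colon\cat{$\bm G$-Spectra}\to\cat{$\bm H$-Spectra}$ is right Quillen for the projective global model structures, hence sends projective $G$-global fibrant objects to projective $H$-global fibrant objects.

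Granting these inputs, the verification is a direct computation. Given a symmetric $G$-spectrum $X$, choose a fibrant replacement $j\colon X\to X^f$ in the projective $G$-global model structure, so that $\cat{R}\Omega^\bullet_G(X)\simeq\Omega^\bullet(X^f)$ and hence
\[
\alpha^*\,\cat{R}\Omega^\bullet_G(X)\simeq\alpha^*\Omega^\bullet(X^f)=\Omega^\bullet(\alpha^*X^f).
\]
On the other hand $\alpha^*X^f$ is projective $H$-global fibrant and $\alpha^*j$ is an $H$-global weak equivalence, so it is a fibrant replacement of $\alpha^*X$ and $\cat{R}\Omega^\bullet_H(\alpha^*X)\simeq\Omega^\bullet(\alpha^*X^f)$ as well; since $\Omega^\bullet$ is homotopical on fibrant objects, these two identifications agree up to coherent equivalence. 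One then checks that the resulting equivalence $\alpha^*\cat{R}\Omega^\bullet_G(X)\simeq\cat{R}\Omega^\bullet_H(\alpha^*X)$ is compatible with the units and counits of the (derived) adjunctions, i.e.\ that it is precisely the mate above; it is therefore an equivalence. Plugging this into Proposition~\ref{prop:pointwisecriterionadjoints} produces the parametrized right adjoint $\cat{R}\Omega^\bullet$ of $\Sigma^\bullet_+$.

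The main obstacle is exactly that last step: matching the \emph{derived} Beck--Chevalley transformation of the $\infty$-categorical square with the trivial point-set one. Because $\Omega^\bullet$ is only right Quillen, one cannot simply argue ``all functors are homotopical, hence the $\infty$-categorical mate is modeled by the $1$-categorical mate'' as in the proofs of Theorems~\ref{thm:global-spaces-comparison} and~\ref{thm:Gamma-presheaves}. Instead one should restrict attention to the full subcategories of projective global fibrant objects, on which $\Omega^\bullet$ \emph{is} homotopical and onto which $\alpha^*$ still restricts by the Quillen property, and run the mate comparison there; this is routine but requires some care in bookkeeping the fibrant replacements and the coherences relating them.
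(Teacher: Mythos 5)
Your proposal takes essentially the same approach as the paper: reduce to the pointwise criterion of Proposition~\ref{prop:pointwisecriterionadjoints}, observe that the point-set adjunction $\Sigma^\bullet_+\dashv\Omega^\bullet$ commutes strictly with restriction (so the uncooked Beck--Chevalley map is the identity), and then use that $\alpha^*$ is right Quillen and $\Omega^\bullet$ is homotopical on fibrants to conclude that the derived mate is an equivalence. The one place where you say ``one then checks'' and ``this is routine but requires some care'' is precisely the non-trivial content, and you have correctly diagnosed it as the crux: one must verify that the equivalence $\alpha^*\cat{R}\Omega^\bullet_G X\simeq\cat{R}\Omega^\bullet_H\alpha^*X$ you build from fibrant replacements really is the mate of the naturality square. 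The paper carries this out by representing the mate (for $X$ fibrant) as the composite $\alpha^*\Omega^\bullet X\xrightarrow{\eta}\Omega^\bullet P\Sigma^\bullet_+\alpha^*\Omega^\bullet X\to\Omega^\bullet P\alpha^*X$ and comparing it, via a small diagram built from naturality of $\iota\colon\id\Rightarrow P$, to $\Omega^\bullet\iota\colon\Omega^\bullet\alpha^*X\to\Omega^\bullet P\alpha^*X$, which is a weak equivalence by Ken Brown's lemma; two-out-of-three then finishes. Your write-up leaves that last comparison as an assertion, so as it stands it has a gap, but it is a gap in execution rather than in strategy: the ingredients you have assembled (strict commutation, homotopicality of $\Sigma^\bullet_+$ and $\alpha^*$, right-Quillen-ness of $\Omega^\bullet$ and $\alpha^*$) are exactly what the paper uses, and filling in the diagram is short once they are in place.
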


We will denote this right adjoint simply by $\cat{R}\Omega^\bullet$ again.

\begin{proof}
	As we already know that these form pointwise right adjoints, it only remains to verify the Beck-Chevalley condition, i.e.~that for every $\alpha\colon H\to G$ the canonical mate $\alpha^*\cat{R}\Omega^\bullet\Rightarrow\cat{R}\Omega^\bullet\alpha^*$ is an equivalence. This can be checked on the level of homotopy categories, for which we pick a fibrant replacement functor for the projective $H$-global model structure on $\cat{$\bm H$-Spectra}$, i.e.~an endofunctor $P$ taking values in projectively fibrant objects together with a natural transformation $\iota\colon\id\Rightarrow P$ that is levelwise an $H$-global weak equivalence. As $\Sigma^\bullet_+$ and $\alpha^*$ are homotopical (Lemma~\ref{lemma:sigma-qa} and Lemma~\ref{lemma:alpha-star-rQ-spectra}, respectively) and $\Omega^\bullet$ is right Quillen (Lemma~\ref{lemma:sigma-qa} again), the mate is then represented for any fibrant $G$-spectrum $X$ by the lower composite $\alpha^*\Omega^\bullet X\to\Omega^\bullet P\alpha^*X$ in the diagram
	\begin{equation*}
		\begin{tikzcd}
			\alpha^*\Omega^\bullet X\arrow[r, "\eta"] & \Omega^\bullet\Sigma^\bullet_+\alpha^*\Omega^\bullet X\arrow[d, "\iota"'] \arrow[r, equal] & \Omega^\bullet \alpha^*\Sigma^\bullet_+\Omega^\bullet X \arrow[r, "\epsilon"]\arrow[d,"\iota"{description}] & \Omega^\bullet\alpha^*X\arrow[d, "\iota"]\\
			& \Omega^\bullet P\Sigma^\bullet_+\alpha^*\Omega^\bullet X\arrow[r, equal] &\Omega^\bullet P\alpha^*\Sigma^\bullet_+\Omega^\bullet X\arrow[r, "\epsilon"'] & \Omega^\bullet P\alpha^*X
		\end{tikzcd}
	\end{equation*}
	in which the two squares commute simply by naturality. However, the top composite is simply the identity (as the adjunction was defined by pulling through the actions); on the other hand, $\iota\colon\alpha^*X\to P\alpha^*X$ is an $H$-global weak equivalence of fibrant objects ($\alpha^*$ being right Quillen), hence $\Omega^\bullet\iota\colon\Omega^\bullet \alpha^*X\to\Omega^\bullet P\alpha^*X$ is an $H$-global weak equivalence by Ken Brown's Lemma ($\Omega^\bullet$ being right Quillen). The claim now follows by $2$-out-of-$3$.
\end{proof}

\subsubsection{A t-structure} The model structures from Theorems~\ref{thm:spectra-global-model-structure} and~\ref{thm:spectra-injective-global-model-structure} are stable \cite{g-global}*{Proposition~3.1.48}, and so $\mySp^\text{gl}_G$ is a stable $\infty$-category. We will close this discussion by establishing a t-structure on it which generalizes Schwede's t-structure on the global stable homotopy category from \cite{schwede2018global}*{Theorem~4.4.9}. For this we first introduce:

\begin{construction}
	Let $H$ be a finite group, let $\phi\colon H\to G$ be a homomorphism, and let $k\in\mathbb Z$. If $X$ is any $G$-global spectrum, then the \emph{$k$-th $\phi$-equivariant homotopy group} $\pi^\phi_k(X)$ is the usual equivariant homotopy group $\pi_k^H(\phi^*X)$, i.e.~the hom set $[\Sigma^k\mathbb S,\phi^*X]$ in the $H$-equivariant stable homotopy category, with the group structure coming from semiadditivity.
\end{construction}

Equivalently (but more intrinsically), we can also describe $\pi^\phi_k(X)$ as the hom set $[\Sigma^{\bullet+k}_+ I(H,\blank)\times_\phi G,X]$ in the homotopy category of $\mySp^{\text{gl}}_G$, see~\cite{g-global}*{Corollary~3.3.4}.

\begin{theorem}\label{thm:g-global-t-structure}
	The stable $\infty$-category $\mySp^\textup{gl}_G$ is compactly generated by the objects $\Sigma^\bullet_+I(H,\blank)\times_\phi G$ for homomorphisms $\phi\colon H\to G$ from finite groups to $G$. Moreover, it admits a right complete t-structure with
	\begin{enumerate}[(1)]
		\item connective part $(\mySp^\textup{gl}_G)_{\ge0}$ those $G$-global spectra that are \emph{$G$-globally connective}, i.e.~$\pi^\phi_kX=0$ for all $k<0$,
		\item coconnective part $(\mySp^\textup{gl}_G)_{\le0}$ those $G$-global spectra that are \emph{$G$-globally coconnective}, i.e.~$\phi^\phi_kX=0$ for all $k>0$.
	\end{enumerate}
\end{theorem}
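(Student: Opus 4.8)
The plan is to produce the t-structure from the general machinery of \cite{HA}*{Proposition~1.4.4.11} applied to an explicit set of compact generators, and then to identify its two halves with the homotopy-group conditions by exploiting that the $G$-global homotopy groups are corepresented by those generators and jointly detect $G$-global weak equivalences. For a homomorphism $\phi\colon H\to G$ and $k\in\mathbb Z$ I write $P_{\phi,k}\mathrel{:=}\Sigma^{\bullet+k}_+I(H,\blank)\times_\phi G$ and $P_\phi\mathrel{:=}P_{\phi,0}$. By \cite{g-global}*{Corollary~3.3.4} there is a natural identification $\pi^\phi_k(X)\cong[P_{\phi,k},X]$ in the homotopy category of $\mySp^\textup{gl}_G$, so the functors $\pi^\phi_k$ are corepresented by the $P_{\phi,k}$; since $P_{\phi,k}\simeq\Sigma^{-k}P_\phi$ in the stable $\infty$-category $\mySp^\textup{gl}_G$ and a map is a $G$-global weak equivalence exactly when it induces isomorphisms on all $\pi^\phi_k$, the set $\{P_\phi\}$ generates $\mySp^\textup{gl}_G$ as a localizing subcategory. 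Each $P_\phi$ is moreover compact: filtered colimits in $\mySp^\textup{gl}_G$ are computed levelwise in symmetric $G$-spectra, and the $G$-global homotopy groups are built levelwise out of equivariant stable homotopy groups (cf.\ \cite{hausmann-equivariant}, \cite{g-global}), which commute with filtered colimits; hence $[P_{\phi,k},-]$ does too for every $k$. This gives the compact generation statement.

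\textbf{Constructing the t-structure.} Since $\mySp^\textup{gl}_G$ is a presentable stable $\infty$-category (the $G$-global model structure being combinatorial and stable), \cite{HA}*{Proposition~1.4.4.11} applied to $\{P_\phi\}$ yields an accessible t-structure whose connective part $(\mySp^\textup{gl}_G)_{\ge0}$ is the smallest full subcategory containing every $P_\phi$ and closed under colimits and extensions. It is closed under all colimits, in particular countable coproducts, and I will check below that $\bigcap_n(\mySp^\textup{gl}_G)_{\le-n}=0$ because a $G$-global spectrum with all $\pi^\phi_*$ vanishing is contractible; by the completeness criterion of \cite{HA}*{Section~1.2.1} the t-structure is then right complete.

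\textbf{Identifying the two halves.} By construction $(\mySp^\textup{gl}_G)_{\le-1}=\bigl((\mySp^\textup{gl}_G)_{\ge0}\bigr)^{\perp}$. Fixing $Y$, the class of objects $X$ with $\pi_j\,\mathrm{map}(X,Y)=0$ for all $j\ge0$ is closed under colimits, extensions and suspension, and it contains the $P_\phi$ precisely when $\pi^\phi_k(Y)=0$ for all $\phi$ and all $k\ge0$ (using $\pi_j\,\mathrm{map}(P_\phi,Y)\cong\pi^\phi_j(Y)$); in that case it contains all of $(\mySp^\textup{gl}_G)_{\ge0}$, so $Y\in(\mySp^\textup{gl}_G)_{\le-1}$, and the converse is immediate. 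Shifting by one gives $(\mySp^\textup{gl}_G)_{\le0}=\{Y:\pi^\phi_k(Y)=0\text{ for }k>0\}$, the $G$-globally coconnective spectra; in particular $\bigcap_n(\mySp^\textup{gl}_G)_{\le-n}$ consists of those $Y$ with all $\pi^\phi_*(Y)=0$, which vanish, finishing the previous step. For the connective part: each $P_\phi$ is $G$-globally connective since restriction commutes with $\Sigma^\bullet_+$, so $\psi^*P_\phi$ is again a suspension spectrum for every $\psi$, hence connective; as the $G$-globally connective spectra are closed under colimits and extensions, $(\mySp^\textup{gl}_G)_{\ge0}$ is contained in them. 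Conversely, for $G$-globally connective $X$ I would look at the fiber sequence $\tau_{\ge0}X\to X\to\tau_{\le-1}X$ of the t-structure just built: both $\tau_{\ge0}X\in(\mySp^\textup{gl}_G)_{\ge0}$ and $X$ are $G$-globally connective, so the long exact sequence of $G$-global homotopy groups forces $\pi^\phi_k(\tau_{\le-1}X)=0$ for $k<0$; combined with $\pi^\phi_k(\tau_{\le-1}X)=0$ for $k\ge0$ this yields $\tau_{\le-1}X\simeq0$, hence $X\simeq\tau_{\ge0}X\in(\mySp^\textup{gl}_G)_{\ge0}$.

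\textbf{Main obstacle.} The argument is essentially formal once two inputs from \cite{g-global} are granted: the corepresentability $\pi^\phi_k(X)\cong[\Sigma^{\bullet+k}_+I(H,\blank)\times_\phi G,X]$ and the compatibility of $G$-global homotopy groups with filtered colimits; with these in hand, everything reduces to a standard application of Lurie's t-structure machinery. The step that requires the most care is the bookkeeping in the third paragraph relating the abstractly defined truncations to the vanishing conditions on the $\pi^\phi_*$, where one repeatedly invokes that these groups jointly detect $G$-global weak equivalences.
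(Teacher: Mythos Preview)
Your proposal is correct and follows essentially the same route as the paper: compact generation via the corepresentability from \cite{g-global}*{Corollary~3.3.4}, the t-structure from \cite{HA}*{Proposition~1.4.4.11}, identification of the coconnective part by orthogonality, of the connective part via the truncation fiber sequence, and right completeness from the joint conservativity of the $\pi^\phi_*$. The only minor difference is in the compactness argument: the paper observes that the $\pi^\phi_k$ commute with coproducts because $\phi^*$ is a left adjoint (again by \cite{g-global}*{Corollary~3.3.4}) and the classical equivariant homotopy groups do, which is slightly cleaner than appealing to filtered colimits being computed levelwise.
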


Here we recall \cite{HA}*{p.~44} that a t-structure on a stable $\infty$-category $\mathscr C$ is called \emph{right complete} if the truncations exhibit $\mathscr C$ as the inverse limit
\begin{equation*}
	\cdots\xrightarrow{\tau_{\ge-2}}\mathscr C_{\ge-2}\xrightarrow{\tau_{\ge-1}}\mathscr C_{\ge-1}\xrightarrow{\tau_{\ge0}}\mathscr C_{\ge0}.
\end{equation*}
By \cite{HA}*{Proposition~1.2.1.19} this is equivalent to demanding that $\bigcap_{n} \mathscr C_{\le-n}$ consist only of zero objects.

\begin{proof}
	We first observe that the $G$-global spectra $\Sigma^\bullet_+ I(H,\blank)\times_\phi G$ for finite groups $H$ (up to isomorphism) and homomorphisms $\phi\colon H\to G$ form a set of compact generators. Indeed, the $\phi$-equivariant homotopy groups for varying $\phi$ detect zero objects as the $H$-equivariant homotopy groups for every $H$ do \cite{hausmann-equivariant}*{Proposition~4.9-(iii)}, and they moreover commute with coproducts as the classical equivariant homotopy groups do (by the same argument) and since $\phi^*\colon\mySp^\text{gl}_G\to\mySp_H$ is a left adjoint by \cite{g-global}*{Corollary~3.3.4}.

	With this established, \cite{HA}*{Proposition 1.4.4.11} yields a t-structure on $\mySp^\text{gl}_G$ with connective part $(\mySp^\text{gl}_G)_{\ge0}$ the smallest subcategory closed under small colimits and extensions containing all the $\Sigma_+^\bullet I(H,\blank)\times_\phi G$. We claim that this has the desired properties.

	To see this, we let $Y$ be a $G$-global spectrum. Then the non-negative homotopy groups of $Y$ vanish if and only if $\maps(\Sigma^\bullet_+I(H,\blank)\times_\phi G,Y)\simeq0$ for all $\phi\colon H\to G$. On the other hand, the class of objects $X$ for which $\maps(X,Y)\simeq0$ is closed under colimits and extensions, so it has to contain all of $(\mySp^\text{gl}_G)_{\ge0}$ in this case, i.e.~$(\mySp^\text{gl}_G)_{\le-1}$ consists precisely of those objects with trivial non-negative homotopy groups. As suspension shifts ($H$-equivariant, hence $G$-global) homotopy groups, this proves the characterization of the coconnective objects.

	On the other hand, the connective $G$-global spectra contain all the $\Sigma^\bullet_+I(H,\blank)\times_\phi G$'s and they are closed under small coproducts (see above) as well as cofibers and extensions (by the long exact sequence), i.e.~every object in $(\mySp^\text{gl}_G)_{\ge0}$ is $G$-globally connective. Conversely, if $X$ is $G$-globally connective, then there is a cofiber sequence $X_{\ge0}\to X\to X_{\le -1}$ with $X_{\ge0}\in (\mySp^\text{gl}_G)_{\ge0}$ and $X_{\le -1}\in(\mySp^\text{gl}_G)_{\le-1}$ by what it means to be a t-structure. But then $X_{\ge0}$ is $G$-globally connective by the above, whence so is the cofiber $X_{\le -1}$. But on the other hand $X_{\le -1}$ has trivial non-negative homotopy groups, so $X_{\le -1}\simeq0$ and hence $X\simeq X_{\ge0}\in (\mySp^\text{gl}_G)_{\ge0}$ as claimed.

	This finishes the construction of the desired t-structure. Right completeness is immediate as any object in $\bigcap_{n\ge0} (\mySp^\text{gl}_G)_{\le-n}$ has trivial homotopy groups.
\end{proof}

\subsection{From global \texorpdfstring{\for{toc}{$\Gamma$}\except{toc}{$\bm\Gamma$}}{Γ}-spaces to global spectra}
Segal's classical Delooping Theorem \cite{segal} relates (very special) $\Gamma$-spaces to connective spectra. We will now recall a $G$-global refinement of this from \cite{g-global} and interpret it in the parametrized context.

\begin{construction}
	We define a functor $\mathcal E^\otimes\colon\cat{$\bm\Gamma$-$\bm{\mathcal I}$-SSet}_*\to\cat{Spectra}$ from the $1$-category of global $\Gamma$-spaces $X$ satisfying $X(0^+)=*$ to symmetric spectra via the $\cat{SSet}_*$-enriched coend
	\begin{equation*}
		\mathcal E^\otimes X\mathrel{:=}\int^{T_+\in\Gamma}\mathbb S^{\times T}\otimes X(T_+)
	\end{equation*}
	with the evident functoriality in $X$; here $\otimes$ denotes the pointwise smash product of a spectrum with a pointed $\mathcal I$-simplicial set, see \cite{g-global}*{Construction~3.2.9}.

	For any finite group $G$, pulling through the $G$-actions yields a functor \[\mathcal E^\otimes\colon\cat{$\bm\Gamma$-$\bm G$-$\bm{\mathcal I}$-SSet}_*\to\cat{$\bm G$-Spectra}\] that we again denote by $\mathcal E^\otimes$.
\end{construction}

\begin{proposition}
	For any finite $G$, there is a model structure on $\cat{$\bm\Gamma$-$\bm G$-$\bm{\mathcal I}$-SSet}_*$ in which a map $f$ is a weak equivalence or fibration if and only if $f(S_+)$ is a weak equivalence or fibration in the model structure on $\cat{$\bm{(G\times\Sigma_S)}$-$\bm{\mathcal I}$-SSet}$ from Theorem~\ref{thm:global-model-I} for every finite set $S$; in particular, its weak equivalences are precisely the $G$-global level weak equivalences.

	Moreover, the above functor $\mathcal E^\otimes$ is homotopical and part of a Quillen adjunction
	\begin{equation*}
		\mathcal E^\otimes\colon\cat{$\bm\Gamma$-$\bm G$-$\bm{\mathcal I}$-SSet}_*\rightleftarrows\cat{$\bm G$-Spectra}_{\injGgl{G}} \noloc\Phi^\otimes.
	\end{equation*}
\end{proposition}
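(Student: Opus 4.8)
The plan is to prove the two assertions separately: the existence of the level model structure on $\cat{$\bm\Gamma$-$\bm G$-$\bm{\mathcal I}$-SSet}_*$, and then that $\mathcal E^\otimes$ is homotopical and left Quillen.

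\emph{The level model structure.} For each finite set $S$, the functor $X\mapsto X(S_+)$ equipped with its tautological $\Sigma_S$-action takes values in $\cat{$\bm{(G\times\Sigma_S)}$-$\bm{\mathcal I}$-SSet}$, which carries the $(G\times\Sigma_S)$-global model structure of \Cref{thm:global-model-I} (applied to the group $G\times\Sigma_S$); this is combinatorial. Assembling these over a set of skeleta of finite sets gives an adjunction $L\colon\prod_{n\ge0}\cat{$\bm{(G\times\Sigma_n)}$-$\bm{\mathcal I}$-SSet}\rightleftarrows\cat{$\bm\Gamma$-$\bm G$-$\bm{\mathcal I}$-SSet}_* \noloc \ev$, where $\ev(X)=(X(n^+))_{n}$ and $L$ is the free reduced $\Gamma$-object functor (an explicit left adjoint built by a wedge construction over $\Gamma$). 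I would then transfer the product model structure along $L\dashv\ev$ by the standard cofibrant-generation (Kan transfer) criterion: the source is locally presentable so the small object argument applies, and the only nontrivial point is that relative $L(J)$-cell complexes are level weak equivalences, where $J$ is the set of generating acyclic cofibrations at each level. By adjunction this reduces to the corresponding statement inside each $\cat{$\bm{(G\times\Sigma_n)}$-$\bm{\mathcal I}$-SSet}$ once one identifies $\ev_{S_+}L(-)$ as a coproduct of free cells — exactly the $\mathcal I$-analogue of the bookkeeping for $\Gamma$-spaces in \cite{g-global}. The resulting model structure is combinatorial with fibrations and weak equivalences detected levelwise in the $(G\times\Sigma_S)$-global model structures, which is the claim; in particular its weak equivalences are the $G$-global level weak equivalences.

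\emph{Homotopicity of $\mathcal E^\otimes$.} Being defined by a coend, $\mathcal E^\otimes$ preserves all colimits, so it commutes with the skeletal filtration of $\Gamma$-objects: $\mathcal E^\otimes X=\colim_n\mathcal E^\otimes(\mathrm{sk}_n X)$, with cofiber sequences $\mathcal E^\otimes(\mathrm{sk}_{n-1}X)\to\mathcal E^\otimes(\mathrm{sk}_n X)\to \mathbb S^{\times n}\otimes_{\Sigma_n}(\mathrm{cr}_n X)$ expressing the $n$-th layer through the $n$-th cross-effect $\mathrm{cr}_n X$, a pointed $(G\times\Sigma_n)$-$\mathcal I$-simplicial set. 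A level weak equivalence $X\to Y$ induces a $(G\times\Sigma_n)$-global weak equivalence on each cross-effect, and the remaining point is that smashing the $\Sigma_n$-equivariantly flat spectrum $\mathbb S^{\times n}$ over $\Sigma_n$ preserves these; a filtered-colimit argument then concludes. This is precisely the $\mathcal I$-version of \cite{g-global}*{Construction~3.2.9} and the homotopy-invariance results surrounding it, so I would either invoke those or adapt their proofs. I expect this flatness/cross-effect analysis to be \textbf{the main obstacle} of the proof.

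\emph{The Quillen adjunction.} Since $\mathcal E^\otimes$ preserves colimits and both categories are locally presentable, the adjoint functor theorem provides the right adjoint $\Phi^\otimes$. As $\mathcal E^\otimes$ is already homotopical, it remains to show it preserves cofibrations, and since it preserves colimits it suffices to check this on the generating cofibrations of the level model structure, i.e.\ on $L(i)$ for $i$ a generating cofibration of a $(G\times\Sigma_n)$-global model structure. By the co-Yoneda lemma, $\mathcal E^\otimes L(-)$ in degree $n$ has the form $\mathbb S^{\times n}\otimes_{\Sigma_n}(-)$ (up to the standard reduction correction), and the explicit free form of the generating cofibrations of \Cref{thm:global-model-I} shows that $\mathcal E^\otimes L(i)$ is a levelwise injection of symmetric $G$-spectra, hence a cofibration in the injective $G$-global model structure of \Cref{thm:spectra-injective-global-model-structure}. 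A homotopical left adjoint preserving cofibrations preserves acyclic cofibrations as well, so $\mathcal E^\otimes\dashv\Phi^\otimes$ is a Quillen adjunction, completing the proof.
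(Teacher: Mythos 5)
The paper's own proof of this proposition is a pure citation: the level model structure is \cite{g-global}*{Theorem~2.2.31}, and the homotopicity of $\mathcal E^\otimes$ together with the Quillen adjunction are \cite{g-global}*{Corollaries~3.4.20 and~3.4.24}. Your proposal instead sketches a re-derivation of these facts. Structurally your plan is sound and follows the expected route for such results: transfer the product of the $(G\times\Sigma_n)$-global model structures along the levelwise-evaluation adjunction to get the level model structure; establish homotopicity of $\mathcal E^\otimes$ via a skeletal/cross-effect filtration and a flatness argument for $\mathbb S^{\times n}$ as a $\Sigma_n$-spectrum; and check generating cofibrations, combined with homotopicity, to upgrade to a Quillen adjunction (your observation that a homotopical cofibration-preserving left adjoint is automatically left Quillen is correct and is a clean way to organize this). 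So there is no structural disagreement with the paper — you have essentially reconstructed the internal argument where the paper outsources it. What you buy is self-containedness; what you pay is that the steps you explicitly flag but do not close — identifying $\ev_{S_+}L$ on free cells, the flatness of $\mathbb S^{\times n}$ over $\Sigma_n$, and the "standard reduction correction" in the co-Yoneda computation for the reduced coend — are precisely where the nontrivial bookkeeping lives. Your proposal is thus on the right track as a reconstruction of the cited arguments, but as written it is a longer road than the paper takes and does not yet amount to a complete proof.
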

\begin{proof}
	The existence of the model structure is part of \cite{g-global}*{Theorem~2.2.31}, while the remaining statements appear as Corollaries~3.4.20 and~3.4.24 of \emph{op.~cit.}
\end{proof}

\begin{remark}\label{rk:associated-gamma-U}
	While the precise form of the above right adjoint will not be relevant below, we record that there is a natural isomorphism $(\Phi^\otimes X)(1^+)\cong\Omega^\bullet X$, see \cite{g-global}*{Construction~3.2.17}. Restricting to injectively fibrant objects, we in particular immediately obtain an equivalence $\mathbb U\cat{R}\Phi^\otimes\simeq\cat{R}\Omega^\bullet$ of derived functors for any fixed $G$.
\end{remark}

Passing to localizations, $\mathcal E^\otimes$ induces a global functor $\ul\GammaS_{\mathcal I,*}^\text{gl}\to\ul\mySp^\text{gl}$.

\begin{lemma}
	The global functor $\mathcal E^\otimes\colon\ul\GammaS_{\mathcal I,*}^\textup{gl}\to\ul\mySp^\textup{gl}$ admits a parametrized right adjoint which is pointwise given by the $\cat{R}\Phi^\otimes$.
\end{lemma}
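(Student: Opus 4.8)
The plan is to apply the pointwise criterion for parametrized adjoints, \Cref{prop:pointwisecriterionadjoints}. Since $\mathcal E^\otimes\colon\ul\GammaS_{\mathcal I,*}^\textup{gl}\to\ul\mySp^\textup{gl}$ is homotopical and for every finite group $G$ the left half of the Quillen adjunction $\mathcal E^\otimes\colon\cat{$\bm\Gamma$-$\bm G$-$\bm{\mathcal I}$-SSet}_*\rightleftarrows\cat{$\bm G$-Spectra}_{\injGgl G} :\Phi^\otimes$, passing to $\infty$-categorical localizations shows that each $\mathcal E^\otimes_G$ admits a right adjoint given by the right derived functor $\cat{R}\Phi^\otimes$ with respect to the \emph{injective} $G$-global model structure. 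It remains to verify the Beck--Chevalley condition: for every homomorphism $\alpha\colon H\to G$, the canonical mate $\alpha^*\circ\cat{R}\Phi^\otimes\Rightarrow\cat{R}\Phi^\otimes\circ\alpha^*$ of the (strictly commuting) square relating $\mathcal E^\otimes$ to $\alpha^*$ is an equivalence.

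I would establish this by the same computation as in the proof of \Cref{prop:looping-parametrized-ra}. Fix an injective $H$-global fibrant replacement functor $\iota\colon\id\Rightarrow P$ on $\cat{$\bm H$-Spectra}$. The adjunctions $\mathcal E^\otimes\dashv\Phi^\otimes$ on $G$- and $H$-objects are obtained by pulling through the actions from the non-equivariant one, so their units and counits are strictly compatible with $\alpha^*$, and $\alpha^*$ commutes strictly with $\mathcal E^\otimes$ and $\Phi^\otimes$; a diagram chase with the triangle identities then identifies the mate, evaluated at an injectively fibrant $G$-spectrum $X$, with the map $\Phi^\otimes(\iota)\colon\alpha^*\Phi^\otimes X=\Phi^\otimes\alpha^*X\to\Phi^\otimes P\alpha^*X$. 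Because the injective $G$-global model structure has the same weak equivalences but more cofibrations than the projective one, its fibrations (hence fibrant objects) form a subclass of the projective ones; since moreover $\alpha^*$ is right Quillen for the \emph{projective} model structures by \Cref{lemma:alpha-star-rQ-spectra}, both $\alpha^*X$ and $P\alpha^*X$ are projectively fibrant $H$-spectra. It therefore suffices to know that $\Phi^\otimes$ sends $H$-global weak equivalences between projectively fibrant $H$-spectra to $H$-global level weak equivalences.

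For that last point I would use that $\Phi^\otimes$ is computed levelwise out of $\Omega^\bullet$: by \Cref{rk:associated-gamma-U} we have $(\Phi^\otimes Y)(1^+)\cong\Omega^\bullet Y$, and more generally $(\Phi^\otimes Y)(n^+)$ is obtained by applying $\Omega^\bullet$ to a finite power of $Y$ with its evident $\Sigma_n$-action. Finite powers preserve fibrancy and weak equivalences of symmetric spectra, and $\Omega^\bullet$ is right Quillen for the projective $(H\times\Sigma_n)$-global model structure by \Cref{lemma:sigma-qa}, hence homotopical on projectively fibrant objects by Ken Brown's lemma; this gives the required preservation of weak equivalences one level at a time. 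With the Beck--Chevalley condition in hand, \Cref{prop:pointwisecriterionadjoints} assembles the $\cat{R}\Phi^\otimes$ into a parametrized right adjoint of $\mathcal E^\otimes$.

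The main obstacle is exactly the friction in the previous paragraph between the two model structures on symmetric $G$-spectra: $\cat{R}\Phi^\otimes$ must be computed via \emph{injective} fibrant replacement, while the only compatibility of $\alpha^*$ with fibrant objects available is with respect to the \emph{projective} structure, so one genuinely has to check $\Phi^\otimes$ remains homotopical on the larger class of projectively fibrant objects. A cleaner route that sidesteps this is to post-compose the candidate Beck--Chevalley transformation with the functor $\mathbb U=\ev_{1^+}$: since $\mathbb U$ commutes strictly with $\alpha^*$ and $\mathbb U\cat{R}\Phi^\otimes\simeq\cat{R}\Omega^\bullet$ by \Cref{rk:associated-gamma-U}, the compatibility of mates with whiskering reduces the claim to the Beck--Chevalley equivalence for $\cat{R}\Omega^\bullet$ already proved in \Cref{prop:looping-parametrized-ra}; one then concludes using that $\cat{R}\Phi^\otimes$ lands in \emph{special} $\Gamma$-spaces (a $G$-global delooping theorem, \cite{g-global}), on which $\mathbb U$ is conservative.
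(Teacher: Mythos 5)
Your reduction to a pointwise Beck--Chevalley check and the subsequent identification of the mate (evaluated at an injectively fibrant $G$-spectrum $X$) with $\Phi^\otimes(\iota)\colon\Phi^\otimes\alpha^*X\to\Phi^\otimes P\alpha^*X$, where $\iota$ is an injective $H$-global fibrant replacement, is exactly the paper's argument, which explicitly invokes the same computation as in \cref{prop:looping-parametrized-ra}. At that point the paper simply cites \cite{g-global}*{claim in proof of Proposition~3.4.30} for the statement that $\Phi^\otimes$ sends $\iota$ to an $H$-global level weak equivalence, whereas you try to prove this claim on the spot.

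Your first attempt at that last step has a genuine gap: the assertion that ``$(\Phi^\otimes Y)(n^+)$ is obtained by applying $\Omega^\bullet$ to a finite power of $Y$'' is not correct. By adjunction, $(\Phi^\otimes Y)(S_+)$ is the cotensor of $Y$ by the $S$-fold cartesian product $\mathbb S^{\times S}$, and $\mathbb S^{\times S}$ is neither the $S$-fold wedge $\mathbb S^{\vee S}$ (which \emph{would} give $(\Omega^\bullet Y)^{\times S}$) nor otherwise expressible through $\Omega^\bullet$ applied to a power of $Y$ on the nose. The derived equivalence $\maps(\mathbb S^{\times S},Y)\simeq(\Omega^\bullet Y)^{\times S}$ is precisely the specialness of $\Phi^\otimes Y$, which is the content of the delooping theorem rather than an input to it; so you cannot use it at this stage without circularity or further argument. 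Consequently the reduction to ``$\Phi^\otimes$ is homotopical on projectively fibrant $H$-spectra'' is correct, but your justification of that homotopicality does not go through as written, and this is exactly the point the paper offloads to the external citation.

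Your ``cleaner route'' is, however, a sound and genuinely different argument that avoids the missing technical lemma. Whiskering the mate by $\mathbb U=\ev_{1^+}$ and invoking the compatibility of mates with pasting, together with $\mathbb U\,\mathbf{R}\Phi^\otimes\simeq\mathbf{R}\Omega^\bullet$ and the already-established Beck--Chevalley equivalence for $\mathbf{R}\Omega^\bullet$ from \cref{prop:looping-parametrized-ra}, shows the mate becomes an equivalence after applying $\mathbb U$. Since both $\alpha^*\mathbf{R}\Phi^\otimes$ and $\mathbf{R}\Phi^\otimes\alpha^*$ land in special (indeed very special) $\Gamma$-spaces -- using only the \emph{pointwise} delooping theorem of \cite{g-global}, not the parametrized enhancement, so there is no circularity with \cref{thm:g-global-delooping} -- and $\mathbb U$ is conservative on special objects (a Segal map argument), the mate itself is an equivalence. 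This trades the paper's citation of a technical claim buried inside a proof in \cite{g-global} for the conceptually cleaner inputs that you have already set up earlier in the section, at the cost of invoking the pointwise delooping theorem. Either route is acceptable; yours is arguably more self-contained within the paper's own framework.
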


We will denote this parametrized right adjoint simply by $\cat{R}\Phi^\otimes$ again.

\begin{proof}
	It only remains to prove that for every $\alpha\colon H\to G$ the mate transformation $\alpha^*\cat{R}\Phi^\otimes\Rightarrow\cat{R}\Phi^\otimes \alpha^*$ at the level of homotopy categories is an equivalence. By the same computation as in Proposition~\ref{prop:looping-parametrized-ra} this reduces to showing that for any injectively fibrant $G$-global spectrum $X$ and some (hence any) injectively fibrant replacement $\iota\colon\alpha^*X\to Y$ of $G$-global spectra the induced map $\Phi^\otimes \iota\colon \Phi^\otimes\alpha^*X\to \Phi^\otimes Y=\cat{R}\Phi^\otimes\alpha^* X$ is an $H$-global level weak equivalence. This is precisely the content of \cite{g-global}*{claim in proof of Proposition~3.4.30}.
\end{proof}

\begin{definition}
	A special $G$-global $\Gamma$-space $X\in\cat{$\bm\Gamma$-$\bm G$-$\bm{\mathcal I}$-SSet}_*$ is called \emph{very special} if for every finite group $H$, every homomorphism $\phi\colon H\to G$, and some (hence any) complete $H$-set universe $\mathcal U_H$ the induced monoid structure on $\pi_0^\phi(X)\mathrel{:=}\pi_0\big((\phi^* X)(1^+)(\mathcal U_H)\big)$ coming from the zig-zag
	\begin{equation*}
		X(1^+)\times X(1^+)\xleftarrow[\sim]{\;\rho\;} X(2^+)\xrightarrow{X(\mu)} X(1^+),
	\end{equation*}
	where $\mu$ is defined by $\mu(1)=\mu(2)=1$, is a group structure. We write $\ul\GammaS^\textup{gl, vspc}_{\mathcal I,*}\subset\ul\GammaS_{\mathcal I,*}^\textup{gl}$ for the full global subcategory of very special objects.
\end{definition}

\begin{remark}
	The above condition is equivalent to $\phi^*X(\mathcal U_H)$ being very special as an $H$-equivariant $\Gamma$-space in the sense of \cite{ostermayr}*{Definition~4.5} for every $H$ and $\phi$ as above, see \cite{g-global}*{discussion after Definition~3.4.12}.
\end{remark}

We can now rephrase the $G$-global delooping theorem \cite{g-global}*{Theorem~3.4.21} in our setting as follows:

\begin{theorem}\label{thm:g-global-delooping}
	The parametrized adjunction $\mathcal E^\otimes\dashv\cat{R}\Phi^\otimes$ restricts to an equivalence $\ul\GammaS^\textup{gl, vspc}_{\mathcal I,*}\simeq \ul\mySp^\textup{gl}_{\ge0}$.\qed
\end{theorem}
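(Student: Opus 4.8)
The plan is to deduce the statement from its pointwise incarnation at a fixed finite group $G$, where the model-categorical $G$-global delooping theorem \cite{g-global}*{Theorem~3.4.21} already supplies the desired equivalence; everything else is organisation into a statement about global $\infty$-categories. So I would first reduce to checking that the relevant restriction of $\mathcal E^\otimes$ is an equivalence after evaluation at every $G$, using that $\Cat_\Glo=\Fun(\Glo\catop,\Cat_\infty)$ and hence that a morphism of $\Glo$-functors is an equivalence precisely when it is a pointwise equivalence.

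Before this, two pieces of bookkeeping are needed so that the restricted functor makes sense globally. The full global subcategory $\ul\GammaS^\textup{gl, vspc}_{\mathcal I,*}\subset\ul\GammaS^\textup{gl}_{\mathcal I,*}$ is given by definition. For the target I would check that $\ul\mySp^\textup{gl}_{\ge0}$ is a full global subcategory of $\ul\mySp^\textup{gl}$ with value $(\mySp^\textup{gl}_G)_{\ge0}$ at $G$: by \Cref{thm:g-global-t-structure} an object of $\mySp^\textup{gl}_G$ lies in $(\mySp^\textup{gl}_G)_{\ge0}$ iff $\pi^\phi_k=0$ for all $\phi\colon H\to G$ and $k<0$, and since $\pi^\phi_k(\alpha^*X)\cong\pi^{\alpha\phi}_k(X)$ for every $\alpha\colon H\to G$, the restriction functors preserve $G$-global connectivity. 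I would also unpack the parametrized adjunction $\mathcal E^\otimes\dashv\cat{R}\Phi^\otimes$: by the preceding lemma it is computed pointwise, being at each $G$ the derived adjunction attached to the Quillen adjunction $\mathcal E^\otimes\colon\cat{$\bm\Gamma$-$\bm G$-$\bm{\mathcal I}$-SSet}_*\rightleftarrows\cat{$\bm G$-Spectra}_{\injGgl{G}}\colon\Phi^\otimes$ (with $\mathcal E^\otimes$ homotopical, so that no further left derivation is needed), whence its unit and counit are represented pointwise by the derived unit and counit of that Quillen adjunction.

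The core step is then to invoke \cite{g-global}*{Theorem~3.4.21}: for each $G$ this derived adjunction restricts to an adjoint equivalence between the full subcategory of $\ul\GammaS^\textup{gl}_{\mathcal I,*}(G)$ on the very special $G$-global $\Gamma$-spaces and the full subcategory of $\mySp^\textup{gl}_G$ on the $G$-globally connective symmetric $G$-spectra, i.e.\ the derived unit is a level weak equivalence on very special objects and the derived counit a $G$-global weak equivalence on connective ones. By the identification of the previous paragraph the latter subcategory is exactly $\ul\mySp^\textup{gl}_{\ge0}(G)$. In particular $\mathcal E^\otimes$ carries $\ul\GammaS^\textup{gl, vspc}_{\mathcal I,*}$ into $\ul\mySp^\textup{gl}_{\ge0}$ and $\cat{R}\Phi^\otimes$ carries $\ul\mySp^\textup{gl}_{\ge0}$ into $\ul\GammaS^\textup{gl, vspc}_{\mathcal I,*}$ — this holds pointwise, hence, both being full global subcategories, globally — and the unit and counit of $\mathcal E^\otimes\dashv\cat{R}\Phi^\otimes$ restrict to natural transformations between the restricted $\Glo$-functors, giving a parametrized adjunction $\mathcal E^\otimes\colon\ul\GammaS^\textup{gl, vspc}_{\mathcal I,*}\rightleftarrows\ul\mySp^\textup{gl}_{\ge0}\colon\cat{R}\Phi^\otimes$.

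To conclude I would observe that the restricted unit and counit are equivalences: they are so after evaluation at every finite group by the core step, and a natural transformation of $\Glo$-functors is an equivalence exactly when it is a pointwise equivalence. Hence the restricted adjunction is an adjoint equivalence, which is exactly the claimed equivalence. I expect the only real work to be in the bookkeeping around the core step — translating ``homotopical $\mathcal E^\otimes$'', ``derived $\Phi^\otimes$'', ``very special'', and ``$G$-globally connective'' into their $\infty$-categorical and parametrized counterparts, and verifying that the pointwise delooping equivalences are compatible with the global (restriction) structure; this compatibility is however automatic since every functor in sight is induced by a strictly restriction-compatible model-categorical functor, so no substantive obstacle is anticipated.
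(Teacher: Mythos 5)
Your proposal is correct and matches the paper's (implicit) argument: the paper treats this theorem as an immediate rephrasing of \cite{g-global}*{Theorem~3.4.21}, and your reduction to pointwise statements together with the observation that both full subcategories are closed under restriction functors (so that they define global subcategories and the adjunction restricts globally) is precisely what that rephrasing amounts to. One minor notational slip: when arguing that restriction preserves connectivity you should keep track that for $\alpha\colon G'\to G$ and $\phi\colon H\to G'$ one has $\pi^\phi_k(\alpha^*X)\cong\pi^{\alpha\phi}_k(X)$, but the intended computation is clearly right.
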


Finally, we want to reinterpret this in terms of equivariant stabilizations, in the sense of \Cref{ex:eq_stability}

\begin{theorem}\label{thm:pw-stabilization-cmon}
	The global $\infty$-category $\ul\mySp^\textup{gl}$ is equivariantly stable and the functor
	\begin{equation}\label{eq:universtal-stab}
		\cat{R}\Phi^\otimes\colon\ul{\mySp}^\textup{gl}\to\ul\GammaS^\textup{gl, spc}_{\mathcal I,*}
	\end{equation}
	is the universal equivariant stabilization.
\end{theorem}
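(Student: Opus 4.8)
The plan is to prove both assertions at once, by producing a chain of equivalences of global $\infty$-categories exhibiting $\ul\mySp^\textup{gl}$ as $\ul\Sp^{\Orb}(\ul\GammaS^\textup{gl, spc}_{\mathcal I,*})$ in a way compatible with $\cat R\Phi^\otimes$. Equivariant stability of $\ul\mySp^\textup{gl}$ will then drop out of \Cref{lem:SpfibAgainPSemi}, and the universal property is the content of \Cref{thm:Stable_omnibus}. The reduction to connective objects uses the $G$-global t-structure of \Cref{thm:g-global-t-structure}, and the connective part is handled by the delooping theorem \Cref{thm:g-global-delooping}.

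\textbf{Step 1: fiberwise stability and the global t-structure.} Each $\mySp^\textup{gl}_G$ is stable (the model structure of \Cref{thm:spectra-global-model-structure} is stable), and each restriction $\alpha^*$ is exact, being a homotopical right Quillen functor by \Cref{lemma:alpha-star-rQ-spectra}; thus $\ul\mySp^\textup{gl}$ is fiberwise stable. The homotopy-group identity $\pi_k^\psi(\alpha^*X)\cong\pi_k^{\alpha\psi}(X)$ (as in the proof of \Cref{thm:g-global-t-structure}) shows $\alpha^*$ is also t-exact, so the $G$-global t-structures assemble into a right complete t-structure on the global $\infty$-category $\ul\mySp^\textup{gl}$. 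Right completeness identifies $\mySp^\textup{gl}_G$ with $\Sp\big((\mySp^\textup{gl}_G)_{\ge0}\big)$ naturally in t-exact functors, hence gives an equivalence of global $\infty$-categories $\ul\mySp^\textup{gl}\simeq\ul\Sp^{\fib}(\ul\mySp^\textup{gl}_{\ge0})$, where $\ul\mySp^\textup{gl}_{\ge0}\colon\Glo\catop\to\Cat_\infty^{\lex}$ sends $G$ to the connective part.

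\textbf{Step 2: the chain of equivalences.} By \Cref{thm:g-global-delooping} the adjunction $\mathcal E^\otimes\dashv\cat R\Phi^\otimes$ restricts to an equivalence $\ul\mySp^\textup{gl}_{\ge0}\simeq\ul\GammaS^\textup{gl, vspc}_{\mathcal I,*}$. The inclusion $\ul\GammaS^\textup{gl, vspc}_{\mathcal I,*}\hookrightarrow\ul\GammaS^\textup{gl, spc}_{\mathcal I,*}$ is fiberwise left exact (very special $\Gamma$-spaces are closed under finite limits, products and fibers of grouplike monoids being grouplike), so applying $\ul\Sp^{\fib}$ is legitimate, and it yields an equivalence $\ul\Sp^{\fib}(\ul\GammaS^\textup{gl, vspc}_{\mathcal I,*})\xrightarrow{\sim}\ul\Sp^{\fib}(\ul\GammaS^\textup{gl, spc}_{\mathcal I,*})$ because, fiberwise, stabilization does not distinguish a category of commutative monoid objects from its full subcategory of grouplike ones. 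Finally $\ul\GammaS^\textup{gl, spc}_{\mathcal I,*}\simeq\ul\CMon^{\Orb}(\ul\Spc_{\Glo})$ by \Cref{prop:special-pointed-vs-unpointed} and \Cref{thm:Xi-restr}, so it is $\Orb$-semiadditive by \Cref{cor:CommutativeMonoidsAreSemiadditive}; hence \Cref{prop:CommutativeMonoidsInSemiadditiveCategories} and \Cref{def:PStabilization} give $\ul\Sp^{\fib}(\ul\GammaS^\textup{gl, spc}_{\mathcal I,*})\simeq\ul\Sp^{\Orb}(\ul\GammaS^\textup{gl, spc}_{\mathcal I,*})$, and this is $\Orb$-stable by \Cref{lem:SpfibAgainPSemi}. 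Composing, $\ul\mySp^\textup{gl}\simeq\ul\Sp^{\Orb}(\ul\GammaS^\textup{gl, spc}_{\mathcal I,*})$, so $\ul\mySp^\textup{gl}$ is equivariantly stable in the sense of \Cref{ex:eq_stability} and \Cref{def:PStability}.

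\textbf{Step 3: compatibility with $\cat R\Phi^\otimes$.} It remains to check that the composite equivalence $\Theta\colon\ul\mySp^\textup{gl}\xrightarrow{\sim}\ul\Sp^{\Orb}(\ul\GammaS^\textup{gl, spc}_{\mathcal I,*})$ carries the counit $\Omega^\infty$ to $\cat R\Phi^\otimes$. Unwinding \Cref{def:PStabilization} (using that $\mathbb U$ is an equivalence on the semiadditive target) one sees $\Omega^\infty$ is the fiberwise infinite loop functor, which by naturality of the counit of $\ul\Sp^{\fib}\dashv\mathrm{incl}$ (\Cref{prop:SpfibRightAdjoint}) composes with $\Theta$ to the functor $\ul\mySp^\textup{gl}\xrightarrow{\tau_{\ge0}}\ul\mySp^\textup{gl}_{\ge0}\xrightarrow{\sim}\ul\GammaS^\textup{gl, vspc}_{\mathcal I,*}\hookrightarrow\ul\GammaS^\textup{gl, spc}_{\mathcal I,*}$, the first equivalence being the delooping functor $\cat R\Phi^\otimes|_{\ge0}$. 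On the other hand $\cat R\Phi^\otimes$ preserves fiber sequences and the terminal object, and it annihilates strictly coconnective global spectra (its value at $1^+$ is $\Omega^\bullet$ by \Cref{rk:associated-gamma-U}, which only sees connective information, and a special $\Gamma$-space with contractible value at $1^+$ is contractible), so the fiber sequence $\tau_{\ge0}X\to X\to\tau_{\le-1}X$ shows $\cat R\Phi^\otimes X\simeq\cat R\Phi^\otimes\tau_{\ge0}X$; hence $\cat R\Phi^\otimes$ also factors as $\mathrm{incl}\circ\cat R\Phi^\otimes|_{\ge0}\circ\tau_{\ge0}$. Therefore $\Omega^\infty\circ\Theta\simeq\cat R\Phi^\otimes$, which is precisely the assertion that $\cat R\Phi^\otimes$ is the universal equivariant stabilization (\Cref{thm:Stable_omnibus}, \Cref{lem:FunintoSpfib}).

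\textbf{Main obstacle.} The genuinely delicate part is the homotopy-theoretic bookkeeping of Step 1--2: setting up the $G$-global t-structure globally so that all restriction functors are visibly t-exact, establishing its right completeness together with the naturality of $\mathcal C\simeq\Sp(\mathcal C_{\ge0})$ in the global variable, and verifying that stabilization identifies special with very special $G$-global $\Gamma$-spaces compatibly for all $G$. (Along the way one also needs that "very special" is preserved by the parametrized adjoints $p_!,p_*$ for $p\in\finPsets[\Orb][\Glo]$, so that $\ul\GammaS^\textup{gl, vspc}_{\mathcal I,*}$ inherits equivariant semiadditivity with norm maps restricted from $\ul\GammaS^\textup{gl, spc}_{\mathcal I,*}$ via \Cref{lem:NormVsBeckChevalley} — this comes down to $\pi_0^\phi$ of an induction or coinduction being a direct sum, respectively a Mackey-type product, of grouplike monoids, hence grouplike.) Once these identifications are in hand, the remaining arguments are formal manipulations of the universal properties already recorded in the paper.
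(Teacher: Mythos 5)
Your proposal is correct and hits exactly the same three key ingredients as the paper's own proof — the $G$-global t-structure of \cref{thm:g-global-t-structure}, the delooping theorem \cref{thm:g-global-delooping}, and the Eckmann-Hilton argument identifying $\Sp$ of special and very special $\Gamma$-spaces — but it organizes them more elaborately. The paper's proof is noticeably leaner: it invokes a dedicated lemma (stated just before the theorem) asserting that a $T$-functor out of a fiberwise stable source with fiberwise-lex target is a fiberwise stabilization as soon as each of its \emph{levels} is a non-parametrized stabilization. This reduces the whole theorem to a pointwise check on $\cat{R}\Phi^\otimes(G)$, which is then handled by \cref{lemma:stabilization-of-connective}, the delooping theorem, and the $\Omega$/Eckmann-Hilton trick. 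In contrast, you first build a global equivalence $\Theta\colon \ul\mySp^\text{gl}\simeq\ul\Sp^{\Orb}(\ul\GammaS^\text{gl, spc}_{\mathcal I,*})$ out of abstract pieces (t-structure, delooping, grouplike completion) and then must \emph{separately} verify $\Omega^\infty\circ\Theta\simeq\cat{R}\Phi^\otimes$. Your Step 3 argument for that — $\cat{R}\Phi^\otimes$ preserves fiber sequences, sends strictly coconnective spectra to a point because its $1^+$-component is $\Omega^\bullet$ and a special $\Gamma$-space is determined by its underlying space — is valid, but the paper simply never needs it.

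Two points worth tightening. First, your Step 1 assertion that right completeness yields the equivalence $\ul\mySp^\text{gl}\simeq\ul\Sp^\fib(\ul\mySp^\text{gl}_{\ge0})$ ``naturally in t-exact functors, hence gives an equivalence of global $\infty$-categories'' is exactly where the paper's pointwise-check lemma is doing work: you have a $\Glo$-functor $\tau_{\ge0}\colon \ul\mySp^\text{gl}\to\ul\mySp^\text{gl}_{\ge0}$ which is levelwise a stabilization, and the lemma is what upgrades this to a fiberwise stabilization of global $\infty$-categories without any further coherence bookkeeping. You should cite it rather than appeal to naturality in the abstract. Second, in Step 2 the claim that ``stabilization does not distinguish a category of commutative monoid objects from its full subcategory of grouplike ones'' is exactly what needs proving; the paper supplies the argument (factor $\Omega$ through very special $\Gamma$-spaces via Eckmann-Hilton, apply 2-out-of-6 to $\Sp(\mathrm{incl})$ and $\Sp(\Omega)$), and you should not treat it as a black box. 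Finally, your concern in the last paragraph that one must check very special $\Gamma$-spaces are closed under $p_!, p_*$ for $p\in\Orb$ is unnecessary: you only apply $\ul\Sp^\fib$, not $\ul\Sp^{\Orb}$, to $\ul\GammaS^\text{gl, vspc}_{\mathcal I,*}$, so you just need the inclusion of very special into special to be fiberwise left exact — norm maps never enter.
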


For the proof of the theorem we will need two lemmas:

\begin{lemma}\label{lemma:stabilization-of-connective}
	The adjunction $\textup{incl}\colon(\mySp_G^\textup{gl})_{\ge0}\rightleftarrows\mySp_G^\textup{gl}\noloc\tau_{\geq 0}$ is the universal stabilization in the world of presentable $\infty$-categories.
\end{lemma}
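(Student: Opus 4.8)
The plan is to show that the coreflective adjunction $\textup{incl}\dashv\tau_{\ge0}$ is, after transport along the universal property of stabilization, the adjunction $\Sigma^\infty_+\dashv\Omega^\infty$ presenting $\mySp_G^\textup{gl}$ as $\Sp\big((\mySp_G^\textup{gl})_{\ge0}\big)$. First I would record the needed structure: by Theorem~\ref{thm:g-global-t-structure} the $\infty$-category $\mySp_G^\textup{gl}$ is presentable and stable, its t-structure is constructed via \cite{HA}*{Proposition~1.4.4.11} (hence is accessible, so $(\mySp_G^\textup{gl})_{\ge0}$ is again presentable), and it is right complete. Since $(\mySp_G^\textup{gl})_{\ge0}$ is by construction closed under colimits in $\mySp_G^\textup{gl}$, the inclusion preserves colimits and its right adjoint is the connective truncation $\tau_{\ge0}$; thus $(\mySp_G^\textup{gl})_{\ge0}$ is a coreflective subcategory with coreflector $\tau_{\ge0}$, and in particular its intrinsic loop functor is $\tau_{\ge0}\circ\Omega$.

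Next I would invoke right completeness, which by \cite{HA}*{Proposition~1.2.1.19} identifies $\mySp_G^\textup{gl}$ with the limit of the tower $\cdots\xrightarrow{\tau_{\ge-2}}(\mySp_G^\textup{gl})_{\ge-2}\xrightarrow{\tau_{\ge-1}}(\mySp_G^\textup{gl})_{\ge-1}\xrightarrow{\tau_{\ge0}}(\mySp_G^\textup{gl})_{\ge0}$, with $0$-th projection $\tau_{\ge0}$. As $\mySp_G^\textup{gl}$ is stable, $\Sigma$ is an autoequivalence carrying $(\mySp_G^\textup{gl})_{\ge k}$ to $(\mySp_G^\textup{gl})_{\ge k+1}$ and commuting with truncations via $\Sigma\tau_{\ge k}\simeq\tau_{\ge k+1}\Sigma$, so the restrictions $\Sigma^n\colon(\mySp_G^\textup{gl})_{\ge-n}\xrightarrow{\;\sim\;}(\mySp_G^\textup{gl})_{\ge0}$ should assemble into an equivalence between this tower and the constant tower
\[
\cdots\xrightarrow{\;\tau_{\ge0}\Omega\;}(\mySp_G^\textup{gl})_{\ge0}\xrightarrow{\;\tau_{\ge0}\Omega\;}(\mySp_G^\textup{gl})_{\ge0}
\]
(using the interchange identity $\tau_{\ge-n}\Omega^{n+1}\simeq\Omega^{n}\tau_{\ge0}\Omega$). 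Since $\tau_{\ge0}\Omega$ is the intrinsic loop functor of $(\mySp_G^\textup{gl})_{\ge0}$, the limit presentation of stabilization \cite{HA}*{Corollary~1.4.2.24} --- which for a presentable $\infty$-category also computes the stabilization in $\PrL$, the structure maps being the right adjoints $\Omega^\infty$ --- identifies this limit with $\Sp\big((\mySp_G^\textup{gl})_{\ge0}\big)$ in such a way that $\tau_{\ge0}$ corresponds to $\Omega^\infty$. Passing to left adjoints then identifies $\textup{incl}$ with $\Sigma^\infty_+$, which is the assertion.

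The step requiring the most care is assembling the family $(\Sigma^n)_{n\ge0}$ into a genuine coherent equivalence of diagrams $\mathbb N^\textup{op}\to\PrL$; concretely, one must organize the truncation--suspension interchange equivalences into a compatible system, which is where one uses that $\Sigma$ is an exact autoequivalence shifting the t-structure by one together with the essential uniqueness of truncation functors. Alternatively, and probably more efficiently, this coherence can be sidestepped by appealing to the theory of Grothendieck prestable $\infty$-categories (Lurie, \emph{Spectral Algebraic Geometry}, Appendix~C): the t-structure on $\mySp_G^\textup{gl}$ is right complete and, because $(\mySp_G^\textup{gl})_{\le-1}$ is carved out by the vanishing of mapping spectra out of the compact generators, also compatible with filtered colimits; hence $(\mySp_G^\textup{gl})_{\ge0}$ is Grothendieck prestable and its inclusion exhibits $\mySp_G^\textup{gl}$ as its stabilization with $\Omega^\infty\simeq\tau_{\ge0}$, whence again the claim.
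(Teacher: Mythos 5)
Your proof is correct and follows essentially the same route as the paper: both arguments compare the tower $\cdots\to\mathscr C_{\ge -2}\to\mathscr C_{\ge -1}\to\mathscr C_{\ge0}$ (with connecting maps the truncations) to the constant tower of $\mathscr C_{\ge 0}$'s computing $\Sp(\mathscr C_{\ge0})$, using right completeness and the limit description of stabilization to conclude, with the levelwise equivalences given by powers of the suspension/loop functor and the commuting squares supplied by the truncation--suspension interchange (the paper phrases these as total mates of the identity $\Sigma^n\circ\textup{incl}=\Sigma^{n-1}\circ\Sigma$). The paper also flags the same SAG Appendix~C alternative you mention but, like you, opts to spell out the direct argument.
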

\begin{proof}
	By Theorem~\ref{thm:g-global-t-structure}, $(\mySp_G^\textup{gl})_{\ge0}$ is the connective part of a right complete t-structure. As mentioned without proof in the introduction of \cite{SAG}*{Appendix C}, this formally implies the statement of the lemma. Let us give the argument in this generality for completeness: given a right complete t-structure on a stable $\infty$-category $\mathscr C$ we consider the diagram
	\begin{equation*}
	\begin{tikzcd}
		\cdots\arrow[r, "\Omega"] &\arrow[d, "\Omega^2", "\simeq"']\mathscr C_{\ge0} \arrow[r, "\Omega"] & \mathscr C_{\ge0}\arrow[r, "\Omega", "\simeq"']\arrow[d, "\Omega", "\simeq"'] &\mathscr C_{\ge0}\arrow[d,"\id", "\simeq"']\\
		\cdots\arrow[r, "\tau_{\ge-2}"'] &\mathscr C_{\ge-2} \arrow[r, "\tau_{\ge-1}"'] & \mathscr C_{\ge-1}\arrow[r, "\tau_{\ge0}"'] &\mathscr C_{\ge0}
	\end{tikzcd}
	\end{equation*}
	where the little squares are filled by the total mates of the identity transformations $\Sigma^n\circ\textup{incl} = \Sigma^{n-1}\circ\Sigma$. Passing to row-wise homotopy limits we then get a commutative diagram
	\begin{equation*}
	\begin{tikzcd}[row sep=small]
		\Sp(\mathscr C_{\ge0})=\lim_n \mathscr C_{\ge0}\arrow[dr, bend left=15pt, "\Omega^\infty=\pr_0"]\arrow[dd, "\lim_n\Omega^{n}"']\\
		& \mathscr C_{\ge0}\\
		\lim_n\mathscr C_{\ge -n}\arrow[ur, bend right=15pt, "\pr_0"']
	\end{tikzcd}
	\end{equation*}
	in which the vertical map on the left is an equivalence as a homotopy limit of equivalences. On the other hand, by right completeness the lower map agrees up to equivalence with $\tau_{\geq 0}\colon \mathscr C\to\mathscr C_{\ge0}$; the claim follows immediately as $\Omega^\infty\colon\Sp(\mathscr C_{\ge0})\to\mathscr C_{\ge0}$ is the universal stabilization by \cite{HA}*{Remark~1.4.2.25}.
\end{proof}

\begin{lemma}
	Let $T$ be an $\infty$-category and let $U\colon\mathcal D\to\mathcal C$ be a $T$-functor such that $\mathcal D$ is fiberwise stable, $\mathcal C$ has fiberwise finite limits, and each $U(A)\colon \mathcal D(A)\to\mathcal C(A)$ is a stabilization in the non-parametrized sense. Then $U$ is a fiberwise stabilization.
\end{lemma}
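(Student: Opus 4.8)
The plan is to exhibit $U$ as (an avatar of) the counit $\Omega^\infty\colon\ul{\Sp}^{\fib}(\mathcal C)\to\mathcal C$. First I would note that, since $\mathcal D$ is fiberwise stable, it is an object of $\Cat^{\st}_T$, and that since each $U(A)$ is, up to equivalence, a stabilization counit $\Omega^\infty\colon\Sp(\mathcal C(A))\to\mathcal C(A)$ — in particular an exact functor — the $T$-functor $U$ preserves fiberwise finite limits, hence is a morphism in $\Cat^{\lex}_T$. Applying the coreflection $\ul{\Sp}^{\fib}\colon\Cat^{\lex}_T\to\Cat^{\st}_T$ of \Cref{prop:SpfibRightAdjoint} (equivalently, the equivalence of \Cref{lem:FunintoSpfib} applied to the $T$-$\infty$-categories $\mathcal D$ and $\mathcal C$) then yields an essentially unique exact $T$-functor $\widetilde U\colon\mathcal D\to\ul{\Sp}^{\fib}(\mathcal C)$ together with an identification $\Omega^\infty\circ\widetilde U\simeq U$. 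It remains to prove that $\widetilde U$ is an equivalence, which is exactly what it means for $U$ to be a fiberwise stabilization.

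Next I would reduce this to a pointwise statement: a natural transformation of functors $T\catop\to\Cat_\infty$ is an equivalence precisely when it is an equivalence at every object, so it suffices to check that $\widetilde U(A)\colon\mathcal D(A)\to\ul{\Sp}^{\fib}(\mathcal C)(A)=\Sp(\mathcal C(A))$ is an equivalence for each $A\in T$. The adjunction of \Cref{prop:SpfibRightAdjoint} is obtained by applying $\Fun(T\catop,-)$ to the non-parametrized adjunction $\Cat^{\st}_\infty\hookrightarrow\Cat^{\lex}_\infty\colon\Sp$, so its unit and counit — and therefore the passage to the adjunct $\widetilde U$ of $U$ — are computed objectwise in $T$. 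Hence $\widetilde U(A)$ is the essentially unique exact functor satisfying $\Omega^\infty\circ\widetilde U(A)\simeq U(A)$ in the sense of the universal property of stabilization, \cite{HA}*{Corollary~1.4.2.23}.

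Finally I would unwind the hypothesis that $U(A)$ is a stabilization of $\mathcal C(A)$: this provides an equivalence $e_A\colon\mathcal D(A)\iso\Sp(\mathcal C(A))$ together with an identification $\Omega^\infty\circ e_A\simeq U(A)$. Since $e_A$ is exact, the uniqueness clause of \cite{HA}*{Corollary~1.4.2.23} forces $\widetilde U(A)\simeq e_A$, so $\widetilde U(A)$ is an equivalence for every $A\in T$, and therefore $\widetilde U$ is an equivalence of $T$-$\infty$-categories compatible with the structure functors to $\mathcal C$. Thus $U$ is a fiberwise stabilization.

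I expect the only step needing genuine care to be the objectwise computation of $\widetilde U$ in the second paragraph, i.e.\ the check that the coreflection on parametrized $\infty$-categories is the pointwise one; this is a formal consequence of the fact that the relevant adjunction is $\Fun(T\catop,-)$ applied to $\Sp\dashv\mathrm{incl}$, so its (co)units are obtained by whiskering and are pointwise. Everything else is a direct application of the non-parametrized universal property of stabilization together with the objectwise detection of equivalences of $T$-$\infty$-categories, so the lemma is essentially soft.
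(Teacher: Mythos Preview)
Your proof is correct and follows essentially the same approach as the paper: both factor $U$ through $\Omega^\infty\colon\ul{\Sp}^{\fib}(\mathcal C)\to\mathcal C$ and verify that the resulting comparison $\mathcal D\to\ul{\Sp}^{\fib}(\mathcal C)$ is a pointwise equivalence. The paper packages this via the naturality square of the counit, observing that $\Omega^\infty_{\mathcal D}$ is an equivalence (as $\mathcal D$ is fiberwise stable) and that $\ul{\Sp}^{\fib}(U)(A)=\Sp(U(A))$ is an equivalence (as $U(A)$ is a stabilization), whereas you obtain $\widetilde U$ abstractly as the adjunct and then identify $\widetilde U(A)$ with the given equivalence $e_A$ via uniqueness in \cite{HA}*{Corollary~1.4.2.23}; these are the same argument in slightly different dress.
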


Put differently, if we already know fiberwise stability of the source, then fiberwise stabilizations can be checked pointwise without regards to any homotopies or higher structure.

\begin{proof}
	In the naturality square
	\begin{equation*}
	\begin{tikzcd}
		\ul\Sp^\textup{fib}(\mathcal D)\arrow[d, "\Omega^\infty"']\arrow[r, "\ul\Sp^\textup{fib}(U)"] &[1.5em] \ul\Sp^\textup{fib}(\mathcal C)\arrow[d, "\Omega^\infty"]\\
		\mathcal D\arrow[r, "U"'] &\mathcal C
	\end{tikzcd}
	\end{equation*}
	the left hand vertical arrow is an equivalence as $\mathcal D$ is fiberwise stable, and so is the top horizontal map as $\big(\ul\Sp^\textup{fib}(U)\big)(A)=\Sp\big(U(A)\big)$ and each $U(A)$ was assumed to be a stabilization. Finally, the right hand vertical map is a fiberwise stabilization by construction, so the claim follows immediately.
\end{proof}

\begin{proof}[Proof of Theorem~\ref{thm:pw-stabilization-cmon}]
	As each $\mySp^\text{gl}_G$ is stable and all restriction maps between them are exact (being right adjoints), it will suffice by the previous lemma that
	\begin{equation*}
		\cat{R}\Phi^\otimes\colon\mySp^\text{gl}_G\to\ul{\GammaS}^\text{gl, spc}_{\mathcal I,*}(G)
	\end{equation*}
	is a stabilization in the non-parametrized sense for every fixed $G$, for which it suffices by stability of the source that this induces an equivalence after applying spectrification. By Lemma~\ref{lemma:stabilization-of-connective}, it suffices to show this for the restriction to $(\mySp_G^\text{gl})_{\ge0}$, for which it is then in turn enough by Theorem~\ref{thm:g-global-delooping} that the inclusion $\textup{incl}\colon\ul{\GammaS}^\text{gl, vspc}_{\mathcal I,*}(G)\hookrightarrow\ul{\GammaS}^\text{gl, spc}_{\mathcal I,*}(G)$ of very special $G$-global $\Gamma$-spaces induces an equivalence after spectrification.

	For this we observe that the loop space functor $\Omega\colon \ul{\GammaS}^\text{gl, spc}_{\mathcal I,*}(G)\to \ul{\GammaS}^\text{gl, spc}_{\mathcal I,*}(G)$ factors through $\ul{\GammaS}_{\mathcal I,*}^\text{gl, vspc}(G)$ as for any special $G$-global $\Gamma$-space $X$ the commutative monoid structure on $\pi_0^\phi(\Omega X)$ coming from the $\Gamma$-space structure agrees with the group structure coming from $\Omega$ by the Eckmann-Hilton argument. It is then clear that for the induced functor $\Sp(\Omega)\colon\Sp(\ul{\GammaS}_{\mathcal I,*}^\text{gl, spc}(G))\to \Sp(\ul{\GammaS}_{\mathcal I,*}^\text{gl, vspc}(G))$ the composites $\Sp(\textup{incl})\Sp(\Omega)$ and $\Sp(\Omega)\Sp(\textup{incl})$ are given by the respective loop functors, so they are equivalences by stability. The claim follows by $2$-out-of-$6$.
\end{proof}

\subsection{Proof of Theorem~\ref{introthm:universal-prop-spectra}} Using the above we now easily get:

\begin{theorem}\label{thm:global-spectra-relative}
	The functor $\cat{R}\Omega^\bullet\colon\ul\mySp^\textup{gl}\to\ul\S^\textup{gl}$ exhibits $\ul\mySp^\textup{gl}$ as the equivariantly stable envelope of $\ul\S^\textup{gl}$, i.e.~for every equivariantly stable global $\infty$-category $\Cc$ postcomposition with $\cat{R}\Omega^\bullet$ induces an equivalence
	\begin{equation*}
		\ul\Fun^\textup{{\textup{$\Orb$-lex}}}_{\Glo}(\Cc, \cat{R}\Omega^\bullet)\colon\ul\Fun^{\textup{$\Orb$-ex}}_{\Glo}(\Cc, \ul\mySp^\textup{gl})\to \ul\Fun^{\textup{$\Orb$-lex}}_{\Glo}(\Cc, \ul\S^\textup{gl}).
	\end{equation*}
	Moreover, the left adjoint $\Sigma^\bullet_+$ exhibits $\ul\mySp^\textup{gl}$ as the equivariantly stable completion in the following sense: for any globally cocomplete equivariantly stable global $\infty$-category $\tcat$ precomposition with $\Sigma^\bullet_+$ yields an equivalence
	\begin{equation*}
		\ul\Fun^\textup{L}_{\Glo}(\Sigma^\bullet_+, \tcat)\colon\ul\Fun_{\Glo}^\textup{L}(\ul\mySp^\textup{gl}, \tcat)\to\ul\Fun^\textup{L}_{\Glo}(\ul\S^\textup{gl}, \tcat).
	\end{equation*}
\end{theorem}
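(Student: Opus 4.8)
The plan is to reduce Theorem~\ref{thm:global-spectra-relative} to the abstract universal properties of the $\Orb$-stabilization recorded in Theorem~\ref{thm:Stable_omnibus} and Corollary~\ref{cor:cocomplete-univ-sp-relative}, by producing an equivalence of global $\infty$-categories $\ul\mySp^\textup{gl}\simeq\ul{\Sp}^{\Orb}(\ul\S^\textup{gl})$ which carries $\cat{R}\Omega^\bullet$ to the canonical projection $\Omega^\infty$ and $\Sigma^\bullet_+$ to $\Sigma^\infty_+$. Once this identification is in hand, both halves of the theorem become formal.

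First I would assemble this equivalence from comparisons already established. By Theorem~\ref{thm:pw-stabilization-cmon}, $\ul\mySp^\textup{gl}$ is equivariantly stable and $\cat{R}\Phi^\otimes\colon\ul\mySp^\textup{gl}\to\ul\GammaS^\textup{gl, spc}_{\mathcal I,*}$ is a fiberwise stabilization of the presentable, equivariantly semiadditive global $\infty$-category $\ul\GammaS^\textup{gl, spc}_{\mathcal I,*}$; uniqueness of fiberwise stabilizations (Proposition~\ref{prop:SpfibRightAdjoint}) then gives an equivalence $\ul\mySp^\textup{gl}\simeq\ul{\Sp}^{\fib}(\ul\GammaS^\textup{gl, spc}_{\mathcal I,*})$ under which $\cat{R}\Phi^\otimes$ becomes the counit $\Omega^\infty$. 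Next, Theorem~\ref{thm:Xi-restr} together with Propositions~\ref{prop:Gamma-I-vs-EM} and~\ref{prop:special-pointed-vs-unpointed} supplies an equivalence $\ul\GammaS^\textup{gl, spc}_{\mathcal I,*}\simeq\ul\CMon^{\Orb}(\ul\Spc_{\Glo})$ compatible with the forgetful functors $\mathbb U$ (and the free functors $\mathbb P$). Applying $\ul{\Sp}^{\fib}$, recalling $\ul{\Sp}^{\Orb}(-)=\ul{\Sp}^{\fib}(\ul\CMon^{\Orb}(-))$ (Definition~\ref{def:PStabilization}), and transporting along the equivalence $\ul\Spc_{\Glo}\simeq\ul\S^\textup{gl}$ of Theorem~\ref{thm:global-spaces-comparison}, we obtain the sought equivalence $\ul\mySp^\textup{gl}\simeq\ul{\Sp}^{\Orb}(\ul\S^\textup{gl})$. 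Since $\Omega^\infty\colon\ul{\Sp}^{\Orb}(\ul\S^\textup{gl})\to\ul\S^\textup{gl}$ is by construction the composite of the fiberwise $\Omega^\infty$ with $\mathbb U$, these identifications send it to $\mathbb U\circ\cat{R}\Phi^\otimes$, which equals $\cat{R}\Omega^\bullet$ by Remark~\ref{rk:associated-gamma-U} (after using the equivalence $\ev_\omega\colon\ul\S^\textup{gl}_{\mathcal I}\simeq\ul\S^\textup{gl}$ of Corollary~\ref{cor:EM-vs-I} to move between the two unstable models, exactly as in Proposition~\ref{prop:looping-parametrized-ra}). Passing to left adjoints and invoking uniqueness of parametrized adjoints, the same equivalence sends $\Sigma^\infty_+$ to $\Sigma^\bullet_+$.

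Granting this, the envelope statement follows from Theorem~\ref{thm:Stable_omnibus} applied with $\ul\S^\textup{gl}$ in place of $\Cc$: this global $\infty$-category is presentable by Theorem~\ref{thm:global-spaces-comparison}, hence admits finite $\Orb$-limits by Remark~\ref{rmk:PresentableHasLimits}, so for every equivariantly stable $\tcat$ postcomposition with $\Omega^\infty=\cat{R}\Omega^\bullet$ is the asserted equivalence $\ul\Fun^{\textup{$\Orb$-ex}}_{\Glo}(\tcat,\ul\mySp^\textup{gl})\iso\ul\Fun^{\textup{$\Orb$-lex}}_{\Glo}(\tcat,\ul\S^\textup{gl})$. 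For the completion statement, the case of presentable $\tcat$ is again contained in Theorem~\ref{thm:Stable_omnibus}, while for a general globally cocomplete equivariantly stable $\tcat$ one invokes Corollary~\ref{cor:cocomplete-univ-sp-relative} with $\mathcal S=\ul\S^\textup{gl}$ (which is equivalent to $\ul\Spc_{\Glo}$): precomposition with $\Sigma^\infty_+=\Sigma^\bullet_+$ gives the equivalence $\ul\Fun^\textup{L}_{\Glo}(\ul\mySp^\textup{gl},\tcat)\iso\ul\Fun^\textup{L}_{\Glo}(\ul\S^\textup{gl},\tcat)$.

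The main obstacle is the compatibility bookkeeping in the second step: one must verify that each equivalence in the chain intertwines the relevant structure functor — $\cat{R}\Phi^\otimes$ with the fiberwise $\Omega^\infty$, the two copies of $\mathbb U$, and the two unstable models linked by $\ev_\omega$ — so that the composite equivalence genuinely carries $\cat{R}\Omega^\bullet$ to $\Omega^\infty$ rather than merely matching source to target. Essentially all of these compatibilities are already packaged into the cited results (Theorems~\ref{thm:pw-stabilization-cmon} and~\ref{thm:Xi-restr}, Remark~\ref{rk:associated-gamma-U}), so the remaining work is to thread them together coherently; a minor point of care is that $\Sigma^\bullet_+$ and $\cat{R}\Omega^\bullet$ live on the $\mathcal I$-model $\ul\S^\textup{gl}_{\mathcal I}$ at the model-categorical level and are only transported to $\ul\S^\textup{gl}$ via $\ev_\omega$, so that identification must be kept consistent throughout.
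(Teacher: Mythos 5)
Your proposal follows essentially the same route as the paper: reduce both halves to the abstract universal properties of the $\Orb$-stabilization (Theorem~\ref{thm:Stable_omnibus} and Corollary~\ref{cor:cocomplete-univ-sp-relative}), then identify $\cat{R}\Omega^\bullet$ with the canonical $\Omega^\infty\colon\ul{\Sp}^{\Orb}(\ul\S^\textup{gl})\to\ul\S^\textup{gl}$ by chaining the comparisons established for spaces, $\Gamma$-spaces, and spectra (Theorems~\ref{thm:global-spaces-comparison}, \ref{thm:Xi-restr}, \ref{thm:pw-stabilization-cmon}, together with Corollaries~\ref{cor:EM-vs-I} and~\ref{cor:univ-property-Gamma-I}). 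The paper phrases this as ``show two triangle diagrams commute'' rather than ``construct the equivalence $\ul\mySp^\textup{gl}\simeq\ul{\Sp}^{\Orb}(\ul\S^\textup{gl})$ and transport,'' but these are the same thing.

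The one place you are too quick is exactly the step you flag as bookkeeping. You write that the composite equivalence carries $\Omega^\infty$ to $\mathbb U\circ\cat{R}\Phi^\otimes$, ``which equals $\cat{R}\Omega^\bullet$ by Remark~\ref{rk:associated-gamma-U}.'' But Remark~\ref{rk:associated-gamma-U} only records a natural equivalence $\mathbb U\,\cat{R}\Phi^\otimes\simeq\cat{R}\Omega^\bullet$ \emph{for each fixed $G$}; it does not a priori give a natural equivalence of \emph{global} functors, which is what you need to intertwine the two counits. Upgrading the fixed-$G$ identification to a global one is the genuinely non-routine step, and the paper supplies a short argument you are missing: pass (by uniqueness of parametrized adjoints) to the triangle of \emph{left} adjoints $\Sigma^\bullet_+\simeq\mathcal E^\otimes\mathbb P$; since $\ul\S^\textup{gl}$ is free on $*\in\S^\textup{gl}_1$ (Theorem~\ref{thm:global-spaces}/Corollary~\ref{cor:SglI-univ-prop}), commutativity of this triangle is detected by chasing $*$, which lands the whole question at the trivial group; then pass back to right adjoints at the level of ordinary $\infty$-categories, where Remark~\ref{rk:associated-gamma-U} does apply. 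Without some such argument the claim that the compatibilities are ``already packaged into the cited results'' is not correct as stated, so you should supply this reduction (or an alternative) explicitly.
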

\begin{proof}
	By \Cref{thm:Stable_omnibus} and Corollary~\ref{cor:cocomplete-univ-sp-relative}, respectively, together with Corollary \ref{cor:univ-property-Gamma-I} it will suffice to show that the diagrams
	\begin{equation*}
	\begin{tikzcd}[column sep=small]
		\ul\mySp^\text{gl}\arrow[rr, "\cat{R}\Omega^\bullet"]\arrow[dr, bend right=15pt, "\cat{R}\Phi^\otimes"'] && \ul\S^\text{gl}\\
		& \ul\GammaS^\text{gl, spc}_{\mathcal I,*}\arrow[ur, bend right=15pt, "\mathbb U"']
	\end{tikzcd}
	\qquad\text{and}\qquad
	\begin{tikzcd}[column sep=small]
		\ul\mySp^\text{gl}\arrow[from=rr, "\Sigma^\bullet_+"']\arrow[from=dr, bend left=15pt, "\mathcal E^\otimes"] && \ul\S^\text{gl}\\
		& \ul\GammaS^\text{gl, spc}_{\mathcal I,*}\arrow[from=ur, bend left=15pt, "\mathbb P"]
	\end{tikzcd}
	\end{equation*}
	of global functors commute up to equivalence.

	By uniqueness of adjoints, it suffices to prove this for the second diagram, for which it is enough by the universal property of global spaces to chase through $*\in\S^\text{gl}_1$; in particular, it suffices to show that this commutes after evaluation at the trivial group. But by uniqueness of adjoints again, it is then enough to prove this for the diagram on the left instead, where this is immediate from Remark~\ref{rk:associated-gamma-U}.
\end{proof}

Together with Theorem~\ref{thm:global-spaces} we then immediately get Theorem~\ref{introthm:universal-prop-spectra} from the introduction:

\begin{theorem}\label{thm:global-spectra}
	Let $\mathcal D$ be any globally cocomplete equivariantly stable global $\infty$-category. Then evaluation at the global sphere spectrum $\mathbb S\cong\Sigma^\bullet_+(*)\in\mySp^\textup{gl}_1$ defines an equivalence
	\begin{equation*}
		\ul\Fun_{\Glo}^\textup{L}(\ul\mySp^\textup{gl},\mathcal D)\xrightarrow{\;\simeq\;}\mathcal D.
	\end{equation*}
	Put differently, $\ul\mySp^\textup{gl}$ is the free globally cocomplete (or presentable) $\Orb$-stable global $\infty$-category on one generator (namely, the global sphere spectrum $\mathbb S$).\qed
\end{theorem}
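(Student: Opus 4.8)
The plan is to deduce the statement formally by composing the universal property of $\ul\S^\textup{gl}$ (Theorem~\ref{thm:global-spaces}) with the relative universal property of $\ul\mySp^\textup{gl}$ (Theorem~\ref{thm:global-spectra-relative}). Let $\mathcal D$ be a globally cocomplete equivariantly stable global $\infty$-category; since any presentable equivariantly stable global $\infty$-category is in particular globally cocomplete, this covers the parenthetical case as well. First I would invoke the second half of Theorem~\ref{thm:global-spectra-relative}, which says that precomposition with the global functor $\Sigma^\bullet_+\colon\ul\S^\textup{gl}\to\ul\mySp^\textup{gl}$ induces an equivalence
\[
\ul\Fun^\textup{L}_{\Glo}(\Sigma^\bullet_+,\mathcal D)\colon\ul\Fun^\textup{L}_{\Glo}(\ul\mySp^\textup{gl},\mathcal D)\xrightarrow{\;\simeq\;}\ul\Fun^\textup{L}_{\Glo}(\ul\S^\textup{gl},\mathcal D).
\]
Then I would apply Theorem~\ref{thm:global-spaces}, which gives an equivalence $\ev_*\colon\ul\Fun^\textup{L}_{\Glo}(\ul\S^\textup{gl},\mathcal D)\xrightarrow{\;\simeq\;}\mathcal D$ provided by evaluation at the terminal object $*\in\ul\S^\textup{gl}(1)$. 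Composing these two equivalences yields the desired equivalence out of $\ul\Fun^\textup{L}_{\Glo}(\ul\mySp^\textup{gl},\mathcal D)$.

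It then only remains to identify this composite with evaluation at $\mathbb S$. Unwinding, the composite sends a globally cocontinuous functor $F\colon\ul\mySp^\textup{gl}\to\mathcal D$ first to $F\circ\Sigma^\bullet_+$ and then evaluates the latter at $*$, giving $F(\Sigma^\bullet_+(*))$; this is a routine instance of the naturality of evaluation functors, so that the composite is $\ev_{\Sigma^\bullet_+(*)}$. It therefore suffices to observe that $\Sigma^\bullet_+(*)\simeq\mathbb S$ in $\mySp^\textup{gl}_1$: under the equivalence $\ul\S^\textup{gl}\simeq\ul\Spc_{\Glo}$ of Theorem~\ref{thm:global-spaces-comparison} the terminal object corresponds to the terminal object, which at $G=1$ is modelled, via Corollary~\ref{cor:EM-vs-I}, by the constant $\mathcal I$-simplicial set at a point; feeding this into the explicit formula $(\Sigma^\bullet_+X)(A)=S^A\wedge X(A)_+$ gives the symmetric spectrum $A\mapsto S^A$, which is exactly the sphere spectrum $\mathbb S$. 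The reformulation in terms of freeness under global colimits is then just a restatement.

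I expect no serious obstacle: all the content is packaged in the two cited theorems, and what remains is bookkeeping. The one point requiring a little care is making the identification $\Sigma^\bullet_+(*)\simeq\mathbb S$ precise at the level of the model-categorical constructions and checking its compatibility with the chain of equivalences used above; but this is a direct computation with suspension spectra rather than a genuine difficulty, and it is anyway only needed to name the generator, not for the equivalence itself.
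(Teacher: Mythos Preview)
Your proposal is correct and matches the paper's approach exactly: the paper states this theorem with a \qed and the single sentence ``Together with Theorem~\ref{thm:global-spaces} we then immediately get Theorem~\ref{introthm:universal-prop-spectra},'' which is precisely the composition of Theorem~\ref{thm:global-spectra-relative} with Theorem~\ref{thm:global-spaces} that you spell out. Your additional identification of $\Sigma^\bullet_+(*)$ with $\mathbb S$ is a helpful elaboration of what the paper records only as the isomorphism $\mathbb S\cong\Sigma^\bullet_+(*)$ in the statement itself.
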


Comparing universal properties we can also reformulate this as follows:

\begin{corollary}\label{cor:comparison-stable-global}
	The essentially unique left adjoint functor $\ul\Sp_{\Glo}^{\Orb}\to\ul\mySp^\textup{gl}$ sending $\Sigma^\infty_+(*)$ to $\mathbb S$ is an equivalence.\qed
\end{corollary}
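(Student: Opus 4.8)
The plan is to deduce the corollary purely formally by comparing the two universal properties already proved. By Theorem~\ref{thm:global-spectra}, the global $\infty$-category $\ul\mySp^\textup{gl}$ is the free globally cocomplete equivariantly stable global $\infty$-category on the generator $\mathbb S\cong\Sigma^\bullet_+(*)$; by Theorem~\ref{thm:cocomplete-univ-sp} (specialized to $T=\Glo$, $P=\Orb$), the global $\infty$-category $\ul\Sp_{\Glo}^{\Orb}=\ul\Sp^{\Orb}(\ul\Spc_{\Glo})$ enjoys the identical universal property with generator $\Sigma^\infty_+(*)$. Here ``equivariantly stable'' is ``$\Orb$-stable'' by Example~\ref{ex:eq_stability}. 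So both objects corepresent the functor $\mathcal D\mapsto\mathcal D$ on globally cocomplete $\Orb$-stable global $\infty$-categories and globally cocontinuous functors, and the statement is then the usual uniqueness of corepresenting objects.

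Concretely, I would first record that both sides satisfy the hypotheses needed to apply each universal property to the other. The global $\infty$-category $\ul\Spc_{\Glo}$ is presentable (Example~\ref{ex:T_Spaces_Presentable}), hence so is $\ul\Sp_{\Glo}^{\Orb}$ by Lemma~\ref{lem:SpPresentableAdjunction}, and presentability gives global cocompleteness (Definition~\ref{def:presentableTCategory}) and $\Orb$-stability (it is $\Orb$-semiadditive by \Cref{lem:SpfibAgainPSemi} and fiberwise stable). For $\ul\mySp^\textup{gl}$, global cocompleteness and equivariant stability follow from Theorem~\ref{thm:pw-stabilization-cmon}, which identifies it with $\ul\Sp^{\Orb}(\ul\GammaS^\textup{gl, spc}_{\mathcal I,*})$, together with presentability of $\ul\GammaS^\textup{gl, spc}_{\mathcal I,*}\simeq\ul\CMon^{\Orb}_{\Glo}$ (Proposition~\ref{prop:PsemiadditiveFunctorsPresentable}); alternatively one may use compact generation from Theorem~\ref{thm:g-global-t-structure} and the parametrized adjoint of Proposition~\ref{prop:looping-parametrized-ra}.

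With this in hand, applying Theorem~\ref{thm:cocomplete-univ-sp} to $\mathcal D=\ul\mySp^\textup{gl}$ shows that evaluation at $\Sigma^\infty_+(*)$ induces an equivalence $\ul\Fun^\textup{L}_{\Glo}(\ul\Sp_{\Glo}^{\Orb},\ul\mySp^\textup{gl})\xrightarrow{\simeq}\ul\mySp^\textup{gl}$; this produces the essentially unique globally cocontinuous $F\colon\ul\Sp_{\Glo}^{\Orb}\to\ul\mySp^\textup{gl}$ with $F(\Sigma^\infty_+(*))\simeq\mathbb S$ appearing in the statement. Dually, Theorem~\ref{thm:global-spectra} applied to $\mathcal D=\ul\Sp_{\Glo}^{\Orb}$ produces an essentially unique globally cocontinuous $G\colon\ul\mySp^\textup{gl}\to\ul\Sp_{\Glo}^{\Orb}$ with $G(\mathbb S)\simeq\Sigma^\infty_+(*)$. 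The composite $G\circ F$ is globally cocontinuous and sends $\Sigma^\infty_+(*)$ to $\Sigma^\infty_+(*)$, so by the uniqueness clause of Theorem~\ref{thm:cocomplete-univ-sp} (with $\mathcal D=\ul\Sp_{\Glo}^{\Orb}$) it is equivalent to the identity; symmetrically $F\circ G\simeq\id$ by Theorem~\ref{thm:global-spectra} with $\mathcal D=\ul\mySp^\textup{gl}$. Hence $F$ is an equivalence. There is no genuinely hard step: all the substance lives in the two universal properties already established, and the only point requiring care is checking that the ambient hypotheses (global cocompleteness and equivariant stability) hold for both global $\infty$-categories so that each universal property may legitimately be tested against the other.
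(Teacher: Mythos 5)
Your proposal is correct and is exactly the argument the paper intends by its remark ``Comparing universal properties we can also reformulate this as follows'' followed by the \qed: both $\ul\Sp_{\Glo}^{\Orb}$ (via Theorem~\ref{thm:cocomplete-univ-sp}) and $\ul\mySp^\textup{gl}$ (via Theorem~\ref{thm:global-spectra}) corepresent the same functor on globally cocomplete equivariantly stable global $\infty$-categories, and the standard uniqueness-of-free-objects argument identifies them. Your added hypothesis checks (presentability/global cocompleteness and equivariant stability of both sides) are the right points to verify and are correctly sourced.
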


\appendix
\numberwithin{theorem}{section}
\section{Slices of \texorpdfstring{$(2,1)$}{(2,1)}-categories}\label{appendix:slices}
In this short appendix we will prove that for a strict $(2,1)$-category the $\infty$-categorical and $2$-categorical slices agree. More precisely:

\begin{proposition}\label{prop:comparison-slices}
Let $\mathscr C$ be a strict $(2,1)$-category. Then the cocartesian fibration $\ev_1\colon\nerve_\Delta(\mathscr C)^{\Delta^1}\to\nerve_\Delta(\mathscr C)$ classifies the homotopy coherent nerve of the composition
\begin{equation*}
	\mathscr C\xrightarrow{\mathscr C_{/\bullet}} \cat{Cat}_{(2,1)}\xrightarrow{\nerve_\Delta}\cat{Cat}_\infty.
\end{equation*}
\end{proposition}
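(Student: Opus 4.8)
The plan is to prove the statement as an equivalence of cocartesian fibrations over $\nerve_\Delta(\mathscr C)$. On one side, the target functor $\ev_1\colon\nerve_\Delta(\mathscr C)^{\Delta^1}\to\nerve_\Delta(\mathscr C)$ is a cocartesian fibration, whose cocartesian edges are precisely the squares whose top horizontal edge is an equivalence, and it is classified by the functor sending an object $c$ to its $\infty$-categorical overcategory $\nerve_\Delta(\mathscr C)_{/c}$; all of this is standard, see \cite{HTT}. On the other side, since $\mathscr C_{/\bullet}\colon\mathscr C\to\cat{Cat}_{(2,1)}$ is a strict $2$-functor, \cite{grothendieck-unstraightening}*{Proposition~2.15} (applied exactly as in Construction~\ref{constr:grothendieck-2-functor}) tells us that $\pi\colon\nerve_\Delta(\tcatUn{}\mathscr C_{/\bullet})\to\nerve_\Delta(\mathscr C)$ is a cocartesian fibration classifying $\nerve_\Delta\circ\mathscr C_{/\bullet}$. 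So it suffices to produce a functor $\Phi\colon\nerve_\Delta(\tcatUn{}\mathscr C_{/\bullet})\to\nerve_\Delta(\mathscr C)^{\Delta^1}$ over $\nerve_\Delta(\mathscr C)$ which preserves cocartesian edges and restricts to an equivalence on each fibre; by \cite{HTT} such a $\Phi$ is automatically an equivalence.

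The first step is to unwind Construction~\ref{constr:grothendieck-2-functor} for $F=\mathscr C_{/\bullet}$. Since $\mathscr C_{/\bullet}$ sends a morphism $h$ to post-composition by $h$, which acts on a morphism of the slice $2$-category by retaining its underlying $1$-cell and whiskering its structure $2$-cell, a direct computation identifies $\tcatUn{}\mathscr C_{/\bullet}$ with the strict arrow $2$-category $\Ar(\mathscr C)=\Fun^{\mathrm{ps}}([1],\mathscr C)$: objects are the morphisms of $\mathscr C$, a $1$-morphism from $f\colon z\to c$ to $g\colon w\to d$ is a triple $(h\colon c\to d,\ k\colon z\to w,\ \mu\colon gk\Rightarrow hf)$ --- a pseudo-commutative square --- with composition given by pasting of squares, and $2$-cells are modifications; this identification is compatible with the projections to $\mathscr C$, which on both sides is the target functor. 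It then remains to construct $\Phi\colon\nerve_\Delta(\Ar(\mathscr C))\to\nerve_\Delta(\mathscr C)^{\Delta^1}$. On objects, $\Phi$ sends an arrow $f\colon z\to c$ to the corresponding edge of $\nerve_\Delta(\mathscr C)$; on a $1$-cell $(h,k,\mu)$ it sends it to the coherent square $\mathfrak C(\Delta^1\times\Delta^1)\to\mathscr C$ with edges $f,g,k,h$, diagonal $hf$, and filling $2$-cell $\mu$ (here $\mathfrak C$ denotes the left adjoint to $\nerve_\Delta$); and in general an $n$-simplex of the source, being a simplicial functor $\mathfrak C\Delta^n\to\Ar(\mathscr C)$, must be carried to a simplicial functor $\mathfrak C(\Delta^n\times\Delta^1)\to\mathscr C$. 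That $\Phi$ lies over $\nerve_\Delta(\mathscr C)$ via $\ev_1$ and the target projection is clear from the construction, and $\Phi$ preserves cocartesian edges because a cocartesian edge of $\pi$ is exactly one whose component $k$ is an equivalence, which $\Phi$ sends to a square with equivalence top edge.

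For the fibrewise check, the fibre of $\pi$ over an object $c$ is $\nerve_\Delta(\mathscr C_{/c})$ --- this is the strict fibre, identified with the value of the classifying functor as recorded in Construction~\ref{constr:grothendieck-2-functor} --- while the fibre of $\ev_1$ over $c$ is the overcategory $\nerve_\Delta(\mathscr C)_{/c}$. The induced functor $\Phi_c\colon\nerve_\Delta(\mathscr C_{/c})\to\nerve_\Delta(\mathscr C)_{/c}$ is a bijection on objects, both sides being the morphisms of $\mathscr C$ with target $c$, and on mapping spaces it is the canonical comparison between $\nerve(\Hom_{\mathscr C_{/c}}(f,g))$ and the homotopy fibre of $\Hom_{\nerve_\Delta\mathscr C}(z,w)\xrightarrow{g\circ-}\Hom_{\nerve_\Delta\mathscr C}(z,c)$ taken over $f$; since all of these mapping spaces are nerves of groupoids, $\mathscr C$ being a $(2,1)$-category, this comparison is an equivalence by the standard description of mapping spaces in a slice $\infty$-category. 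Hence $\Phi_c$ is an equivalence, which completes the proof.

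The main obstacle I anticipate is the construction of $\Phi$ itself, because of a pseudo-versus-strict mismatch: the evident evaluation map $\tcatUn{}\mathscr C_{/\bullet}\times[1]\to\mathscr C$ is only a pseudofunctor --- the two factorizations of the diagonal $1$-cell of a square yield $hf$ and $gk$, which differ by the structure $2$-cell $\mu$ --- so one cannot simply apply $\nerve_\Delta$ to it. Making $\Phi$ precise will therefore require either an explicit combinatorial model of $\mathfrak C(\Delta^n\times\Delta^1)$, allowing one to write down the associated simplicial functor $\mathfrak C(\Delta^n\times\Delta^1)\to\mathscr C$ directly, or a strictification of that evaluation pseudofunctor; by contrast the fibrewise comparison in the previous paragraph is a direct, if slightly tedious, verification.
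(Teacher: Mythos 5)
Your plan agrees with the paper's in its overall shape — identify $\tcatUn{}\mathscr C_{/\bullet}$ with an arrow $2$-category, produce a map over $\nerve_\Delta(\mathscr C)$ to or from $\nerve_\Delta(\mathscr C)^{\Delta^1}$, and check it's a fibrewise equivalence — and you correctly flag the pseudo-versus-strict mismatch as the crux. But you do not actually resolve that mismatch: your $\Phi$ is defined on objects and $1$-cells only, and for $n$-simplices you merely state the type of the map ("a simplicial functor $\mathfrak C(\Delta^n\times\Delta^1)\to\mathscr C$") without constructing it, deferring this to an unspecified combinatorial model of $\mathfrak C(\Delta^n\times\Delta^1)$ or a strictification. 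That deferral is the whole content of the proposition.

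The paper gets around this obstacle by working in the other direction, from $\nerve_\Delta(\mathscr C)^{\Delta^1}$ to $\nerve_\Delta(\mathscrGr)$, and by exploiting that both simplicial sets are strictly $3$-coskeletal (since $\nerve_\Delta$ of a strict $(2,1)$-category is). This lets one pin each of them down by an explicit description of its $0$-, $1$-, $2$-simplices and of the horn-filling condition $\partial\Delta^3\hookrightarrow\Delta^3$, so that the comparison map can be written out in closed form: on vertices it is the identity, on edges it pastes the two $2$-cells $\theta$ and $(\theta')^{-1}$ that comprise an edge of $\nerve_\Delta(\mathscr C)^{\Delta^1}$ into a single $2$-cell, and on $2$-simplices it is again "send the transformations to themselves." Going in this direction is decisive, because $\nerve_\Delta(\mathscr C)^{\Delta^1}$ records the diagonal of each square together with \emph{both} flanking $2$-cells, so the map is purely a forgetting operation and there is no coherence to supply; in your direction one would have to choose a diagonal and a splitting of a single $2$-cell into two, which is exactly where the pseudo/strict tension bites. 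Your fibrewise argument via mapping spaces in the slice would work (and uses that the hom-groupoid of the $2$-categorical slice is precisely the $1$-categorical model of the homotopy fibre of $g_\ast$), but the paper's explicit $3$-coskeletal descriptions make this check a short direct computation with groupoids.

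In short: right strategy, correct diagnosis of the difficulty, but the key construction is left open, and the paper's choice of direction together with the coskeletality observation is precisely what closes that gap.
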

\begin{proof}
We begin by making the $2$-categorical Grothendieck construction $\pi\colon\mathscrGr\to\mathscr C$ (Construction~\ref{constr:grothendieck-2-functor}) of the functor $\mathscr C_{/\bullet}\colon\mathscr C\to\cat{Cat}_{(2,1)}$ explicit, which, upon passing to homotopy coherent nerves, will then yield a concrete model of the unstraightening:
\begin{enumerate}[(1)]
	\item An object of $\mathscrGr$ is a morphism $f\colon X\to Y$ in $\mathscr C$.
	\item A morphism $f\to g$ is a diagram
	\begin{equation}\label{diag:morphism-G}
	\begin{tikzcd}
		X_1\arrow[r, "x"]\arrow[d, "f"'] & X_2\arrow[d, "g"]\\
		Y_1\arrow[r, "y"']\arrow[ur, Rightarrow, "\theta"] & Y_2
	\end{tikzcd}
	\end{equation}
	(the pair $(x,\theta)$ being a morphism from the pushforward $\mathscr C_{/y}(f)$ to $g$ in $\mathscr C_{/Y_2}$). Composition of morphisms is given by composition of $1$-cells and pasting of $2$-cells in $\mathscr C$.
	\item A $2$-cell between two such morphisms $(x,\theta,y)$, $(x',\theta',y')$ is a pair of a $2$-cell $\sigma\colon x\Rightarrow x'$ and a $2$-cell $\tau\colon y\Rightarrow y'$ such that the pastings
	\begin{equation*}
	\begin{tikzcd}[column sep=large, row sep=large]
		X_1\arrow[d, "f"']\arrow[r, bend left=15pt, "x'", ""'{yshift=2pt,name=b},yshift=2pt]\arrow[r, bend right=15pt,yshift=-2pt, "x"', ""{yshift=-2pt,name=a}]\arrow[from=a,to=b,Rightarrow,"\sigma\;"] & X_2\arrow[d, "g"]\\
		Y_1\arrow[r, "y"']\arrow[ur, Rightarrow, "\theta"'] & Y_2
	\end{tikzcd}
	\qquad\text{and}\qquad
	\begin{tikzcd}[column sep=large, row sep=large]
		X_1\arrow[d, "f"']\arrow[r, "x'"] & X_2\arrow[d, "g"]\\
		Y_1\arrow[r, bend left=15pt, "y'", ""'{yshift=0pt,name=b},yshift=2pt]\arrow[r, bend right=15pt,yshift=-2pt, "y"', ""{yshift=-2pt,name=a}]\arrow[ur, Rightarrow, "\theta'", yshift=3pt] & Y_2\arrow[from=a,to=b,Rightarrow,"\tau\;"]
	\end{tikzcd}
	\end{equation*}
	agree. Horizontal and vertical composition of $2$-cells is given by horizontal and vertical composition, respectively, in $\mathscr C$.
\end{enumerate}
The projection $\pi\colon\mathscrGr\to \mathscr C$ sends an object $f\colon X\to Y$ to $Y$, a morphism $(\ref{diag:morphism-G})$ to $y$, and a $2$-cell $(\sigma,\tau)$ to $\tau$.

The homotopy coherent nerve $\nerve_\Delta(\mathscrGr)$ is then a strictly $3$-coskeletal simplicial set, hence it suffices to describe the $2$-truncation and to characterize which diagrams $\partial\Delta^3\to\nerve_\Delta(\mathscr G)$ extend over $\Delta^3$. Unravelling the definitions, we get:
\begin{enumerate}[(1)]
	\item A vertex of $\nerve_\Delta(\mathscr G)$ is a morphism $f\colon X\to Y$ in $\mathscr C$.
	\item An edge $f\to g$ in $\nerve_\Delta(\mathscr G)$ is a diagram $(\ref{diag:morphism-G})$.
	\item A $2$-simplex with boundary
	\begin{equation*}
	\begin{tikzcd}
		X_0\arrow[d, "f_0"'] \arrow[r, "x_{01}"] & X_1\arrow[d, "f_1"]\arrow[from=dl, Rightarrow, "\theta_{01}"]\\
		Y_0\arrow[r, "y_{01}"'] & Y_1
	\end{tikzcd}\qquad
	\begin{tikzcd}
		X_1\arrow[d, "f_1"'] \arrow[r, "x_{12}"] & X_2\arrow[d, "f_2"]\arrow[from=dl, Rightarrow, "\theta_{12}"]\\
		Y_1\arrow[r, "y_{12}"'] & Y_2
	\end{tikzcd}\qquad
	\begin{tikzcd}
		X_0\arrow[d, "f_0"'] \arrow[r, "x_{02}"] & X_2\arrow[d, "f_2"]\arrow[from=dl, Rightarrow, "\theta_{02}"]\\
		Y_0\arrow[r, "y_{02}"'] & Y_2
	\end{tikzcd}
	\end{equation*}
	amounts to the data of a natural transformation $\sigma\colon x_{02}\Rightarrow x_{12}x_{01}$ and a natural transformation $\tau\colon y_{02}\Rightarrow y_{12}y_{01}$ such that the two pastings
	\begin{equation*}
	\begin{tikzcd}
		\phantom{X_0}\\[-1ex]
		X_0\arrow[d, "f_0"'] \arrow[r, "x_{01}"] & X_1\arrow[d, "f_1"{description}]\arrow[from=dl, Rightarrow, "\theta_{01}"] \arrow[r, "x_{12}"] & X_2\arrow[d, "f_2"]\arrow[from=dl, Rightarrow, "\theta_{12}"]\\
		Y_0\arrow[rr, bend right=1.8cm, "y_{02}"', ""{name=X}]\arrow[r, "y_{01}"{description}] & \arrow[from=X, Rightarrow, "\tau"']Y_1 \arrow[r, "y_{12}"{description}] & Y_2
	\end{tikzcd}\qquad\text{and}\qquad
	\begin{tikzcd}
		& X_1\arrow[dr, bend left=15pt, "x_{12}"]\\[-1ex]
		X_0\arrow[ur, bend left=15pt, "x_{01}"]\arrow[d, "f_0"'] \arrow[rr, "x_{02}"{description}] &\strut\arrow[u, "\sigma"', Rightarrow]& X_2\arrow[d, "f_2"]\arrow[from=dll, Rightarrow, "\theta_{02}"]\\
		Y_0\arrow[rr, "y_{02}"'] && Y_2
	\end{tikzcd}
	\end{equation*}
	agree.
	\item A diagram $\partial\Delta^3\to\nerve_\Delta(\mathscrGr)$ corresponding to
	\begin{equation}\label{diag:del-Delta-3}
	\hskip-15pt
	\begin{aligned}
	&\begin{tikzcd}[column sep=.9em]
		& X_1\arrow[dr, bend left=15pt, "x_{12}"]\\
		X_0\arrow[ur, bend left=15pt, "x_{01}"]\arrow[rr, "x_{02}"'] &\phantom{x}\arrow[u, Rightarrow, "\sigma_{012}"']& X_2
	\end{tikzcd}\kern.5em
	\begin{tikzcd}[column sep=.9em]
		& X_1\arrow[dr, bend left=15pt, "x_{13}"]\\
		X_0\arrow[ur, bend left=15pt, "x_{01}"]\arrow[rr, "x_{03}"'] &\phantom{x}\arrow[u, Rightarrow, "\sigma_{013}"']& X_3
	\end{tikzcd}\kern.5em
	\begin{tikzcd}[column sep=.9em]
		& X_2\arrow[dr, bend left=15pt, "x_{23}"]\\
		X_0\arrow[ur, bend left=15pt, "x_{02}"]\arrow[rr, "x_{03}"'] &\phantom{x}\arrow[u, Rightarrow, "\sigma_{023}"']& X_3
	\end{tikzcd}\kern.5em
	\begin{tikzcd}[column sep=.9em]
		& X_2\arrow[dr, bend left=15pt, "x_{23}"]\\
		X_1\arrow[ur, bend left=15pt, "x_{12}"]\arrow[rr, "x_{13}"'] &\phantom{x}\arrow[u, Rightarrow, "\sigma_{123}"']& X_3
	\end{tikzcd}
	\\
	&\begin{tikzcd}[column sep=1.275em]
		& Y_1\arrow[dr, bend left=15pt, "y_{12}"]\\
		Y_0\arrow[ur, bend left=15pt, "y_{01}"]\arrow[rr, "y_{02}"'] &\phantom{y}\arrow[u, Rightarrow, "\tau_{012}"']& Y_2
	\end{tikzcd}\kern.5em
	\begin{tikzcd}[column sep=1.275em]
		& Y_1\arrow[dr, bend left=15pt, "y_{13}"]\\
		Y_0\arrow[ur, bend left=15pt, "y_{01}"]\arrow[rr, "y_{03}"'] &\phantom{y}\arrow[u, Rightarrow, "\tau_{013}"']& Y_3
	\end{tikzcd}\kern.5em
	\begin{tikzcd}[column sep=1.275em]
		& Y_2\arrow[dr, bend left=15pt, "y_{23}"]\\
		Y_0\arrow[ur, bend left=15pt, "y_{02}"]\arrow[rr, "y_{03}"'] &\phantom{y}\arrow[u, Rightarrow, "\tau_{023}"']& Y_3
	\end{tikzcd}\kern.5em
	\begin{tikzcd}[column sep=1.275em]
		& Y_2\arrow[dr, bend left=15pt, "y_{23}"]\\
		Y_1\arrow[ur, bend left=15pt, "y_{12}"]\arrow[rr, "y_{13}"'] &\phantom{y}\arrow[u, Rightarrow, "\tau_{123}"']& Y_3
	\end{tikzcd}
	\hskip-10pt
	\end{aligned}
	\end{equation}
	extends to $\Delta^3$ if and only if the pastings
	\begin{equation*}
	\begin{tikzcd}[column sep=large, row sep=large]
		X_0\arrow[dr, "x_{02}"{description,name=y},""{name=a}, ""'{name=b}]\arrow[r, "x_{01}"]\arrow[d, "x_{03}"', ""{name=x}] & X_1\arrow[d, "x_{12}"]\arrow[from=a, Rightarrow, "\sigma_{012}"] \arrow[from=x,to=y,Rightarrow, "\sigma_{023}"']\\
		X_3 & \arrow[l, "x_{23}"] X_2
	\end{tikzcd}\qquad\text{and}\qquad
	\begin{tikzcd}[column sep=large, row sep=large]
		X_0\arrow[r, "x_{01}"]\arrow[d, "x_{03}"',""{name=x}] & X_1\arrow[d, "x_{12}"]\arrow[dl, "x_{13}"{description,name=y}, ""{name=a}]\\
		X_3 & \arrow[l, "x_{23}"] X_2\arrow[from=a, Rightarrow, "\!\sigma_{123}"]\arrow[from=x,to=y,Rightarrow, "\sigma_{013}"]
	\end{tikzcd}
	\end{equation*}
	agree, and likewise for the $\tau$'s. Put differently, $\partial\Delta^3\to\nerve_\Delta(\mathscrGr)$ extends over $\Delta^3$ if and only if the two maps $\partial\Delta^3\to\nerve_\Delta(\mathscr C)$ defined by $(\ref{diag:del-Delta-3})$ extend over $\Delta^3$.
\end{enumerate}

The degeneracy map $\nerve_\Delta(\mathscrGr)_0\to\nerve_\Delta(\mathscrGr)_1$ is given by sending $f\colon X\to Y$ to the square
\begin{equation*}
	\begin{tikzcd}
		X\arrow[r, "\id"]\arrow[d,"f"'] & X\arrow[d,"f"]\\
		Y\arrow[r, "\id"']\arrow[ur, Rightarrow, "\id"] & Y
	\end{tikzcd}
\end{equation*}
and similarly the degeneracies $\nerve_\Delta(\mathscrGr)_1\to\nerve_\Delta(\mathscrGr)_2$ are given by inserting identity arrows and identity $2$-cells.

The map $\nerve_\Delta(\pi)\colon\nerve_\Delta(\mathscrGr)\to\nerve_\Delta(\mathscr C)$ is the evident forgetful map. It then remains to construct an equivalence $\nerve_\Delta(\mathscrGr)\simeq \nerve_\Delta(\mathscr C)^{\Delta^1}$ of cocartesian fibrations over $\nerve_\Delta(\mathscrGr)$.

For this we observe that $\nerve_\Delta(\mathscr C)^{\Delta^1}$ is again strictly $3$-coskeletal (as $\nerve_\Delta(\mathscr C)$ is), and that unravelling definitions it can be described as follows:
\begin{enumerate}[(1)]
	\item A vertex of $\nerve_\Delta(\mathscr C)^{\Delta^1}$ is a morphism $f\colon X\to Y$ in $\mathscr C$.
	\item An edge $f\to g$ in $\nerve_\Delta(\mathscr C)^{\Delta^1}$ is a diagram
	\begin{equation}\label{diag:edge-U}
		\begin{tikzcd}[column sep=large, row sep=large]
			X_1\arrow[d, "f"']\arrow[dr, ""{name=c}, ""'{name=e}] \arrow[r, "x"] & X_2\arrow[d, "g"]\arrow[Rightarrow, from={c}, "\theta"]\\
			Y_1\arrow[Rightarrow, from={e}, "\theta'"]\arrow[r, "y"'] & Y_2\rlap.
		\end{tikzcd}
	\end{equation}
	\item A $2$-simplex in $\nerve_\Delta(\mathscr C)^{\Delta^1}$ with boundary
	\begin{equation*}
		\begin{tikzcd}
			X_0\arrow[d, "f_0"'] \arrow[r, "x_{01}"] & X_1\arrow[d, "f_1"]\arrow[from=dl, Rightarrow, "\theta_{01}"]\\
			Y_0\arrow[r, "y_{01}"'] & Y_1
		\end{tikzcd}\qquad
		\begin{tikzcd}
			X_1\arrow[d, "f_1"'] \arrow[r, "x_{12}"] & X_2\arrow[d, "f_2"]\arrow[from=dl, Rightarrow, "\theta_{12}"]\\
			Y_1\arrow[r, "y_{12}"'] & Y_2
		\end{tikzcd}\qquad
		\begin{tikzcd}
			X_0\arrow[d, "f_0"'] \arrow[r, "x_{02}"] & X_2\arrow[d, "f_2"]\arrow[from=dl, Rightarrow, "\theta_{02}"]\\
			Y_0\arrow[r, "y_{02}"'] & Y_2
		\end{tikzcd}
	\end{equation*}
	(where we have pasted the two natural isomorphisms and omitted the middle arrow) amounts to the data of a natural transformation $\sigma\colon x_{02}\Rightarrow x_{12}x_{01}$ and a transformation $\tau\colon y_{02}\Rightarrow y_{12}y_{01}$ satisfying the same conditions as for $\nerve_\Delta(\mathscr G)$.

	\item A diagram $\partial\Delta^3\to\nerve_\Delta(\mathscr C)^{\Delta^1}$ corresponding to $(\ref{diag:del-Delta-3})$ extends to $\Delta^3$ if and only if it satisfies the same pasting condition as for $\nerve_\Delta(\mathscr G)$, i.e.~if and only if the two maps $\partial\Delta^3\to\nerve_\Delta(\mathscr C)$ defined by the above extend to $\Delta^3$.
\end{enumerate}
In each case, the degeneracy maps are again given by inserting identity arrows and $2$-cells.

It is then straight-forward to check that we have a unique map $\Phi\colon\nerve_\Delta(\mathscr C)^{\Delta^1}\to\nerve_\Delta(\mathscrGr)$ that is the identity on vertices, sends an edge $(\ref{diag:edge-U})$ to the edge given by pasting of $\theta$ and $(\theta')^{-1}$, and that sends a $2$-simplex of $\nerve_\Delta(\mathscr C)^{\Delta^1}$ corresponding to $\sigma\colon x_{02}\Rightarrow x_{12}x_{01}$, $\tau\colon y_{02}\Rightarrow y_{12}y_{01}$ to the $2$-simplex of $\nerve_\Delta(\mathscrGr)$ corresponding to the same transformations. This is clearly a map over $\nerve_\Delta(\mathscr C)$ and so by \cite{HTT}*{Proposition 3.1.3.5} it only remains to show that it induces equivalences on fibers.

It is bijective on objects by definition, so it only remains to prove that for all $f\colon X_1\to Y$, $g\colon X_2\to Y$ the induced map
\begin{equation}\label{eq:induced-mapping-spaces}
	\Hom^{\text L}_{(\nerve_\Delta(\mathscr C)^{\Delta^1})_Y}(f,g)\to\Hom^{\text L}_{\nerve_\Delta(\mathscrGr)_Y}(f,g)
\end{equation}
is a weak homotopy equivalence. However, both sides are nerves of groupoids, so it is enough to show that it is surjective on vertices and that for any two vertices $\alpha,\beta$ on the left hand side it induces a bijection between edges $\alpha\to\beta$ and edges between their images.

For the first statement, it suffices to observe that by definition $(\ref{eq:induced-mapping-spaces})$ is given on vertices by the effect of $\Phi$ on edges $f\to g$; thus, given any edge $(x,\id_Y,\sigma)$ of $\nerve_\Delta(\mathscr G)_Y$, a preimage is given by
\begin{equation*}
	\begin{tikzcd}[column sep=large, row sep=large]
		X_1\arrow[d, "f"']\arrow[dr, ""{name=c}, ""'{name=e}, "f"{description}] \arrow[r, "x"] & X_2\arrow[d, "g"]\arrow[Rightarrow, from={c}, "\sigma"]\\
		Y\arrow[Rightarrow, from={e}, "\id"]\arrow[r, "\id"'] & Y\rlap.
	\end{tikzcd}
\end{equation*}
Similarly, the effect of $(\ref{eq:induced-mapping-spaces})$ on edges is induced by the effect of $\Phi$ on $2$-cells, so it follows immediately from the above description that it induces bijections between edges $\alpha\to\beta$ and edges between their images.
\end{proof}

\begin{remark}
	Let $\mathscr I$ be a (say, strict) $(2,1)$-category; as announced in \cite{duskin}, the $\infty$-categorical functor category $\nerve_\Delta(\mathscr C)^{\nerve_\Delta(\mathscr I)}$ can be identified with the homotopy coherent nerve of the strict $(2,1)$-category $\Fun^\text{pseudo}(\mathscr I,\mathscr C)$ of normal (i.e.~strictly unital) pseudofunctors $\mathscr I\to\mathscr C$, pseudonatural transformations, and modifications. If one is willing to take this for granted, the proof of the proposition can be significantly shortened, as the above Grothendieck construction $\mathscrGr$ is canonically \emph{isomorphic} to  $\Fun^\text{pseudo}([1],\mathscr C)$.

	However, the authors are unaware of any place in the literature where such a comparison is actually proven: in particular, the announced sequel to \cite{duskin} apparently never appeared. On the level of objects (i.e.~that maps $\nerve_\Delta(\mathscr I)\to\nerve_\Delta(\mathscr C)$ correspond to normal pseudofunctors $\mathscr I\to\mathscr C$) a detailed proof is given as \cite{kerodon}*{Tag {\ttfamily 00AU}}. The statement that at least every pseudonatural transformation of functors $\mathscr I\to\mathscr C$ gives rise to a transformation of maps $\nerve_\Delta(\mathscr I)\to\nerve_\Delta(\mathscr C)$ appears as \cite{bfb}*{Proposition~4.4}, but its proof is left as an exercise.
\end{remark}

\bibliographystyle{amsalpha}
\bibliography{reference}

\newcommand{\etalchar}[1]{$^{#1}$}
\providecommand{\bysame}{\leavevmode\hbox to3em{\hrulefill}\thinspace}
\providecommand{\MR}{\relax\ifhmode\unskip\space\fi MR }
\providecommand{\MRhref}[2]{%
  \href{http://www.ams.org/mathscinet-getitem?mr=#1}{#2}
}
\providecommand{\href}[2]{#2}
\begin{thebibliography}{HHLN23b}

\bibitem[ABFJ22]{abfj-left-exact}
Mathieu Anel, Georg Biedermann, Eric Finster, and Andr{\'e} Joyal, \emph{Left-exact localizations of {{\(\infty\)}}-topoi. {I}: {Higher} sheaves}, Adv. Math. \textbf{400} (2022), 64pp., Id/No 108268.

\bibitem[AS69]{AtiyahSegal1969equivariant}
Michael~F. Atiyah and Graeme~B. Segal, \emph{Equivariant {$K$}-theory and completion}, J. Differ. Geom. \textbf{3} (1969), 1--18.

\bibitem[Bar23a]{Barrero2023TransferSystems}
Miguel Barrero, \emph{Global transfer systems of abelian compact {Lie} groups}, Proc. Amer. Math. Soc. \textbf{151} (2023), no.~7, 3169--3182.

\bibitem[Bar23b]{barrero2021}
\bysame, \emph{Operads in unstable global homotopy theory}, Algebr. Geom. Topol. \textbf{23} (2023), no.~7, 3293--3355.

\bibitem[BD20]{balmerAmbrogio_Mackey}
Paul Balmer and Ivo Dell'Ambrogio, \emph{Mackey 2-functors and {Mackey} 2-motives}, EMS Monogr. Math., European Mathematical Society (EMS), 2020.

\bibitem[BDG{\etalchar{+}}16]{exposeI}
Clark Barwick, Emanuele Dotto, Saul Glasman, Denis Nardin, and Jay Shah, \emph{{Parametrized higher category theory and higher algebra: Exposé I -- Elements of parametrized higher category theory}}, arXiv:1608.03657 (2016).

\bibitem[BFB05]{bfb}
Manuel Bullejos, Emilio Faro, and Vicente Blanco, \emph{A full and faithful nerve for 2-categories}, Appl. Categ. Struct. \textbf{13} (2005), no.~3, 223--233.

\bibitem[Boh14]{bohmann2014global}
Anna~Marie Bohmann, \emph{Global orthogonal spectra}, Homology Homotopy Appl. \textbf{16} (2014), no.~1, 313--332.

\bibitem[Buc14]{buckley-grothendieck}
Mitchell Buckley, \emph{{Fibred $2$-categories and bicategories}}, J. Pure Appl. Algebra \textbf{218} (2014), no.~6, 1034--1074.

\bibitem[Car84]{carlsson1984SegalConjecture}
Gunnar Carlsson, \emph{Equivariant stable homotopy and {Segal's Burnside} ring conjecture}, Ann.~Math. (2) \textbf{120} (1984), 189--224.

\bibitem[Car91]{carlsson1991SullivanConjecture}
\bysame, \emph{Equivariant stable homotopy and {Sullivan's} conjecture}, Invent.~Math. \textbf{103} (1991), no.~1, 497--525.

\bibitem[DK84]{dwyer-kan-elmendorf}
W.~G. Dwyer and D.~M. Kan, \emph{Singular functors and realization functors}, Indag. Math. \textbf{46} (1984), 147--153.

\bibitem[Dus01]{duskin}
John~W. Duskin, \emph{Simplicial matrices and the nerves of weak {{\(n\)}}-categories {I}: {Nerves} of bicategories}, Theory Appl. Categ. \textbf{9} (2001), 198--308.

\bibitem[Elm83]{elmendorf}
Anthony~D. Elmendorf, \emph{Systems of fixed point sets}, Trans. Amer. Math. Soc. \textbf{277} (1983), 275--284.

\bibitem[GH07]{gepnerhenriques2007orbispaces}
David Gepner and André Henriques, \emph{Homotopy theory of orbispaces}, {arXiv:math/0701916} (2007).

\bibitem[GM23]{gepnermeier2020equivTMF}
David Gepner and Lennart Meier, \emph{On equivariant topological modular forms}, Compos. Math. \textbf{159} (2023), no.~12, 2638--2693.

\bibitem[Gro60]{ega1}
Alexander Grothendieck, \emph{{Éléments de géométrie algébrique. I: Le langage des schémas}}, Publ. Math., Inst. Hautes {\'E}tud. Sci. \textbf{4} (1960), 1--228.

\bibitem[Hau17]{hausmann-equivariant}
Markus Hausmann, \emph{{{\(G\)}}-symmetric spectra, semistability and the multiplicative norm}, J. Pure Appl. Algebra \textbf{221} (2017), no.~10, 2582--2632.

\bibitem[Hau19]{hausmann-global}
\bysame, \emph{{Symmetric spectra model global homotopy theory of finite groups}}, {Algebr. Geom. Topol.} \textbf{19} (2019), no.~3, 1413--1452.

\bibitem[Hau22]{hausmann2022global}
\bysame, \emph{Global group laws and equivariant bordism rings}, Ann.~Math. (2) \textbf{195} (2022), no.~3, 841--910.

\bibitem[HHLN23a]{HHLNb}
Rune Haugseng, Fabian Hebestreit, Sil Linskens, and Joost Nuiten, \emph{Lax monoidal adjunctions, two-variable fibrations and the calculus of mates}, Proc. Lond. Math. Soc. (3) \textbf{127} (2023), no.~4, 889--957.

\bibitem[HHLN23b]{HHLNa}
\bysame, \emph{Two-variable fibrations, factorisation systems and {{\(\infty\)}}-categories of spans}, Forum Math. Sigma \textbf{11} (2023), 70, Id/No e111.

\bibitem[HHR16]{HHR2016Kervaire}
Michael~A Hill, Michael~J Hopkins, and Douglas~C Ravenel, \emph{On the nonexistence of elements of {Kervaire} invariant one}, Ann.~Math. (2) \textbf{184} (2016), 1--262.

\bibitem[Hil24]{hilman2022parametrised}
Kaif Hilman, \emph{Parametrised presentability over orbital categories}, Appl. Categ. Struct. \textbf{32} (2024), no.~3, 53pp., Id/No 15.

\bibitem[HL13]{hopkins2013ambidexterity}
Michael Hopkins and Jacob Lurie, \emph{Ambidexterity in {$K(n)$}-local stable homotopy theory}, \url{https://people.math.harvard.edu/~lurie/papers/Ambidexterity.pdf} (2013).

\bibitem[HNP19]{grothendieck-unstraightening}
Yonatan Harpaz, Joost~Jakob Nuiten, and Matan Prasma, \emph{{Quillen cohomology of {{\((\infty, 2)\)}}-categories}}, High. Struct. \textbf{3} (2019), no.~1, 17--66.

\bibitem[HSS00]{hss}
Mark {Hovey}, Brooke {Shipley}, and Jeff {Smith}, \emph{{Symmetric spectra}}, {J. Amer. Math. Soc.} \textbf{13} (2000), no.~1, 149--208.

\bibitem[HY17]{descent-lim}
Asaf Horev and Lior Yanovski, \emph{On conjugates and adjoint descent}, Topology Appl. \textbf{232} (2017), 140--154.

\bibitem[Jur20]{juran2020orbifolds}
Branko Juran, \emph{Orbifolds, orbispaces and global homotopy theory}, To appear in \textit{Algebr. Geom. Topol.}, arXiv:2006.12374 (2020).

\bibitem[Len23]{global-operads}
Tobias Lenz, \emph{{Genuine vs.~na\"ive symmetric monoidal $G$-categories}}, Doc. Math. \textbf{28} (2023), no.~5, 1079--1161.

\bibitem[Len25]{g-global}
\bysame, \emph{{{\(G\)}-Global Homotopy Theory and Algebraic {{\(K\)}}-Theory}}, Mem. Amer. Math. Soc. \textbf{306} (2025), no.~1545.

\bibitem[LNP25]{LNP}
Sil Linskens, Denis Nardin, and Luca Pol, \emph{Global homotopy theory via partially lax limits}, Geom. Topol. \textbf{29} (2025), no.~3, 1345--1440.

\bibitem[Lur09]{HTT}
Jacob Lurie, \emph{Higher topos theory}, Ann.~Math.~Stud., vol. 170, Princeton University Press, Princeton, NJ, 2009, updated version available at \url{https://people.math.harvard.edu/~lurie/papers/highertopoi.pdf}.

\bibitem[Lur17]{HA}
\bysame, \emph{Higher algebra}, \url{https://www.math.ias.edu/~lurie/papers/HA.pdf} (2017).

\bibitem[Lur18]{SAG}
\bysame, \emph{{Spectral Algebraic Geometry}}, {under construction (version dated February 2018), \url{www.math.ias.edu/~lurie/papers/SAG-rootfile.pdf}} (2018).

\bibitem[Lur25]{kerodon}
\bysame, \emph{Kerodon}, {online textbook, available at \url{https://kerodon.net}} (2025).

\bibitem[Mar21]{martini2021yoneda}
Louis Martini, \emph{Yoneda's lemma for internal higher categories}, arXiv:2103.17141 (2021).

\bibitem[MW22]{martiniwolf2022presentable}
Louis Martini and Sebastian Wolf, \emph{Presentability and topoi in internal higher category theory}, arXiv:2209.05103 (2022).

\bibitem[MW24]{martiniwolf2021limits}
\bysame, \emph{Colimits and cocompletions in internal higher category theory}, High. Struct. \textbf{8} (2024), no.~1, 97--192.

\bibitem[Nar16]{nardin2016exposeIV}
Denis Nardin, \emph{{Parametrized higher category theory and higher algebra: Expos\'e IV -- Stability with respect to an orbital $\infty$-category}}, arXiv:1608.07704 (2016).

\bibitem[Nar17]{nardin2017thesis}
\bysame, \emph{Stability and distributivity over orbital $\infty$-categories}, Ph.D. thesis, Massachusetts Institute of Technology, 2017.

\bibitem[NS18]{nikolausscholze2018cyclic}
Thomas Nikolaus and Peter Scholze, \emph{On topological cyclic homology}, Acta Math. \textbf{221} (2018), no.~2, 203--409.

\bibitem[Ost16]{ostermayr}
Dominik Ostermayr, \emph{Equivariant {{\(\Gamma\)}}-spaces}, Homology Homotopy Appl. \textbf{18} (2016), no.~1, 295--324.

\bibitem[Par23]{pardon2020orbifold}
John Pardon, \emph{Orbifold bordism and duality for finite orbispectra}, Geom. Topol. \textbf{27} (2023), no.~5, 1747--1844.

\bibitem[Ram22]{Ramzi2022Grothendieck}
Maxime Ramzi, \emph{A monoidal {Grothendieck} construction for $\infty$-categories}, arXiv:2209.12569 (2022).

\bibitem[Ram23]{Ramzi2022Yoneda}
\bysame, \emph{An elementary proof of the naturality of the {Yoneda} embedding}, Proc. Amer. Math. Soc. \textbf{151} (2023), no.~10, 4163--4171.

\bibitem[Rez14]{rezk2014global}
Charles Rezk, \emph{Global homotopy theory and cohesion}, \url{https://faculty.math.illinois.edu/~rezk/global-cohesion.pdf} (2014).

\bibitem[Rie14]{cat-htpy}
Emily Riehl, \emph{{Categorical Homotopy Theory}}, New Math. Monogr., vol.~24, Cambridge: Cambridge University Press, 2014.

\bibitem[Sch18]{schwede2018global}
Stefan Schwede, \emph{Global homotopy theory}, {New Math. Monogr.}, vol.~34, Cambridge University Press, 2018.

\bibitem[Sch20]{schwede_orbispaces_2020}
\bysame, \emph{Orbispaces, orthogonal spaces, and the universal compact {Lie} group}, Mathematische Zeitschrift \textbf{294} (2020), no.~1, 71--107.

\bibitem[Seg74]{segal}
Graeme Segal, \emph{Categories and cohomology theories}, Topology \textbf{13} (1974), 293--312.

\bibitem[Sha23]{shah2021parametrized}
Jay Shah, \emph{Parametrized higher category theory}, Algebr. Geom. Topol. \textbf{23} (2023), no.~2, 509--644.

\bibitem[{Shi}89]{shimakawa}
Kazuhisa {Shimakawa}, \emph{{Infinite loop $G$-spaces associated to monoidal $G$-graded categories}}, {Publ. Res. Inst. Math. Sci.} \textbf{25} (1989), no.~2, 239--262.

\bibitem[SS12]{sagave-schlichtkrull}
Steffen Sagave and Christian Schlichtkrull, \emph{Diagram spaces and symmetric spectra}, Adv. Math. \textbf{231} (2012), no.~3-4, 2116--2193.

\bibitem[SS20]{sati2020proper}
Hisham Sati and Urs Schreiber, \emph{Proper orbifold cohomology}, arXiv:2008.01101 (2020).

\bibitem[Wir74]{wirthmuller1974equivariant}
Klaus Wirthm{\"u}ller, \emph{Equivariant homology and duality}, Manuscr.~Math. \textbf{11} (1974), no.~4, 373--390.

\end{thebibliography}

\end{document}